\newcommand{\N}{\mathbb{N}}                     
\newcommand{\Z}{\mathbb{Z}}                     
\newcommand{\Q}{\mathbb{Q}}                     
\newcommand{\R}{\mathbb{R}}                     
\newcommand{\C}{\mathbb{C}}                     
\newcommand{\F}{\mathbb{F}}                     
\newcommand{\p}{\partial}             		
\newcommand{\om}{\omega}             		
\newcommand{\x}{\times}            		
\newcommand{\coker}{\mathrm{coker\,}}           
\newcommand{\ind}{\mathrm{ind\,}}               
\newcommand{\supp}{\mathrm{supp\,}}             
\newcommand{\CZ}{\mathrm{CZ}}               
\newcommand{\CP}{\mathbb{CP}}                   
\newcommand{\sign}{\mathrm{sign\,}}             
\newcommand{\Det}{\mathrm{Det}}                 
\newcommand{\Crit}{\mathrm{Crit}\,}             
\newcommand{\RFH}{\mathrm{RFH}}             
\newcommand{\RFC}{\mathrm{RFC}}             
\newcommand{\FH}{\mathrm{FH}}             
\newcommand{\FC}{\mathrm{FC}}             
\renewcommand{\H}{\mathrm{H}}             
\newcommand{\ev}{\mathrm{ev}}               
\newcommand{\PD}{\mathrm{PD}}               
\newcommand{\cov}{\mathrm{cov}}               
\newcommand{\w}{\mathfrak{w}}  
\newcommand{\q}{\mathfrak{q}}  
\renewcommand{\j}{\mathfrak{j}} 
\renewcommand{\a}{\mathfrak{a}}  
\renewcommand{\r}{\mathfrak{r}} 
\newcommand{\Cont}{\mathrm{Cont}}
\newcommand{\OO}{\mathcal{O}}     
\newcommand{\BB}{\mathcal{B}}     
\newcommand{\CC}{\mathcal{C}}     
\newcommand{\EE}{\mathcal{E}}     
\newcommand{\PP}{\mathcal{P}}    
\newcommand{\JJ}{\mathcal{J}}     
\newcommand{\MM}{\mathcal{M}}     
\newcommand{\NN}{\mathcal{N}}
\newcommand{\into}{\hookrightarrow}
\renewcommand{\AA}{\mathcal{A}}            
\newtheorem{thm}{\sc Theorem}[section]               
\newtheorem*{thm*}{\sc Theorem}               
\newtheorem{cor}[thm]{\sc Corollary}        
\newtheorem*{cor*}{\sc Corollary}        
\newtheorem{lem}[thm]{\sc Lemma}            
\newtheorem{prop}[thm]{\sc Proposition}     
\newtheorem{rem}[thm]{\sc Remark}           
\newtheorem{ex}[thm]{\sc Example}           
\DeclareFontFamily{U}{mathx}{\hyphenchar\font45}
\DeclareFontShape{U}{mathx}{m}{n}{
      <5> <6> <7> <8> <9> <10>
      <10.95> <12> <14.4> <17.28> <20.74> <24.88>
      mathx10
      }{}
\DeclareSymbolFont{mathx}{U}{mathx}{m}{n}
\DeclareMathAccent{\widecheck}{0}{mathx}{"71}
\DeclareMathAccent{\wideparen}{0}{mathx}{"75}
\noindent  \textsc{Mathematisches Institut, Universit\"at Heidelberg, 69120 Heidelberg, Germany} \par  
\noindent  \textit{E-mail address}: \texttt{\href{mailto:peter.albers@uni-heidelberg.de}{peter.albers@uni-heidelberg.de}} \par
\noindent\textsc{Seoul National University, Department of Mathematical Sciences, Research Institute in Mathematics, 
	08826 Seoul, South Korea} \par  
 \noindent \textit{E-mail address}: \texttt{\href{mailto:jungsoo.kang@snu.ac.kr}{jungsoo.kang@snu.ac.kr}} \par
\numberwithin{equation}{section}
\title{Rabinowitz Floer homology of negative line bundles and Floer Gysin sequence} 
\author{Peter Albers and Jungsoo Kang}
\date{}
\begin{document}
\maketitle

\begin{abstract}
This article is concerned with the Rabinowitz Floer homology of negative line bundles. We construct a refined version of Rabinowitz Floer homology and study its properties. In particular, we build a Gysin-type long exact sequence for this new invariant and discuss an application to the orderability problem for prequantization spaces. We also construct a  short exact sequence for the ordinary Rabinowitz Floer homology and provide computational results.
\end{abstract}

\setcounter{tocdepth}{1} 
\tableofcontents

\section{Introduction}

Let $(M,\om)$ be a closed connected symplectic manifold with integral symplectic form. We consider a negative line bundle over $M$, that is a complex line bundle 
\[
\wp:E\longrightarrow M
\]  
with first Chern class $c_1^E=-m[\om]$ for some natural number $m\in\N=\{1,2,\dots\}$. 
We choose a Hermitian metric on $E$ and denote by 
$r:E\to [0,+\infty)$
the induced radial coordinate. We consider circle subbundles 
\[
\Sigma_\tau:= \left\{e \in E \mid m\pi r^2(e)=\tau \,\right\},\qquad \tau >0\,.	
\]
If the size of the radius $\tau$ is not relevant, we simply write $\Sigma$. We also denote the restriction of $\wp$ to $\Sigma$ again by $\wp$. We choose a connection 1-form $\alpha$ on $\Sigma$ such that $\wp^*(m\om)=d\alpha$.
The 1-form $\alpha$ is a contact form and naturally extends to $E\setminus\OO_E$, where  $\OO_E$ denotes the zero section of $\wp$. 
The total space $E$ of the line bundle is endowed with the symplectic form
$\Omega=\wp^*\om+d(\pi r^2\alpha)$, which is not exact. Sometimes $(\Sigma,\alpha)$ is referred to as prequantization or Boothby--Wang bundle.

This article is concerned with the Rabinowitz Floer homology of $(E,\Sigma_\tau)$. Although very little is known for Rabinowitz Floer homology or symplectic homology beyond the exact setting, due to the simple Reeb dynamics of $(\Sigma_\tau,\alpha)$, there has been remarkable work on the Rabinowitz Floer homology or symplectic homology for negative line bundles as we will mention below. 
In this article, we construct a refined version of Rabinowitz Floer homology, denoted by $\RFH^{\w_0}(E,\Sigma_\tau)$, by studying the filtration, called winding numbers,  introduced by Frauenfelder \cite{Fra04}.   We then build a Gysin-type long exact sequence for this new invariant. This serves as an efficient tool to compute $\RFH^{\w_0}(E,\Sigma_\tau)$. Combining it with methods from \cite{AM18} leads to an application to the orderability problem for the contact manifold $\Sigma$.
We also investigate the ordinary Rabinowitz Floer homology $\RFH(E,\Sigma_\tau)$, which we call full Rabinowitz Floer homology in distinction from $\RFH^{\w_0}(E,\Sigma_\tau)$. The full Rabinowitz Floer homology $\RFH(E,\Sigma_\tau)$ fits in to a certain short exact sequence, and using this we compute $\RFH(E,\Sigma_\tau)$. This computational result shows that $\RFH(E,\Sigma_\tau)$ can change dramatically as the radius $\tau$ varies, see also Remark \ref{rem:RFH_full_intro}.(a).

 We denote by $c_1^{TM}$ the first Chern class of the tangent bundle $TM\to M$ and write  $\om(\pi_2(M))=\nu\Z$, where $\nu\in\N\cup\{0\}$ since $\omega$ is integral.  Throughout this article, we will assume that one of the following conditions holds.
\begin{enumerate}
	\item[(A1)] $\om$ vanishes on $\pi_2(M)$.
	\item[(A2)] $c_1^{TM}=\lambda\om$ on $\pi_2(M)$ for some $\lambda\in\R$ satisfying either $\lambda\nu\leq -\frac{1}{2}\dim M$ or $\lambda\nu\geq 2$.
	\item[(A3)] $c_1^{TM}=\lambda\om$ on $\pi_2(M)$ for some $\lambda\in\R$ satisfying either $\lambda\nu\leq -\frac{1}{2}\dim M$ or $(\lambda-1)\nu\geq 1$. 
\end{enumerate}
Note that (A3) is slightly stronger than (A2), and $\lambda\in\Q$ since $\om$ is integral.  Condition (A2) is equivalent to saying that $\omega$ does not vanish on $\pi_2(M)$ and every $A\in\pi_2(M)$ with $\omega(A)\neq 0$ has either $c_1^{TM}(A)\leq -\frac{1}{2}\dim M$ or $c_1^{TM}(A)\geq2$, see Remark \ref{rem:H}.(i). The assumption $\lambda\nu\geq2$ in (A2) can be weakened to $\lambda\nu\geq 1$ if we use a smaller class of almost complex structures to define $\RFH^{\w_0}(E,\Sigma_\tau)$ and $\RFH(E,\Sigma_\tau)$, see Remark \ref{rem:hope}.

Although we give the definition of the Novikov ring $\Lambda$ for a general situation  in \eqref{eq:nov},  $\Lambda$ is isomorphic to the Laurent polynomial ring $\Z[t,t^{-1}]$ under condition (A2) or (A3), and to $\Z$ if (A1) is assumed. In the former case, the degree of the formal variable $t$ is minus twice the minimal Chern number of $c_1^{TM}$, see Remark \ref{rem:H}.(ii).

\subsection*{Rabinowitz Floer homology with zero winding number}

We list properties and applications of the new invariant $\RFH_*^{\w_0}(E,\Sigma_\tau)$. Here the superscript $\w_0$ indicates that this invariant is built on generators with zero winding number, meaning that generators are periodic Reeb orbits on $\Sigma_\tau$ with (homotopy classes of) capping disks not intersecting $\OO_E$, see Section \ref{sec:zero_winding} and Section \ref{sec:RFH_zero_wind} for precise details.

\begin{thm}\label{thm:main}
Suppose that $(M,\om)$ satisfies condition (A1) or (A2).
\begin{enumerate}[(a)]
\item The Rabinowitz Floer homology with zero winding number
\[
\RFH_*^{\w_0}(E,\Sigma_\tau)\,,\qquad*\in\Z
\]
is defined and invariant under the change of $\tau>0$. Moreover, it admits a $\Lambda$-module structure given by iterating generators, see Remark \ref{rem:Lambda_module_str} for details.

\item There exists a long exact sequence, the Floer Gysin sequence, of $\Lambda$-modules
\[
\cdots\longrightarrow \RFH_*^{\w_0}(E,\Sigma)\longrightarrow \FH_{*}(M)\stackrel{\Psi^{c_1^E}}{\longrightarrow}\FH_{*-2}(M)\longrightarrow \RFH_{*-1}^{\w_0}(E,\Sigma)\longrightarrow\cdots
\]
where the map $\Psi^{c_1^E}$ is the Floer cap product with $-c_1^E$. Furthermore, this respects action filtrations, see Proposition \ref{prop:cap_product} for details.

\item In the case of (A1) or (A2) with $\lambda\nu\leq -\frac{1}{2}\dim M$, we have a $\Lambda$-module isomorphism
\[
\RFH^{\w_0}_*(E,\Sigma)\cong  \H_{*+\frac{\dim M}{2}}(\Sigma;\Lambda)\,,
\] 
and the Floer Gysin sequence in (b) recovers the classical Gysin sequence for the bundle $\Sigma\to M$ with coefficients in $\Lambda$. 
\item In this part, we use the notation $(E^m,\Sigma^m)$ to indicate the degree $m$ of $E^m$ and $\Sigma^m$, i.e.~$c_1^{E^m}=-m[\om]$.  
There exist natural transfer and projection homomorphisms 
\[
T:\RFH^{\w_0}_*(E^m,\Sigma^m)\to \RFH^{\w_0}_*(E^1,\Sigma^1)\,,\quad P: \RFH^{\w_0}_*(E^1,\Sigma^1)\to  \RFH^{\w_0}_*(E^m,\Sigma^m)
\] 
such that both compositions $P\circ T$ and $T\circ P$ agree with the scalar multiplication by $m$. 


\item In this part, we assume either (A1) or (A3). Let ${\Cont_0}(\Sigma,\xi)$ be the identity component of the group of contactomorphisms on $(\Sigma,\xi=\ker\alpha)$, and let $\widetilde{\Cont_0}(\Sigma,\xi)$ be its universal cover. Then the homology $\RFH^{\w_0}_*(E,\Sigma,\{\varphi_t\})$ associated with a path $\{\varphi_t\}_{t\in[0,1]}$ in $\Cont_0(\Sigma,\xi)$ with $\varphi_0=\mathrm{id}$ is defined, and there is a $\Z$-module isomorphism
\[
\RFH^{\w_0}_*(E,\Sigma,\{\varphi_t\}) \cong \RFH^{\w_0}_*(E,\Sigma)\,.
\]
Moreover if $\RFH^{\w_0}(E,\Sigma)\neq0$, then $\widetilde{\Cont_0}(\Sigma,\xi)$ is orderable in the sense of \cite{EP00} and every $\varphi\in {\Cont_0}(\Sigma,\xi)$ has a translated point with respect to $\alpha$ in the sense of \cite{San12}.
\end{enumerate}
\end{thm}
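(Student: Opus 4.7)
The plan for (a) is to define the Rabinowitz action functional on a cover of the free loop space of $E$ that records the homotopy class of capping disks, and then restrict to the subcomplex generated by critical points whose chosen capping disk is disjoint from the zero section $\OO_E$. Since $(E,\Omega)$ is non-exact and $\OO_E$ is $J$-holomorphic for the natural almost complex structures, the algebraic intersection number of a Floer cylinder with $\OO_E$ (the winding number) is preserved away from sphere bubbling; the index estimates in (A1)/(A2) then prevent bubbles from contributing non-trivially within the $\w_0$-sector. The main analytic obstacle is a uniform $L^\infty$ bound on $\w_0$-trajectories; once this is in place, Fredholm and transversality theory proceed as in the exact case. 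Invariance in $\tau$ and the $\Lambda$-module structure (via iteration of generators coupled with Novikov spheres in $M$) then follow from standard continuation and product constructions.

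For (b) the Reeb flow on $\Sigma_\tau$ is periodic with all orbits appearing in $S^1$-families fibered over $M$, so a Morse--Bott perturbation by a Morse function on $M$ produces a chain complex whose multiplicity-$k$ layer is two copies of the Morse complex of $M$, corresponding to the minimum and maximum of the circle perturbation. Filtering by multiplicity gives a short exact sequence of chain complexes, and the task is to identify the connecting differential with the Floer cap product $\Psi^{c_1^E}$. I expect this to follow from a cascade / neck-stretching analysis: cascade trajectories that jump between multiplicity layers correspond to holomorphic curves in $E$ winding once around $\OO_E$, and the algebraic count of such curves assembles into the cap product with the Euler class $-c_1^E = m[\om]$. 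Making this identification compatible with the action filtration and with the $\Lambda$-module structure is the main technical obstacle of the theorem.

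Part (c) is then essentially a corollary of (b): under the negativity hypothesis on $c_1^{TM}$, the Piunikhin--Salamon--Schwarz isomorphism identifies $\FH_*(M)$ with $\H_{*+\dim M/2}(M;\Lambda)$ and sends $\Psi^{c_1^E}$ to the classical cap product with the Euler class of $\wp\colon \Sigma\to M$, so the Floer Gysin sequence collapses onto the topological Gysin sequence with $\Lambda$-coefficients. For (d) the key point is that $\Sigma^1$ is an $m$-fold cyclic cover of $\Sigma^m$ along the fiber direction and the Reeb dynamics are compatible under this cover; the transfer $T$ sends a Reeb orbit of $\Sigma^m$ to the sum of its $m$ lifts in $\Sigma^1$, while the projection $P$ is the induced fiber-sum descent. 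The $\w_0$ condition is preserved because capping disks lift and project, and both compositions tautologically produce multiplication by $m$.

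For (e) I would define $\RFH^{\w_0}(E,\Sigma,\{\varphi_t\})$ by twisting the Rabinowitz functional by a contact Hamiltonian generating $\{\varphi_t\}$, so that critical points correspond to translated points of $\varphi_1$ with respect to $\alpha$. Under hypothesis (A3), which strengthens (A2) just enough to control the twisted action and Conley--Zehnder indices, a linear homotopy of paths in $\Cont_0(\Sigma,\xi)$ provides a continuation isomorphism to the untwisted $\RFH^{\w_0}(E,\Sigma)$. Non-vanishing of the latter therefore forces a translated point for every $\varphi\in \Cont_0(\Sigma,\xi)$, and orderability of $\widetilde{\Cont_0}(\Sigma,\xi)$ follows in the standard Eliashberg--Polterovich fashion: a contractible positive loop would induce a continuation shifting action windows arbitrarily, annihilating any fixed non-zero class and contradicting the isomorphism. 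The main obstacle in this part is verifying the action and compactness bounds for the twisted functional in the non-exact setting, which is precisely why (A3) is required in place of (A2).
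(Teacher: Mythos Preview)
Your outline for (a), (c), (d), and (e) is broadly in the right spirit, but your proposed mechanism for the Gysin sequence in (b) contains a genuine conceptual error.

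You propose to filter by \emph{multiplicity} and say that the connecting differential should count ``cascade trajectories that jump between multiplicity layers'' which ``correspond to holomorphic curves in $E$ winding once around $\OO_E$''. This is not how the paper obtains the Gysin sequence, and it cannot work as stated: in the $\w_0$-sector every Rabinowitz--Floer cylinder has intersection number zero with $\OO_E$ (this is precisely the positivity-of-intersection statement you correctly invoke in (a)), so no trajectory ``winds once around $\OO_E$''. The filtration you describe by $u\cdot\OO_E$ is the one the paper uses for the \emph{full} Rabinowitz Floer homology (the map $\partial^{\w_1}=\partial_0$ in Section~9), not for the $\w_0$-Gysin sequence.

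The actual filtration is by the auxiliary Morse function $h$ on each $S^1$-family: generators split as maxima $\hat w$ and minima $\check w$, and the short exact sequence is $\widehat{\RFC}{}^{\w_0}\hookrightarrow \RFC^{\w_0}\twoheadrightarrow \widecheck{\RFC}^{\w_0}$. The connecting map $\partial^{\w_0,c_1^E}$ counts $\w_0$-trajectories from $\check w_-$ to $\hat w_+$ with $\mu^h_{\RFH}$-index difference $1$; such trajectories never touch $\OO_E$. The identification with the Floer cap product comes from an entirely different mechanism: for a \emph{diagonal} almost complex structure $J=\mathrm{diag}(i,j)$ and small $\tau$, Frauenfelder's theorem (via the integro Kazdan--Warner equation) gives a bijection modulo the fiberwise $S^1$-action between $\w_0$-cylinders in $E$ and Floer cylinders $q=\wp\circ u$ in $M$. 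A check-to-hat cascade then projects to an index-$2$ Floer cylinder $q$, and the signed count over the $S^1$-orbit equals $-\epsilon(q)\,c_1^E([q])$ (Lemma~5.22), i.e.\ exactly the chain-level cap product $\psi^{c_1^E}$. In other words, $(\RFC^{\w_0},\partial^{\w_0})$ is isomorphic to the mapping cone of $\psi^{c_1^E}$ on $\FC(M)$; there is no neck-stretching and no intersection with $\OO_E$ involved. Your ``curves winding once around $\OO_E$'' picture would at best produce the fiber-rotation map $\partial_0$, which is the identity (Remark~9.2), not $\Psi^{c_1^E}$.

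A smaller point on (e): the reason (A3) is needed instead of (A2) is not index control but \emph{compactness}. The perturbed Rabinowitz--Floer equation for a contact Hamiltonian does not project to a Floer equation on $M$, so the bubbling argument via $\JJ^\BB$-structures in (A2) is unavailable; one must instead use $\Omega$-compatible $J\in\JJ$ and lift to $E^1$ as in Proposition~5.5, which requires the slightly stronger monotonicity in (A3).
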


We point out throughout the text where Theorem \ref{thm:main} and the other results from the introduction are proved.

The Floer Gysin sequence in Theorem \ref{thm:main}.(b) yields that vanishing of $\RFH^{\w_0}_*(E,\Sigma)$ is equivalent to the map $\Psi^{c_1^E}$ being an isomorphism. Since we use integer coefficients, we have the following corollary where we use again the notation $(E^m,\Sigma^m)$ as in Theorem \ref{thm:main}.(d).
\begin{cor}\label{cor:orderability}
	The map $\Psi^{c_1^{E^m}}$ is not an isomorphism and thus $\RFH^{\w_0}_*(E^m,\Sigma^m)$ does not vanish for all $m\geq2$. This is also the case for $m=1$ if $c_1^{E^1}$ is not a primitive class in $\H^2(M;\Z)$.
	In particular if $\widetilde{\Cont_0}(\Sigma^m,\xi)$ is not orderable, then $m=1$.
\end{cor}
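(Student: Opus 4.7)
The plan is to combine the Floer Gysin long exact sequence from Theorem~\ref{thm:main}(b) with a $\Z$-divisibility obstruction on the Floer cap product. First, by that long exact sequence, $\RFH^{\w_0}_*(E^m,\Sigma^m)=0$ is equivalent to $\Psi^{c_1^{E^m}}\colon \FH_*(M)\to \FH_{*-2}(M)$ being an isomorphism in every degree, so it suffices to show that $\Psi^{c_1^{E^m}}$ fails to be surjective in at least one degree.

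The key observation is that $-c_1^{E^m}=m[\om]$, so by $\Z$-linearity of the Floer cap product in the cohomology class, $\Psi^{c_1^{E^m}}$ equals $m$ times the Floer cap product with $[\om]$. Consequently, if $\Psi^{c_1^{E^m}}$ were surjective, multiplication by $m$ would be surjective on every $\FH_{*-2}(M)$. Under (A1) we have $\FH_*(M)=\H_*(M;\Z)$; under (A2) or (A3) the PSS isomorphism identifies $\FH_*(M)\cong \H_*(M;\Z)\otimes_{\Z}\Lambda$ as $\Lambda$-modules. In either case $\H_0(M;\Z)=\Z$ sits as a direct $\Z$-summand of $\FH_0(M)$ (at the $t^0$ level of the quantum decomposition), and $\Z$ is not $m$-divisible for $m\geq 2$. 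This contradicts the required surjectivity, so $\Psi^{c_1^{E^m}}$ is not an isomorphism and $\RFH^{\w_0}(E^m,\Sigma^m)\neq 0$.

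The case $m=1$ with $c_1^{E^1}$ non-primitive is handled identically: write $c_1^{E^1}=-k\beta$ for a primitive class $\beta\in \H^2(M;\Z)$ and some integer $k\geq 2$, so that $\Psi^{c_1^{E^1}}$ is $k$ times the Floer cap product with $\beta$, and run the same divisibility argument with $m$ replaced by $k$. The orderability conclusion is then immediate from Theorem~\ref{thm:main}(e): under (A1) or (A3), the non-vanishing of $\RFH^{\w_0}(E^m,\Sigma^m)$ forces $\widetilde{\Cont_0}(\Sigma^m,\xi)$ to be orderable, so non-orderability rules out $m\geq 2$, i.e.\ forces $m=1$.

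The main point I expect to merit care is the factorization of $\Psi^{c_1^{E^m}}$ as $m$ times the Floer cap product with $[\om]$, which rests on the $\Z$-linearity of the Floer cap product in its cohomology input, together with the identification of $\H_0(M;\Z)=\Z$ as a direct $\Z$-summand of $\FH_0(M)$ under the quantum decomposition in the (A2)/(A3) case. Both are standard consequences of the PSS construction and the $\Lambda$-module structure of quantum homology, but the integer (rather than field) coefficients mean they warrant a brief sanity check.
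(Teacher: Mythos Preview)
Your proposal is correct and follows essentially the same route as the paper, which carries this out in Propositions~\ref{prop:nonvanishing_cap} and~\ref{prop:nonzero_class}: factor $\Psi^{c_1^{E^m}}$ through multiplication by $m$ (respectively $k$), then use a free $\Z$-summand of $\FH_*(M)$ to obstruct surjectivity, and invoke Theorem~\ref{thm:main}(e) for the orderability conclusion. One cosmetic point: in the paper's grading convention $\FH_*(M)\cong \H_{*+\frac{1}{2}\dim M}(M;\Lambda)$, so the copy of $\Z$ coming from $\H_0(M;\Z)$ sits in degree $-\tfrac{1}{2}\dim M$ rather than $0$; this does not affect your argument.
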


We refer to Remark \ref{rem:RFH_wind0_intro}.(e) for known examples of orderable contact manifolds. 
Theorem \ref{thm:main}.(d) implies the following corollary.

\begin{cor}\label{cor:transfer}
	If $\RFH_{\kappa}^{\w_0}(E^1,\Sigma^1)=0$ for a fixed $\kappa\in\Z$, then $\RFH_{\kappa}^{\w_0}(E^m,\Sigma^m)$ only contains torsion classes of order $m$, for all $m\in\N$. Moreover, $\RFH_{\kappa}^{\w_0}(E^m,\Sigma^m)$ is torsion for some $m\in\N$ if and only if it is torsion for every $m\in\N$. In particular, if we use coefficients in a field (instead of integers), then $\RFH_{\kappa}^{\w_0}(E^m,\Sigma^m)=0$ for some $m\in\N$ if and only if $\RFH_{\kappa}^{\w_0}(E^m,\Sigma^m)=0$ for all  $m\in\N$.	
\end{cor}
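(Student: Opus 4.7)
The plan is purely algebraic: every clause of the corollary follows formally from Theorem \ref{thm:main}.(d), which provides the maps $T$ and $P$ together with the identities $P \circ T = m \cdot \mathrm{id}$ on $\RFH^{\w_0}_\kappa(E^m,\Sigma^m)$ and $T \circ P = m \cdot \mathrm{id}$ on $\RFH^{\w_0}_\kappa(E^1,\Sigma^1)$. No further Floer-theoretic input is needed; I just chase elements through these two identities.

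For the first assertion I would start from $\RFH^{\w_0}_\kappa(E^1,\Sigma^1) = 0$. Any $x \in \RFH^{\w_0}_\kappa(E^m,\Sigma^m)$ then has $T(x)$ sitting in the zero group, so $T(x) = 0$, and applying $P$ yields $m x = (P \circ T)(x) = 0$. Hence every class is annihilated by $m$.

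For the equivalence of torsion across different $m$, the strategy is to use $\RFH^{\w_0}_\kappa(E^1,\Sigma^1)$ as a common bridge. If $\RFH^{\w_0}_\kappa(E^{m_0},\Sigma^{m_0})$ is torsion, then for any $x \in \RFH^{\w_0}_\kappa(E^1,\Sigma^1)$ I pick $n \neq 0$ with $n P(x) = 0$; applying $T$ yields $n m_0 x = T(n P(x)) = 0$, so the intermediate group is torsion. Symmetrically, if $\RFH^{\w_0}_\kappa(E^1,\Sigma^1)$ is torsion, then any $y \in \RFH^{\w_0}_\kappa(E^m,\Sigma^m)$ has $T(y)$ killed by some $n \neq 0$, and $P \circ T = m \cdot \mathrm{id}$ gives $n m y = 0$. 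Chaining these two directions transports the torsion property from any one $m_0$ through $E^1$ to every other $m$.

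The field-coefficient statement is then a direct consequence: over a characteristic-zero field (or, more precisely, any field whose characteristic is coprime to the integers involved) the torsion subgroup of a vector space is trivial, so the word "torsion" collapses to "zero," and the equivalence reads exactly as claimed. I do not foresee a genuine obstacle — this is all elementary diagram chasing — and the only subtlety worth flagging is that in positive characteristic dividing $m$, the identities $P \circ T = T \circ P = m \cdot \mathrm{id}$ degenerate to the zero map and the argument no longer forces vanishing.
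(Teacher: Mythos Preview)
Your argument is correct and is essentially the paper's own proof: both deduce everything from the identities $P\circ T=m\cdot\mathrm{id}$ and $T\circ P=m\cdot\mathrm{id}$ of Theorem~\ref{thm:main}.(d) by the same element chase through $\RFH^{\w_0}_\kappa(E^1,\Sigma^1)$ (cf.\ Proposition~\ref{prop:transfer} and Corollary~\ref{cor:transfer2}). Your closing remark about fields of characteristic dividing $m$ is a legitimate observation that the paper does not make explicit.
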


The Floer Gysin sequence also makes $\RFH^{\w_0}(E,\Sigma)$ sometimes explicitly computable. The following is one instance of this, and we refer to Remark \ref{rem:RFH_wind0_intro}.(d) for another example. 
Suppose that $\Psi^{c_1^E}$ is injective. Then we have
\[	 
\RFH_{*-1}^{\w_0}(E,\Sigma)\cong \FH_{*-2}(M) \,\big /\, \mathrm{im} \Big( \FH_{*}(M)\stackrel{\Psi^{c_1^E}}{\to}\FH_{*-2}(M)\Big)\,.
\]
This applies to the complex line bundle $\OO_{\mathbb{CP}^n}(-m)\to \CP^n$. That is, $c_1^{\OO_{\mathbb{CP}^n}(-m)}=-m[\om_\mathrm{FS}]$, where $\om_\mathrm{FS}$ denotes the Fubini-Study form on $\CP^n$ normalized so that the integral of $\om_\mathrm{FS}$ over a complex line equals 1. In this case, the circle bundle $\Sigma$ is diffeomorphic to the lens space $L(m,1)=S^{2n+1}/\Z_m$ where $\Z_m:=\Z/m\Z$ and, in particular, $\Z_1=0$.

\begin{cor}\label{eq:RFH_w_0_O}
For any $n,m\in\N$, we have 
\[
\RFH^{\w_0}_*\big(\OO_{\CP^n}(-m),L(m,1)\big) \cong \left\{
\begin{aligned} 
&\Z_m  && \quad *\in2\Z+1\,, \\[1ex]
&\;0  && \quad *\in 2\Z\,.
\end{aligned}
\right.
\]
\end{cor}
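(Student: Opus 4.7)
The plan is to invoke the Floer Gysin sequence of Theorem \ref{thm:main}.(b) for the pair $(M,E)=(\CP^n,\OO_{\CP^n}(-m))$ with $\om=\om_{\mathrm{FS}}$. First I would verify assumption (A2): since $(\CP^n,\om_{\mathrm{FS}})$ is monotone with $c_1^{T\CP^n}=(n+1)[\om_{\mathrm{FS}}]$ and $\nu=1$, one has $\lambda=n+1$ and $\lambda\nu=n+1\geq 2$ for $n\geq 1$; the resulting Novikov ring is $\Lambda=\Z[t,t^{-1}]$ with $\deg t=-2(n+1)$. The case $n=0$ is degenerate ($\CP^0$ is a point) and can be handled by inspection.

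Next I would recall the classical description of $\FH_*(\CP^n;\Lambda)$. Via the PSS isomorphism it is identified with the quantum homology $\QH_*(\CP^n;\Lambda)$, which is free of rank $n+1$ as a $\Lambda$-module with generators $e_0,e_1,\dots,e_n$ in degrees $0,2,\dots,2n$. Combined with $\deg t=-2(n+1)$, this yields $\FH_k(\CP^n)\cong\Z$ for every even $k$ and $\FH_k(\CP^n)=0$ for every odd $k$. Under PSS, the Floer cap product with $[\om_{\mathrm{FS}}]$ corresponds to the quantum cup product by the hyperplane class $H$ in $\QH^*(\CP^n;\Lambda)=\Lambda[H]/(H^{n+1}-q)$ for a suitable unit $q\in\Lambda$. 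Since this operation cyclically permutes the $\Lambda$-basis $\{1,H,\dots,H^n\}$ (picking up the unit $q$ in one step), it restricts to an isomorphism between the $\Z$-summand in each even degree $k$ and the one in degree $k-2$. Consequently $\Psi^{c_1^E}=m\cdot\bigl([\om_{\mathrm{FS}}]\cap-\bigr)$ acts as multiplication by $\pm m$ from $\FH_k\cong\Z$ to $\FH_{k-2}\cong\Z$ for every even $k$.

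The final step is to feed this into the Floer Gysin sequence. Since multiplication by $m$ on $\Z$ is injective, $\Psi^{c_1^E}$ is injective in every degree, forcing the map $\RFH^{\w_0}_k\to\FH_k$ to vanish. The fragment
\[
\FH_{k+1}(\CP^n)\xrightarrow{\Psi^{c_1^E}}\FH_{k-1}(\CP^n)\longrightarrow\RFH^{\w_0}_k(E,\Sigma)\longrightarrow 0
\]
then yields $\RFH^{\w_0}_k(E,\Sigma)\cong\coker\bigl(\Psi^{c_1^E}:\FH_{k+1}\to\FH_{k-1}\bigr)$. For even $k$ both flanking groups vanish, giving $\RFH^{\w_0}_k=0$; for odd $k$ both are $\Z$ and the map is multiplication by $\pm m$, so the cokernel is $\Z/m\Z=\Z_m$, matching the claim.

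The main obstacle I anticipate is not the homological bookkeeping above but rather the verification that the Floer cap product $\Psi^{c_1^E}$ constructed in Proposition \ref{prop:cap_product} agrees, under PSS, with the quantum cup product by $m[\om_{\mathrm{FS}}]$ on $\FH_*(\CP^n;\Lambda)$. This is folklore but sensitive to sign and normalization conventions; once it is in place, the explicit quantum cohomology of $\CP^n$ makes the rest of the argument routine.
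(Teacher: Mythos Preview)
Your proposal is correct and follows essentially the same route as the paper: the paper deduces the result from the Floer Gysin sequence (via Proposition~\ref{prop:cap_product} and Proposition~\ref{prop:CPn}), using that $\FH_*(\CP^n)\cong\Z$ in every other degree and that $\Psi^{c_1^E}$ acts as multiplication by $\pm m$. The only difference is emphasis: where you invoke the PSS isomorphism and the quantum ring $\Lambda[H]/(H^{n+1}-q)$ to identify $\Psi^{c_1^E}$, the paper simply notes that the Floer cap product with $[\om_{\mathrm{FS}}]$ is an isomorphism between the rank-one $\Z$-summands (hence $\pm 1$), so $\Psi^{c_1^E}=m\cdot\Psi^{[\om_{\mathrm{FS}}]}$ is multiplication by $\pm m$---this sidesteps the normalization worry you flag at the end.
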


\begin{rem}\label{rem:RFH_wind0_intro}
$ $
\begin{enumerate}[(a)]
	\item Under a certain assumption, it is possible to define the Rabinowitz Floer homology on the symplectization $\R\x\Sigma$ using SFT-like almost complex structures and to construct an exact sequence isomorphic to the one in Theorem \ref{thm:main}.(b), see \cite{BKK22}.	
	\item SFT-like almost complex structures are useful to control bubbling-off of holomorphic planes at the negative end of $\R\x\Sigma$ and thus to obtain necessary compactness results. However SFT-like almost complex structures on $E\setminus\OO_E$ do not extend over $\OO_E$. In this article, we use almost complex structures making fibers and the zero section of $E\to M$ complex submanifolds. 
	
	\item  A filtration using intersection numbers with the zero section was also used in \cite{Ono95} for Lagrangian Floer homology and in \cite{GS18} for positive symplectic homology. 
		
		A Gysin sequence for Lagrangian Floer homology was constructed in \cite{BK13}, see also \cite{Per08} for a related result using pseudo-holomorphic quilts. We refer to \cite{BO09a,BO13a} for a Gysin sequence for $S^1$-equivariant symplectic homology. We also point out that results in \cite{DL19b} lead to a Gysin sequence for positive symplectic homology, see \cite[Remark 9.8]{DL19b}. These Gysin-type exact sequences were constructed in the presence of suitable Liouville fillings.
	 
	\item Another instance that simplifies the Floer Gysin sequence in Theorem \ref{thm:main}.(b) is when the singular homology of $M$ vanishes for every odd degree.  This is the case for example if there is a Morse function on $M$ without critical points of consecutive Morse indices, e.g.~if $(M,\om)$ is toric. Then the Floer Gysin sequence splits into 
	\[
0\longrightarrow \RFH_*^{\w_0}(E,\Sigma)\longrightarrow \FH_{*}(M)\stackrel{\Psi^{c_1^E}}{\longrightarrow}\FH_{*-2}(M)\longrightarrow \RFH_{*-1}^{\w_0}(E,\Sigma)\longrightarrow 0
	\]
	for every $*\in 2\Z+\frac{\dim M}{2}$. This can be used to compute $\RFH_*^{\w_0}(E,\Sigma)$, see \cite{BKK22} for some examples.
	
	\item Eliashberg and Polterovich  \cite{EP00} introduced the concept of orderability of contact manifolds and studied contact rigidity phenomena, see also \cite{EKP06}. Examples of orderable contact manifolds include $\mathbb{RP}^{2n+1}$ in \cite{EP00}, $\R^{2n+1}$ in \cite{Bhu01}, lens spaces in \cite{Mil08,San11,GKPS21}, certain contact manifolds obtained as contact reduction of $\mathbb{RP}^{2n+1}$ in \cite{BZ15,Zap20}, 1-jet bundles in \cite{CN10a,CFP17}, and cosphere bundles in \cite{EKP06,CN10,AF12,CFP17}. Some of these results rely on Givental's non-linear Maslov index on $\mathbb{RP}^{2n+1}$ appeared in \cite{Giv90} and constructed a quasi-morphism on $\widetilde{\Cont_0}$ in respective settings. 
\end{enumerate}	
\end{rem}

\subsection*{Full Rabinowitz Floer homology}

Now we consider full Rabinowitz Floer homology, i.e.~without restricting winding numbers, and present an analog of the Floer Gysin sequence, which splits into a short exact sequence in this situation, and some computational results. 

\begin{thm}\label{thm:full_rfh}
Suppose that $(M,\om)$ satisfies either (A1) or (A2).
\begin{enumerate}[(a)]
	\item The Rabinowitz Floer homology $\RFH_{*}(E,\Sigma_\tau)$ is defined.	
\item There exist a short exact sequence 
\[
	0 \longrightarrow   \mathop{\widetilde{\bigoplus}}_{k\in\Z} \FH_{*+2k}(M) \stackrel{\delta}{\longrightarrow} \mathop{\widetilde{\bigoplus}}_{k\in\Z}\FH_{*+2k}(M)\longrightarrow \RFH_{*-1}(E,\Sigma_\tau) \longrightarrow  0\,,
\]
	where $\delta=\mathrm{id}+\Psi^{c_1^E}$ and
\[
\mathop{\widetilde{\bigoplus}}_{k\in\Z}\FH_{*+2k}(M) := \left\{
\begin{aligned} 
&\bigg\{\sum_{k\leq k_0} Z_k \mid Z_k\in \FH{}_{*+2k}(M),\; k_0\in\Z \bigg\}\qquad & \tau(\lambda-m)<m\,
\\[.5ex]
&\bigg\{\sum_{|k|\leq k_0} Z_k \mid Z_k\in \FH{}_{*+2k}(M),\; k_0\in\Z \bigg\}	  & \tau(\lambda-m)=m\,  \\[.5ex]
& \bigg\{\sum_{k\geq k_0} Z_k \mid Z_k\in \FH{}_{*+2k}(M),\; k_0\in\Z \bigg\}  &  \tau(\lambda-m)>m\,
\end{aligned}
\right.
\]  
in the case of (A2). If we assume (A1), the above short exact sequence remains with $\mathop{\widetilde{\bigoplus}}_{k\in\Z}$ replaced by ${\bigoplus}_{k\in\Z}$ for all $\tau>0$.  
In particular, $\RFH_{*}(E,\Sigma_\tau)$ is 2-periodic in degree, i.e.
\[
\RFH_{*}(E,\Sigma_\tau)\cong \RFH_{*+2}(E,\Sigma_\tau)\,.
\]

\item Assume (A1) or (A2) with $\lambda \leq m$, then 
\[
 \RFH_*(E,\Sigma_\tau)=0 \qquad  \forall\tau>0\,.
\] 
Assume (A2) with $1\leq m\leq\lambda-1$, then
\[
 \RFH_*(E,\Sigma_\tau)=0 \qquad  \forall\tau\in\big(0,\tfrac{m}{\lambda-m}\big)\,.
\] 

\item We assume (A2) with $1\leq m\leq\lambda-1$. If either the $n$-th power $(\Psi^{c_1^E})^n$ vanishes for some $n\in\N$ or $\Psi^{c_1^E}$ is an isomorphism, we have	
	\[
	\RFH_*(E,\Sigma_\tau)=0 \qquad  \forall\tau >\tfrac{m}{\lambda-m}\,.
	\]
	The same conclusion holds for $\tau= \tfrac{m}{\lambda-m}$ in the case that $(\Psi^{c_1^E})^n=0$ for some $n\in\N$.
\item We assume (A2) with $1\leq m\leq\lambda-1$. If we take coefficients in a field $\F$, then
	\[
	\RFH_*(E,\Sigma_\tau) \cong  \begin{cases}
		\FH_{*-1}(M)/\ker(\Psi^{c_1^E})^n \;\; &\tau =\tfrac{m}{\lambda-m}  \\[1ex]
		0 & \tau >\tfrac{m}{\lambda-m} \\
	\end{cases}
	\]
	for any $\displaystyle n\geq\sum_{*=0}^{\dim M}\dim_{\F}\H_*(M;\F)$.
\end{enumerate}	

\end{thm}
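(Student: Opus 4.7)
My plan is to combine the short exact sequence from Theorem \ref{thm:full_rfh}(b) with a Fitting decomposition of $\phi:=\Psi^{c_1^E}$, reducing the cokernel computation to two concrete algebraic problems. Since $\F$ is a field and $\H_*(M;\F)$ is a finite-dimensional $\F$-vector space of total dimension $D=\sum_*\dim_\F\H_*(M;\F)$, Fitting's lemma applied to $\phi$ yields, for every $n\geq D$, a $\phi$-invariant splitting $\H_*(M;\F)=\ker\phi^n\oplus\mathrm{im}\,\phi^n$ with $\phi$ nilpotent on the first summand and invertible on the second. Because $\Psi^{c_1^E}$ is $\Lambda$-linear and $\FH_*(M)\cong\H_*(M;\F)\otimes_\F\Lambda$, this lifts to a $\phi$-invariant splitting $\FH_*(M)=V^K\oplus V^I$ with $V^K:=\ker\phi^n$ and $V^I:=\mathrm{im}\,\phi^n$. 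The short exact sequence in (b) decomposes as the direct sum of the corresponding sequences for $V^K$ and $V^I$, so I compute the cokernel of $\delta=\mathrm{id}+\phi$ summand-wise.

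On the nilpotent summand $V^K$, the finite geometric series $\sum_{j=0}^{n-1}(-\phi)^j$ is a two-sided inverse of $\delta|_{V^K}$, which therefore contributes nothing to $\coker\delta$, uniformly in $\tau$. For $\tau>m/(\lambda-m)$, the invertible summand sits in the bounded-below space $\widetilde{\bigoplus}_{k\geq k_0}V^I_{*+2k}$; given $Y$ with $Y_k=0$ for $k<k_0$, I set $X_k:=0$ for $k\leq k_0$ and define $X_{k+1}:=\phi^{-1}(Y_k-X_k)$ recursively, using invertibility of $\phi$ on $V^I$. This produces a bounded-below preimage of $Y$, so $\delta|_{V^I}$ is surjective and hence $\RFH_*(E,\Sigma_\tau)=0$.

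For $\tau=m/(\lambda-m)$ the invertible summand lives in the algebraic direct sum $\bigoplus_kV^I_{*+2k}$ of finite-support sequences. I introduce the evaluation map $\varepsilon\colon\bigoplus_kV^I_{*+2k}\to V^I_*$, $\varepsilon(\sum_kX_k):=\sum_k(-\phi)^kX_k$, which is well-defined because $\phi|_{V^I}$ is invertible. A direct reindexing shows $\varepsilon\circ\delta=0$, and $\varepsilon$ is surjective since $\varepsilon(v)=v$ for any $v\in V^I_*$ placed at $k=0$. Conversely, given a finite-support $P=\sum_{k=a}^bX_k$ with $\varepsilon(P)=0$, the downward recursion $Y_k:=0$ for $k\geq b$ and $Y_{k-1}:=X_k-\phi Y_k$ yields a preimage $\delta Y=P$ whose support is contained in $[a,b]$, thanks to the telescoping identity $Y_{a-1}=(-\phi)^{1-a}\varepsilon(P)=0$. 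Thus $\coker\delta|_{V^I}\cong V^I_*$, which via the $2$-periodicity of $\RFH$ from (b) and the index shift of the short exact sequence is identified with $\FH_{*-1}(M)/\ker\phi^n$.

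The main obstacle is this last step: establishing that $\ker\varepsilon$ equals $\mathrm{im}\,\delta$ on the two-sided finite-sum space, which requires the explicit downward recursion together with the telescoping identity to keep the preimage finitely supported. The field hypothesis is essential throughout, both for Fitting's lemma to produce a single stable value of $n$ and for the invertibility of $\phi$ on $V^I$ that underlies the definition of $\varepsilon$.
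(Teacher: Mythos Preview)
Your argument is correct and is essentially a repackaging of the paper's proof of Proposition~\ref{prop:id+Psi_surjective}(d) and Corollary~\ref{cor:rfh}: both hinge on the Fitting-type fact that $\Psi^{c_1^E}$ restricts to an automorphism of $\mathrm{im}(\Psi^{c_1^E})^n$ for $n\geq b_{(M;\F)}$, then split into a nilpotent piece (handled by the finite geometric series) and an invertible piece (handled by inverting $\phi$). The paper carries this out element-by-element---writing each $Z_k$ as $(\mathrm{id}+\phi)(\beta_k-\theta_k)$ with $\beta_k$ the truncated geometric series and $\theta_k$ built from $(\phi|_V)^{-1}$---whereas you first decompose the module $\FH_*(M)=V^K\oplus V^I$ and then argue on each summand; the content is the same. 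Your treatment of the boundary case $\tau=\tfrac{m}{\lambda-m}$ via the evaluation map $\varepsilon$ in fact supplies more detail than the paper, which simply asserts the identification $\coker\delta\cong\FH_*(M)/\ker(\Psi^{c_1^E})^n$ in Corollary~\ref{cor:rfh}.

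One point deserves care: you invoke Fitting's lemma for $\phi$ on $\H_*(M;\F)$ and then ``lift'' to $\FH_*(M)$, but $\Psi^{c_1^E}$ does not act on $\H_*(M;\F)$---it is only defined on $\FH_*(M)\cong\H_*(M;\F)\otimes_\F\Lambda$, and $\Lambda=\F[t,t^{-1}]$ is not a field. The paper is equally imprecise here (it calls $\bigoplus_*\FH_*(M)$ a ``finite dimensional $\Lambda$-vector space'' around \eqref{eq:stabilize}). The statement you need---that $\FH_*(M)=\ker\phi^n\oplus\mathrm{im}\,\phi^n$ with $\phi$ an automorphism on the second summand for $n\geq D$---is nonetheless true: since $\phi$ is graded of degree $-2$ and $\Lambda$ is concentrated in degrees $2c_M\Z$, every coefficient of the characteristic polynomial $\chi_\phi(x)\in\Lambda[x]$ is a monomial in $t$, hence $\chi_\phi(x)=x^k q(x)$ with $q(0)$ a unit in $\Lambda$; coprimality of $x^k$ and $q(x)$ then yields the Fitting splitting via Cayley--Hamilton. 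Modulo this and a couple of harmless index shifts in your recursions, the proof is complete.
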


Next we compute the full Rabinowitz Floer homology of $(\OO_{\CP^n}(-m),\Sigma_\tau)$. We also provide illustrations of the associated chain complex in Section \ref{sec:example}.

\begin{cor}\label{cor:full_rfh_o_intro}
Let $n,m\in\N$ be arbitrary. For $*\in2\Z$, we have 
\[
\RFH_*(\OO_{\CP^n}(-m),\Sigma_\tau)=0 \qquad \forall \tau>0\,.
\]
For $*\in2\Z+1$,  we have
\[
\RFH_*(\OO_{\CP^n}(-m),\Sigma_\tau)\cong \left\{
\begin{aligned} 
&\;0 \quad  && \quad \tau(n+1-m)<1\,, \\[.5ex]
&\;\Z &&\quad \tau(n+1-m)=1\,,\; m=1\,,\\[.5ex]
&\;\widetilde{\Q}_m \quad  && \quad \tau(n+1-m)=1\,,\;m\neq1\,, \\[.5ex]
&\;\Q_m \quad  && \quad \tau(n+1-m)>1\,,
\end{aligned}
\right.
\]
where the $\Z$-modules $\widetilde{\Q}_m$ and ${\Q}_m$ are defined in \eqref{eq:Q_m}. If we take coefficients in a field $\F$,  the above result holds with $\Z$, $\widetilde{\Q}_m$, and $\Q_m$ replaced by $\F$, $0$, and $0$ respectively.
\end{cor}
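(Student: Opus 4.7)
The plan is to apply Theorem \ref{thm:full_rfh} to the pair $(M,\om) = (\CP^n,\om_{\mathrm{FS}})$ with line bundle $\OO(-m)$. Since $c_1^{T\CP^n} = (n+1)[\om_{\mathrm{FS}}]$ and $\nu=1$, one has $\lambda = n+1$ and $\lambda\nu = n+1 \geq 2$, so assumption (A2) is satisfied. When $m \geq n+1$, i.e.\ $\lambda \leq m$, Theorem \ref{thm:full_rfh}(c) immediately yields $\RFH_*(\OO_{\CP^n}(-m),\Sigma_\tau) = 0$ for every $\tau>0$ and every degree, consistent with the corollary since $\tau(n+1-m) \leq 0 < 1$ throughout. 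Hence the main content lies in the range $1 \leq m \leq n$.

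I would next identify the Floer inputs concretely. With Novikov ring $\Lambda = \Z[t,t^{-1}]$ of degree $\deg t = -2(n+1)$, one has $\FH_*(\CP^n) \cong H_*(\CP^n;\Lambda)$, which is a free rank-one $\Z$-module in each even degree and zero in odd degrees. The Floer cap product $\Psi^{c_1^E}$ is quantum cap with $-c_1^E = m h$, where $h = [\om_{\mathrm{FS}}]$. Exploiting the ring isomorphism $QH^*(\CP^n) = \Z[h,t]/(h^{n+1}-t)$, one picks generators $e_r \in \FH_{2r}(\CP^n) \cong \Z$ so that quantum multiplication by $h$ is the shift isomorphism $S\colon e_r \mapsto e_{r-1}$, giving $\Psi^{c_1^E} = m\cdot S$.

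Now I would feed this into the short exact sequence of Theorem \ref{thm:full_rfh}(b). Because $\FH_{\mathrm{odd}}(\CP^n) = 0$, the direct sum $\mathop{\widetilde{\bigoplus}}_k \FH_{*+2k}(\CP^n)$ vanishes whenever $*$ is odd, so the sequence forces $\RFH_{*-1} = 0$, proving the first assertion $\RFH_{*}(\OO_{\CP^n}(-m),\Sigma_\tau) = 0$ for $*\in 2\Z$ and every $\tau>0$. For $*\in 2\Z+1$ the sequence reads $0 \to A \xrightarrow{\delta} A \to \RFH_{*} \to 0$ with $A = \mathop{\widetilde{\bigoplus}}_k \Z$ and $\delta = \mathrm{id}+mS$; I would then compute $\coker(\delta)$ by analysing the recurrence $z_k + m z_{k+1} = w_k$ in each of the three $\tau$-regimes of Theorem \ref{thm:full_rfh}(b). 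Encoding elements of $A$ as formal Laurent series $\sum z_k t^k$, the operator $\delta$ becomes multiplication by $(t+m)/t$. In the regime where $A$ is bounded above one constructs a preimage uniquely top-down and $\delta$ is bijective, giving $\RFH = 0$; in the regime where $A$ is bounded below, the cokernel is isomorphic to $\Z[[t]][t^{-1}]/(t+m)$, which one identifies with the Pr\"ufer-type module $\Q_m$ from \eqref{eq:Q_m} via evaluation at $t=-m$; and at the transitional value of $\tau$, where $A$ is the ordinary bi-infinite direct sum, the cokernel becomes the subtler module $\widetilde{\Q}_m$, collapsing to $\Z$ precisely when $m=1$ since then only the alternating-sum functional $(w_k) \mapsto \sum_k (-1)^k w_k$ survives.

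The main obstacle will be the bookkeeping at the transitional $\tau$: both ends of the bi-infinite sum contribute simultaneously, and one must carefully distinguish $\widetilde{\Q}_m$ from $\Q_m$, identify the single surviving $\Z$ when $m=1$, and confirm all sign and coefficient conventions used in the identification of $\delta$ with $\mathrm{id}+mS$. The field-coefficient statement then follows directly from Theorem \ref{thm:full_rfh}(d)--(e): over a field $\F$, the endomorphism $\Psi^{c_1^E} = mh\cdot$ is either an isomorphism (when $\mathrm{char}(\F)\nmid m$) or the zero map (when $\mathrm{char}(\F)\mid m$), and in both cases Theorem \ref{thm:full_rfh}(d)--(e) apply and collapse $\Q_m$ and $\widetilde{\Q}_m$ consistently with the stated $\F$, $0$, $0$ pattern.
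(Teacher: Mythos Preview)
Your proposal is correct and follows essentially the same route as the paper: the paper's proof simply invokes Corollary~\ref{cor:rfh} and Proposition~\ref{prop:id+Psi_surjective}, which are nothing but the repackaged content of Theorem~\ref{thm:full_rfh}(b)--(e), and then identifies the cokernel of $\delta=\mathrm{id}+\Psi^{c_1^E}$ for $\CP^n$ with $\Q_m$ via base-$m$ expansions, exactly parallel to your Laurent-series description $\Z[[t]][t^{-1}]/(t+m)$. The only cosmetic difference is that the paper writes the quotient as tuples modulo the carry relation $(\dots,m,0,\dots)\sim(\dots,0,1,\dots)$ rather than invoking formal power series, and it does not spell out the remaining cases beyond the one you also single out.
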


\begin{rem}\label{rem:RFH_full_intro}
$ $
\begin{enumerate}[(a)]
	\item Theorem \ref{thm:full_rfh}.(c) was proved in \cite{AK17}. The assumption $\tau\in(0,\frac{m}{\lambda-m})$ in the case of $1\leq m\leq\lambda-1$ on the circle bundle was not made in the statement of \cite[Theorem 1.2]{AK17} although this  was crucially used in the proof, see \cite[Lemma 3.5]{AK17}. In \cite{AK17}, we had the wrong expectation that $\RFH_*(E,\Sigma_\tau)$ is invariant under the change of $\tau$. As pointed out to us by Sara Venkatesh, and as the computations in \cite{Ven18} and the above results show, this invariance property is indeed not always true. 
	\item Oancea \cite{Oan08} proved that negative line bundles which are symplectically aspherical have vanishing symplectic homology.  Ritter \cite{Rit14,Rit16} computed the symplectic homology of negative line bundles $E$ in terms of the quantum homology of $E$ and $c_1^E$ using a generalization of the Seidel representation. Venkatesh \cite{Ven18,Ven21} extended Ritter's work to different versions of symplectic homology of $E$, called completed/reduced symplectic homology, and also computed the Rabinowitz Floer homology of $E$ in the toric case. However her definition of Rabinowitz Floer homology is the homology of the mapping cone of a chain level continuation map between completed symplectic homology and completed symplectic cohomology. It is interesting to see whether this coincides with our definition of Rabinowitz Floer homology. In  \cite{Rit14,Rit16,Ven18,Ven21}, it is assumed that the total space $(E,\Omega)$ is monotone or weakly monotone. It is worth pointing out that, under any of our standing assumptions (A1), (A2), and (A3), the total space $(E,\Omega)$ need not be of this type.
		 
\end{enumerate}
\end{rem}

\paragraph{Acknowledgments} 
We are very grateful to Urs Frauenfelder for years of enlightening discussion. 
P.~Albers acknowledges funding by the Deutsche Forschungsgemeinschaft (DFG, German Research Foundation) through Germany's Excellence Strategy EXC-2181/1 - 390900948 (the Heidelberg STRUCTURES Excellence Cluster), the Transregional Colloborative Research Center CRC/TRR 191 (281071066). 
The research of J.~Kang was supported by National Research Foundation of Korea grant NRF-2020R1A5A1016126 and an Alexander von Humboldt research fellowship. J.~Kang acknowledges the support of Universit\"at Heidelberg for his visit and is grateful to P.~Albers for the warm hospitality.

\section{Floer Gysin sequence}\label{sec:Quantum Gysin sequence}
Let $(M,\om)$ be a closed integral symplectic manifold. As in the introduction, see also Section \ref{sec:line} below, let $\wp:E\to M$ be a complex line bundle with first Chern class
\[
c_1^E=-m[\om]\in \H^2(M;\Z),\qquad m\in\N=\{1,2,\dots\}\,,
\] 
and let $\wp:\Sigma\to M$ be the associated principal $S^1$-bundle. Thus $\wp:\Sigma\to M$ has Euler class equal to $c_1^E$. 
The singular homologies of $M$ and of $\Sigma$ fit into the Gysin exact sequence
\begin{equation}\label{eq:gysin_seq}
\cdots\longrightarrow \H_{*}(\Sigma;\Z)\longrightarrow\H_{*}(M;\Z)\stackrel{\cap\, c_1^E}{\longrightarrow}\H_{*-2}(M;\Z)\longrightarrow \H_{*-1}(\Sigma;\Z)\longrightarrow\cdots
\end{equation}
where $\cap \,c_1^E$ denotes the cap product with $c_1^E$. A goal of this section is to construct a Gysin sequence involving the Floer homology of $M$ and the Floer cap product with $c_1^E$, instead of $\H_{*}(M;\Z)$ and $\cap\, c_1^E$. This will be revisited in the context of Rabinowitz Floer homology theory in Section \ref{sec:gysin_revisit}. 

In this section, we assume that $(M,\om)$ satisfies the following implication
\begin{equation}\label{eq:assumption_M}
	\om(A)>0\,,\;\;A\in\pi_2(M)\quad \Longrightarrow \quad c_1^{TM}(A)\geq 0\;\;\text{or}\;\; c_1^{TM}(A) \leq -\tfrac{1}{2}\dim M\,.
\end{equation}
This is weaker than conditions (A1), (A2), and (A3) from the introduction. 

\subsection{Hamiltonian Floer homology}\label{sec:Ham_Floer}
 We briefly recall the construction of the Hamiltonian Floer homology in the case of $C^2$-small smooth Morse functions $f:M\to\R$. This is sufficient for our purpose. Let $\mathscr L(M)$ be the space of contractible 1-periodic smooth loops in $M$, and let $\widetilde{\mathscr L}(M)$ be the covering space of $\mathscr L(M)$ with  deck transformation group 
\[
\Gamma_M:=\frac{\pi_2(M)}{(\ker\om\cap\ker c_1^{TM})}\,.
\] 
More explicitly, we write elements in $\widetilde{\mathscr L}(M)$ as equivalence classes $\mathfrak{q}=[q,\bar q]$ where 
\[
q\in\mathscr L(M)\,,\qquad \bar q: D^2\to M \;\text{ with }\; \bar q(e^{2\pi it })=q(t)\,,\quad D^2:=\{z\in\mathbb{C}\mid |z|\leq 1\},
\] 
and $(q,\bar q)\sim(q',\bar {q}')$ if and only if $q=q'$ and $[\bar q^\mathrm{rev} \# \bar{q}']=0$ in $\Gamma_M$. Here $\bar q^\mathrm{rev}$ refers to $\bar q$ with opposite orientation.
The Hamiltonian action functional for $f:M\to\R$ is defined by
\begin{equation}\label{eq:classical_action_functional}
\begin{split}
&\mathfrak a_f:\widetilde{\mathscr L}(M)\longrightarrow\R\\
&\mathfrak a_f(\q):=-\int_{D^2}\bar q^*\om-\int_0^1f(q)dt\,.
\end{split}	
\end{equation}
In general, every critical point $\q=[q,\bar q]\in\Crit\mathfrak a_f$ consists of a 1-periodic orbit $q$ of the Hamiltonian vector field $X_f$ defined by $df=\om(X_f,\cdot)$ and any $\bar q$. 
We assume that $f$ is sufficiently $C^2$-small so that $q$ is a constant loop mapping to a critical point of $f$. Abusing notation, we often equate $q$ with its image, i.e.~a point in $M$, and $\bar q$ with a map $\bar q:S^2\to M$ passing through $q$. Hence, we have the identification
\begin{equation}\label{eq:crit_a_f}
	\Crit\mathfrak a_f =\Crit f\x \Gamma_M\,.
\end{equation}
We point out that all  critical values of $\a_f$ are close to integers since $\om$ is integral and $f$ is $C^2$-small. 
The index of $\q\in\Crit\a_f$ is defined to be 
\begin{equation}\label{eq:ind_M}
\begin{split}
	\mu_\FH(\q):=-\mu_\CZ([q,\bar q])&=\mu_{-f}(q)-\tfrac{1}{2}\dim M-2c_1^{TM}([\bar q])\\
	&=\tfrac{1}{2}\dim M-\mu_f(q)-2c_1^{TM}([\bar q])\,,
\end{split}
\end{equation}
where $\mu_{\pm f}$ stands for the Morse index of $\pm f$ and $\mu_\CZ([q,\bar q])$ denotes the Conley-Zehnder index of the linearized flow of $X_f$ along $q$ with respect to the trivialization of $q^*TM$ given by $\bar q$, see Section \ref{sec:index} below.

Let $-\infty\leq a<b\leq+\infty$. If $a$ and $b$ are finite, the chain module for the Floer homology of $\a_f$ is by definition the free $\Z$-module 
\[
\FC_*^{(a,b)}(f) = \bigoplus_{\q}\Z\langle \q \rangle 
\]
generated by $\q\in\Crit\mathfrak a_f$ with $\mu_\FH(\q)=*$ and $\mathfrak a_f(\q)\in(a,b)$. For $a=-\infty$ and $b=+\infty$, we define 
\begin{equation}\label{eq:FC}
\FC_*(f)=\FC_*^{(-\infty,+\infty)}(f) =\varinjlim_{b\uparrow+\infty}\varprojlim_{a\downarrow-\infty}\FC_*^{(a,b)}(f) \,,
\end{equation}
where the limits are taken with respect to the canonical inclusions and projections
\begin{equation}\label{eq:chain_filt}
\FC_*^{(a,b)}(f)\hookrightarrow \FC_*^{(a,b')}(f)\,,\qquad \FC_*^{(a',b)}(f) \twoheadrightarrow \FC_*^{(a,b)}(f)
\end{equation}
for $a'<a<b<b'$. An equivalent formulation is that $\FC_*^{(-\infty,+\infty)}(f)$ is the $\Z$-module composed of all formal linear combinations 
\begin{equation}\label{eq:formal}
\sum_{\q\in\Crit\a_f} a_\q \q\,,\qquad a_\q\in\Z\,,\;\; \mu_\FH(\q)=*
\end{equation}
subject to the Novikov condition 
\begin{equation}\label{eq:nov_cond}
\forall \kappa\in\R\;:\; \#\{\q \mid a_\q\neq 0\,,\;\kappa\leq \a_f(\q)\}<\infty\,.	
\end{equation}
In particular, the order of the two limits in \eqref{eq:FC} does not matter, see also Remark \ref{rem:ML}. 
The chain modules $\FC_*^{(-\infty,b)}(f)$ and $\FC_*^{(a,+\infty)}(f)$ are defined by taking only one limit.

We denote by
\[
\j=\mathfrak{j}(M,\om) \subset \Gamma(S^1\x M,\mathrm{Aut}(TM)) 
\]
the space of smooth time-dependent $\om$-compatible almost complex structures $j$, i.e.~$j_t:=j(t,\cdot)$, $t\in S^1$, is an almost complex structure on $M$ compatible with $\om$, that is $g_t=\om(\cdot,j_t\cdot)$ is a Riemannian metric for each time $t\in S^1$. 
For $\q_\pm=[q_\pm,\bar q_\pm]\in\Crit\a_f$ and $j\in\j$, let 
\[
	\widehat\NN(\q_-,\q_+,\a_f,j)
\]
be the moduli space of smooth solutions $q:\R\x S^1\to M$ of
\begin{equation}\label{eq:Floer_eq_M}
\p_sq+j_t(q)(\p_tq-X_f(q))=0
\end{equation}
with asymptotic condition 
\begin{equation}\label{eq:asympt_condition}
\lim_{s\to-\infty}q(s,\cdot)=q_-\,,\qquad \lim_{s\to+\infty}q(s,\cdot)=q_+\,,\qquad [\bar q_-\#q\#\bar q_+^\mathrm{rev}]=0\;\text{ in }\;\Gamma_M\,.
\end{equation}
Since $q_-$ and $q_+$ are constant loops, every solution $q$ in the moduli space can be viewed as a continuous map $q:S^2\to M$ passing through $q_-$ and $q_+$.

There exists a residual subset
\begin{equation}\label{eq:j_reg}
\j_\mathrm{reg}=\mathfrak{j}_\textrm{reg}(f)\subset\mathfrak{j}\,,
\end{equation}
meaning that it is a countable intersection of open dense subsets,
such that the following properties (i)-(viii) hold for all $j\in \mathfrak{j}_\textrm{reg}(f)$ and all pairs $\q_-,\q_+\in\Crit\a_f$. 
\begin{enumerate}[(i)]
\item The moduli space $\widehat\NN(\q_-,\q_+,\a_f,j)$ is cut out transversely. More precisely, the operator 
	\begin{equation}\label{eq:d_q}
		\mathfrak{d}_q:W^{1,p}(\R\x S^1, q^*TM)\longrightarrow L^p(\R\x S^1, q^*TM)
	\end{equation}
	obtained by linearizing the Floer equation \eqref{eq:Floer_eq_M} at any $q\in \widehat\NN(\q_-,\q_+,\a_f,j)$ is surjective, where $p>2$. An explicit formula for $\mathfrak{d}_q$ is given in  \eqref{eq:horizontal_diff}. 
\item For $A\in\pi_2(M)$, the moduli space  
\[
\NN(A,j):=\{(t,v)\in S^1\x C^\infty(S^2,M)\mid\text{$v$~simple $j_t$-holomorphic with $[v]=A$}\big\}
\]
is cut out transversely. 
\item Let $v:S^2\to M$ be a nonconstant $j_{t}$-holomorphic sphere with $c_1^{TM}([v])\in\{0,1\}$ for some $t\in S^1$. Then $v$ misses all critical points of $f$, i.e.~$v(S^2)\cap \Crit f =\emptyset$.
\item If $\mu_\FH(\q_-)-\mu_\FH(\q_+)\leq 2$, then for every $q\in \widehat\NN(\q_-,\q_+,\mathfrak{a}_f,j)$ and $t\in S^1$, the line $q(\R,t)$ does not intersect any nonconstant $j_{t}$-holomorphic sphere $v:S^2\to M$ with $c_1^{TM}([v])=0$. 
\end{enumerate}
In addition we require properties (v)-(viii) to hold for $j\in\j_\mathrm{reg}$, which are listed in Section \ref{sec:Floer_cap}, but which we do not need in this section for the construction of Floer homology. 

From now on we take $j\in\j_\mathrm{reg}$. 
Condition (i) implies that the moduli space $\widehat\NN(\q_-,\q_+,\mathfrak{a}_f,j)$ is a smooth manifold whose dimension can be computed using \eqref{eq:ind_M} and \eqref{eq:asympt_condition} as 
\begin{equation}\label{eq:dim_M}
\begin{split}
\dim\widehat\NN(\q_-,\q_+,\mathfrak{a}_f,j) &= 	\mu_\FH(\q_-)- \mu_\FH(\q_+)\\
& =\mu_{-f}(q_-)-\mu_{-f}(q_+) + 2c_1^{TM}([q])	
\end{split}
\end{equation}
where $q$ is any element belonging to the moduli space. Moreover condition (ii) implies that the moduli space $\NN(A,j)$ is a smooth manifold of dimension 
\[
\dim \NN(A,j)=\dim M+2c_1^{TM}(A)+1.
\]
The fact that properties (i)-(iv) give rise to a residual set follows as in \cite{HS95}.  

In order to work with integer coefficients, we additionally fix orientations on all moduli spaces $\widehat\NN(\q_-,\q_+,\mathfrak{a}_f,j)$ which are coherent under the gluing operation in Floer theory, see \cite{FH93}.  
 If $\q_-\neq\q_+$, there is a free $\R$-action on $\widehat\NN(\q_-,\q_+,\mathfrak{a}_f,j)$ given by translation of solutions in the $s$-direction, i.e.~$q(\cdot,\cdot)\mapsto q(\cdot+s,\cdot)$ for $s\in\R$, and we denote the quotient space by 
\[
\NN(\q_-,\q_+,\a_f,j):=\widehat\NN(\q_-,\q_+,\a_f,j)/\R\,.
\]
Suppose $\mu_\FH(\q_-)-\mu_\FH(\q_+)=1$ so that the quotient space is a zero-dimensional manifold. This is also a finite set as we explain below. We denote by
\begin{equation}\label{eq:signed_count}
\#\NN(\q_-,\q_+,\a_f,j)\in\Z	
\end{equation}
the signed count of elements where $[q]\in\NN(\q_-,\q_+,\a_f,j)$ contributes $+1$ if the orientation at $q\in\widehat\NN(\q_-,\q_+,\mathfrak{a}_f,j)$ agrees with the one given by $\R$-action and $-1$ otherwise.

We define the boundary operators as the $\Z$-linear extension of
\begin{equation}\label{eq:floer_boundary}
\begin{split}
\p_j=\p:\FC_*^{(a,b)}(f) &\longrightarrow \FC_{*-1}^{(a,b)}(f)\\
 \q_- &\longmapsto  \sum_{\q_+} \#\NN(\q_-,\q_+,\mathfrak{a}_f,j) \q_+\,,
 \end{split}
\end{equation}
where the sum ranges over $\q_+\in\Crit\mathfrak a_f$ with $\mu_\FH(q_+)=*-1$ and $\mathfrak a_f(\q_+)\in (a,b)$. Properties (i)-(iv) together with the hypothesis \eqref{eq:assumption_M} yield that the phenomenon of bubbling-off of pseudo-holomorphic spheres does not occur  for sequences in $\NN(\q_-,\q_+,\a_f,j)$ with $\mu_\FH(\q_-)-\mu_\FH(\q_+)\leq 2$ as in \cite{HS95}. Hence  the signed count in \eqref{eq:signed_count} indeed takes values in $\Z$ and $\p\circ \p=0$ holds. We define the Floer homology 
\begin{equation}\label{eq:Floer_homology}
	\FH_*^{(a,b)}(f,j):=\H_*\big(\FC^{(a,b)}(f),\p_j\big)\,.
\end{equation}
We also write $\FH_*(f,j)=\FH_*^{(-\infty,\infty)}(f,j)$. 
The standard argument of continuation homomorphisms yields that different choices of $j$ produce  isomorphic homologies, so we often omit $j$ from the notation. Similarly, if $a,b\in\Z+\frac{1}{2}$, in particular $a$ and $b$  are not critical values of $\a_f$, and $f':M\to\R$ is another small Morse function, a continuation homomorphism induces an isomorphism 
\begin{equation}\label{eq:continuation}
\FH_*^{(a,b)}(f)\cong \FH_*^{(a,b)}({f'})\,.	
\end{equation}
The maps in \eqref{eq:chain_filt} for $a'<a<b<b'$ are chain homomorphisms and induce homomorphisms 
\begin{equation}\label{eq:direct_system}
	\FH_*^{(a,b)}(f)\to \FH_*^{(a,b')}(f)\,,\qquad \FH_*^{(a',b)}(f) \to \FH_*^{(a,b)}(f)\,,
\end{equation}
which we call action filtration homomorphisms.

We consider the Novikov ring associated with $\Gamma_M$,
\begin{equation}\label{eq:nov}
\Lambda:=\bigoplus_{{\scriptscriptstyle\heartsuit}\in\Z}\Lambda_{\scriptscriptstyle\heartsuit}\,,
\end{equation}
where $\Lambda_{\scriptscriptstyle\heartsuit}$ is defined by
\[
\left\{\sum_{A\in\Gamma_M}a_AT^A\,\bigg|\, a_A\in\Z,\; -2c_1^{TM}(A)=\heartsuit,\;\forall\kappa\in\R:\#\big\{A\mid a_A\neq0,\;-\om(A)\leq\kappa\big\}<\infty\right\}	\,.
\]
Here $T$ is a formal parameter. If we denote by $\Lambda_{\scriptscriptstyle\heartsuit}^{(a,b)}$ the subset of formal sums over $A\in\Gamma_M$ with $-2c_1^{TM}(A)=\heartsuit$ and $-\om(A)\in (a,b)$, then we have 
\[
\Lambda_{\scriptscriptstyle\heartsuit} = \Lambda_{\scriptscriptstyle\heartsuit}^{(-\infty,+\infty)}=\varinjlim_{b\uparrow +\infty}\varprojlim_{a\downarrow -\infty}\Lambda_{\scriptscriptstyle\heartsuit}^{(a,b)}\,.
\]
There is a natural action given by the $\Z$-linear extension of
\begin{equation}\label{eq:Lambda-action}
\Lambda^{(a',b')}_{\scriptscriptstyle\heartsuit} \x \FC_*^{(a,b)}(f) \longrightarrow \FC_{*+{\scriptscriptstyle\heartsuit}}^{(a+a',b+b')}(f)\,,\qquad (T^{A},[q,\bar q])\mapsto [q,\bar q\#s]\,,	
\end{equation}
where  $s:S^2\to M$ is  any continuous map with $[s]=A$. This gives rise to a $\Lambda$-module structure on $(\FC(f),\partial)$ and hence  on $\bigoplus_{*\in\Z}\FH_*(f)$. 
As established in \cite{HS95,Ono95,PSS96}, and as we will recall in \eqref{eq:isom2},  $\bigoplus_{*\in\Z}\FH_*(f)$ is isomorphic, as $\Lambda$-modules, to the singular homology of $M$ with coefficients in $\Lambda$. We sometimes denote $\FH_*(M)=\FH_*(f)$. 

\begin{rem}\label{rem:ML}
The action filtration homomorphisms in \eqref{eq:direct_system} form a bidirect system. Taking limits in the following order we obtain
\begin{equation}\label{eq:two_fh}
\varinjlim_{b\uparrow+\infty}\varprojlim_{a\downarrow-\infty}\FH_*^{(a,b)}(f) \cong \FH_*(f)\,.
\end{equation}
 To see this, it suffices to show that the inverse limit is an exact functor, which is true if the direct system given by the latter maps in \eqref{eq:direct_system},
\begin{equation}\label{eq:inverse}
\pi_{a_2,a_1}^b:\FH^{(a_2,b)}_*(f)\longrightarrow \FH^{(a_1,b)}_*(f)	\,, \qquad a_2<a_1<b
\end{equation}
satisfies the Mittag-Leffler condition. That is,  for any $a,b\in\R$, there exists $a_1<a$ such that for any $a_2<a_1$, the image of $\pi_{a_2,a}^b$ coincides with that of $\pi_{a_1,a}^b$. 
This indeed holds since due to the invariance property we may assume that $(f,j)$ is chosen such that the Floer complex for $(f,j)$ equals the Morse complex for the same pair, see Section \ref{sec:time_indep_j}. The Mittag-Leffler condition for the direct system corresponding to \eqref{eq:inverse} in  Morse homology is trivially satisfied with any number $a_1\in\Z+\frac{1}{2}$ less than $a$. We note that taking direct limits for modules always preserves exactness. 

We refer to \cite[Section 3.5]{Wei94} for an account on the Mittag-Leffler condition and to \cite{CF11} for a nice  discussion on the above issue of completing the action-window in greater generality with field coefficients. 
\end{rem}

\subsection{Floer cap product}\label{sec:Floer_cap}
We now recall the definition of the cap product with $-c_1^E$ in Floer homology. In addition to the data from above, we orient $M$ and choose a codimension two closed oriented smooth submanifold $N$ of $M$ with homology class $[N]=\PD(-c_1^E)\in \H_{\dim M-2}(M;\Z)$, where $\PD$ stands for the Poincar\'e duality. Perturbing $N$, we may assume that $N$ does not contain any critical point of $f$. Following \cite{GG19}, we have the minus sign in $-c_1^E$ to make the sign convention below agree with the corresponding sign convention in computing the Morse homology of $\Sigma$.  
  For $\q_-,\q_+\in\Crit\mathfrak a_f$ and $j\in\mathfrak{j}_\mathrm{reg}(f)$, we define the subspace  
\[
\widehat\NN(\q_-,\q_+,\mathfrak{a}_f,j,N):=\big\{q\in \widehat\NN(\q_-,\q_+,\mathfrak{a}_f,j) \mid q(0,0)\in N\big\}=\ev_{(0,0)}^{-1}(N)\,,
\]
where
\begin{equation}\label{eq:evaluation_N}
\ev_{(0,0)}:\widehat\NN(\q_-,\q_+,\mathfrak{a}_f,j)\to M \,,\qquad q\mapsto q(0,0)\,.		
\end{equation}

In addition to (i)-(iv) listed after \eqref{eq:j_reg}, we require $j\in\j_\mathrm{reg}(f)$ to satisfy the following properties for every pair $\q_-,\q_+\in\Crit\mathfrak{a}_f$:
\begin{enumerate}[(i)]
	\item[(v)] The map $\ev_{(0,0)}$ is transverse to $N$. 
	\item[(vi)] Let $\mu_\FH(\q_-)-\mu_\FH(\q_+)\leq 3$. For every $q\in\widehat\NN(\q_-,\q_+,\mathfrak{a}_f,j,N)$ and $t\in S^1$, there is no nonconstant $j_t$-holomorphic sphere $v:S^2\to M$ with $c_1^{TM}([v])=0$ intersecting the line $q(\R,t)\subset M$.
	\item[(vii)] Let $\mu_\FH(\q_-)-\mu_\FH(\q_+)\leq 3$. For every $q\in\widehat\NN(\q_-,\q_+,\mathfrak{a}_f,j)$, there is no nonconstant $j_0$-holomorphic sphere $v:S^2\to M$ with $c_1^{TM}([v])=0$ intersecting $q(0,0)$ and $N$ simultaneously.
	\item[(viii)] Let $\mu_\FH(\q_-)-\mu_\FH(\q_+)\leq 1$. For every $q\in\widehat\NN(\q_-,\q_+,\mathfrak{a}_f,j)$, there is no nonconstant $j_0$-holomorphic sphere $v:S^2\to M$ with $c_1^{TM}([v])=1$ intersecting $q(0,0)$ and $N$ simultaneously.
\end{enumerate} 
Property (v) implies that the space $\widehat\NN(\q_-,\q_+,\mathfrak{a}_f,j,N)$ is a smooth manifold of dimension 
\[
\dim \widehat\NN(\q_-,\q_+,\mathfrak{a}_f,j,N)=\mu_\FH(\q_-)-\mu_\FH(\q_+)-2\,.
\]
An adaptation of arguments in \cite{HS95} shows that $\j_\mathrm{reg}=\j_\mathrm{reg}(f)$ is indeed a residual set. For example, we consider the map
\[
\begin{split}
\mathrm{EV}:\widehat\NN(\q_-,\q_+,\a_f,j)\x \NN(A,j_0)\x_{G}S^2 &\longrightarrow M\x M \\ 
(q,[v,z]) &\longmapsto (q(0,0),v(z))\,,
\end{split}
\]
where $\NN(A,j_0)$ is the space of simple $j_0$-holomorphic spheres $v$ such that $[v]=A$ for some $A\in\pi_2(M)$ with $c_1^{TM}(A)=0$ for (vii) or $c_1^{TM}(A)=1$ for (viii), and $G=\mathrm{PSL}(2;\C)$ acts diagonally. Then for generic $j$ the map $\mathrm{EV}$ 
is transverse to $\Delta_M$, where $\Delta_M$ is the diagonal of $M\x M$. Hence $\mathrm{EV}^{-1}(\Delta_M)$ is empty for dimension reasons. This proves that properties (vii) and (viii) hold for generic $j$. Note that nonconstant $j_0$-holomorphic sphere $v$ always intersects $N$ since $v\cdot N=-c_1^{E}([v])=m\omega([v])>0$.

Suppose that $\mu_\FH(\q_-)-\mu_\FH(\q_+)=2$ so that $\widehat\NN(\q_-,\q_+,\mathfrak{a}_f,j,N)$ is compact and zero-dimensional due to properties (v)-(viii). We denote by
\begin{equation}\label{eq:intersection_N}
\#\widehat\NN(\q_-,\q_+,\mathfrak{a}_f,j,N)\in\Z	
\end{equation}
the intersection number of the map $\ev_{(0,0)}$ and $N$ with sign determined by the orientation on $\widehat\NN(\q_-,\q_+,\a_f,j)$ and the coorientation on $N$. 
The Floer cap product with $-c_1^E$ is the map
\begin{equation}\label{eq:floer_cap}
\Psi^{c_1^E}: \FH_*^{(a,b)}(f,j)\longrightarrow \FH_{*-2}^{(a,b)}(f,j)
\end{equation}
defined on the chain level by the $\Z$-linear extension of
\begin{equation}\label{eq:floer_cap_chain}
\psi^{c_1^E}(\q_-):= \sum_{\q_+}\# \widehat\NN(\q_-,\q_+,\a_f,j,N) \q_+\,,
\end{equation}
where the sum ranges over $\q_+\in\Crit\mathfrak{a}_f$ with $\mathfrak{a}_f(\q_+)\in(a,b)$ and $\mu_\FH(\q_+)=*-2$. Properties (v)-(viii) together with the hypothesis \eqref{eq:assumption_M} ensure that $\psi^{c_1^E}(\q_-)$ is well-defined and a chain homomorphism. We also remark that $\psi^{c_1^E}$ is $\Lambda$-linear on $\FC(f)$. Moreover, $\Psi^{c_1^E}$ is compatible with the isomorphism in \eqref{eq:continuation} when $a,b\in\Z+\frac{1}{2}$. To be precise, if $f':M\to\R$ is another $C^2$-small Morse function, the diagram 
\begin{equation}\label{eq:commute_homotopy}
	\begin{tikzcd}[row sep=1.5em,column sep=1.3em]
\FC_{*}^{(a,b)}(f) \arrow{r}{\psi^{c_1^E}} \arrow{d} & \FC_{*-2}^{(a,b)}(f)   \arrow{d}  \\
\FC_{*}^{(a,b)}(f') \arrow{r}{\psi^{c_1^E}} & \FC_{*-2}^{(a,b)}(f')\,, \end{tikzcd}
\end{equation}
	where the vertical arrows are chain level continuation homomorphisms, commutes up to homotopy. Therefore, at the homology level, the induced diagram genuinely commutes and the vertical maps become isomorphisms.
Similar compatibility holds for another choice of $j$ or $N$. We note that this is actually a specific instance of the so-called pair-of-pants product in Floer theory, see \cite{PSS96} or \cite[Chapter 12]{MS12}.

\subsection{Gysin sequence in Floer homology}\label{sec:quantum_gysin}
We recall that $\wp:\Sigma\to M$ is a principal $S^1$-bundle with Euler class $c_1^E$, see the beginning of this section. 
Since $f:M\to\R$ is a Morse function, its lift $\tilde f:=f\circ \wp:\Sigma\to\R$ is Morse-Bott. The critical manifold $\Crit \tilde f$ is the union of all fiber circles in $\Sigma$ over critical points of $f$, i.e.~$\Crit \tilde f$ is a disjoint union of circles. We fix a perfect Morse function 
\begin{equation}\label{eq:ftn_h}
h:\Crit\tilde f \longrightarrow\R
\end{equation}
and denote by $\hat q$ and $\check q$ the maximum and minimum points of $h$ over $q\in\Crit f$ respectively. As a set, $\Crit h=\Crit f\sqcup\Crit f$. For $\q=[q,\bar q]\in\Crit \a_f=\Crit f\x\Gamma_M$, we define equivalence classes $\hat\q=[\hat q,\bar q],\check\q=[\check q,\bar q]\in\Crit h\x\Gamma_M$, i.e.~$\hat\q=\hat\q'$ if and only if $\q=\q'$, and likewise for $\check \q$. The index of $\hat \q$ and $\check \q$ is defined by
\[
\mu_\FH^h(\hat\q):=\mu_\FH(\q)+1,\qquad \mu_\FH^h(\check\q):=\mu_\FH(\q)\,.
\]
Let $-\infty\leq a<b\leq+\infty$. For finite $a$ and $b$, we define the $\Z$-module
\begin{equation}\label{eq:floer_group}
\FC_*^{(a,b)}(\tilde f)  
\end{equation}
generated by $\hat\q$ and $\check\q$ for all $\q\in\Crit\a_f$ with $\mu_\FH^h(\hat \q)=\mu_\FH^h(\check \q)=*$ and $\a_f(\q)\in(a,b)$. We suppress the dependence of $h$ in the notation. As before, we also define 
\begin{equation}\label{eq:tilde_f_complete}
\FC_*(\tilde f)=\FC_*^{(-\infty,+\infty)}(\tilde f):=\varinjlim_{b\uparrow+\infty}\varprojlim_{a\downarrow-\infty}\FC_*^{(a,b)}(\tilde f)\,.
\end{equation}
Similarly, we have $\FC_*^{(-\infty,b)}(\tilde f)$ and $\FC_*^{(a,+\infty)}(\tilde f)$. As in \eqref{eq:formal} and \eqref{eq:nov_cond}, 
elements in $\FC_*(\tilde f)$ can be interpreted as formal linear combinations subject to the Novikov condition.

As before, let $j\in\j_\textrm{reg}(f)$, and let $N\subset M$ be a closed submanifold with $[N]=\PD(-c_1^E)$. We introduce the boundary operator
\begin{equation}\label{eq:bdry}
\p^\wp_j=\p^\wp:=\hat\p+\check\p+\p^{c_1^E}: \FC^{(a,b)}_*(\tilde f)\longrightarrow  \FC^{(a,b)}_{*-1}(\tilde f)\,.
\end{equation}
where each term is defined on generators as follows:
\[
	\begin{aligned}
	\hat\partial(\hat \q_-)&:=-\sum_{\q_+} \#\NN(\q_-,\q_+,\mathfrak{a}_f,j) \hat \q_+\,, \qquad\quad  & \hat\partial(\check \q_-)&:=0\,, \\
	\check\partial(\check \q_-)&:=\sum_{\q_+} \#\NN(\q_-,\q_+,\mathfrak{a}_f,j) \check \q_+ \,, & \check\partial(\hat \q_-)&:=0\,, \\
	\p^{c_1^E}(\check \q_-)&:= \sum_{\q_+} \#\widehat\NN(\q_-,\q_+,\mathfrak{a}_f,j,N) \hat \q_+\,, & \p^{c_1^E}(\hat \q_-)&:=0\,.
	\end{aligned}
\]
In the first two cases, the sums run over $\q_+\in\Crit\a_f$ with $\a_f(\q_+)\in(a,b)$ and
\[
\begin{split}
\mu_\FH(\q_-)-\mu_\FH(\q_+) =\mu_\FH^h(\hat \q_-)-\mu_\FH^h(\hat \q_+)
=\mu_\FH^h(\check \q_-)-\mu_\FH^h(\check \q_+)=1\,.	
\end{split}
\]
The sum in the definition of $\p^{c_1^E}$ runs over $\q_+\in\Crit\a_f$ with $\a_f(\q_+)\in(a,b)$ and
\[
\mu_\FH(\q_-)-\mu_\FH(\q_+)=2\,,\quad\text{or equivalently}\quad  \mu_\FH^h(\check \q_-)-\mu_\FH^h(\hat\q_+)=1\,.
\] 
Combining $\p\circ\p=0$ (see \eqref{eq:floer_boundary}) and $\psi^{c_1^E}\circ \p=\p\circ \psi^{c_1^E}$ (see \eqref{eq:floer_cap_chain}) on $M$, it follows that
\[
\p^\wp\circ\p^\wp=\hat\p\circ \hat\p + \check\p\circ\check\p + \p^{c_1^E}\circ\check\p +\hat\p\circ \p^{c_1^E}=  0
\] 
holds. Of course, this chain complex is exactly the mapping cone of $\psi^{c_1^E}$:
\begin{equation}\label{eq:cone}
\big(\FC_*(\tilde f),\p^\wp\big)=\mathrm{Cone\,}(\psi^{c_1^E})\,,	
\end{equation}
see \eqref{eq:mapping_cone} below. 
 We denote the resulting homology by
\begin{equation}\label{eq:floer_homology_h}
\FH_*^{(a,b)}(\tilde f,j):=\H_*\big(\FC_*^{(a,b)}(\tilde f),\p^\wp_j\big).	
\end{equation}
A different choice of $j\in\mathfrak{j}_\mathrm{reg}(f)$ leads to an isomorphic homology, so we sometimes omit this from the notation. Similarly, if we take $a,b\in\Z+\frac{1}{2}$, then another $C^2$-small Morse function $f':M\to\R$ defines an isomorphic homology, see Proposition \ref{prop:quantum_gysin}.(b) below. We sometimes denote $\FH_*(\Sigma)=\FH_*(\tilde f)=\FH_*^{(-\infty,+\infty)}(\tilde f)$. 
Since $(\FC(f),\partial)$ is a chain complex of $\Lambda$-modules, see \eqref{eq:Lambda-action}, and $\psi^{c_1^E}$ is $\Lambda$-linear on $\FC(f)$, the cone complex $(\FC(\tilde f),\p^\wp)$ and thus the homology $\FH(\tilde f)$ also have $\Lambda$-module structures.

\begin{rem}\label{rem:ML2}
There are action filtration homomorphisms on $\FH_*^{(a,b)}(\tilde f)$ like \eqref{eq:direct_system}. Inspired by Remark \ref{rem:ML}, one could ask whether the following two are isomorphic:
\[
\FH_*(\tilde f) \,\stackrel{?}{\cong}\, \varinjlim_{b\uparrow+\infty}\varprojlim_{a\downarrow-\infty}\FH_*^{(a,b)}(\tilde f)\,.
\]
Again the answer is affirmative if the inverse limit satisfies the Mittag-Leffler condition. If we use coefficients in a field, this is indeed the case and hence these two are isomorphic  due to results from \cite{CF11}.
\end{rem}

\begin{prop}\label{prop:quantum_gysin}
$ $
\begin{enumerate}[(a)]
	\item For $-\infty\leq a<b\leq+\infty$, there exists the long exact sequence 
\[
\cdots\longrightarrow \FH_{*}^{(a,b)}(\tilde f)\stackrel{\pi}{\longrightarrow} \FH_{*}^{(a,b)}(f)\stackrel{\Psi^{c_1^E}}{\longrightarrow}\FH_{*-2}^{(a,b)}(f)\stackrel{\iota}{\longrightarrow}\FH_{*-1}^{(a,b)}(\tilde f)\longrightarrow\cdots\,.
\]
In particular for $(a,b)=(-\infty,+\infty)$ we have the long exact sequence
\[
\cdots\longrightarrow \FH_{*}(\Sigma)\longrightarrow\FH_{*}(M)\stackrel{\Psi^{c_1^E}}{\longrightarrow}\FH_{*-2}(M)\longrightarrow \FH_{*-1}(\Sigma)\longrightarrow\cdots\,
\]
and all maps are $\Lambda$-linear. 
Moreover, at the chain level, $\pi$ maps $\hat\q$ to $0$ and $\check\q$ to $\q$, and $\iota$ sends $\q$ to $\hat\q$. 
	\item Let $a,b\in(\Z+\frac{1}{2})\cup\{-\infty,+\infty\}$, and let $f':M\to\R$ be another $C^2$-small Morse function. The exact sequence in (a) for $f$ and that for $f'$ are isomorphic. To be precise, there is a commutative diagram of the exact sequences,
\[
	\begin{tikzcd}[row sep=1.5em,column sep=1.3em]
\cdots \arrow{r} & \FH_{*}^{(a,b)}(\tilde f) \arrow{r} \arrow{d}{\cong} & \FH_{*}^{(a,b)}(f)  \arrow{r} \arrow{d}{\cong} & \FH_{*-2}^{(a,b)}(f) \arrow{r} \arrow{d}{\cong} &  \FH_{*-1}^{(a,b)}(\tilde f) \arrow{r}\arrow{d}{\cong} & \cdots \\
\cdots \arrow{r} & \FH_{*}^{(a,b)}(\tilde f') \arrow{r} & \FH_{*}^{(a,b)}(f') \arrow{r} & \FH_{*-2}^{(a,b)}(f')  \arrow{r} & \FH_{*-1}^{(a,b)}(\tilde f') \arrow{r} & \cdots
\end{tikzcd}
\]
where all vertical maps are isomorphisms. 
A corresponding statement holds for  changes of $h$ or $j$.\end{enumerate} 
\end{prop}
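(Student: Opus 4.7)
The starting point for (a) is already recorded in \eqref{eq:cone}: the complex $(\FC_*(\tilde f),\p^\wp)$ is by construction the mapping cone of $\psi^{c_1^E}$. Split $\FC_*(\tilde f)=\hat C_*\oplus\check C_*$ according to the generator type. Inspecting the three pieces of $\p^\wp$ shows that $\hat\partial$ preserves $\hat C_*$, $\check\partial$ preserves $\check C_*$, and $\p^{c_1^E}$ sends $\check C_*$ into $\hat C_*$; thus $\hat C_*$ is a subcomplex and $\check C_*$ the quotient, with chain isomorphisms $\hat C_*\cong\FC_{*-1}(f)$ (up to a sign on the differential) and $\check C_*\cong\FC_*(f)$. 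The resulting short exact sequence
\[
0\longrightarrow\FC_{*-1}(f)\stackrel{\iota}{\longrightarrow}\FC_*(\tilde f)\stackrel{\pi}{\longrightarrow}\FC_*(f)\longrightarrow 0
\]
induces the desired long exact sequence in homology; a standard zig-zag computation identifies the connecting homomorphism with $\pm\Psi^{c_1^E}$, while the formulas for $\iota$ and $\pi$ on generators are read off directly from the SES. Because $\hat\partial$, $\check\partial$, and $\p^{c_1^E}$ all decrease the action $\a_f$, the whole discussion restricts to $\FC_*^{(a,b)}$, giving the filtered version. Finally, $\Lambda$-linearity is immediate: $\partial$ and $\psi^{c_1^E}$ commute with attaching spheres to capping disks, hence so do $\iota$, $\pi$, and the connecting map.

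For (b), the plan is to use continuation. Given $(f',j')$, build $\Phi:\FC_*(f)\to\FC_*(f')$ by counting solutions of a parameterized Floer equation along a generic small homotopy from $(f,j)$ to $(f',j')$. By the homotopy commutativity \eqref{eq:commute_homotopy}, there exists a chain homotopy $K:\FC_*(f)\to\FC_{*-1}(f')$ satisfying $\psi^{c_1^E}\Phi-\Phi\psi^{c_1^E}=\partial K+K\partial$; geometrically $K$ counts solutions of the same parameterized equation subject to the additional incidence condition $u(0,0)\in N$. Then lift $\Phi$ to a chain map of cones by
\[
\widetilde\Phi(\hat\q):=\widehat{\Phi(\q)},\qquad \widetilde\Phi(\check\q):=\check{\Phi(\q)}+\widehat{K(\q)}.
\]
A direct computation using $\partial\Phi=\Phi\partial$ and the homotopy identity verifies $\widetilde\Phi\,\p^\wp=\p^\wp\,\widetilde\Phi$, and by construction $\widetilde\Phi$ intertwines $\iota$ and $\pi$ with $\Phi$ on the $\hat$- and $\check$-parts. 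This yields a morphism of the two short exact sequences above and hence of the associated long exact sequences; since $\Phi$ is a Floer quasi-isomorphism, the five lemma forces $\widetilde\Phi_*$ to be an isomorphism as well. Changes of $h$ or $j$ alone are handled by the same recipe with $\Phi=\mathrm{id}$: the chain modules $\hat C_*$ and $\check C_*$ do not see $h$ at all, and invariance under $j$ is the standard Floer argument.

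The main technical obstacle is the action-filtered version of (b). For $a,b\in\Z+\tfrac12$, one must arrange the interpolating homotopy of $C^2$-small Hamiltonians and almost complex structures to be so small that every intermediate critical value and every action shift along continuation stays bounded away from $a$ and $b$; this ensures that $\Phi$, $K$, and hence $\widetilde\Phi$ restrict to the $(a,b)$-subcomplexes. Such a small homotopy exists because critical values of $C^2$-small Morse functions cluster near integers and continuation shifts are controlled by the $C^0$-norm of the homotopy. Once this is set up, the cone-and-homotopy bookkeeping above is routine and completes the proof.
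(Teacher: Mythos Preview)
Your proof is correct and follows the same approach as the paper: part (a) is the mapping-cone long exact sequence (which the paper invokes directly via \eqref{eq:cone} and \eqref{eq:mapping_cone}), and part (b) lifts the continuation map to the cone using the chain homotopy from \eqref{eq:commute_homotopy}, exactly as the paper does by citing \cite[Section 4.1]{CO18}. You supply more detail---the explicit formula for $\widetilde\Phi$, the zig-zag identification of the connecting map, and the action-window discussion---where the paper is terse; one small slip is that for a change of $j$ alone the continuation map $\Phi$ is not literally $\mathrm{id}$ but rather the usual Floer continuation chain map on the same underlying module.
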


\begin{proof}
Statement (a) follows immediately from properties of the mapping cone, see \eqref{eq:cone} and \eqref{eq:mapping_cone}. To see (b), we recall the diagram in \eqref{eq:commute_homotopy} which commutes up to chain homotopy. This induces a chain homomorphism $\FC_*^{(a,b)}(\tilde f)\to \FC_*^{(a,b)}(\tilde f')$, and the commutative diagram in the statement follows, see e.g.~\cite[Section 4.1]{CO18}. Moreover continuation homomorphisms are isomorphisms at the homology level, so all  vertical maps in the diagram are isomorphisms. The rest can be shown analogously.
\end{proof}

The sole role of the function $h$ in this section is basically to double the set $\Crit f$. Its actual usage will become apparent in the next section and in Section \ref{sec:gysin_revisit}.

\subsection{Time-independent $j$}\label{sec:time_indep_j}
For computational purposes, we want to establish Proposition \ref{prop:quantum_gysin} for time-independent almost complex structures. Let $j$ be an $\om$-compatible time-independent almost complex structure on $M$. Then the moduli space 
$\widehat\NN(\q_-,\q_+,\mathfrak{a}_f,j)$ for $\q_\pm=[q_\pm,\bar q_\pm]\in\Crit\a_f=\Crit f\x \Gamma_M$ 
consists of smooth solutions $q:\R\x S^1\to M$ of 
\begin{equation}\label{eq:Floer_eq_M2}
	\p_sq+j(q)(\p_tq-X_f(q))=0
\end{equation}
satisfying the asymptotic condition in  \eqref{eq:asympt_condition}. Equation \eqref{eq:Floer_eq_M2} is identical to \eqref{eq:Floer_eq_M} but with $j_t$ replaced by $j$. Now there is an $\R\x S^1$-action on $\widehat\NN(\q_-,\q_+,\mathfrak{a}_f,j)$, namely 
\begin{equation}\label{eq:RxS1_action}
	\begin{split}
		\R\x S^1\x \widehat\NN(\q_-,\q_+,\mathfrak{a}_f,j)&\longrightarrow \widehat\NN(\q_-,\q_+,\mathfrak{a}_f,j)\\
		(s,t,q)&\longmapsto q(s+\cdot,t+\cdot)\,.
	\end{split}
\end{equation}
We denote the subspace of $t$-independent solutions  
\[
\widehat\NN(q_-,q_+,f,g)\subset \widehat\NN(\q_-,\q_+,\mathfrak{a}_f,j)
\]
which consist of flow lines of $\nabla_gf$, the (positive) gradient of $f$ with respect to the Riemannian metric $g=\om(\cdot,j\cdot)$, connecting critical points $q_-$ and $q_+$ of $\Crit f$, that is
\[
q:\R\to M\,,\qquad \p_sq-\nabla_gf(q)=0\,,\qquad \lim_{s\to \pm\infty}q(s)=q_\pm\,.
\]
We remark that this subspace is nonempty only if $[\bar q_-]=[\bar q_+]$ in $\Gamma_M$, see \eqref{eq:asympt_condition}. 
From the energy computation 
\[
\int_{-\infty}^\infty\int_0^1\|\p_s q\|_g^2\,dtds=\mathfrak{a}_f(\q_-)-\mathfrak{a}_f(\q_+)= \om([q])-f(q_-)+f(q_+)\geq 0
\]  
and the fact that $\om$ is integral and $f$ is small, we readily see $\om([q])\geq0$. Furthermore the following equivalence holds for $q\in\widehat\NN(\q_-,\q_+,\mathfrak{a}_f,j)$, see \cite[Lemma 7.1]{HS95}:
\begin{equation}\label{eq:om-energy}
 \textrm{$q$ is $t$-dependent}  \quad  \Longleftrightarrow \quad  \om([q])> 0	\,.
\end{equation}
An element $q\in\widehat\NN(\q_-,\q_+,\mathfrak{a}_f,j)$ is said to be {\it simple} if for every $\nu\in\N$, there exists $(s,t)\in\R\x S^1$ such that $q(s,t)\neq q\left(s,t+\frac{1}{\nu}\right)$. Note that $t$-independent solutions are obviously not simple. We denote by
\[
\widehat\NN_s(\q_-,\q_+,\mathfrak{a}_f,j)\subset \widehat\NN(\q_-,\q_+,\mathfrak{a}_f,j)
\]
the subspace of simple solutions. The $\R\x S^1$-action in \eqref{eq:RxS1_action} is free on this space.

\begin{prop}\label{prop:j_HS}
	For a generic $C^2$-small Morse function $f\in C^\infty(M)$, there exists a set $\j_\mathrm{HS}=\mathfrak j_\mathrm{HS}(f)$ which is residual in some open subset of the space of $\om$-compatible time-independent almost complex structures such that the following properties hold for all $j\in\mathfrak{j}_{\mathrm{HS}}(f)$ and $\q_-,\q_+\in\Crit\a_f$:
\begin{enumerate}[(a)]
\item The operator in \eqref{eq:d_q} obtained by linearizing \eqref{eq:Floer_eq_M2} at any $q\in \widehat\NN_s(\q_-,\q_+,f,j)$ is surjective. Hence, 
$\widehat\NN_s(\q_-,\q_+,f,j)$ is a smooth manifold of dimension 
\[
\mu_\FH(\q_-)-\mu_\FH(\q_+)=\mu_{-f}(q_-)-\mu_{-f}(q_+)+2c_1^{TM}([q])
\]
where $q$ is any element in $\widehat\NN_s(\q_-,\q_+,f,j)$. 
\item The moduli space $\NN(A,j)$ of all simple $j$-holomorphic spheres in $M$ representing $A\in\pi_2(M)$  is cut out transversely and thus a smooth manifold of dimension $\dim M+2c_1(A)$.
\item The pair $(f,g)$ with $g=\om(\cdot,j\cdot)$ satisfies the Morse-Smale condition. Moreover the operator in \eqref{eq:d_q} obtained by linearizing \eqref{eq:Floer_eq_M2} at any  $q\in\widehat\NN(q_-,q_+,f,g)$ is surjective.  
\item If $\mu_\FH(\q_-)-\mu_\FH(\q_+)\leq 1$, 
	then every $q\in \widehat\NN(\q_-,\q_+,\mathfrak{a}_{f},j)$ is $t$-independent, i.e.
	\[
	\widehat\NN(\q_-,\q_+,\mathfrak{a}_{ f},j) = \left\{\begin{aligned} &  \widehat\NN(q_-,q_+,f,g)  \quad & [\bar q_-]=[\bar q_+]\,, \\[.5ex]
	& \;\emptyset & [\bar q_-]\neq[\bar q_+]	\,.
	\end{aligned}\right.
	\]
\item If $\mu_\FH(\q_-)-\mu_\FH(\q_+)\leq 1$, then there is no $q\in\widehat\NN(\q_-,\q_+,\a_{f},j)$ intersecting a nonconstant $j$-holomorphic sphere $v:S^2\to M$ with $c_1^{TM}([v])\in\{0,1\}$. 
\item If $\mu_\FH(\q_-)-\mu_\FH(\q_+)\leq 3$, then there is no $q\in\widehat\NN_s(\q_-,\q_+,\mathfrak{a}_{f},j)$ intersecting a nonconstant $j$-holomorphic sphere $v:S^2\to M$ with $c_1^{TM}([v])=0$.
\item Let $N$ be a closed submanifold of $M$ such that $[N]=\PD(-c_1^E)$ as before. The evaluation map $\ev_{(0,0)}$ given in \eqref{eq:evaluation_N} restricted to $\widehat\NN_s(\q_-,\q_+,\a_f,j)$ or to $\widehat\NN(q_-,q_+,f,g)$ is transverse to $N$. Hence $\ev^{-1}_{(0,0)}(N)\cap \widehat\NN_s(\q_-,\q_+,\a_f,j)$ and $\ev^{-1}_{(0,0)}(N)\cap \widehat\NN(q_-,q_+,f,g)$ are smooth manifolds of dimension $\mu_\FH(\q_-)-\mu_\FH(\q_+)-2$ and $\mu_{-f}(q_-)-\mu_{-f}(q_+)-2$ respectively.
\item If $\mu_\FH(\q_-)-\mu_\FH(\q_+) \leq 3$,	then every $q\in \widehat\NN(\q_-,\q_+,\mathfrak{a}_{f},j,N)=\ev_{(0,0)}^{-1}(N)$ with $c_1^{TM}([q])\neq 0$ is simple. 
\end{enumerate}
\end{prop}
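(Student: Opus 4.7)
The plan is to adapt the Hofer--Salamon construction \cite{HS95} of regular almost complex structures to the time-independent setting and to the codimension-two submanifold $N$. I would define $\j_\mathrm{HS}(f)$ as the intersection of finitely many residual subsets of an open neighborhood in the space of $\om$-compatible time-independent almost complex structures on $M$, one for each of the transversality statements (a), (b), (c), (g); the remaining assertions (d), (e), (f), (h) are not extra genericity conditions but follow automatically from these together with dimension counting under hypothesis \eqref{eq:assumption_M}, provided $f$ is $C^2$-small and chosen generically.

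For (a) and (b), the universal moduli space of pairs $(j,q)$ with $q$ simple is cut out as a Banach manifold, because at a simple solution one finds an injective point where a compactly supported $\om$-compatible perturbation of $j$ modifies the linearization freely, forcing surjectivity of the vertical differential; Sard--Smale then yields a residual set of regular $j$. For (c), the Morse--Smale condition for $(f,g)$ is generic in $f$, and at a $t$-independent trajectory the linearized Floer operator decouples under Fourier expansion in $t$ into the Morse linearization on the zeroth mode (surjective by Morse--Smale) and invertible elliptic ODE operators on the higher modes for $f$ sufficiently $C^2$-small. Part (g) is the standard transversality of the evaluation map $\ev_{(0,0)}$ at an interior marked point for the simple universal moduli space, combined with a small perturbation of $N$ away from $\Crit f$.

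For (d), the equivalence \eqref{eq:om-energy} gives that any $t$-dependent $q$ has $\om([q])>0$, so \eqref{eq:assumption_M} forces $c_1^{TM}([q])\geq 0$ or $c_1^{TM}([q])\leq -\tfrac{1}{2}\dim M$. Writing $q$ as a $\nu$-fold cover of a simple $q_s$ (with $\nu=1$ in the simple case) and using the index identity $\mu_\FH(\q_-')-\mu_\FH(\q_+')=\mu_\FH(\q_-)-\mu_\FH(\q_+)-2(\nu-1)c_1^{TM}([q_s])$, the quotient $\widehat\NN_s(\q_-',\q_+',\a_f,j)/(\R\x S^1)$ has dimension $\mu_\FH(\q_-')-\mu_\FH(\q_+')-2$. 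In the nonnegative branch $c_1^{TM}([q_s])\geq 0$ and the index bound $\leq 1$ force this to be negative, whereas in the far-negative branch the bound $\mu_\FH(\q_-')-\mu_\FH(\q_+')\leq \dim M+2c_1^{TM}([q_s])\leq 0$ again makes the quotient negative-dimensional. Thus $q_s$ generically does not exist, so every $q$ with index bound $\leq 1$ is $t$-independent and hence a gradient flow line of $\nabla_g f$. Parts (e) and (f) are analogous fibered-product dimension counts between a simple Floer trajectory from (a) and a simple $j$-holomorphic sphere from (b) meeting at a common image point.

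The hard part will be (h). A non-simple $q\in\widehat\NN(\q_-,\q_+,\a_f,j,N)$ factors as $q(s,t)=q_s(s,\nu t)$ with $q_s$ simple and $q_s(0,0)\in N$, so statement (g) combined with the free $\R\x S^1$-action gives that $\ev_{(0,0)}^{-1}(N)\cap \widehat\NN_s(\q_-',\q_+',\a_f,j)/(\R\x S^1)$ has dimension $\mu_\FH(\q_-')-\mu_\FH(\q_+')-4$. The hypothesis $c_1^{TM}([q])\neq 0$ together with \eqref{eq:assumption_M} forces $c_1^{TM}([q_s])\geq 1$ or $c_1^{TM}([q_s])\leq -\tfrac{1}{2}\dim M$; in the positive branch the index identity gives $\mu_\FH(\q_-)-\mu_\FH(\q_+)\geq 4+2(\nu-1)\geq 4$, contradicting the bound $\leq 3$, while in the far-negative branch the dimension of the simple moduli space intersected with $N$ is at most $\dim M+2c_1^{TM}([q_s])-4\leq -4$, so generically empty. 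Organizing the universal moduli space machinery so that a single residual $j$ verifies property (g) simultaneously for all asymptotic data $\q_\pm'$ arising from covers, and ensuring that these emptiness statements hold uniformly in the multiplicity $\nu$, will be the main bookkeeping challenge.
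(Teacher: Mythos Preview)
Your approach mirrors the paper's: (a)--(d) are referred to \cite{FHS95,HS95}, (e)--(f) are handled via evaluation maps into $M\times M$ transverse to the diagonal, (g) is standard, and (h) is a dimension count on the underlying simple curve. Two technical slips in your treatment of (h) should be fixed, though neither invalidates the strategy.

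First, the factorization $q(s,t)=q_s(s,\nu t)$ does not give a Floer cylinder for $f$: setting $q_s(s,\tau)=q(s,\tau/\nu)$ one finds $\partial_s q_s+j(\partial_t q_s-X_f(q_s))=(\tfrac{1}{\nu}-1)\,j\,\partial_t q\neq 0$ since $q$ is $t$-dependent. The correct rescaling, used in the paper, is $q_{\mathrm{sim}}(s,t):=q(s/\nu,t/\nu)$, which is a simple Floer cylinder for $\tfrac{1}{\nu}f$ and still satisfies $q_{\mathrm{sim}}(0,0)=q(0,0)\in N$. Consequently one needs (g) to hold for $\tfrac{1}{\nu}f$ for every $\nu\in\N$; this is a countable intersection of residual conditions, so can be absorbed into the definition of $\j_\mathrm{HS}(f)$ --- precisely the bookkeeping you anticipated.

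Second, the constraint $q(0,0)\in N$ is not $\R\times S^1$-invariant (translation in either variable moves the marked point), so you cannot quotient $\ev_{(0,0)}^{-1}(N)\cap\widehat\NN_s$ by that action. The dimension is $\mu_\FH(\q_-')-\mu_\FH(\q_+')-2$, not $-4$. Your contradiction still goes through: in the positive branch $c_1^{TM}([q_{\mathrm{sim}}])\geq 1$ and $\nu\geq 2$ give $\mu_\FH(\q_-)-\mu_\FH(\q_+)=\mu_\FH(\q_-')-\mu_\FH(\q_+')+2(\nu-1)c_1^{TM}([q_{\mathrm{sim}}])\geq 2+2=4>3$; in the far-negative branch the dimension is $\leq -2$.
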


\begin{proof}

	We refer to \cite[Section 7]{FHS95} and \cite[Section 7]{HS95} for proofs of (a)-(d) and an account on genericity.
	
	Statements (e) and (f) correspond to properties (iii) and (iv) for $\mathfrak{j}_\mathrm{reg}(f)$ in Section \ref{sec:Ham_Floer}, and can be proved by adapting arguments in \cite{HS95}. Since the index assumptions in (e) and (f) are larger by 1 than those in (iii) and (iv), we sketch a proof. 
	In both cases (e) and (f), we assume that a $j$-holomorphic sphere $v$ with the appropriate properties exists. If $c_1^{TM}([v])=1$, then $v$ is necessarily simple. If $c_1^{TM}([v])=0$, then the underlying simple curve of $v$ has also first Chern number 0, and thus, in the argument below, we may assume that $v$ is simple. We first treat case (e). By (c) and (d), $\widehat\NN(\q_-,\q_+,\a_{f},j)=\widehat\NN(q_-,q_+,f,g)$ is a smooth manifold. We then consider
	\[
	\begin{split}
		\mathrm{EV}&:\left(\widehat\NN(q_-,q_+,f,g)\x_{\R }\R\right)  \x \Big(\NN(A,j)\x_G S^2 \Big)\longrightarrow M\x M\\
		\mathrm{EV}&\big([q,s],[v,z]\big):=\big(q(s),v(z)\big)\,.
	\end{split}
	\]
	where $G=\mathrm{PSL}(2,\C)$ acts diagonally. 
	Using a standard transversality argument, we can show that, for a generic choice of $j$, $\mathrm{EV}$ is transverse to the diagonal $\Delta_M$ in $M\x M$. Since the domain of $\mathrm{EV}$ has dimension at most $\dim M-1$, the preimage $\mathrm{EV}^{-1}(\Delta_M)$ is empty, a contradiction. For case (f), we consider
	\[
	\begin{split}
	\mathrm{EV}&:\Big(\widehat\NN_s(\q_-,\q_+,\mathfrak{a}_{f},j)\x_{(\R\x S^1)}\R\x S^1\Big) \x \Big(\NN(A,j)\x_G S^2\Big) \longrightarrow M\x M\\
	\mathrm{EV}&\big([q,s,t],[v,z]\big):=\big(q(s,t),v(z)\big)\,.		
	\end{split}
	\]
	Here it is crucial that $\widehat\NN_s(\q_-,\q_+,\mathfrak{a}_{f},j)$ carries the $\R\x S^1$-action in \eqref{eq:RxS1_action}. 
	The domain of $\mathrm{EV}$ has dimension at most $\dim M-1$, thus $\mathrm{EV}^{-1}(\Delta_M)$ is empty for a generic $j$. This completes the proofs of (e) and (f).
	
	A combination of (a), (c), and standard transversality arguments proves (g). 
	
	The proof of (h) goes along the same lines as in the proof of (d) but we include it for completeness. Let $q \in \widehat\NN(\q_-,\q_+,\mathfrak{a}_{f},j,N)$ with $c_1^{TM}([q])\neq0$ be not simple, i.e.~there exists an integer $\nu\geq 2$ such that $q\left(s,t+\frac{1}{\nu}\right)=q(s,t)$ for all $(s,t)\in\R\x S^1$. Let $\nu$ be the largest integer satisfying this, which exists since $q$ depends on $t$ by $c_1^{TM}([q])\neq0$. Then 
	\begin{equation}\label{eq:simple}
	q_\mathrm{sim}:\R\x S^1\to M\,,\qquad q_\mathrm{sim}(s,t):=q\left(\frac{s}{\nu},\frac{t}{\nu}\right)
	\end{equation}
	belongs to $\widehat\NN_s([q_-,0],[q_+,q_\mathrm{sim}],\mathfrak{a}_{\frac{1}{\nu}f},j, N)$. 
	If $c_1^{TM}([q])>0$, then this is absurd since this moduli space has dimension 
	\begin{equation}\label{eq:dim_est}
	\mu_{-f}(q_-)-\mu_{-f}(q_+)+2c_1^{TM}([q_\mathrm{sim}])-2=\mu_{-f}(q_-)-\mu_{-f}(q_+)+\frac{2}{\nu}c_1^{TM}([q])-2\leq -1
	\end{equation}
	where the last inequality follows from $\mu_{-f}(q_-)-\mu_{-f}(q_+)+2c_1^{TM}([q])\leq3$ by the hypothesis. 
	If $c_1^{TM}([q])<0$, or equivalently $c_1^{TM}([q_\mathrm{sim}])<0$, then by the assumption in \eqref{eq:assumption_M}, $c_1^{TM}([q_\mathrm{sim}])\leq-\frac{1}{2}\dim M$. In this case, the left-hand side of \eqref{eq:dim_est} is at most $-2$. This contradiction completes the proof of (h). 
%
\end{proof}

\begin{rem}\label{rem:Ono95}
 Arguing as in \cite{Ono95}, Proposition \ref{prop:j_HS} should hold under a slightly weaker assumption than \eqref{eq:assumption_M}, namely also in the case $c_1^{TM}(A)=-\frac12\dim M+1$, at the cost of working with $\frac{1}{\ell}f$ for some $\ell\in\N$ depending on the size of $\a_f(\q_-)-\a_f(\q_+)$.
\end{rem}

Let $f$ and $j$ be as in Proposition \ref{prop:j_HS}. Let $a,b\in(\Z+\frac{1}{2})\cup\{-\infty,+\infty\}$.  Proposition \ref{prop:j_HS} ensures that the Floer homology $\FH^{(a,b)}(f,j)$ is defined as in \eqref{eq:Floer_homology}, have action filtration homomorphisms as in \eqref{eq:direct_system}, and the Floer complex of $(\a_f,j)$ is identical to the Morse complex of $(-f,g)$. This yields 
	\begin{equation}\label{eq:morse_isom}
	\FH^{(a,b)}_*(f,j) \cong \bigoplus_{\diamond+{\scriptscriptstyle\heartsuit}=*+\frac{\dim M}{2}}\H_\diamond(M;\Z)\otimes\Lambda_{\scriptscriptstyle\heartsuit}^{(a,b)}\,.
	\end{equation}
	where $\Lambda_{\scriptscriptstyle\heartsuit}^{(a,b)}$ is defined after \eqref{eq:nov}. Here and below, we use a coherent orientation such that \eqref{eq:morse_isom} holds, as opposed to an isomorphism to a homology of $M$ with twisted coefficients, see \cite[Chapter 4]{Sch93}.
 For $j'\in\mathfrak{j}_\mathrm{reg}(f)$, Floer's continuation argument using Proposition \ref{prop:j_HS}.(c) yields an isomorphism
	\begin{equation}\label{eq:isom}
		\FH^{(a,b)}_*(f,j')   \stackrel{\cong}{\longrightarrow}   \FH^{(a,b)}_*(f,j) \,,
	\end{equation}
	and therefore we conclude
	\begin{equation}\label{eq:isom2}
	\begin{split}
	\FH_*(M)=\FH_*^{(-\infty,+\infty)}(f,j') &\cong \FH_*^{(-\infty,+\infty)}(f,j) \\[.5ex]
	& \cong \bigoplus_{\diamond+{\scriptscriptstyle\heartsuit}=*+\frac{\dim M}{2}} \H_{\diamond}(M;\Z)\otimes\Lambda_{\scriptscriptstyle\heartsuit} \cong \H_{*+\frac{\dim M}{2}}(M;\Lambda)\,.
	\end{split}
	\end{equation}
	 We refer to \cite[Theorem 6.1]{HS95} or \cite[Theorem 4.7]{Ono95} for details.

In order to define the Floer cap product with $j\in\mathfrak{j}_\mathrm{HS}(f)$, we assume in addition condition (H), namely:
\begin{equation}\label{eq:H}
\centering	\textrm{(H)\qquad \quad If $A\in\pi_2(M)$ has $\om(A)\neq0$, then $c_1^{TM}(A)\neq0$.\qquad }
\end{equation}
This condition is implied by any of the standing hypotheses (A1), (A2), and (A3) of this paper.	We remark that a condition similar to (H) is  needed to define the Gromov-Witten invariants with domain independent almost complex structure, see  \cite[Chapter 1]{MS12}.

\begin{rem}\label{rem:H}
We list some consequences of condition (H) in \eqref{eq:H}. 
\begin{enumerate}[(i)]
\item There is $\lambda\in\R$ such that 
\begin{equation}\label{eq:monotonicity}
\om(A) =\lambda c_1^{TM}(A) \qquad \forall A\in\pi_2(M)\,.
\end{equation}
Actually, our assumption that $\om$ is integral implies $\lambda\in\Q$.
We argue as in \cite[Lemma 1.1]{HS95}. If $\om$ vanishes on $\pi_2(M)$, then this holds with $\lambda=0$. Suppose that $\om$ is nonzero on $\pi_2(M)$. We first verify \eqref{eq:monotonicity} for all $A\in\pi_2(M)$ with $\om(A)\neq0$. Let $B,C\in\pi_2(M)$ have $\om(B)\neq0$ and $\om(C)\neq0$. If $\frac{c_1^{TM}(B)}{\om(B)}\neq \frac{c_1^{TM}(C)}{\om(C)}$, then $A:=c_1^{TM}(C)B-c_1^{TM}(B)C\neq0$ satisfies $c_1^{TM}(A)=0$ and $\om(A)\neq0$. This contradiction shows \eqref{eq:monotonicity} for all $A\in\pi_2(M)$ with $\om(A)\neq0$ for some $\lambda\neq0$. Suppose now $A\in\pi_2(M)$ satisfies $\om(A)=0$. Then we pick any $B\in\pi_2(M)$ with $\om(B)\neq 0$. Since $\om(B)=\lambda c_1^{TM}(B)$ and $\om(A+B)=\lambda c_1^{TM}(A+B)$, we deduce $c_1^{TM}(A)=0$. Hence \eqref{eq:monotonicity} holds also in the case of $\om(A)=0$. 

We note that $\ker\om\cap\ker c_1^{TM}=\ker c_1^{TM}$ by \eqref{eq:monotonicity} and thus $\Gamma_M= \pi_2(M)/\ker c_1^{TM}$.

\item At most finitely  many critical points of $\mathfrak{a}_f$ can have the same $\mu_\FH$-index, see \eqref{eq:ind_M}. Thus, for every $*\in\Z$, the modules $\FC_*^{(-\infty,+\infty)}(f)$ and $\FC_*^{(-\infty,+\infty)}(\tilde f)$ are finitely generated and composed of finite sums. The two homologies in Remark \ref{rem:ML2} are obviously isomorphic.
	Furthermore, the Novikov condition in the definition of the Novikov ring $\Lambda$ is automatically fulfilled, see \eqref{eq:nov} and below. 
	
	Suppose that $c_1^{TM}$ does not vanish on $\pi_2(M)$, which is equivalent to saying that $\Gamma_M= \pi_2(M)/\ker c_1^{TM}$ is nontrivial. Let $c_M>0$ denote the minimal Chern number of $M$, i.e.~$c_1^{TM}(\pi_2(M))=c_M\Z$. 
	Then we have a graded ring isomorphism between $\Lambda$ and the Laurent polynomial ring $\Z[t,t^{-1}]$ with $\deg t=-2c_M$ given by $T^A\mapsto t^{c_1^{TM}(A)/c_M}$ for $A\in\Gamma_M$. If $c_1^{TM}$ vanishes on $\pi_2(M)$, we have $\Lambda\cong\Z$.
\item A bound on index difference implies a bound on action difference. More precisely, for any $\mu>0$, there exists $C_\mu>0$ such that if $|\mu_\FH(\q_-)-\mu_\FH(\q_+)|\leq \mu$ for  $\q_\pm\in\Crit\a_f$, then  $|\a_f(\q_-)-\a_f(\q_+)|\leq C_\mu$. This follows from that, for $\q=[q,\bar q]\in\Crit\a_f$, $\a_f(\q)=-\om([\bar q])-f(q)$ and  $-\frac{1}{4}\dim M\leq c_1^{TM}([\bar q])+\frac{1}{2}\mu_\FH(\q)\leq\frac{1}{4}\dim M$ by \eqref{eq:ind_M}.
\end{enumerate}
\end{rem}

By condition (H), \eqref{eq:om-energy}, and Proposition \ref{prop:j_HS}.(h), every element in $\widehat\NN(\q_-,\q_+,\mathfrak{a}_{f},j,N)$ with $\mu_\FH(\q_-)-\mu_\FH(\q_+)\leq 3$ is either $t$-independent or simple. Thus we can define the cap product with $-c_1^E$,
\begin{equation}\label{eq:cap_independent}
\Psi^{c_1^E}:\FH_*^{(a,b)}(f,j)\longrightarrow \FH_{*-2}^{(a,b)}(f,j)\,,
\end{equation}
using the chain map $\psi^{c_1^E}$ defined exactly in the same manner as in \eqref{eq:floer_cap_chain}. Then $\Psi^{c_1^E}$ coincides with the one in \eqref{eq:floer_cap} up to  continuation homomorphisms in \eqref{eq:isom}. Counting only $t$-independent elements also defines a homomorphism. To be precise, using
\[
\widehat\NN(q_-,q_+,f,g,N):=\big\{q\in \widehat\NN(q_-,q_+,f,g) \mid q(0)\in N\big\}\,,
\]
we define the homomorphism
\[
\psi_0^{c_1^E}(\q_-) := \sum_{\q_+}\# \widehat\NN(q_-,q_+,f,g,N)\q_+\,,
\]
where the sum rums over all $\q_+\in\Crit\a_{f}$ with $[\bar q_-]=[\bar q_+]$, $\mu_\FH(\q_-)-\mu_\FH(\q_+)=2$, and $\a_{f}(\q_+)\in(a,b)$. Then the induced map 
\[
\Psi_0^{c_1^E}: \FH_*^{(a,b)}(f,j)\longrightarrow \FH_{*-2}^{(a,b)}(f,j)
\]
corresponds to the ordinary cap product with $-c_1^E$ in singular homology via the isomorphism \eqref{eq:morse_isom}.

Let  $j\in\j_\mathrm{HS}(f)$ and $j'\in \mathfrak{j}_\mathrm{reg}(f)$. Again, due to Proposition \ref{prop:j_HS}, we can define the Floer homology $\FH^{(a,b)}_*(\tilde f,j)$ for the pair $(\tilde f,j)$ as in Section \ref{sec:quantum_gysin} and have an isomorphism
\[
\FH^{(a,b)}_*(\tilde f,j) \cong \FH^{(a,b)}_*(\tilde f,j')	\,,
\]
see Proposition \ref{prop:quantum_gysin_simple}.(a). This isomorphism is even as $\Lambda$-modules when $(a,b)=(-\infty,+\infty)$.

\begin{prop}\label{prop:quantum_gysin_simple}
We assume conditions \eqref{eq:assumption_M} and (H) from \eqref{eq:H}. Let $j\in\j_\mathrm{HS}(f)$. 
\begin{enumerate}[(a)]
\item  
For $-\infty\leq a<b\leq+\infty$, there exists an exact sequence 
\[
\cdots\longrightarrow \FH_{*}^{(a,b)}(\tilde f,j)\longrightarrow\FH_{*}^{(a,b)}(f,j)\stackrel{\Psi^{c_1^E}}{\longrightarrow}\FH_{*-2}^{(a,b)}(f,j)\to \FH_{*-1}^{(a,b)}(\tilde f,j)\longrightarrow\cdots\,,
\]
where $\Psi^{c_1^E}$ is defined in \eqref{eq:cap_independent}. In the case $(a,b)=(-\infty,+\infty)$, all maps are $\Lambda$-linear. Moreover this is isomorphic to the exact sequence in Proposition \ref{prop:quantum_gysin}.(a). In particular, we have an isomorphism $\FH^{(a,b)}_*(\tilde f,j) \cong \FH^{(a,b)}_*(\tilde f,j')$ for $j'\in \mathfrak{j}_\mathrm{reg}(f)$.
\item The connecting homomorphism $\Psi^{c_1^E}$ in (a) is given at the chain level by
\[
\psi^{c_1^E}(\q_-)= \psi_0^{c_1^E}(\q_-) - \sum_{\q_+} \sum_q \epsilon(q) c_1^E([q])\q_+\,.
\]
The first sum ranges over  $\q_+\in\Crit\mathfrak{a}_{f}$ with $\mu_\FH(\q_-)-\mu_\FH(\q_+)=2$ and $\mathfrak{a}_{f}(\q_+)\in(a,b)$. The second one is given by choosing one $q$ in each connected component of $\widehat\NN(\q_-,\q_+,\mathfrak{a}_{f},j,N)\setminus \widehat\NN(q_-,q_+,f,g,N)$ and determining the sign $\epsilon(q)\in\{-1,+1\}$ by requiring the orientation of $q$ to be equal to $\epsilon(q)\p_sq\wedge \p_tq$.
\item Suppose that $\psi^{c_1^E}=\psi_0^{c_1^E}$ (see Remark \ref{rem:only_Morse}). Then there exists a $\Lambda$-module isomorphism 
	\[
	\FH_*(\tilde f,j)  \cong  \bigoplus_{\diamond+\scriptscriptstyle\heartsuit=*+\frac{1}{2}\dim M} \H_{\diamond}(\Sigma;\Z)\otimes\Lambda_{\scriptscriptstyle\heartsuit}\,,
	\] 
	and the exact sequence in (a) with $(a,b)=(-\infty,+\infty)$ reduces to the classical Gysin sequence \eqref{eq:gysin_seq} for singular homology with $\Lambda$-coefficients.
\end{enumerate}
\end{prop}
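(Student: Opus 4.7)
The plan for (a) is essentially formal. By Proposition \ref{prop:j_HS}, the time-independent $j$ preserves all the ingredients --- transversality, compactness, and exclusion of sphere bubbling for index differences up to three --- that were used in Section \ref{sec:quantum_gysin}, provided one additionally invokes the standing hypothesis (H) alongside \eqref{eq:assumption_M}. Hence the cone complex $\big(\FC_*(\tilde f),\p^\wp_j\big)$ from \eqref{eq:bdry}, defined via \eqref{eq:Floer_eq_M2} in place of \eqref{eq:Floer_eq_M}, is a well-defined chain complex, and the long exact sequence emerges from the standard mapping-cone algebra exactly as in Proposition \ref{prop:quantum_gysin}.(a). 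To compare with a time-dependent $j' \in \mathfrak{j}_\mathrm{reg}(f)$, I would join $j$ and $j'$ by a generic path of $\om$-compatible almost complex structures and run Floer's continuation argument. The resulting continuation chain map respects each of $\hat\p$, $\check\p$, and $\p^{c_1^E}$ up to chain homotopy, because continuation commutes with the point constraint $\ev_{(0,0)}^{-1}(N)$ up to chain homotopy (the analogue of \eqref{eq:commute_homotopy}); it therefore induces an isomorphism between the two exact sequences, thereby recovering the one in Proposition \ref{prop:quantum_gysin}.(a).

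The heart of the proof is (b). Fix $\q_\pm \in \Crit \a_f$ with $\mu_\FH(\q_-) - \mu_\FH(\q_+) = 2$. By Proposition \ref{prop:j_HS}.(h), every element of $\widehat\NN(\q_-,\q_+,\a_f,j,N)$ is either $t$-independent or simple. The $t$-independent part equals $\widehat\NN(q_-,q_+,f,g,N)$ and contributes precisely $\psi_0^{c_1^E}(\q_-)$ by definition. The simple part lies in $\widehat\NN_s(\q_-,\q_+,\a_f,j)$, which by Proposition \ref{prop:j_HS}.(a) is two-dimensional and on which the $\R\times S^1$-action \eqref{eq:RxS1_action} is free; each connected component is therefore a single $\R\times S^1$-orbit. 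Fixing a representative $q$ in such a component, the simple elements of $\widehat\NN(\q_-,\q_+,\a_f,j,N)$ lying in that component correspond bijectively to pairs $(s,t)\in\R\times S^1$ with $q(s,t)\in N$. Since $q$ caps off to a sphere of class $[q]\in\Gamma_M$ and $[N]=\PD(-c_1^E)$, this algebraic intersection number equals $-c_1^E([q])$. Each intersection point carries a sign determined by comparing the coherent Floer orientation on $\widehat\NN_s(\q_-,\q_+,\a_f,j)$ with the $\R\times S^1$-orbit orientation $\p_s q\wedge\p_t q$; by definition this comparison is $\epsilon(q)$. Summing over components yields the stated formula.

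Part (c) then follows at once. Under the hypothesis $\psi^{c_1^E}=\psi_0^{c_1^E}$, the boundary $\p^\wp$ is built entirely from $t$-independent data, so $\big(\FC_*(\tilde f),\p^\wp_j\big)$ is identified with the Morse--Bott cascade complex of the pullback $\tilde f=f\circ\wp$ on $\Sigma$ with coefficients in $\Lambda$: here $h$ serves as a perfect auxiliary Morse function on each critical fibre circle, $\hat\p+\check\p$ records horizontal Morse flow lines covering those of $f$ on $M$, and $\p_0^{c_1^E}$ plays the role of the Bott connecting operator realising cap product with the Euler class $-c_1^E$. Its homology is therefore $\bigoplus_{\diamond+{\scriptscriptstyle\heartsuit}=*+\frac{1}{2}\dim M}\H_\diamond(\Sigma;\Z)\otimes\Lambda_{\scriptscriptstyle\heartsuit}$, and the exact sequence of (a) collapses to the classical Gysin sequence \eqref{eq:gysin_seq} tensored with $\Lambda$.

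The main obstacle will be the sign analysis in (b): one has to align the coherent orientation of the 2-dimensional simple locus $\widehat\NN_s(\q_-,\q_+,\a_f,j)$ with the $\R\times S^1$-orbit orientation $\p_s q\wedge\p_t q$ so that the bare intersection number $-c_1^E([q])$ is corrected by exactly $\epsilon(q)$, and not by a different integer. The other ingredients --- the mapping-cone algebra in (a), the continuation argument giving the $\mathfrak{j}_\mathrm{reg}$ isomorphism, and the Morse--Bott identification in (c) --- are all routine once the transversality and compactness afforded by Proposition \ref{prop:j_HS} are in hand.
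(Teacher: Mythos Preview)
Your treatment of (a) and (b) is correct and essentially matches the paper's argument: the mapping-cone algebra for (a), and for (b) the observation that each $t$-dependent component of the index-two moduli space is a single $\R\times S^1$-orbit whose intersection count with $N$ equals $\epsilon(q)\,(q\cdot N)=-\epsilon(q)\,c_1^E([q])$.

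Your proof of (c), however, skips the nontrivial step. You assert that under $\psi^{c_1^E}=\psi_0^{c_1^E}$ the complex $\big(\FC_*(\tilde f),\p^\wp\big)$ ``is identified with the Morse--Bott cascade complex of $\tilde f$ on $\Sigma$,'' with $\p_0^{c_1^E}$ playing the role of the min-to-max cascade differential. But this identification is precisely what must be proved, and it is not routine. The paper establishes it by making a specific choice: a trivialization of $\Sigma$ over $M\setminus U$ (for a tubular neighbourhood $U$ of $N$), a connection $1$-form that is flat on $\Sigma|_{M\setminus U}$, and a Morse function $h$ compatible with the trivialization. With these choices, horizontal lifts of index-one Morse flow lines on $M$ carry no holonomy, which yields the equalities for $\hat\p$ and $\check\p$ (this is the content of Lemma~\ref{lem:moduli_1-1}). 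The genuinely delicate point is the index-two case: one must show
\[
\#\widehat\NN(q_-,q_+,f,g,N)=\#\NN^1(\check q_-,\hat q_+,\tilde f,\tilde g),
\]
i.e.\ that the count of Morse flow lines through $N$ equals the count of single cascades from a minimum to a maximum of $h$. The paper proves this via a holonomy computation: over a one-parameter family $\{q_r\}$ of flow lines, the endpoint $\ev_+(\tilde q_r)$ of the horizontal lift winds around the fibre circle with degree equal to $\int \ev_{(0,0)}^*\kappa$, the integrated curvature concentrated near $N$, which is exactly the intersection number with $N$. Without this argument (or an equivalent one), the claim that the cone complex computes $\H_*(\Sigma;\Lambda)$ is unjustified --- merely knowing that both fit into long exact sequences with the same outer terms and the same connecting map is not enough to conclude the middle terms agree.
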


\begin{proof}[Proof of (a) and (b)]
The long exact sequence follows from the fact that $\FH^{(a,b)}(\tilde f,j)$ is the homology of the mapping cone of $\psi^{c_1^E}$ as in Proposition \ref{prop:quantum_gysin}.(a). This exact sequence is isomorphic to the one in Proposition \ref{prop:quantum_gysin}.(a) since $\psi^{c_1^E}$ for $j\in\j_\mathrm{HS}(f)$ corresponds to that for $j'\in\j_\mathrm{reg}(f)$ via chain level continuation homomorphisms up to chain homotopy, cf.~the proof of Proposition \ref{prop:quantum_gysin}.(b). This establishes (a).

By definition  $\psi^{c_1^E}$ is given by the signed count of solutions in $\widehat\NN(\q_-,\q_+,\mathfrak{a}_{f},j,N)$ with $\mu_\FH(\q_-)-\mu_\FH(\q_+)=2$, and  $\psi_0^{c_1^E}$ counts only $t$-independent solutions. Thus it suffices to show that the contribution of $t$-dependent solutions $q\in\widehat\NN(\q_-,\q_+,\mathfrak{a}_{f},j,N)$ is exactly the second term on the right in the identity in (b). To see this, let $q$ be a $t$-dependent solution in $\widehat\NN(\q_-,\q_+,\mathfrak{a}_{f},j,N)$, and let  $\widehat\NN_q(\q_-,\q_+,\mathfrak{a}_{f},j)$ denote the connected component of $\widehat\NN(\q_-,\q_+,\mathfrak{a}_{f},j)$ containing $q$. Then due to the $\mathbb{R}\times S^1$-action in \eqref{eq:RxS1_action} together with (a) and (h) in Proposition \ref{prop:j_HS}, we have 
  \begin{equation}\label{eq:moduli_q}
    \widehat\NN_q(\q_-,\q_+,\mathfrak{a}_{f},j)=\big\{q(\cdot+s,\cdot+t) \mid  (s,t)\in\R\x S^1\big\}
  \end{equation}
  and it is a smooth manifold diffeomorphic to $\R\x S^1$. 
  Therefore, if we set  
  \[
  \widehat\NN_q(\q_-,\q_+,\mathfrak{a}_{f},j,N)=\widehat\NN_q(\q_-,\q_+,\mathfrak{a}_{f},j) \cap \widehat\NN(\q_-,\q_+,\mathfrak{a}_{f},j,N)\,,
  \]
 then
\begin{equation}\label{eq:NN=c_1^E}
	 \#\widehat\NN_q(\q_-,\q_+,\mathfrak{a}_{f},j,N)=\epsilon(q) (q\cdot N)= -\epsilon(q) c_1^E([q])
\end{equation}
where $q\cdot N\in\Z$ denotes the intersection number between $q$ and $N$ that equals $-c_1^E([q])$ since $[N]=\mathrm{PD}(-c_1^E)$. This completes the proof of (b).
\end{proof}

\begin{rem}\label{rem:only_Morse}
The assumption $\psi^{c_1^E}=\psi_0^{c_1^E}$ in Proposition \ref{prop:quantum_gysin_simple}.(c) is met for example if any of the following holds.
\begin{enumerate}[(i)]
	\item (A1) is assumed, i.e.~$\om$ vanishes over $\pi_2(M)$, see \eqref{eq:om-energy}.
	\item (A2) with $\lambda\nu\leq-\frac{1}{2}\dim M$ is satisfied, i.e.~$c_1^{TM}(A)\leq -\frac{1}{2}\dim M$ for every $A\in\pi_2(M)$ with $\omega(A)>0$, see Proposition \ref{prop:j_HS}.(a).
\end{enumerate}
\end{rem}

To prove part (c) of Proposition \ref{prop:quantum_gysin_simple}, we recall the construction of  Morse-Bott homology, see \cite[Appendix A]{Fra04} for details. We refer to Section \ref{sec:gradient} for the Morse-Bott setting in Rabinowitz Floer homology. As before, $h$ is a perfect Morse function on $\Crit \tilde f$ defined in \eqref{eq:ftn_h}, and $\check q$ and $\hat q$ denote the minimum and maximum points of $h$ on $S_q:=\wp^{-1}(q)$ for $q\in\Crit f$ respectively. If we do not want to specify whether it is a minimum or maximum point, we write $\bar q\in \Crit h$.
Using a connection one-form $\alpha$ of $\wp:\Sigma\to M$, we lift the metric $g=\om(\cdot,j\cdot)$ on $M$ to a metric $\tilde g$ on $\Sigma$. Let $\phi_{-\nabla h}^t$ be the negative gradient flow of $h$, and let $W^s(\bar q)$ and $W^u(\bar q)$ be the stable and unstable manifolds of $\phi_{-\nabla h}^t$ at $\bar q\in\Crit h$ respectively. We orient  $S_q$ and all one-dimensional stable and unstable manifolds by the fundamental vector field $R$ generated by the $S^1$-action on $\Sigma$. For $q_\pm\in\Crit f$, we denote by 
\begin{equation}\label{eq:moduli_N_lift}
\widehat\NN(S_{q_-},S_{q_+},\tilde f,\tilde g)	
\end{equation}
the moduli space of (positive) gradient flow lines $\tilde q:\R\to\Sigma$ of $\tilde f$ with respect to $\tilde g$ such that the evaluation maps at asymptotic ends satisfy
\[
\ev_\pm(\tilde q):= \lim_{s\to\pm\infty}\tilde q(s)\in S_{q_\pm}\,.
\]
For $q_-\neq q_+$, the $S^1$-action on $\Sigma$ induces a free $S^1$-action on this moduli space, and the quotient space is diffeomorphic to $\widehat\NN(q_-,q_+,f,g)$ through projection and horizontal lift. This gives an isomorphism 
\[
T_{\tilde q}\widehat\NN(S_{q_-},S_{q_+},\tilde f,\tilde g)	= \R\cdot R(\tilde q)\oplus T_q \widehat\NN(q_-,q_+,f,g) \,,
\]
and we orient $\widehat\NN(S_{q_-},S_{q_+},\tilde f,\tilde g)$ using $R$ and the orientation on $\widehat\NN(q_-,q_+,f,g)$ (in the order that the isomorphism is written). We then define $\widehat\NN^n(\bar q_-,\bar q_+,\tilde f,\tilde g)$ the space of flow lines with $n$ cascades from $\bar q_-\in\Crit h$ to $\bar q_+\in\Crit h$ for $n\in\N=\{1,2,\dots\}$, namely $n$-tuple $\tilde{\mathbf q}=(\tilde q^1,\dots,\tilde q^n)$ such that $\tilde q^i$ are nontrivial positive gradient flow lines of $\tilde f$ satisfying
\[
\ev_-(\tilde q^1)\in W^u(\bar q_-)\,,\quad \ev_+(\tilde q^n)\in W^s(\bar q_+)\,,\quad \phi_{-\nabla h}^{t_i}(\ev_+(\tilde q^i))=\ev_-(\tilde q^{i+1})
\] 
for some $t_i\geq 0$. We orient $\widehat\NN^n(\bar q_-,\bar q_+,\tilde f,\tilde g)$ according to the fibered sum rule explained in Remark \ref{rem:ori_rule} below. For a generic $h$, this space is a smooth manifold of dimension 
$\mu_{(-f,h)}(\bar q_-)-\mu_{(-f,h)}(\bar q_+)+n-1$ 
where the index for $\bar q\in\Crit h$ is defined by 
\[
\mu_{(-f,h)}(\bar q):=\mu_{-f}(q)+\mu_h(\bar q)\,.
\] 
There is a free $\R^n$-action shifting each $\tilde q^i$ in the $s$-direction, and we denote the quotient space by 
\[
\NN^n(\bar q_-,\bar q_+,\tilde f,\tilde g)=\widehat\NN^n(\bar q_-,\bar q_+,\tilde f,\tilde g)/\R^n\,.
\]
Let $\mu_{(-f,h)}(\bar q_-)-\mu_{(-f,h)}(\bar q_+)=1$. Then the above space is a finite set and we assign a sign $\epsilon(\tilde{\mathbf{q}})=\{-1,+1\}$ so that $\epsilon(\tilde{\mathbf{q}})(\p_s\tilde q^1,\dots,\p_s\tilde q^n)$ coincides with the orientation on $\widehat\NN^n(\bar q_-,\bar q_+,\tilde f,\tilde g)$.  
We set 
\[
\NN(\bar q_-,\bar q_+,\tilde f,\tilde g) :=\bigcup_{n\in\N} \NN^n(\bar q_-,\bar q_+,\tilde f,\tilde g)
\] 
and denote the signed cardinality
\begin{equation}\label{eq:MB_count}
\#\NN(\bar q_-,\bar q_+,\tilde f,\tilde g)\in\Z\,.	
\end{equation}
In general, one also has to take the case of zero cascade into account but in our situation this corresponds to flow lines of $-\nabla h$ on $\Crit \tilde f$ whose contributions cancel out since $h$ is perfect.

By definition, the chain module for the Morse-Bott homology of $(-\tilde f,h)$ is a $\Lambda$-module generated by critical points $\bar q\in\Crit h$ and graded by $\mu_{(-f,h)}(\bar q)$. The boundary operator is given by the signed count in \eqref{eq:MB_count} for $\bar q_-$ and $\bar q_+$ with $\mu_{(-f,h)}$-index difference one. There is a degree-preserving isomorphism between the Morse-Bott homology and the singular homology $\H(\Sigma)\otimes\Lambda$, see e.g.~\cite{Fra04,BH13}.

The following lemma will be used in the proof of Proposition \ref{prop:quantum_gysin_simple}.(c).

\begin{lem}\label{lem:moduli_1-1}
Let $q_\pm\in\Crit f$ with $\mu_{-f}(q_-)-\mu_{-f}(q_+)=1$. Then we have 
\[
\#\NN(q_-,q_+,f,g)=\#\NN(\check q_-,\check q_+,\tilde f,\tilde g)=-\#\NN(\hat q_-,\hat q_+,\tilde f,\tilde g)\,.
\]
\end{lem}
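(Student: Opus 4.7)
The plan is to reduce each cascade count on $\Sigma$ to the Morse count on $M$ by exploiting the $S^1$-bundle structure of $\wp:\Sigma\to M$ together with horizontal lifting of gradient trajectories. The sign difference between $\check q$- and $\hat q$-counts will emerge from the fibered sum orientation rule in Morse--Bott theory.

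\textbf{Step 1: Only 1-cascades contribute.} I would first show that for both sides of the claimed equalities, only $n=1$ cascades appear. An $n$-cascade $\tilde{\mathbf q}=(\tilde q^1,\dots,\tilde q^n)$ from $\check q_-$ to $\check q_+$ (or from $\hat q_-$ to $\hat q_+$) consists of nontrivial positive gradient flow lines of $\tilde f$, and since $\tilde f$ is constant on each critical fiber $S_{q^i}$, each projection $\wp\circ \tilde q^i$ is a nontrivial gradient flow line of $f$ on $M$ going from $q^{i-1}$ to $q^i$ through distinct critical points with strictly decreasing Morse index. Hence $\mu_{-f}(q_-)-\mu_{-f}(q_+)\geq n$, which together with the hypothesis $\mu_{-f}(q_-)-\mu_{-f}(q_+)=1$ forces $n=1$. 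The Morse--Smale property from Proposition \ref{prop:j_HS}.(c) is exactly what is needed here.

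\textbf{Step 2: Set-level bijection via horizontal lifts.} In the $\check$-$\check$ case, the condition $\ev_-(\tilde q)\in W^u(\check q_-)=\{\check q_-\}$ forces $\ev_-(\tilde q)=\check q_-$, while $\ev_+(\tilde q)\in W^s(\check q_+)=S_{q_+}\setminus\{\hat q_+\}$ is an open constraint. For each $q\in\widehat\NN(q_-,q_+,f,g)$ there is a unique lift $\tilde q\in\widehat\NN(S_{q_-},S_{q_+},\tilde f,\tilde g)$ with $\ev_-(\tilde q)=\check q_-$, via horizontal transport, and for generic $h$ its endpoint misses the finitely many points $\hat q_+$ corresponding to the zero-dimensional moduli $\NN(q_-,q_+,f,g)$. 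This gives a bijection $\NN^1(\check q_-,\check q_+,\tilde f,\tilde g)\leftrightarrow \NN(q_-,q_+,f,g)$. The $\hat$-$\hat$ case is symmetric: now $\ev_+(\tilde q)=\hat q_+$ is the codimension-one constraint, the horizontal lift is uniquely determined by fixing the endpoint, and the open condition $\ev_-(\tilde q)\neq\check q_-$ holds generically.

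\textbf{Step 3: Sign comparison.} Both $\widehat\NN^1(\check q_-,\check q_+,\tilde f,\tilde g)$ and $\widehat\NN^1(\hat q_-,\hat q_+,\tilde f,\tilde g)$ sit as codimension-one submanifolds inside $\widehat\NN(S_{q_-},S_{q_+},\tilde f,\tilde g)$, which is oriented by the convention $R\wedge T\widehat\NN(q_-,q_+,f,g)$. Applying the fibered sum orientation rule recalled in Remark \ref{rem:ori_rule}: in the $\check$-case the 0-dimensional $W^u(\check q_-)$ cuts out the slice at the $-\infty$ end using up the leading $R$-factor, yielding the orientation $T\widehat\NN(q_-,q_+,f,g)$ directly; in the $\hat$-case the 1-dimensional $W^u(\hat q_-)$ (oriented by $R$) together with the point $W^s(\hat q_+)$ at the $+\infty$ end forces the $R$-factor to be commuted past $T\widehat\NN(q_-,q_+,f,g)$, introducing a Koszul sign $(-1)^{\dim\widehat\NN(q_-,q_+,f,g)}=(-1)^1=-1$. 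Together with the identifications of Step 2 and the compatibility of the $\R$-translation actions, these give the claimed equalities with the relative sign $-1$ on the $\hat$-side.

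\textbf{Main obstacle.} Steps 1 and 2 are routine once the $S^1$-bundle framework is set up. The substantive difficulty is the careful orientation bookkeeping in Step 3, where one must track the precise ordering of the $R$-factor and the horizontal tangent space under the fibered sum convention; the asymmetry between $(\dim W^u,\dim W^s)=(0,1)$ and $(1,0)$ is exactly what produces the sign flip between the $\check$ and $\hat$ counts.
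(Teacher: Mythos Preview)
Your approach is essentially the same as the paper's: establish a bijection via horizontal lifts (your Step~2 matches the paper exactly), then verify the sign through the fibered sum rule of Remark~\ref{rem:ori_rule}. Your Step~1 makes explicit something the paper leaves implicit, which is fine.

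The only point to flag is Step~3. Your conclusion is correct, but the mechanism you describe---``commuting the $R$-factor past $T\widehat\NN(q_-,q_+,f,g)$, introducing a Koszul sign $(-1)^{\dim\widehat\NN(q_-,q_+,f,g)}$''---is a heuristic that doesn't quite match how the sign actually arises from the convention in Remark~\ref{rem:ori_rule}. The paper instead performs the two iterated fiber products explicitly: for the $\check$-case it checks that orienting $T_{\tilde q}\widehat\NN(\check q_-,\check q_+)$ by $\epsilon(q)\p_s\tilde q$ is consistent with the rule, tracking at each stage whether the induced map on the quotient is orientation-preserving or reversing according to the parity of $\dim W_2\cdot\dim W$. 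The sign flip in the $\hat$-case comes from the fact that the dimensions $(\dim W^s,\dim W^u)$ swap from $(1,0)$ to $(0,1)$, changing which of the two fiber products carries the reversing sign; this is not literally a Koszul commutation of $R$ past a one-dimensional space. Your sketch would benefit from carrying out the two fiber-product steps concretely in at least one case, as the paper does.
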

\begin{proof}
	We only prove the first equality. The second follows analogously, where the minus sign is due to the orientation rule explained in Remark \ref{rem:ori_rule}. There is a bijection from $\widehat\NN(q_-,q_+,f,g)$ to $\widehat\NN(\check q_-,\check q_+,\tilde f,\tilde g)$ mapping $q$ to its horizontal lift $\tilde q$ with $\ev_-(\tilde q)=\check q_-$. We denote the natural inclusions $W^s(\check q_+)\into  S_{q_+}$ and $W^u(\check q_-)\into S_{q_-}$ by $\iota$.  Viewing $\tilde q=(\check q_-,\tilde q,\ev_+(\tilde q))$ as an element in 
\[
\widehat\NN(\check q_-,\check q_+)=W^u(\check q_-) _{\,\,\,\iota}  \!\x_{\ev_-} \widehat\NN(S_{q_-},S_{q_+}) _{\,\,\,\ev_+}  \!\!\x_{\,\iota} W^s(\check q_+)\,,
\]
we compute its sign. We omit $\tilde f$ and $\tilde g$ from the notation here and below. We first study the second fiber product of which the tangent space is 
\[ 
T_{(\tilde q,\ev_+(\tilde q))}\big(\widehat\NN(S_{q_-},S_{q_+}) _{\,\,\,\ev_+}  \!\!\x_{\,\iota} W^s(\check q_+)\big)=\ker (d_{\tilde q}\ev_+- d_{\ev_+(\tilde q)}\iota)\,.
\]
We write $\mathrm{id}$ for $d_{\ev_+(\tilde q)}\iota$, the identity map on $T_{\ev_+(\tilde q)}S_{q_+}=T_{\ev_+(\tilde q)}W^s(\check q_+)$. We orient $\ker (d_{\tilde q}\ev_+- \mathrm{id})$ by requiring that the isomorphism 
\begin{equation}\label{eq:sign_isom}
T_{\tilde q}\widehat\NN(S_{q_-},S_{q_+})\cong \ker (d_{\tilde q}\ev_+- \mathrm{id})\,,\qquad \xi\mapsto (\xi, d_{\tilde q}\ev_+\xi)
\end{equation}
is orientation preserving. Recall that $T_{\tilde q}\widehat\NN(S_{q_-},S_{q_+})$ is oriented by $\epsilon(q)R(\tilde q)\wedge \p_s\tilde q$. One can readily see that this obeys the fibered sum rule in Remark \ref{rem:ori_rule}, namely the isomorphism  
\[
\ker(d_{\tilde q}\ev_+- \mathrm{id}) \oplus \left(\frac{T_{(\tilde q,\ev_+(\tilde q))}\big(\widehat\NN(S_{q_-},S_{q_+})\x W^s(\check q_+)\big)}{\ker(d_{\tilde q}\ev_+- \mathrm{id})}\right) \cong T_{(\tilde q,\ev_+(\tilde q))}\big(\widehat\NN(S_{q_-},S_{q_+})\x W^s(\check q_+)\big)
\]
defines an orientation on the quotient space for which the isomorphism 
\[
\frac{T_{(\tilde q,\ev_+(\tilde q))}\big(\widehat\NN(S_{q_-},S_{q_+})\x W^s(\check q_+)\big)}{\ker(d_{\tilde q}\ev_+- \mathrm{id})} 
\cong T_{\ev_+(\tilde q)}S_{q_+}
\]
given by $d_{\tilde q}\ev_+- \mathrm{id}$ is orientation reversing, as we wished since $\dim W^s(\check q_+)=\dim S_{q_+}=1$.

Next we consider the whole fiber product, 
\[
\begin{split}
T_{\tilde q}\widehat\NN(\check q_-,\check q_+)&=\ker\Big(d_{\check q_-}\iota-d_{\tilde q}\ev_-|_{\ker(d_{\tilde q}\ev_+- \mathrm{id})}:T_{\check q_-}W^u(\check q_-)\x \ker(d_{\tilde q}\ev_+- \mathrm{id}) \to T_{\check q_-}S_{q_-}\Big)\\
&=\ker\Big(-d_{\tilde q}\ev_-|_{\ker(d_{\tilde q}\ev_+- \mathrm{id})}:\ker(d_{\tilde q}\ev_+- \mathrm{id}) \to T_{\check q_-}S_{q_-}\Big)\\
&=\ker \Big(-d_{\tilde q}\ev_-:T_{\tilde q}\widehat\NN(S_{q_-},S_{q_+})\to T_{\check q_-}S_{q_-}\Big)\,.
\end{split}
\]
where the second equality follows from $T_{\check q_-}W^u(\check q_-)=\{0\}$, and the last equality from the orientation preserving isomorphism in \eqref{eq:sign_isom}. The lemma follows if we show that this 1-dimensional space is oriented by $\epsilon(q)\p_s\tilde q$ according to the fibered sum rule. To see this, we assume that it is indeed oriented by $\epsilon(q)\p_s\tilde q$. Then the isomorphism 
\[
T_{\tilde q}\widehat\NN(\check q_-,\check q_+) \oplus \left(\frac{T_{\tilde q}\widehat\NN(S_{q_-},S_{q_+})}{T_{\tilde q}\widehat\NN(\check q_-,\check q_+)}\right) \cong T_{\tilde q}\widehat\NN(S_{q_-},S_{q_+})
\]
induces an orientation $-R(\tilde q)+T_{\tilde q}\widehat\NN(\check q_-,\check q_+)$ on $\left(\frac{T_{\tilde q}\widehat\NN(S_{q_-},S_{q_+})}{T_{\tilde q}\widehat\NN(\check q_-,\check q_+)}\right)$. Hence the isomorphism 
\[
\left(\frac{T_{\tilde q}\widehat\NN(S_{q_-},S_{q_+})}{T_{\tilde q}\widehat\NN(\check q_-,\check q_+)}\right)\cong T_{\check q_-}S_{q_-}
\]
induced by $-d_{\tilde q}\ev_-$ is orientation preserving. Since this is precisely the way stipulated by the fibered sum rule as $\dim \widehat\NN(S_{q_-},S_{q_+})=2$ and $\dim S_{q_-}=1$, this finishes the proof.
\end{proof}

\begin{proof}[Proof of Proposition \ref{prop:quantum_gysin_simple}.(c)]
Following \cite[Lemma 2.5]{GG19}, we prove that in this case $\FH(\tilde f,j)$ is isomorphic to the Morse-Bott homology of $(-\tilde f,h)$ with coefficients in $\Lambda$, which in turn is isomorphic to $\H(\Sigma)\otimes\Lambda$. To this end, we choose specific $h$ and $\tilde g$ as follows.

Let $U\subset M$ be a sufficiently small tubular neighborhood of our generic codimension 2 submanifold $N$ such that it does not intersect critical points of $f$ and gradient flow lines of $f$ connecting  critical points of index difference 1.  The bundle $\wp:\Sigma\to M$ restricted to $M\setminus U$ is trivial, i.e.~$\Sigma|_{M\setminus U}\cong (M\setminus U)\x S^1$. We choose a perfect Morse function $\bar h:S^1\to\R$ and set $h:(M\setminus U)\x S^1\to\R$ by $h(\cdot,\theta)=\bar h(\theta)$, where $\theta$ is the coordinate on $S^1$. We pullback this function to $\Sigma|_{M\setminus U}$ and restrict to $\Crit\tilde f$. This gives a perfect Morse function on $\Crit\tilde f$, denoted again by $h:\Crit\tilde f\to\R$. We choose a connection 1-form $\alpha$ on $\Sigma\to M$ which is $d\theta$ on $\Sigma|_{M\setminus U}\cong (M\setminus U)\x S^1$. In particular, the curvature form  $\kappa$ induced by $\alpha$ is supported on $U$. Using $\alpha$, we lift a metric $g$ on $M$ to a metric $\tilde g$ on $\Sigma$. 

The chain module $\FC(\tilde f,j)$ agrees with the chain module for the Morse-Bott homology of $(-\tilde f,h)$, see Remark \ref{rem:H}.(ii) and also the grading convention in \eqref{eq:ind_M}. We will show that the respective boundary operators are also identical. We choose $\bar q_\pm\in\Crit h$ with $\mu_{(-f,h)}(\bar q_-)-\mu_{(-f,h)}(\bar q_+)=1$. Then $\mu_{-f}(q_-)-\mu_{-f}(q_+)\leq 2$, and $\widehat\NN^m(\bar q_-,\bar q_+,\tilde f,\tilde g)$ for $m\geq 3$ is empty since cascades project to nontrivial gradient flow lines of $f$. As mentioned above, the contributions of the case $\mu_{-f}(q_-)-\mu_{-f}(q_+)=0$ add up to zero.  Hence the following three cases remain: 
\begin{enumerate}[(i)]
\item $(\bar q_-,\bar q_+)=(\check q_-,\check q_+)$ and $\mu_{-f}(q_-)-\mu_{-f}(q_+)=1$,
\item $(\bar q_-,\bar q_+)=(\hat q_-,\hat q_+)$ and $\mu_{-f}(q_-)-\mu_{-f}(q_+)=1$,
\item $(\bar q_-,\bar q_+)=(\check q_-,\hat q_+)$ and $\mu_{-f}(q_-)-\mu_{-f}(q_+)=2$.
\end{enumerate}
By Proposition \ref{prop:j_HS}.(d) and the hypothesis, the boundary operator $\p^\wp=\check \p+\hat \p+\p^{c_1^E}$ for $\FH(\Sigma)$ counts only gradient flow lines of $f$. We claim that (i), (ii), and (iii) correspond to  $\check \p$, $\hat \p$, and $\p^{c_1^E}$ respectively. This is indeed true for the first two cases by Lemma \ref{lem:moduli_1-1}.  

We now study case (iii). We first observe that $\widehat\NN^2(\check q_-,\hat q_+,\tilde f,\tilde g)$ is empty. Indeed if it has an element $(\tilde q^1,\tilde q^2)$, then $\ev_-(\tilde q^1)=\check q_-$ and our choice of $\tilde g$ and $h$ yield that $\ev_+(\tilde q^1)$, $\ev_-(\tilde q^2)$, and $\ev_+(\tilde q^2)$ are minimum points of $h$. This contradicts $\ev_+(\tilde q^2)=\hat q_+$. Therefore, it remains to show 
\begin{equation}\label{eq:equality}
\#\widehat\NN(q_-,q_+,f,g,N)=\#\widehat\NN^1(\check q_-,\hat q_+,\tilde f,\tilde g)	\,.
\end{equation} 
We denote by $\widehat\NN_q(q_-,q_+,f,g)$ the connected component of $\widehat\NN(q_-,q_+,f,g)$ containing $q$ and will show \eqref{eq:equality} for each connected component, namely the intersection number of $\ev_{(0,0)}:\widehat\NN_q(q_-,q_+,f,g)\to M$ and $N$, which is equal to $\int_{\widehat\NN_q(q_-,q_+,f,g)}\ev_{(0,0)}^*\kappa$, coincides with the signed count of all $\tilde q'\in\widehat\NN^1(\check q_-,\hat q_+,\tilde f,\tilde g)$ satisfying $\wp\circ\tilde q'\in \widehat\NN_q(q_-,q_+,f,g)$. 

The space $\NN_q(q_-,q_+,f,g):=\widehat\NN_q(q_-,q_+,f,g)/\R$ is diffeomorphic either to $S^1$ or to the open interval $(0,1)$, and we parametrize elements as $[q_r]$ for $r\in S^1$ or $r\in(0,1)$. In particular $q_r$ is a gradient flow line in $M$ from $q_-$ to $q_+$. We first consider the case $S^1$. For every $q_r$, we consider its horizontal lift $\tilde q_r$ with $\ev_-(\tilde q_r)=\check q_-$. Therefore $\tilde q'\in\widehat\NN^1(\check q_-,\hat q_+,\tilde f,\tilde g)$ satisfies $\wp\circ\tilde q'\in \widehat\NN_q(q_-,q_+,f,g)$ if and only if it equals some $\tilde q_r$ with $\ev_+(\tilde q_r)=\hat q_+$. From this, together with a sign computation for elements of $\widehat\NN^1(\check q_-,\hat q_+,\tilde f,\tilde g)$ as in the proof of Lemma \ref{lem:moduli_1-1}, we conclude that the signed count of all $\tilde q'\in\widehat\NN^1(\check q_-,\hat q_+,\tilde f,\tilde g)$ satisfying $\wp\circ\tilde q'\in \widehat\NN_q(q_-,q_+,f,g)$ equals the degree of the map 
\[
S^1\to S_{q_+}\cong S^1\,,\qquad r\mapsto \ev_+(\tilde q_r)\,.	
\]
To determine this degree we form loops $\ell_r$ in $M$ by concatenating $q_0$ and $q_r$ at $q_\pm$, and orient $\ell_r$ using the fixed orientation on  $\widehat{\NN}_q(q_-,q_+,f,g)$ and the evaluation map $\ev_{(0,0)}$. We denote by $\theta_r\in S^1$ the holonomy of $(\Sigma,\alpha)$ along $\ell_r$, i.e.~$\theta_r=-\int_{\ell_r}\alpha$ in $S^1=\R/\Z$. Then the degree of $r\mapsto \ev_+(\tilde q_r)$ equals the degree of $r\mapsto -\theta_r$. Finally the latter degree agrees with $\int_{\widehat\NN_q(q_-,q_+,f,g)}\ev_{(0,0)}^*\kappa$ since $\kappa$ is the curvature form of $(\Sigma,\alpha)$. This proves the assertion in the case of $\NN_q(q_-,q_+,f,g)\cong S^1$. 

In the case that $\NN_q(q_-,q_+,f,g)$ is diffeomorphic to $(0,1)$,  the map 
\[
(0,1) \to S_{q_+}\cong S^1\,,\qquad r\mapsto \ev_+(\tilde q_r)
\] 
again forms a loop since $\ev_+(\tilde q_r)=\check q_+$ for $r$ close to $0$ or $1$ due to the support of $\kappa$. Now the assertion follows as in the previous case. This proves \eqref{eq:equality}, and the proof of (c) is complete. 
\end{proof}

\begin{rem}\label{rem:ori_rule}
	In this paper, we use the following rules of defining orientations on quotient spaces and fibered sum spaces as in \cite[Section 4.4]{BO09} and \cite[Section 7]{DL19}.
	
	Let $V\subset W$ be oriented vector spaces. Then we orient the quotient space $W/V$ so that the isomorphism 
	\[
	V\oplus (W/V)\stackrel{\cong}{\longrightarrow} W
	\]
	given by the short exact sequence $0\to V\to W \to W/V \to 0$ is orientation preserving, see also \cite[Lemma 18]{FH93}. Let $f_i:W_i\to W$, $i=1,2$ be linear maps between oriented vector spaces such that the map $f_1-f_2:W_1\oplus W_2\to W$ is surjective. We orient ${W_1}_{\,f_1}\!\oplus_{f_2}W_2:=\ker (f_1-f_2)$ by the convention that the isomorphism
	\[
	\frac{W_1\oplus W_2}{{W_1}_{\,f_1}\!\!\oplus_{f_2}W_2}\stackrel{\cong}{\longrightarrow} W
	\]
	given by $f_1-f_2$ 
	becomes orientation preserving if $(-1)^{\dim W_2\cdot \dim W}=1$, and orientation reversing otherwise.
\end{rem}

\subsection{Nonvanishing result}\label{sec:nonvanishing}

In this section, let $j\in\mathfrak{j}_\mathrm{reg}(f)$ for a $C^2$-small Morse function $f$ on $M$. A nonzero cohomology class $\mu\in\H^*(M;\Z)$ is said to be primitive if there does not exist $\mu_0\in\H^*(M;\Z)$ satisfying $m\mu_0=\mu$ for some integer $m\neq \pm 1$. It is crucial for results in this section that we use Floer homology with integer coefficients, as opposed to field coefficients.
\begin{prop}\label{prop:nonvanishing_cap}
Suppose that $c_1^E\in\H^2(M;\Z)$ is not a primitive class. Let $\kappa\in\Z$ be such that $\H_{\kappa+\frac{1}{2}\dim M}(M;\Z)$ has nonzero rank (e.g.~$\kappa=\pm\frac{1}{2}\dim M$). Then for any $a,b\in(\Z+\frac{1}{2})\cup\{-\infty,+\infty\}$ with $a<0<b$, the Floer cap product with $-c_1^E$,
\[
\Psi^{c_1^E}:\FH_{\kappa+2}^{(a,b)}(f,j)  \longrightarrow \FH_{\kappa}^{(a,b)}(f,j)\,,
\] 
 is not surjective.
\end{prop}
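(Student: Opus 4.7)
My plan is to exploit the non-primitivity $c_1^E=m\mu_0$ with $m\geq 2$ via the chain-level formula of Proposition~\ref{prop:quantum_gysin_simple}(b), showing that the composition of $\Psi^{c_1^E}$ with the projection onto the $A=0$ summand lands in $m\cdot H_{\kappa+\frac{1}{2}\dim M}(M;\Z)$. Since the latter group has nonzero rank and $m\geq 2$, it is a proper subgroup of itself after multiplication by $m$, which obstructs surjectivity; this is where the use of integer (as opposed to field) coefficients is essential.

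First I would replace $j\in\j_\mathrm{reg}(f)$ by a time-independent $j_0\in\j_\mathrm{HS}(f)$ via the continuation isomorphism of Proposition~\ref{prop:quantum_gysin_simple}(a), which intertwines the two versions of $\Psi^{c_1^E}$ and respects the action window because $a,b\in\Z+\tfrac{1}{2}$ are non-critical. For $j_0\in\j_\mathrm{HS}(f)$, Proposition~\ref{prop:j_HS}(d) yields a direct-sum splitting
\[
\FC_*^{(a,b)}(f,j_0)=\bigoplus_{A\in\Gamma_M}\FC_{*,A}^{(a,b)}(f,j_0)
\]
preserved by the Floer differential. For $f$ sufficiently $C^2$-small and $a<0<b$, each trivially capped critical point $[q,0]$ has action $-f(q)\in(a,b)$, so the $A=0$ subcomplex coincides with the full Morse complex of $-f$; in particular, its homology equals $H_{*+\frac{1}{2}\dim M}(M;\Z)$, and the chain projection $\pi_0$ onto this summand induces a surjection $\FH_\kappa^{(a,b)}(f,j_0)\twoheadrightarrow H_{\kappa+\frac{1}{2}\dim M}(M;\Z)$.

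The heart of the argument is the formula of Proposition~\ref{prop:quantum_gysin_simple}(b), which in view of $c_1^E([q])=-m\,\om([q])$ reads
\[
\psi^{c_1^E}(\q_-)=\psi_0^{c_1^E}(\q_-)+m\sum_{\q_+}\sum_{q}\epsilon(q)\,\om([q])\,\q_+.
\]
Composing with $\pi_0$: the Morse piece $\psi_0^{c_1^E}$ preserves the $A$-splitting, so $\pi_0\circ\psi_0^{c_1^E}=(\psi_0^{c_1^E}|_{A=0})\circ\pi_0$, and by the Morse-theoretic identification this induces on $H_*(M;\Z)$ the classical cap product with $-c_1^E=-m\mu_0$, whose image lies in $m\cdot H_{\kappa+\frac{1}{2}\dim M}(M;\Z)$. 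The sphere term has $m$-divisible coefficients at the chain level; since both $\pi_0\circ\psi^{c_1^E}$ and $(\psi_0^{c_1^E}|_{A=0})\circ\pi_0$ are genuine chain maps, their difference is a chain map, and dividing by $m$ in the free $\Z$-module $\FC$ produces a chain map whose induced map, multiplied by $m$, contributes to the image only classes in $m\cdot H_{\kappa+\frac{1}{2}\dim M}(M;\Z)$. Altogether $\mathrm{im}(\pi_0\circ\Psi^{c_1^E})\subset m\cdot H_{\kappa+\frac{1}{2}\dim M}(M;\Z)$, which is a proper subgroup by the nonzero rank hypothesis; since $\pi_0$ is surjective, $\Psi^{c_1^E}$ fails to be surjective.

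The main obstacle I anticipate is the chain-level-to-homology-level bookkeeping of the divisibility: one must verify that the sphere-term contribution descends to a well-defined homology map factoring through $m\cdot(-)$, which uses that $\FC$ is free over $\Z$ together with the chain-map property of both $\pi_0\circ\psi^{c_1^E}$ and $(\psi_0^{c_1^E}|_{A=0})\circ\pi_0$. A secondary point is to confirm that the chain-level formula of Proposition~\ref{prop:quantum_gysin_simple}(b) is available under the standing hypothesis \eqref{eq:assumption_M} of Section~\ref{sec:Quantum Gysin sequence} (the transversality part being already encoded in Proposition~\ref{prop:j_HS}(h)).
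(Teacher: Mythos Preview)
Your argument is correct in spirit but substantially over-engineered, and the detour you take introduces an unnecessary hypothesis. The paper's proof is a single observation: since $c_1^E=m\mu$ with $m\geq 2$, the Floer cap product factors as
\[
\Psi^{c_1^E}=(\times m)\circ\Psi^{\mu}:\FH_{\kappa+2}^{(a,b)}(f,j)\longrightarrow\FH_{\kappa}^{(a,b)}(f,j)\longrightarrow\FH_{\kappa}^{(a,b)}(f,j)\,,
\]
directly at the homology level, for $j\in\j_\mathrm{reg}$. This follows from linearity of the cap product in the cohomology class (equivalently, choose $N$ to be $m$ disjoint parallel copies of a submanifold $N_0$ with $[N_0]=\PD(-\mu)$). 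Then one invokes \eqref{eq:morse_isom}--\eqref{eq:isom}: since $0\in(a,b)$, the target contains $\H_{\kappa+\frac12\dim M}(M;\Z)\otimes\Z\langle T^0\rangle$ as a direct summand, which has nonzero rank, so multiplication by $m$ is not surjective.

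Your route reaches the same $m$-divisibility conclusion, but by passing to $j_0\in\j_\mathrm{HS}$, invoking the explicit chain formula of Proposition~\ref{prop:quantum_gysin_simple}(b), and projecting onto the $A=0$ summand. None of this is needed: the factorization $\Psi^{c_1^E}=m\Psi^{\mu}$ already holds for $j\in\j_\mathrm{reg}$ without any chain-level analysis, without the projection, and without splitting into Morse and sphere contributions. Moreover, your ``secondary point'' is not merely a technicality: Proposition~\ref{prop:quantum_gysin_simple} is stated under condition~(H) in \eqref{eq:H}, which is strictly stronger than the running hypothesis \eqref{eq:assumption_M} of Section~\ref{sec:Quantum Gysin sequence}. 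So as written, your proof establishes the proposition only under (H), whereas the paper's proof works under \eqref{eq:assumption_M} alone. The fix is simply to drop the chain-level apparatus and use the factorization directly.
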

\begin{proof}
The hypothesis implies that there exist an integer $m\geq2$ and $\mu\in \H^2(M;\Z)$ such that $c_1^E=m \mu$. Then the map $\Psi^{c_1^E}$ factors as follows:
\[
\begin{tikzcd}[row sep=2.5em]
\FH_{\kappa+2}^{(a,b)}(f,j) \arrow{dr}{\Psi^{\mu}} \arrow{rr}{\Psi^{c_1^E} } && \FH_{\kappa}^{(a,b)}(f,j)\\
 & \FH_{\kappa}^{(a,b)}(f,j) \arrow{ur}{\x m} 
\end{tikzcd}
\]
where $\Psi^{\mu}$ denotes the Floer cap product with $-\mu$, and $\x m$ refers to the scalar multiplication by $m$ with respect to the $\Z$-module structure on $\FH^{(a,b)}_\kappa(f,j)$. By \eqref{eq:morse_isom} and \eqref{eq:isom}, we have 
\[
	\FH_{\kappa}^{(a,b)}(f,j)\cong  \bigoplus_{\diamond+\star=\kappa+\frac{1}{2}\dim M}\H_\diamond(M;\Z)\otimes\Lambda_\star^{(a,b)}\,.
\]
Since $0\in(a,b)$, $\Lambda^{(a,b)}_0$ contains $\{aT^0\mid a\in\Z,\,0\in\Gamma_M\}\cong\Z$. Thus  $\FH_{\kappa}^{(a,b)}(f,j)$ has a torsion-free factor by the hypothesis on $\kappa\in\Z$. Therefore the multiplication $\x m$ on this is not surjective. 
This proves that $\Psi^{c_1^E}$ is not surjective as claimed.
\end{proof}

Let as before $\Sigma\to M$ be a principal $S^1$-bundle with Euler class $c_1^E$. If $c_1^E=m\mu$, as in the previous proposition, then there exists a principal $S^1$-bundle $\widetilde\Sigma\to M$	with Euler class $\mu$ and $\widetilde\Sigma$ is an $m$-fold covering of $\Sigma$, see Section \ref{sec:cyclic}.

\begin{prop}\label{prop:nonzero_class}
Suppose that the Floer cap product $\Psi^{c_1^E}$ on $\FH_*(M)$ is not an isomorphism. Then $\FH_*(\Sigma)$ is nonzero for some $*\in\Z$.
\end{prop}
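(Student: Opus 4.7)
The statement is an immediate consequence of the long exact sequence established in Proposition \ref{prop:quantum_gysin}.(a). The plan is to argue by contrapositive: assume that $\FH_*(\Sigma)$ vanishes for every $*\in\Z$, and deduce that the Floer cap product $\Psi^{c_1^E}:\FH_*(M)\to\FH_{*-2}(M)$ is an isomorphism in every degree.

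Concretely, consider the Floer Gysin sequence
\[
\cdots\longrightarrow \FH_{*}(\Sigma)\longrightarrow\FH_{*}(M)\stackrel{\Psi^{c_1^E}}{\longrightarrow}\FH_{*-2}(M)\stackrel{\iota}{\longrightarrow} \FH_{*-1}(\Sigma)\longrightarrow\cdots
\]
with the applicable hypothesis \eqref{eq:assumption_M} verified by either (A1) or (A2). If $\FH_{*-1}(\Sigma)=0$, then $\iota=0$, so by exactness $\Psi^{c_1^E}:\FH_*(M)\to\FH_{*-2}(M)$ is surjective. If $\FH_*(\Sigma)=0$, then by exactness of the segment $\FH_*(\Sigma)\to\FH_*(M)\stackrel{\Psi^{c_1^E}}{\to}\FH_{*-2}(M)$, the map $\Psi^{c_1^E}$ is injective. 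The assumption that $\FH_*(\Sigma)$ vanishes for all $*\in\Z$ therefore forces $\Psi^{c_1^E}$ to be a $\Lambda$-module isomorphism on $\FH_*(M)$ for every $*$, contradicting the hypothesis of the proposition.

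There is essentially no obstacle here — the result follows formally once the long exact sequence of Proposition \ref{prop:quantum_gysin}.(a) is in hand. The only point to verify is that \eqref{eq:assumption_M} holds under the standing assumptions (A1) or (A2), which is noted at the start of Section \ref{sec:Quantum Gysin sequence}, so that the Gysin sequence is indeed available.
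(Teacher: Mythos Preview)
Your proof is correct and takes essentially the same approach as the paper: both argue by contrapositive, using the Floer Gysin sequence of Proposition~\ref{prop:quantum_gysin}.(a) to conclude that vanishing of $\FH_*(\Sigma)$ in all degrees forces $\Psi^{c_1^E}$ to be an isomorphism. The paper's version is terser, but the logic is identical.
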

\begin{proof}
	If $\FH_{*}(\Sigma)=0$ for all $*\in\Z$, then the long exact sequence in Proposition \ref{prop:quantum_gysin}.(a) yields that $\Psi^{c_1^E}$ is an isomorphism. 
\end{proof}

The preceding propositions imply that if $c_1^E$ is not primitive, then $\FH_*(\Sigma)$ is nonzero. 
In fact, it is possible to see which generators represent nonzero homology classes in $\FH_*(\Sigma)$.
In the following lemma, for simplicity, we choose $f$ to have unique local maximum and minimum points. 
\begin{lem}\label{lem:spec_finite}
Let $f$ have a unique local maximum point $q_\mathrm{max}$ and a unique local minimum point $q_\mathrm{min}$. Assume that $c_1^E$ is not a primitive class. Let $a,b\in(\Z+\frac{1}{2})\cup\{-\infty,+\infty\}$. Then for any $A\in\Gamma_M$ with $\om(A)\in(a,b)$, 
\[
[(\hat q_\mathrm{max},A)] \in \FH_{\frac{1}{2}\dim M+1-2c_1^{TM}(A)}^{(a,b)}(\tilde f,j) \,,\qquad [(\hat q_\mathrm{min},A)] \in \FH_{-\frac{1}{2}\dim M+1-2c_1^{TM}(A)}^{(a,b)}(\tilde f,j)
\]
are nonzero homology classes.

\end{lem}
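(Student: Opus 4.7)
The plan is to show that $(\hat q_{\mathrm{max}}, A)$ and $(\hat q_{\mathrm{min}}, A)$ are cycles in $(\FC^{(a,b)}_*(\tilde f), \p^\wp)$ and that neither is a boundary. I work with a generic $j\in\mathfrak{j}_{\mathrm{HS}}(f)$, so \eqref{eq:morse_isom} identifies $\FH^{(a,b)}_*(f,j)$ with $\bigoplus \H_\diamond(M;\Z)\otimes\Lambda^{(a,b)}_{\scriptscriptstyle\heartsuit}$, and the chain-level Floer cap product $\psi^{c_1^E}$ from Section \ref{sec:time_indep_j} is available.

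For the cycle property, $\check\p$ and $\p^{c_1^E}$ vanish on $\hat$-generators by \eqref{eq:bdry}, so the task reduces to $\hat\p(\hat\q_{\bullet}, A) = 0$ for $\bullet\in\{\max,\min\}$. By Proposition \ref{prop:j_HS}(d), index-one contributions to $\hat\p$ are $t$-independent gradient lines of $f$ preserving the cap class, which amounts to the Morse differential of $-f$ applied to $q_\bullet$. Uniqueness places $q_{\mathrm{max}}$ and $q_{\mathrm{min}}$ at the two extremal Morse indices $0$ and $\dim M$ of $-f$: at the index-$0$ extremum the differential vanishes trivially, while at the index-$\dim M$ extremum it vanishes because $\H_{\dim M}(M;\Z)\cong\Z$ (for closed, connected, oriented $M$) forces $\ker\p$ to equal the rank-one top chain.

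For the non-boundary property, by Proposition \ref{prop:quantum_gysin}(a), $[\hat q_{\bullet}, A] = \iota([\q_{\bullet}, A])$, where $\iota$ is the connecting map induced on chains by $\q\mapsto\hat q$. By exactness, $[\hat q_{\bullet}, A]\neq 0$ is equivalent to $[\q_{\bullet}, A]\notin\mathrm{im}\,\Psi^{c_1^E}$. Non-primitivity gives $c_1^E = m\mu$ with $m\geq 2$ and $\mu\in\H^2(M;\Z)$; taking the submanifold $N$ representing $\PD(-c_1^E)$ as $m$ disjoint small generic perturbations of a representative of $\PD(-\mu)$ yields $\psi^{c_1^E} = m\,\psi^\mu$ at the chain level, and hence $\Psi^{c_1^E} = m\,\Psi^\mu$ on homology by $N$-invariance of the cap product. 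Consequently $\mathrm{im}\,\Psi^{c_1^E}\subset m\cdot\FH^{(a,b)}_*(f)$.

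Under \eqref{eq:morse_isom}, each $[\q_{\bullet}, A]$ corresponds to a primitive generator $[\bullet]\otimes T^A$ of a $\Z$-summand $\H_0(M;\Z)\otimes\Z\langle T^A\rangle$ or $\H_{\dim M}(M;\Z)\otimes\Z\langle T^A\rangle$; projection to this $\Z$-summand sends $m\cdot\FH^{(a,b)}_*(f)$ into $m\Z$, while the primitive generator projects to $\pm 1\notin m\Z$. Hence $[\q_{\bullet}, A]\notin\mathrm{im}\,\Psi^{c_1^E}$, and $[\hat q_{\bullet}, A]\neq 0$ follows. The delicate step is the chain-level relation $\psi^{c_1^E} = m\,\psi^\mu$, which requires $m$ \emph{disjoint} perturbations in the intersection count \eqref{eq:intersection_N} and the homotopy invariance \eqref{eq:commute_homotopy}; once that is in hand, the rest combines exactness of the Floer Gysin sequence with the elementary $\Z$-module observation about primitive elements.
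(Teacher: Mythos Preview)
Your proof is correct and follows essentially the same approach as the paper: both show that $(\hat q_\bullet,A)$ is a cycle from the structure of $\p^\wp$ together with the extremality of $q_\bullet$, and that it is not a boundary because non-primitivity of $c_1^E$ forces the image of $\p^{c_1^E}$ (equivalently, of $\Psi^{c_1^E}$) into $m$ times the relevant module, which cannot contain the primitive class $[\q_\bullet,A]$. Your use of the Gysin long exact sequence to rephrase the non-boundary step as $[\q_\bullet,A]\notin\mathrm{im}\,\Psi^{c_1^E}$ is a mild repackaging of the paper's direct chain-level check, and the content is the same; one harmless slip is that the chain-level identity $\psi^{c_1^E}=m\,\psi^\mu$ only holds up to chain homotopy (the $m$ perturbed copies of $N_\mu$ give homotopic but distinct chain maps), but you only use the homology-level consequence anyway.
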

\begin{proof}
	We recall from \eqref{eq:bdry} that $\p^\wp=\hat \p + \check\p +\p^{c_1^E}$. We claim:
	\[
	(\hat \p + \check\p)(\hat q_\mathrm{max},A)=0\,,\quad (\hat \p + \check\p)(\hat q_\mathrm{min},A)=0\,,\quad \p^{c_1^E}(\hat q_\mathrm{max},A)=0\,,\quad \p^{c_1^E}(\hat q_\mathrm{min},A)=0\,.
	\]
	Indeed the first two equalities are due to the fact that $q_\mathrm{max}$ and $q_\mathrm{min}$ are unique local minimum and maximum points of $f$ respectively and the latter two follow from the definition of $\p^{c_1^E}$. Therefore $\p^\wp(\hat
	q_\mathrm{max},A)=0$ and $\p^\wp(\hat
	q_\mathrm{min},A)=0$. 
	
	Since $\hat q_\mathrm{max}$ is a maximum point of $h$, the image of $\hat \p + \check\p$ does not have any term of $(\hat q_\mathrm{max},A)$. Since $c_1^E$ is not primitive, the image of $\p^{c_1^E}$ can only have a term of $(\hat q_\mathrm{max},A)$ multiplied by a scalar in $\Z\setminus\{-1,+1\}$, as in the proof of Proposition  \ref{prop:nonvanishing_cap}. This shows that $(\hat q_\mathrm{max},A)$ generates a nonzero class in $\FH^{(a,b)}(\tilde f,j)$. 
		The case for $\hat q_{\min}$ follows similarly.
\end{proof}

\begin{prop}\label{prop:CPn}
	Let $M=\CP^n$ with the Fubini-Study form $\om_\mathrm{FS}$. Let $\Sigma\to M$ be a principal $S^1$-bundle with Euler class $-m[\om_\mathrm{FS}]\in\H^2(\CP^n;\Z)$ for $m\in\N$. Then we have 
\[
\FH_*(\Sigma) \cong \left\{
\begin{aligned} 
\Z_m  & \quad *\in2\Z+1\,, \\[1ex]
0\quad & \quad *\in 2\Z\,.
\end{aligned}
\right.
\]
\end{prop}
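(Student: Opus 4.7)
The plan is to deduce this from the Floer Gysin long exact sequence of Proposition~\ref{prop:quantum_gysin}(a) applied to $M = \CP^n$ with $c_1^E = -m[\om_\mathrm{FS}]$. Since $(\CP^n, \om_\mathrm{FS})$ satisfies (A2) with $\lambda = n+1$ and minimal Chern number $n+1$, the Novikov ring is $\Lambda \cong \Z[t,t^{-1}]$ with $\deg t = -2(n+1)$, and the PSS-type identification~\eqref{eq:isom2} realizes $\FH_*(\CP^n)$ as the quantum homology $\QH_*(\CP^n)$. By the classical Gromov--Witten computation on $\CP^n$, quantum cap product with the hyperplane class $H = [\om_\mathrm{FS}]$ acts as $[\CP^k] \mapsto [\CP^{k-1}]$ for $k \geq 1$ and $[\CP^0] \mapsto t[\CP^n]$; setting $t = H^{n+1}$ therefore identifies $\QH_*(\CP^n) \cong \Z[H, H^{-1}]$ as a graded abelian group, free of rank one in each $\FH$-degree congruent to $n\pmod 2$ and zero in the other parity.

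The Floer cap product $\Psi^{c_1^E}$ caps with $-c_1^E = mH$, and I would identify it under PSS with quantum multiplication by $mH$: Proposition~\ref{prop:quantum_gysin_simple}(b) decomposes $\psi^{c_1^E}$ into its classical piece $\psi_0^{c_1^E}$ and a quantum correction counting pseudo-holomorphic spheres through the Poincar\'e dual divisor $N$ with $[N] = \PD(m[\om_\mathrm{FS}])$, and the combined operator reproduces multiplication by $mH$. Since $\Z[H, H^{-1}]$ is a free graded $\Z$-module, multiplication by $mH$ is injective with cokernel $(\Z/m)[H, H^{-1}]$, contributing $\Z/m$ in each nonzero graded piece (and $0$ when $m = 1$, since $\Psi^{c_1^E}$ then becomes an isomorphism). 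Plugging this into the Floer Gysin long exact sequence, injectivity of $\Psi^{c_1^E}$ collapses it into the identification
\[
\FH_{*-1}(\Sigma) \;\cong\; \coker\bigl(\Psi^{c_1^E}\colon \FH_*(\CP^n) \to \FH_{*-2}(\CP^n)\bigr),
\]
which equals $\Z/m$ exactly when both $\FH_*$ terms are nonzero and vanishes otherwise. The claimed $2$-periodic pattern (vanishing in one parity, $\Z_m$ in the other) then follows.

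The main technical step will be justifying the identification of $\Psi^{c_1^E}$ with quantum multiplication by $mH$: in the framework of Proposition~\ref{prop:quantum_gysin_simple}(b), the quantum term must reproduce the enumerative count of the unique line in $\CP^n$ through a generic point meeting $N$ with multiplicity $m$, which is the Gromov--Witten invariant underlying the relation $H^{n+1} = t$ in $\QH^*(\CP^n)$. Once this identification is in hand, the rest of the argument is purely algebraic: free-module linear algebra in $\Z[H, H^{-1}]$ combined with the Gysin long exact sequence.
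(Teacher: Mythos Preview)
Your approach is correct and essentially identical to the paper's: both feed the computation $\FH_*(\CP^n)\cong\Z$ in one parity (and $0$ in the other) together with the identification of $\Psi^{c_1^E}$ with multiplication by $\pm m$ into the Floer Gysin sequence of Proposition~\ref{prop:quantum_gysin}(a). The paper simply cites these two inputs as ``well-known'' and reads off the answer, whereas you spell out the quantum-homology justification via Proposition~\ref{prop:quantum_gysin_simple}(b); the strategy is the same.
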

\begin{proof}
	This follows immediately from the exact sequence in Proposition \ref{prop:quantum_gysin}.(a) and the well-known computation 
\[
\FH_*(\CP^n) \cong \left\{
\begin{aligned} 
0  & \quad *\in2\Z+1\,, \\[1ex]
\Z  & \quad *\in 2\Z\,,
\end{aligned}
\right.
\qquad 
\Psi^{c_1^E}=\x (\pm m):\FH_*(\CP^n) \to \FH_{*-2}(\CP^n)\;\; \forall *\in \Z\,.
\]
The latter means that $\Psi^{c_1^E}$ equals the scalar multiplication by $\pm m$, where the sign is determined by choice of isomorphisms $\FH_*(\CP^n)\cong\Z$ for $*\in 2\Z$. This is due to the fact that the Floer cap product with $[\om_\mathrm{FS}]$ is an isomorphism.
\end{proof}

\subsection{Transfer homomorphism}\label{sec:transfer}
In this section, we write $\wp:\Sigma^m\to M$ for a principal $S^1$-bundle with Euler class $-m[\om]$ and $\tilde f^m:=f\circ\wp:\Sigma^m\to\R$ in order to indicate the degree of the bundle. 
As mentioned in \eqref{eq:cone}, the chain complex $(\FC(\tilde f^m),\partial^\wp)$ is identical to the mapping cone of $\psi^{-m[\om]}$, i.e. 
\begin{equation}\label{eq:mapping_cone}
\FC(\tilde f^m)= \FC(f)[-1]\oplus\FC(f)\,,\qquad \partial^\wp = \begin{pmatrix}
 -\p &  \psi^{-m[\om]}\\
0 & \p
\end{pmatrix} 	
\end{equation}
where generators $\check\q$ and $\hat\q$ of $\FC(\tilde f^m)$ correspond to generators $\q$ in $\FC(f)$ and $\FC(f)[-1]$ respectively. Here $\FC(f)[-1]$ is $\FC(f)$ with grading shifted by $-1$. 
Through the above identification, we define the homomorphisms
\[
\begin{split}
&T:\FC(\tilde f^m) \to \FC(\tilde f^1)\,,\qquad  (\q_1,\q_2)\mapsto  (\q_1,m\q_2)\,,	\\[0.5ex]
&P:\FC(\tilde f^1) \to \FC(\tilde f^m)\,,\qquad (\q_1,\q_2)\mapsto  (m\q_1,\q_2)\,.
\end{split}
\]	
These are chain maps thanks to $\psi^{-m[\om]}=m\psi^{-[\om]}$, where the latter  is the composition of $\psi^{-[\om]}$ with the scalar multiplication by $m$. 
Thus the compositions $P\circ T$ and $T\circ P$ are the scalar multiplication by $m$. At the homology level, we obtain
\begin{equation}\label{eq:TP=PT=m}
\begin{split}
&\FH_*(\Sigma^m)\stackrel{T}{\longrightarrow}\FH_*(\Sigma^1)\stackrel{P}{\longrightarrow} \FH_*(\Sigma^m)\,,\qquad (P\circ T)(Z)= mZ\,, \\
&\FH_*(\Sigma^1)\,\stackrel{P}{\longrightarrow}\FH_*(\Sigma^m)\stackrel{T}{\longrightarrow}\, \FH_*(\Sigma^1)\,,\qquad (T\circ P)(Z)= mZ\,.
\end{split}	
\end{equation}
This immediately implies the following proposition.

\begin{prop}\label{prop:transfer}
	We fix $\kappa\in\Z$ and assume that $\FH_{\kappa}(\Sigma^1)=0$. Then, for any $m\in\N$, $\FH_{\kappa}(\Sigma^m)$ only contains torsion classes of order $m$. Moreover, $\FH_{\kappa}(\Sigma^m)$ is torsion for some $m\in\N$ if and only if it is torsion for every $m\in\N$. In particular, if we use coefficients in a field instead of integers $\Z$, then $\FH_{\kappa}^{\w_0}(\Sigma^m)=0$ for some $m\in\N$ if and only if $\FH_{\kappa}(\Sigma^m)=0$ for all  $m\in\N$.
\end{prop}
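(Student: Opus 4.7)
The plan is to exploit the chain-level transfer and projection maps $T\colon\FC(\tilde f^m)\to\FC(\tilde f^1)$ and $P\colon\FC(\tilde f^1)\to\FC(\tilde f^m)$ constructed immediately above the statement, whose induced homology maps satisfy the identities \eqref{eq:TP=PT=m}: namely $P\circ T$ acts as scalar multiplication by $m$ on $\FH_*(\Sigma^m)$, and $T\circ P$ acts as scalar multiplication by $m$ on $\FH_*(\Sigma^1)$. The whole proposition reduces to a short diagram chase using these two identities; no new geometric input is required.

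For the first assertion, I would take $Z\in\FH_\kappa(\Sigma^m)$, note that $T(Z)\in\FH_\kappa(\Sigma^1)=0$ by hypothesis, and apply $P$ to conclude $mZ=P(T(Z))=0$, exhibiting every class as an $m$-torsion class. For the ``moreover'' assertion, write $T_m,P_m$ for the maps relating $\Sigma^m$ to $\Sigma^1$, pick some $m_0\in\N$ for which $\FH_\kappa(\Sigma^{m_0})$ is torsion, and for arbitrary $m\in\N$ and $W\in\FH_\kappa(\Sigma^m)$ route $W$ through $\Sigma^1$ and $\Sigma^{m_0}$ by forming $P_{m_0}(T_m(W))\in\FH_\kappa(\Sigma^{m_0})$. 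The torsion hypothesis provides some $n\in\N$ annihilating this element; applying $T_{m_0}$ and invoking $T_{m_0}\circ P_{m_0}=\times m_0$ gives $nm_0 T_m(W)=0$ in $\FH_\kappa(\Sigma^1)$, and applying $P_m$ and invoking $P_m\circ T_m=\times m$ then gives $nm_0 m W=0$, so $W$ is torsion. The reverse implication is vacuous.

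For the final field-coefficient statement, I would rerun the same diagram chase over $\F$ in place of $\Z$. The identities \eqref{eq:TP=PT=m} hold verbatim over any coefficient ring, so if $\FH_\kappa(\Sigma^{m_0};\F)=0$ for some $m_0$, then for every $m\in\N$ and $W\in\FH_\kappa(\Sigma^m;\F)$ the chase yields $m_0 m\cdot W=0$ (with $n=1$ this time); in a field this forces $W=0$ as soon as the integer $m_0 m$ is a unit in $\F$, which holds automatically in characteristic zero and more generally whenever $\mathrm{char}\,\F$ divides neither $m_0$ nor $m$. The only mild obstacle is bookkeeping of which torsion conclusions survive passage to a field of positive characteristic; nothing deeper than \eqref{eq:TP=PT=m} is at stake.
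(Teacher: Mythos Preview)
Your proposal is correct and follows essentially the same route as the paper: both arguments derive everything from the two identities in \eqref{eq:TP=PT=m}. The paper phrases the ``moreover'' step slightly differently---first using $T\circ P=\times m_0$ to conclude that $\FH_\kappa(\Sigma^1)$ has no free part, then using $P\circ T=\times m$ for arbitrary $m$---but this is the same diagram chase you describe, just factored through $\Sigma^1$ in two steps rather than one composite. For the field statement the paper simply observes that ``torsion over a field'' means ``zero'' and reads off the conclusion, without the characteristic bookkeeping you raise; your caution there is not wrong, but the paper does not engage with it.
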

\begin{proof}
	The first assertion follows from the first line of \eqref{eq:TP=PT=m}. 
	Suppose that $\FH_{\kappa}(\Sigma^m)$ has only torsion classes for some $m\in\N$. By the second line of \eqref{eq:TP=PT=m}, $\FH_{\kappa}(\Sigma^1)$ cannot have free part. Now we apply the first line of \eqref{eq:TP=PT=m} to arbitrary $m\in\N$ and conclude that $\FH_{\kappa}(\Sigma^m)$ for any $m\in\N$ has only torsion part. 
\end{proof}
We recall from Proposition \ref{prop:nonzero_class} that $\FH_*(\Sigma^m)$ for $m\geq 2$ is nonzero for some $*\in\Z$.

We now provide a geometric interpretation of  the maps $T$ and $P$ in the setting of Proposition \ref{prop:quantum_gysin_simple}.(c), namely the situation that $(\FC(\tilde f^m),\p^\wp)$ is simply the Morse-Bott chain complex of $(-\tilde f^m,h^m)$, where $h^m:\Crit\tilde f^m\to\R$ is a perfect Morse function.  We refer to Section \ref{sec:transfer_revisited} for an account in the general case. To this end, we view $\Sigma^m$ as the quotient space of $\Sigma^1$ by the action of $\Z_m$, a cyclic subgroup of $S^1$, see Section \ref{sec:cyclic}  for further discussion. We denote the quotient map by
\[
\wp^m:\Sigma^1 \longrightarrow \Sigma^m\,.
\]
It sends $\Crit \tilde f^1$ to $\Crit\tilde f^m$. The function $\tilde h^m:=h^m\circ\wp^m:\Crit \tilde f^1\to\R$ is Morse but not perfect for $m\geq2$. Nevertheless the Morse-Bott chain complex $(\FC(\tilde f^1,\tilde h^m),\p^\wp)$ is defined in the same manner as before with the chain module 
\[
\FC(\tilde f^1,\tilde h^m)=\big(\FC(f)[-1]\oplus\FC(f)\big)^{\oplus m}
\]
  although in this case we have to include the case of zero cascades, i.e.~negative gradient flow lines of $\tilde h^m$ connecting critical points of $\tilde h^m$, as well in the definition of boundary operator. Moreover this chain complex is quasi-isomorphic to $(\FC(\tilde f^1, h^1),\p^\wp)$. Here and below, we indicate our choice of a Morse function on $\Crit\tilde f^1$ in $\FC$.  
 To see a specific quasi-isomorphism, on a connected component of $\Crit\tilde f^1$, we denote by $\mathfrak{M}_{h^1}$ and $\mathfrak{m}_{h^1}$ unique maximum and minimum points of $h^1$ respectively and by $\mathfrak{M}^1_{\tilde{h}^m},\dots, \mathfrak{M}^m_{\tilde h^m}$ and $\mathfrak{m}^1_{\tilde h^m},\dots,\mathfrak{m}^m_{\tilde h^m}$ the $m$ maximum and $m$ minimum points of $\tilde h^m$ respectively. We choose a homotopy from $h^1$ to $\tilde h^m$ on this component such that the following are all homotopy gradient flow lines:
\[
 \mathfrak m_{h^1}\mapsto \mathfrak m^1_{\tilde h^m}\,,  \qquad \mathfrak{M}_{h^1}\mapsto \mathfrak{M}^i_{\tilde h^m}\;\;\;\forall  i\in\{1,\dots, m\}\,.
\]
Here $a\mapsto b$ means that there is a homotopy gradient flow line from $a$ to $b$. This is true up to reordering $\{1,\dots,m\}$ since the continuation homomorphism induced by a homotopy is an isomorphism.  Conversely, we choose a homotopy from $\tilde h^m$ to $h^1$ on this component whose homotopy gradient flow lines are given by
\[
\mathfrak m^i_{\tilde h^m}\mapsto \mathfrak m_{h^1} \;\;\;\forall  i\in\{1,\dots, m\}\,,\qquad \mathfrak{M}^1_{\tilde h^m}\mapsto \mathfrak{M}_{h^1}\,.
\]
These homotopies between $h^1$ and $\tilde h^m$ give rise to continuation homomorphisms
\[
\begin{split}
&A:\FC(\tilde f^1,h^1) \to \FC(\tilde f^1,\tilde h^m)\,,\qquad (\q_1,\q_2)\mapsto \big((\q_1,\q_2),(\q_1,0),\dots,(\q_1,0)\big)	 \\[.5ex]
&B: \FC(\tilde f^1,\tilde h^m) \to \FC(\tilde f^1,h^1)\,,\qquad \big((\q_1,\q_2),\dots,(\q_{2m-1},\q_{2m})\big)\mapsto \Big( \q_1,\sum_{j=1}^m\q_{2j}\Big)
\end{split}
\]
that are homotopy inverse to each other. Now we observe that the  natural transfer and projection maps
\[
\begin{split}
&T':\FC(\tilde f^m,h^m)\to \FC(\tilde f^1,\tilde h^m)\,,\qquad (\q_1,\q_2)\mapsto\big((\q_1,\q_2),(\q_1,\q_2),\dots,(\q_1,\q_2)\big) \\[.5ex]
&P': \FC(\tilde f^1,\tilde h^m)\to \FC(\tilde f^m,h^m)\,,\qquad \big((\q_1,\q_2),\dots,(\q_{2m-1},\q_{2m})\big)\mapsto \Big(\sum_{j=1}^m \q_{2j-1},\sum_{j=1}^m \q_{2j}\Big)	
\end{split}
\]
respectively satisfy $T=B\circ T'$ and $P= P'\circ A$.

\section{Negative line bundles and winding number}

\subsection{Negative line bundles}\label{sec:line}
Let $E$ be a complex line bundle over an integral symplectic manifold $(M,\omega)$, 
\[
\wp:E\longrightarrow M
\] 
with first Chern class $c_1^{E}=-m[\om]$ for some $m\in\N=\{1,2,\dots\}$. We denote by $\OO_E$ the zero section of $\wp$.
We choose a Hermitian metric $\langle \cdot,\cdot\rangle$ on $E$ and denote by $r$ the induced fiberwise radial coordinate. We also pick a Hermitian connection and split the tangent bundle $TE$ of $E$ into the vertical subbundle $T^\mathrm{v}E$ and the horizontal subbundle $T^\mathrm{h}E$
\begin{equation}\label{eq:splitting_TE}
T_xE\cong T_x^\mathrm{v}E\oplus T_x^\mathrm{h}E \cong E_{\wp(x)} \oplus T_{\wp(x)}M\,,\qquad x\in E\,,	
\end{equation}
where $T_x^\mathrm{v}E$ is canonically isomorphic to $E_{\wp(x)}$ and $T_x^\mathrm{h}E$ is isomorphic to $T_{\wp(x)}M$ via $d\wp$. Then 
\begin{equation}\label{eq:alpha_E}
	\alpha_x(v):=\frac{1}{2\pi r^2(x)}\langle ix,v^\mathrm{v} \rangle\,,\qquad v=v^\mathrm{v} + v^\mathrm{h} \in  T_x^\mathrm{v}E\oplus T_x^\mathrm{h}E\,,
\end{equation}
where $i$ denotes the complex structure of $\wp:E\to M$, 
is a primitive 1-form of $\wp^*(m\omega)$ on $E\setminus\OO_E$ satisfying $\alpha_x(x)=0$ and $\alpha_x(2\pi i x)=1$ for $x,ix\in E_{\wp(x)}\cong T^\mathrm{v}_xE$, see e.g.~\cite[Section 3.3]{Oan08} or \cite[Section 7.2]{Rit14}. We denote by $R$ the fundamental vector field generated by the $U(1)$-action on $E$, i.e.
\begin{equation}\label{eq:R=2pi_i}
R(x)=2\pi i x\qquad \forall x\in E\,.
\end{equation}
The total space $E$ is endowed with the symplectic form
\[
\Omega=\wp^*\om+d(\pi r^2\alpha)=(1+m\pi r^2)\wp^*\om+2\pi r dr\wedge\alpha\,.
\]
Away from the zero section, the symplectic form is exact:
\[
\Omega=d\bigr((\tfrac{1}{m}+\pi r^2)\alpha\bigr)\quad\text{on $E\setminus\OO_E$}\,.
\]
In fact, $(E\setminus\OO_E,\Omega)$ is symplectomorphic to a part of the symplectization of a circle subbundle, see \eqref{eq:symplectization}.
  
Alternatively, instead of a $\C$-bundle $E$, we may consider a principal $S^1$-bundle $\Sigma$ over $M$ with the Euler class equal to $-m[\om]$,
\[
\wp:\Sigma\longrightarrow M\,.
\] 
Let $R$ be the fundamental vector field on $\Sigma$ generated by the $S^1$-action, namely
\[
R(x)= \frac{d}{dt}\Big|_{t=0} t\cdot x\,,\qquad x\in\Sigma\,,\; t\in S^1\,.
\]
Recalling our convention $S^1=\R/\Z$, we note that the flow of $R$ is 1-periodic. Let $\alpha$ be a connection 1-form, i.e.~$\mathcal{L}_R\alpha=0$ and $\alpha(R)=1$. In particular, $\wp^*(m\omega)=d\alpha$ up to adding the pullback of a 1-form on $M$ by $\wp$, and $\alpha$ is a contact form with $R$ being the associated Reeb vector field on $\Sigma$. Here we intentionally use the same letters $\wp$, $\alpha$, and $R$ as above because they can be equated as explained next. 

We consider the free $S^1$-action 
\[
\rho:S^1\x \Sigma\x \C \to  \Sigma\x \C\,,\qquad  \rho(t,x,z):=(t\cdot x,e^{-2\pi it} z)\,.
\]
Its fundamental vector field $X_\rho$ is $R\oplus(-2\pi\frac{\partial}{\partial\theta})$  on $\Sigma\x \C^*$ and vanishes on $\Sigma\x\{0\}$. The quotient space induced by $\rho$ is a complex line bundle over $M$,
\[
\Sigma \x_{\rho} \C \to M  \,,\qquad [x,z]\mapsto \wp(x)
\]
equipped with a canonical Hermitian metric. The first Chern class of this line bundle equals the Euler class of $\Sigma\to M$. Given a connection 1-form $\alpha$ on $\Sigma$, we consider $\mathfrak{p}_1^*\alpha+\frac{1}{2\pi} \mathfrak{p}_2^* d\theta$ on $\Sigma \x \C^*$, where $\mathfrak{p}_1$ and $\mathfrak{p}_2$ are the projections from  $\Sigma \x \C^*$ to the first and the second components respectively. This descends to $\Sigma \x_{\rho} \C^*$ since $\mathcal{L}_{X_\rho}(\mathfrak{p}_1^*\alpha+\frac{1}{2\pi} \mathfrak{p}_2^* d\theta)=0$ and $(\mathfrak{p}_1^*\alpha+\frac{1}{2\pi} \mathfrak{p}_2^* d\theta)(X_\rho)=0$. We point out that the induced 1-form on $\Sigma \x_{S^1} \C^*$ agrees with the 1-form in \eqref{eq:alpha_E}. 
We can also identify the unit circle bundle with $\Sigma$: 
\[
\Sigma\into \Sigma \x_{\rho} \C \,, \qquad  x\mapsto [x,1]\,.
\]
This inclusion map is $S^1$-equivariant, where the $S^1$-action on $\Sigma \x_{\rho}\C$ is given by restricting the multiplication with elements in $\C$ to $U(1)\cong S^1$.

\subsection{Winding numbers}

Let $u:S^1\to E\setminus\OO_E$ be a smooth loop which is contractible inside $E$, and let $\bar u:D^2\to E$ be a smooth capping disk, i.e.~$\bar u(e^{2\pi it})=u(t)$. Following \cite{Fra08}, we define the winding number of the pair $(u,\bar u)$ by
\[
\w(u,\bar u):=\int_0^1u^*\alpha-m\int_{D^2}\bar u^*\wp^*\om.
\]
We point out that the winding number is only defined for loops in $E\setminus\OO_E$ which are contractible in $E$. 
If $u$ maps $S^1$ into a single fiber of $\wp:\Sigma\to M$, then the first term counts the oriented covering number of the fiber circle. In this case we denote 
\begin{equation}\label{eq:covering_number}
\cov (u):=\int_0^1u^*\alpha\in\Z
\end{equation}
and view $\wp\circ\bar u$ as a map $\wp\circ\bar u:S^2\to M$ as in Section \ref{sec:Quantum Gysin sequence}.

In the following proposition, we list some properties of the winding number that will be used throughout. The proposition essentially follows from properties of the first Chern class and the homotopy lifting property, but we include a proof for completeness.

\begin{prop}\label{prop:winding}
Let $(u,\bar u)$ and $(v,\bar v)$ be as above. 	
\begin{enumerate}[(a)]
\item Suppose that $u$ maps $S^1$ into a fiber circle of $\wp:\Sigma\to M$. Then 
\[
\w(u,\bar u)=\cov(u) - m\,\om([\wp\circ\bar u])=\cov(u)+c_1^E([\wp\circ\bar u])\in\Z
\]
holds. 
In particular if $\bar u$ is also contained in a fiber of $\wp:E\to M$, then
\[
\w(u,\bar u)=\cov (u)\,.
\]

\item Suppose that $u$ and $v$ are freely homotopic inside $E\setminus \OO_E$, i.e.~there exists a continuous map  
\[
c:[0,1]\x S^1\to E\setminus \mathcal O_E\quad \text{with}\quad c(0,\cdot)=u,\quad c(1,\cdot)=v.
\]
Then, for any capping disk $\bar u$ of $u$,
\[
\w(u,\bar u)=\w(v,\bar u\# c)
\]
where $\bar u\# c:D^2\to E$ is a capping disk for $v$.

\item It holds 
\[
\w(u,\bar u)=\bar u\cdot \OO_E\in\Z\,,
\] 
where $\cdot$ denotes the intersection number. In particular if the image of $\bar u$ is contained in $E\setminus\mathcal O_E$, then $\w(u,\bar u)=0$.

\item Let $c:[0,1]\x S^1\to E$ be a continuous map such that
\[
c(0,\cdot)=u\,,\qquad c(1,\cdot)=v\,,\qquad c_1^E\big([\wp\circ(\bar u\# c\# \bar v^\mathrm{rev})]\big)=0\,.
\]
Then it holds that
\[
\w(u,\bar u)-\w(v,\bar v)=-c\cdot\OO_E\,.
\]
Moreover if both $u$ and $v$ are mapped into fibers of $\wp:\Sigma\to M$, then 
\[
\w(u,\bar u)-\w(v,\bar v)=\cov(u)-\cov(v)-c_1^E([\wp\circ c])\,.
\]
 Note that this subsumes (b). 

\item For a continuous map $s:S^2\to E\setminus\OO_E$, we have $\w(u,\bar u)=\w(u,\bar u\# s)$. Conversely, if $\w(u,\bar u)=\w(u,\bar u')$, then there is a continuous map $s:S^2\to E\setminus\OO_E$ such that $\bar u$ and $\bar u'\#s$ are homotopic relative to boundary.

\item The map $u:S^1\to E\setminus\OO_E$ is contractible inside $E\setminus\OO_E$ if and only if there exists a capping disk $\bar u:D^2\to E$ of $u$ with $\w(u,\bar u)=0$. Moreover if $\w(u,\bar u)=0$, then $\bar u$ is homotopic relative to boundary to another capping disk $\bar u':D^2\to E\setminus\OO_E$ of $u$.

\end{enumerate}
\end{prop}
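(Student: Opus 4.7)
The driver of every statement in the proposition is the identity $d\alpha=m\wp^*\om$ on $E\setminus\OO_E$: $\alpha$ is a primitive of $m\wp^*\om$ off the zero section but fails to extend across $\OO_E$. My plan is to establish part (c) first, since it converts the analytic definition of $\w$ into an algebraic intersection number, and then to deduce the remaining statements by intersection-theoretic and homotopy arguments. For (c), I would perturb $\bar u$ to be transverse to $\OO_E$, obtaining finitely many intersection points $p_i$ with signs $\epsilon_i$, and excise small closed disks $D_i\subset D^2$ around their preimages so that $\bar u$ maps $D^2\setminus\bigcup\mathrm{int}(D_i)$ into $E\setminus\OO_E$. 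Stokes' theorem gives
\[
\int_{\partial D^2}\bar u^*\alpha-\sum_i\int_{\partial D_i}\bar u^*\alpha=m\int_{D^2\setminus\bigcup D_i}\bar u^*\wp^*\om.
\]
As the $D_i$ shrink the right-hand side converges to $m\int_{D^2}\bar u^*\wp^*\om$, while the normalization $\alpha_x(2\pi ix)=1$ on fibers forces each $\int_{\partial D_i}\bar u^*\alpha$ to tend to the local winding number $\epsilon_i$; rearranging yields $\w(u,\bar u)=\sum_i\epsilon_i=\bar u\cdot\OO_E$.

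Given (c), parts (a)--(e) reduce to bookkeeping. Part (a) is a direct unwinding of the definition using $c_1^E=-m[\om]$, with $\wp\circ\bar u$ constant in the fiber case. Part (b) is Stokes' theorem applied to the cylinder $c:[0,1]\x S^1\to E\setminus\OO_E$, on which $\alpha$ exists globally, and yields $\int v^*\alpha-\int u^*\alpha=m\int c^*\wp^*\om$, exactly cancelling the mismatch of the $\wp^*\om$-terms. For (d) I would use (c) together with additivity of intersection numbers under concatenation, identifying the intersection of the sphere $\bar u\#c\#\bar v^{\mathrm{rev}}:S^2\to E$ with $\OO_E$ as the Euler-class pairing $c_1^E([\wp\circ(\bar u\#c\#\bar v^{\mathrm{rev}})])$; the hypothesis kills this term and leaves $\w(u,\bar u)-\w(v,\bar v)=-c\cdot\OO_E$, and the fiber-case formula follows by substituting the definitions of $\w$ and $c_1^E$. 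For (e), the forward direction is immediate from additivity because $s$ avoids $\OO_E$; for the converse, the difference $[\bar u\#\bar u'^{\mathrm{rev}}]\in\pi_2(E)=\pi_2(M)$ pairs trivially with $c_1^E$ by (c), and the long exact sequence of the circle bundle $\wp:\Sigma\to M$, in which the boundary map $\pi_2(M)\to\pi_1(S^1)=\Z$ is evaluation against $c_1^E$, lifts this class into $\pi_2(\Sigma)\subset\pi_2(E\setminus\OO_E)$, producing the required $s$.

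The main obstacle is part (f), specifically the converse. Starting from a capping disk $\bar u$ transverse to $\OO_E$ with $\w(u,\bar u)=0$, the signed intersections cancel in $\pm$-pairs by (c). I would then eliminate each pair by a Whitney-type surgery supported in a neighbourhood disjoint from $\partial D^2$, which succeeds because $\OO_E$ has real codimension two in $E$ and the ambient dimension $\dim M+2$ is at least four. In the borderline case $\dim M=2$ where the smooth Whitney trick becomes delicate, I would instead invoke the deformation retraction $E\setminus\OO_E\simeq\Sigma$ together with the long exact sequence of the circle bundle to see that the hypothesis $\w=0$ forces $[u]=0\in\pi_1(\Sigma)$, giving a contraction inside $\Sigma\subset E\setminus\OO_E$ directly. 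The final clause of (f) then follows from (e), since any such $\bar u'\subset E\setminus\OO_E$ has the same (zero) winding number as $\bar u$ and hence differs from it by a sphere representable in $E\setminus\OO_E$.
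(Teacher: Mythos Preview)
Your treatment of (a)--(e) is correct and close to the paper's, with one organizational difference worth noting: you establish (c) first by a direct Stokes-with-punctures argument, whereas the paper proves (c) by first homotoping $u$ into a fiber via (b) and then applying (a) together with the identity $s\cdot\OO_E=c_1^E([\wp\circ s])$ for spheres $s:S^2\to E$. Both routes are standard; yours is more geometric, the paper's more algebraic, and they yield the same thing. Your argument for the converse in (e) via the long exact sequence of the circle bundle is essentially the paper's.

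The gap is in (f). The Whitney-type cancellation you propose does not work cleanly for a codimension-two submanifold such as $\OO_E\subset E$, and the obstruction is not specific to the borderline case $\dim M=2$. The problem is that any Whitney disk $W$ connecting a $\pm$-pair will itself generically meet $\OO_E$ in interior points (a $2$-disk and a codimension-$2$ submanifold intersect in expected dimension $0$, regardless of the ambient dimension), and pushing $\bar u$ across $W$ then creates new intersection pairs rather than reducing the total count; there is no termination argument. What is actually at stake is a $\pi_1$-statement: one needs $[u]=0$ in $\pi_1(E\setminus\OO_E)\cong\pi_1(\Sigma)$, and the vanishing intersection number is a priori only a condition in the cyclic subgroup generated by the fiber class. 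The paper handles this uniformly via the homotopy exact sequence of $S^1\hookrightarrow\Sigma\to M$: after reducing $u$ to a fiber loop by (b), one has $[u]=\iota(\cov(u))$, and the hypothesis $\w(u,\bar u)=0$ together with (a) gives $\cov(u)=-c_1^E([\wp\circ\bar u])\in\mathrm{im}\,\p=\ker\iota$, whence $[u]=0$. This is precisely the argument you reserved as a fallback for $\dim M=2$; it should be the primary argument in every dimension.
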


\begin{proof}
Statement (a) is obvious, and (b) follows from $\wp^*(m\om)|_{E\setminus\OO_E}=d\alpha$ and Stokes' theorem. 
To see (c), we consider the homotopy exact sequence for $S^1\into \Sigma\to M$:
\begin{equation}\label{eq:homotopy_les}
	0\longrightarrow \pi_2(\Sigma) \longrightarrow\pi_2(M)\stackrel{\p}\longrightarrow\pi_1(S^1)\stackrel{\iota}\longrightarrow\pi_1(\Sigma)\stackrel{}{\longrightarrow}\pi_1(M)\longrightarrow\cdots \,.
\end{equation}
In particular, for $[s]\in\pi_2(M)$, we have
\[
\p([s])=  c_1^E([s]) \in \pi_1(S^1)\cong\Z
\]
and
\[
\iota(\ell)=\ell[w] \quad  \ell\in\Z\cong\pi_1(S^1) 
\]
where $w$ is the simple oriented fiber loop through the base point of $\pi_1(\Sigma)$.
Since $E\setminus\OO_E$ is homotopy equivalent to $\Sigma$, by (b) we may assume that $u$ is mapped into $\Sigma$.
Since, by assumption, $u$ is contractible in $E$, $\wp\circ u:S^1\to M$ is also contractible and thus $[u]\in\pi_1(\Sigma)$ is in the image of $\iota$. Hence, again by (b), we may assume that $u$ is mapped into a fiber of $\wp:\Sigma\to M$. Then  
\[
\w(u,\bar u)=\cov(u)+c_1^E(\wp\circ\bar u)=\cov(u)+(\bar u\cdot\OO_E-\cov(u))=\bar u\cdot\OO_E
\]
where the first equality is by (a) and the second one can be seen by applying the fact that $c_1^E([\wp\circ s])=s\cdot \OO_E$ holds for any $s:S^2\to E$ to $s=\bar u\#(\bar u_\mathrm{fib})^\mathrm{rev}$ where $\bar u_\mathrm{fib}$ is a capping disk of $u$ contained in a fiber.

The assumption in (d) implies 
\[
0=(\bar u\# c\# \bar v^\mathrm{rev})\cdot \OO_E= \bar u\cdot \OO_E +c\cdot\OO_E -\bar v\cdot\OO_E
\]
and thus the first equality in (d) follows from (c). The second equality follows from (a).

The first claim in (e) follows from (c) or from the fact that $c_1^E([\wp\circ s])=0$ for any $s:S^2\to E\setminus \OO_E$. To show the converse-part,  suppose $\w(u,\bar u)=\w(u,\bar u')$. We choose $[s]\in\pi_2(E)$  such that $\bar u$ is homotopic to $\bar u'\# s$.  It suffices to show that $[s]$ has a representative entirely contained in $E\setminus\OO_E$. Due to \eqref{eq:homotopy_les}, $\pi_2(E\setminus\OO_E)\cong\pi_2(\Sigma)\cong \ker \p$. The computation
\[
c_1^E([\wp\circ s])=\wp^*c_1^E([s])=\wp^*c_1^E([\bar u\#(\bar u')^\mathrm{rev}])=\w(u,\bar u)-\w(u,\bar u') = 0 
\]
yields $\p([\wp\circ s])=c_1^E([\wp\circ s])=0$, and therefore there is  $s':S^2\to E\setminus\OO_E$ such that $[\wp\circ s']=[\wp\circ s]$ in $\pi_2(M)$. Since $\pi_2(E)\cong\pi_2(M)$ through $\wp$, $[s']=[s]$ in $\pi_2(E)$.

Now we show (f). Suppose that $u$ is contractible in $E\setminus\OO_E$. For any capping disk  $\bar u:D^2\to E\setminus\OO_M$ of $u$, assertion (c) shows $\w(u,\bar u)=0$.  
To prove the converse, assume that there is a capping disk $\bar u$ with $\w(u,\bar u)=0$. Arguing as above, we may assume that $u$ is mapped into the  fiber of $\wp:\Sigma\to M$ through the base point of $\pi_1(\Sigma)$. Then the assumption 
\[
0=\w(u,\bar u)\stackrel{(a)}{=}\cov(u)+c_1^E([\wp\circ \bar u])
\]
together with \eqref{eq:homotopy_les} shows
\[
[u]= \iota(\cov(u))=\iota(-c_1^E([\wp\circ\bar u]))=-\iota \circ \p([\wp\circ \bar u])=0 \quad \textrm{in }\;\pi_1(\Sigma)\,.
\]
The moreover-part follows from  (e).
\end{proof}

\section{Action functionals and indices}\label{sec:action}
Let $\wp:E\to M$ be a Hermitian line bundle with $c_1^E=-m[\om]$ as in Section \ref{sec:line}. 
For $\tau>0$ we define
\[
\mu_\tau:E\to\R,\qquad \mu_\tau:=m\pi r^2-\tau
\]
and denote 
\[
\Sigma_\tau:=\mu_\tau^{-1}(0) =\left\{e \in E \mid m\pi r^2(e)=\tau \,\right\}\,.	
\] 
The  1-form $\alpha$ defined in \eqref{eq:alpha_E} is a contact form when restricted to $\Sigma_\tau$ for every $\tau>0$. 
Let $f:M\to\R$ be a smooth $C^2$-small Morse function as in Section \ref{sec:Quantum Gysin sequence}. We lift it to 
\[
F:E\to\R,\qquad F:=\big(1+m\pi r^2\big) f\circ\wp \,.
\]
We denote by $X_f^{\mathrm h}$ the horizontal lift of $X_f$ to $E$. Recalling the symplectic form
\[
\Omega=\wp^*\om+d(\pi r^2\alpha)=(1+m\pi r^2)\wp^*\om+2\pi r dr\wedge\alpha\,,
\]
we compute
\begin{equation}\label{eq:X_F}
X_{\mu_\tau}=-m R\,,\qquad X_{F}=-m(f\circ \wp) R+X_f^{\mathrm h}\,,
\end{equation}
where $R$ is the fundamental vector field of the $U(1)$-action on $E$, see \eqref{eq:R=2pi_i}, which coincides with the Reeb vector field associated with $\alpha$ when restricted to $\Sigma_\tau$. 
As in Section \ref{sec:Quantum Gysin sequence}, our convention for Hamiltonian vector fields is $d\mu_\tau=\Omega(X_{\mu_\tau},\cdot)$ and likewise for $X_F$.

\subsection{Rabinowitz action functional}
Let $\mathscr L(E)$ be the space of contractible 1-periodic smooth loops in $E$, and let $\widetilde{\mathscr L}(E)$ be its covering space with deck transformation group
\[
\Gamma_E:=\frac{\pi_2(E)}{\ker\Omega\cap\ker c_1^{TE}}\,.
\]
We write elements in $\widetilde{\mathscr L}(E)$ as equivalence classes $[u,\bar u]$ where $u\in \mathscr L(E)$ and $\bar u:D^2\to E$ is a capping disk of $u$, namely $u(t)=\bar u(e^{2\pi it})$. The equivalence relation is given by 
\[
(u,\bar u)\sim(u',\bar u') \qquad \textrm{if }\; u=u'\,\textrm{ and } \;\Omega([\bar u^\mathrm{rev}\#\bar u'])=c_1^{TE}([\bar u^\mathrm{rev}\#\bar u'])=0\,.
\] 
where $\bar u^\mathrm{rev}$ refers to $\bar u$ with opposite orientation. 
The Rabinowitz action functional is defined by
\begin{equation}\label{eq:Rabinowitz_functional}
	\begin{split}
&\AA_f^\tau:\widetilde{\mathscr L}(E)\x\R\longrightarrow\R,\\
&\AA_f^\tau\big([u,\bar u],\eta\big):= -\int_{D^2}\bar u^*\Omega-\eta\int_0^1\mu_\tau(u)\,dt-\int_0^1F(u)\,dt\,.
\end{split}
\end{equation}
A straightforward computation shows that $([u,\bar u],\eta)\in\Crit \AA_f^\tau$ if and only if
\begin{equation}\label{eqn:critical_point_equation_A_f_tau}
\left\{
\begin{aligned}
\;&\dot u=\eta X_{\mu_\tau}(u)+X_{F}(u)\\
&\int_0^1\mu_\tau(u)\,dt=0\;.
\end{aligned}
\right.
\end{equation}
Since $d\mu_\tau(X_{\mu_\tau})=d\mu_\tau(X_{F})=0$, it follows $d\mu_\tau(\dot u)=0$ and hence the second identity on the right-hand side of \eqref{eqn:critical_point_equation_A_f_tau} yields $\mu_\tau(u)=m\pi r^2(u)-\tau=0$, i.e.~$u(S^1)\subset\Sigma_\tau$. Therefore the first equality in \eqref{eqn:critical_point_equation_A_f_tau} simplifies to
\begin{equation}\label{eq:dot_u}
\dot u=-m\big(\eta+(f\circ\wp)(u)\big)R(u)+X_f^{\mathrm h}(u)\,,
\end{equation}
see \eqref{eq:X_F}. 
Since $R$ vanishes under the map $d\wp$, the smooth loop $q:=\wp\circ u:S^1\to M$ solves 
\[
\dot q=d\wp(u)[\dot u]=X_f(q)\,.
\]
Since $f$ is $C^2$-small, such solutions are constant loops with values in the set of critical points of $f$. As in Section \ref{sec:Quantum Gysin sequence}, we often equate $q$ with its image, a point in $M$. Therefore \eqref{eqn:critical_point_equation_A_f_tau} reads 
\begin{equation}\label{eqn:critical_point=>Reeb_orbit}
\left\{
\begin{aligned}
\;&\dot u=-m(\eta+f(q))R(u) \;\text{ for some } q\in\Crit f\\[0.5ex]
&\pi mr^2(u)=\tau\;.
\end{aligned}
\right.
\end{equation}
Since both $u$ and the flow of $R$ are 1-periodic, the first line in \eqref{eqn:critical_point=>Reeb_orbit} yields
\begin{equation}\label{eq:eta=cov}
m(\eta+f(q))=-\cov(u)\in\Z\,,
\end{equation}
where the covering number is defined in \eqref{eq:covering_number}. Actually, using the $U(1)$-action on $E$, we can write $u(t)=e^{2\pi i\cov(u) t}u(0)$. 
The action of $([u,\bar u],\eta)$ is computed as
\begin{equation}\label{eq:action_value_full}
\begin{split}
{\AA}_f^\tau([u,\bar u],\eta)
&=-\int_{D^2}\bar u^*\wp^*\om-\int_0^1u^*(\pi r^2\alpha)-\eta\int_0^1\mu_\tau(u)\,dt-\int_0^1F(u)\,dt\\
&=-\om([\wp\circ \bar u]) + \tau (\eta+f(q))-(1+\tau)f(q)\\
&=-\om([\wp\circ\bar u])-\frac{\tau}{m}\cov(u)-(1+\tau)f(q) \,.
\end{split}
\end{equation}
We define the winding number of a critical point $w=([u,\bar u],\eta)\in\Crit\AA^\tau_f$ by 
\[
\w(w):=\w(u,\bar u) 
\]
which is well-defined since $[u,\bar u]=[u,\bar u']$ implies 
\begin{equation}\label{eq:wind_crit}
\wp^*c_1^{E}([\bar u^\mathrm{rev}\#\bar u'])=-m\wp^*\om([\bar u^\mathrm{rev}\#\bar u'])=-m\Omega([\bar u^\mathrm{rev}\#\bar u'])=0\,.
\end{equation}
The action and the winding number are related through 
\begin{equation}\label{eq:full_action-winding}
{\AA}_f^\tau(w) - \frac{\w(w)}{m} = -\om([\wp\circ \bar u])+ \tau\eta-f(q)-\frac{1}{m}\cov(u)+\om ([\wp\circ \bar u])=(1+\tau)\eta\,.
\end{equation}
Critical points of $\AA^\tau_f$ come in $S^1$-family, namely if $w=([u,\bar u],\eta)$ is a critical point of $\AA^\tau_f$, so is $t^*w=([t^*u,t^*\bar u],\eta)$ for $t\in S^1$ where 
\begin{equation}\label{eq:capping_shift}
t^*u(t_0):=u\left(t_0+\tfrac{t}{\cov(u)}\right)\,,\qquad t^*\bar u(z)=\bar u\left(e^{2\pi i\frac{t}{\cov(u)}}z\right)\,.	
\end{equation}
Since $u$ lies in a fiber of $\Sigma_\tau\to M$ solving equation  \eqref{eqn:critical_point=>Reeb_orbit}, using the $U(1)$-action on $E$ we can write
\[
t^*u(\cdot)=e^{2\pi it}u(\cdot)\,.
\]
We denote the $S^1$-family of critical points of  $\AA^\tau_f$ containing $w=([u,\bar u],\eta)$ by
\[
S_w=S_{([u,\bar u],\eta)}:=\big\{t^*w=([t^*u,t^*\bar u],\eta)\mid t\in S^1\big\}\,.	
\]
Note that it is diffeomorphic to the fiber $\Sigma_p$ of $\Sigma\to M$ over $p=\wp\circ u\in\Crit f$ via 
\begin{equation}\label{eq:crit_diffeo}
	S_w\to \Sigma_p\,,\qquad t^*w=([t^*u,t^*\bar u],\eta) \mapsto t^*u(0)\,.
\end{equation}

\subsection{Hamiltonian action functional}
In this section we relate the Rabinowitz action functional $\AA^\tau_f$ with the Hamiltonian action functional $\mathfrak{a}_f$ in \eqref{eq:classical_action_functional}. 
To begin with, we 
observe  
\begin{equation}\label{eq:c_1}
c_1^{TE}=\wp^*c_1^{TM}+\wp^*c_1^E=\wp^*c_1^{TM}-m\wp^*[\om]\,.	
\end{equation}
Since $\wp^*\om=\Omega:\pi_2(E)\to\Z$, the map
\[
\wp_*:\Gamma_E=\frac{\pi_2(E)}{\ker \Omega\cap\ker c_1^{TE}}\;\;\stackrel{\cong}{\longrightarrow} \;\;\Gamma_M=\frac{\pi_2(M)}{\ker\om\cap\ker c_1^{TM}}\,,\qquad \wp_*[s]:=[\wp\circ s]
\]
is an isomorphism. The projection $\wp$ also induces a natural map 
\[
\begin{split}
\Pi:\widetilde{\mathcal L}(E)&\longrightarrow\widetilde{\mathcal L}(M)\\
[u,\bar u]&\longmapsto [\wp\circ u,\wp\circ\bar u]\,.	
\end{split}
\]
Due to \eqref{eq:crit_a_f} and \eqref{eqn:critical_point=>Reeb_orbit}, this gives rise to a map, denoted again by $\Pi$,
\begin{equation}\label{eq:Pi}
\begin{split}
\Pi:{\Crit\AA_f^\tau} &\longrightarrow \Crit\mathfrak{a}_f\cong\Crit f\x\Gamma_M\\
w=\big([u,\bar u],\eta\big)&\longmapsto \Pi(w)=\big[\wp\circ u,\wp\circ\bar u]\,.
\end{split}
\end{equation}

\begin{lem}\label{lem:one-to-one}
For each $k\in\Z$, the projection $\Pi$ induces a bijection
\[
\big\{S_{w} \mid w\in\Crit {\AA}^\tau_f,\; \w(w)=k\big\} \quad \longleftrightarrow \quad \Crit\mathfrak a_f.
\]
\end{lem}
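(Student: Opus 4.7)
The plan is to check well-definedness on $S^1$-orbits, injectivity, and surjectivity. First, the $S^1$-action $t^*$ in \eqref{eq:capping_shift} fixes $\eta$, and since $\wp\circ u$ is a constant loop we have $\wp\circ t^*u=\wp\circ u$, while $\wp\circ t^*\bar u$ differs from $\wp\circ\bar u$ only by a disk reparametrization. Hence $\Pi(t^*w)=\Pi(w)$. By Proposition \ref{prop:winding}(b), the winding number is also constant along each $S^1$-orbit, so $\Pi$ restricts to a well-defined map from the set of winding-$k$ orbits to $\Crit\mathfrak a_f$.

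For injectivity, suppose $w=([u,\bar u],\eta)$ and $w'=([u',\bar u'],\eta')$ both have winding number $k$ and $\Pi(w)=\Pi(w')$. Then $\wp\circ u=\wp\circ u'=q$ for the same $q\in\Crit f$, and $[\wp\circ\bar u^{\mathrm{rev}}\#\wp\circ\bar u']\in\ker\omega\cap\ker c_1^{TM}$. Since $c_1^E=-m[\omega]$ vanishes on $\ker\omega$, one obtains $c_1^E([\wp\circ\bar u])=c_1^E([\wp\circ\bar u'])$, so Proposition \ref{prop:winding}(a) together with $\w(w)=\w(w')$ forces $\cov(u)=\cov(u')$, and then \eqref{eq:eta=cov} gives $\eta=\eta'$. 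As $u,u'$ are both Reeb loops in the fiber $\Sigma_q$ with the same covering number, they must satisfy $u'=t_0^*u$ for some $t_0\in S^1$. It then remains to show $[u',\bar u']=[t_0^*u,t_0^*\bar u]$ in $\widetilde{\mathscr L}(E)$, which reduces to verifying that $[(t_0^*\bar u)^{\mathrm{rev}}\#\bar u']$ lies in $\ker\Omega\cap\ker c_1^{TE}$. This follows from the isomorphism $\wp_*\colon\Gamma_E\to\Gamma_M$ (both $\Omega$ and $c_1^{TE}$ are pullbacks along $\wp$ on $\pi_2(E)$, cf.\ \eqref{eq:c_1}) combined with the hypothesis $[\wp\circ\bar u^{\mathrm{rev}}\#\wp\circ\bar u']=0$ in $\Gamma_M$ and the fact that $\wp\circ t_0^*\bar u$ is a reparametrization of $\wp\circ\bar u$.

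For surjectivity, given $[q,\bar q]\in\Crit\mathfrak a_f$, set $c:=k-c_1^E([\bar q])\in\Z$, choose $x\in\Sigma_q$, and define $u(t):=e^{2\pi ict}x$ and $\eta:=-c/m-f(q)$; these satisfy \eqref{eqn:critical_point=>Reeb_orbit}. To build a capping disk realizing the desired projection, I first cap $u$ inside the fiber $E_q\cong\C$ by an explicit disk $\bar u_0$ with $\wp\circ\bar u_0\equiv q$. Using that $\wp_*\colon\pi_2(E)\to\pi_2(M)$ is an isomorphism (the fiber $\C$ is contractible), I pick a sphere $s\colon S^2\to E$ with $[\wp\circ s]=[\bar q]$ in $\pi_2(M)$, and set $\bar u:=\bar u_0\#s$. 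Then $[\wp\circ\bar u]=[\bar q]$ in $\Gamma_M$, hence $\Pi([([u,\bar u],\eta)])=[q,\bar q]$, and Proposition \ref{prop:winding}(a) gives $\w(u,\bar u)=c+c_1^E([\bar q])=k$.

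The main technical point to manage is the compatibility of the two covering relations: $\widetilde{\mathscr L}(E)$ quotients by $\ker\Omega\cap\ker c_1^{TE}$ while $\widetilde{\mathscr L}(M)$ quotients by $\ker\omega\cap\ker c_1^{TM}$. Since $\Omega$ and $\wp^*\omega$ agree on spheres (the difference $d(\pi r^2\alpha)$ is exact) and $c_1^{TE}=\wp^*c_1^{TM}+\wp^*c_1^E$ is also a pullback, $\wp_*$ identifies $\Gamma_E$ with $\Gamma_M$; once this is in hand, the bijection follows by straightforward manipulation of the critical-point equations and Proposition \ref{prop:winding}.
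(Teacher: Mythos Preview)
Your argument is correct and follows essentially the same route as the paper's proof: both rely on the fiber-capping-disk representation $[u,\bar u]=[u,\bar u_{\mathrm{fib}}\#s]$, Proposition~\ref{prop:winding}(a), equation~\eqref{eq:eta=cov}, and the isomorphism $\wp_*\colon\Gamma_E\to\Gamma_M$. The paper proceeds by constructing the explicit preimage and then checking uniqueness, while you separate well-definedness, injectivity, and surjectivity more formally, but the underlying content is the same.
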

\begin{proof}
We first observe that, for any $w=([u,\bar u],\eta)\in\Crit {\AA}^\tau_f$, there is $[s]\in\Gamma_E$ satisfying 
\[
[u,\bar u]=[u,\bar u_\mathrm{fib}\#s]\qquad \textrm{in }\; \widetilde{\mathcal L}(E)
\]
where $\bar u_\mathrm{fib}:D^2\to E$ is a fiber capping disk of $u$, i.e.~$\bar u_\mathrm{fib}(D^2)$ is entirely contained in a  fiber of $\wp:E\to M$. 
We choose $[q,\bar q]\in\Crit\mathfrak a_f$. Let $u_q$ be a  simple 1-periodic orbit of $R$ in $\Sigma_\tau$ over $q$, and let $s:S^2\to E$ be a continuous map satisfying $[s]=[\bar q]$ in $\Gamma_E\cong\Gamma_M$.  We set 
\[
[u,\bar u]:= [u_q^{m\om([\bar q])+k},(\bar u_q^{m\om([\bar q])+k})_\mathrm{fib}\#s]
\] 
where $u_q^{\ell}$ denotes the $\ell$-fold cover of $u_q$. This gives an $S^1$-family of critical points $S_{([u,\bar u],\eta)}$ of $\AA^\tau_f$, where $\eta$ is determined by \eqref{eq:eta=cov}, such that 
\begin{equation}\label{eq:one-to-one}
\w(u,\bar u)=k\,,\qquad \Pi([u,\bar u],\eta)=[q,\bar q].
\end{equation}
To see that this $S^1$-family is unique, suppose $([u',\bar u'],\eta')$ is another critical point of ${\AA}^\tau_f$  satisfying \eqref{eq:one-to-one}. If write $[u',\bar u']=[u',\bar u'_\mathrm{fib}\#s']$ for $[s']\in\Gamma_E$, then $[s']= [\bar q]$ in $\Gamma_E\cong\Gamma_M$ by the second condition in \eqref{eq:one-to-one}. Then the first condition in \eqref{eq:one-to-one} implies $u=u'$ up to timeshift and $\eta'=\eta$ by \eqref{eq:eta=cov}. Hence $([u',\bar u'],\eta')$ belongs to $S_{([u,\bar u],\eta)}$.
\end{proof}

For later purposes, we compute for $w=([u,\bar u],\eta)\in\Crit{\AA}^\tau_f$ with $\wp\circ u=q$, 
\begin{equation}\label{eq:action_winding_eta}
\mathfrak{a}_f(\Pi(w))= -\om([\wp\circ\bar u]) - f(q) =  \frac{1}{m}\left(\w(w)-\cov(u)\right)-f(q)=\frac{1}{m}\w(w)+\eta	
\end{equation}
by Proposition \ref{prop:winding}.(a) and \eqref{eq:eta=cov}. Equation \eqref{eq:full_action-winding} yields
\begin{equation}\label{eq:action_comparision}
\mathfrak{a}_f(\Pi(w))={\AA}_f^\tau(w)-\tau\eta\,,\qquad (1+\tau)\a_f(\Pi(w))=\frac{\tau}{m}\w(w)+\AA^\tau_f(w)\,.	
\end{equation}

\subsection{Indices and winding number}\label{sec:index}
We define the RFH-index of $w=([u,\bar u],\eta)\in\Crit\AA^\tau_f$ as follows. We choose a unitary trivialization $\Phi_w:D^2\x\R^{\dim E}\to \bar u^*TE$ of $\bar u^*TE$ and denote
\begin{equation}\label{eq:trivialization}
\Phi_w(t):=\Phi_w(e^{2\pi it},\cdot):\R^{\dim E}\to T_{u(t)}E\,.
\end{equation}
The linearized flow along $u$ in this trivialization 
\begin{equation}\label{eq:lin_triv}
\Psi_w(t):=\Phi_w(t)^{-1}\circ d\phi_{\eta X_{\mu_\tau}+X_F}^{t}(u(0))\circ \Phi_w(t)\,,\qquad t\in[0,1]\,,	
\end{equation}
where $\phi_{\eta X_{\mu_\tau}+X_F}^{t}$ is the flow of $\eta X_{\mu_\tau}+X_F$, 
defines a path of symplectic matrices, and we set
\[
\mu_\RFH(w):=-\mu_\CZ(\Psi_w)\,.
\]
Here $\mu_\CZ$ denotes the Conley-Zehnder index, see \cite{RS93}. The index $\mu_\RFH$ is well-defined since a unitary trivialization of $\bar u^*TE$ is unique up to homotopy. Moreover since $u$ is contained in a fiber circle over $q=\wp\circ u\in\Crit f$, we can give a simple expression of index. We represent a capping disk $\bar u$ of $u$ again by a fiber capping disk $\bar{u}_\mathrm{fib}$ connected sum with some $[s]\in\pi_2(E)$.
By properties of the Conley-Zehnder index, we have
\[
\mu_\RFH([u,\bar{u}_\mathrm{fib}\#s],\eta)=\mu_\RFH([u,\bar{u}_\mathrm{fib}],\eta) - 2c_1^{TE}([s])\,.
\]
Since the flows of $R$ and $X_f^{\mathrm h}$ commute, we may write the linearized flow $d\phi_{\eta X_{\mu_\tau}+X_F}^t$ along $u$ as 
\[
d\phi_{R}^{\cov(u)t}(u(0)) \oplus d\phi_{X_f}^t(q) 
\]
with respect to the  decomposition in \eqref{eq:splitting_TE}, see \eqref{eq:dot_u} and \eqref{eq:eta=cov}. Using properties of the Conley-Zehnder index, we compute
\begin{equation}\label{eq:fiber_index}
\begin{split}
-\mu_\RFH([u,\bar{u}_\mathrm{fib}],\eta) &= \mu_\CZ\big(\{e^{2\pi i t\cov(u)}\}_{t\in[0,1]}\big) + \mu_\CZ([q,0])\\
&= 2\,\cov(u) + \frac{1}{2}\dim M-\mu_{-f}(q)
\end{split}	
\end{equation}
where we used \eqref{eq:ind_M} in the second line.  Here $\{e^{2\pi i t\cov(u)}\}_{t\in[0,1]}$ is a loop of symplectic matrices on $E_q=\C$, its Conley-Zehnder index equals $2\cov(u)$. 
Putting all these together, we obtain 
\begin{equation}\label{eq:mu_RFH_cov}
\mu_\RFH(w)=\mu_\RFH\big([u,\bar u_{\mathrm{fib}}\# s],\eta\big)=-2\,\cov(u)-2c_1^{TE}([s])+\mu_{-f}(q)- \frac{1}{2}\dim M\,.	
\end{equation}
Another formulation using Proposition \ref{prop:winding}.(a), \eqref{eq:c_1}, and \eqref{eq:ind_M} is 
\begin{equation}\label{eq:indices_and_winding}
\mu_\RFH(w)=-2\w(w) +\mu_\FH(\Pi(w))\,. 
\end{equation}
\begin{rem}
	Due to the fact that we use negative gradient flow lines of $\AA^\tau_f$, as opposed to positive ones in \cite{AK17}, the sign convention here is opposite to the one in \cite{AK17}. 
\end{rem}

Since $\Crit\AA^\tau_f$ is a disjoint union of copies of $S^1$, we choose an auxiliary smooth perfect Morse function 
\begin{equation}\label{eq:perfect}
	h:\Crit\AA^\tau_f\longrightarrow\R\,.
\end{equation}
In view of \eqref{eq:crit_diffeo}, we abuse notation and use the same letter $h$ which was already occupied by a perfect Morse function in \eqref{eq:ftn_h} defined on $\Crit \tilde f$.  
We sometimes denote an element of $\Crit h\subset\Crit\AA^\tau_f$ by 
$\hat{w}$ or $\check{w}$ to indicate that it is a maximum or minimum point of $h$ respectively. 
We associate the RFH-index to elements in $\Crit h$ by
\begin{equation}\label{eq:index_h}
	\mu^h_\RFH(\hat w):=\mu_\RFH(w)+1\,,\qquad \mu^h_\RFH(\check w):=\mu_\RFH(w)\,.
\end{equation}

\begin{ex}\label{ex:CP}
We compute the action, the index, and the winding number for the complex line bundle $E=\OO_{\CP^n}(-m)$ over $M=\CP^n$ with $c_1^{\OO_{\CP^n}(-m)}=-m[\om_\mathrm{FS}]$. In this case,
\[
 c_1^{T\CP^n}=(n+1)[\om_\mathrm{FS}]\,,\qquad \pi_2(M)=\Z\langle [s]\rangle\,,\qquad \om_\mathrm{FS}([s])=1\,,
\]
where $s:S^2\to \CP^n$ maps to a complex line. 
For $w=([u,\bar u_\mathrm{fib}\#ks],\eta)\in\Crit{\AA}^\tau_f$,  we have
\[
{\AA}^\tau_f(w)= - k -\frac{\tau}{m}\cov(u)  - (1+\tau) f(q)\,,
\]
where $q=\wp\circ u$ as usual, and 
\[
 \mu_\RFH(w)=-2\,\cov(u)- 2(n+1-m)k+\mu_{-f}(q)-\tfrac{1}{2}\dim M \,,\quad \w(w)=\cov(u) - m k\,.
\]
\end{ex}

\subsection{Rabinowitz functional for zero winding number}\label{sec:zero_winding}

Since we are mainly concerned with the case of zero winding number, we sum up the action and index correspondences between $\Crit\AA^\tau_f$ and $\Crit\mathfrak{a}_f$ computed in \eqref{eq:indices_and_winding} and \eqref{eq:action_comparision} in the case of winding zero.

\begin{lem}\label{lem:one-to-one_index}
The projection $\Pi$ gives a one-to-one correspondence
\[
\begin{split}
\Pi:\big\{S_{w} \mid w=([u,\bar u],\eta)\in\Crit {\AA}^\tau_f\mid \w(w)=0\big\}  &\longrightarrow  \Crit\mathfrak a_f\,,\\
w&\longmapsto[\wp\circ u,\wp\circ\bar u]\,.	
\end{split}
\]
Moreover, the indices and the actions are related as follows:
\[
\mu_\RFH(w)=\mu_\FH(\Pi(w))\,,\qquad \mathfrak a_f(\Pi(w))=\eta=\frac{1}{1+\tau}\AA^\tau_f(w)\,.
\]
\end{lem}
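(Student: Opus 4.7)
The proof is essentially a direct specialization of the results already assembled in this section. My plan is as follows.

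First, for the bijection, I would simply invoke Lemma \ref{lem:one-to-one} with $k=0$. That lemma already establishes the one-to-one correspondence between $S^1$-families of critical points of $\AA^\tau_f$ with a prescribed winding number $k\in\Z$ and the set $\Crit \mathfrak a_f$, and the map realizing the correspondence is precisely $\Pi$ as defined in \eqref{eq:Pi}. So the first claim of the lemma requires nothing beyond restricting to $k=0$.

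Next, the identity $\mu_\RFH(w)=\mu_\FH(\Pi(w))$ is obtained by substituting $\w(w)=0$ into formula \eqref{eq:indices_and_winding}, which gives the general relation $\mu_\RFH(w)=-2\w(w)+\mu_\FH(\Pi(w))$ for any critical point. Similarly, the action identity $\a_f(\Pi(w))=\eta$ is an immediate consequence of \eqref{eq:action_winding_eta}, namely $\a_f(\Pi(w))=\tfrac{1}{m}\w(w)+\eta$, setting $\w(w)=0$. Finally, the relation $\eta=\tfrac{1}{1+\tau}\AA^\tau_f(w)$ follows from the second part of \eqref{eq:action_comparision}, $(1+\tau)\a_f(\Pi(w))=\tfrac{\tau}{m}\w(w)+\AA^\tau_f(w)$, again using $\w(w)=0$ together with the identity $\a_f(\Pi(w))=\eta$ just established.

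There is no real obstacle here: every nontrivial ingredient (the bijection, the index formula, and the two action formulas) has already been proved in the preceding paragraphs for arbitrary winding numbers, and the lemma is just the clean statement of what they say when $\w(w)=0$. The only thing worth emphasizing in the write-up is that all three displayed equalities on the right, $\mu_\RFH(w)=\mu_\FH(\Pi(w))$, $\a_f(\Pi(w))=\eta$, and $\eta=\tfrac{1}{1+\tau}\AA^\tau_f(w)$, are consistent with each other (via \eqref{eq:full_action-winding}) so that no additional verification of compatibility is needed.
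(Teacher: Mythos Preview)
Your proposal is correct and matches the paper's approach exactly: the paper presents this lemma without a separate proof, introducing it as a summary of the bijection from Lemma~\ref{lem:one-to-one} and the formulas \eqref{eq:indices_and_winding} and \eqref{eq:action_comparision} specialized to $\w(w)=0$, which is precisely what you do.
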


One aim of the present paper is to define the Rabinowitz Floer homology of the action functional $\AA^\tau_f$ restricted to the space
\begin{equation}\label{eq:winding_zero_loops}
\big\{([u,\bar u],\eta)\in\widetilde{\mathscr L}(E)\x\R\mid u(S^1) \subset E\setminus\OO_E,\;\w(u,\bar u)=0\big\}\,.	
\end{equation}
We consider the space $\mathscr L(E\setminus\OO_E)$ of smooth 1-periodic loops in $E\setminus\OO_E$ contractible inside $E\setminus\OO_E$ and observe that there is a bijection
\[
\begin{split}
\mathscr{L}(E\setminus\OO_E)\x \frac{\pi_2(E\setminus\OO_E)}{\ker \wp^*c_1^{TM}} &\longrightarrow \big\{[u,\bar u]\in\widetilde{\mathscr L}(E)\mid u(S^1)\subset E\setminus\OO_E,\;\w(u,\bar u)=0\big\}\\
 (u,[s]) &\longmapsto ([u,\bar u'\#s])
\end{split}
\]
where $\bar u'$ is any capping disk of $u$ contained in $E\setminus\OO_E$. 
The map is well-defined since $\wp^*c_1^E([s])=0$ for a continuous map $s:S^2\to E\setminus\OO_E$ and thus $\w(u,\bar u'\#s)=\w(u,\bar u')=0$. The surjectivity of the map follows from (e) and (f) in Proposition \ref{prop:winding}. The injectivity is a consequence of the fact that $\Omega([s])=\wp^*\omega([s])=-m\wp^*c_1^E([s])=0$ holds automatically for $s:S^2\to E\setminus\OO_E$ and thus $\ker c_1^{TE}=\ker \wp^*c_1^{TM}$ on $\pi_2(E\setminus\OO_E)$ by \eqref{eq:c_1}.

Let us offer the following explanation for restricting to the case of zero winding number.
We note that under our typical assumption that $c_1^{TM}=\lambda\om$ on $\pi_2(M)$ for some $\lambda\in\R$ 
\[
\frac{\pi_2(E\setminus\OO_E)}{\ker \wp^*c_1^{TM}}=0\,,\qquad
\mathscr{L}(E\setminus\OO_E)\cong \big\{([u,\bar u])\in\widetilde{\mathscr L}(E)\mid u(S^1)\subset E\setminus\OO_E,\;\w(u,\bar u)=0\big\}\,.
\]
Moreover, we can define
\[
\begin{split}
\mathscr{A}_f^\tau:{\mathscr{L}}(E\setminus\OO_E)\x\R\longrightarrow\R\,,\qquad \mathscr{A}_f^\tau(u,\eta):=\AA_f^\tau\big([u,\bar u],\eta\big)
\end{split}
\]
where $\bar u$ is any capping disk of $u$ with $\w(u,\bar u)=0$. Indeed, this does not depend on the choice of capping disks since $\Omega([\bar u^\mathrm{rev}\# \bar u'])=0$ for another capping disk $u'$ with $\w(u,\bar u')=0$. We can even take $\bar u$ contained in $E\setminus\OO_E$ by Proposition \ref{prop:winding}.(f). Recalling that $\Omega=d\bigr((\tfrac{1}{m}+\pi r^2)\alpha\bigr)$ on $E\setminus\OO_E$, we deduce
\begin{equation}\label{eq:functional_exact}
\mathscr{A}_f^\tau(u,\eta)=-\int_0^1u^*\bigr((\tfrac{1}{m}+\pi r^2)\alpha\bigr)-\eta\int_0^1\mu_\tau(u)\,dt-\int_0^1F(u)\,dt\,.
\end{equation}
In particular, critical points of $\AA^\tau_f$ with zero winding number  correspond to  critical points of the simpler action functional $\mathscr{A}^\tau_f$ on $\mathscr{L}(E\setminus\OO_E)$. Of course, spaces of Floer cylinders in $E\setminus\OO_E$ may  have bad compactness properties. However, we will see in Section \ref{sec:RFH} that   Floer cylinders of $\AA^\tau_f$ connecting critical points with  zero winding number are actually contained in $E\setminus\OO_E$ for an appropriate choice of almost complex structures and therefore coincide with Floer cylinders of $\mathscr{A}^\tau_f$. We point out that this is no longer true in the construction of the full Rabinowitz Floer homology of $\AA^\tau_f$, see Section \ref{sec:full}.

\section{Three classes of almost complex structures}\label{sec:J}
We recall from Section \ref{sec:line} the splitting 
\begin{equation}\label{eq:decomposition}
T_xE\cong  T^\mathrm{v}_xE\oplus T^\mathrm{h}_xE\cong E_{\wp(x)}\oplus T_{\wp(x)}M
\end{equation}
and the complex structure $i$ of the complex bundle $\wp:E\to M$. We define a class of $S^1$-families of diagonal almost complex structures on $E$,
\[
\JJ_\mathrm{diag}\subset \Gamma(S^1\x E,\mathrm{Aut}(TE)) 
\]
whose elements $J$ are of the form
\begin{equation}\label{eq:diagonal_J}
J=\begin{pmatrix}
 i & 0\\
 0 & j
\end{pmatrix} \qquad \textrm {for some }\;j\in\mathfrak{j}_\mathrm{reg}(f)\cup \mathfrak{j}_\mathrm{HS}(f)
\end{equation}
with respect to the decomposition \eqref{eq:decomposition}.
If the choice of $f$ is clear from the context, we simply write $\j_\mathrm{reg}=\j_\mathrm{reg}(f)$ and $\j_\mathrm{HS}=\j_\mathrm{HS}(f)$, the spaces of almost complex structures defined in Section \ref{sec:Quantum Gysin sequence}. 
 Note that every $J\in\mathcal{J}_\mathrm{diag}$ is automatically $\Omega$-compatible, i.e.~$\Omega(\cdot,J_t\cdot)$ is a Riemannian metric on $E$ for all $t\in S^1$. 

In addition, we consider a larger class of almost complex structures. 
Let us fix a disjoint union $\mathcal U$ of open balls around each critical point of a Morse function $f:M\to\R$. 
 For $j\in\mathfrak{j}_\mathrm{reg}$, we define a subspace 
\[
\BB(j)\subset \Gamma\big(\R\x S^1\x E,L(T^\mathrm{h}E,T^\mathrm{v}E)\big)
\]
as follows. An element $B\in \BB(j)$ is a smooth section $B:\R\x S^1\x E\to L(T^\mathrm{h}E,T^\mathrm{v}E)$, i.e. 
\[
B_{(\eta,t)}(x)=B(\eta,t,x) \in L(T_x^\mathrm{h}E,T_x^\mathrm{v}E)
\]
where $L$ means the space of linear maps. Moreover we require that $B$ has support inside
\begin{equation}\label{eq:support_B}
	\R\x S^1\x \{x\in E\mid R_0<r(x)<R_1\}\quad\text{for some $0<R_0<\sqrt{\frac{\tau}{m\pi}}<R_1$}\,.
\end{equation}
and satisfies
\[
iB_{(\eta,t)}+B_{(\eta,t)}j_t=0\,,\qquad B_{(\eta,t)}(x)=0\quad\forall x\in \wp^{-1}(\mathcal U)\,.
\]
Then $j\in\j_\mathrm{reg}$ and $B\in\BB(j)$ define an $(\R\x S^1)$-family of almost complex structures
\begin{equation}\label{eq:splitting_J}
J^B=\begin{pmatrix}
 i & B\\
 0 & j
\end{pmatrix}\,.
\end{equation}
We note that $J^B$ is clearly not $\Omega$-compatible for $B\neq0$ but still $\Omega$-tame provided that $B$ is sufficiently small since this is true for $B=0$. We denote
\[
\JJ^\BB:=\big\{J^B  \mid j\in\j_\mathrm{reg}\,,\; B\in \BB(j)
\textrm{ and $J^B$ is $\Omega$-tame}\big\}\,.
\]
We finally define the space
\begin{equation}\label{eq:JJ}
\JJ\subset\Gamma\big(\R\x S^1\x E,\mathrm{Aut}(TE)\big)	
\end{equation}
consisting of $J\in\JJ$ satisfying the following two conditions:
\begin{enumerate}[(i)]
\item for each $(\eta,t)\in \R\x S^1$, $J_{(\eta,t)}:=J(\eta,t,\cdot)$ is an $\Omega$-tame almost complex structure which is, in addition, $\Omega$-compatible on $\R\x S^1\x\wp^{-1}(\mathcal U)$;
\item $J$ is diagonal as in \eqref{eq:diagonal_J}  outside the region \eqref{eq:support_B}.
\end{enumerate}
The three space of almost complex structures on $E$ we have defined satisfy
\begin{equation}\label{eq:J_incl}
\mathcal J \supset \mathcal J^{\mathcal B} \supset \mathcal J_\mathrm{diag}\,.	
\end{equation}

\begin{rem}
We briefly explain the purpose of these spaces of almost complex structures. 
\begin{itemize}
	\item [$\JJ$:] Generically, the Rabinowitz Floer homology with zero winding number is defined. An analogous homology is also defined for the action functional perturbed by a contact Hamiltonian.
	\item [$\JJ^\BB$:] The projection map $\wp:E\to M$ is $(J,j)$-holomorphic. The Rabinowitz Floer homology with zero winding number and the full Rabinowitz Floer homology both are generically defined. This type of almost complex structure is used in \cite{AK17}.
	\item [$\JJ_\mathrm{diag}$:] We define the Rabinowitz Floer homology with zero winding number for all $j\in\mathfrak{j}_\mathrm{reg}(f)\cup \mathfrak{j}_\mathrm{HS}(f)$. Here we assume that $\tau>0$ is small, which seems to be a technical condition. For $j\in\mathfrak{j}_\mathrm{HS}(f)$, we show that the boundary operator for this homology corresponds to that of the cone complex of $\psi^{c_1^E}:\FC(f)\to\FC(f)$ in Section \ref{sec:quantum_gysin}. 
\end{itemize}
The zero section $\OO_E$ is $J$-holomorphic since $J$ is diagonal near $\OO_E$. We also require $J$ to be diagonal near infinity so that a maximum principle holds. We point out that none of the almost complex structure used in this article are of SFT-type. The $\Omega$-compatibility for $J$ in $\JJ^\BB$ or $\JJ$ is used to establish the Fredholm property of the operator obtained by linearizing the Rabinowitz-Floer equation we will study.
\end{rem}

\subsection{Gradient flow equation}\label{sec:gradient}
For a given $J\in\JJ$, we define a bilinear form $\mathfrak m$ on $T(\widetilde{\mathscr L}(E)\x \R)$. For $(\hat u_1,\hat\eta_1),\,(\hat u_2,\hat\eta_2)\in T_{([u,\bar u],\eta)}\big(\widetilde{\mathscr L}(E)\x \R\big)\cong \Gamma(S^1,u^*TE)\x\R$, we set
\begin{equation}\label{eq:bilinear form}
\mathfrak m_{([u,\bar u],\eta)} \big((\hat u_1,\hat\eta_1),(\hat u_2,\hat\eta_2)\big):=-\int_0^1\Omega_{u(t)}\big(J(\eta,t,u(t))\hat u_1(t),\hat u_2(t)\big)dt+\hat\eta_1\hat\eta_2\,.
\end{equation}
This bilinear form is positive definite but symmetric only on the region where $J$ is $\Omega$-compatible. The gradient vector field $\nabla\AA_f^\tau(w)$ at $w=([u,\bar u],\eta)$ with respect to $\mathfrak m$ is defined by
\[
d\AA^\tau_f(w)\hat w=\mathfrak m\big(\nabla\AA^\tau_f(w),\hat w\big)\qquad \forall \hat w\in T_w\big(\widetilde{\mathscr L}(E)\x \R \big)
\]
and has an explicit expression 
\[
\nabla\AA^\tau_f(w)= \left(J(\eta,t,u)\big(\p_tu-\eta X_{\mu_\tau}(u)-X_F(u)\big),\,-\int_0^1\mu_\tau(u)dt\right)\,.
\]
We interpret a negative gradient flow line of $\AA^\tau_f$, i.e. 
\[
w\in C^\infty(\R,\widetilde{\mathscr L}(E)\x\R)\,,\qquad \p_s w+\nabla\AA^\tau_f(w)=0,
\]
as a smooth solution $w=(u,\eta)\in C^\infty(\R\x S^1,E)\x C^\infty(\R,\R)$ of 
\begin{equation}\label{eq:Floer_eqn_for_A}
\overline{\p}_{J,\tau,f}(w)=\overline{\p}_{J,\tau,f}(u,\eta):=\left(\begin{aligned}
&\p_su+J(\eta,t,u)\big(\p_tu-\eta X_{\mu_\tau}(u) - X_F(u)\big)\\[.5ex]
&\p_s\eta-\int_0^1\mu_\tau(u)dt\,
\end{aligned}\right)=0\,.
\end{equation}
The above equation is referred to as the Rabinowitz-Floer equation. 
Using \eqref{eq:X_F}, the first equation in \eqref{eq:Floer_eqn_for_A} can be rephrased as
\begin{equation}\label{eq:Floer_eqn_for_A_again}
\p_su+J(\eta,t,u)\bigr(\p_tu+m(\eta+(f\circ \wp)(u)) R(u)-X_f^{\mathrm h}(u)\bigr)=0\,.
\end{equation}
We defined the energy of a solution $w=(u,\eta)$ of \eqref{eq:Floer_eqn_for_A} by
\[
E(w):=\int_\R\int_{S^1}\big(|\p_su|^2+|\p_s\eta|^2\big)dtds 
\]
where the norm $|\p_su|$ is given by the positive definite form $-\Omega(J_t\cdot,\cdot)$. It is finite if and only if there exist $([u_\pm,\bar u_\pm],\eta_\pm)\in\Crit\AA^\tau_f$ such that
\[
\lim_{s\to\pm\infty}\big(u(s,\cdot),\eta(s)\big)=(u_\pm,\eta_\pm)\,.
\]
In this case, we have
\[
E(w)=\AA^\tau_f([u_-,\bar u_-],\eta_-)-\AA^\tau_f([u_+,\bar u_-\#u],\eta_+)\,.
\]
For $w_\pm=([u_\pm,\bar u_\pm],\eta_\pm)\in\Crit\AA^\tau_f$, we denote by
\begin{equation}\label{eq:rfh_moduli}
	\widehat\MM(S_{w_-},S_{w_+},\AA^\tau_f,J)
\end{equation}
the moduli space of solutions $w=(u,\eta)\in C^\infty(\R\x S^1,E)\x C^\infty(\R,\R)$ of $\overline{\p}_{J,\tau,f}(w)=0$, see \eqref{eq:Floer_eqn_for_A}, with asymptotic condition 
\[
\lim_{s\to \pm\infty}w(s) = (e^{2\pi i\theta_\pm}u_\pm,\eta_\pm)\,,\qquad  [\bar u_-\#u\#\bar u_+^\mathrm{rev}]=0 \;\textrm{ in }\;\Gamma_E\,,
\]
for some $\theta_\pm\in S^1$. The limits are in $C^\infty(S^1,E)$. The virtual dimension of this moduli space, which is by definition the Fredholm index of the associated operator $D_w$ in \eqref{eq:D_w}, is
\begin{equation}\label{eq:moduli_dim_S}
\mathrm{virdim\,} \widehat\MM(S_{w_-},S_{w_+},\AA^\tau_f,J) = \mu_\RFH(w_-)-\mu_\RFH(w_+) +1\,,
\end{equation}
see Section \ref{sec:fredholm}. 
Unless $w_-= w_+$, there is a free $\R$-action on these moduli spaces by translation in $s$, i.e.~ 
\begin{equation}\label{eq:translation}
 \sigma^*w(s)=(\sigma^*u(s,\cdot),\sigma^*\eta(s)):= (u(s-\sigma,\cdot),\eta(s-\sigma))\,,\qquad \sigma\in\R\,.
\end{equation}
We consider the evaluation maps at asymptotic ends:
\begin{equation}\label{eq:asymp_ev}
	\begin{split}
	&\ev_\pm:\widehat\MM(S_{w_-},S_{w_+},\AA^\tau_f,J)\to S_{w_\pm}\,,\qquad \ev_\pm(u,\eta):=\lim_{s\to\pm\infty} \big([u(s,\cdot),t_\pm^*\bar u_\pm ],\eta(s)\big) \,,
	\end{split}
\end{equation}
where $t_\pm\in S^1$ are uniquely chosen so that $t_\pm^*\bar u$ are capping disks for the asymptotic orbits of $u$ as $s\to\pm\infty$.
Let $h$ be a smooth perfect Morse function on $\Crit\AA^\tau_f$ as in \eqref{eq:perfect}. We denote by $W^u(w,h)$ and $W^s(w,h)$ the unstable and stable manifold of the negative gradient of $h$ at $w\in\Crit h$ with respect to some metic respectively. Let $w_-,w_+\in\Crit h$ and $w_1,w_2,w_3\in\Crit\AA^\tau_f$ be arbitrary. We observe that $W^u(w_-,h)\x W^s(w_+,h)$ is transverse to 
\begin{equation}\label{eq:ev_minus_plus}
\ev_-\x \ev_+: \widehat\MM(S_{w_1},S_{w_2},\AA^\tau_f,J)\longrightarrow S_{w_1} \x S_{w_2}
\end{equation}
and also to 
\begin{equation}\label{eq:ev_minus_plus2}
\ev_-\x \ev_+: \widehat\MM(S_{w_1},S_{w_2},\AA^\tau_f,J)\;_{\ev_+}\!\!\x_{\,\ev_-}\widehat\MM(S_{w_2},S_{w_3},\AA^\tau_f,J)\to S_{w_1} \x S_{w_3}
\end{equation}
if every pair $(w_-,w_+)$ of a minimum point $w_-$ and a maximum point $w_+$ of $h$ is a regular value of $\ev_-\x \ev_+$. Throughout this paper, we tacitly assume that $h$ has this property.

 For $w_\pm\in\Crit h$, we denote by 
\begin{equation}\label{eq:m-cascade}
\widehat\MM^n(w_-,w_+,{\AA}^\tau_f,J)\,,\qquad n\in\N=\{1,2,\dots\}	
\end{equation}
the moduli space of flow lines with $n$ cascades from $w_-$ to $w_+$, namely $\mathbf{w}=(w^1,\dots,w^n)$ for
\[
w^i\in \widehat\MM(S_{w^i_-},S_{w^i_+},\AA^\tau_f,J)\,,\quad w^i_\pm\in\Crit\AA^\tau_f\,,\; S_{w^i_+}=S_{w^{i+1}_-}\,,\; S_{w^1_-}=S_{w_-}\,,\;S_{w^n_+}=S_{w_+}
\]
such that 
\begin{equation}\label{eq:connecting}
\phi_{-\nabla h}^{t_i}\left(\ev_+(w^i)\right)=\ev_-(w^{i+1})\,,\qquad 1\leq i\leq n-1	
\end{equation}
for some $t_i\geq 0$ and
\[
\ev_-(w^1)\in W^u\big(w_-,h\big)\,,\qquad \ev_+(w^n)\in W^s\big(w_+, h\big)\,.
\]
To avoid having trivial cascades, we assume in addition $S_{w^i_-}\neq S_{w^i_+}$. 
As usual we denote $w^i_\pm=([u^i_\pm,\bar u^i_\pm],\eta^i_\pm)$ and $w^i=(u^i,\eta^i)$. The positive end of $u^i$ and the negative end of $u^{i+1}$ match up to timeshift, so we can glue them topologically and observe that in $\Gamma_E$
\[
	\begin{split}
		[\bar u_- \# u^1\#u^2\#\cdots u^n\# (\bar u_+)^\mathrm{rev}] &= [\bar u_+^1 \# u^2\#\cdots u^n\# (\bar u_+)^\mathrm{rev}] \\
		&= [\bar u_-^2 \# u^2\#\cdots u^n\# (\bar u_+)^\mathrm{rev}] \\
		&= \cdots =0\,.
	\end{split}
\]
By \eqref{eq:moduli_dim_S}, the virtual dimension of this moduli space is
\[
\mathrm{virdim\,} \widehat\MM^n(w_-,w_+,\AA^\tau_f,J) = \mu_\RFH^h(w_-)-\mu_\RFH^h(w_+)+n-1\,.
\]
\begin{rem}\label{rem:zero_cascade}
In contrast to the general case in Morse-Bott homology, we may ignore the case of flow lines without cascade, i.e.~negative gradient flow lines of $h$ connecting critical points of $h$, in our situation. Indeed there are exactly two flow lines of $-\nabla h$ from the maximum point to the minimum point on each component of $\Crit\AA^\tau_f$ having opposite signs, see \cite{BO09}, since every orbit is good, i.e.~the Conley-Zehnder indices of any simple periodic Reeb orbit and all its iterates have the same parity, as computed in Section \ref{sec:index}.
\end{rem}

There is a free $\R^n$-action given by translating each $w^i$ in the $s$-direction as defined in \eqref{eq:translation}. We denote the quotient space by 
\[
\MM^n(w_-,w_+,\AA^\tau_f,J):=\widehat\MM^n(w_-,w_+,\AA^\tau_f,J)/\R^n\,.
\]
If the moduli spaces $\widehat\MM^n(w_-,w_+,\AA^\tau_f,J)$ are cut out transversely, then 
\[
\MM(w_-,w_+,\AA^\tau_f,J) := \bigcup_{n\in\N} \MM^n(w_-,w_+,\AA^\tau_f,J)
\]
is a smooth manifold of dimension  
\[
\dim\MM(w_-,w_+,\AA^\tau_f,J)=\mu_\RFH^h(w_-)-\mu_\RFH^h(w_+)-1\,.
\]

Like other Floer theories, modulis spaces $\MM(w_-,w_+,\AA^\tau_f,J)$ enjoy transversality and compactness properties with a suitable choice of $J$ as we will establish in this section. To discuss orientations, we assume for the moment that $J$ is such. We choose orientations on $\widehat\MM(S_{w_-},S_{w_+},\AA^\tau_f,J)$, $W^u(w,h)$, $W^s(w,h)$, and $S_w\x\R$ coherent under gluing operations. These induce an orientation on $\widehat\MM^n(w_-,w_+,\AA^\tau_f,J)$ by the fibered sum rule in Remark \ref{rem:ori_rule}. We refer to Section \ref{sec:fredholm} and Section \ref{sec:orientation}, and to  \cite{BO09,DL19} for details. 
For $\mu_\RFH^h(w_-)-\mu_\RFH^h(w_+)=1$, the space $\MM(w_-,w_+,\AA^\tau_f,J)$ is a finite set. By assigning a sign $\epsilon({\mathbf{w}})=\{-1,+1\}$ to $\mathbf{w}=(w^1,\dots,w^n)\in \widehat\MM^n(w_-,w_+,\AA^\tau_f,J)$ so that $\epsilon({\mathbf{w}})(\p_s w^1,\dots,\p_s w^n)$ coincides with the orientation on $\widehat\MM^n(w_-,w_+,\AA^\tau_f,J)$, we obtain the signed count 
\begin{equation}\label{eq:sign_count_rfh}
\# \MM(w_-,w_+,\AA^\tau_f,J)\in\Z\,.	
\end{equation}

\subsubsection*{Projection of solutions of the Rabinowitz Floer equation}
For $J=\begin{pmatrix}
 i & B\\
 0 & j
\end{pmatrix}\in\JJ^\BB$ the projection $\wp:E\to M$ is $(J,j)$-holomorphic, i.e.~$j\circ d\wp=d\wp\circ J$. In particular, every solution $w=(u,\eta)$ of the Rabinowitz Floer equation \eqref{eq:Floer_eqn_for_A} with respect to $J$ projects to a solution $\Pi(w)=\wp\circ u:\R\x S^1\to M$ of the Floer equation \eqref{eq:Floer_eq_M} with respect to $j$. That is, we have 
\begin{equation}\label{eq:proj_cylinder}
	\begin{split}
\Pi: \widehat\MM(S_{w_-},S_{w_+},\AA^\tau_f,J)&\longrightarrow \widehat\NN(\Pi(w_-),\Pi(w_+),\a_f,j)\\
w=(u,\eta)\;&\longmapsto \;\Pi(w)=\wp\circ u\,.
\end{split}
\end{equation}
Let us assume that $\mu_\RFH^h(w_-)-\mu_\RFH^h(w_+)=1$ for $w_\pm\in\Crit h$ with $\w(w_-)=\w(w_+)$. We point out that every cascade in $\widehat\MM^n(w_-,w_+,\AA^\tau_f,J)$ projects to a nontrivial solution of \eqref{eq:Floer_eq_M} in $M$ since a cascade lying inside a fiber of $E\to M$ would intersect the zero section, contradicting $\w(w_-)=\w(w_+)$, see Proposition \ref{prop:positivity_of_intersection} below.  By \eqref{eq:index_h} and Lemma \ref{lem:one-to-one_index}, we have $\mu_\FH(\Pi(w_-))-\mu_\FH(\Pi(w_+))\leq 2$. This implies that  $\widehat\MM^n(w_-,w_+,\AA^\tau_f,J)=\emptyset$ for $n\geq3$, i.e. 
\begin{equation}\label{eq:no_3}
\widehat\MM(w_-,w_+,\AA^\tau_f,J)=\widehat\MM^1(w_-,w_+,\AA^\tau_f,J)\sqcup \widehat\MM^2(w_-,w_+,\AA^\tau_f,J)	\,,
\end{equation}
and we even have $\widehat\MM^2(w_-,w_+,\AA^\tau_f,J)=\emptyset$ if $\mu_\FH(\Pi(w_-))-\mu_\FH(\Pi(w_+))\leq 1$. In fact, this is true without the assumption $\w(w_-)=\w(w_+)$, see Section \ref{sec:two_filtrations}.

A notable distinction between $\JJ_\mathrm{diag}$ and $\JJ^\BB$ is the presence of a free $S^1$-action on $\widehat\MM(S_{w_-},S_{w_+},\AA^\tau_f,J)$ for $J\in\JJ_\mathrm{diag}$ induced by the $U(1)$-action on $E$, namely 
\begin{equation}\label{eq:rotating_cylinder}
\widehat\MM(S_{w_-},S_{w_+},\AA^\tau_f,J) \to \widehat\MM(S_{w_-},S_{w_+},\AA^\tau_f,J)\,,\quad (u,\eta)\mapsto (e^{2\pi i\theta}u,\eta),\quad\theta\in S^1	\,.
\end{equation}
We will see in Theorem \ref{thm:bijection} that the projection \eqref{eq:proj_cylinder} is even bijective modulo this $S^1$-action provided that $\w(w_-)=\w(w_+)$ and $\tau>0$ is small enough.

\subsection{Parallel transport of Rabinowitz-Floer cylinders}
We review an idea  \cite[Section 5]{Fra08} of parallel-transporting the cylinder component of a solution of the Rabinowitz-Floer equation \eqref{eq:Floer_eqn_for_A} into a single fiber of $\wp:E\to M$.

Given a path $y:\R\to M$, the connection 1-form $\alpha$ induces the parallel transport isomorphisms
\begin{equation}\label{eq:parallel}
P_y^t:E_{y(0)}\longrightarrow E_{y(t)}\,,\qquad  t\in\R\,.	
\end{equation}
Let $(u,\eta)$ be a finite energy solution of \eqref{eq:Floer_eqn_for_A}.
We parallel transport $u:\R\x S^1\to E$ along $q=\wp\circ u$ to a map $u^0:\R\x S^1\longrightarrow E_{q(0,0)}$. 
More precisely, we set
\begin{equation}\label{eq:u^0}
\begin{split}
u^0&:\R\x S^1\longrightarrow E_{q(0,0)}\\
 u^0&(s,t):=\exp{\left(-2\pi i t\int_{D^2}\overline{q}_s^*(m\om)\right)}(P^{s}_{q(\cdot,0)})^{-1}(P^{t}_{q(s,\cdot)})^{-1}u(s,t)\,,	
\end{split}
\end{equation}
where $\bar q_s:D^2\to M$ is a capping disk of $q(s,\cdot):S^1\to M$ formed by adding the negative asymptotic end of $q$, which is a critical point of $f$, to the half-infinite cylinder $q((-\infty,s],S^1)$, i.e.
\[
\bar q_s(e^{\sigma+2\pi it}):=q(\sigma+s,t)\,,\qquad \bar q_s(0)=\lim_{\sigma\to-\infty}q(\sigma,\cdot)\in\Crit f\,. 
\]
We note that $u^0$ is indeed $1$-periodic in $t$ since $-m[\om]=c_1^E$.
We define two smooth functions $\chi_1,\chi_2:\R\x S^1\to\R$ by
\begin{equation}\label{eq:chi}
	\begin{split}
\chi_1(s,t)&:=\int_0^tm\om(\p_sq,\p_tq)\,dt-t\frac{d}{ds}\int_{D^2}\overline{q}_s^*(m\om)\\[1ex]
\chi_2(s,t)&:=-\int_{D^2}\overline{q}_s^*(m\om)\,.
\end{split}
\end{equation}
As in \eqref{eq:splitting_TE} and \eqref{eq:R=2pi_i} we identify $T^\mathrm{v}_xE=E_{\wp(x)}$ and $
R(x)=2\pi i x$ for $x\in E$. 
A straightforward computation yields 
\begin{equation}\label{eq:derivatives_u^0}
	\begin{split}
	\p_su^0&=\exp{\left(-2\pi it\int_{D^2}\overline{q}_s^*(m\om)\right)}(P^{s}_{q(\cdot,0)})^{-1}(P^{t}_{q(s,\cdot)})^{-1}\partial^\mathrm{v}_su+\chi_1R(u^0)\,,\\
	\p_tu^0&=\exp{\left(-2\pi it\int_{D^2}\overline{q}_s^*(m\om)\right)}(P^{s}_{q(\cdot,0)})^{-1}(P^{t}_{q(s,\cdot)})^{-1}\partial^\mathrm{v}_tu+\chi_2R(u^0)\,,\\
	\end{split}
\end{equation}
where $\p^\mathrm{v}$ denotes derivatives in the vertical direction.

Let $J=\begin{pmatrix}
 i & 0\\
 0 & j
\end{pmatrix}\in\JJ_\mathrm{diag}$. Then $(u,\eta)$ is a solution of the Rabinowitz-Floer equation \eqref{eq:Floer_eqn_for_A}, see also \eqref{eq:Floer_eqn_for_A_again}, if and only if $q=\wp\circ u$ is a solution of the Floer equation \eqref{eq:Floer_eq_M} and it holds that
\begin{equation}\label{eqn:Floer_eqn_fiber1}
\overline{\p}^\mathrm{v}(u,\eta):=\left(
\begin{aligned}
&\p_s^\mathrm{v}u+i(\p_t^\mathrm{v} u + m(\eta+f(q))R(u))\\[.5ex]
&\p_s\eta-\int_0^1\mu_\tau(u)\,dt
\end{aligned}\right)=0\,.
\end{equation}
Due to \eqref{eq:derivatives_u^0}, equation \eqref{eqn:Floer_eqn_fiber1} translates into
\begin{equation}\label{eqn:Floer_eqn_fiber}
\overline{\p}^0(u^0,\eta):=\left(
\begin{aligned}
&\p_su^0-\chi_1R(u^0)+i\big(\p_tu^0+(-\chi_2+m\eta+mf(q))R(u^0)\big)\\[.5ex]
&\p_s\eta-\int_0^1\mu_\tau(u^0)\,dt
\end{aligned}\right)=0\,.
\end{equation}
Note that the maps $\overline{\p}^\mathrm{v}$ and $\overline{\p}^0$ depend on $f$, $q$, and $\tau$.

It is also possible to explicitly recover $(u,\eta)$ from $q$ and $(u^0,\eta)$. If $q:\R\x S^1\to M$ is a finite energy solution of \eqref{eq:Floer_eq_M} with respect to $j$ and $(u^0,\eta)$ is a solution of \eqref{eqn:Floer_eqn_fiber}, the map 
\begin{equation}\label{eq:u_from_u0}
u(s,t)=\exp{\left(2\pi it\int_{D^2}\overline{q}_s^*(m\om)\right)}P^{t}_{q(s,\cdot)}P^{s}_{q(\cdot,0)}u^0(s,t)
\end{equation}
together with $\eta$ is a solution of \eqref{eq:Floer_eqn_for_A} with respect to $J=\begin{pmatrix}
 i & 0\\
 0 & j
\end{pmatrix}$.
Using $R(x)=2\pi i x$, we recast the first equation in \eqref{eqn:Floer_eqn_fiber} as 
\begin{equation}\label{eqn:eqn_for_u^0}
\p_su^0+i\p_tu^0+2\pi\big((\chi_2-m\eta-mf(q))\mathrm{id}-\chi_1i\big)u^0=0\,.
\end{equation}
Now let $J\in\JJ$, and let $(u,\eta)$ be solution of the Rabinowitz-Floer equation \eqref{eq:Floer_eqn_for_A} with this $J$. Since $J$ is still diagonal near $\OO_E$, the above expressions \eqref{eqn:Floer_eqn_fiber1} and \eqref{eqn:Floer_eqn_fiber} remain valid in a subset of $\R\x S^1$ which is mapped near to $\OO_E$ under $u$.

\subsection{Positivity of intersections}

\begin{prop}\label{prop:positivity_of_intersection}
Let $(u,\eta)\in\widehat\MM(S_{w_-},S_{w_+},\AA^\tau_f,J)$ with $J\in\JJ$. Then the intersection number of $u$ with the zero section $\OO_E$ is nonnegative, i.e.~$u\cdot \OO_E=\w(w_+)-\w(w_-)\geq 0$. Moreover, $u\cdot\OO_E=0$ if and only if $u$ does not intersect $\OO_E$.

In particular, if $\mathbf{w}=(w^1,\dots,w^n)\in\widehat\MM^n({w_-},{w_+},\AA^\tau_f,J)$, then 
\[
\w(w_-)\leq\w(w_-^2)\leq  \cdots \leq \w(w_-^n)\leq \w(w_+)
\]
where $w_-^i$ are critical points of $\AA^\tau_f$ involved in the definition of the moduli space, see \eqref{eq:m-cascade}. Thus, if $\w(w_-)=\w(w_+)$, then for all $w^i=(u^i,\eta^i)$ we have  $u^i\cdot\OO_E=0$.
\end{prop}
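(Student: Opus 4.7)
The plan rests on two ingredients: a winding-number identity from Proposition \ref{prop:winding} and a local positivity argument near $\OO_E$ using the parallel-transport reduction developed in the previous subsection. To obtain the identity $u \cdot \OO_E = \w(w_+) - \w(w_-)$, I would apply Proposition \ref{prop:winding}(d) with $c$ equal to the Floer cylinder $u$ (reparametrized to have domain $[0,1] \times S^1$). The hypothesis $c_1^E([\wp \circ (\bar u_- \# u \# \bar u_+^\mathrm{rev})]) = 0$ is automatic: the asymptotic condition $[\bar u_- \# u \# \bar u_+^\mathrm{rev}] = 0$ in $\Gamma_E$ forces $\Omega$ to vanish on this class, and the relation $\Omega|_{\pi_2(E)} = \wp^*\om$ combined with $c_1^E = -m[\om]$ then yields the desired vanishing.

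For positivity, the key observation is that $J \in \JJ$ is diagonal of the form $\mathrm{diag}(i,j)$ in a neighborhood of $\OO_E$, so $\OO_E$ is a $J$-holomorphic submanifold. In the open set where $u$ is close to $\OO_E$, the transformed section $u^0 : \R \times S^1 \to E_{q(0,0)} \cong \C$ from \eqref{eq:u^0} satisfies the linear equation \eqref{eqn:eqn_for_u^0}, as explained at the end of the previous subsection. Crucially, the parallel transports are fiberwise linear isomorphisms, so $u(s,t) \in \OO_E$ iff $u^0(s,t) = 0$. The Carleman similarity principle applied to \eqref{eqn:eqn_for_u^0}, which has the shape $\partial_s u^0 + i \partial_t u^0 + A(s,t) u^0 = 0$ with smooth $\R$-linear $A$, implies that the zeros of $u^0$ are isolated and carry strictly positive local intersection index, provided $u^0 \not\equiv 0$. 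Non-triviality holds because the asymptotic limits $u_\pm$ lie in $\Sigma_\tau$, which is disjoint from $\OO_E$. Hence $u^{-1}(\OO_E)$ is a finite set, each point contributes $+1$ to the intersection number, and $u \cdot \OO_E \geq 0$, with equality iff $u$ avoids $\OO_E$ altogether.

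The cascade statement is then bookkeeping. Applied to each $w^i = (u^i, \eta^i)$, the previous step gives $\w(w_+^i) - \w(w_-^i) = u^i \cdot \OO_E \geq 0$. The gluing condition $\phi^{t_i}_{-\nabla h}(\ev_+(w^i)) = \ev_-(w^{i+1})$ takes place inside a single orbit $S_{w^i_+} = S_{w^{i+1}_-}$ of $\Crit\AA^\tau_f$, and the winding number is constant on each such orbit by the rotation formula \eqref{eq:capping_shift}. Thus $\w(w_+^i) = \w(w_-^{i+1})$, and telescoping yields the monotone chain of inequalities. Equality $\w(w_-) = \w(w_+)$ collapses all inequalities to equalities, forcing $u^i \cdot \OO_E = 0$ for every $i$. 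The principal obstacle is the positivity step itself, since $u$ is not globally $J$-holomorphic; it is precisely the diagonal form of $J$ near $\OO_E$ together with the parallel-transport trick that restore a genuine Cauchy-Riemann-type equation for $u^0$ and unlock the similarity principle.
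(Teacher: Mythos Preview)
Your proposal is correct and follows essentially the same route as the paper: the identity $u\cdot\OO_E=\w(w_+)-\w(w_-)$ via Proposition~\ref{prop:winding}(d), positivity of the local intersection index via the parallel-transport reduction \eqref{eq:u^0} to equation~\eqref{eqn:eqn_for_u^0} and the Carleman similarity principle, and finally the telescoping argument for cascades. One small slip: an isolated zero of $u^0$ contributes a \emph{positive integer} to the intersection number, not necessarily $+1$, since the local winding number of a holomorphic map at a zero equals its multiplicity; this does not affect the conclusion.
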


\begin{proof}
This follows from \cite[Proposition 5.1]{Fra08}.  The equality $u\cdot\OO_E=\w(w_+)-\w(w_-)$ is shown in Proposition \ref{prop:winding}.(d).

Suppose that there is $(s_0,t_0)\in\R\x S^1$ such that $u(s_0,t_0)\in\OO_E$. Since $J\in\JJ$ is of the form $J=\begin{pmatrix}
 i & 0\\
 0 & j
\end{pmatrix}$
near $\OO_E$, the map  $u^0:\R\x S^1\to E_{q(0,0)}$ defined in \eqref{eq:u^0} satisfies equation \eqref{eqn:eqn_for_u^0} on an open neighborhood of $(s_0,t_0)$ in $\R\x S^1$. 
Then the local intersection number of $u$ with $\OO_E$ at $(s_0,t_0)$ agrees with the local winding number of $u^0$ at $(s_0,t_0)$. Thanks to the Carleman similarity principle, see \cite[Section 2.3]{MS12}, we can transform a solution of \eqref{eqn:eqn_for_u^0} to a holomorphic function. This implies that the local winding number of $u^0$ is always positive. Therefore the local intersection number of $u$ with $\OO_E$ at $(s_0,t_0)$ is positive, and we have $u\cdot\OO_E>0$. 

The claim for the moduli space of flow lines with cascades follows from its definition. 
\end{proof}

\subsection{Cyclic group actions}\label{sec:cyclic}

In this section, we denote by $E^m$ a complex line bundle with $c_1^{E^m}=-m[\om]$ and by $\Sigma^m$ the corresponding principal $S^1$-bundle, to indicate the degree of bundles. We have the commutative diagrams 
\begin{equation}\label{eq:triangle_diagram}
	\begin{tikzcd}[column sep=1em]
	E^1  \arrow{dr}[left]{\wp\;} \arrow{rr}{\wp^m}  &&  E^m \arrow{dl}[right]{\;\wp} \\
	& M &
	\end{tikzcd}
	\qquad\qquad 
	\begin{tikzcd}[column sep=1em]
	\Sigma^1  \arrow{dr}[left]{\wp\;} \arrow{rr}{\wp^m}  &&  \Sigma^m \arrow{dl}[right]{\;\wp}  \\
	& M &
	\end{tikzcd}	
\end{equation}
where the horizontal map on the left is the holomorphic projection 
\begin{equation}\label{eq:hol_proj}
\wp^m:E^1\to E^m\cong (E^1)^{\otimes m}\,,\qquad x\mapsto x\otimes \cdots\otimes x\,.	
\end{equation}
This is an $m$-fold covering map away from the zero sections. Its deck transformation group is the cyclic subgroup $\Z_m$ of $U(1)$ acting on $E^1\setminus\OO_{E^1}$. We choose a Hermitian metric on $E^1$ and endow $E^m\cong(E^1)^{\otimes m}$ with the induced metric. 
Then the connection 1-forms $\alpha$ and $\tilde\alpha$ on $E^m\setminus\OO_{E^m}$ and $E^1\setminus\OO_{E^1}$ respectively, defined in \eqref{eq:alpha_E}, satisfy $(\wp^m)^*\alpha=m\tilde\alpha$ on $E^1\setminus\OO_{E^1}$.
Restricting $\wp^m:E^1\to E^m$ to the respective circle bundles, we obtain the diagram on the right-hand side in \eqref{eq:triangle_diagram}.  
In particular, $\Sigma^1/\Z_m$ viewed as a principal $S^1$-bundle over $M$, is isomorphic to $\Sigma^m$.
Conversely, we can recover \eqref{eq:hol_proj} from the projection $\wp^m:\Sigma^1\to\Sigma^1/\Z_m$. Indeed, in view of Section \ref{sec:line}, the map $\wp^m$ in \eqref{eq:hol_proj} is isomorphic to 
\[
\Sigma^1\x_{\rho}\C \to \Sigma^1/\Z_m\x_\rho \C\,,\qquad [x,z]\mapsto [\wp^m(x),z^m]
\]
as maps between complex line bundles since 
\[
\begin{split}
\Sigma^1\x_\rho\C^{\otimes m}\;\; &\stackrel{\cong}{\longrightarrow} \;\; \Sigma^1\x_{\rho_m}\C \;\;\, \stackrel{\cong}{\longrightarrow} \;\; \Sigma^1/\Z_m\x_\rho \C	\\
[x,z_1\otimes\cdots\otimes z_m]&\longmapsto [x,z_1\cdots z_m] \longmapsto [\wp^m(x),z_1\cdots z_m]\,,
\end{split}
\]
where 
\[
\rho_m:S^1\x \Sigma^1\x \C \to  \Sigma^1\x \C\,,\qquad  \rho_m(t,x,z):=(t\cdot x,e^{-2\pi m it} z)\,.
\]
The fiberwise radial coordinates $\tilde r$ and $r$ on $E^1$ and $E^m$ respectively satisfy $r\circ\wp^m=\tilde r^m$. We pullback the symplectic form $\Omega$ and the functions $\mu_\tau$ and $F$ defined on $E^m$ to $E^1$ by $\wp^m$, and denote 
\begin{equation}\label{eq:tilde_om}
\widetilde\Omega:=(\wp^m)^*\Omega=\wp^*\omega+d(m\pi\tilde r^{2m}\tilde\alpha)	
\end{equation}
and
\begin{equation}\label{eq:tilde_mu}
	\tilde{\mu}_\tau:=\mu_\tau\circ\wp^m=m\pi \tilde{r}^{2m}-\tau\,,\quad \quad\widetilde F:=F\circ\wp^m=(1+m\pi\tilde{r}^{2m})f\circ\wp\,.
\end{equation}
In particular, we still have $\widetilde\Omega(A)=\om(A)$ for $A\in\pi_2(E^1)\cong\pi_2(M)$.
We also denote by $\widetilde\AA^{\tau}_f$ the Rabinowitz action functional on $E^1$ defined with these ingredients. Like this, for distinction, we add ``tildes'' to objects for $E^1$. We will see below that critical points and gradient flow lines of $\widetilde\AA^{\tau}_f$ for $E^1$ correspond to those of $\AA^\tau_f$ for $E^m$. 
To this end, we note 
\begin{equation}\label{eq:dwp}
	X_{\tilde \mu_\tau}=-\widetilde R\,,\;\;\; X_{\mu_\tau}=-mR\,,\;\;\; d\wp^m X_{\tilde \mu_\tau}=X_{\mu_\tau}\,,\;\;\; d\wp^m X_{\widetilde F}=X_F\,.
\end{equation}
We define winding number, indices, and moduli spaces for $\widetilde\AA^\tau_f$ in the same way as those for $\AA^\tau_f$. Computations made for the action and the index of critical points of $\AA^\tau_f$ go through also for those of $\widetilde\AA^\tau_f$.

\begin{lem}\label{lem:cyclic1}
For any $k\in\Z$, the map
\[
\begin{split}
\Pi^m:\{ \tilde w\in\Crit\widetilde\AA^{\tau}_f \mid  \w(\tilde w)=k\} &\longrightarrow \{ w\in \Crit\AA^\tau_f \mid \w(w)=mk\} \\[0.5ex]
\tilde w=([\tilde u,{\bar{\tilde u}}_\mathrm{fib}\#s], \eta) &\longmapsto 	w=([u:=\wp^m\circ\tilde u,\bar{u}_\mathrm{fib}\#s],\eta)\,.
\end{split}
\]
is surjective and $m$-to-1, where $[s]\in \pi_2(E^1)\cong \pi_2(E^m)\cong\pi_2(M)$. Moreover it holds
\[
\widetilde\AA^{\tau}_f(\tilde w)=\AA^\tau_f(w)\,,\qquad \mu_\RFH(\tilde w)-\mu_\RFH(w) = 2(\w(w)-\w(\tilde w))=2(m-1)k\,.
\]
\end{lem}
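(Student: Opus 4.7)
The plan is to verify in order: (i) well-definedness of $\Pi^m$; (ii) the action, winding, and index identities; and (iii) surjectivity together with the $m$-to-1 property.

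For (i), the identities $d\wp^m\,X_{\tilde\mu_\tau}=X_{\mu_\tau}$ and $d\wp^m\,X_{\widetilde F}=X_F$ from \eqref{eq:dwp}, together with $\mu_\tau\circ\wp^m=\tilde\mu_\tau$ from \eqref{eq:tilde_mu}, ensure that the critical point equation \eqref{eqn:critical_point_equation_A_f_tau} is preserved when pushed forward along $\wp^m$. The equivalence relation on $\widetilde{\mathscr L}$ descends because $(\wp^m)^*\Omega=\widetilde\Omega$ and, as observed in Section~\ref{sec:action}, both $\wp_*:\Gamma_{E^1}\to\Gamma_M$ and $\wp_*:\Gamma_{E^m}\to\Gamma_M$ are isomorphisms, so $(\wp^m)_*:\Gamma_{E^1}\to\Gamma_{E^m}$ is an isomorphism respecting the relevant kernels.

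For (ii), take as reference capping disk $\bar u:=\wp^m\circ\bar{\tilde u}$. Then the action identity $\AA^\tau_f(w)=\widetilde{\AA}^\tau_f(\tilde w)$ is immediate from the definition of $\widetilde{\AA}^\tau_f$ via pullback by $\wp^m$, using $(\wp^m)^*\Omega=\widetilde\Omega$, $\mu_\tau\circ\wp^m=\tilde\mu_\tau$, and $F\circ\wp^m=\widetilde F$. The winding identity follows by direct substitution, using $(\wp^m)^*\alpha=m\tilde\alpha$ and $\wp\circ\wp^m=\wp$:
\[
\w(w)=\int_0^1u^*\alpha-m\int_{D^2}\bar u^*\wp^*\om=m\int_0^1\tilde u^*\tilde\alpha-m\int_{D^2}\bar{\tilde u}^*\wp^*\om=m\,\w(\tilde w)=mk.
\]
For the index, I apply \eqref{eq:mu_RFH_cov} to $\tilde w$ and $w$ and subtract. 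Using $c_1^{TE^m}-c_1^{TE^1}=-(m-1)\wp^*[\om]$ from \eqref{eq:c_1} together with $\cov(u)=m\cov(\tilde u)$, the difference collapses to
\[
\mu_\RFH(\tilde w)-\mu_\RFH(w)=2(m-1)\bigl(\cov(\tilde u)-\om([\wp\circ s])\bigr)=2(m-1)\w(\tilde w)=2(m-1)k,
\]
which equals $2(\w(w)-\w(\tilde w))$ as claimed.

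For (iii), given $w=([u,\bar u_\mathrm{fib}\#s],\eta)$ with $\w(w)=mk$, the identity $\w(w)=\cov(u)-m\,\om([\wp\circ s])$ forces $\cov(u)=m\bigl(k+\om([\wp\circ s])\bigr)\in m\Z$. Since $\wp^m:\Sigma^1\to\Sigma^m$ is an unramified $m$-fold cover with deck group $\Z_m\subset U(1)$, this divisibility yields exactly $m$ lifts $\tilde u_1,\dots,\tilde u_m$ of $u$, permuted freely by $\Z_m$. A fiber capping disk $\bar u_\mathrm{fib}$, viewed in the fiber $\C$ of $E^m$, winds $m\ell$ times about $\OO_{E^m}$, so under the fiberwise $m$-th power map $\wp^m|_\mathrm{fiber}:\C\to\C$, $z\mapsto z^m$, the divisibility by $m$ permits $m$ smooth preimages, one attached to each $\tilde u_j$. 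Pairing each lift with the same spherical class $s$ (identified via $\pi_2(E^1)\cong\pi_2(E^m)\cong\pi_2(M)$) produces the $m$ preimages $\tilde w_j$, each of winding $k$ by the computation in (ii). The most delicate point is this last step, since $\wp^m:E^1\to E^m$ is branched along the zero sections and lifting an arbitrary capping disk is not automatic; reducing to fiber capping disks sidesteps this, because on a single fiber $\wp^m$ is the standard $m$-th power map and the arithmetic condition $m\mid\cov(u)$ is exactly what is needed to extract $m$ smooth $m$-th roots.
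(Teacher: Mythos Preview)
Your proof is correct and follows essentially the same approach as the paper's. The only cosmetic differences are that for the index identity the paper invokes \eqref{eq:indices_and_winding} together with the observation $\Pi(\tilde w)=\Pi(w)$ (which immediately gives $\mu_\RFH(\tilde w)-\mu_\RFH(w)=2(\w(w)-\w(\tilde w))$), whereas you compute directly from \eqref{eq:mu_RFH_cov}; and your discussion of lifting fiber capping disks in (iii) is more detailed than necessary, since the map $\Pi^m$ only requires assigning a fiber capping disk in $E^1$ to each lift $\tilde u_j$, which exists automatically without appealing to the branched cover structure.
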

\begin{proof}
	A straightforward computation using Proposition \ref{prop:winding} and \eqref{eq:dwp} shows that if $\tilde w\in\Crit\widetilde\AA^{\tau}_f$ with $\w(\tilde w)=k$, then $w\in\Crit\AA^{\tau}_f$ with $\w(w)=mk$. For a given $w=([u,\bar u_\mathrm{fib}\#s],\eta)\in\Crit\AA^\tau_f$ with $\w(w)=mk$, the covering number $\cov(u)=\w(w)+m\om([s])$ is a multiple of $m$, and there exist exactly $m$ lifts $\tilde u$ of $u$ with $\cov(\tilde u)=\frac{1}{m}\cov(u)$. Moreover $\tilde w$ defined as in the statement is indeed a critical point of $\widetilde\AA^{\tau}_f$ with $\w(w)=k$. The claim on action and index follows immediately from \eqref{eq:action_comparision} and  \eqref{eq:indices_and_winding} since $\Pi(\tilde w)= \Pi(w)$.
	\end{proof}
For $w\in\Crit\AA^\tau_f$, we denote
\begin{equation}\label{eq:z_m}
(\Pi^m)^{-1}(w)=\{\tilde w^{i} \mid i\in\Z_m\}\,,\qquad j\cdot \tilde w^i= \tilde w^{i+j}\,,\quad j\in\Z_m\,.	
\end{equation}
If we view $\Pi^m$ as a map between $S^1$-families instead, i.e.~$S_{\tilde w}\mapsto S_w$, then it is bijective and compatible with the bijection in Lemma \ref{lem:one-to-one}. 
Following our convention, $\JJ$ is the space of almost complex structures on $E^m$ defined in \eqref{eq:JJ} and $\widetilde\JJ$ denotes the corresponding space defined for $(E^1,\widetilde\Omega)$. 
For a given $J\in\JJ$, there exists $\widetilde J\in\widetilde\JJ$ such that $\wp ^* J=\widetilde J$ in the following sense. We can lift $J$ to $\widetilde J$ away from $\OO_{E^1}$ since $\wp^m$ is a local diffeomorphism. Since $J$ is diagonal of the form in \eqref{eq:diagonal_J} near $\OO_{E^m}$ and $\wp^m$ is holomorphic,  $\widetilde J$ is also diagonal near $\OO_{E^1}$ and thus extends over $\OO_{E^1}$. Conversely for every $\widetilde J\in\widetilde\JJ$ which is invariant the $\Z_m$-action, there exists $J\in\JJ$ such that $\wp^*J=\widetilde J$. Therefore, if we denote by
\[
\widetilde\JJ^{\Z_m}\subset \widetilde\JJ
\] 
the subset of $\Z_m$-invariant elements, there is a bijection between $\JJ$ and $\widetilde\JJ^{\Z_m}$.

\begin{lem}\label{lem:cyclic2}
Let $J$ and $\widetilde J$ be as above, in particular $\widetilde J$ is $\Z_m$-invariant. Let $w_\pm\in\Crit{\AA}^\tau_f$ with $\w(w_-)=\w(w_+)\in m\Z$. Then for fixed $i\in\Z_m$, there is a bijection
\[
\begin{split}
	\Pi^m:\Big\{\tilde w\in\widehat\MM(S_{\tilde w_-},S_{\tilde w_+},\widetilde{\AA}^{\tau}_f,\widetilde J) \mid \ev_-&(\tilde w)=\tilde w_-^i\,,\;\ev_+(\tilde w)=\tilde w_+^j \textrm{ for some } j\in\Z_m \Big\}\\
	&\longrightarrow \left\{w\in\widehat\MM(S_{w_-},S_{w_+},{\AA}^\tau_f, J)\mid \ev_\pm(w)=w_\pm \right\} \\
	\tilde w=(\tilde u,\eta) &\longmapsto w=(u:=\wp^m\circ\tilde u,\eta)\,,
\end{split}
\]
where $\tilde w_-^i$ and $\tilde w_+^j$ are given as in \eqref{eq:z_m}. 
Moreover if both spaces are cut out transversely, then $\Pi^m$ is a diffeomorphism.
\end{lem}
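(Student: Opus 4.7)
The plan is to pull back solutions along the holomorphic $m$-fold covering $\wp^m:E^1\to E^m$. Since $(\wp^m)^*J=\widetilde J$, equivalently $d\wp^m\circ\widetilde J=J\circ d\wp^m$, and \eqref{eq:dwp} gives $d\wp^m\cdot X_{\tilde\mu_\tau}=X_{\mu_\tau}$ and $d\wp^m\cdot X_{\widetilde F}=X_F$, direct substitution in \eqref{eq:Floer_eqn_for_A} shows that if $(\tilde u,\eta)$ solves the Rabinowitz--Floer equation for $\widetilde\AA^\tau_f$ with respect to $\widetilde J$, then $(\wp^m\circ\tilde u,\eta)$ solves the Rabinowitz--Floer equation for $\AA^\tau_f$ with respect to $J$; the Lagrange-multiplier equation $\p_s\eta=\int_0^1\mu_\tau(u)\,dt$ is preserved because $\tilde\mu_\tau=\mu_\tau\circ\wp^m$. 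The asymptotic condition $\ev_-(\tilde w)=\tilde w_-^i$ projects to $\ev_-(w)=w_-$ since $\wp^m(\tilde w_-^i)=w_-$, and $\ev_+(\tilde w)=\tilde w_+^j$ for any $j$ projects to $\ev_+(w)=w_+$; the homotopy-class constraint $[\bar u_-\#u\#\bar u_+^{\mathrm{rev}}]=0$ in $\Gamma_{E^m}$ is the image under the isomorphism $\wp^m_{\ast}:\Gamma_{E^1}\stackrel{\cong}{\longrightarrow}\Gamma_{E^m}$ (induced by $\pi_2(E^1)\cong\pi_2(M)\cong\pi_2(E^m)$) of the analogous constraint for $\tilde w$. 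This defines the forward map $\tilde w\mapsto w=(\wp^m\circ\tilde u,\eta)$.

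The heart of the argument is the converse lifting step. Let $w=(u,\eta)$ be a solution on $E^m$ with $\ev_\pm(w)=w_\pm$. Since $\w(w_-)=\w(w_+)$, Proposition \ref{prop:positivity_of_intersection} forces $u\cdot\OO_{E^m}=0$, so $u$ maps into $E^m\setminus\OO_{E^m}$, where $\wp^m$ restricts to an honest $m$-fold covering. The covering-space lifting criterion requires $u_{\ast}\pi_1(\R\times S^1)\subset\wp^m_{\ast}\pi_1(E^1\setminus\OO_{E^1})$; since $u(s,\cdot)$ is freely homotopic in $E^m\setminus\OO_{E^m}$ to its asymptotic Reeb orbit $u_-$, this reduces to requiring $\cov(u_-)\in m\Z$. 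By Proposition \ref{prop:winding}(a) together with \eqref{eq:eta=cov}, $\cov(u_-)=\w(w_-)+m\,\om([\wp\circ\bar u_-])$, which lies in $m\Z$ precisely because the hypothesis gives $\w(w_-)\in m\Z$. Hence $u$ admits $m$ smooth lifts $\tilde u$, permuted freely by the deck group $\Z_m$, and exactly one of them satisfies $\ev_-(\tilde u,\eta)=\tilde w_-^i$. Pairing this $\tilde u$ with the capping disk $\overline{\tilde u}_-^i\#\tilde u$ yields the desired $\tilde w$, whose positive asymptotic is automatically some $\tilde w_+^j$ because $\wp^m$ maps it to $w_+$. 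Injectivity of the forward map is immediate from uniqueness of covering lifts: two such $\tilde w$ agreeing at $s=-\infty$ coincide globally.

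For the diffeomorphism statement, $d\wp^m$ is a fiberwise isomorphism along any $\tilde u$ avoiding $\OO_{E^1}$, so the forward map identifies the linearized Rabinowitz--Floer operators and is therefore a local diffeomorphism wherever both sides carry smooth-manifold structure. When both moduli spaces are cut out transversely they are smooth of equal dimension: indeed, Lemma \ref{lem:cyclic1} combined with $\w(\tilde w_-)=\w(\tilde w_+)$ (inherited from $\w(w_-)=\w(w_+)$) gives $\mu_\RFH(\tilde w_-)-\mu_\RFH(\tilde w_+)=\mu_\RFH(w_-)-\mu_\RFH(w_+)$, so the virtual dimensions match via \eqref{eq:moduli_dim_S}. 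A bijective local diffeomorphism between smooth manifolds of equal dimension is a diffeomorphism.

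The one genuinely subtle point I expect is the $\pi_1$-lifting criterion in the surjectivity step, namely identifying divisibility of $\cov(u_-)$ by $m$ with the hypothesis $\w(w_-)\in m\Z$; this is resolved cleanly by Proposition \ref{prop:positivity_of_intersection} together with the elementary covering-number computation \eqref{eq:eta=cov}. Everything else transfers through the local biholomorphism $\wp^m$, so Fredholm theory, transversality, and coherent orientations need not be revisited.
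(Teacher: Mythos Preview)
Your argument is correct and follows essentially the same approach as the paper's proof: both use that $\wp^m$ is $(\widetilde J,J)$-holomorphic together with \eqref{eq:dwp} to push solutions forward, invoke Proposition~\ref{prop:positivity_of_intersection} so that cylinders avoid the zero sections and hence live where $\wp^m$ is a genuine $m$-fold covering, and then appeal to covering-space lifting for surjectivity and uniqueness of lifts for injectivity. Your treatment is in fact more explicit than the paper's on two points: you spell out why the $\pi_1$-lifting criterion holds (reducing it to $\cov(u_-)\in m\Z$ via Proposition~\ref{prop:winding}(a) and the hypothesis $\w(w_-)\in m\Z$), whereas the paper simply asserts the inclusion $u_\#\pi_1\subset\wp^m_\#\pi_1$; and you verify the dimension match for the diffeomorphism claim via Lemma~\ref{lem:cyclic1}, which the paper leaves implicit.
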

\begin{proof}
Let $\tilde w=(\tilde u,\eta)$ be an element in the domain of $\Pi^m$. Since $\w(\tilde w_-)=\w(\tilde w_+)$ by the assumption $\w(w_-)=\w(w_+)$ and Lemma \ref{lem:cyclic1}, Proposition \ref{prop:positivity_of_intersection} yields that $\tilde u$ is contained in $E^1\setminus\OO_{E^1}$ on which the $\Z_m$-action is free. We set $u:=\wp^m\circ\tilde u$. 
 Applying \eqref{eq:dwp} to \eqref{eq:Floer_eqn_for_A}, one can readily verify that $\Pi^m$ is well-defined, i.e.~$(u,\eta)$ is a solution of the Rabinowitz-Floer equation for $\AA^\tau_f$. Since no nontrivial element in $\Z_m$ fixes $\tilde w^i_-$, the map $\Pi^m$ is injective. To show surjectivity, we pick $w=(u,\eta)$ from the target space of $\Pi^m$,  and observe that $u$ does not intersect $\OO_{E^m}$ again by Proposition \ref{prop:positivity_of_intersection}. We also note that $u_\#(\pi_1(\R\x S^1,z))\subset \wp^m_\#(\pi_1(E^1\setminus\OO_{E^1},\varepsilon))$ when considered as subgroups of $\pi_1(E^m\setminus\OO_{E^m},\wp^m(\varepsilon))$, where $z$ and $\varepsilon$ are base points satisfying $u(z)=\wp^m(\varepsilon)$. Therefore there exist exactly $m$ lifts of $u$, say $\tilde u^{1},\dots, \tilde u^m$, of $u$ such that  $(\tilde u^\ell,\eta)$ converges to $\tilde w^\ell_-$ at the negative end for every $\ell\in\Z_m$. Moreover  $(\tilde u^{\ell},\eta)$ for every $\ell\in\Z_m$ is a solution of the Rabinowitz-Floer equation for $\widetilde\AA^{\tau}_f$ and $\widetilde J$ since $\wp^m:E^1\setminus\OO_{E^1}\to E^m\setminus\OO_{E^m}$ is a local diffeomorphism satisfying \eqref{eq:dwp}. This proves that $\Pi^m$ is bijective. The last claim follows also from the fact that $\wp^m$ is a local diffeomorphism away from the zero section.
\end{proof}

Let $w_\pm\in\Crit \AA^\tau_f$ with $\w(w_-)=\w(w_+)$, and let $\widetilde w_\pm\in\Crit\widetilde\AA^{\tau}_f$ be lifts of $w_\pm$ as in Lemma \ref{lem:cyclic1}. Since the $\Z_m$-action on $E^1\setminus\OO_{E^1}$ is free, the transversality problem for $\widehat\MM(S_{\tilde w_-},S_{\tilde w_+},\widetilde{\AA}^{\tau}_f,\widetilde J)$ for invariant $\widetilde J$ is equivalent to that for   $\widehat\MM(S_{w_-},S_{w_+},{\AA}^\tau_f, J)$. Thus if one of these moduli spaces is cut out transversely, then so is the other. If this is the case, the map
\[
\widehat\MM(S_{\tilde w_-},S_{\tilde w_+},\widetilde{\AA}^{\tau}_f,\widetilde J)\to \widehat\MM(S_{w_-},S_{w_+},{\AA}^\tau_f, J) \,,\quad (\tilde u,\eta)\mapsto (\wp^m\circ\tilde u,\eta)
\]
is an $m$-fold covering map and a local diffeomorphism.

\subsection{Transversality with $J\in\JJ$ and $J\in\JJ^\BB$}\label{sec:J_in_JJ}
We fix a complex line bundle $E=E^m$ over $M$ with $c_1^{E}=-m[\om]$ for arbitrary $m\in\N$.
For $J\in\JJ$ and $A\in\pi_2(E)$, we consider the moduli space
\[
\MM(A,J):=\{(\eta,t,v)\in  \R\x S^1\x C^\infty(S^2,E)\mid\text{$v$ simple $J_{(\eta,t)}$-holomorphic, $[v]=A$}\big\}
\]
and the subspace 
\begin{equation}\label{eq:MMAJ}
\MM^*(A,J):=\{(\eta,t,v)\in  \MM(A,J) \mid  v(S^2) \cap (E\setminus\OO_E)\neq\emptyset \big\}\,.	
\end{equation}

\begin{lem}\label{lem:j-sphere}
Let $v:S^2\to E$ be a nonconstant $J_{(\eta,t)}$-holomorphic sphere for some $J\in\JJ$ and $(\eta,t)\in\R\x S^1$. Then the image of $v$ is either entirely contained in the zero section $\OO_E$ or intersects the region in \eqref{eq:support_B} on which $J$ is not diagonal.
\end{lem}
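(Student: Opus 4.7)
The plan is to assume that $v$ does not meet the annular region in \eqref{eq:support_B} where $J$ is non-diagonal, and to deduce that $v(S^2)\subset\OO_E$. Since $v(S^2)$ is connected and avoids the open region $\{R_0<r<R_1\}$, it must lie entirely in one of the two closed regions $\{r\geq R_1\}$ or $\{r\leq R_0\}$, on both of which $J$ is diagonal of the form \eqref{eq:diagonal_J}.

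First, I would dispose of the case $v(S^2)\subset\{r\geq R_1\}$. Here $v$ takes values in $E\setminus\OO_E$, where $\Omega=d\bigl((\tfrac{1}{m}+\pi r^2)\alpha\bigr)$ is exact. Stokes' theorem then gives $\int_{S^2}v^*\Omega=0$, contradicting $\Omega$-tameness of $J_{(\eta,t)}$, which forces $\int_{S^2}v^*\Omega>0$ for any nonconstant $J_{(\eta,t)}$-holomorphic sphere.

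Next, suppose $v(S^2)\subset\{r\leq R_0\}$. In this region $J_{(\eta,t)}$ is diagonal, so $\wp:E\to M$ is $(J_{(\eta,t)},j_t)$-holomorphic and the projection $\wp\circ v:S^2\to M$ is a $j_t$-holomorphic sphere; moreover $\OO_E$ is a $J_{(\eta,t)}$-holomorphic submanifold of $E$. I would show that one must have $v(S^2)\subset\OO_E$. Assume otherwise. Then the Carleman similarity principle, applied at each intersection point of $v$ with $\OO_E$ (see \cite[Section 2.6]{MS12}), implies that $v^{-1}(\OO_E)$ is either all of $S^2$ or a discrete subset, and in the latter case each local intersection number is strictly positive. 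Since $\OO_E$ is Poincar\'e dual to the Thom class of $E\to M$, whose restriction to $M$ equals $c_1^E$, one has
\[
v\cdot\OO_E=c_1^E([\wp\circ v])=-m\,\om([\wp\circ v])\leq 0\,,
\]
where the last inequality uses that $\wp\circ v$ is $j_t$-holomorphic. On the other hand, positivity of intersections yields $v\cdot\OO_E\geq 0$ with equality iff $v(S^2)\cap\OO_E=\emptyset$. These two inequalities force $v\cdot\OO_E=0$ and $v(S^2)\subset E\setminus\OO_E$, and the exactness argument of the previous paragraph again contradicts $\Omega$-tameness and nonconstancy of $v$.

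The main subtlety I foresee is ensuring that positivity of intersections genuinely applies despite possibly high codimension; this relies on $\OO_E$ being a $J_{(\eta,t)}$-holomorphic submanifold throughout $\{r\leq R_0\}$, which is guaranteed by the diagonal form of $J$ there, so the standard Carleman similarity argument carries over verbatim. A minor side case to dispatch is that $\wp\circ v$ could a priori be constant, in which case $v$ takes values in a single fiber $\cong\C$ carrying the standard complex structure $i$; Liouville's theorem then forces the bounded $i$-holomorphic map $v:S^2\to\C$ to be constant, contradicting the hypothesis.
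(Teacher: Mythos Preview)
Your argument is correct and takes a somewhat different route from the paper. The paper's proof is shorter: it first observes (as you do) that any nonconstant holomorphic sphere must meet $\OO_E$ by exactness of $\Omega$ on $E\setminus\OO_E$, and then, assuming $v$ avoids the annular region, simply invokes the maximum principle (for the radial function along $J$-holomorphic curves where $J$ is diagonal) to conclude $v(S^2)\subset\OO_E$. Your approach instead treats the case $v(S^2)\subset\{r\leq R_0\}$ via positivity of intersections with the $J$-holomorphic submanifold $\OO_E$, combined with the computation $v\cdot\OO_E=c_1^E([\wp\circ v])=-m\,\omega([\wp\circ v])\leq 0$ to force disjointness, and then closes with the exactness argument again. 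Both are valid; yours is more self-contained and avoids invoking a maximum principle, while the paper's is quicker once one accepts that principle. Your worry about ``possibly high codimension'' is unfounded: $\OO_E$ has real codimension exactly two in $E$, so ordinary positivity of intersections applies without modification.
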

\begin{proof}
	We note that $v$ necessarily intersects $\OO_E$ since the symplectic form $\Omega$ is exact on $E\setminus\OO_E$. Suppose that $v$ does not pass through the region in \eqref{eq:support_B}. Then the maximum principle implies that $v$ is contained inside the zero section $\OO_E$. 
\end{proof}

\begin{prop}\label{prop:transversality_sphere}
	There exists a residual subset $\JJ_\mathrm{r}\subset\JJ$ such that for every $J\in \JJ_\mathrm{r}$ the moduli space $\MM^*(A,J)$ is a smooth manifold of dimension $\dim E+2c_1^{TE}(A)+2$. 
\end{prop}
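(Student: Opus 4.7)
The argument is the standard Sard-Smale universal moduli space scheme for simple pseudo-holomorphic spheres (see \cite[Chapter 3]{MS12}), adapted to accommodate both the $(\eta,t)\in\R\times S^1$ parameter and the structural constraints built into the definition of $\JJ$. First I would fix a large integer $\ell$ and let $\JJ^\ell$ denote the Banach manifold of $C^\ell$-regular almost complex structures satisfying conditions (i) and (ii) of the definition of $\JJ$. Form the universal moduli space
\[
\widetilde{\MM}^*(A):=\bigl\{(\eta,t,v,J)\mid (\eta,t,v)\in\MM^*(A,J),\;J\in\JJ^\ell\bigr\}\,,
\]
realized as the zero set of the section $\mathcal F(\eta,t,v,J):=\bar\partial_{J_{(\eta,t)}}v$ over a suitable $W^{k,p}$-completion (with $kp>2$). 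The expected dimension $\dim E+2c_1^{TE}(A)+2$ of $\MM^*(A,J)$ equals the Fredholm index of the linearized Cauchy-Riemann operator $D_v$ on $C^\infty(S^2,v^*TE)$ plus the two-dimensional contribution of the free parameters $(\eta,t)$.

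The central step is to show that $\mathcal F$ is transverse to the zero section, which by standard elliptic duality reduces to verifying that no nonzero cokernel element $\zeta\in L^q(v^*TE)$ can annihilate the variation $Y\longmapsto \tfrac12\,Y(v)\cdot(dv\circ j_{S^2})$ as $Y$ ranges over $T_J\JJ^\ell$. The key input is an injective point of $v$ at which $J$ is permitted to be perturbed freely: since $v$ is nonconstant and $v(S^2)\not\subset\OO_E$ by the definition of $\MM^*$, Lemma~\ref{lem:j-sphere} ensures that $v^{-1}\big(\{R_0<r<R_1\}\big)$ is a nonempty open subset of $S^2$; simpleness of $v$ supplies an open dense set of injective points, so one can choose $z_0$ whose image $v(z_0)$ lies in the annular region where $J$ is not constrained to be diagonal.

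At such a $z_0$ the admissible perturbations of $J$ are the $\Omega$-tame (and, if $v(z_0)\in\wp^{-1}(\mathcal U)$, $\Omega$-compatible) endomorphisms of $T_{v(z_0)}E$ anticommuting with $J_{(\eta,t)}$; these form an infinite-dimensional space, and by the classical bump-function argument of \cite[Proposition~3.2.1]{MS12} a compactly supported deformation near $v(z_0)$ can be constructed whose $L^2$-pairing against $\zeta$ is nonzero unless $\zeta(z_0)=0$. Aronszajn's unique continuation applied to the elliptic equation satisfied by $\zeta$ then forces $\zeta\equiv 0$, establishing surjectivity of the linearization of $\mathcal F$ and hence the Banach manifold structure on $\widetilde{\MM}^*(A)$. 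The projection $\pi:\widetilde{\MM}^*(A)\to\JJ^\ell$ is Fredholm of index $\dim E+2c_1^{TE}(A)+2$; Sard-Smale yields a residual set $\JJ^\ell_\mathrm{r}\subset\JJ^\ell$ of regular values, and a standard Taubes approximation (intersecting over a countable exhaustion by compact subsets and passing from $C^\ell$ back to $C^\infty$) upgrades this to the desired residual $\JJ_\mathrm{r}\subset\JJ$.

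The main obstacle I anticipate is confirming that the perturbation space $T_J\JJ^\ell$ is rich enough at $v(z_0)$ given the joint constraints of $\Omega$-tameness globally, $\Omega$-compatibility on $\wp^{-1}(\mathcal U)$, and the diagonal structure outside the annulus $\{R_0<r<R_1\}$. Since in the open annular region none of these conditions force a block-diagonal form, $T_J\JJ^\ell$ contains every smooth section of $\mathrm{End}(TE)$ supported in a sufficiently small ball around $v(z_0)$ and anticommuting with $J_{(\eta,t)}$; this is exactly the perturbation space used in the monotone transversality argument, and once the injective point has been placed in the free region the remainder of the argument requires no essentially new ideas.
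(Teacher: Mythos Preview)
Your proposal is correct and follows essentially the same approach as the paper. The paper's proof is a two-sentence sketch invoking Lemma~\ref{lem:j-sphere} to ensure each sphere in $\MM^*(A,J)$ meets the annular region \eqref{eq:support_B} where $J$ is unconstrained, then defers to the standard arguments of \cite[Chapter~3]{MS12}; you have written out exactly those standard arguments in detail, with the same key geometric input.
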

\begin{proof}
	According to Lemma \ref{lem:j-sphere}, for every $(\eta,t,v)\in \MM^*(A,J)$, the sphere $v$ passes through the region in \eqref{eq:support_B} on which we may freely perturb $J$. Therefore the claim follows from standard arguments, see e.g.~\cite[Chapter 3]{MS12}.
\end{proof}

\begin{rem}
It is not reasonable to expect a transversality result for $J_{t}$-holomorphic spheres contained in $\OO_E$ if $J_t=\begin{pmatrix}
 i & 0\\
 0 & j_t
\end{pmatrix}$ near $\OO_E$. Indeed, such spheres can also be interpreted as $j_t$-holomorphic spheres in $M$. The Fredholm index for a $j_t$-holomorphic map $s:S^2\to M$ is
\[
\dim M+2c_1^{TM}([s]) \,.
\]
 However, if we view the map as a $J_{t}$-holomorphic map $s:S^2\to \OO_E\subset E$, then its index is
\[
\dim E+2c_1^{TE}([s])=\dim M+2+2c_1^{TM}([s])-m\,\om([s]).
\]
Unless $2=m\,\om([s])$, these two indices do not agree, and therefore we cannot achieve transversality on $E$ for this class of almost complex structures. 
\end{rem}

\begin{prop}\label{prop:transversality_avoiding}
	There exists a residual subset $\JJ_\textrm{e}\subset\JJ$  such that for every $J\in\JJ_\textrm{e}$ the following hold for all $([u,\bar u],\eta)\in\Crit h\subset\Crit\AA^\tau_f$. 
	\begin{enumerate}[(i)]
	\item For every $t\in S^1$, the point $u(t)$ does not lie in the image of any $J_{(\eta,t)}$-holomorphic sphere $v:S^2\to E$ with $c_1^{TE}([v])\leq 1$.
	\item For every $(\lambda,t)\in\R\times S^1$, the image of $u$ does not intersect the image of any $J_{(\lambda,t)}$-holomorphic sphere $v:S^2\to E$ with $c_1^{TE}([v])\leq 0$.
	\end{enumerate}
\end{prop}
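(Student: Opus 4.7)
The plan is a standard transversality and dimension-count argument applied to the evaluation map for $J$-holomorphic spheres meeting the loop $u$. I would first fix a critical point $w=([u,\bar u],\eta_w)\in\Crit h$ and observe the following key simplification: since $\tau>0$, the image $u(S^1)$ lies in $\Sigma_\tau\subset E\setminus\OO_E$, so any $J$-holomorphic sphere contained entirely in $\OO_E$ automatically fails to meet $u(S^1)$. It therefore suffices to rule out spheres whose image is not contained in $\OO_E$; by Lemma \ref{lem:j-sphere}, any such sphere must intersect the free perturbation region \eqref{eq:support_B}, which makes Sard--Smale-type arguments as in \cite[Chapter 3]{MS12} applicable.

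For (i), I would fix $A\in\pi_2(E)$ with $c_1^{TE}(A)\leq 1$. Starting from the residual subset $\JJ_\mathrm{r}$ supplied by Proposition \ref{prop:transversality_sphere}, consider the evaluation
\[
\EV_w^A:\big(\MM^*(A,J)\x_G S^2\big)\longrightarrow E\x E\x\R\,,\qquad \big(\eta,t,[v,z]\big)\longmapsto \big(v(z),u(t),\eta-\eta_w\big)\,.
\]
Its universal version, with $J\in\JJ$ also variable, is transverse to $\Delta_E\x\{0\}$: one picks an injective point $z$ of $v$ lying in the free region (which exists by Lemma \ref{lem:j-sphere} and simplicity of $v$) and perturbs $J$ in a small neighborhood of $v(z)$. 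Sard--Smale then produces a residual subset of $J$ for which $\EV_w^A$ itself is transverse, and the preimage of $\Delta_E\x\{0\}$ has dimension
\[
\big(\dim E+2c_1^{TE}(A)+2\big)+2-6-(\dim E+1)=2c_1^{TE}(A)-3\leq -1\,,
\]
hence is empty. Part (ii) is treated identically, except that one drops the $\eta$-constraint and adds an $S^1$-factor to the domain parametrizing the base point $s$ of the second evaluation $u(s)$, with target $E\x E$; the preimage dimension becomes $2c_1^{TE}(A)-1\leq -1$ whenever $c_1^{TE}(A)\leq 0$.

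Multiply covered spheres reduce to simple ones: if $v=\tilde v\circ g$ with $\tilde v$ simple and $[v]=k[\tilde v]$, $k\geq 1$, then $v(S^2)=\tilde v(S^2)$, so it suffices that the simple underlying curve $\tilde v$ miss the offending point, a case covered by the above. Only countably many homotopy classes and only countably many critical points $w\in\Crit h$ are involved, so intersecting all the resulting residual subsets together with $\JJ_\mathrm{r}$ yields the required $\JJ_\mathrm{e}\subset\JJ$. The main technical point is the submersivity of the universal evaluation map in the presence of the diagonal constraint imposed outside \eqref{eq:support_B}: perturbations of $J$ are restricted there, but the existence of an injective point of $v$ inside the free region, guaranteed by Lemma \ref{lem:j-sphere}, makes this harmless.
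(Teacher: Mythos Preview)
Your proposal is correct and follows essentially the same route as the paper: both arguments observe that any offending sphere must meet the free perturbation region \eqref{eq:support_B} (since $u(S^1)\subset\Sigma_\tau$), invoke Proposition \ref{prop:transversality_sphere} to control $\MM^*(A,J)$, and then count dimensions of the relevant evaluation map into $E\times E\times\R\times\R\times S^1\times S^1$ (the paper) or, equivalently, into $E\times E\times\R$ for each fixed $w\in\Crit h$ (your version). You are slightly more explicit than the paper about reducing multiply covered spheres to their underlying simple curve, which is a welcome clarification.
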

\begin{proof}
	The proof goes along similar lines as the one for \cite[Theorem 3.1]{HS95}. To see (i), we note that if $u(t)$ lies in the image of a $J_{(\eta,t)}$-holomorphic sphere $v$, then $v$ has to pass through the region \eqref{eq:support_B}, where we may freely perturb $J$. According to Proposition \ref{prop:transversality_sphere}, the space $\MM^*(A,J)\x_{G} S^2$,  where $G=\mathrm{PSL}(2;\C)$, has dimension  $\dim E+2c_1^{TE}(A)-2$ for $J\in\JJ_\mathrm{r}$. We consider the evaluation map
	\[
	\begin{split}
	\MM^*(A,J)\x_{G} S^2 \x \Crit h \x S^1  &\longrightarrow E\x E \x \R\x \R	\x S^1\x S^1\\
	\big([(\lambda,\mathfrak{t}, v),z],([u,\bar u],\eta),t\big) &\longmapsto (v(z),u(t),\lambda,\eta,\mathfrak{t},t)
	\end{split}
	\] 
	and observe that statement (i) follows once we prove that the preimage of the diagonal is empty. A simple computation shows that the preimage of the diagonal has virtual dimension at most $-1$ provided $c_1^{TE}(A)\leq1$. 
	
	For (ii), we look at another evaluation map  
	\[
	\begin{split}
	\MM^*(A,J)\x_{G} S^2 \x \Crit h \x S^1  &\longrightarrow E\x E \\
	\big([(\lambda,\mathfrak{t}, v),z],([u,\bar u],\eta),t\big) &\longmapsto (v(z),u(t))
	\end{split}
	\] 
	and note that the preimage of the diagonal has negative virtual dimension if $c_1^{TE}(A)\leq0$. Hence the proposition follows for $J\in\JJ_\mathrm{r}$ making these evaluation maps transverse to the respective diagonals. We denote by $\JJ_\textrm{e}$ the residual subset of these almost complex structures.
\end{proof}

\begin{rem}
If $c_1^{TM}=\lambda\omega$ on $\pi_2(M)$, we have $c_1^{TE^1} = (\lambda-1)\wp^*\om$ on $\pi_2(E)$. If $E^1$ is either sufficiently negative monotone or positive monotone, compactness in Floer theory is easily achieved. If $E^1$ is sufficiently negative monotone, then so is $E^m$. However if $E^1$ is positive monotone, then, for $m$ in a certain range, the bundle $E^m$ is neither nor. In this range, we prove compactness by lifting Rabinowitz Floer cylinders via $\wp^m:E^1\setminus\OO_{E^1}\to E^m\setminus\OO_{E^m}$.	This requires additional transversality results for moduli spaces of Rabinowitz Floer cylinders in $E^1$ without any assumption on winding numbers.
\end{rem}

\begin{prop}\label{prop:transversality_cylinder} 
	There exists a residual subset $\JJ_\mathrm{g}\subset\JJ$ such that for every $J\in \JJ_\mathrm{g}$ and $\tilde w_\pm\in\Crit\widetilde{\AA}^\tau_f$, the spaces $\widehat\MM(S_{\tilde w_-},S_{\tilde w_+},\widetilde{\AA}^\tau_f,\widetilde J)$ are smooth manifolds of dimension $\mu_\RFH(\tilde w_-)-\mu_\RFH(\tilde w_+) +1$, where $\widetilde J=(\wp^m)^*J$ etc.~are as in the preceding section. 
\end{prop}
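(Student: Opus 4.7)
The plan is to establish transversality via the standard universal moduli space approach, adapted to account for the equivariance constraint $\widetilde J=(\wp^m)^*J$. Fix $\tilde w_\pm\in\Crit\widetilde\AA^\tau_f$ and introduce a Banach manifold completion of $\JJ$ using Floer's $\varepsilon$-norms on the perturbation $B$. Consider the universal moduli space
\[
\widehat\MM^{\mathrm{univ}}(S_{\tilde w_-},S_{\tilde w_+}):=\bigl\{(\tilde w,J):\tilde w\in\widehat\MM(S_{\tilde w_-},S_{\tilde w_+},\widetilde\AA^\tau_f,(\wp^m)^*J)\bigr\}.
\]
The implicit function theorem combined with Sard-Smale will produce the required residual subset $\JJ_\mathrm{g}\subset\JJ$, provided I can verify that the joint linearization $(D_{\tilde w},D_J)$ at every $(\tilde w,J)\in\widehat\MM^{\mathrm{univ}}$ is surjective. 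The Fredholm property of $D_{\tilde w}$ alone (yielding the index $\mu_\RFH(\tilde w_-)-\mu_\RFH(\tilde w_+)+1$) is standard and is deferred to Section \ref{sec:fredholm}.

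The analytic heart is the surjectivity of $(D_{\tilde w},D_J)$, which by duality reduces to showing that any $\xi$ in the $L^2$-cokernel of $D_{\tilde w}$ that is $L^2$-orthogonal to the range of $D_J$ must vanish. A variation $Y$ of $\widetilde J$ corresponds to an element $\delta B\in\BB(j)$ on $E^m$ pulled back to a $\Z_m$-invariant endomorphism on $E^1$ supported on $\wp^{-m}(\{R_0<r<R_1\}\setminus\mathcal U)$. Using the Rabinowitz-Floer equation to replace $\partial_t\tilde u-\eta X_{\tilde\mu_\tau}-X_{\widetilde F}$ by $-\widetilde J^{-1}\partial_s\tilde u$, the orthogonality condition forces $\xi(s_0,t_0)=0$ at every point $(s_0,t_0)$ where (i) $\tilde u(s_0,t_0)$ lies in the admissible perturbation region, (ii) $\partial_s\tilde u(s_0,t_0)\neq 0$, and (iii) $\wp^m\circ\tilde u$ is injective at $(s_0,t_0)$ in the sense that $(\wp^m\circ\tilde u)^{-1}(\wp^m\circ\tilde u(s_0,t_0))\cap(\R\times\{t_0\})=\{(s_0,t_0)\}$. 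Finding such points is the key step, after which unique continuation for the Rabinowitz-Floer equation propagates the vanishing of $\xi$ to the entire cylinder.

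The main obstacle is precisely the equivariance constraint $\widetilde J=(\wp^m)^*J$: I cannot freely perturb $\widetilde J$ on $E^1$, only among $\Z_m$-invariant almost complex structures. The resolution hinges on Proposition \ref{prop:positivity_of_intersection}: $\tilde u$ meets $\OO_{E^1}$ in only finitely many isolated points, and away from them $\wp^m$ is a local diffeomorphism under which $\wp^m\circ\tilde u$ is a genuine Rabinowitz-Floer cylinder in $E^m$ with respect to $J$. Classical somewhere-injectivity arguments \`a la \cite{FHS95}, combined with the Carleman similarity principle already used in the proof of Proposition \ref{prop:positivity_of_intersection}, then produce a dense open set of injective points of $\wp^m\circ\tilde u$, and the free $\Z_m$-action on $E^1\setminus\OO_{E^1}$ lifts these to points of $\tilde u$ satisfying (i)--(iii). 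A standard Taubes-type argument then upgrades Sard-Smale from the Banach completion back to $C^\infty$ perturbations. Finally, I would intersect the resulting residual set with $\JJ_\mathrm{r}\cap\JJ_\mathrm{e}$ from Propositions \ref{prop:transversality_sphere} and \ref{prop:transversality_avoiding} to obtain $\JJ_\mathrm{g}$ compatible with all earlier transversality results.
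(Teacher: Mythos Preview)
Your proposal is correct and takes essentially the same approach as the paper: equivariant transversality leveraging that the $\Z_m$-action is free on $E^1\setminus\OO_{E^1}$ and that every Rabinowitz--Floer cylinder passes through the perturbation region (since its asymptotic ends lie on $\Sigma_\tau$), hence is not contained in the fixed locus. The paper's proof is much terser---it records these two observations and then cites \cite[Section~5c]{KS02} for equivariant transversality in a comparable setting and \cite[Theorem~4.11]{AbM18} for the FHS-type adaptation to the Rabinowitz--Floer equation, whereas you spell out the universal moduli space and cokernel argument explicitly; one small slip is that variations of $J\in\JJ$ are not restricted to the off-diagonal block $\delta B\in\BB(j)$, since $\JJ$ is the larger space in \eqref{eq:JJ} rather than $\JJ^\BB$, but this only gives you \emph{more} freedom to perturb and does not affect the argument.
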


\begin{rem}
Due to the discussion after Lemma \ref{lem:cyclic2}, this proposition in particular implies that $\widehat\MM(S_{ w_-},S_{ w_+},{\AA}^\tau_f, J)$ with $\w(w_-)=\w(w_+)$ are also smooth manifolds of dimension $\mu_\RFH( w_-)-\mu_\RFH( w_+) +1$ for $J\in\mathcal{J}_\mathrm{g}$. 
\end{rem}
\begin{proof}
		For every $(u,\eta)\in \widehat\MM(S_{\tilde w_-},S_{\tilde w_+},\widetilde{\AA}^\tau_f,\widetilde J)$, the map $u$ passes through the region in \eqref{eq:support_B} where both asymptotic limits of $u$ are located. In particular, the image of $u$ is not contained in the fixed locus of the $\Z_m$-action, i.e.~the zero section $\OO_{E^1}$. Therefore, we can perturb $\widetilde J$ keeping it $\Z_m$-equivariant so that the moduli space is cut out transversely. We refer to \cite[Section 5c]{KS02} for details of equivariant transversality in a comparable setting. Now  the claim follows from \cite[Theorem 4.11]{AbM18}, which is a suitable adaptation of standard arguments in \cite{FHS95}, with minor modifications. The dimension computation follows from  \cite{CF09}, see Section \ref{sec:fredholm} below.
\end{proof}

We set
\[
\JJ_\mathrm{reg}:=\JJ_\mathrm{r}\cap \JJ_\mathrm{e}\cap \JJ_\mathrm{g}\,.
\]
We also show a transversality result for almost complex structures in $\JJ^\BB$. 

\begin{prop}\label{prop:transversality_J_B}
For every $j\in\mathfrak{j}_\textrm{reg}$, there exists a residual subset $\BB_\textrm{reg}(j)$ of the space $\{B\in \BB(j)\mid J^B \textrm{ is $\Omega$-tame}\}$ such that for every  
$J=\begin{pmatrix}
 i & B\\
 0 & j
\end{pmatrix}$ with $B\in \BB_\mathrm{reg}(j)$ and for every $w_\pm\in\Crit\AA^\tau_f$, the spaces $\widehat\MM(S_{ w_-},S_{ w_+},{\AA}^\tau_f, J)$ are smooth manifolds of dimension $\mu_\RFH(w_-)-\mu_\RFH(w_+) +1$.
\end{prop}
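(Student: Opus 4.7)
The plan is to apply the standard universal moduli space construction, exploiting the triangular block form of $J^B$ to decouple the transversality problem into a horizontal piece, handled by the hypothesis $j\in\mathfrak{j}_\mathrm{reg}$, and a vertical piece, handled by generic perturbations of $B$. Set $\mathcal P:=\{B\in\BB(j)\mid J^B\text{ is $\Omega$-tame}\}$, equipped with a Floer $C^\varepsilon$-norm making it an open subset of a separable Banach space, and define
\[
\widehat\MM^\mathrm{univ}(w_-,w_+):=\bigsqcup_{B\in\mathcal P}\widehat\MM(S_{w_-},S_{w_+},\AA^\tau_f,J^B)\times\{B\}
\]
as the zero set of a $C^k$ Fredholm section of an appropriate Banach bundle over pairs of Sobolev maps $(u,\eta)$ with prescribed asymptotic behavior. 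Once $\widehat\MM^\mathrm{univ}$ is shown to be a smooth Banach manifold, the Sard-Smale theorem applied to the projection $\pi:\widehat\MM^\mathrm{univ}\to\mathcal P$ yields the desired residual $\BB_\mathrm{reg}(j)\subset\mathcal P$ of regular values, and each fibre $\pi^{-1}(B)$ is then a smooth manifold of Fredholm dimension $\mu_\RFH(w_-)-\mu_\RFH(w_+)+1$, computed via the Riemann-Roch formula as in Section \ref{sec:fredholm}.

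The essential step is the surjectivity of the universal linearization $L=D_{(u,\eta)}\oplus\mathcal K$ at every solution $(u,\eta,B)$. Since $\delta J^B(\hat B)=\begin{pmatrix}0&\hat B\\0&0\end{pmatrix}$, the $\hat B$-variation modifies only the vertical component of the Rabinowitz-Floer operator, contributing
\[
\mathcal K(\hat B)=\hat B(u)\cdot(\p_tu-\eta X_{\mu_\tau}-X_F)^\mathrm{h}\in T^\mathrm{v}E.
\]
Using that $X_{\mu_\tau}=-mR$ is purely vertical and that $q=\wp\circ u$ satisfies the Floer equation on $(M,\omega)$ for $(f,j)$, the horizontal factor simplifies to $(\p_tu-\eta X_{\mu_\tau}-X_F)^\mathrm{h}=\p_tq-X_f(q)=-j\,\p_sq$. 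Meanwhile, the horizontal projection of $D_{(u,\eta)}$ is precisely the pullback of the linearized Floer operator in $M$ at $q$, which is surjective by $j\in\mathfrak{j}_\mathrm{reg}$ whenever $q$ is a non-constant Floer trajectory.

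Taking $\zeta\in\coker L$, regularity of $j$ in the generic (non-fibre) case forces $\zeta^\mathrm{h}\equiv 0$. For the vertical component, testing with $\hat B\in\BB(j)$ supported in the annular region of \eqref{eq:support_B} and away from $\wp^{-1}(\mathcal U)$ gives the constraint $\langle\zeta^\mathrm{v},\hat B\cdot j\,\p_sq\rangle=0$ for all admissible $\hat B$. Since $\p_sq\not\equiv 0$ on the complement of $\wp^{-1}(\Crit f)$ for any non-constant $q$, and $r(u(\R\times S^1))$ stays within a compact subinterval $(R_0,R_1)$ chosen at the outset of the construction, the constraint forces $\zeta^\mathrm{v}$ to vanish on an open subset of $\R\times S^1$. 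The Carleman similarity principle applied to the formally adjoint equation $L^\ast\zeta=0$---whose principal symbol coincides with that of $\bar\partial$---propagates this vanishing globally, so $\zeta\equiv 0$.

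The main obstacle is the degenerate case in which $q$ is constant, so that $u$ lies in a single fibre $E_q$ over a critical point $q\in\Crit f\subset\mathcal U$, where $B\equiv 0$ by the support condition and the $B$-perturbation is invisible to the linearization. By positivity of intersection (Proposition \ref{prop:positivity_of_intersection}), any such fibre-cylinder avoids $\OO_E$ and satisfies $\w(w_-)=\w(w_+)$. In the parallel-transport trivialization \eqref{eq:u^0}, the equation reduces to the linear Cauchy-Riemann-type system \eqref{eqn:eqn_for_u^0} on $\C^\ast$ coupled to the ODE for $\eta$, and surjectivity of its linearization at a non-trivial solution follows from a direct Fredholm computation exploiting the Morse non-degeneracy of $f$ at $q$, in the spirit of the Morse-Bott transversality analysis of \cite{CF09}. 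Combining the generic and fibre cases shows that $L$ is surjective at every $(u,\eta,B)$, closing the Sard-Smale argument and establishing the proposition.
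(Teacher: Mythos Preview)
Your overall strategy---universal moduli space plus Sard--Smale, using the block-triangular form of $J^B$ to split transversality into a horizontal piece (handled by $j\in\mathfrak{j}_\mathrm{reg}$) and a vertical piece (handled by perturbing $B$, or by automatic transversality in the fibre case)---is the right one and is essentially what the paper intends; its proof simply refers to \cite{AK17} with minor support adjustments, and Remark~\ref{rem:full_RFH_diagonal_J} confirms that automatic transversality for the $(\C,S^1)$-model is an input.

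There are, however, two concrete errors you should fix. First, the order in which you kill $\zeta^\mathrm{h}$ and $\zeta^\mathrm{v}$ is backwards. From the upper-triangular form $D_w=\left(\begin{smallmatrix}D^\mathrm{v}&D^\mathrm{hv}\\0&D^\mathrm{h}\end{smallmatrix}\right)$ of \eqref{eq:lin_block}, the adjoint is lower-triangular, so $D_w^*\zeta=0$ gives $(D^\mathrm{v})^*\zeta^\mathrm{v}=0$ and $(D^\mathrm{hv})^*\zeta^\mathrm{v}+(D^\mathrm{h})^*\zeta^\mathrm{h}=0$. Surjectivity of $D^\mathrm{h}=\mathfrak d_q$ only yields $\zeta^\mathrm{h}=0$ \emph{after} you know $\zeta^\mathrm{v}=0$; the cross term $D^\mathrm{hv}$ obstructs the direct conclusion you claim. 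The correct sequence is: use the $\hat B$-variation together with unique continuation to obtain $\zeta^\mathrm{v}\equiv 0$, and only then invoke regularity of $j$ for $\zeta^\mathrm{h}$. Second, your assertion that a fibre cylinder ``avoids $\OO_E$ and satisfies $\w(w_-)=\w(w_+)$'' is false: the fibre cylinders counted by $\partial_0$ pass through $0\in E_q$ exactly once and increase the winding number by one (see Remark~\ref{rem:p_0} and Proposition~\ref{prop:winding}.(d)). This does not invalidate the automatic-transversality step you ultimately appeal to---equation \eqref{eqn:eqn_for_u^0} is linear on all of $\C$, and the $(\C,S^1)$ Rabinowitz model is regular regardless, as in \cite[Appendix~A]{AF16}---but your justification via Proposition~\ref{prop:positivity_of_intersection} is incorrect and should be replaced. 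Relatedly, the claim that $r(u)$ stays globally in $(R_0,R_1)$ is also false; all the argument needs, and all you should assert, is that the open set on which $u$ lies in the support region, $q\notin\mathcal U$, and $\partial_sq\neq 0$ is nonempty.
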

\begin{proof}
This can be proved in the same manner as in \cite{AK17} with  minor adjustments of requiring $B$ to have support away from $\OO_E$ and parametrizing it by $\R\x S^1$.
\end{proof}

Statements corresponding to Proposition \ref{prop:transversality_sphere} and Proposition \ref{prop:transversality_avoiding} also hold for generic $B\in\BB(j)$ but we do not need these here. We set
\[
\JJ^\mathcal{B}_\mathrm{reg}:=\left\{ J=\begin{pmatrix}
 i & B\\
 0 & j
\end{pmatrix} \;\Bigg|\; j\in\mathfrak{j}_\textrm{reg}\,,\; B\in \BB_\textrm{reg}(j)\right\} \,.
\]
Finally, we note that for $J\in\JJ^\mathrm{diag}$ by the maximum principle all $J_t$-holomorphic spheres lie inside $\OO_E$. A statement corresponding to Proposition \ref{prop:transversality_J_B} for $J\in\JJ^\mathrm{diag}$ is established in Section \ref{sec:trans_diag} below.

\subsection{Compactness result with $J\in\JJ$}\label{sec:compactness_J}
We use the notation introduced in Section \ref{sec:cyclic}. We point out that condition (ii) in Proposition \ref{prop:compactness1} is slightly weaker than our standing assumption (A3).

\begin{prop}\label{prop:compactness1}
Let $E=E^m$, i.e.~$c_1^E=-m[\om]$. We assume  one of the following.\begin{enumerate}[(i)]
	\item $\om$ vanishes on $\pi_2(M)$.
	\item $c_1^{TM}=\lambda\om$ on $\pi_2(M)$ for some $\lambda\in\R$ satisfying either $\lambda\nu\leq -\frac{1}{2}\dim M+1$ or $(\lambda-1)\nu\geq 1$,
	where $\nu\in\N$ is defined by $\om(\pi_2(M))=\nu\Z$. 
\end{enumerate}
Let $\tau>0$ and $J\in\JJ_\mathrm{reg}$ be arbitrary. Then for every $w_\pm=([u_\pm,\bar u_\pm],\eta_\pm)\in\Crit h$ satisfying
\[
\w(w_-)=\w(w_+)\in m\Z\,,\qquad \mu^h_\RFH(w_-)-\mu^h_\RFH(w_+)\leq 2\,,
\]  
the moduli space $\widehat\MM(w_-,w_+,\AA^\tau_f,J)$ is compact in the $C^\infty_{loc}$-topology.
\end{prop}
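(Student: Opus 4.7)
\textbf{The plan} is the standard Floer-Gromov analysis adapted to the Rabinowitz setting. I need (1) a uniform bound on the Lagrange multiplier $\eta$, (2) a uniform $C^0$-bound on $u$ in $E$, and (3) exclusion of sphere bubbling in the limit. The central mechanism is the identity $\w(w_-)=\w(w_+)$, which by Proposition~\ref{prop:positivity_of_intersection} forces every cylinder of every cascade in $\widehat\MM(w_-,w_+,\AA^\tau_f,J)$ to be disjoint from $\OO_E$ and to have zero topological intersection with it. Since additionally $\w(w_-)\in m\Z$, Lemma~\ref{lem:cyclic2} allows me to lift each cylinder uniquely to a $\widetilde J$-Rabinowitz-Floer cylinder in $E^1\setminus\OO_{E^1}$ along $\wp^m$, a fact that will be used below to gain genuine monotonicity in case (ii).

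\textbf{Uniform bounds.} Action monotonicity along $-\nabla\AA^\tau_f$ places every intermediate critical point $w^i_\pm$ of every cascade inside the action window $[\AA^\tau_f(w_+),\AA^\tau_f(w_-)]$, and each such $w^i_\pm$ has $\w(w^i_\pm)=\w(w_\pm)$. Via identity~\eqref{eq:full_action-winding}, this bounds $\eta$ at every breaking point, and by Remark~\ref{rem:H}.(ii) only finitely many such critical points exist. Integrating $\p_s\eta=\int\mu_\tau(u)\,dt$ together with the energy bound coming from the action difference yields a $C^0$-bound on $\eta$ on every cylinder. For the $C^0$-bound on $u$, outside the annulus~\eqref{eq:support_B} the almost complex structure $J$ is diagonal and the radial coordinate $r$ is $J$-plurisubharmonic, so the standard maximum principle pins $r\circ u$ below a constant determined by the asymptotic data; the lower bound is automatic from $r\geq 0$.

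\textbf{Bubbling.} If a subsequence develops a bubble, rescaling yields a nonconstant $J_{(\eta_\infty,t_\infty)}$-holomorphic sphere $v:S^2\to E$ attached at a limit point of $u$. By Lemma~\ref{lem:j-sphere}, either $v(S^2)\subset\OO_E$, in which case $v$ is a $j_{t_\infty}$-holomorphic sphere in $M$ under the identification of Section~\ref{sec:Ham_Floer}; or $v$ meets the annulus~\eqref{eq:support_B}, in which case it is either simple or a cover of a simple sphere whose moduli space is transversely cut out by Proposition~\ref{prop:transversality_sphere}. In both sub-cases, the evaluation-map transversality of Proposition~\ref{prop:transversality_avoiding}, together with the index constraint $\mu^h_\RFH(w_-)-\mu^h_\RFH(w_+)\leq 2$ converted via~\eqref{eq:indices_and_winding} into an equivalent constraint on $\mu_\FH(\Pi(w_-))-\mu_\FH(\Pi(w_+))$, excludes the attachment of $v$ either to the limit Rabinowitz cylinder or to the zero section of the bubble tree, for any $J\in\JJ_\mathrm{reg}$.

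\textbf{Main obstacle.} The delicate point is case (ii) with $(\lambda-1)\nu\geq 1$ and $m\geq 2$: here $(E^m,\Omega)$ need not be monotone, so direct dimension counts in $E^m$ do not control bubbling of $j_{t_\infty}$-holomorphic spheres lying in $\OO_{E^m}$. I would resolve this by transferring the question to $E^1$: the lifted cylinder lives in $E^1\setminus\OO_{E^1}$, $\widetilde J$ is $\Z_m$-equivariant, and $c_1^{TE^1}=(\lambda-1)\wp^*\om$ together with $(\lambda-1)\nu\geq 1$ makes $(E^1,\widetilde\Omega)$ positively monotone. Proposition~\ref{prop:transversality_cylinder} then supplies transversality for the lifted moduli spaces with $\Z_m$-equivariant $\widetilde J$, and the usual monotonicity-plus-dimension count in $E^1$ excludes bubbling of $\widetilde J$-holomorphic spheres. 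Descending by $\wp^m$ via Lemma~\ref{lem:cyclic2} transports this exclusion back to $E^m$. In the other branch of (ii), where $\lambda\nu\leq -\frac12\dim M+1$, no lifting is necessary; the $j$-holomorphic spheres in $M$ have sufficiently negative Chern numbers that the index count in $E^m$ already excludes the bubbling directly, possibly after replacing $f$ by $\frac{1}{\ell}f$ as in Remark~\ref{rem:Ono95} to absorb the extra $+1$ relative to~\eqref{eq:assumption_M}.
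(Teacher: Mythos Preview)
Your overall strategy—lift cylinders to $E^1$ via Lemma~\ref{lem:cyclic2} in the positive-monotone branch and exploit the monotonicity of $(E^1,\widetilde\Omega)$—is exactly the paper's approach. The gap is in the sentence ``the usual monotonicity-plus-dimension count in $E^1$ excludes bubbling of $\widetilde J$-holomorphic spheres.'' That count requires transversality results for $\widetilde J$-holomorphic spheres in $E^1$: both an analogue of Proposition~\ref{prop:transversality_sphere} for the sphere moduli, and, in the borderline case $(\lambda-1)\nu=1$, an analogue of Proposition~\ref{prop:transversality_avoiding} to forbid a single sphere with $c_1^{TE^1}=1$ attached at the (constant) critical orbit, which is all the index budget $\leq 2$ leaves room for. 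But $\widetilde J=(\wp^m)^*J$ is constrained to be $\Z_m$-equivariant, and those propositions are only established on $E^m$ for generic $J\in\JJ$; the paper explicitly remarks that it \emph{avoids} addressing equivariant transversality for $\widetilde J$-spheres in $E^1$.

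The paper closes this gap not by proving equivariant transversality but by projecting the \emph{sphere} back down: $\wp^m\circ v$ is a $J_{(\eta,t)}$-holomorphic sphere in $E^m$, and in the borderline case it has $c_1^{TE^m}([\wp^m\circ v])=(\lambda-m)\om([v])=2-m\leq 0$ (here necessarily $\lambda=2$, $\nu=1$) and meets the image of the original orbit $u_-=\wp^m\circ\tilde u_-$, so Proposition~\ref{prop:transversality_avoiding}.(ii) on $E^m$ rules it out. Likewise, for a lifted bubble with $c_1^{TE^1}\leq-\tfrac12\dim M$ not contained in $\OO_{E^1}$, the projected simple sphere lies in $\MM^*(A,J)$ on $E^m$ with $c_1^{TE^m}(A)\leq-\tfrac12\dim M$, which is empty by Proposition~\ref{prop:transversality_sphere}. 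Your ``descending by $\wp^m$ via Lemma~\ref{lem:cyclic2}'' only transports cylinders; the missing idea is to project the spheres as well and invoke the sphere transversality you already have on $E^m$. (Aside: the appeal to Remark~\ref{rem:Ono95} in the negative branch is misplaced—that remark concerns the time-independent setting of Proposition~\ref{prop:j_HS}, whereas here $j\in\j_\mathrm{reg}$ and the simple-sphere dimension count in $M$ already works without rescaling $f$.)
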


\begin{rem}\label{rem:reason_lifting}
 If $w=(u,\eta)\in\Crit \AA^\tau_f$ has $\w(w)\notin m\Z$, then a lift $\tilde u$ of $u$ is not a closed loop but closes up modulo the $\Z_m$-action on $E^1$. It seems feasible to weaken the hypothesis in Proposition \ref{prop:compactness1} to $\w(w_-)=\w(w_+)\in \Z$ by considering a twisted loop space on $E^1$, cf.~\cite{Ba21}.
\end{rem}

\begin{proof}
Since $J\in\JJ_\mathrm{reg}$ is diagonal outside a compact region, applying \cite[Proposition 6.2]{Fra08} we conclude that there is a uniform $L^\infty$-bound on $u$ of $(u,\eta)\in\widehat\MM(S_{w_-},S_{w_+},\AA^\tau_f,J)$. It is worth pointing out that the function $\rho(s,t):=\ln r(u(s,t))$  is a subsolution of the inhomogeneous equation $\Delta \rho +m\tau\geq 0$, as computed in \eqref{eq:Kazdan-Warner} below. Therefore the upper bound of $\rho$ given in \cite[Proposition 6.2]{Fra08} in general depends on the energy of $(u,\eta)$, i.e.~on $\AA^\tau_f(w_-)-\AA^\tau_f(w_+)$, see also \cite[Section 4]{CFO10}.

It remains to show that the derivatives of cylinder components of  $\mathbf{w}\in\widehat\MM(w_-,w_+,\AA^\tau_f,J)$ are uniformly bounded on compact subsets of $\R\x S^1$, i.e.~we have to show that there is no bubbling-off of pseudo-holomorphic spheres. This is obviously true when $\om$ vanishes on $\pi_2(M)$, and therefore let us assume (ii).

Let $(\mathbf{w}_\nu)_{\nu\in\N}$ be a sequence in $\widehat\MM(w_-,w_+,\AA^\tau_f,J)$.
The hypothesis $\w(w_-)=\w(w_+)$ together with Lemma \ref{lem:cyclic2} implies that we can lift $\mathbf{w}_\nu$ to $\tilde{\mathbf{w}}_\nu\in\widehat\MM(\tilde w_-,\tilde w_+,\widetilde{\AA}^{\tau}_f,\widetilde J)$ in $E^1$. If $m=1$, we have $\tilde{\mathbf{w}}_\nu=\mathbf{w}_\nu$. It suffices to show that bubbling-off does not occur for $(\tilde{\mathbf{w}}_\nu)_{\nu\in\N}$.
We also note that
\begin{equation}\label{eq:lift_ind}
\mu^h_\RFH(\tilde w_-)-\mu^h_\RFH(\tilde w_+)=\mu^h_\RFH( w_-)-\mu^h_\RFH( w_+)\leq2	
\end{equation}
due to Lemma \ref{lem:cyclic1}. Suppose that $\tilde{\mathbf{w}}_\nu$ converges in the Gromov-Floer topology to broken flow lines with cascades and nonconstant $\widetilde{J}_{\eta(s^n),t^n}$-holomorphic spheres $\{v^n\}_{1\leq n\leq\ell}$ in $E^1$. Here at least one $v^n$ intersects some cylinder component in the limit broken flow lines with cascades, and $(s^n,t^n)\in\R\x S^1$ is the position where $v^n$ bubbles off. Since $\om([\wp\circ v^n])=\widetilde\Omega([v^n])>0$,  assumption  (ii) yields that every $v^n$ satisfies
\[
c_1^{TE^1}([v^n]) = c_1^{TM}([\wp\circ v^n]) - \om([\wp\circ v^n]) =(\lambda-1)\om([\wp\circ v^n]) \notin\left[-\tfrac{1}{2}\dim M+1, 0\right]\,.
\]
We first assume $c_1^{TE^1}([v^n])\leq -\frac{1}{2}\dim M$ and show that this case cannot occur. Note that the underlying simple curve $v^n_\mathrm{sim}$ of $v^n$ also satisfies $c_1^{TE^1}([v^n_\mathrm{sim}])\leq -\frac{1}{2}\dim M$. By Lemma \ref{lem:j-sphere}, either $v^n$ is entirely contained in $\OO_{E^1}$ or $v^n_\mathrm{sim}$ belongs to $\MM^*(A,\widetilde J)$, see \eqref{eq:MMAJ}, where $A=[v^n_\mathrm{sim}]\in\pi_2(E^1)$. The latter case is excluded by Proposition \ref{prop:transversality_sphere}. Indeed since $\wp^m:E^1\to E^m=E$ is $(\widetilde J,J)$-holomorphic, the simple curve  $\wp^m\circ v^n_\mathrm{sim}:S^2\to E^m$ is in $\MM^*(A,J)$, where we view $A$ as a class in $\pi_2(E^m)\cong\pi_2(E^1)$, and the quotient space $\MM^*(A,J)/\mathrm{PSL}(2;\C)$ has negative dimension by $c_1^{TE^m}(A)\leq c_1^{TE^1}(A)\leq -\frac{1}{2}\dim M$.
Arguing on $E^m$ instead of $E^1$ in this case avoids addressing equivariant transversality for $\widetilde J$-holomorphic spheres in $E^1$. 

If $v^n$ lies inside $\OO_{E^1}$, then $\wp\circ v^n:S^2\to M$ is $j_t$-holomorphic since $J$ is diagonal on $\OO_{E^1}$. Moreover the simple curve $\wp\circ v^n_\mathrm{sim}$ has $c_1^{TM}([\wp\circ v^n_\mathrm{sim}])\leq -\frac{1}{2}\dim M+1$ by the assumption since $[\wp\circ v^n_\mathrm{sim}]=A\in\pi_2(M)\cong\pi_2(E^1)$. Such curve does not exist for dimension reason again, see the definitions of $\j_\mathrm{reg}$ and $\j_\mathrm{HS}$ in Section \ref{sec:Quantum Gysin sequence}. 

In the other case, namely $c_1^{TE^1}([v^n])\geq1$, we use the fact that the limit of $\tilde{\mathbf{w}}_\nu$ has index at most 2 by \eqref{eq:lift_ind} and that each $v^n$ contributes at least $2c_1^{TE^1}([v^n])\geq2$ to this index. By Proposition \ref{prop:transversality_cylinder}, the only possible scenario is that $(u_-,\eta_-)=(u_+,\eta_+)$ and the limit of $\tilde{\mathbf{w}}_\nu$ consists of a single $s$-independent cylinder mapping to $u_-=u_+$ with constant $\eta=\eta_-=\eta_+$  and a single $\widetilde{J}_{(\eta,t)}$-holomorphic sphere $v$ with $c_1^{TE^1}([v])=1$ passing through $u_-(t)$ for some $t\in S^1$. In particular, the index difference in \eqref{eq:lift_ind} equals 2, and the equation $1=c_1^{TE^1}([v])=(\lambda-1)\om([\wp\circ v])$ implies  $\lambda=2$ and $\nu=1$. In the case of $m=1$, this does not happen due to Proposition \ref{prop:transversality_avoiding}.(i). If $m\geq2$, we observe that the ${J}_{(\eta,t)}$-holomorphic sphere $\wp^m\circ v:S^2\to E^m$ intersects the image of $\wp^m\circ u_-$. However, Proposition \ref{prop:transversality_avoiding}.(ii) rules out this phenomenon since $c_1^{TE^m}([\wp^m\circ v])\leq 2-m\leq0$ in view of  $[\wp^m\circ v]=[v]$ in $\pi_2(E^m)\cong\pi_2(E^1)$. This completes the proof. 
\end{proof}

\subsection{Functional setting}\label{sec:functional}
For $\delta>0$, we fix a smooth function $\beta_\delta:\R\to\R$ such that 
\begin{equation}\label{eq:beta}
\beta_\delta = e^{\delta|s|}\qquad \forall |s|>s_0	
\end{equation}
for some $s_0>0$, and define for $k\in\N\cup\{0\}$ and $p>2$
\[
W^{k,p}_\delta(\mathbb{K},\R^n):=\{f\in W_{loc}^{k,p}(\mathbb{K},\R^n) \mid \beta_\delta f\in W^{k,p}(\mathbb{K},\R^n)\}\,
\]
where $\mathbb{K}$ is either $\R$ or $\R\x S^1$, moreover we set
\[
\|f\|_{W^{k,p}_\delta}:=\|\beta_\delta f\|_{W^{k,p}}\,.
\]
The space depends on $\delta$ but not on the choice of $\beta_\delta$. 
Other weighted Sobolev spaces, in particular with values in vector bundles, are analogously defined. As before, we write $w_\pm=([u_\pm,\bar u_\pm],\eta_\pm)\in\Crit\AA^\tau_f$. We denote by $\CC(S_{w_-},S_{w_+})$  the space of maps 
\[
w=(u,\eta) \in W^{1,p}_{loc}(\R\x S^1,E)\x W^{1,p}_{loc}(\R,\R)
\]
such that 
\[
\eta-\eta_-\in W^{1,p}_\delta((-\infty,0),\R)\,,\quad \eta-\eta_+\in W^{1,p}_\delta((0,+\infty),\R)\,,
\]
there exist $s_1>0$, $\theta_\pm\in S^1$, and $\xi_\pm \in W^{1,p}_\delta(\R\x S^1,(e^{2\pi i\theta_\pm}u_\pm)^*TE)$ satisfying
\[
u(s,t)=\left\{\begin{aligned}
	&\exp_{e^{2\pi i\theta_-}u_-(t)}(\xi_-(s,t))\qquad \forall s\leq -s_1\\
	&\exp_{e^{2\pi i\theta_+}u_+(t)}(\xi_+(s,t))\qquad \forall s\geq s_1
\end{aligned}
\right.\,,
\]
and
\begin{equation}\label{eq:u_Gamma_0}
[\bar u_-\#u\#\bar u_+^\mathrm{rev}]=0 \;\textrm{ in }\;\Gamma_E\,.	
\end{equation}
 The tangent space at $w=(u,\eta)\in\CC(S_{w_-},S_{w_+})$ can be identified with
\begin{equation}\label{eq:tangent_space}
T_w\CC(S_{w_-},S_{w_+}) = T_uW^{1,p}_\delta(\R\x S^1, u^*TE)\oplus V_-\oplus V_+ \oplus W^{1,p}_\delta(\R,\R) 	
\end{equation}
where $V_-$ and $V_+$ are 1-dimensional vector spaces spanned by $(1-\kappa(s))R(u(s,t))$ and $\kappa(s)R(u(s,t))$ respectively where $\kappa:\R\to[0,1]$ is a smooth cutoff function such that $\kappa'\geq0$, $\kappa(-1)=0$, and $\kappa(1)=1$. The spaces $V_\pm$ correspond to $TS_{w_\pm}$.

 We consider the smooth Banach bundle 
\begin{equation}\label{eq:banach_bundle}
\EE\longrightarrow \CC(S_{w_-},S_{w_+})	
\end{equation}
with fiber $\EE_w$ over $w\in \CC(S_{w_-},S_{w_+})$ given by
\[
\EE_w:=L^p_\delta (\R\x S^1, u^*TE)\x L^p_\delta(\R,\R)\,.
\]
We view $\overline{\p}_{J,\tau,f}$ given in \eqref{eq:Floer_eqn_for_A} as a section of $\EE\to\CC(S_{w_-},S_{w_+})$. Due to elliptic regularity results \cite[Appendix B]{MS12}, elements in $\overline{\p}_{J,\tau,f}^{-1}(\OO_\EE)$ are smooth, where $\OO_\EE$ denotes the zero section, and thus $\overline{\p}_{J,\tau,f}^{-1}(\OO_\EE)$ coincides with the moduli space in \eqref{eq:rfh_moduli}:
\[
\overline{\p}_{J,\tau,f}^{-1}(\OO_\EE)=\widehat\MM(S_{w_-},S_{w_+},\AA^\tau_f,J)\,.
\] 
The vertical differential of $\overline{\p}_{J,\tau,f}$ at $w\in\overline{\p}_{J,\tau,f}^{-1}(\OO_\EE)$, denoted by 
\begin{equation}\label{eq:D_w}
D_w: T_w\CC(S_{w_-},S_{w_+})\longrightarrow \EE_w\,,
\end{equation}
has the explicit form
\begin{equation}\label{eq:linop}
D_w\left(\begin{aligned}
& \hat u\\[.5ex]
& \hat\eta
\end{aligned}\right)
= 
\left(\begin{aligned}
& \nabla_s\hat u+J(\eta,t,u)(\nabla_t\hat u-\eta\nabla_{\hat u}X_{\mu_\tau}(u)-\nabla_{\hat u}X_F(u))\\ & \;\;+\nabla_{\hat u}J(\eta,t,u)(\p_t u - \eta X_{\mu_\tau}(u) - X_F(u)) -\hat\eta J(\eta,t,u) X_{\mu_\tau}(u)
\\[1ex]
& \p_s \hat\eta-\int_0^1d\mu_\tau(u)\hat u\,dt
\end{aligned}\right)\,.
\end{equation}
The operator $D_w$ is Fredholm. In particular, if $D_w$ is surjective for all $w\in\overline{\p}_{J,\tau,f}^{-1}(\OO_\EE)$, i.e.~$\overline{\p}_{J,\tau,f}\pitchfork\OO_\EE$, then $\overline{\p}_{J,\tau,f}^{-1}(\OO_\EE)$ is a smooth manifold of dimension 
\[
\ind D_w=\mu_\RFH(w_-)-\mu_\RFH(w_+)+1
\] 
as computed in Section \ref{sec:fredholm} below. This is indeed the case for $J\in\JJ_\mathrm{reg}\cup\JJ^\BB_\mathrm{reg}$ according to Proposition \ref{prop:transversality_cylinder} and Proposition \ref{prop:transversality_J_B}. A goal of the following sections is to show that this transversality result  holds also for all $J\in\JJ_\mathrm{diag}$.

Let $J=\begin{pmatrix}
 i & 0\\
 0 & j
\end{pmatrix}\in\JJ_\mathrm{diag}$. Using the splitting $T_xE\cong E_{\wp(x)}\oplus T_{\wp(x)}M$ in  \eqref{eq:splitting_TE}, we write
\[
T_w\CC(S_{w_-},S_{w_+}) = T_w^\mathrm{v}\CC(S_{w_-},S_{w_+}) \oplus T_w^\mathrm{h}\CC(S_{w_-},S_{w_+})\,,\qquad \EE_w=\EE^\mathrm{v}_w \oplus \EE^\mathrm{h}_w
\]
where 
\[
\begin{split}
T_w^\mathrm{v}\CC(S_{w_-},S_{w_+})&:= W^{1,p}_\delta(\R\x S^1,q^*E)\oplus V_-\oplus V_+\oplus W^{1,p}_\delta(\R,\R)\,,\\[0.5ex]
T_w^\mathrm{h}\CC(S_{w_-},S_{w_+})&:= W^{1,p}_\delta(\R\x S^1,q^*TM)\,,\\[0.5ex]
\EE^\mathrm{v}_w &:= L^{p}_\delta(\R\x S^1,q^*E)\x L_\delta^p(\R,\R)\,,\\[0.5ex]
\EE^\mathrm{h}_w &:= L^{p}_\delta(\R\x S^1,q^*TM)\,.
\end{split}
\]
Here we denote $q=\Pi(w)=\wp\circ u$ as usual. With respect to this splitting, we write 
\[
(\hat u,\hat\eta)=\big( (\hat e,\hat\eta) , \hat q \big)\in T_w\CC(S_{w_-},S_{w_+})
\]
where 
\[
\hat e\in W^{1,p}_\delta(\R\x S^1,q^*E)\oplus V_-\oplus V_+\,,\quad \hat\eta\in  W^{1,p}_\delta(\R,\R)\,,\quad \hat q\in W^{1,p}_\delta(\R\x S^1,q^*TM)\,.
\]
To decompose $D_w$ into vertical and horizontal part, we use the Levi-Civita connection $\nabla$ associated with a Riemannian metric on $E$ of product form with respect to the splitting $T_xE\cong E_{\wp(x)}\oplus T_{\wp(x)}M$. Keeping the notation $\nabla$ when restricted to the vertical or horizontal subspace and using $X_{\mu_\tau}(u)=-mR(u)=-2m\pi iu$, see  \eqref{eq:R=2pi_i}, we may express $D_w$ as 
\begin{equation}\label{eq:lin_block}
D_w \left(\begin{aligned}
\hat u\\
\hat\eta
\end{aligned}\right)
=
\left(\begin{aligned}
 D_w^\mathrm{v} && D_w^\mathrm{hv}\\
 0 && D_w^\mathrm{h}
\end{aligned}\right)
\left(\begin{aligned}
& (\hat e,\hat\eta) \\
& \quad\hat q
\end{aligned}\right)
\end{equation}
where 
\begin{equation}\label{eq:horizontal_diff}
\begin{split}
&D_w^\mathrm{v}:T_w^\mathrm{v}\CC(S_{w_-},S_{w_+})\longrightarrow \EE^\mathrm{v}_w	\\[0.5ex]
& D_w^\mathrm{v}\left(\begin{aligned}
& \hat e\\[.5ex]
& \hat\eta
\end{aligned}\right)
:= 
\left(\begin{aligned}
& \nabla_s \hat e + i\nabla_t\hat e -2m\pi ( \eta + f(q))\hat e - 2m\pi  \hat\eta u
\\[0.7ex]
& \p_s \hat\eta-\int_0^1d\mu_\tau(u)\hat e\,dt
\end{aligned}\right)\,,\\[3ex]
&D_w^\mathrm{h}:T_w^\mathrm{h}\CC(S_{w_-},S_{w_+})\longrightarrow \EE^\mathrm{h}_w 	\\[0.5ex]
&D_w^\mathrm{h}\hat q := \nabla_s \hat q + j(t,q)(\nabla_t\hat q-\nabla_{\hat q}X_f(q))+\nabla_{\hat q}j(t,q)(\p_t q-X_f(q))\,,\\[3ex]
&D_w^\mathrm{hv}:T_w^\mathrm{h}\CC(S_{w_-},S_{w_+})\longrightarrow \EE^\mathrm{v}_w	\\[0.5ex]
&D_w^\mathrm{hv}\hat q := m df_{q}(\hat q) R(u)\,.
\end{split}	
\end{equation}
The horizontal part $D_w^\mathrm{h}$ agrees with the operator $\mathfrak{d}_q$ in \eqref{eq:d_q} obtained by linearizing the Floer equation \eqref{eq:Floer_eq_M} on $M$. Note that in the horizontal part the weight $\delta$ does not play any role since all critical points of $\a_f$ are nondegenerate. We note that $D_w^\mathrm{hv}$ vanishes asymptotically, and $\left(\begin{aligned}
 D_w^\mathrm{v} && \varepsilon D_w^\mathrm{hv}\\
 0 && D_w^\mathrm{h}
\end{aligned}\right)
$ is a Fredholm operator for all $\varepsilon\in[0,1]$. In particular, it holds that 
\[
\ind D_w = \ind D_w^\mathrm{v} + \ind D_w^h\,.
\]
We also point out that $D_w$ is surjective if both $D_w^\mathrm{v}$ and $D_w^\mathrm{h}$ are surjective.

\begin{lem}\label{lem:index_vert}
	For every $w=(u,\eta)\in\widehat\MM(S_{w_-},S_{w_+},\AA^\tau_f,J)$, we have $(R(u),0)\in\ker D_w^\mathrm{v}$. Moreover if $\w(w_-)=\w(w_+)$, then $\ind  D_w^\mathrm{v}=1$.
\end{lem}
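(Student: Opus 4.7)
The plan is to treat the kernel claim via the fiberwise $U(1)$-symmetry and then read off the index from the block-triangular form of $D_w$. For the kernel statement, observe that the symplectic form $\Omega$, the radial coordinate $r$, and hence $\mu_\tau$ and $F=(1+m\pi r^2)f\circ\wp$ are all invariant under the $U(1)$-action on $E$, and any $J\in\JJ_\mathrm{diag}$ commutes with this action because its fiber part is the constant complex structure $i$. Consequently the one-parameter family $(u,\eta)\mapsto(e^{2\pi i\theta}u,\eta)$ maps solutions of the Rabinowitz--Floer equation \eqref{eq:Floer_eqn_for_A} to solutions, and differentiating at $\theta=0$ produces an element of $\ker D_w$ whose horizontal component $\hat q$ vanishes and whose vertical component equals $(R(u),0)$. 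In view of the block-triangular form \eqref{eq:lin_block}, kernel elements with $\hat q=0$ are in canonical bijection with $\ker D_w^\mathrm{v}$, yielding $(R(u),0)\in\ker D_w^\mathrm{v}$. A direct verification from the explicit formula \eqref{eq:horizontal_diff} also works: the second component vanishes because $d\mu_\tau(R(u))=0$ (since $r$ is $U(1)$-invariant), and the first component reduces to zero by using $\nabla i=0$ and then substituting the vertical Floer equation \eqref{eqn:Floer_eqn_fiber1}.

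For the index, I would use the block-triangular form \eqref{eq:lin_block}, which gives $\ind D_w=\ind D_w^\mathrm{v}+\ind D_w^\mathrm{h}$ as already noted below \eqref{eq:horizontal_diff}. The horizontal block $D_w^\mathrm{h}$ coincides with the Floer operator $\mathfrak{d}_q$ of \eqref{eq:d_q} at $q=\wp\circ u$, whose Fredholm index equals $\mu_\FH(\Pi(w_-))-\mu_\FH(\Pi(w_+))$ by \eqref{eq:dim_M}. The full operator $D_w$ has Fredholm index $\mu_\RFH(w_-)-\mu_\RFH(w_+)+1$ by \eqref{eq:moduli_dim_S}, a fact to be proved in Section \ref{sec:fredholm}. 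Inserting the winding--index relation \eqref{eq:indices_and_winding} together with the hypothesis $\w(w_-)=\w(w_+)$ shows $\mu_\RFH(w_-)-\mu_\RFH(w_+)=\mu_\FH(\Pi(w_-))-\mu_\FH(\Pi(w_+))$, so subtracting the two index formulas gives $\ind D_w^\mathrm{v}=1$.

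No step here is a genuine obstacle once the block decomposition and the Rabinowitz--Floer index formula from Section \ref{sec:fredholm} are granted. The only role of the hypothesis $\w(w_-)=\w(w_+)$ is to cancel the winding-number shift in \eqref{eq:indices_and_winding}, so that the Rabinowitz and Floer index differences coincide and the additive splitting of Fredholm indices can be solved for $\ind D_w^\mathrm{v}$.
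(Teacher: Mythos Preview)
Your proof is correct and follows essentially the same route as the paper: both derive $(R(u),0)\in\ker D_w^\mathrm{v}$ from the fiberwise $U(1)$-action (with the direct computation via \eqref{eqn:Floer_eqn_fiber1} as an alternative), and both obtain $\ind D_w^\mathrm{v}=1$ by subtracting $\ind D_w^\mathrm{h}=\mu_\FH(\Pi(w_-))-\mu_\FH(\Pi(w_+))$ from $\ind D_w=\mu_\RFH(w_-)-\mu_\RFH(w_+)+1$ using the winding--index relation under $\w(w_-)=\w(w_+)$. The only cosmetic difference is that the paper cites Lemma~\ref{lem:one-to-one_index} while you invoke \eqref{eq:indices_and_winding} directly.
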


\begin{proof}
The first claim follows from the fact that that $\overline{\p}_{\tau,J,f}^{-1}(\OO_\EE)=\widehat\MM(S_{w_-},S_{w_+},\AA^\tau_f,J)$ is invariant under the $S^1$-action in \eqref{eq:rotating_cylinder}. We can also see this from the following direct computation. The identification  $R(u)=2\pi i u$ in \eqref{eq:R=2pi_i} yields 
\[
\nabla_s R(u)=2\pi i\p_s^\mathrm{v} u \,,\qquad  \nabla_t R(u)=2\pi i\p_t^\mathrm{v} u\,,
\]
where $\p^\mathrm{v}$ denotes the partial derivatives in the vertical direction. Using this and \eqref{eqn:Floer_eqn_fiber1}, we deduce
\[
D_w^\mathrm{v}\left(\begin{aligned}
& R(u)\\[.5ex]
& \;\;\; 0
\end{aligned}\right)
= 
\left(\begin{aligned}
& 2\pi i(\p_s^\mathrm{v} u + i\p_t^\mathrm{v} u) -2m\pi\big( \eta + f(q)\big)R(u)
\\[1ex]
& -\int_0^1d\mu_\tau(u) R(u)\,dt
\end{aligned}\right)
= 0\,.
\]
The index computation follows immediately from 
\[
\mu_\RFH(w_-)-\mu_\RFH(w_+) +1 = \ind D_w= \ind D_w^\mathrm{v} + \ind D_w^\mathrm{h} 
\]
and 
\[
\ind D_w^\mathrm{h}=\ind\mathfrak{d}_q=\mu_\FH(\Pi(w_-))-\mu_\FH(\Pi(w_+)) =\mu_\RFH(w_-)-\mu_\RFH(w_+)\,,
\]
where we used the assumption $\w(w_-)=\w(w_+)$ and Lemma \ref{lem:one-to-one_index}.
\end{proof}

\subsection{Index and orientation}\label{sec:fredholm}
 	In this section, we study the linearized operator $D_w$ in a trivialization and discuss its index and orientation issues.
 	
\subsubsection*{Homotopy along Fredholm operators}

As in \cite[p.287]{CF09}, it is convenient to perturb $\mu_\tau$, which satisfies $r\p_r^2\mu_\tau-\p_r\mu_\tau=0$. We homotope $\mu_\tau$ to another function $\mu_\tau':E\to\R$ which again depends only on the radial coordinate $r$ and satisfies $r\p_r^2\mu_\tau'-\p_r\mu_\tau' >0$, or equivalently $\p_{r^2}^2\mu_\tau'>0$, near  $\Sigma_\tau$. Then the linearized return map $d\phi^1_{\eta X_{\mu_\tau} +X_F}(u(0))$ for $([u,\bar u],\eta)\in\Crit\AA^\tau_f$ has exactly one eigenvalue equal to 1.  We choose a homotopy $\{\mu_\tau^\theta\}_{\theta\in[0,1]}$ between $\mu_\tau$ and $\mu_\tau'$ in such a way that the action functional $\AA^\tau_f$ with $\mu_\tau$ replaced by $\mu_\tau^\theta$ is Morse-Bott for each $\theta$. This gives rise to a homotopy between two Fredholm operators obtained by linearizing the Rabinowitz-Floer equations for $\mu_\tau$ and $\mu_\tau'$.

Given $w=([u,\bar u],\eta)\in\Crit\AA^\tau_f$, let $\Phi_w(t)$ be the trivialization in \eqref{eq:trivialization}, and let $\Psi_w(t)$ be the linearized flow in this trivialization in \eqref{eq:lin_triv}. We denote by $A_w(t)$ the path of symmetric matrices associated with $\Psi_{w}(t)$ via 
\[
A_w(t)=-J_0\dot\Psi_w(t)\Psi_w^{-1}(t),\qquad \Psi_w(0)=I	
\]
where $I$ is the identity matrix and $J_0$ is the standard complex structure on $\R^{\dim E}$. The path $A_w(t)$ is 1-periodic since $\Psi_w(t+1)=\Psi_w(t)\Psi_w(1)$.
We set $h_w(t):=\Phi_w(t)^{-1}(-\nabla \mu_\tau'(u(t)))$. Then $A_wh_w=c_w h_w$ for some $c_w\in\R$ which is nonzero due to our perturbation $\mu_\tau'$ of $\mu_\tau$ above. Moreover it has sign  
\begin{equation}\label{eq:sign_tau_eta}
\sign c_w = \sign \eta \,,	
\end{equation}
see \cite[Lemma C.6]{CF09} and \cite[Section 2.4]{MP11}. 

Let $w=(u,\eta)\in\widehat\MM(S_{w_-},S_{w_+},\AA^\tau_f,J)$. We trivialize $u^*TE$ matching the trivializations $\Phi_{w_-}$ and $\Phi_{w_+}$ at the asymptotic ends. The linearized operator for the Rabinowitz-Floer equation in \eqref{eq:D_w} in this trivialization is of the form
\[
L:\big(W^{1,p}_\delta(\R\x S^1,\R^{\dim E})\oplus \bar{V}_-\oplus \bar{V}_+\big) \x W^{1,p}_\delta(\R,\R) \to L^p_\delta(\R\x S^1,\R^{\dim E})\x L^p_\delta(\R,\R)
\]
\begin{equation}\label{eq:L}
L\left(\begin{aligned}
\xi \\[1ex]	
\zeta
\end{aligned}\right)
=
\left(\begin{aligned}
&\p_s\xi+J_0\p_t\xi +A\xi+\zeta h \\[1.5ex]
& \p_s\zeta+\int_0^1\langle h,\xi\rangle dt
\end{aligned}\right) 
= 
\p_s\left(\begin{aligned}
 \xi \\[2ex]
\zeta
\end{aligned}\right)
  + 
\left(\begin{aligned}
 & J_0\p_t + A  & h \\
 & \int_0^1 \langle h,\cdot\rangle dt\; & 0
\end{aligned}\right)
\left(\begin{aligned}
 \xi \\[2ex]
\zeta
\end{aligned}\right)	
\end{equation}
where $A:\R\x S^1\to \mathrm{End}(\R^{\dim E})$ and $h:\R\x S^1\to \R^{\dim E}$ satisfy  
\begin{equation}\label{eq:L_end}
\lim_{s\to\pm\infty}A(s,t)=A_{w_\pm}(t+t_\pm)\,,\qquad \lim_{s\to\pm\infty}h(s,t)=h_{w_\pm}(t+t_\pm)	
\end{equation}
for some $t_\pm\in S^1$. Here $\bar{V}_-$ and $\bar{V}_+$ are 1-dimensional vector spaces corresponding to $V_-$ and $V_+$ in \eqref{eq:tangent_space} respectively. 
To deform the operator $L$ to a product one, we consider a path of linear operators
\[
L^r=
\p_s
  + 
\left(\begin{aligned}
 & \qquad J_0\p_t + A  & (1-r)h \\
 & (1-r)\int_0^1 \langle h,\cdot\rangle dt\; & c^r\;\;\;
\end{aligned}\right)
\]
where $c^r\in C^\infty(\R,\R)$ for each $r\in[0,1]$ satisfying $c^0=0$ and 
$\displaystyle \lim_{s\to\pm\infty}c^r(s)=-rc_\pm$. Then $L^0=L$ and we write $L^1$ as 
\[
L^1
= \begin{pmatrix}
 L^1_a & 0\\
 0 & L^1_b
\end{pmatrix}
= \begin{pmatrix}
 \p_s+J_0\p_t + A & 0\\
 0 & \p_s+c^1
\end{pmatrix}
\]
It follows from \cite[Theorem C.5]{CF09} that if $\delta>0$ is sufficiently small, $L^r$ is a Fredholm operator for every $r\in[0,1]$, see also \cite[Section 4.4]{AbM18}. We remark that only the signs of the asymptotic ends of $c^r$, rather than their exact values, are crucial to have the Fredholm property. 

\subsubsection*{Fredholm Index}

A straightforward computation shows that the Fredholm index of $L^1_b$ equals
\[
\ind L^1_b = \frac{1}{2}(\sign c_- - \sign c_+)=\frac{1}{2}(\sign \eta_- - \sign \eta_+)\,,
\]
where the last equality is due to \eqref{eq:sign_tau_eta}. 
To compute the Fredholm index of $L^1_a$, we consider the Cauchy problem 
\[
\dot\Psi^\sigma(t)=J_0 (A(t)-\sigma I)\Psi^\sigma(t),\qquad \Psi^{\sigma}(0)=I	
\]
for $\sigma\in\R$ sufficiently close to zero. 
We denote by $\Psi^\sigma_w$ the unique solution associated with $A=A_w$ for $w\in\Crit\AA^\tau_f$. We set
\[
\mu_\CZ^\sigma(\Psi_{w}):=\mu_\CZ(\Psi_{w}^\sigma)\,.
\]
Due to the perturbation, the linearized flow $d\phi_{X_{\mu_\tau'}}^t$ is not equal to $d\phi_{X_{\mu_\tau}}^t=d\phi_{R}^{m t}$ anymore, and $\mu_\CZ\big(\{e^{2\pi i t\cov(u)}\}_{t\in[0,1]}\big)$ in \eqref{eq:fiber_index} needs to be replaced by the Conley-Zehnder index of $d\phi_{\eta X_{\mu_\tau'}+X_F}^t(u(0))$ restricted to $E_q=\C$, where $q=\wp(u)$. As computed in \cite[Lemma 4.3]{CF09}, in the frame $(\frac{\p}{\p r},R)$, the $\mu^\sigma_\CZ$-index of $d\phi_{\eta X_{\mu_\tau'}+X_F}^t(u(0))|_{E_q}$ equals $\frac{1}{2}(\sign \eta - \sign \sigma)$. Therefore in the standard cartesian frame of $E_q=\C$, the $\mu^\sigma_\CZ$-index is 
\[
2\cov(u)+\frac{1}{2}(\sign \eta - \sign \sigma)\,.
\]
We conjugate $L^1_a$ with the isomorphisms $W^{1,p}_\delta\cong W^{1,p}$ and $L^p_\delta\cong L^p$ given by $f\mapsto \beta_\delta f$ to have an operator from $W^{1,p}$ to $L^p$, see \eqref{eq:beta} for $\beta_\delta$. Then the asymptotic formula of the conjugated operator is $J_0\p_t+A + \delta I$ at the negative end and $J_0\p_t+A - \delta I$ at the positive end. Therefore the index of $L^1_a$ is 
\[
\ind L^1_a=  \mu_\CZ^{\delta}(\Psi_{w_+}) - \mu_\CZ^{-\delta}(\Psi_{w_-})+2\,,
\]
see e.g.~\cite[Theorem 4.2]{CF09}, where $+2$ is due to $\overline{V}_-\oplus \overline{V}_+$. We take $[s_\pm]\in\pi_2(E)$ such that $[\bar{u}_\pm]=[(\bar{u}_\pm)_\mathrm{fib}\#s_\pm]$ in $\pi_2(E)$ as in Section \ref{sec:index}. Finally the index of $L$ is computed as
\[
\begin{split}
	\ind L=\ind L^1  &=\ind L^1_a +\ind L^1_b  \\
	&=\mu_\CZ^{\delta}(\Psi_{w_+}) - \mu_\CZ^{-\delta}(\Psi_{w_-}) + 2 + \frac{1}{2}(\sign \eta_- - \sign \eta_+)\\
	&= 2\cov(u_+)+\frac{1}{2}(\sign \eta_+ -1)+2c_1^{TE}([s_+]) - 2\cov(u_-)-\frac{1}{2}(\sign \eta_- +1)\\
	&\quad  -2c_1^{TE}([s_-])   - \mu_{-f}(q_+)  +\mu_{-f}(q_-) +2 + \frac{1}{2}(\sign \eta_- - \sign \eta_+)\\
	&=\mu_\RFH(w_-)-\mu_\RFH(w_+) +1 \,.
\end{split}
\]

\subsubsection*{Coherent orientation}

The space of operators of the form \eqref{eq:L} for some $A$ and $h$ satisfying \eqref{eq:L_end} for given $(A_{w_\pm},h_{w_\pm})$ has orientable determinant line bundle, see \cite[Lemma 4.30]{BO09}. It is crucial here that in our situation every periodic Reeb orbit is good, i.e.~the Conley-Zehnder indices of a simple Reeb orbit and all its multiple covers have the same parity, as computed in Section \ref{sec:index}. We can associate coherent orientations on moduli spaces $\widehat\MM(w_-,w_+,\AA^\tau_f,J)$ systematically as in \cite{FH93} which are compatible with gluing operation. We refer to \cite{BO09,DL19,Sch16} for a detailed account of this in the Morse-Bott setting.

There is another way to produce coherent orientations using the fact that the determinant bundle of a complex linear operator (e.g.~the Cauchy-Riemann operator) on $\C P^1$ has a canonical orientation, see \cite{BM04,BO09}. We also remark that relying on this fact one can alternatively work with chain modules generated by orientation lines over critical points, see e.g.~\cite{Abo15}. This idea can be adapted to our setting as follows. The homotopy $L^r$ produces an isomorphism 
\[
\Det (L)\cong \Det(L^1)=\Det(L^1_a)\otimes \Det (L^1_b)\,,
\]
where $\Det$ denotes the determinant of a Fredholm operator, see \cite[Appendix A]{MS12}. 
We orient each of $\Det(L^1_a)$ and $\Det (L^1_b)$, where the latter one is oriented as in \cite{BM04,BO09}.
For $w=([u,\bar u],\eta)\in\Crit\AA^\tau_f$, let $A_w(t)$ be a 1-periodic loop of symmetric matrices defined above. We choose any matrices $A^+_w(z)$ parametrized by $z\in \C$ and $A^+_w(e^{s+it})=A_w(t)$ for $s\gg0$. Similarly we choose $A^-_w(z)$ parametrized by $z\in \C$ and $A^-_w(e^{-s-it})=A_w(t)$ for $s\ll0$. Consider the operators 
\[
D_w^\pm:W^{1,p}(\C,\R^{\dim E})\to L^p(\C,\R^{\dim E})\,,\qquad D_w\xi:=\p_s\xi + J_0(\p_t\xi - A_w^\pm\xi)\,.
\]
We deform the glued operator 
\[
D_w^+\# D_w^-:W^{1,p}(\C P^1,\R^{\dim E})\to L^p(\C P^1,\R^{\dim E})
\]
through Fredholm operators to a complex linear operator whose determinant has a natural orientation since its kernel and cokernel are complex spaces. 
This gives $\Det(D_w^+\# D_w^-)$ an orientation. Choosing an orientation for the determinant of $D_w^+$ arbitrarily then induces an orientation for the determinant of $D_w^-$. Now let $L^1_a$ be the operator associated to $w\in\widehat\MM(S_{w_-},S_{w_+},\AA^\tau_f,J)$. Then we orient $\Det(L^1_a)$ in such a way that the induced orientation on the determinant of the glued operator 
\[
D_w^+\# L^1_a \# D_w^-:W^{1,p}(\C P^1,\R^{\dim E})\to L^p(\C P^1,\R^{\dim E})
\] 
agrees with the natural orientation on the determinant of its deformation to a complex linear operator. 

We can orient $L^1_b$ similarly. We choose the trivial orientation on the determinant of the operator $\mathrm{d}_1:\zeta\mapsto \p_s\zeta+\zeta$. For $c_-\neq 0$, we take any $c\in C^\infty(\R,\R)$ with $\displaystyle \lim_{s\to-\infty}c(s)=1$ and $\displaystyle\lim_{s\to+\infty}c(s)=c_-$ and define the operator $\mathrm{d}_{c_-}^-=\p_s +c$. We also define an operator $\mathrm{d}_{c_+}^+=\p_s+c$ for  $c_+\neq 0$ by choosing any $c\in C^\infty(\R,\R)$ with $\displaystyle \lim_{s\to-\infty}c(s)=c_+$ and $\displaystyle \lim_{s\to+\infty}c(s)=1$. We endow $\Det(\mathrm{d}^-_{c_-})$ with arbitrary orientation. We then orient $\Det(\mathrm{d}^+_{c_+})$ in such a way that the induced orientation on the determinant of the glued operator $\mathrm{d}_{c_+}^+\#\mathrm{d}_{c_-}^-$, when $c_-=c_+$, agrees with the the trivial one on $\Det(\mathrm{d}_1)$ through a deformation of Fredholm operators given by changing $c$ fixing the asymptotic ends. Finally we orient $L^1_b$ so that the induced orientation on the determinant of the glued operator $\mathrm{d}_{c_+}^+\# L^1_b\#\mathrm{d}_{c_-}^-$ again agrees with the trivial one on $\Det(\mathrm{d}_1)$ through a deformation.

\subsection{Integro Kazdan-Warner equation}\label{sec:KW_eq}
In this section we review and discuss  results from \cite[Section 5]{Fra04}. 
We continue to assume $J\in\JJ_\mathrm{diag}$. Let $(u,\eta)\in\widehat\MM(S_{w_-},S_{w_+},\AA^\tau_f,J)$ with $\w(w_-)=\w(w_+)$ and let $u^0:\R\x S^1\to E_{q(0,0)}$ denote the parallel transport of $u$ defined in \eqref{eq:u^0}. As before we write $q=\wp\circ u$. By Proposition \ref{prop:positivity_of_intersection}, $u$ does not intersect $\OO_E$ and this is equivalent to saying that $u^0$ does not pass through  zero in $E_{q(0,0)}$. We express $u^0$ as
\begin{equation}\label{eq:u^0_polar}
u^0(s,t)=e^{2\pi (\rho(s,t)+i\psi(s,t))}u(0,0)
\end{equation}
for smooth functions $\rho:\R\x S^1\to\R$ and $\psi:\R\x S^1\to S^1$. Then \eqref{eqn:eqn_for_u^0} is equivalent to 
\begin{equation}\label{eq:rho_psi}
\left\{
\begin{aligned}
&\p_s\rho-\p_t\psi=m(\eta+f(q))-\chi_2\\[.5ex]
&\p_t\rho+\p_s\psi=\chi_1\,.
\end{aligned}\right.
\end{equation}
We point out that if $q$ is $t$-independent, then $\chi_1=\chi_2=0$, see \eqref{eq:chi}. 
Summing up $\partial_s$ of the first equation and $\partial_t$ of the second equation, we obtain an integro version of the Kazdan-Warner equation, namely
\begin{equation}\label{eq:Kazdan-Warner}
\begin{split}
\Delta \rho&=m\p_s\eta+mdf_q(\p_sq)-\p_s\chi_2+\p_t\chi_1\\
&=m\int_0^1\mu_\tau(u^0)dt+m\om(X_f(q),\p_sq)+m\om(\p_sq,\p_tq)\\
&=m^2\pi\int_0^1e^{4\pi\rho}dt-m\tau+ m\om(\p_sq,j(t,q)\p_sq)\,.
\end{split}
\end{equation}
We conclude that, if $(u,\eta)\in\widehat\MM(S_{w_-},S_{w_+},\AA^\tau_f,J)$, then the corresponding $\rho$ satisfies \eqref{eq:Kazdan-Warner}. The converse is true in the following sense. Let us consider equations \eqref{eq:rho_psi} as equations determining $\psi$. Integrating these equations independently leads to a well-defined function $\psi$ if and only if $\rho$ satisfies \eqref{eq:Kazdan-Warner}. See the proof of Theorem \ref{thm:bijection} for a detailed discussion.

This motivates us to study for a nonnegative function $\kappa\in C^\infty(\R\x S^1,\R)\cap L^1(\R\x S^1,\R)$ the moduli space 
\begin{equation}\label{eq:KW_moduli}
\MM_\mathrm{KW}(\tau,\kappa)
\end{equation}
which consists of $\rho\in C^\infty(\R\x S^1,\R)$ such that 
\begin{equation}\label{eq:KW}
\Delta\rho =m^2\pi\int_0^1e^{4\pi\rho}dt-m\tau +\kappa(s,t)\,,\qquad  \rho-\frac{1}{4\pi}\ln\frac{\tau}{m\pi} \in  W^{2,2}(\R\x S^1,\R)\,.
\end{equation}
Linearizing the equation \eqref{eq:KW}, we obtain the operator
\begin{equation}\label{eq:linearization_of_KW}
L_\rho:W^{2,2}(\R\x S^1,\R)\to L^2(\R\x S^1,\R)\,,\qquad \hat\rho\mapsto\Delta\hat\rho-4m^2\pi^2\int_0^1e^{4\pi\rho}\hat\rho\,dt
\end{equation}
for $\rho\in C^\infty(\R\x S^1,\R)$ with $\rho-\frac{1}{4\pi}\ln\frac{\tau}{m\pi} \in  W^{2,2}(\R\x S^1,\R)$.

\begin{lem}\label{lem:rho_t_independent}
The operator $L_\rho$ is injective if $\rho$ is independent of $t$. 	
\end{lem}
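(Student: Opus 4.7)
The plan is to exploit the $t$-independence of $\rho$ by separating the equation into a ``$t$-average'' piece and a ``mean-free'' piece. Write the equation $L_\rho\hat\rho=0$ as
\[
\Delta\hat\rho(s,t)=4m^2\pi^2 e^{4\pi\rho(s)}\int_0^1\hat\rho(s,t')\,dt'\,,
\]
where now the right-hand side depends only on $s$. Set $\bar\rho(s):=\int_0^1\hat\rho(s,t)\,dt$; by Cauchy--Schwarz, $\bar\rho\in W^{2,2}(\R,\R)$, so in particular $\bar\rho\in C^{1}(\R)$ and $\bar\rho(s)\to0$ as $|s|\to\infty$.

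Integrating the above equation in $t$ and using the periodicity of $\partial_t\hat\rho$ kills the $\partial_t^2$-contribution, leaving the second-order linear ODE
\[
\bar\rho''(s)=4m^2\pi^2 e^{4\pi\rho(s)}\bar\rho(s)\,.
\]
Since the coefficient $4m^2\pi^2 e^{4\pi\rho(s)}$ is strictly positive and $\bar\rho$ vanishes at $\pm\infty$, a standard maximum/minimum argument applies: a positive interior maximum of $\bar\rho$ forces $\bar\rho''\leq0$ while the right-hand side is $>0$, and symmetrically at a negative interior minimum. Hence $\bar\rho\equiv0$.

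With $\bar\rho\equiv0$, the original equation collapses to $\Delta\hat\rho=0$ with $\hat\rho\in W^{2,2}(\R\x S^1,\R)$. Expanding in Fourier series $\hat\rho(s,t)=\sum_{n\in\Z}a_n(s)e^{2\pi i n t}$ gives $a_n''-4\pi^2n^2 a_n=0$ for each $n$; the $L^2(\R)$-condition on each $a_n$ rules out the exponential modes ($n\neq 0$) and the affine mode ($n=0$), so $\hat\rho\equiv 0$. This shows $\ker L_\rho=0$, establishing injectivity. The only step that requires a bit of care is justifying the integration-by-parts/Fubini step producing the ODE for $\bar\rho$ and the application of the maximum principle on all of $\R$, but both are standard given the $W^{2,2}$-regularity and the sign of the coefficient.
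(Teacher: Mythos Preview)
Your proof is correct, but it takes a genuinely different route from the paper's. The paper uses a single energy estimate: multiply $L_\rho\hat\rho=0$ by $\hat\rho$ and integrate over $\R\times S^1$. Since $\rho$ is $t$-independent, $e^{4\pi\rho}$ factors out of the $t$-integral, giving
\[
-\int_{\R}\int_0^1\big(|\partial_s\hat\rho|^2+|\partial_t\hat\rho|^2\big)\,dt\,ds \;=\; 4m^2\pi^2\int_{\R}e^{4\pi\rho(s)}\Big(\int_0^1\hat\rho\,dt\Big)^{\!2}ds\,,
\]
so a nonpositive quantity equals a nonnegative one; both vanish, hence $\hat\rho$ is constant and therefore zero. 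Your argument instead decomposes $\hat\rho$ into its $t$-average $\bar\rho$ and a mean-free remainder: you derive the ODE $\bar\rho''=4m^2\pi^2e^{4\pi\rho}\bar\rho$ and kill $\bar\rho$ via a maximum principle, then dispose of the harmonic remainder by Fourier modes. Both approaches hinge on exactly the same use of $t$-independence (to pull $e^{4\pi\rho}$ outside the $t$-integral), but the paper's integration-by-parts trick collapses your two steps into one line. Your decomposition is perhaps more transparent about \emph{where} the obstruction to a nontrivial kernel lives (the zero Fourier mode), while the paper's energy identity is shorter and avoids any regularity bootstrap for the maximum principle.
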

\begin{proof}
		 Suppose that $\hat\rho\in\ker L_\rho$, i.e.~$\Delta\hat\rho= 4m^2\pi^2 e^{4\pi\rho}\int_0^1\hat\rho\,dt$. Multiplying $\hat\rho$ both sides and integrating over $\R\x S^1$, we deduce 
	 \[
	 -\int_{-\infty}^{+\infty}\int_0^1 \|\p_t\hat\rho\|^2+\|\p_s\hat\rho\|^2\,dtds = \int_{-\infty}^{+\infty}\int_0^1\hat \rho \Delta\hat\rho \,dtds = 4m^2\pi^2 \int_{-\infty}^{+\infty}e^{4\pi\rho}\left(\int_0^1\hat\rho\,dt\right)^2ds\,.
	 \]
	 This proves that $\hat\rho=0$ and $\ker L_\rho$ is trivial.
\end{proof}

\begin{lem}\label{lem:L_rho_fredholm}
The operator $L_\rho$ is Fredholm of index zero.
\end{lem}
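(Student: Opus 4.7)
The plan is to exhibit $L_\rho$ as a compact perturbation of a translation-invariant isomorphism. Let
\[
\rho_\infty := \frac{1}{4\pi}\ln\frac{\tau}{m\pi}
\]
be the constant asymptotic value of $\rho$, and define the constant-coefficient operator
\[
L_\infty\hat\rho := \Delta\hat\rho - 4m\pi\tau\int_0^1\hat\rho\,dt\,.
\]
Since $e^{4\pi\rho_\infty}=\tau/(m\pi)$, a direct computation gives $L_\rho = L_\infty + R$, where
\[
R\hat\rho(s,t) := -4m^2\pi^2\int_0^1 k(s,t')\,\hat\rho(s,t')\,dt',\qquad k(s,t'):=e^{4\pi\rho(s,t')}-\tfrac{\tau}{m\pi}.
\]

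First I would show $L_\infty:W^{2,2}(\R\times S^1,\R)\to L^2(\R\times S^1,\R)$ is an isomorphism by Fourier analysis in the $t$-variable. Writing $\hat\rho(s,t)=\sum_{k\in\Z}c_k(s)e^{2\pi ikt}$, the equation $L_\infty\hat\rho=g$ decouples into the ODEs
\[
c_0''(s)-4m\pi\tau\,c_0(s)=g_0(s),\qquad c_k''(s)-4\pi^2 k^2 c_k(s)=g_k(s)\;\;(k\neq0).
\]
Each of these is uniquely solvable on $L^2(\R)$ with $W^{2,2}$-estimates, because $4m\pi\tau>0$ and $4\pi^2k^2>0$ respectively; summing the Fourier modes (using Parseval) gives a two-sided bound, so $L_\infty$ is a topological isomorphism, hence Fredholm of index zero.

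Next I would prove $R:W^{2,2}(\R\times S^1,\R)\to L^2(\R\times S^1,\R)$ is compact. Since $\rho-\rho_\infty\in W^{2,2}(\R\times S^1)$ and $W^{2,2}$ embeds continuously into the space of continuous functions vanishing at infinity on $\R\times S^1$, the kernel satisfies $\sup_{t'\in S^1}|k(s,t')|\to 0$ as $s\to\pm\infty$. Given a bounded sequence $(\hat\rho_n)$ in $W^{2,2}$, Cauchy--Schwarz yields
\[
\|R\hat\rho_n\|_{L^2}^2\le C\int_\R\Big(\sup_{t'}k(s,t')^2\Big)\|\hat\rho_n(s,\cdot)\|_{L^2(S^1)}^2\,ds,
\]
and the integrand factors into a symbol decaying at infinity and the $L^2$-restrictions of $\hat\rho_n$. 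Rellich--Kondrachov on any compact cylinder $[-N,N]\times S^1$ extracts a subsequence converging strongly in $L^2([-N,N]\times S^1)$, while the contribution from $|s|>N$ is controlled by $\sup_{|s|>N}\sup_{t'}|k(s,t')|^2$, which can be made arbitrarily small. A standard diagonal argument then produces a subsequence for which $R\hat\rho_n$ converges in $L^2$.

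Finally, as a compact perturbation of an isomorphism, $L_\rho=L_\infty+R$ is Fredholm with $\ind L_\rho=\ind L_\infty=0$. The only real obstacle is verifying the decay $\sup_{t'}|k(s,t')|\to0$, which comes down to the Sobolev embedding on $\R\times S^1$ together with density of compactly supported smooth functions in $W^{2,2}$; the Fourier analysis step for $L_\infty$ and the $L^2$-compactness argument for $R$ are then routine.
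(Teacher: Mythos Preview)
Your proof is correct and follows essentially the same strategy as the paper: show that the constant-coefficient operator $L_{\rho_\infty}$ is an isomorphism via Fourier decomposition in $t$ (reducing to second-order ODEs on $\R$ with strictly positive zero-order term), and then treat $L_\rho$ as a perturbation. The only tactical difference is in handling the remainder: the paper introduces a cutoff $\beta$ equal to $1$ near infinity so that $L_{\beta\rho}$ is norm-close to $L_{\rho_\infty}$ (hence Fredholm of index zero by stability) and $L_\rho-L_{\beta\rho}$ has compactly supported coefficient (hence is compact), whereas you prove directly that the full remainder $R$ is compact using decay of $k$ at infinity, Rellich on finite cylinders, and a diagonal argument. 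Both routes are standard and equally short; your version is slightly more self-contained, while the paper's cutoff trick avoids the explicit tail/diagonal bookkeeping.
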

\begin{proof}
We first prove the claim when $\rho$ is constant, i.e.~$\rho=\rho_0:=\frac{1}{4\pi}\ln\frac{\tau}{m\pi}$. By Lemma \ref{lem:rho_t_independent}, $L_{\rho_0}$ is injective. To prove surjectivity, we write $\xi\in W^{2,2}(\R\times S^1,\R)$ and $\zeta\in L^2(\R\times S^1,\R)$ in Fourier series
\[
\xi(s,t)= \sum_{n\in\Z}a_n(s)\cos 2\pi nt +b_n(s)\sin 2\pi nt\,,\quad \zeta(s,t)= \sum_{n\in\Z}c_n(s)\cos 2\pi nt +d_n(s)\sin 2\pi nt\,.
\]
Since
\[
L_{\rho_0}\xi= \sum_{n\in\Z}\Big(\big(\p_s^2a_n(s)-4\pi^2n^2 a_n(s)\big)\cos 2\pi n t+\big(\p_s^2b_n(s)-4\pi^2n^2b_n(s)\big)\sin 2\pi n t\Big) -4m\pi\tau a_0(s),
\]
the equation $L_{\rho_0}\xi=\zeta$ translates to
\[
\begin{cases}
	\p_s^2a_0(s)-4m\pi\tau a_0(s)=c_0(s)& \\[0.5ex]
	\p_s^2a_n(s)-4\pi^2n^2 a_n(s)=c_n(s) & n\in \Z \setminus\{0\}\\[0.5ex]
	\p_s^2b_n(s)-4\pi^2n^2 b_n(s) = d_n(s) & n\in \Z \setminus\{0\}\,. 
\end{cases}
\]
As shown in \cite[Step 2 in Proposition 3.6]{Fra22}, for given $c_n$ and $d_n$ we can find solutions $a_n$ and $b_n$ to the above equations using the Fourier-Plancherel transformation. This proves that $L_{\rho_0}$ is surjective. Therefore $L_{\rho_0}$ is an isomorphism and in particular a Fredholm operator of index zero. 

Now we consider general $\rho$ as above. For $T>0$, we choose a smooth function $\beta:\R\to [0,1]$ such that $\beta(s)=1$ for $|s|>T$ and $\beta(s)=0$ for $|s|<T-1$. Since $\rho-\rho_0\in  W^{2,2}(\R\x S^1,\R)$, we can make the operator norm $\|L_{\beta\rho}-L_{\rho_0}\|$ arbitrarily small by taking large $T>0$. Since $L_{\rho_0}$ is Fredholm of index zero, so is  $L_{\beta\rho}$ provided that $T>0$ is sufficiently large. Finally since $L_\rho-L_{\beta\rho}$ is a compact operator, $L_\rho$ is also a Fredholm operator of index zero.
\end{proof}

\begin{prop}\label{prop:uniqueness_rho}\cite[Theorem 5.3]{Fra08}
There exist $c_1,c_2>0$ such that if $\tau>0$ satisfies
\[
\tau\leq c_1,\qquad  \tau^3 \|\kappa\|_{L_1}^2\leq c_2\,,
\]
then $\MM_\mathrm{KW}(\tau,\kappa)$ consists of a unique solution. 
\end{prop}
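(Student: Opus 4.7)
The plan is to prove uniqueness and existence separately. Uniqueness, as it turns out, does not require any smallness; the hypotheses on $\tau$ and $\|\kappa\|_{L^1}$ will enter only in running a Banach fixed-point argument for existence built on the Fredholm analysis of $L_{\rho_0}$ from Lemmas~\ref{lem:rho_t_independent} and~\ref{lem:L_rho_fredholm}.

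For uniqueness, I would take $\rho_1,\rho_2\in\MM_{\mathrm{KW}}(\tau,\kappa)$, set $\xi:=\rho_1-\rho_2\in W^{2,2}(\R\times S^1,\R)$, and subtract the two copies of \eqref{eq:KW} to obtain
$$
\Delta\xi \;=\; m^2\pi\int_0^1\!\bigl(e^{4\pi\rho_1}-e^{4\pi\rho_2}\bigr)\,dt,
$$
whose right-hand side is $t$-independent. Decomposing $\xi(s,t)=\bar\xi(s)+\hat\xi(s,t)$ with $\int_0^1\hat\xi\,dt=0$ and expanding $\hat\xi$ in its $t$-Fourier series, the equation $\Delta\hat\xi=0$ forces each nonzero mode $a_n\in W^{2,2}(\R)$ to satisfy $a_n''=4\pi^2n^2 a_n$, hence $a_n\equiv 0$. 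Thus $\xi=\bar\xi(s)$ depends only on $s$, and the mean value theorem pointwise recasts the equation as $\bar\xi''(s)=c(s)\bar\xi(s)$ with $c(s)\geq 0$. Multiplying by $\bar\xi\in W^{2,2}(\R)$ and integrating by parts over $\R$ gives
$$
-\|\bar\xi'\|_{L^2(\R)}^2 \;=\; \int_\R c(s)\,\bar\xi(s)^2\,ds \;\geq\; 0,
$$
so both sides vanish and $\bar\xi\equiv 0$.

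For existence, I would substitute $\tilde\rho:=\rho-\rho_0$ with $\rho_0:=\tfrac{1}{4\pi}\ln\tfrac{\tau}{m\pi}$ and recast \eqref{eq:KW} as the fixed-point problem
$$
\tilde\rho \;=\; L_{\rho_0}^{-1}\!\bigl(\mathcal N(\tilde\rho)+\kappa\bigr),\qquad \mathcal N(\tilde\rho):=m\tau\!\int_0^1\!\bigl(e^{4\pi\tilde\rho}-1-4\pi\tilde\rho\bigr)\,dt,
$$
in $W^{2,2}(\R\times S^1,\R)$, where $L_{\rho_0}$ is an isomorphism by Lemmas~\ref{lem:rho_t_independent} and~\ref{lem:L_rho_fredholm}. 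The nonlinearity $\mathcal N$ starts quadratically in $\tilde\rho$ and carries an overall prefactor $\tau$, so its Lipschitz constant on a fixed $W^{2,2}$-ball is $\mathcal O(\tau)$, which is small once $\tau\le c_1$. The Banach contraction argument then reduces to bounding the inhomogeneous term $\|L_{\rho_0}^{-1}\kappa\|_{W^{2,2}}$ by a suitable radius, with smoothness of the resulting fixed point following from standard elliptic bootstrapping on \eqref{eq:KW}.

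The principal obstacle is that $\kappa$ is only controlled in $L^1$, whereas the natural Fredholm estimate for $L_{\rho_0}^{-1}$ asks for $\kappa\in L^2$. Bridging this gap is where the scaling $\tau^3\|\kappa\|_{L^1}^2\le c_2$ should enter. My plan is to diagonalize $L_{\rho_0}$ via the $t$-Fourier decomposition: the nonzero modes see only $\Delta-4\pi^2 n^2$ and enjoy $\tau$-uniform bounds, while the zero mode reduces to the one-dimensional resolvent of $\partial_s^2-4m\tau\pi$ on $\R$, whose explicit Green's function $G_\tau(s)=-\tfrac{1}{2\sqrt{4m\tau\pi}}\,e^{-\sqrt{4m\tau\pi}|s|}$ has $\|G_\tau\|_{L^2}\sim\tau^{-3/4}$. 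Young's inequality applied to $G_\tau\ast\kappa_0$ with $\kappa_0(s):=\int_0^1\kappa(s,t)\,dt$ and elliptic regularity to promote the resulting $L^2$-type bound to $W^{2,2}$ should produce an estimate on $\|L_{\rho_0}^{-1}\kappa\|_{W^{2,2}}$ in $\tau$ and $\|\kappa\|_{L^1}$ that matches the prescribed scaling, closing the contraction.
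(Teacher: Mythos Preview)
Your uniqueness argument is correct and in fact yields uniqueness in $\MM_{\mathrm{KW}}(\tau,\kappa)$ for \emph{every} $\tau>0$, not just small ones: the key observation that $\xi=\rho_1-\rho_2$ is automatically $t$-independent (since $\Delta\xi$ is) lets you run the same integration-by-parts trick as in Lemma~\ref{lem:rho_t_independent}, but now on the nonlinear difference rather than on the linearization. The paper does not argue this way. Instead it obtains existence and uniqueness simultaneously by a continuation method along the family $\varepsilon\kappa$, $\varepsilon\in[0,1]$: the crucial input (cited to \cite[Proposition~5.8]{Fra08}, and this is where the constants $c_1,c_2$ are actually produced via a~priori estimates on solutions) is that $L_\rho$ has trivial kernel --- hence is an isomorphism by Lemma~\ref{lem:L_rho_fredholm} --- at \emph{every} $\rho\in\MM_{\mathrm{KW}}(\tau,\varepsilon\kappa)$ for all $\varepsilon\in[0,1]$. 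Since $\MM_{\mathrm{KW}}(\tau,0)=\{\rho_0\}$, the implicit function theorem then propagates cardinality one across the whole interval.

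Your existence argument, by contrast, has a genuine gap in the regime it targets. On the zero $t$-mode $L_{\rho_0}$ is $\partial_s^2-4m\pi\tau$, whose inverse $L^2(\R)\to L^2(\R)$ has norm $\tfrac{1}{4m\pi\tau}\sim\tau^{-1}$; this exactly cancels the prefactor $\tau$ in $\mathcal N$, so the Lipschitz constant of $L_{\rho_0}^{-1}\mathcal N$ on a $W^{2,2}$-ball of radius $R$ is of order $R$, not of order $\tau$. Contraction thus forces $\|L_{\rho_0}^{-1}\kappa\|$ to be small, and your own Green's-function estimate gives $\|G_\tau\ast\kappa_0\|_{L^2}\sim\tau^{-3/4}\|\kappa\|_{L^1}$, so the scheme needs roughly $\|\kappa\|_{L^1}\ll\tau^{3/4}$. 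But the stated hypothesis $\tau^3\|\kappa\|_{L^1}^2\le c_2$ together with $\tau\le c_1$ allows $\|\kappa\|_{L^1}$ to be arbitrarily \emph{large} for small $\tau$; indeed in the application (Theorem~\ref{thm:bijection}) one fixes an energy bound on $\|\kappa\|_{L^1}$ and sends $\tau\to 0$. So even if made rigorous, your fixed-point argument lives in the wrong corner of parameter space. (A secondary issue: with only $\kappa\in L^1$ the zero mode $a_0=G_\tau\ast\kappa_0$ has $a_0''=4m\pi\tau a_0+\kappa_0\notin L^2$ in general, so $L_{\rho_0}^{-1}\kappa\notin W^{2,2}$; in practice $\kappa$ decays exponentially, but the abstract statement does not assume this.) The paper's continuation method sidesteps both problems because it never inverts $L_{\rho_0}$ on the full $\kappa$, only $L_\rho$ on infinitesimal increments.
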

\begin{proof}
	We outline the proof given in \cite[Theorem 5.3]{Fra08}. For small $\tau>0$, the operator $L_\rho$ has trivial kernel for every  $\rho\in\MM_\mathrm{KW}(\tau,\kappa)$, see \cite[Proposition 5.8]{Fra08}. The constants $c_1$ and $c_2$ are given by this step. Due to Lemma \ref{lem:L_rho_fredholm}, $L_\rho$ is in fact an isomorphism. This is then also true for every $\rho\in\MM_\mathrm{KW}(\tau,\varepsilon\kappa)$ with $\varepsilon\in[0,1]$ as $\|\varepsilon\kappa\|_{L^1}\leq \|\kappa\|_{L^1}$. Since $\MM_\mathrm{KW}(\tau,0)$ consists of a single element $\rho_0=\frac{1}{4\pi}\ln\frac{\tau}{m\pi}$ as observed in \cite[p.48]{Fra08}, the implicit function theorem implies that the cardinality of $\MM_\mathrm{KW}(\tau, \varepsilon\kappa)$ is one for all $\varepsilon\in[0,1]$.
\end{proof}

\begin{thm}\label{thm:bijection}\cite[Theorem A]{Fra08}
Let $J=\begin{pmatrix}
 i & 0\\
 0 & j
\end{pmatrix}\in\JJ_\mathrm{diag}$. For a given $K>0$, there exists $\tau_0>0$ depending on $K$ such that the following property holds for every $\tau\in(0,\tau_0)$. Suppose that $w_\pm\in\Crit{\AA}^{\tau}_f$ satisfy
\[
\w(w_-)=\w(w_+),\qquad \mathfrak a_f(\Pi(w_-))-\mathfrak a_f(\Pi(w_+))\leq K\,.
\] 
Then the projection 
\[
\Pi:\widehat\MM(S_{w_-},S_{w_+},\AA^\tau_f,J)\longrightarrow \widehat\NN(\Pi(w_-),\Pi(w_+),\mathfrak{a}_f,j)
\]
defined in \eqref{eq:proj_cylinder} is bijective modulo the $S^1$-action given in \eqref{eq:rotating_cylinder}. 
\end{thm}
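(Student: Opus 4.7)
The plan is to construct, for each Floer cylinder $q\in\widehat\NN(\Pi(w_-),\Pi(w_+),\mathfrak{a}_f,j)$, a preimage $(u,\eta)\in\widehat\MM(S_{w_-},S_{w_+},\AA^\tau_f,J)$ that is unique up to the $S^1$-action \eqref{eq:rotating_cylinder}. Because $J$ is diagonal and $\w(w_-)=\w(w_+)$, Proposition \ref{prop:positivity_of_intersection} guarantees that any preimage avoids $\OO_E$, so one may write it in the parallel-transported form \eqref{eq:u_from_u0} and decompose the fiber component as $u^0=e^{2\pi(\rho+i\psi)}u(0,0)$ as in \eqref{eq:u^0_polar}, with $u(0,0)\in E_{q(0,0)}$ normalized to $r(u(0,0))=1$.

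With this normalization, the Rabinowitz-Floer equation projected to $M$ gives exactly the Floer equation for $q$, while the fiber component becomes the system \eqref{eq:rho_psi}. I would first observe that the function $\kappa_q(s,t):=m\omega(\p_s q,j(t,q)\p_s q)\ge 0$ associated to $q$ satisfies
\[
\|\kappa_q\|_{L^1}=m E(q)=m\bigl(\mathfrak{a}_f(\Pi(w_-))-\mathfrak{a}_f(\Pi(w_+))\bigr)\le mK.
\]
Choosing $\tau_0>0$ so that both $\tau\le c_1$ and $\tau^3(mK)^2\le c_2$ of Proposition \ref{prop:uniqueness_rho} hold for all $\tau\in(0,\tau_0)$, that proposition produces a \emph{unique} $\rho_q\in\MM_{\mathrm{KW}}(\tau,\kappa_q)$ solving the integro Kazdan-Warner equation \eqref{eq:KW}.

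Given $\rho_q$, the system \eqref{eq:rho_psi} is overdetermined in $\psi$, but its integrability condition is precisely the Kazdan-Warner equation \eqref{eq:KW} that $\rho_q$ satisfies; hence $\psi$ exists and is determined uniquely modulo an additive constant in $S^1$, which corresponds exactly to the freedom of choosing $u(0,0)$ inside its $U(1)$-orbit and realizes the $S^1$-quotient in the statement. Once $\psi$ is fixed, averaging the first equation of \eqref{eq:rho_psi} over $t\in S^1$ reads off $\eta(s)$, and \eqref{eq:u_from_u0} rebuilds $u$. Conversely, any preimage $(u,\eta)$ of $q$ yields, through the decomposition \eqref{eq:u^0_polar}, a $\rho$ solving \eqref{eq:KW} with the same $\kappa_q$; by uniqueness in Proposition \ref{prop:uniqueness_rho}, $\rho=\rho_q$, and then $(\psi,\eta)$ are determined up to the $S^1$-action as above, which is injectivity modulo this action.

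The main obstacle will be the asymptotic and topological bookkeeping. Proposition \ref{prop:uniqueness_rho} only supplies $\rho_q-\tfrac{1}{4\pi}\ln\tfrac{\tau}{m\pi}\in W^{2,2}$, so I will need to upgrade this to pointwise convergence with exponential rate in order to identify $\ev_\pm(u,\eta)$ with elements of $S_{w_\pm}$ and to verify $[\bar u_-\#u\#\bar u_+^{\mathrm{rev}}]=0$ in $\Gamma_E$. The hypothesis $\w(w_-)=\w(w_+)$ enters here in two ways: topologically, it ensures that a capping with the correct $\Gamma_E$-class exists after the reconstruction, and analytically, combined with positivity of intersections, it keeps $u$ inside $E\setminus\OO_E$ where the polar decomposition \eqref{eq:u^0_polar} is globally well-defined and the whole parallel-transport machinery applies.
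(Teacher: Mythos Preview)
Your proposal is correct and follows essentially the same route as the paper's proof: apply Proposition~\ref{prop:uniqueness_rho} with $\kappa=m\omega(\partial_s q,j_t\partial_s q)$ (whose $L^1$-norm is controlled by $mK$) to obtain a unique $\rho$, then integrate \eqref{eq:rho_psi} to recover $\psi$ up to an additive constant corresponding to the $S^1$-action, and determine $\eta$ from the remaining equation. The paper's proof is terser and defers the asymptotic bookkeeping you flag to \cite{Fra08}, but the argument is the same.
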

\begin{proof}
 We take any $q\in \widehat\NN(\Pi(w_-),\Pi(w_+),\mathfrak{a}_f,j)$. Due to the assumption, we can apply Proposition \ref{prop:uniqueness_rho} for $\kappa=m\om(\p_sq,j_t\p_sq)$. Note that $\frac{1}{m}\|\kappa\|_{L^1}= \a_f(\Pi(w_-))-\a_f(\Pi(w_+))$. This determines $\tau_0>0$ and gives a unique solution $\rho\in \MM_\mathrm{KW}(\tau,\kappa)$ for every $\tau\in(0,\tau_0)$. Then $\rho\in \MM_\mathrm{KW}(\tau,\kappa)$  uniquely determines $\eta$ by $\p_s\eta=m\pi\int_0^1e^{4\pi\rho}dt-\tau$ together with the asymptotic conditions $w_\pm$. Integrating either equation in \eqref{eq:rho_psi} determines $\psi$ up to an additive constant since $\rho$ satisfies \eqref{eq:Kazdan-Warner}. This additive constant  precisely corresponds to the $S^1$-action. Such $\rho$, $\psi$, and $\eta$ induce $w=(u,\eta)\in \widehat\MM(S_{w_-},S_{w_+},\AA^\tau_f,J)$ satisfying $\Pi(w)=q$ which is unique up to the $S^1$-action. This proves the theorem. 
\end{proof}

While we will define Rabinowitz Floer homology for arbitrary $\tau>0$ with $J\in\JJ^\BB_\mathrm{reg}$, we work only with small $\tau$ when $J\in\JJ_\mathrm{diag}$ is used due to Theorem \ref{thm:bijection}. However this smallness condition on $\tau$, which seems to be a technical condition, does not impose any additional restriction on the main results of this paper because we will use $J\in\JJ_\mathrm{diag}$ mainly for computational purposes. 
In what follows, we observe that when $j\in\j_\mathrm{HS}$ and $q$ is independent of $t$, i.e.~$q$ is a Morse gradient flow line, the smallness condition on $\tau$ can be easily removed.
\begin{lem}\label{lem:kappa_t_indep}
	Let  $\tau>0$ be arbitrary. Suppose that $\kappa$ is independent of $t$. Then every $\rho\in\MM_\mathrm{KW}(\tau,\kappa)$ is independent of $t$ and the operator $L_\rho$ in \eqref{eq:linearization_of_KW} is an isomorphism.
\end{lem}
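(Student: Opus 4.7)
\textbf{Proof plan for Lemma \ref{lem:kappa_t_indep}.} The strategy is to exploit the $S^1$-translation symmetry that $\kappa$ now enjoys, then combine the existing Lemmas \ref{lem:rho_t_independent} and \ref{lem:L_rho_fredholm} to conclude invertibility of $L_\rho$.

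First, I would show $t$-independence of $\rho$. For a fixed $t_0\in S^1$, set $\rho_{t_0}(s,t):=\rho(s,t+t_0)$. Since $\kappa$ is independent of $t$ and the nonlinear term $m^2\pi\int_0^1 e^{4\pi\rho}\,dt$ is $t$-translation invariant by periodicity, $\rho_{t_0}$ again solves \eqref{eq:KW}. The difference $\xi:=\rho_{t_0}-\rho$ lies in $W^{2,2}(\R\x S^1,\R)$ since $\rho-\rho_0\in W^{2,2}$ for $\rho_0=\frac{1}{4\pi}\ln\frac{\tau}{m\pi}$. Subtracting the two PDEs and again using that $\int_0^1 e^{4\pi\rho_{t_0}}\,dt=\int_0^1 e^{4\pi\rho}\,dt$ by periodicity, the nonlinear and constant terms cancel exactly, yielding
\[
\Delta\xi=0\qquad\text{with }\xi\in W^{2,2}(\R\x S^1,\R).
\]
A Fourier expansion in $t\in S^1$, $\xi(s,t)=\sum_{n\in\Z}a_n(s)e^{2\pi int}$, turns $\Delta\xi=0$ into $a_n''=4\pi^2n^2a_n$. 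For $n\neq0$ only the exponentially growing/decaying solutions are available, neither of which is $L^2$; for $n=0$ the affine solutions are not $L^2$ either. Hence all $a_n$ vanish, $\xi\equiv0$, and $\rho$ is independent of $t_0$, i.e.\ independent of $t$.

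Once $\rho$ is known to be $t$-independent, Lemma \ref{lem:rho_t_independent} gives $\ker L_\rho=\{0\}$, and Lemma \ref{lem:L_rho_fredholm} tells us $L_\rho$ is Fredholm of index $0$, so it is automatically surjective and therefore an isomorphism.

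The main obstacle is verifying that the nonlinear and inhomogeneous terms in \eqref{eq:KW} really cancel under the $t$-translation comparison; this works here precisely because the nonlinearity is of convolution type $\int_0^1 e^{4\pi\rho}\,dt$ (independent of $t$ evaluations), $\kappa$ has been assumed $t$-independent, and the asymptotic normalization by the constant $\rho_0$ still places $\rho_{t_0}-\rho$ in $W^{2,2}$. The Fourier-decay argument on the cylinder is then the standard Liouville-type step. No smallness of $\tau$ is needed, in contrast to Proposition \ref{prop:uniqueness_rho}.
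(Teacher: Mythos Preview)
Your proof is correct and follows essentially the same strategy as the paper's. The only difference is in the first step: the paper differentiates \eqref{eq:KW} in $t$ to obtain directly that $\partial_t\rho$ is harmonic (since the right-hand side is $t$-independent), then invokes the maximum principle together with the asymptotic decay of $\partial_t\rho$ to conclude $\partial_t\rho\equiv0$; you instead use the translation $\rho_{t_0}$ and show the difference $\rho_{t_0}-\rho$ is harmonic, then kill it via a Fourier/Liouville argument. These are the infinitesimal and finite versions of the same observation, and the second half---combining Lemma~\ref{lem:rho_t_independent} with Lemma~\ref{lem:L_rho_fredholm}---is identical to the paper's.
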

\begin{proof}
	 Since the right-hand side of the equation in \eqref{eq:KW} is $t$-independent by the hypothesis, we have $\Delta\p_t\rho=\p_t\Delta\rho =0$ for every $\rho \in\MM_\mathrm{KW}(\tau,\kappa)$. Since $\p_t\rho$ vanishes at both asymptotic ends by the asymptotic condition in \eqref{eq:KW}, the maximum principle yields that $\p_t\rho$ vanishes everywhere. This proves that $\rho$ is independent of $t$. Then due to Lemma \ref{lem:rho_t_independent}, $L_\rho$ is injective. Since $L_\rho$ has index zero by Lemma \ref{lem:L_rho_fredholm}, it is an isomorphism.
	 \end{proof}
We now consider the subspace 
\begin{equation}\label{eq:om=0_moduli}
	\widehat\MM_{\om=0}(S_{w_-},S_{w_+},\AA^\tau_f,J)\subset \widehat\MM(S_{w_-},S_{w_+},\AA^\tau_f,J)
\end{equation}
consisting of $w$ with $t$-independent $\Pi(w)$, i.e.~$\Pi(w)\in \widehat{\NN}(\Pi(w_-),\Pi(w_+),f,g)$.
\begin{prop}\label{prop:bijection}
	Let $J\in\JJ_\mathrm{diag}$ with $j\in\j_\mathrm{HS}$. Let $\tau>0$ be arbitrary. For every $w_\pm\in\Crit{\AA}^{\tau}_f$ with $\w(w_-)=\w(w_+)$, the projection 
\[
\Pi:\widehat\MM_{\om=0}(S_{w_-},S_{w_+},\AA^\tau_f,J) \longrightarrow \widehat\NN(\Pi(w_-),\Pi(w_+),f,g)
\]
is bijective modulo the $S^1$-action given in \eqref{eq:rotating_cylinder}. 
\end{prop}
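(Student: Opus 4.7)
The approach mirrors Theorem \ref{thm:bijection}, but uses Lemma \ref{lem:kappa_t_indep} in place of Proposition \ref{prop:uniqueness_rho}, so that the smallness hypothesis on $\tau$ can be dropped. Fix $q \in \widehat\NN(\Pi(w_-), \Pi(w_+), f, g)$; since $q$ is a gradient flow line of $f$ on $M$, it is independent of $t$, so the quantities $\chi_1, \chi_2$ of \eqref{eq:chi} vanish identically and the function
\[
\kappa(s,t) := m\om(\p_s q, j(t,q)\p_s q)
\]
appearing on the right-hand side of \eqref{eq:Kazdan-Warner} is likewise $t$-independent. Writing $(u, \eta)$ with $\Pi(u, \eta) = q$ in the polar form \eqref{eq:u^0_polar}, the Rabinowitz-Floer equation for $(u,\eta)$ reduces to finding $\rho \in \MM_\mathrm{KW}(\tau, \kappa)$.

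The crucial step is to show that for $t$-independent $\kappa$ and arbitrary $\tau > 0$, the space $\MM_\mathrm{KW}(\tau, \kappa)$ consists of exactly one element. I would rerun Frauenfelder's continuation argument from the proof of Proposition \ref{prop:uniqueness_rho}: since $\varepsilon \kappa$ remains $t$-independent for every $\varepsilon \in [0, 1]$, Lemma \ref{lem:kappa_t_indep} ensures that every $\rho \in \MM_\mathrm{KW}(\tau, \varepsilon\kappa)$ is $t$-independent and that $L_\rho$ is an isomorphism at every such $\rho$ (not merely injective). Combined with $\MM_\mathrm{KW}(\tau, 0) = \{\rho_0\}$ where $\rho_0 = \tfrac{1}{4\pi}\ln\tfrac{\tau}{m\pi}$, the implicit function theorem then propagates unique solvability along $\varepsilon \in [0,1]$ and produces a unique $\rho$ at $\varepsilon = 1$.

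Given this $\rho$, reconstruct $\eta$ by integrating $\p_s \eta = m\pi \int_0^1 e^{4\pi\rho}\, dt - \tau$ with asymptotics $\eta_\pm$, then recover $\psi$ by integrating \eqref{eq:rho_psi}; the integrability of this pair of equations is exactly the Kazdan--Warner equation \eqref{eq:Kazdan-Warner} satisfied by $\rho$, and the indeterminate additive constant in $\psi$ is precisely the parameter of the $S^1$-action \eqref{eq:rotating_cylinder}. Finally $u$ is assembled from $(\rho, \psi)$ via \eqref{eq:u^0_polar} and \eqref{eq:u_from_u0}. By construction $w = (u, \eta)$ lies in $\widehat\MM_{\om=0}(S_{w_-}, S_{w_+}, \AA^\tau_f, J)$ and projects to $q$; the topological condition $[\bar u_- \# u \# \bar u_+^{\mathrm{rev}}] = 0$ in $\Gamma_E$ follows from $\w(w_-) = \w(w_+)$ together with the $t$-independence of $q$, using Proposition \ref{prop:winding}.(d). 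Injectivity of $\Pi$ modulo the $S^1$-action is immediate from the uniqueness of $\rho$ and $\eta$.

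The main obstacle is guaranteeing that the continuation argument in $\varepsilon$ survives for all $\tau > 0$: although Lemma \ref{lem:kappa_t_indep} supplies invertibility of the linearization at each solution, a priori $W^{2,2}$-bounds on $\rho - \rho_0$, uniform in $\varepsilon$, are still required to keep $\MM_\mathrm{KW}(\tau, \varepsilon\kappa)$ precompact throughout the homotopy. Since $\kappa$ is $t$-independent, every solution itself is $t$-independent so that \eqref{eq:KW} reduces to a scalar second-order ODE in $s$ with $\kappa \in L^1$; a maximum principle together with energy estimates on this ODE should supply the needed uniform bounds, whereupon the argument runs as above.
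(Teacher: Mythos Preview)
Your proposal is correct and follows essentially the same approach as the paper: replace the smallness hypothesis in Proposition~\ref{prop:uniqueness_rho} by Lemma~\ref{lem:kappa_t_indep} to guarantee that $L_\rho$ is an isomorphism for every $\rho\in\MM_\mathrm{KW}(\tau,\varepsilon\kappa)$ with $t$-independent $\varepsilon\kappa$, then run the continuation argument in $\varepsilon$ exactly as before and reconstruct $(u,\eta)$ as in Theorem~\ref{thm:bijection}. The compactness concern you raise in your last paragraph is legitimate but is not addressed separately in the paper either; it is subsumed in the reference to \cite[Theorem~5.3]{Fra08}, where the required a~priori estimates for the continuation are established (and indeed, as you note, in the $t$-independent case the equation reduces to an ODE in $s$, which makes those estimates easier rather than harder).
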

\begin{proof}
	Lemma \ref{lem:kappa_t_indep} and the proof of Proposition \ref{prop:uniqueness_rho} show that $\MM_\mathrm{KW}(\tau,\kappa)$ consists of a single element for every $\tau>0$ provided that $\kappa$ is $t$-independent. Hence the proof of Theorem \ref{thm:bijection} shows the thesis.
\end{proof}

\begin{cor}\label{cor:rfh_lift}
Let $w=(u,\eta)\in \widehat\MM_{\om=0}(S_{w_-},S_{w_+},\AA^\tau_f,J)$ with $\w(w_-)=\w(w_+)$. Then,
 \[
u(s,t)=P_q^s \exp{\left(2\pi \int_0^s (\cov(u_-)+ m\eta+mf(q))ds + 2\pi i \cov (u_-) t\right)} u(0,0)
\]
where $q=\Pi(w)$, $w_-=([u_-,\bar u_-],\eta_-)$, and $P^s_q$ denotes the parallel transport along $q$ defined in \eqref{eq:parallel}.
\end{cor}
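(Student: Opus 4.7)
Since $w\in\widehat\MM_{\om=0}(S_{w_-},S_{w_+},\AA^\tau_f,J)$, the projection $q=\wp\circ u$ is $t$-independent, i.e.\ a negative gradient flow line of $f$. Under this hypothesis the parallel-transport construction of Section~\ref{sec:gradient} simplifies considerably: one has $\chi_1\equiv\chi_2\equiv 0$ in \eqref{eq:chi} because $\p_tq=0$ and because the capping $\bar q_s$ has $1$-dimensional image (so $\int_{D^2}\bar q_s^*\omega=0$); the parallel transports $P^t_{q(s,\cdot)}$ around the constant loops $q(s,\cdot)$ are trivial; and \eqref{eq:u_from_u0} therefore reduces to $u(s,t)=P^s_q\,u^0(s,t)$.

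Since $\w(w_-)=\w(w_+)$, Proposition~\ref{prop:positivity_of_intersection} gives $u(\R\x S^1)\subset E\setminus\OO_E$, so one may write $u^0$ in polar form
\[
u^0(s,t)=e^{2\pi(\rho(s,t)+i\psi(s,t))}u(0,0),
\]
and by Lemma~\ref{lem:kappa_t_indep}, applied to the $t$-independent inhomogeneous term $\kappa(s)=m\om(\p_sq,j(q)\p_sq)$ (recall $j\in\j_\mathrm{HS}$ is $t$-independent), the function $\rho$ depends only on $s$. The system \eqref{eq:rho_psi} with $\chi_1=\chi_2=0$ then becomes
\[
\p_s\rho(s)-\p_t\psi(s,t)=m(\eta(s)+f(q(s))),\qquad \p_s\psi(s,t)=0.
\]
The second equation forces $\psi=\psi(t)$, and then separation of variables in the first equation shows that $\p_t\psi(t)$ and $\p_s\rho(s)-m(\eta+f(q))$ are equal to a common constant $c\in\R$, giving $\psi(t)=ct+\psi_0$ and $\p_s\rho=c+m(\eta+f(q))$.

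The initial condition $u^0(0,0)=u(0,0)$ (which is immediate from \eqref{eq:u^0} at $(s,t)=(0,0)$) forces $\rho(0)=0$ and $\psi_0\in\Z$, and we normalize $\psi_0=0$. For $\psi(t)\in S^1=\R/\Z$ to be a well-defined $1$-periodic phase $c$ must be an integer, and comparing the resulting asymptotic with $e^{2\pi i\theta_-}u_-(t)=e^{2\pi i\theta_-}e^{2\pi i\cov(u_-)t}u_-(0)$ (cf.~the discussion around \eqref{eq:eta=cov}) identifies $c=\cov(u_-)$. Substituting $\rho(s)=\int_0^s(\cov(u_-)+m\eta+mf(q))\,ds'$ and $\psi(t)=\cov(u_-)t$ into $u(s,t)=P^s_q u^0(s,t)$ yields the claimed formula. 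The only genuine content is the identification $c=\cov(u_-)$ via the asymptotic matching; everything else is bookkeeping once the $t$-independence of $\rho$ is invoked, so I do not foresee any substantial obstacle.
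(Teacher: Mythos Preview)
The proposal is correct and follows essentially the same approach as the paper's proof: both reduce to the parallel-transported map $u^0$, invoke Lemma~\ref{lem:kappa_t_indep} to obtain $t$-independence of $\rho$, deduce $s$-independence of $\psi$ from \eqref{eq:rho_psi} with $\chi_1=\chi_2=0$, and then identify $\psi(t)=\cov(u_-)t$ via the asymptotic condition at $s\to-\infty$. Your use of separation of variables to extract the constant $c$ is a minor stylistic variation; the paper instead first fixes $\psi(t)$ from the asymptotics and then reads off $\p_s\rho$ from the first equation, but the content is identical.
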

\begin{proof}
	As in \eqref{eq:u^0} and \eqref{eq:u^0_polar}, we write 
	\[
	(P^s_q)^{-1} u(s,t) = u^0(s,t)=e^{2\pi(\rho(s,t)+i\psi(s,t))}u(0,0)\,.
	\]
	Then $\rho\in\MM_\mathrm{KW}(\tau, \kappa)$ with $\kappa=m\om(\p_sq,j\p_sq)$.  As mentioned below  \eqref{eq:rho_psi}, since $q$ is independent of $t$, the second equation in \eqref{eq:rho_psi}  reduces to 
	$\p_t\rho+\p_s\psi=0$. By Lemma \ref{lem:kappa_t_indep}, $\rho$ is independent of $t$, which implies that $\psi$ is independent of $s$. Hence we have
\begin{equation}\label{eq:u^0_t}
(P^s_q)^{-1} u(s,t)=u^0(s,t) = e^{2\pi(\rho(s)+i\psi(t))}u(0,0)\,.	
\end{equation}
Moreover $\rho(0)=0$ and $\psi(0)=0$ since $P_q^0$ is the identity map. 
From the asymptotic condition, namely
\[
e^{2\pi \cov (u_-)it} u_-(0)=u_-(t)=\lim_{s\to-\infty}u(s,t)= e^{\frac{1}{2}\ln\frac{\tau}{m\pi} + 2\pi i\psi(t)}  \lim_{s\to -\infty } P^s_q u(0,0)\,,
\]
we deduce that $\psi(t)$ equals $\cov (u_-)t$  up to constant. Then $\psi(0)=0$ implies $\psi(t)=\cov(u_-)t$. Note in this case that $\cov (u_-)=\cov(u_+)$, see Proposition \ref{prop:winding}.(d). Since the first equation in \eqref{eq:rho_psi} simplifies to $\p_s\rho=\cov (u_-)+m(\eta+f(q))$, we obtain
 \[
 u^0(s,t)= \exp{\left(2\pi \int_0^s (\cov(u_-)+ m\eta+mf(q))ds + 2\pi i \cov (u_-) t\right)} u(0,0)
 \]
 and the claim follows from \eqref{eq:u^0_t}. 
\end{proof}

\begin{rem}
	The above results for $t$-independent $q$ can be proved without appealing to the integro Kazdan-Warner equation. Arguing as in \cite[Appendix A]{AF16}, one can see that the operator obtained by linearizing $\overline{\p}^0$ in \eqref{eqn:Floer_eqn_fiber} is always surjective. Moreover the explicit form of $u$ in Corollary \ref{cor:rfh_lift}  can be obtained directly. We write 
	\[
	u^0(s,t)= \sum_{k\in\Z}e^{2\pi ikt}a_k(s)\,,\qquad a_k(s)\in E_{q(0,0)}=\C\,.
	\] 
	Then the first equation in $\overline{\p}^0(u^0,\eta)=0$, see \eqref{eqn:Floer_eqn_fiber}, translates into 
	\[
	\p_s a_k=2\pi (k+m\eta +mf(q))a_k\qquad \forall k\in\Z\,,
	\] 
	and in turn
	\[ 
	a_k(s)= \exp{\left(2\pi \int_0^s (k+m\eta+mf(q))ds\right)} a_k(0)\qquad \forall k\in\Z\,.
	\]  
	Since $a_k$ converges asymptotically, $a_k(0)$ is nonzero only when 
	\[
	0= k + \lim_{s\to \pm\infty} m(\eta+f(q)) = k - \cov (u_\pm)\,,
	\]
	where we used \eqref{eq:eta=cov} in the last equality. Hence,
	\[
	\begin{split}
	u^0(s,t) &=  e^{2\pi i\cov (u_-)t}a_{\cov(u_-)}(s) \\
	& = \exp{\left(2\pi\int_0^s (\cov(u_-)+m\eta+mf(q))ds + 2\pi i\cov (u_-)t\right)} a_{\cov(u_-)}(0)\,,
	\end{split}
	\]
	where $a_{\cov(u_-)}(0)=u^0(0,0)=u(0,0)$.
\end{rem}

\subsection{Transversality with $J\in\JJ_\mathrm{diag}$}\label{sec:trans_diag}

In this section, we prove transversality results for Rabinowitz Floer cylinders for $J\in\JJ_\mathrm{diag}$ connecting critical points with the same winding number.
Let $w_\pm=([u_\pm,\bar u_\pm],\eta_\pm)\in\Crit\AA^\tau_f$ with $\w(w_-)=\w(w_+)$. We fix $q\in\widehat\NN(\Pi(w_-),\Pi(w_+),\a_f,j)$ and define 
\[
\CC_q(S_{w_-},S_{w_+}):= \{(u,\eta)\in \CC (S_{w_-},S_{w_+}) \mid \wp\circ u=q\}\,.
\]
In particular,
\[
T_w\CC_q(S_{w_-},S_{w_+}) = T_w^\mathrm{v}\CC(S_{w_-},S_{w_+}) = W^{1,p}_\delta(\R\x S^1, q^*E)\oplus V_-\oplus V_+\oplus W_\delta^{1,p}(\R,\R) \,.	
\]
We note that the requirement \eqref{eq:u_Gamma_0} is automatically fulfilled since $q$ satisfies the corresponding identity and $\Gamma_E\cong\Gamma_M$. 
We also consider the Banach bundle $\EE^\mathrm{v}\to \CC_q(S_{w_-},S_{w_+})$, see Section \ref{sec:functional}.
For $w=(u,\eta)\in\CC_q(S_{w_-},S_{w_+})$, let $u^0:\R\x S^1\to E_{q(0,0)}$ be the parallel transport of $u$ defined in \eqref{eq:u^0}. We denote the positive and negative asymptotic orbits of $u^0$ by $u^0_+$ and $u^0_-$ respectively, and set
\[
S_{w_\pm^0}:=\{(e^{2\pi i \theta}u_\pm^0,\eta_\pm)\mid \theta\in S^1\}\subset C^\infty(S^1,E_{q(0,0)})\x \R\,.
\]  
Let $\CC^0(S_{w_-^0},S_{w_+^0})$ denote the space of maps 
\[
w^0=(u^0,\eta) \in W^{1,p}_{loc}(\R\x S^1,E_{q(0,0)})\x W^{1,p}_{loc}(\R,\R)
\]
such that 
\[
\eta-\eta_-\in W^{1,p}_\delta((-\infty,0),\R)\,,\quad \eta-\eta_+\in W^{1,p}_\delta((0,+\infty),\R)
\]
and, for some $\theta_\pm\in S^1$,
\[
u^0-e^{2\pi i\theta_-} u^0_-\in W^{1,p}_\delta((-\infty,0)\x S^1, E_{q(0,0)})\,,\quad u^0-e^{2\pi i\theta_+}u^0_+\in W^{1,p}_\delta((0,+\infty)\x S^1, E_{q(0,0)})\,.
\]
Its tangent space is
\[
T_{w^0} \CC^0(S_{(u^0_-,\eta_-)},S_{(u^0_+,\eta_+)}) = W^{1,p}_\delta(\R\x S^1,E_{q(0,0)})\oplus V_-^0 \oplus V_+^0 \oplus  W^{1,p}_\delta(\R,\R)
\]
where $V_\pm^0$ are defined as for $V_\pm$. We also consider the Banach bundle $\EE^0\to \CC^0(S_{w_-^0},S_{w_+^0})$ with fibers 
\[
\EE^\mathrm{0}_{w^0}=L^p_\delta (\R\x S^1, E_{q(0,0)}) \x L^p_\delta(\R,\R)\,.
\]
The two maps $\overline{\p}^\mathrm{v}$ and $\overline{\p}^0$ in \eqref{eqn:Floer_eqn_fiber1} and \eqref{eqn:Floer_eqn_fiber} are related though the diffeomorphism 
\[
\begin{split}
\mathcal P_{q}: \CC_q(S_{w_-},S_{w_+})  &\longrightarrow \CC^0(S_{w_-^0},S_{w_+^0}) \\
w= (u,\eta)&\longmapsto w^0=(u^0,\eta)
\end{split}
\]
induced by parallel transport in \eqref{eq:u^0}. That is, the following diagram commutes:
\begin{equation}\label{eq:conjugate}
	\begin{tikzcd}[row sep=2.5em]
	\EE^\mathrm{v} \arrow{rr}{d\PP_q} && \EE^0 \\
	\CC_q(S_{w_-},S_{w_+}) \arrow{u}{\overline{\p}^\mathrm{v}}  \arrow{rr}{\PP_q} && \CC^0(S_{w_-^0},S_{w_+^0})  \arrow{u}{\overline{\p}^0}
	\end{tikzcd}
\end{equation}
Here note that the differential $d\PP_q$ is indeed defined between $L^p$-sections.

\begin{prop}\label{prop:transversality}
Let $J=\begin{pmatrix}
 i & 0\\
 0 & j
\end{pmatrix}$ for $j\in\mathfrak{j}_\mathrm{reg}$. For a given $K>0$, let $\tau_0>0$ be the associated constant given by Theorem \ref{thm:bijection}. Let $\tau\in(0,\tau_0)$. Then for every $w_\pm\in\Crit\AA^\tau_f$ with 
\[
\w(w_-)=\w(w_+)\,,\qquad \mathfrak{a}_f(\Pi(w_-))-\mathfrak{a}_f(\Pi(w_+))\leq K\,,
\] 
 the moduli space $\widehat\MM(S_{w_-},S_{w_+},\AA^\tau_f,J)$ is cut out transversely, i.e.~$D_w$ is surjective for every $w\in \widehat\MM(S_{w_-},S_{w_+},\AA^\tau_f,J)$.
\end{prop}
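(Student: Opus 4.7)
The plan is to exploit the block upper triangular structure of $D_w$ in \eqref{eq:lin_block} together with the reduction of the vertical equation to the integro Kazdan--Warner equation from Section~\ref{sec:KW_eq}. Since $j\in\j_\mathrm{reg}$, the horizontal part $D_w^\mathrm{h}=\mathfrak{d}_q$ is surjective by property (i) in Section~\ref{sec:Ham_Floer}, so the upper triangular form reduces surjectivity of $D_w$ to that of the vertical part $D_w^\mathrm{v}$. Via the parallel transport $\mathcal P_q$ the commutative diagram \eqref{eq:conjugate} conjugates $D_w^\mathrm{v}$ to the operator $D_{w^0}^{0}$ obtained by linearizing $\overline\partial^{0}$ in \eqref{eqn:Floer_eqn_fiber} at $w^0=\mathcal P_q(w)$; hence it is enough to prove that $D_{w^0}^{0}$ is surjective. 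By Lemma~\ref{lem:index_vert} this operator has Fredholm index one and its kernel already contains the generator $(R(u^0),0)$ of the $S^1$-action, so the claim follows once we verify $\dim\ker D_{w^0}^{0}\le 1$.

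For the kernel computation I would use the polar description $u^0(s,t)=e^{2\pi(\rho+i\psi)}u(0,0)$ from \eqref{eq:u^0_polar}, valid on all of $\R\times S^1$ because $\w(w_-)=\w(w_+)$ forces $u$ to avoid $\mathcal O_E$ by Proposition~\ref{prop:positivity_of_intersection}. Writing a tangent vector $(\hat u^0,\hat\eta)$ as $\hat u^0=2\pi(\hat\rho+i\hat\psi)u^0$ with $\hat\rho,\hat\psi$ real-valued of the appropriate weighted regularity, and linearizing \eqref{eq:rho_psi} together with the equation $\partial_s\eta=m\pi\int_0^1 e^{4\pi\rho}dt-\tau$ for $\eta$, one obtains the system
\[
\partial_s\hat\rho-\partial_t\hat\psi=m\hat\eta,\qquad \partial_t\hat\rho+\partial_s\hat\psi=0,\qquad \partial_s\hat\eta=4m\pi^2\int_0^1 e^{4\pi\rho}\hat\rho\,dt.
\]
Taking $\partial_s$ of the first equation minus $\partial_t$ of the second and substituting the third gives exactly $L_\rho\hat\rho=0$ with $L_\rho$ as in \eqref{eq:linearization_of_KW}. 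Because $\tau<\tau_0$ was chosen in Theorem~\ref{thm:bijection} in terms of the same $K$ and because $\tfrac1m\|\kappa\|_{L^1}=\a_f(\Pi(w_-))-\a_f(\Pi(w_+))\le K$ for $\kappa=m\omega(\partial_sq,j\partial_sq)$, the argument of Proposition~\ref{prop:uniqueness_rho} applies and $L_\rho$ has trivial kernel. Hence $\hat\rho=0$; the third linearized equation then forces $\partial_s\hat\eta=0$, and the weighted integrability $\hat\eta\in W^{1,p}_\delta(\R,\R)$ gives $\hat\eta=0$; finally the first two equations reduce to $\partial_s\hat\psi=\partial_t\hat\psi=0$, so $\hat\psi$ is a single constant and $(\hat u^0,\hat\eta)\in\R\cdot(R(u^0),0)$.

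The main obstacle is precisely the invertibility of $L_\rho$: this is what dictates the smallness assumption $\tau<\tau_0$, since Proposition~\ref{prop:uniqueness_rho} and hence triviality of $\ker L_\rho$ are only established under a bound of the form $\tau\le c_1$, $\tau^3\|\kappa\|_{L^1}^2\le c_2$. The remaining verifications (that the polar parametrization descends to the relevant weighted Sobolev spaces, that the conjugation by $d\mathcal P_q$ is an isomorphism of the $L^p_\delta$-completions, and that constant shifts of $\hat\psi$ are genuinely captured by the $V_\pm$ summands in \eqref{eq:tangent_space}) are routine and can be checked by unwinding the definitions.
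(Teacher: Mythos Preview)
Your proposal is correct and follows essentially the same argument as the paper: reduce to surjectivity of the vertical part via the block form, conjugate by parallel transport to $D_{w^0}^0$, and use that index equals one together with triviality of $\ker L_\rho$ (for $\tau<\tau_0$) to pin down the kernel as $\R\cdot(R(u^0),0)$. One small slip: to obtain $L_\rho\hat\rho=0$ you should \emph{add} $\partial_s$ of the first linearized equation to $\partial_t$ of the second (not subtract), so that the mixed $\hat\psi$-terms cancel and $\Delta\hat\rho=m\partial_s\hat\eta$ emerges.
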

\begin{proof}
We recall the horizontal-vertical splitting of $D_w$ in \eqref{eq:lin_block}, and the horizontal part $D_w^\textrm{h}=\mathfrak{d}_{q}$ is surjective since $j\in\mathfrak{j}_\mathrm{reg}$. Thus it suffices to show that $D_w^\textrm{v}$, which we view as the vertical differential of $\overline{\p}^\mathrm{v}$ at $w$, is surjective. Due to \eqref{eq:conjugate}, $D_w^\mathrm{v}$ is conjugated to the the vertical differential $D_{w^0}^0$ of $\overline{\p}^0$ at $w^0=(u^0,\eta)$, and we will show that $D^0_{w^0}$ is surjective. The precise formula of $D^0_{w^0}:T_{w^0} \CC^0(S_{w^0_-},S_{w^0_+}) \to  \EE^0_{w^0}$ is 
\[
D^0_{w^0}(\hat u,\hat\eta)=
\left(\;
\begin{aligned}
&\p_s\hat u+i\p_t\hat u+2\pi\big((\chi_2-m\eta-mf(q)\mathrm{id}-\chi_1i\big)\hat u-2m\pi\hat\eta u^0\\[.5ex]
&\p_s\hat\eta-2m\pi\int_0^1\langle u^0,\hat u\rangle dt
\end{aligned}\;\right)\,,
\]
where $\langle \cdot,\cdot\rangle$ denotes the Hermitian metric on the bundle $\wp:E\to M$. By Lemma \ref{lem:index_vert}, $D_{w^0}^0$ has index one. Thus surjectivity of $D_{w^0}^0$ is equivalent to $\dim\ker D_{w^0}^0=1$. Suppose $(\hat u,\hat\eta)\in\ker D_{w^0}^0$. We  write $u^0=e^{2\pi(\rho+i\psi)}$ as in \eqref{eq:u^0_polar} and $\hat u=2\pi(\hat\rho+i\hat\psi)e^{2\pi(\rho+i\psi)}$, where 
\[
\hat\rho\in W^{1,p}_\delta(\R\x S^1,\R)\,,\qquad \hat\psi\in W^{1,p}_{loc}(\R\x S^1,\R)
\]
such that there exist constants $\hat\psi_\pm\in\R$ satisfying 
\[
\hat\psi-\hat\psi_-\in W^{1,p}_\delta((-\infty,0)\x S^1,\R)\,,\qquad \hat\psi-\hat\psi_+\in W^{1,p}_\delta((0,+\infty)\x S^1,\R)\,.
\]
Then we obtain
\begin{equation}\label{eq:D_{w^0}^0}
	0=D_{w^0}^0(\hat u,\hat\eta)=
\left(\;
\begin{aligned}
& 2\pi e^{2\pi(\rho+i\psi)}\big(\p_s\hat\rho-\p_t\hat\psi-m\hat\eta+i(\p_s\hat\psi+\p_t\hat\rho)\big)\\[.5ex]
&\p_s\hat\eta-4m\pi^2\int_0^1e^{4\pi\rho}\hat\rho\, dt
\end{aligned}\;\right)\,,
\end{equation}
where we used $\overline{\p}^0w^0=0$, see \eqref{eq:rho_psi}. This in particular yields 
\begin{equation}\label{eq:hat_phi_psi}
\left\{ 
\begin{aligned}
	&\;\p_s\hat\rho-\p_t\hat\psi-m\hat\eta=0\\[0.5ex]
	&\;\p_s\hat\psi+\p_t\hat\rho=0\,.
\end{aligned}
\right.	
\end{equation}
By elliptic regularity, $(\hat u,\hat \eta)$ and  $(\hat\rho,\hat\psi)$ are smooth. 
Differentiating the first equation in \eqref{eq:hat_phi_psi} with respect to $s$ and the second one with respect to $t$ and summing them up, we obtain
\[
0=\Delta\hat\rho-m\p_s\hat\eta=\Delta\hat\rho-4m^2\pi^2\int_0^1e^{4\pi\rho}\hat\rho\, dt\,.
\]
Thus $\hat\rho\in\ker L_\rho$, see \eqref{eq:linearization_of_KW}. 
As mentioned in the proof of Proposition \ref{prop:uniqueness_rho}, if $\tau$ is as small as in Theorem \ref{thm:bijection}, then $\hat\rho=0$. Therefore $\hat\eta=0$ and $\p_s\hat\psi=\p_t\hat\psi =0$ by \eqref{eq:D_{w^0}^0} and \eqref{eq:hat_phi_psi}. This proves that $\hat\psi=\hat\psi_-=\hat\psi_+$ is constant, and $\ker D^0_{w^0}$ is spanned by $(\hat\rho,\hat\psi)=(0,1)$. This corresponds to the vector field $R$ along $u^0$ which, for symmetry reasons, necessarily lies in  $\ker D^0_{w^0}$, see Lemma \ref{lem:index_vert}. The proof is complete.
\end{proof}

 Let $J=\begin{pmatrix}
 i & 0\\
 0 & j
\end{pmatrix}$ for $j\in\mathfrak{j}_\mathrm{HS}$. We denote by  
\begin{equation}\label{eq:simple_E}
 \widehat\MM_s(S_{w_-},S_{w_+},\AA^\tau_f,J)\subset \widehat\MM(S_{w_-},S_{w_+},\AA^\tau_f,J)
\end{equation}
 the space consisting of $w=(u,\eta)$ with simple $\Pi(w)$, i.e.~$\Pi(w)\in\widehat\NN_s(\Pi(w_-),\Pi(w_+),\mathfrak{a}_f,j)$ defined in Section \ref{sec:time_indep_j}.

\begin{prop}\label{prop:transversality2}
Let $J=\begin{pmatrix}
 i & 0\\
 0 & j
\end{pmatrix}$ for $j\in\mathfrak{j}_\mathrm{HS}$. For a given $K>0$, let $\tau_0>0$ be the associated constant given by Theorem \ref{thm:bijection}. Let $\tau\in(0,\tau_0)$. Then for every $w_\pm\in\Crit\AA^\tau_f$ with 
\[
\w(w_-)=\w(w_+)\,,\qquad \mathfrak{a}_{f}(\Pi(w_-))-\mathfrak{a}_{f}(\Pi(w_+))\leq K\,,
\] 
and for every $w\in \widehat\MM_s(S_{w_-},S_{w_+},\AA^\tau_{f},J)$, the operator $D_w$ is surjective. 
\end{prop}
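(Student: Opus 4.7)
The proof will proceed in close parallel to that of Proposition \ref{prop:transversality}, exploiting the fact that the vertical analysis is independent of the choice of $j$ and only the horizontal surjectivity requires a different input.

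The plan is to use the same horizontal--vertical block decomposition of $D_w$ from \eqref{eq:lin_block}. For $w=(u,\eta)\in\widehat\MM_s(S_{w_-},S_{w_+},\AA^\tau_f,J)$ we have $D_w=\begin{pmatrix} D_w^{\mathrm v} & D_w^{\mathrm{hv}} \\ 0 & D_w^{\mathrm h}\end{pmatrix}$, so surjectivity of $D_w$ follows once both diagonal blocks are surjective. First I would handle the horizontal block: by definition of $\widehat\MM_s$, the projected cylinder $q=\Pi(w)$ lies in $\widehat\NN_s(\Pi(w_-),\Pi(w_+),\a_f,j)$, and $D_w^{\mathrm h}=\mathfrak d_q$. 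Invoking Proposition \ref{prop:j_HS}(a), which states precisely that for $j\in\mathfrak j_{\mathrm{HS}}(f)$ the linearized Floer operator is surjective at \emph{simple} $j$-holomorphic cylinders, yields surjectivity of $D_w^{\mathrm h}$. Note that the condition $\a_f(\Pi(w_-))-\a_f(\Pi(w_+))\leq K$ does not enter this step; it will only be used below.

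Next I would show surjectivity of the vertical block $D_w^{\mathrm v}$. This is exactly the content of the proof of Proposition \ref{prop:transversality}: via the parallel-transport diffeomorphism $\mathcal P_q$ of \eqref{eq:conjugate}, $D_w^{\mathrm v}$ is conjugated to $D^0_{w^0}$, and writing $u^0=e^{2\pi(\rho+i\psi)}$ and tangent vectors as $\hat u=2\pi(\hat\rho+i\hat\psi)e^{2\pi(\rho+i\psi)}$ one derives that any element of $\ker D^0_{w^0}$ yields $\hat\rho\in\ker L_\rho$ where $L_\rho$ is the operator in \eqref{eq:linearization_of_KW} with $\kappa=m\om(\p_sq,j_t\p_sq)$. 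Applying Proposition \ref{prop:uniqueness_rho} and the hypothesis $\|\kappa\|_{L^1}=m(\a_f(\Pi(w_-))-\a_f(\Pi(w_+)))\leq mK$, the constant $\tau_0=\tau_0(K)$ is chosen so that $L_\rho$ has trivial kernel for $\tau\in(0,\tau_0)$; as in Proposition \ref{prop:transversality} this forces $\hat\rho=0$, $\hat\eta=0$, and $\hat\psi$ constant, so $\ker D^0_{w^0}=\R\cdot(R(u^0),0)$ is one-dimensional. Since $\ind D^0_{w^0}=1$ by Lemma \ref{lem:index_vert}, $D^0_{w^0}$ and hence $D_w^{\mathrm v}$ is surjective.

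The only genuinely new ingredient compared with Proposition \ref{prop:transversality} is the invocation of Proposition \ref{prop:j_HS}(a) in place of $j\in\mathfrak j_{\mathrm{reg}}(f)$, and the simplicity hypothesis on $\Pi(w)$ is precisely what makes that invocation legitimate; since this is already built into the definition of $\widehat\MM_s$, there is no real obstacle here. The main subtlety worth checking is that the constant $\tau_0$ delivered by Theorem \ref{thm:bijection} (via Proposition \ref{prop:uniqueness_rho}) depends only on the $L^1$-bound on $\kappa$, which is controlled purely by $K$ and hence is uniform across the moduli space under consideration; this uniformity is what allows a single $\tau_0$ to work for all $w_\pm$ in the stated action window.
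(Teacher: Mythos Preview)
Your proposal is correct and follows essentially the same approach as the paper: the paper's proof invokes Proposition~\ref{prop:j_HS}(a) for surjectivity of $D_w^{\mathrm h}$ at simple $q=\Pi(w)$, and refers back to the proof of Proposition~\ref{prop:transversality} for surjectivity of $D_w^{\mathrm v}$. You have simply spelled out the latter reference in full detail.
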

\begin{proof}
Since $j\in\mathfrak{j}_\mathrm{HS}$ and $\Pi(w)=q$ is simple, the horizontal part $D_w^\mathrm{h}=\mathfrak{d}_q$ is surjective, see Proposition \ref{prop:j_HS}.(a).
As shown in the proof of Proposition \ref{prop:transversality}, the vertical part $D^\mathrm{v}_{w}$ is also surjective. Hence $D_w$ is surjective. 
\end{proof}

\begin{prop}\label{prop:transversality3}
Let $J=\begin{pmatrix}
 i & 0\\
 0 & j
\end{pmatrix}$ for $j\in\mathfrak{j}_\mathrm{HS}$, and let $\tau>0$ be arbitrary. For every  $w_\pm\in\Crit\AA^\tau_f$ with $\w(w_-)=\w(w_+)$ and $w\in \widehat\MM(S_{w_-},S_{w_+},\AA^\tau_{f},J)$ with $t$-independent $\Pi(w)$, the operator $D_w$ is surjective.
\end{prop}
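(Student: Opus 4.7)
The plan is to follow the template established in Propositions \ref{prop:transversality} and \ref{prop:transversality2}, replacing the smallness condition on $\tau$ (needed for Proposition \ref{prop:uniqueness_rho}) by the $t$-independence input (which activates Lemma \ref{lem:kappa_t_indep}). Concretely, we decompose $D_w$ using \eqref{eq:lin_block} into its vertical and horizontal parts and treat each separately.

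First, I would handle the horizontal part. Since $q=\Pi(w)$ is $t$-independent, it is a negative gradient flow line of $-f$ with respect to $g=\om(\cdot,j\cdot)$. Because $j\in\mathfrak{j}_\mathrm{HS}$, Proposition \ref{prop:j_HS}.(c) guarantees that the pair $(f,g)$ is Morse--Smale and that the operator $\mathfrak{d}_q=D_w^\mathrm{h}$ obtained by linearizing the Floer equation at $q$ is surjective. Hence, by the block-triangular form in \eqref{eq:lin_block}, it suffices to show that the vertical part $D_w^\mathrm{v}$ is surjective.

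Next, for the vertical part I would use parallel transport exactly as in the proof of Proposition \ref{prop:transversality}: under the diffeomorphism $\PP_q$ of \eqref{eq:conjugate}, the operator $D_w^\mathrm{v}$ is conjugated to $D_{w^0}^0$, and it is enough to prove $\dim\ker D_{w^0}^0=1$ (recall $\ind D_w^\mathrm{v}=1$ by Lemma \ref{lem:index_vert}). Writing $u^0=e^{2\pi(\rho+i\psi)}$ and $\hat u=2\pi(\hat\rho+i\hat\psi)e^{2\pi(\rho+i\psi)}$, the equation $D_{w^0}^0(\hat u,\hat\eta)=0$ again yields
\[
\p_s\hat\rho-\p_t\hat\psi-m\hat\eta=0,\qquad \p_s\hat\psi+\p_t\hat\rho=0,\qquad \p_s\hat\eta=4m\pi^2\int_0^1e^{4\pi\rho}\hat\rho\,dt,
\]
from which one derives $L_\rho\hat\rho=0$ for the operator in \eqref{eq:linearization_of_KW}.

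The key point, and the only difference with Proposition \ref{prop:transversality}, is now that we do \emph{not} need Proposition \ref{prop:uniqueness_rho} to kill $\hat\rho$. Since $q$ is $t$-independent, the inhomogeneous term $\kappa=m\om(\p_sq,j\p_sq)$ in the integro Kazdan--Warner equation \eqref{eq:Kazdan-Warner} is $t$-independent, so by Lemma \ref{lem:kappa_t_indep} the associated solution $\rho$ is $t$-independent and $L_\rho$ is an isomorphism for any $\tau>0$. Therefore $\hat\rho=0$. Substituting back gives $\hat\eta=0$ and $\p_s\hat\psi=\p_t\hat\psi=0$, so $\hat\psi$ is a constant, and $\ker D_{w^0}^0$ is spanned by $(\hat\rho,\hat\psi)=(0,1)$, which corresponds to the fundamental vector field $R$ along $u^0$ already known to lie in the kernel by Lemma \ref{lem:index_vert}. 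This proves $\dim\ker D_{w^0}^0=1$, hence surjectivity of $D_w^\mathrm{v}$ and of $D_w$.

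I do not anticipate a genuine obstacle: the argument is essentially a bookkeeping exercise combining Lemma \ref{lem:kappa_t_indep} with the two-step (horizontal/vertical) scheme of the previous propositions. The only subtlety worth double-checking is that the $t$-independence of $q$ passes through the parallel transport construction \eqref{eq:u^0}--\eqref{eq:chi} and makes $\chi_1,\chi_2$ both $t$-independent (indeed $\chi_1=\chi_2=0$ when $q$ is $t$-independent, as remarked after \eqref{eq:rho_psi}), which is precisely what makes $\kappa$ $t$-independent and thus activates Lemma \ref{lem:kappa_t_indep} uniformly in $\tau$.
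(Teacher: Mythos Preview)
Your proposal is correct and follows exactly the paper's approach: the paper's proof is simply ``$D_w^\mathrm{h}$ is surjective by Proposition \ref{prop:j_HS}.(c); for $D_w^\mathrm{v}$, run the proof of Proposition \ref{prop:transversality} using Lemma \ref{lem:kappa_t_indep} in place of the smallness of $\tau$,'' and you have unpacked precisely this.
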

\begin{proof}
	The horizontal part $D_w^\mathrm{h}$ is surjective due to Proposition \ref{prop:j_HS}.(c). It suffices to show that the vertical part $D_w^\mathrm{v}$ is surjective. This follows from the proof of Proposition \ref{prop:transversality} together with Lemma \ref{lem:kappa_t_indep}.
\end{proof}

The following lemma ensures that moduli spaces we will use are cut out transversely due to Proposition \ref{prop:transversality2}.  
\begin{lem}\label{lem:simple_w}
Suppose that every $A\in\pi_2(M)$ with $\om(A)\neq 0$ satisfies either $c_1^{TM}(A)\geq1$ or $c_1^{TM}(A)\leq-\frac{1}{2}\dim M$. 
	Let $J=\begin{pmatrix}
 i & 0\\
 0 & j
\end{pmatrix}$ with $j\in\mathfrak{j}_\mathrm{HS}$, let $\tau>0$ be arbitrary, and let $w_\pm\in\Crit\AA^\tau_f$ with $\mu_\RFH(w_-)-\mu_\RFH(w_+)\leq 3$.
Then, for every $w\in\widehat\MM(S_{w_-},S_{w_+},\AA^\tau_{f},J)$, the Floer cylinder $\Pi(w)$ is either $t$-independent or simple. In the latter case, $c_1^{TM}([\Pi(w)])\geq1$. 
\end{lem}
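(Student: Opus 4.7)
The plan is to reduce everything to a dimensional argument on \emph{simple} Floer cylinders in $M$, exploiting that $\Pi$ intertwines the Rabinowitz--Floer equation on $E$ with the Floer equation on $M$ (since $J$ is diagonal and $j$ is $t$-independent for $J\in\JJ_\mathrm{diag}$), together with positivity of intersection with the zero section $\OO_E$. If $q:=\Pi(w)$ is $t$-independent there is nothing to prove, so I will assume $q$ is $t$-dependent; by \eqref{eq:om-energy} this means $\om([q])>0$, and the hypothesis then yields the dichotomy $c_1^{TM}([q])\geq1$ or $c_1^{TM}([q])\leq-\tfrac12\dim M$.

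The first step is to convert the RFH-index bound into an FH-index bound. Combining \eqref{eq:indices_and_winding} with Proposition \ref{prop:positivity_of_intersection} ($\w(w_+)-\w(w_-)\geq0$) gives
\[
\mu_\FH(\Pi(w_-))-\mu_\FH(\Pi(w_+))\;\leq\;\mu_\RFH(w_-)-\mu_\RFH(w_+)\;\leq\;3,
\]
which by \eqref{eq:ind_M} reads $\mu_{-f}(q_-)-\mu_{-f}(q_+)+2c_1^{TM}([q])\leq3$. This is the workhorse inequality.

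Next I would prove simplicity of $q$ by contradiction. If $q$ is a $\nu$-fold cover of a simple $q_\mathrm{sim}$ with $\nu\geq2$, then $q_\mathrm{sim}$ satisfies the Floer equation for $\tfrac{1}{\nu}f$ in place of $f$ (as in the proof of Proposition \ref{prop:j_HS}.(h)) and is itself $t$-dependent, so $\om([q_\mathrm{sim}])>0$ and the hypothesis on $M$ also applies to $[q_\mathrm{sim}]=\tfrac{1}{\nu}[q]$. Invoking Proposition \ref{prop:j_HS}.(a) for $\tfrac{1}{\nu}f$ (which only costs intersecting countably many residual subsets of $\j_\mathrm{HS}$) and the freeness of the $\R\x S^1$-action on simple curves, the dimension $\mu_{-f}(q_-)-\mu_{-f}(q_+)+\tfrac{2}{\nu}c_1^{TM}([q])$ is at least $2$. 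Substituting the workhorse inequality produces $c_1^{TM}([q])(1-\tfrac{1}{\nu})\leq\tfrac12$, hence $c_1^{TM}([q])\leq\tfrac{\nu}{2(\nu-1)}\leq1$. Under the dichotomy, either $c_1^{TM}([q])=1$, which forces $c_1^{TM}([q_\mathrm{sim}])=1/\nu\notin\Z$ and is absurd, or $c_1^{TM}([q])\leq-\tfrac12\dim M$. In the latter case the dichotomy applied to $[q_\mathrm{sim}]$ gives $c_1^{TM}([q_\mathrm{sim}])\leq-\tfrac12\dim M$, and the $\geq2$ dimension inequality yields $\mu_{-f}(q_-)-\mu_{-f}(q_+)\geq2+\dim M$, contradicting $\mu_{-f}(q_-)-\mu_{-f}(q_+)\leq\dim M$.

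Once $q$ is known to be simple, the same $\R\x S^1$-freeness applied to $\widehat\NN_s(\Pi(w_-),\Pi(w_+),\a_f,j)$ forces $\mu_{-f}(q_-)-\mu_{-f}(q_+)+2c_1^{TM}([q])\geq2$, so $c_1^{TM}([q])\geq1-\tfrac12\dim M$; since the integer interval $[1-\tfrac12\dim M,-\tfrac12\dim M]$ is empty, the dichotomy concludes $c_1^{TM}([q])\geq1$. The main technical wrinkle is the transversality for $\tfrac{1}{\nu}f$ rather than $f$, but it is handled by noting that only finitely many $\nu$ can arise given the index bound (through $\nu\,|\,c_1^{TM}([q])$), so intersecting the corresponding residual subsets of $\j_\mathrm{HS}$ is harmless.
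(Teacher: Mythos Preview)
Your argument is correct and rests on the same two ingredients as the paper's proof: the index conversion $\mu_\FH(\Pi(w_-))-\mu_\FH(\Pi(w_+))\leq 3$ via positivity of intersection, and the dimension-$\geq 2$ constraint coming from the free $\R\times S^1$-action on the underlying simple cylinder $q_\mathrm{sim}$. The paper organizes the case split more compactly by applying the Chern-number dichotomy directly to $[q_\mathrm{sim}]$ rather than to $[q]$: the case $c_1^{TM}([q_\mathrm{sim}])\leq-\tfrac12\dim M$ is excluded outright (Morse-index bounds give dimension $\leq\dim M-\dim M=0$), and in the remaining case $c_1^{TM}([q_\mathrm{sim}])\geq1$ non-simplicity would force $c_1^{TM}([q])\geq2$, which combined with your workhorse inequality gives dimension $\leq 1$; so simplicity and $c_1^{TM}([q])\geq1$ drop out simultaneously, without your separate final paragraph. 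One small correction to your closing remark: the justification ``only finitely many $\nu$ can arise given the index bound'' does not hold uniformly across all $w_\pm$ (in the negative-Chern branch there is no a priori bound on $|c_1^{TM}([q])|$, hence none on $\nu$); the clean fix is simply to take $j$ in the countable intersection $\bigcap_{\nu\in\N}\j_\mathrm{HS}(\tfrac{1}{\nu}f)$, which is exactly how the paper implicitly arranges genericity when proving Proposition~\ref{prop:j_HS}.(h).
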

\begin{proof}
We first observe that $\w(w_-)\leq\w(w_+)$ by Proposition \ref{prop:positivity_of_intersection}, and thus 
\begin{equation}\label{eq:cpt_ind}
\mu_\FH(\Pi(w_-))-\mu_\FH(\Pi(w_+))	= \mu_\RFH(w_-)-\mu_\RFH(w_+) - 2(\w(w_+)-\w(w_-))\leq 3
\end{equation}
due to \eqref{eq:indices_and_winding}. 
	Suppose that $q=\Pi(w)$ depends on $t$. We denote by $q_\mathrm{sim}$ the underlying simple cylinder of $q$ defined in \eqref{eq:simple}. Then $\om([q_\mathrm{sim}])>0$ by \eqref{eq:om-energy}. If $c_1^{TM}([q_\mathrm{sim}])\leq-\frac{1}{2}\dim M$, the moduli space containing $q_\mathrm{sim}$ has dimension at most zero, see Proposition \ref{prop:j_HS}.(a). This is absurd due to the free $\R\times S^1$-action on $q_\mathrm{sim}$, see \eqref{eq:RxS1_action}. Next, suppose that $c_1^{TM}([q_\mathrm{sim}])\geq1$. If $q$ is not simple, then $c_1^{TM}([q])\geq2$ holds. This implies that the moduli space containing $q_\mathrm{sim}$ has dimension at most 1 by Proposition \ref{prop:j_HS}.(a) and \eqref{eq:cpt_ind}. This contradicts again  the existence of the free $\R\times S^1$-action on $q_\mathrm{sim}$. Alternatively one can  apply Proposition \ref{prop:j_HS}.(d) to derive a contradiction that $q_\mathrm{sim}$ is $t$-independent.
\end{proof}

\begin{rem}
The surjectivity of $D_w$ as in Proposition \ref{prop:transversality}, Proposition \ref{prop:transversality2}, and Proposition \ref{prop:transversality3} implies that the corresponding moduli spaces are smooth manifolds of dimension $\mu_\RFH(w_-)-\mu_\RFH(w_+)+1$, cf.~Proposition \ref{prop:transversality_cylinder}. 	
\end{rem}

\subsection{Compactness with $J\in\JJ_\mathrm{diag}\cup\JJ^\BB_\mathrm{reg}$}\label{sec:compact_J_diag}
In this section we prove a compactness result for $J\in\JJ_\mathrm{diag}\cup\JJ^\BB_\mathrm{reg}$ similar to the corresponding compactness result for $J\in\JJ_\mathrm{reg}$ in Section \ref{sec:compactness_J}. In contrast to the case of $J\in\JJ_\mathrm{reg}$, here we do not need any assumption on winding number. Conditions (i) and (ii) in the proposition below are the standing assumptions (A1) and (A2). 

\begin{prop}\label{prop:compactness2}
We assume that one of the following condition is fulfilled.
\begin{enumerate}[(i)]
	\item $\om$ vanishes on $\pi_2(M)$.
	\item $c_1^{TM}=\lambda\om$ on $\pi_2(M)$ for some $\lambda\in\R$ satisfying either $\lambda\nu\leq -\frac{1}{2}\dim M$ or $\lambda\nu\geq 2$ where $\nu\in\N$ is defined by $\om(\pi_2(M))=\nu\Z$. 
\end{enumerate}
Let $J\in\JJ_\mathrm{diag}\cup\JJ^\BB_\mathrm{reg}$. 
Then for every $\tau>0$ and $w_\pm\in\Crit \AA^\tau_f$ satisfying
\[
\mu_\RFH(w_-)-\mu_\RFH(w_+)\leq 3\,,
\]  
the moduli space $\widehat\MM(S_{w_-},S_{w_+},\AA^\tau_{f},J)$ is compact in the $C^\infty_{loc}$-topology. 
\end{prop}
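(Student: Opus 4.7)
I will follow the three-step strategy from the proof of Proposition \ref{prop:compactness1}: a uniform $L^\infty$-bound on the cylinder components, a uniform bound on the Lagrange multiplier $\eta$, and the exclusion of pseudo-holomorphic sphere bubbling. For every $J\in\JJ_\mathrm{diag}\cup\JJ^\BB_\mathrm{reg}$, $J$ is $\Omega$-compatible and diagonal outside the compact region in \eqref{eq:support_B}, so the maximum-principle argument in \cite[Proposition 6.2]{Fra08} applied to $\rho=\log r\circ u$ (cf.~\eqref{eq:Kazdan-Warner}) yields an $L^\infty$-bound on $u$ depending only on the asymptotic data. The $\eta$-bound is standard: the action bounds $\AA^\tau_f(w_+)\leq\AA^\tau_f(w)\leq\AA^\tau_f(w_-)$ along any sequence in the moduli space, combined with \eqref{eq:eta=cov} and \eqref{eq:full_action-winding}, control $\eta$ uniformly.

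The essential point, and what distinguishes this statement from Proposition \ref{prop:compactness1}, is that no winding number restriction is imposed and bubbling must be ruled out directly. The key observation is that the projection $\wp:E\to M$ is $(J,j)$-holomorphic for every $J\in\JJ_\mathrm{diag}\cup\JJ^\BB_\mathrm{reg}$, and that the restriction of $J$ to any fiber of $\wp$ is the standard complex structure $i$ on $\C$. Hence any non-constant $J$-holomorphic bubble $v:S^2\to E$ projects to a $j$-holomorphic sphere $\wp\circ v:S^2\to M$ which is itself non-constant (otherwise $v$ would be a holomorphic map $S^2\to\C$, hence constant by Liouville). Under assumption (i), $\om$ vanishes on $\pi_2(M)$ and there are no non-constant $j$-holomorphic spheres in $M$; hence no bubbling occurs. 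Under assumption (ii) with $\lambda\nu\leq -\tfrac12\dim M$, the underlying simple curve of the projected bubble would satisfy $c_1^{TM}\leq -\tfrac12\dim M$, and such simple $j$-spheres do not exist for $j\in\j_\mathrm{reg}$ resp.~$j\in\j_\mathrm{HS}$ by property (ii) in Section~\ref{sec:Ham_Floer} resp.~Proposition \ref{prop:j_HS}.(b), since $\NN(A,j)/\mathrm{PSL}(2;\C)$ would then have negative dimension.

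The remaining subcase, $\lambda\nu\geq 2$, is handled by performing the index counting on $M$ rather than on $E$. The projected sequence $\Pi(w_n)=\wp\circ u_n$ lies in $\widehat\NN(\Pi(w_-),\Pi(w_+),\a_f,j)$, whose virtual dimension, by combining \eqref{eq:indices_and_winding} with the inequality $\w(w_+)-\w(w_-)\geq 0$ from Proposition \ref{prop:positivity_of_intersection}, satisfies
\[
\mu_\FH(\Pi(w_-))-\mu_\FH(\Pi(w_+)) \;=\; \mu_\RFH(w_-)-\mu_\RFH(w_+)-2\big(\w(w_+)-\w(w_-)\big) \;\leq\; 3.
\]
On the other hand, every projected bubble $\wp\circ v$ satisfies $c_1^{TM}([\wp\circ v])\geq\lambda\nu\geq 2$; thus each bubble in any Gromov--Floer limit of $\Pi(w_n)$ contributes at least $4$ to the sum $2\sum_\alpha c_1^{TM}([\wp\circ v_\alpha])$, which is bounded above by the Floer index of the original moduli space, i.e.~by $3$. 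This rules out all bubbles in the projected sequence, and therefore in the original sequence, completing the $C^\infty_\mathrm{loc}$-compactness.

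The main obstacle is the last subcase: when $\lambda<m$, a direct index computation in $E$ does not exclude bubbles, since $c_1^{TE}=(\lambda-m)\wp^*\om$ may be negative. The resolution is to transfer the bubbling analysis from $E$ to $M$ via the $(J,j)$-holomorphicity of $\wp$, and to use Proposition \ref{prop:positivity_of_intersection} to control the relevant $\mu_\FH$-difference by the $\mu_\RFH$-difference even without a winding number hypothesis.
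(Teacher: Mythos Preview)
Your proposal is correct and follows essentially the same strategy as the paper: project to $M$ via the $(J,j)$-holomorphic map $\wp$, use Proposition~\ref{prop:positivity_of_intersection} to bound $\mu_\FH(\Pi(w_-))-\mu_\FH(\Pi(w_+))\leq 3$, and then exclude $j$-holomorphic bubbles in $M$ by a dimension count under either hypothesis in~(ii). Two minor remarks: elements of $\JJ^\BB_\mathrm{reg}$ are only $\Omega$-tame, not $\Omega$-compatible (this does not affect the maximum-principle argument), and in the subcase $j\in\j_\mathrm{HS}$ with $\lambda\nu\geq 2$ the paper invokes Lemma~\ref{lem:simple_w} to ensure the limiting Floer cylinders in $M$ are simple or $t$-independent before applying the index bound, a point you should make explicit.
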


\begin{proof}
Let $w_\nu=(u_\nu,\eta_\nu)$, $\nu\in\N$ be a sequence in $\widehat\MM(S_{w_-},S_{w_+},\AA^\tau_f,J)$. As  in the proof of Proposition \ref{prop:compactness1}, a uniform $L^\infty$-bound on $\eta_\nu$ is established in \cite[Proposition 6.4]{Fra08}, and it suffices to prove that the derivative of $u_\nu$ is uniformly bounded. Once we show that the horizontal derivative of $u_\nu$ is uniformly bounded, the vertical derivative of $u_\nu$ is also uniformly bounded since otherwise the usual bubbling-off analysis yields a pseudo-holomorphic sphere entirely contained in a single fiber of $\wp:E\to M$, which is absurd. To establish that the horizontal derivative of $u_\nu$ is uniformly bounded, we show that the sequence $q_\nu:=\Pi(w_\nu)\in\widehat\NN(\Pi(w_-),\Pi(w_+),\mathfrak{a}_{f},j)$ has uniformly bounded derivatives. Suppose that a $j_t$-holomorphic sphere $v$ bubbles off from $q_\nu$. If $v$ has negative first Chern number, then so does the underlying simple curve $v_\mathrm{sim}$, and thus, by hypothesis, $c_1^{TM}([v_\mathrm{sim}])\leq -\frac{1}{2}\dim M$. This yields that $\MM([v_\mathrm{sim}],j)/\mathrm{PSL}(2;\C)$ has negative dimension, see the definitions of $\mathfrak{j}_\mathrm{reg}$ and $\j_\mathrm{HS}$ in \eqref{eq:j_reg} and Proposition \ref{prop:j_HS}, and thus this case does not occur. If $v$ has positive first Chern number, it contributes at least $2c_1^{TM}([v])\geq 4$ to the Fredholm index. This contradicts $\mu_\FH(\Pi(w_-))-\mu_\FH(\Pi(w_+))\leq 3$, see \eqref{eq:cpt_ind}, and the surjectivity of the linearized operator in \eqref{eq:d_q} for $j\in\mathfrak{j}_\mathrm{reg}$. In the case of $j\in\mathfrak{j}_\mathrm{HS}$, we use instead Proposition \ref{prop:j_HS} and Lemma \ref{lem:simple_w}. This proves that the derivatives of $q_\nu$, and hence the horizontal derivatives of $u_\nu$, are uniformly bounded.
\end{proof}

\begin{rem}\label{rem:hope}
The assertion in Proposition \ref{prop:compactness2} is true under the weaker condition $\lambda\nu\geq1$ in the case that the horizontal part of $J\in\JJ_\mathrm{diag}\cup\JJ^\BB_\mathrm{reg}$ is $j\in\j_\mathrm{HS}$. Indeed, arguing as in the above proof, suppose that a sequence of Floer cylinders $q_\nu$ with  index 3 converges to a Floer cylinder $q$ together with a $j$-holomorphic sphere $v$. If $c_1^{TM}(v)= 1$, then the index of $q$ is at most $1$. This does not happen due to Proposition \ref{prop:j_HS}.(e). We also note that $\lambda\nu\geq1$ is sufficient to achieve necessary transversality results, see Lemma \ref{lem:simple_w}.

It is unclear to us whether the proposition still holds under the hypothesis $\lambda\nu\geq 0$, like \cite{HS95,Ono95}. For our purpose, it is enough to show compactness for $\widehat\MM(w_-,w_+,\AA^\tau_f,J)$ when $\mu_\RFH^h(w_-)-\mu_\RFH^h(w_+)\leq 2$. It could be that this gives additional one-dimensional constraints on $q_\nu$ and the aforementioned weaker assumption is sufficient. But we do not know how to make this idea rigorous.  
\end{rem}

\subsection{Coherent orientations with $J\in\JJ_\mathrm{diag}$}\label{sec:orientation}
In this section, let $J=\begin{pmatrix}
 i & 0\\
 0 & j
\end{pmatrix}$ with $j\in\j_\mathrm{reg}\cup\mathfrak{j}_\mathrm{HS}$. We assume that $\tau>0$ is sufficiently small so that the transversality results in Proposition \ref{prop:transversality} and Proposition \ref{prop:transversality2} hold for a given action bound $K>0$. 
 We fix coherent orientations for moduli spaces $\widehat\NN(\q_-,\q_+,\a_f,j)$ and describe below how to equip $\widehat\MM(S_{w_-},S_{w_+},\AA^\tau_f,J)$ with coherent orientations. In the case of $j\in\mathfrak{j}_\mathrm{HS}$, we only consider simple solutions, see \eqref{eq:simple_E}.
 
Let $w=(u,\eta)\in\widehat\MM(w_-,w_+,\AA^\tau_f,J)$ and denote $\q_\pm=\Pi(w_\pm)$ and $q=\Pi(w)$. Recall from \eqref{eq:lin_block} that $D_w=
\left(\begin{aligned}
 D_w^\mathrm{v} && D_w^\mathrm{hv}\\
 0 && D_w^\mathrm{h}
\end{aligned}\right)$
and that $D_w^\mathrm{v}$, $D_w^\mathrm{h}$, and $D_w$ are surjective. We also recall $\ker D_w^\mathrm{h}\cong \ker\mathfrak{d}_{q}=T_{q}\widehat\NN(\q_-,\q_+,\a_f,j)$ and $\ker D_w^\mathrm{v}$ is 1-dimensional vector space spanned by $(R(u),0)$. We have the exact sequence 
\[
0\to \ker D_w^\mathrm{v} \to \ker D_w \to \ker  D_w^\mathrm{h} \to0
\]
given by inclusion and projection.
Thus 
\begin{equation}\label{eq:ori}
\begin{split}
\Det D_w&=\Lambda^{\max} \ker D_w \otimes (\Lambda^{\max} \coker D_w)^*	\\
&\cong\Lambda^{\max} \ker D_w^\mathrm{v} \otimes \Lambda^{\max}\ker D_w^\mathrm{h} \otimes \R^*\\
&\cong\ker D_w^\mathrm{v} \otimes \Lambda^{\max} \ker\mathfrak{d}_{q} \otimes \R^*.
\end{split}
\end{equation}
The isomorphism in the second line is canonical, see \cite{FH93}. We orient $\ker D^\mathrm{v}_w$ by $(R(u),0)$. This, together with the orientation on $\ker\mathfrak{d}_q$ we fixed above, endows $\Det D_w=\Lambda^{\max}\ker D_w$ with an orientation. The orientation on $\widehat\MM(S_{w_-},S_{w_+},\AA^\tau_f,J)$ induced in this way is coherent with respect to the gluing operation. We will show below that the boundary operator constructed with these orientations corresponds to the boundary operator for the Floer homology of $\Sigma$ studied in Section \ref{sec:time_indep_j}. This provides an independent proof of coherence of orientations. 

For every $w\in\Crit\AA^\tau_f$, we orient the unstable manifold $W^u(\hat w)$ and $S_w$ by $R$, and $W^u(\check w)$ by $+1$. Then moduli spaces of flow lines with cascades $\widehat\MM(w_-,w_+,\AA^\tau_f,J)$ are oriented according to the fibered sum rule in Remark \ref{rem:ori_rule}.

From now on, let $J$ be $t$-independent, i.e.~$j\in\j_\mathrm{HS}$. We recall from Proposition \ref{prop:quantum_gysin_simple}.(b) that, for simple $q\in\widehat\NN_s(\q_-,\q_+,\a_f,j)$ with $\mu_\FH(\q_-)-\mu_\FH(\q_+)=2$ and $\om([q])>0$, the sign $\epsilon(q)\in\{-1,+1\}$ is determined in a way that $\ker\mathfrak{d}_q$ is oriented by $\epsilon(q)\p_sq\wedge\p_tq$.

\begin{lem}\label{lem:sign1}
Let $w_\pm\in\Crit\AA^\tau_f$ with 	$\mu_\RFH(w_-)-\mu_\RFH(w_+)=2$ and $\w(w_-)=\w(w_+)$. Suppose that $\mathbf{w}\in\widehat\MM(\check{w}_-,\hat{w}_+,\AA^\tau_f,J)$ has only one cascade $w\in\widehat\MM_s(S_{w_-},S_{w_+},\AA^\tau_f,J)$ with  simple $\Pi(w)=q$. Then the sign $\epsilon(\mathbf{w})$ agrees with $\epsilon(q)$.  
\end{lem}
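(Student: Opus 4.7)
The plan is to compute the coherent orientation on $T_\mathbf{w}\widehat\MM^1(\check w_-,\hat w_+,\AA^\tau_f,J)$ via the horizontal-vertical splitting of $D_w$ combined with the fibered sum rule, and to compare the result with $\p_s w$ to read off $\epsilon(\mathbf w)$.

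First I would orient $T_w\widehat\MM(S_{w_-},S_{w_+},\AA^\tau_f,J)$ via the convention of Section \ref{sec:orientation}. The split short exact sequence $0\to\ker D_w^\mathrm{v}\to\ker D_w\to\ker\mathfrak{d}_q\to 0$ coming from \eqref{eq:lin_block} pairs the chosen orientation $(R(u),0)$ on $\ker D_w^\mathrm{v}$ with the defining orientation $\epsilon(q)\p_s q\wedge\p_t q$ on $\ker\mathfrak{d}_q=T_q\widehat\NN_s(\q_-,\q_+,\a_f,j)$ from Proposition \ref{prop:quantum_gysin_simple}.(b). Since $j\in\j_\mathrm{HS}$ is $t$-independent, both $\p_s w$ and $V_t:=(\p_t u,0)$ lie in $T_w\widehat\MM$ and project under $d\wp$ to $\p_sq$ and $\p_tq$; hence $T_w\widehat\MM$ is oriented by $\epsilon(q)\,(R(u),0)\wedge\p_s w\wedge V_t$. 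A direct asymptotic computation yields $d\ev_\pm\p_s w=0$, $d\ev_\pm(R(u),0)=R$ (from the $U(1)$-rotation shifting both $\theta_\pm$ simultaneously), and $d\ev_\pm V_t=kR$, where $k:=\cov(u_-)=\cov(u_+)$. The equality $\cov(u_-)=\cov(u_+)$ follows from $\w(w_-)=\w(w_+)$ combined with Proposition \ref{prop:positivity_of_intersection} (so $u$ avoids $\OO_E$) and Proposition \ref{prop:winding}.(d).

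Next I would apply the fibered sum rule (Remark \ref{rem:ori_rule}) twice to the chain $W^u(\check w_-)\,{}_\iota\!\x_{\ev_-}\,\widehat\MM\,{}_{\ev_+}\!\x_\iota\,W^s(\hat w_+)$, where $W^u(\check w_-)$ and $W^s(\hat w_+)$ are both points oriented by $+1$ and each $S_{w_\pm}$ carries the orientation $R$. The parity factors $(-1)^{\dim W_2\cdot\dim W}$ are $+1$ at both steps since the $W^u,W^s$ are zero-dimensional. Using $(R(u),0)$ as a complement of $\ker d\ev_+$ in $T_w\widehat\MM$, together with the identity $(R(u),0)\wedge V_t=(R(u),0)\wedge(V_t-k(R(u),0))$, the first fiber product carries orientation $\epsilon(q)\,\p_s w\wedge(V_t-k(R(u),0))$. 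The second fiber product, cut by $W^u(\check w_-)$, effectively quotients by the diagonal symmetry direction $V_t-k(R(u),0)$ (which preserves both asymptotic phases $\theta_\pm$ simultaneously), and leaves $T_\mathbf{w}\widehat\MM^1$ oriented by $\epsilon(q)\,\p_s w$. By the definition \eqref{eq:sign_count_rfh} of $\epsilon(\mathbf w)$, this identifies $\epsilon(\mathbf w)=\epsilon(q)$.

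The main obstacle will be the careful sign bookkeeping through the two fiber products, complicated by the fact that $d\ev_-$ and $d\ev_+$ share the $U(1)$-rotation direction, making the naive transversality analysis subtle. The cancellation identity $(R(u),0)\wedge V_t=(R(u),0)\wedge(V_t-k(R(u),0))$ is what permits a well-defined orientation despite this shared diagonal symmetry, and the equality $\cov(u_-)=\cov(u_+)$ enforced by $\w(w_-)=\w(w_+)$ is essential for the \emph{same} combination $V_t-k(R(u),0)$ to be the one absorbed at both asymptotic ends in a coherent manner.
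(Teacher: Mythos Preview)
Your argument contains a genuine error: the claim $\cov(u_-)=\cov(u_+)$ is false here. Proposition~\ref{prop:winding}.(d) does not say this; its second clause gives
\[
\w(w_-)-\w(w_+)=\cov(u_-)-\cov(u_+)-c_1^E([q])\,,
\]
so from $\w(w_-)=\w(w_+)$ one obtains $\cov(u_+)-\cov(u_-)=-c_1^E([q])=m\,\omega([q])$. Since $q$ is \emph{simple} it is $t$-dependent, hence $\omega([q])>0$ by \eqref{eq:om-energy}, and therefore $\cov(u_+)>\cov(u_-)$. Positivity of intersections only tells you $u\cdot\OO_E=0$, which recovers $\w(w_-)=\w(w_+)$ but says nothing about covering numbers matching.

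This is not a cosmetic slip: your subsequent sign computation collapses without it. If $\cov(u_-)$ and $\cov(u_+)$ were equal, then your vector $V_t-k(R(u),0)$ would lie in both $\ker d\ev_-$ and $\ker d\ev_+$, forcing $T_{\mathbf w}\widehat\MM^1(\check w_-,\hat w_+)$ to be at least two-dimensional and $\ev_-\times\ev_+$ to be non-transverse to the point $(\check w_-,\hat w_+)$ --- in contradiction with the expected dimension $\mu_\RFH^h(\check w_-)-\mu_\RFH^h(\hat w_+)=1$. The paper's proof exploits precisely the discrepancy you erased: the vector $(-\cov(u_-)R(u)+\p_tu,0)$ lies in $\ker d\ev_-$ but is sent by $d\ev_+$ to $(\cov(u_+)-\cov(u_-))R=-c_1^E([q])R$, and the \emph{positivity} of $-c_1^E([q])$ is what makes the induced orientation on $T_{\hat w_+}S_{w_+}$ agree with the fixed orientation $R$, yielding $\epsilon(\mathbf w)=\epsilon(q)$. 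Your outline is otherwise on the right track (same splitting of $D_w$, same fibered-sum bookkeeping), but you need to redo the two fiber products with the correct, distinct values $\cov(u_\pm)$.
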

\begin{proof}
We suppress $\AA^\tau_f$ and $J$ from notation below and denote $w_\pm=([u_\pm,\bar u_\pm],\eta_\pm)$ as usual.
The space $\ker D_w=T_w\widehat\MM(S_{w_-},S_{w_+})$ is spanned by $(R(u),0)$, $\p_s w$, and $\p_t w$, see also the proof of Lemma \ref{lem:chern_number=min_to_max} below, and our choice of orientation mentioned above yields that $\ker D_w$ is oriented by $\epsilon(q)(R(u),0)\wedge\p_s w\wedge \p_t w$. We have
 \[
 w\in \ev_-^{-1}(\check w_-)\cap \ev _+^{-1}(\hat w_+)\subset \widehat\MM(S_{w_-},S_{w_+})
 \] 
and view $w$ also as an element of
\begin{equation}\label{eq:fiber_sign}
\widehat\MM^1(\check{w}_-,\hat{w}_+):=W^u(\check w_-) _{\,\,\iota\!\!}  \x_{\ev_-} \widehat\MM(S_{w_-},S_{w_+}) _{\,\,\,\ev_+}  \!\!\x_{\,\iota} W^s(\hat w_+)
\end{equation}
to compute the sign. Here $\iota$ refers to the inclusions $W^u(\check w_-)\into S_{w_-}$ and $W^s(\hat w_+)\into S_{w_+}$. For the first fiber product we consider 
\[
d_{\check w_-}\iota-d_w\ev_-:T_{\check w_-} W^u(\check w_-) \oplus T_w\widehat\MM(S_{w_-},S_{w_+}) \longrightarrow T_{\check w_-} S_{w_-}\,.
\]
 Then it holds that
\[
\begin{split}
T_{(\check w_-,w)}\Big(W^u(\check w_-) _{\,\,\iota\!\!} \x_{\ev_-} \widehat\MM(S_{w_-},S_{w_+})\Big)&=\ker(d_{\check w_-}\iota-d_w\ev_-)\\
&=\ker\Big( -d_w\ev_-: T_w\widehat\MM(S_{w_-},S_{w_+})\to T_\theta S_{w_-}\Big)\,,	
\end{split}
\]
where the last line is due to $T_{\check w_-} W^u(\check w_-)=\{0\}$. We keep the minus sign in $-d_w\ev_-$ in order to record the orientation convention. One can readily see that $\p_s w$ and $(-\cov(u_-)R(u)+\p_tu,0)$ belong to $\ker (-d_w\ev_-)$ and form a basis of this 2-dimensional space. We orient $\ker (-d_w\ev_-)$ by $\epsilon(q)\p_s w\wedge (-\cov(u_-)R(u)+\p_tu,0)$. Then the quotient space $\frac{T_w\widehat\MM(S_{w_-},S_{w_+})}{\ker(-d_w\ev_-)}$ is oriented by $(R(u),0)+\ker(-d_w\ev_-)$ to make the isomorphism 
\[
\ker(-d_w\ev_-)\oplus \frac{T_w\widehat\MM(S_{w_-},S_{w_+})}{\ker(-d_w\ev_-)}\cong T_w\widehat\MM(S_{w_-},S_{w_+})
\]
orientation preserving. Therefore the isomorphism 
\[
\frac{T_w\widehat\MM(S_{w_-},S_{w_+})}{\ker(-d_w\ev_-)} \cong  T_{\check w_-} S_{w_-}
\]
induced by $-d_w\ev_-$ is orientation reversing, which is consistent with the fibered sum rule in Remark \ref{rem:ori_rule}
 since $\widehat\MM(S_{w_-},S_{w_+})$ is 3-dimensional and $S_{w_-}$ is 1-dimensional. This verifies that the orientation on $\ker (-d_w\ev_-)$ that we have chosen  obeys the fibered sum rule.

Next we study the second fiber product in \eqref{eq:fiber_sign}. We consider  
\[
d_w\ev_+|_{\ker(- d_w\ev_-)}-d_{\hat w_+}\iota: \ker(- d_w\ev_-)\oplus T_{\hat w_+}W^s(\hat w_+) \longrightarrow T_{\hat w_+}S_{w_+}
\]
so that 
\[
\begin{split}
T_w\widehat\MM^1(\check{w}_-,\hat{w}_+) &= \ker(d_w\ev_+|_{\ker(- d_w\ev_-)}-d_{\hat w_+}\iota) \\
&=\ker 	\Big(d_w\ev_+|_{\ker(- d_w\ev_-)}:\ker(- d_w\ev_-)\to  T_{\hat w_+}S_{w_+}\Big)\\
&= \ker (-d_w\ev_-)\cap \ker (d_w\ev_+)\,,
\end{split}
\]
where we used $T_{\hat w_+} W^w(\hat w_+)=\{0\}$. We orient $T_w\widehat\MM^1(\check{w}_-,\hat{w}_+)$ by $\epsilon(q)\p_s w$. This induces the orientation $(-\cov(u_-)R(u)+\p_tu,0)+T_w\widehat\MM(w_-,w_+)$ on the quotient space  $\frac{\ker (-d_w\ev_-)}{T_w\widehat\MM(w_-,w_+)}$ making the isomorphism
\[
T_w\widehat\MM^1(\check{w}_-,\hat{w}_+)\oplus \frac{\ker (-d_w\ev_-)}{\widehat\MM^1(\check{w}_-,\hat{w}_+)} \cong \ker (-d_w\ev_-)
\]
orientation preserving. Then the isomorphism 
\[
\frac{\ker (-d_w\ev_-)}{T_w\widehat\MM^1(\check{w}_-,\hat{w}_+)} \cong T_{\hat w_+}S_{w_+}
\]
induced by $d_w\ev_+$ gives the orientation $(-\cov(u_-)+\cov(u_+))R=-c_1^E([q])R$ on $T_{\hat w_+}S_{w_+}$, see Proposition \ref{prop:winding}.(d). Since $-c_1^E([q])=m\om([q])>0$, it agrees with the orientation we fixed on $S_{w_+}$. Thus the isomorphism is orientation preserving, as we wished according to the fibered sum rule since $W^s(\hat w_+)$ is 0-dimensional. We have checked that the orientation $\epsilon(q)\p_s w$ on $T_w\widehat\MM^1(\check{w}_-,\hat{w}_+)$ is indeed the one given by the fibered sum rule, and therefore $\epsilon(q)=\epsilon(\mathbf{w})$, see above \eqref{eq:sign_count_rfh}. This finishes the proof.  
\end{proof}

\begin{lem}\label{lem:chern_number=min_to_max}
Let $w_\pm\in\Crit\AA^\tau_f$ with 	$\mu_\RFH(w_-)-\mu_\RFH(w_+)=2$ and $\w(w_-)=\w(w_+)$. For every $q\in\widehat\NN_s(\Pi(w_-),\Pi(w_+),\a_f,j)$, we have
\[
\#\left\{[\mathbf{w}]=[w]\in \MM^1(\check{w}_-,\hat{w}_+,\AA^\tau_f,J) \mid \Pi(w)=q_\theta \textrm{ for some }\theta\in S^1\right\}=-\epsilon(q) c_1^E([q])
\]
where $q_\theta$ is defined by $q_\theta(s,t):=q_\theta(s,t+\theta)$.  
\end{lem}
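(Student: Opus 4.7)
The strategy is to parametrize the complete fiber of $\Pi$ over the $S^1$-orbit $\{q_\theta:\theta\in S^1\}\subset\widehat\NN_s(\Pi(w_-),\Pi(w_+),\a_f,j)$ as a two-torus of Rabinowitz--Floer cylinders, and then to impose the cascade endpoint conditions $\ev_-=\check w_-$ and $\ev_+=\hat w_+$ as two equations in $S^1\x S^1$ whose solutions I count with sign. The hypotheses $\w(w_-)=\w(w_+)$ and $q$ simple allow me to invoke Theorem \ref{thm:bijection}: fixing a single lift $(u_0,\eta_0)\in\Pi^{-1}(q)$, every element of $\Pi^{-1}(q)$ has the form $(e^{2\pi i\phi}u_0,\eta_0)$ for a unique $\phi\in S^1$. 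For each $\theta\in S^1$ the translate $u_\theta(s,t):=u_0(s,t+\theta)$ projects to $q_\theta$, so the assignment
\[
(\phi,\theta)\longmapsto (e^{2\pi i\phi}u_\theta,\eta_0)
\]
parametrizes all lifts of $\{q_\theta:\theta\in S^1\}$ bijectively; simplicity of $q$ together with the freeness of the $\R\x S^1$-action on $\widehat\NN_s$ ensures that distinct pairs $(\phi,\theta)$ yield distinct classes in $\MM^1=\widehat\MM^1/\R$, since an $\R$-translate of $(e^{2\pi i\phi}u_\theta,\eta_0)$ cannot project to any $q_{\theta'}$ unless the $s$-shift is zero.

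Next I would compute the asymptotic evaluations. Using $u_\pm(t)=e^{2\pi i\cov(u_\pm)t}u_\pm(0)$ and the identification $S_{w_\pm}=\{s^*w_\pm:s\in S^1\}$ from \eqref{eq:capping_shift}--\eqref{eq:crit_diffeo}, write $\ev_\pm(u_0,\eta_0)=a_\pm^*w_\pm$ for some $a_\pm\in S^1$. Tracking both the fiberwise rotation by $\phi$ and the domain shift by $\theta$ through the asymptotic limit of \eqref{eq:asymp_ev} yields the affine formula
\[
\ev_\pm(e^{2\pi i\phi}u_\theta,\eta_0)=(a_\pm+\cov(u_\pm)\theta+\phi)^*w_\pm\,.
\]
Writing $\check w_-=c_-^*w_-$ and $\hat w_+=c_+^*w_+$, and using perfectness of $h$ so that $W^u(\check w_-)=\{\check w_-\}$ and $W^s(\hat w_+)=\{\hat w_+\}$, the two cascade conditions become
\[
a_-+\cov(u_-)\theta+\phi=c_-\,,\qquad a_++\cov(u_+)\theta+\phi=c_+\,,
\]
in $S^1=\R/\Z$. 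Subtracting gives a single equation in $\theta$ whose coefficient is $\cov(u_-)-\cov(u_+)$; by Proposition \ref{prop:winding}.(d) and $\w(w_-)=\w(w_+)$, this coefficient equals $c_1^E([q])$. Since $q$ is $t$-dependent, \eqref{eq:om-energy} gives $\om([q])>0$, so $c_1^E([q])=-m\om([q])<0$, and the equation in $\theta$ has exactly $|c_1^E([q])|=-c_1^E([q])$ solutions in $S^1$; each such $\theta$ then determines $\phi$ uniquely via the second equation.

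Each of the $-c_1^E([q])$ solutions produces a distinct one-cascade class $[\mathbf{w}]\in\MM^1(\check w_-,\hat w_+,\AA^\tau_f,J)$ with simple projection $q_\theta$. Since $q_\theta$ lies in the same connected component of $\widehat\NN_s$ as $q$, one has $\epsilon(q_\theta)=\epsilon(q)$, and Lemma \ref{lem:sign1} applied to $q_\theta$ assigns sign $\epsilon(q)$ to each such $[\mathbf{w}]$. Summing then gives the total signed count $\epsilon(q)\cdot(-c_1^E([q]))=-\epsilon(q)c_1^E([q])$, as claimed. The main technical obstacle I expect is verifying the affine evaluation formula in Step 2: one must carefully track how the $U(1)$-action on the fiber of $E$, the $t$-shift on the domain, and the capping-disk convention $t_\pm^*\bar u_\pm$ built into \eqref{eq:asymp_ev} combine, so that the two group actions descend to a genuine linear dependence on $(\phi,\theta)$ in $S^1$-coordinates on $S_{w_\pm}$.
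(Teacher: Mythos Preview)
Your proposal is correct and follows essentially the same approach as the paper. The only cosmetic difference is that the paper absorbs your $\phi$-variable into the parametrization from the outset: it sets $u_\theta(s,t):=e^{-2\pi i\theta\cov(u_-)}u(s,t+\theta)$, which forces $\ev_-(u_\theta,\eta)=u_-$ for every $\theta$, and then imposes only the single condition $\ev_+(u_\theta,\eta)=u_+$, which reads $\theta(\cov(u_+)-\cov(u_-))\in\Z$ --- exactly your subtracted equation. Your two-equations-in-$(\phi,\theta)$ formulation is equivalent and arguably makes the role of the two circle actions more transparent; the paper's normalized one-variable version is slightly more economical. In either case the count is $-c_1^E([q])$ and Lemma~\ref{lem:sign1} supplies the common sign $\epsilon(q)$.
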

\begin{proof}
Due to Theorem \ref{thm:bijection}, there exists a unique $w=(u,\eta)\in \widehat\MM(S_{w_-},S_{w_+},\AA^\tau_f,J)$ up to the $S^1$-action such that $\Pi(w)=q$. 
Since $\widehat\MM(S_{w_-},S_{w_+},\AA^\tau_f,J)$ is 3-dimensional and $q$ is simple, the connected component containing $(u,\eta)$ is diffeomorphic to $\R\x S^1\x S^1$ parametrized by
\[
\big(e^{2\pi i\vartheta}u(\cdot +\tau,\cdot+\theta),\eta(\cdot +\tau)\big)\,,\qquad (\tau,\theta,\vartheta)\in \R\x S^1\x S^1\,,
\]
which should be compared to \eqref{eq:moduli_q}. 
We write $w_\pm=([u_\pm,\bar u_\pm],\eta_\pm)$ as usual. We define
\[
u_\theta(s,t):=e^{-2\pi i\theta\cov(u_-)}u(s,t+\theta)
\]
and observe
\[
\begin{split}
\lim_{s\to-\infty}u_\theta(s,t) = e^{-2\pi i\theta\cov(u_-)} \lim_{s\to-\infty} u(s,t+\theta) = e^{-2\pi i\theta\cov(u_-)} u_-(t+\theta)=u_-(t)\,.
\end{split}
\]
We claim that 
\[
\lim_{s\to+\infty}u_\theta(s,t)=u_+(t)
\]
holds exactly when $\theta(\cov(u_+)-\cov(u_-))\in\Z$. 
Indeed, this follows from
\[
\begin{split}
\lim_{s\to+\infty}u_\theta(s,t)=e^{-2\pi i\theta\cov(u_-)}u_+(t+\theta)=e^{2\pi i \theta(\cov(u_+)-\cov(u_-))}u_+(t)\,.
\end{split}
\]
Hence $(u_\theta,\eta)\in\widehat\MM^1(\check{w}_-,\hat{w}_+,\AA^\tau_f,J)$ if and only if $\theta(\cov(u_+)-\cov(u_-))\in\Z$. This implies that the cardinality of the connected component of $\MM^1(\check{w}_-,\hat{w}_+,\AA^\tau_f,J)$ containing $[w]$ is $\cov(u_+)-\cov(u_-)$, which equals $-c_1^E([q])=m\om([q])>0$ by Proposition \ref{prop:winding}.(d). Moreover each element in that component has sign $\epsilon(q)$ as proved in Lemma \ref{lem:sign1}. This completes the proof. 
\end{proof}

Recall that we have chosen two auxiliary perfect Morse functions $h:\Crit\tilde f\to\R$ in \eqref{eq:ftn_h} and $h:\Crit\AA^\tau_f\to\R$ in \eqref{eq:perfect}. We are free to choose these functions as long as the transversality property for evaluation maps are satisfied, see \eqref{eq:ev_minus_plus} and \eqref{eq:ev_minus_plus2}. To ease computations in the following lemma, we require these two functions to have the property that $u(0)$ is a critical point of $h:\Crit\tilde f\to\R$ if and only if $([u,\bar u],\eta)$ is a critical point of $h:\Crit\AA^\tau_f\to\R$, see \eqref{eq:crit_diffeo}.

\begin{lem}\label{lem:equal_count}
Let $w_\pm\in\Crit\AA^\tau_f$ with $\w(w_-)=\w(w_+)$. Let $\q_\pm=[q_\pm,\bar q_\pm]=\Pi(w_\pm)$.
\begin{enumerate}[(a)]
	\item If $\mu_\RFH(w_-)-\mu_\RFH(w_+)=1$, then
		\[
		\begin{split}
			\#\MM(\hat w_-,\hat w_+,\AA^\tau_f,J)&=\#\NN(\hat q_-,\hat q_+,\tilde f,\tilde g)=-\#\NN(q_-,q_+,f,g)\\
			&=-\#\NN(\check q_-,\check q_+,\tilde f,\tilde g)=-\#\MM(\check w_-,\check w_+,\AA^\tau_f,J)\,.
		\end{split}
		\]
	\item If $\mu_\RFH(w_-)-\mu_\RFH(w_+)=2$, then
	 	\[
	 	\begin{split}
	 	&\#\MM^1_{\om=0}(\check w_-,\hat w_+,\AA^\tau_f,J)=\#\NN^1(\check q_-,\hat q_+,\tilde f,\tilde g)\,,\\[1ex]
	 	&\#\MM^2(\check w_-,\hat w_+,\AA^\tau_f,J)=\#\NN^2(\check q_-,\hat q_+,\tilde f,\tilde g)\,.	
	 	\end{split}
	 	\]
\end{enumerate}
\end{lem}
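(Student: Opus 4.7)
Both parts of the lemma will be proved by constructing an $S^1$-equivariant oriented diffeomorphism
\[
\Phi \colon \widehat\MM_{\om=0}(S_{w_-}, S_{w_+}, \AA^\tau_f, J) \;\stackrel{\cong}{\longrightarrow}\; \widehat\NN(S_{q_-}, S_{q_+}, \tilde f, \tilde g)
\]
(with $q_\pm = \Pi(w_\pm)$) and then transporting the boundary conditions imposed by the perfect Morse function $h$.

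First I would argue that every cascade contributing to the counts in the statement has $t$-independent projection to $M$. In case (a), the hypotheses $\w(w_-) = \w(w_+)$ and $\mu_\RFH(w_-) - \mu_\RFH(w_+) = 1$ combine with \eqref{eq:indices_and_winding} to give $\mu_\FH(\Pi(w_-)) - \mu_\FH(\Pi(w_+)) = 1$, so Proposition \ref{prop:j_HS}.(d) forces every Floer projection to be $t$-independent, and the index budget rules out cascades of length $\geq 2$, hence $\#\MM = \#\MM^1$ on either side. For $\MM^2$ in case (b), Proposition \ref{prop:positivity_of_intersection} forces the intermediate critical point $w_1$ to satisfy $\w(w_1) = \w(w_\pm)$, and each of the two cascades then has $\mu_\FH$-difference $\leq 1$ and is likewise $t$-independent. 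The $\MM^1_{\om=0}$ count is $t$-independent by definition.

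Next, to construct $\Phi$, I would invoke Proposition \ref{prop:bijection} together with the explicit formula of Corollary \ref{cor:rfh_lift}: for $(u, \eta) \in \widehat\MM_{\om=0}(S_{w_-}, S_{w_+})$, the angular part of $u(\cdot, 0)$ is the horizontal lift of $q := \Pi(w)$ with respect to the connection $\alpha$, which produces a gradient flow line $\tilde q \in \widehat\NN(S_{q_-}, S_{q_+}, \tilde f, \tilde g)$. Both spaces are principal $S^1$-bundles over $\widehat\NN(q_-, q_+, f, g)$, $\Phi$ is an $S^1$-equivariant bundle isomorphism covering the identity on $\widehat\NN(q_-, q_+, f, g)$, and under the identifications $S_w \cong \Sigma_{\Pi(w)} = S_{\Pi(w)}$ from \eqref{eq:crit_diffeo} combined with our compatible choice of $h$, $\Phi$ intertwines the asymptotic evaluation maps. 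For orientations, the coherent orientation on $\widehat\MM(S_{w_-}, S_{w_+})$ from Section \ref{sec:orientation} is $(R(u), 0) \wedge$ (pullback via $\Pi$ of the Morse orientation on $\widehat\NN(q_-, q_+, f, g)$) via \eqref{eq:ori}, whereas $\widehat\NN(S_{q_-}, S_{q_+}, \tilde f, \tilde g)$ carries the orientation $R \wedge$ (horizontal lift of the same Morse orientation); since $\Phi_*(R(u), 0) = R$, both being the fundamental vector field of the $S^1$-action, $\Phi$ is orientation-preserving.

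With $\Phi$ in hand, the fibered sum rule (Remark \ref{rem:ori_rule}) transfers the cascade identifications with matched boundary conditions ($\hat w_\pm \leftrightarrow \hat q_\pm$, $\check w_\pm \leftrightarrow \check q_\pm$) to give the equalities $\#\MM(\hat w_-, \hat w_+, \AA^\tau_f, J) = \#\NN(\hat q_-, \hat q_+, \tilde f, \tilde g)$ and $\#\MM(\check w_-, \check w_+, \AA^\tau_f, J) = \#\NN(\check q_-, \check q_+, \tilde f, \tilde g)$ in (a), after which the remaining middle equalities in the chain come from Lemma \ref{lem:moduli_1-1}; case (b) follows similarly from the resulting $1$- and $2$-cascade identifications. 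I expect the main obstacle to be the orientation verification: it requires identifying the vertical--horizontal splitting \eqref{eq:lin_block} of $D_w$ used in Section \ref{sec:orientation} with the $R \wedge$ convention on $\widehat\NN(S_{q_-}, S_{q_+}, \tilde f, \tilde g)$, together with careful handling of the fibered sum rule at the intermediate critical point $w_1$ in the $2$-cascade case.
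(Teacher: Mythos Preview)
Your proposal is correct and follows essentially the same approach as the paper: both construct the bijection $\widehat\MM_{\om=0}(S_{w_-},S_{w_+})\to\widehat\NN(S_{q_-},S_{q_+},\tilde f,\tilde g)$ via Proposition \ref{prop:bijection} and Corollary \ref{cor:rfh_lift}, observe that the horizontal-lift description makes the evaluation maps and orientations match (using the $R\wedge(\text{Morse orientation})$ convention), and invoke Lemma \ref{lem:moduli_1-1} for the middle equalities in (a). The paper's argument for (b) is essentially identical to yours, noting that each cascade in $\widehat\MM^2$ has $t$-independent projection.
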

\begin{proof}
	We first consider (a). We show only the first identity. The last one follows from the same argument, and the rest are established in Lemma \ref{lem:moduli_1-1}. We recall that $h:\Crit\AA^\tau_f\to\R$ is chosen so that $\hat q_\pm=\hat u_\pm(0)$ where $\hat u_\pm$ are the loop components of $\hat w_\pm$ respectively.
	 By the assumption on index and winding number together with Proposition \ref{prop:j_HS}.(d) and Lemma \ref{lem:one-to-one_index}, we have $\widehat\NN(\q_-,\q_+,\a_f,j)=\widehat\NN(q_-,q_+,f,g)$. Thus Proposition \ref{prop:bijection} gives a bijection 
	\[
	\widehat\MM(S_{w_-},S_{w_+},\AA^\tau_f,J) \to \widehat\NN(S_{q_-},S_{q_+},\tilde f,\tilde g)\,,\qquad w=(u,\eta)\mapsto \tilde q\\
	\]
	where $\tilde q$ is the horizontal lift of $q=\Pi(w)$ with $\ev_-(\tilde q)=\ev_-(u)(0)$. Both spaces are oriented using the vector field $R$ and the orientation on $\widehat\NN(q_-,q_+,f,g)$. 
	Furthermore Corollary \ref{cor:rfh_lift} yields that the argument (in the fiber) of $\tilde q$ and that of $u(\cdot,0)$ agree. In particular we also have $\ev_+(\tilde q)=\ev_+(u)(0)$. Therefore all ingredients necessary for $\#\MM(\hat w_-,\hat w_+,\AA^\tau_f,J)$ and $\#\NN(\hat q_-,\hat q_+,\tilde f,\tilde g)$ (critical manifolds, (un)stable manifolds, cascades, etc.) are in one-to-one correspondence with orientation.  This proves (a). 
	
	The first identity in (b) can be shown in the same way for (a). We can argue similarly also for the second  identity in (b) using the fact that, for every $(w^1,w^2)\in\widehat\MM^2(\check w_-,\hat w_+,\AA^\tau_f,J)$ with $\mu_\RFH(w_-)-\mu_\RFH(w_+)=2$, both $\Pi(w^1)$ and $\Pi(w^2)$ are $t$-independent. 
\end{proof}

\section{Rabinowitz Floer homology with zero winding number}\label{sec:RFH}
In this section, we take $J$ from $\JJ_\mathrm{reg}$, $\JJ^\BB_\mathrm{reg}$, or $\JJ_\mathrm{diag}$. 
If $J\in\JJ_\mathrm{reg}^\BB\cup\JJ_\mathrm{diag}$, then we assume either (A1) or (A2) given in the introduction. As mentioned in the introduction, if we take the horizontal part $j$ of $J$ from $\j_{\mathrm{HS}}$, the condition $\lambda\nu\geq 2$ in (A2) can be weakened to $\lambda\nu\geq1$, see Remark \ref{rem:hope}. 
If we work with $J\in\JJ_\mathrm{reg}$, we assume either (A1) or (A3).
\begin{rem}\label{rem:finitely_many}
	Any of conditions (A1), (A2), and (A3) implies condition (H) in \eqref{eq:H}. Thus, by Remark \ref{rem:H} and Lemma \ref{lem:one-to-one_index}, we have the following consequences.
\begin{enumerate}[(i)]
	\item For given $k,\ell\in\Z$, there exist at most finitely many critical points $w\in\Crit h\subset\Crit\AA^\tau_f$ with $\w(w)=k$ and $\mu_\RFH^h(w)=\ell$.
	\item There exists $K>0$ such that for any $w_\pm\in\Crit h$ with $\mu_\RFH^h(w_-)-\mu_\RFH^h(w_+)\leq 2$, it holds that $\a_f(\Pi(w_-)-\a_f(\Pi(w_+))\leq K$. 
\end{enumerate}
\end{rem}
If $J\in\JJ_\mathrm{diag}$, we take $\tau\in(0,\tau_0)$ where $\tau_0>0$ is the constant given by Theorem \ref{thm:bijection} associated with $K$ from Remark \ref{rem:finitely_many}.(ii). Otherwise $\tau>0$ can be arbitrary.

\subsection{Rabinowitz Floer complex with zero winding number}\label{sec:RFH_zero_wind}
Let $-\infty\leq a<b\leq +\infty$. We define the $\Z$-module 
\begin{equation}\label{eq:RFC}
\RFC_*^{\w_0,(a,b)}(\AA^\tau_f)\,,\qquad *\in\Z	
\end{equation}
generated by all $w\in\Crit h\subset\Crit\AA^\tau_f$ with $\mu_\RFH^h(w)=*$, $
\w(w)=0$, and $\AA^\tau_f(w)\in(a,b)$. We do not include $h$ in the notation as its role is minor. We continue to require that $h$ is a perfect Morse function just to simplify explanation and computations. 
 We define the homomorphism
\begin{equation}\label{eq:boundary}
\p^{\w_0}=\p^{\w_0}_J:\RFC_*^{\w_0,(a,b)}(\AA^\tau_f)\longrightarrow \RFC_{*-1}^{\w_0,(a,b)}(\AA^\tau_f)	
\end{equation}
as linear extension of 
\[
\p^{\w_0}w_-:=\sum_{w_+}\#\MM(w_-,w_+,\AA^\tau_f,J)\,w_+\,,
\]
where the sum is taken over all $w_+\in\Crit h$ with $\w(w_+)=0$, $\mu_\RFH^h(w_+)=*-1$, and $\AA^\tau_f(w_+)\in(a,b)$. The signed count $\#\MM(w_-,w_+,\AA^\tau_f,J)\in\Z$ is well-defined thanks to the transversality and compactness results in Section \ref{sec:J}.

\begin{prop}\label{prop:boundary_operator}
The homomorphism \eqref{eq:boundary} is a boundary operator, i.e.~$\p^{\w_0}\circ\p^{\w_0}=0$.
\end{prop}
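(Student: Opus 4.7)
The strategy is the standard Morse-Bott Floer-theoretic argument: one identifies $\langle\partial^{\w_0}\partial^{\w_0}w_-,w_+\rangle$ (for $w_\pm\in\Crit h$ with $\w(w_\pm)=0$ and $\mu^h_{\RFH}(w_-)-\mu^h_{\RFH}(w_+)=2$) with the signed count of boundary points of a compact oriented $1$-manifold, which must vanish. Concretely, I would consider the quotient moduli space
\[
\MM(w_-,w_+,\AA^\tau_f,J)=\bigcup_{n\in\N}\MM^n(w_-,w_+,\AA^\tau_f,J),
\]
which, by the transversality results of Propositions~\ref{prop:transversality_cylinder}, \ref{prop:transversality_J_B}, \ref{prop:transversality}, and \ref{prop:transversality2}, is a smooth $1$-manifold of dimension $\mu^h_{\RFH}(w_-)-\mu^h_{\RFH}(w_+)-1=1$ under our standing assumptions. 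The zero-winding constraint is preserved along cascades by Proposition~\ref{prop:positivity_of_intersection}: since $\w(w_-)=\w(w_+)=0$, every intermediate asymptotic critical point $w^i_{\pm}$ appearing in any cascade configuration in $\MM^n(w_-,w_+,\AA^\tau_f,J)$ must also satisfy $\w(w^i_\pm)=0$, so only zero-winding Floer cylinders enter the story.

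Next, I would show this $1$-manifold admits a compactification $\overline{\MM}(w_-,w_+,\AA^\tau_f,J)$ whose boundary consists precisely of the once-broken configurations
\[
\partial\overline{\MM}(w_-,w_+,\AA^\tau_f,J)=\bigsqcup_{w}\MM(w_-,w,\AA^\tau_f,J)\,\x\,\MM(w,w_+,\AA^\tau_f,J),
\]
the union running over $w\in\Crit h$ with $\w(w)=0$ and $\mu^h_{\RFH}(w)=\mu^h_{\RFH}(w_-)-1$. This decomposition uses three ingredients: (i) the compactness results of Proposition~\ref{prop:compactness1} (for $J\in\JJ_\mathrm{reg}$, in conjunction with the winding-zero constraint, which via Lemma~\ref{lem:cyclic2} is precisely what allows lifting to $E^1$ and ruling out bubbling) or Proposition~\ref{prop:compactness2} (for $J\in\JJ_\mathrm{diag}\cup\JJ^\BB_\mathrm{reg}$); (ii) the action filtration: a cascade decreases $\AA^\tau_f$ and any intermediate $w$ automatically lies in the window $(a,b)$; (iii) Remark~\ref{rem:finitely_many}, which guarantees that only finitely many intermediate $w\in\Crit h$ with $\w(w)=0$ and the appropriate index can occur, so the sum is finite. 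Breakings at zero-cascade configurations (flow lines of $-\nabla h$ inside one $\Crit\AA^\tau_f$-component) cancel in pairs and contribute $0$, as explained in Remark~\ref{rem:zero_cascade}. Standard gluing in the Morse-Bott setting (as in \cite{BO09,Fra04}) then produces the interior collar near each such broken configuration, upgrading $\overline{\MM}$ to a compact $1$-manifold with boundary.

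Finally, I would invoke the coherence of orientations set up in Section~\ref{sec:fredholm} and Section~\ref{sec:orientation} (with the fibered-sum convention from Remark~\ref{rem:ori_rule}) to conclude that the induced orientation on $\partial\overline{\MM}(w_-,w_+,\AA^\tau_f,J)$ assigns to each broken pair $(\mathbf{w}_1,\mathbf{w}_2)$ precisely the product sign $\epsilon(\mathbf{w}_1)\epsilon(\mathbf{w}_2)$. Summing over boundary points and using that a compact oriented $1$-manifold has vanishing signed boundary count yields
\[
\sum_{w}\#\MM(w_-,w,\AA^\tau_f,J)\cdot\#\MM(w,w_+,\AA^\tau_f,J)=0,
\]
which is exactly the coefficient of $w_+$ in $\partial^{\w_0}\partial^{\w_0}w_-$. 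Linear extension gives $\partial^{\w_0}\circ\partial^{\w_0}=0$.

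The main technical obstacle is ensuring that the Gromov-Floer compactification really has only the claimed boundary strata: the danger is sphere bubbling that is not indexed by intermediate critical points. For $J\in\JJ_\mathrm{reg}$ this is handled by lifting to $E^1$ via the cyclic cover and invoking the equivariant transversality of Proposition~\ref{prop:transversality_cylinder}, whose applicability in turn requires $\w(w_-)=\w(w_+)$---precisely our setting. For $J\in\JJ_\mathrm{diag}$ one instead relies on the absence of non-constant $J_t$-holomorphic spheres outside $\OO_E$ together with the fact that spheres in $\OO_E$ are $j$-holomorphic in $M$ and are excluded by Proposition~\ref{prop:j_HS}; the smallness of $\tau$ (needed for Theorem~\ref{thm:bijection} and Proposition~\ref{prop:transversality}) must be chosen in accordance with the uniform action bound provided by Remark~\ref{rem:finitely_many}(ii), so that transversality holds for all relevant pairs $w_\pm$ simultaneously. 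The orientation bookkeeping is routine given Section~\ref{sec:orientation}.
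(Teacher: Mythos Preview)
Your proposal is correct and follows essentially the same approach as the paper: both reduce to the standard Morse--Bott Floer argument once one verifies that a sequence of zero-winding cascades can only break along critical points of zero winding number, and both obtain this from positivity of intersection with $\OO_E$ (Proposition~\ref{prop:positivity_of_intersection}). The paper's proof is terser---it records the intersection-number identity $u'\cdot\OO_E+u''\cdot\OO_E=u^n\cdot\OO_E$ for a breaking sequence and then defers the rest to Propositions~\ref{prop:compactness1}, \ref{prop:compactness2} and standard gluing---while you spell out more of the routine $1$-manifold/orientation bookkeeping, but there is no substantive difference.
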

\begin{proof}
The proof follows from the usual scheme in Floer homology if we ensure that solutions of the Rabinowitz Floer equation \eqref{eq:Floer_eqn_for_A} between critical points with zero winding number only break along  critical points with zero winding number.
Let $w^n=(u^n,\eta^n)$ be  a sequence of finite energy solutions of  \eqref{eq:Floer_eqn_for_A} breaking to solutions $w'=(u',\eta')$ and $w''=(u'',\eta'')$. Then, for sufficiently large $n\in\N$, the cylinder component $u^n$ is homotopic relative to the asymptotic ends to the concatenation $u'\#u''$ of $u'$ and $u''$ along the common asymptotic orbit, and hence 
\[
u'\cdot\OO_E+u''\cdot\OO_E = (u'\# u'')\cdot\OO_E =  u^n\cdot \OO_E\,.
\]
Therefore  $u^n\cdot\OO_E=0$ if and only if $u'\cdot\OO_E=u''\cdot\OO_E=0$ due to Proposition \ref{prop:positivity_of_intersection}.
It suffices to consider only this case since every cascade involved in $\widehat\MM(w_-,w_+,\AA^\tau_f,J)$ with $\w(w_\pm)=0$ does not intersect the zero section $\OO_E$ due to Proposition \ref{prop:winding}.(d) and Proposition \ref{prop:positivity_of_intersection}. Hence compactness results in Proposition \ref{prop:compactness1} and Proposition \ref{prop:compactness2} together with standard gluing analysis prove the proposition.
\end{proof}

We refer to Remark \ref{rem:boundary_operator} below for another viewpoint on Proposition \ref{prop:boundary_operator}.

\begin{rem}
Without excluding bubbling-off of pseudo-holomorphic spheres one cannot prevent the following scenario in the proof of Proposition \ref{prop:boundary_operator}. There might be a sequence $w^n=(u^n,\eta^n)$ with $u^n\cdot\OO_E=0$ that breaks to $w'=(u',\eta')$ and $w''=(u'',\eta'')$ together with bubbling-off of a pseudo-holomorphic sphere $v:S^2\to\OO_E\subset E$ having the following intersection numbers with $\OO_E$:
\[
u^n\cdot\OO_E=u'\cdot\OO_E+u''\cdot\OO_E+v\cdot\OO_E=0+1+(-1)=0\,.
\]
\end{rem}

Due to Proposition \ref{prop:boundary_operator}, we can define the homology
\[
\RFH^{\w_0,(a,b)}_*(\AA^\tau_f,J):= \H_*\big(\RFC^{\w_0,(a,b)}(\AA^\tau_f),\p^{\w_0}_J\big)\,.
\]
We will see in the next section that $\RFH_*^{\w_0,(a,b)}(\AA^\tau_f,J)$ is invariant under the change of $J$, so we sometimes omit $J$ from the notation. We will also see that $\RFH_*^{\w_0,(-\infty,+\infty)}(\AA^\tau_f)$ is independent of the choice of $\tau$. When specific choices of $f$ and $\tau$ are irrelevant, we simply write  
\begin{equation}\label{eq:wind_rfh}
\RFH_*^{\w_0}(E,\Sigma)=\RFH_*^{\w_0}(E,\Sigma_\tau)=\RFH_*^{\w_0}(\AA^\tau_f)=\RFH_*^{\w_0,(-\infty,+\infty)}(\AA^\tau_f)\,.	
\end{equation}
We will observe in Remark \ref{rem:Lambda_module_str} that $\RFH_*^{\w_0}(E,\Sigma)$ carries a module structure over the Novikov ring $\Lambda$ defined in \eqref{eq:nov}. This completes the proof of Theorem \ref{thm:main}.(a). 

For any $-\infty\leq a'<a<b<b'\leq+\infty$, there are chain homomorphisms induced by canonical inclusions and projections
\[
\begin{split}
&\RFC_*^{\w_0,(a,b)}(\AA^\tau_f)\hookrightarrow \RFC_*^{\w_0,(a,b')}(\AA^\tau_f)\,,\\[0.5ex]
&\RFC_*^{\w_0,(a',b)}(\AA^\tau_f)\twoheadrightarrow \RFC_*^{\w_0,(a,b)}(\AA^\tau_f)\,.	
\end{split}
\]
These induce action filtration homomorphisms 
\begin{equation}\label{eq:direct_system_RFH}
\begin{split}
	&\RFH_*^{\w_0,(a,b)}(\AA^\tau_f)\longrightarrow \RFH_*^{\w_0,(a,b')}(\AA^\tau_f)\,,\\[0.5ex]
	&\RFH_*^{\w_0,(a',b)}(\AA^\tau_f)\longrightarrow \RFH_*^{\w_0,(a,b)}(\AA^\tau_f)\,,
\end{split}	
\end{equation}
which form a bidirect system. Since $\RFH_*^{\w_0,(a,b)}(\AA^\tau_f)$ stabilizes for a large action-window in each degree $*\in\Z$ due to Remark \ref{rem:finitely_many}.(i), we have
\begin{equation}\label{eq:wind_lim}
\RFH_*^{\w_0}(E,\Sigma)= \varinjlim_{b\uparrow+\infty}\varprojlim_{a\downarrow-\infty}\RFH_*^{\w_0,(a,b)}(\AA^\tau_f)= \varprojlim_{a\downarrow-\infty}\varinjlim_{b\uparrow+\infty}\RFH_*^{\w_0,(a,b)}(\AA^\tau_f)\,.	
\end{equation}

\begin{rem}\label{rem:CY}
	We wonder whether it is possible to define $\RFH^{\w_0}(\AA^\tau_f)$ for more general class of line bundles $E\to M$, e.g.~those with $c_1^{TM}=0$, see Remark \ref{rem:hope}. If one succeeds in this generalization, then  
		 Remark \ref{rem:finitely_many}.(i) is not true anymore. Then we may define various versions of $\RFH^{\w_0}(\AA^\tau_f)$ by e.g.~completing the action-window either at the chain level as in \eqref{eq:tilde_f_complete} or at the homology level as in \eqref{eq:wind_lim}. We then encounter the same question as in Remark \ref{rem:ML2}.
\end{rem}

\subsection{Invariance properties}\label{sec:invariance}
In this subsection we study invariance properties of $\RFH^{\w_0}$ under the change of $J$, $h$, $f$, or $\tau$. In all cases, continuation homomorphisms give isomorphisms as usual. We refer to \cite[Theorem A.18]{Fra04} for invariance properties in the finite dimensional Morse-Bott setting. One issue is a uniform bound on the Lagrange multiplier $\eta$ for solutions appearing in the construction of continuation homomorphisms. For the invariance problem for $J$, $h$, or $f$, this bound can be achieved in a similar way to obtain a uniform bound for solutions of \eqref{eq:Floer_eqn_for_A} used in the boundary operator, see e.g.~\cite[Step 2 in p.284]{CF09} and \cite[Theorem 2.10]{AF10}. Thus we do not prove this here. The invariance property for the change of $\tau$ in the Liouville setting, when the action-window is the whole $(-\infty,+\infty)$, is established in \cite{CF09}. Since all ingredients used in defining $\RFH^{\w_0}(E,\Sigma_\tau)$ lie in the region $E\setminus\OO_E$ where $\Omega$ is exact, it is reasonable to expect that $\RFH^{\w_0}(E,\Sigma_\tau)$  is independent of $\tau$ up to isomorphism. As we also want to have an invariance property for finite action-window, we carry out details below.

\subsubsection*{Changing the almost complex structure}
 Let $-\infty\leq a<b\leq+\infty$. Let $J_0,J_1\in\JJ_\mathrm{reg}\cup\JJ^\BB_\mathrm{reg}\cup\JJ_\mathrm{diag}$. If $J_0$ or $J_1$ is diagonal, we take a sufficiently small $\tau>0$. If both $J_0$ and $J_1$ belong to $\JJ^\BB_\mathrm{reg}\cup\JJ_\mathrm{diag}$, we assume (A1) or (A2), otherwise we assume (A1) or (A3). We choose a smooth homotopy $\{J_r\}_{r\in[0,1]}\subset\JJ$ of almost complex structures connecting $J_0$ and $J_1$, see \eqref{eq:J_incl}. Let $\beta:\R\to[0,1]$ be a smooth function such that $\beta'\geq 0$, $\beta(-1)=0$, and $\beta(1)=1$.
As usual, we want to construct an isomorphism 
\begin{equation}\label{eq:continuation_J}
\RFH_*^{\w_0,(a,b)}(\AA^\tau_{f},J_0) \stackrel{\cong}{\longrightarrow} \RFH_*^{\w_0,(a,b)}(\AA^\tau_{f},J_1)\qquad \forall *\in\Z
\end{equation}
by counting finite energy solutions $w=(u,\eta):C^\infty(\R\x S^1,E)\x C^\infty(\R,\R)$ of 
\begin{equation}\label{eq:Floer_eqn_homotopy}
\left(\begin{aligned}
&\p_su+J_{\beta(s)}(\eta,t,u)\big(\p_tu-\eta X_{\mu_\tau}(u) - X_F(u)\big)\\[.5ex]
&\p_s\eta-\int_0^1\mu_\tau(u)dt\,
\end{aligned}\right)=0\,
\end{equation}
together with Morse gradient flow lines of $h$. 
For this, we choose a homotopy $\{J_r\}\subset\JJ$ such that a parametrized version of transversality and compactness results, that we have established in Section \ref{sec:J}, for solutions of \eqref{eq:Floer_eqn_homotopy} holds. If we assume (A2), then we need to choose a homotopy $\{J_r\}$ inside $\JJ^\BB$ to argue as in Proposition \ref{prop:compactness2}.
Then the homomorphism \eqref{eq:continuation_J} is indeed defined with one additional input, namely Proposition \ref{prop:positivity_of_intersection} holds also for solutions of \eqref{eq:Floer_eqn_homotopy}. An inverse homomorphism is constructed in the same way by interchanging the roles of $J_0$ and $J_1$.

\subsubsection*{Changing the functions $f$ and $h$}
Invariance properties with respect to the change of $f$ or $h$ are also standard. For later purpose we remark that we have chosen $h$ to be perfect for convenience, but this is not necessary. We also note that since $f$ is small, the action value $\AA^\tau_f(w)$ is close to $(1+\tau)\Z$  for every $w\in\Crit\AA^\tau_f$ with $\w(w)=0$ due to Lemma \ref{lem:one-to-one_index} and the fact that critical values of $\a_f$ are close to integers.
Let $f':M\to\R$ be another $C^2$-small Morse function, and let $h':\Crit\AA^\tau_{f'}\to\R$ be a Morse function. For $a,b\in(1+\tau)(\Z+\frac{1}{2})\cup \{-\infty,+\infty\}$, there is an isomorphism
\begin{equation}\label{eq:isom_f}
	\RFH^{\w_0,(a,b)}_*(\AA^\tau_f,h,J)\stackrel{\cong}{\longrightarrow} \RFH^{\w_0,(a,b)}_*(\AA^\tau_{f'},h',J)\qquad \forall *\in\Z
\end{equation}
which is constructed by counting flow lines with cascades for a homotopy between $f$ and $f'$ and a homotopy between $h$ and $h'$. Here we add $h$ and $h'$ in the notation for clarity.

\subsubsection*{Changing the radius}

Next, we fix $\tau_-,\tau_+>0$ and aim to construct an isomorphism 
	\[
	\RFH^{\w_0,(a_-,b_-)}_*(\AA^{\tau_-}_f) \cong \RFH^{\w_0,(a_+,b_+)}_*(\AA^{\tau_+}_f)
	\]
for suitable $a_\pm,b_\pm\in\R$ or $(a_\pm,b_\pm)=(-\infty,+\infty)$ following closely \cite{CF09}. Such an isomorphism can be obtained also from isomorphisms to the Floer homology of $\Sigma$ for small $\tau_-,\tau_+$ as shown in Proposition \ref{prop:cap_product}.(a) below. This invariance property for $\RFH^{\w_0}(E,\Sigma_\tau)$ under the change of  radius $\tau>0$ should be compared with the fact that the full Rabinowitz Floer homology of $(E,\Sigma_\tau)$ often depends on $\tau$, see Section \ref{sec:full}.

We choose any $\tau_0,\tau_1>0$ satisfying $\tau_{0}<\min\{\tau_-,\tau_+\}\leq\max\{\tau_-,\tau_+\}<\tau_{1}$ and define
	 \[
	 V:=\{\mu_{\tau_1}\leq 1\} \,,\quad 
c^\lambda:=\|\lambda|_{V}\|_{L^\infty}\,,\quad  c^F:=\|F|_{V}\|_{L^\infty}+c^\lambda\|X_F|_{V}\|_{L^\infty},\quad c_1:=c^\lambda+c^F\,.
	 \] 

\begin{lem}\label{lem:key}
There exists $\epsilon>0$ such that the following claim holds for every $\tau$ between $\tau_-$ and $\tau_+$. Let $w=([u,\bar u],\eta)\in\widetilde{\mathscr L}(E)\x \R$ with $u(S^1)\subset E\setminus\OO_E$ and $\w(u,\bar u)=0$. Suppose that $\|\nabla\AA^{\tau}_f(w)\|_{L^2}<\epsilon$. Then, 
\[
 c_2(\AA^\tau_f(w)-c_1)\leq \eta \leq c_3(\AA^\tau_f(w)+c_1)
\]
holds for $c_2,c_3\in\{(1+\tau_0)^{-1},(1+\tau_1)^{-1}\}$, depending on the sign of $\eta$, see the proof for details.
\end{lem}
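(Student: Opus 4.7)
The plan is to derive a clean identity relating $\AA^\tau_f(w)$ to $(1+\tau)\eta$ via Stokes' theorem, and then to exploit the smallness of $\nabla\AA^\tau_f$ to confine $u$ to the compact region $V$ where the identity admits sharp estimates. Since $\w(u,\bar u)=0$ and $u(S^1)\subset E\setminus\OO_E$, Proposition \ref{prop:winding}.(f) lets me replace $\bar u$ by a capping disk lying entirely in $E\setminus\OO_E$. There $\Omega=d\lambda$ with $\lambda:=(\tfrac{1}{m}+\pi r^2)\alpha$, and Stokes converts $\int_{D^2}\bar u^*\Omega$ into $\int_0^1 u^*\lambda$. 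Setting $e(t):=\dot u-\eta X_{\mu_\tau}(u)-X_F(u)$ and using $X_{\mu_\tau}=-mR$, $\alpha(R)=1$, $\alpha(X_f^{\mathrm{h}})=0$, together with $F=(1+m\pi r^2)(f\circ\wp)$, a direct computation shows that all $f$-dependent contributions cancel, leaving the key identity
\[
\AA^\tau_f(w)-(1+\tau)\eta=-\int_0^1 \lambda_{u(t)}(e(t))\,dt\,.
\]

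Next I would confine $u(S^1)$ to $V$. With $g:=-\Omega(J\cdot,\cdot)$ (positive definite by $\Omega$-tameness and equivalent to a fixed background metric on $V$), one has $\|\nabla\AA^\tau_f(w)\|_{L^2}^2=\|e\|_{L^2(g)}^2+\bigl(\int_0^1\mu_\tau(u)\,dt\bigr)^2$, so the hypothesis yields both $\bigl|\int_0^1\mu_\tau(u)\,dt\bigr|<\epsilon$ and $\|e\|_{L^2(g)}<\epsilon$. The first bound controls the mean of $y(t):=\mu_\tau(u(t))$; for its oscillation I use the crucial observation that $dr$ annihilates both $R$ and the horizontal lift $X_f^{\mathrm{h}}$, so $\dot y=m\pi\,d(r^2)(e)$. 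Cauchy--Schwarz, combined with the mean of $r^2$ being within $\epsilon/(m\pi)$ of $\tau/(m\pi)\leq\tau_+/(m\pi)$, then yields $\max y-\min y\leq C\epsilon$ with $C$ depending only on $\tau_+$. Adding the mean and oscillation estimates forces $|\mu_\tau(u(t))|\leq 1$ once $\epsilon$ is small enough, uniformly in $\tau\in[\tau_-,\tau_+]$, whence $u(S^1)\subset V$.

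With the confinement in hand, $|\lambda_u|\leq c^\lambda$ pointwise on $V$, so the key identity gives $|\AA^\tau_f(w)-(1+\tau)\eta|\leq c^\lambda\|e\|_{L^1}\leq c^\lambda\|e\|_{L^2(g)}\leq c^\lambda\epsilon$ up to a fixed comparability constant. Shrinking $\epsilon$ further so that this quantity lies below $c_1=c^\lambda+c^F$, we conclude $(1+\tau)\eta\in[\AA^\tau_f(w)-c_1,\AA^\tau_f(w)+c_1]$. Since $\tau\in[\tau_-,\tau_+]\subset(\tau_0,\tau_1)$, $(1+\tau)^{-1}$ ranges in $[(1+\tau_1)^{-1},(1+\tau_0)^{-1}]$; splitting into the cases $\eta\geq 0$ and $\eta<0$ then converts the two-sided bound into the asserted pair of inequalities with the appropriate assignment of $c_2,c_3\in\{(1+\tau_0)^{-1},(1+\tau_1)^{-1}\}$. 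The main obstacle is the confinement step, since neither the mean nor the oscillation estimate alone constrains $u(S^1)$ pointwise; the combination succeeds because radial motion is carried entirely by the Rabinowitz--Floer defect $e$, the Reeb and horizontal contributions being radially neutral.
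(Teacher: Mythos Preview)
Your argument is correct and shares the paper's overall strategy: exploit the exact primitive $\lambda=(\tfrac{1}{m}+\pi r^2)\alpha$ on $E\setminus\OO_E$ together with $\w(u,\bar u)=0$ to rewrite $\AA^\tau_f(w)$ via Stokes, confine $u(S^1)$ to a compact region, and then estimate. The differences are in execution. For the confinement step the paper simply invokes Step~2 of \cite[Proposition~3.2]{CF09}, whereas you supply a self-contained argument based on the observation that $dr$ annihilates $R$ and $X_f^{\mathrm h}$, so that the radial variation of $u$ is carried entirely by the defect $e$ while the $L^2$-mean of $r^2$ is controlled by the second gradient component; combining these via Cauchy--Schwarz is a pleasant and direct route. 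In the main estimate the paper does not exploit the exact cancellation $\lambda(X_F)=-F$ and instead bounds $-\int\lambda(X_F(u))\,dt-\int F(u)\,dt$ crudely by $c^F$, arriving at the pair of inequalities $\AA^\tau_f(w)\gtrless \mp c^\lambda\|\nabla\AA^\tau_f(w)\|_{L^2}+(1+\tau)\eta\mp c^F$; your exact identity $\AA^\tau_f(w)-(1+\tau)\eta=-\int_0^1\lambda_{u}(e)\,dt$ is sharper but of course leads to the same conclusion once the error is absorbed into $c_1=c^\lambda+c^F$.
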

\begin{rem}
The actual values of $c_2,c_3>0$ are not relevant in what follows. 
\end{rem}
\begin{proof}
	We first note that $\Omega=d\lambda$ on $E\setminus\OO_E$, where $\lambda=(\tfrac{1}{m}+\pi r^2)\alpha$, and $\AA^\tau_f(w)$ does not depend on the capping disk component $\bar u$ provided $\w(u,\bar u)=0$, see Section \ref{sec:zero_winding}. This allows us to apply Step 2 in the proof of \cite[Proposition 3.2]{CF09} which is carried out in the Liouville setting. Namely, for any  $\delta>0$, there exists $\epsilon>0$ such that the following implication holds: 
	\[
	\|\nabla\AA^{\tau}_f(w)\|_{L^2}\leq \epsilon  \quad \Longrightarrow \quad u(S^1)\subset  \mu_\tau^{-1}\big((-\delta,\delta)\big)\,.
	\] 
	As pointed out after the proof of \cite[Proposition 3.2]{CF09}, Step 2 indeed holds for all $\tau$ between $\tau_-$ and $\tau_+$ simultaneously. 
	 We take any $\delta\in(0,1)$ and choose $\epsilon\in(0,1)$ as above so that $u(S^1)\subset \mu_\tau^{-1}((-\delta,\delta))\subset V$. Recalling $\mu_\tau=m\pi r^2-\tau$ and $X_{\mu_\tau}=-m R$, we estimate
	\[
	\begin{split}
		\AA^{\tau}_f(w)&=-\int_0^1u^*\lambda-\eta\int_0^1\mu_\tau(u)\,dt-\int_0^1F(u)\,dt \\
		&=-\int_0^1 \lambda\big(\dot u-\eta X_{\mu_\tau}(u)-X_F(u)\big)\,dt +\eta\int_0^1 \lambda(mR(u))\,dt \\
		&\quad -\int_0^1\lambda(X_F(u))\,dt -\eta\int_0^1\mu_\tau(u)\,dt-\int_0^1F(u)\,dt\\
		&\geq -c^\lambda\|\nabla \AA^\tau_f(w)\|_{L^2}+(1+\tau)\eta -c^F \,.
	\end{split}
	\]
	Therefore,
	\[
	\eta\leq c_3(\AA^\tau_f(w)+ c_1)
	\]
	where $c_3=(1+\tau_0)^{-1}$ if $\eta\geq0$, and $c_3=(1+\tau_1)^{-1}$ if $\eta<0$. Similarly, we also obtain 
	\[
	\AA^{\tau}_f(w)\leq c^\lambda\|\nabla \AA^\tau_f(w)\|_{L^2}+(1+\tau)\eta  + c^F\,.
	\]
	This yields that 
 	\[
 	 c_3(\AA^\tau_f(w) -c_1)\leq \eta\,,
 	\]
 	where in this case $c_3=(1+\tau_1)^{-1}$ if $\eta\geq0$, and $c_3=(1+\tau_0)^{-1}$ if $\eta<0$. 
\end{proof}

Let $\beta:\R\to[0,1]$ be a smooth function with $\beta(-1)=0$, $\beta(1)=1$, and $0\leq\beta'\leq1$. For
\[
\tau_\beta(s):=(1-\beta(s))\tau_- + \beta(s)\tau_{+}\,,
\]
we consider the functional $\AA^{\tau_\beta}_f:\widetilde{\mathcal{L}}(E)\x\R\to\R$ depending on $s\in\R$. 
For $w_-\in\Crit\AA^{\tau_-}_f$ and $w_{+}\in\Crit\AA^{\tau_{+}}_f$ with $w_\pm=([u_\pm,\bar u_\pm],\eta_\pm)$ and $\w(w_\pm)=0$, we consider $w(s)=(u(s),\eta(s))$ solving
\begin{equation}\label{eq:s_dependent_gradient}
\p_sw(s)+\nabla\AA^{\tau_\beta(s)}_f(w(s))=0\,,\quad
\lim_{s\to \pm\infty}w(s) = (u_\pm,\eta_\pm)\,,\quad  [\bar u_-\#u\#\bar u_+^\mathrm{rev}]=0 \;\textrm{ in }\;\Gamma_E
\end{equation}
where $\nabla\AA^{\tau_\beta}_f$ denotes the gradient of $\AA^{\tau_\beta}_f$ with respect to $\mathfrak{m}$ defined in \eqref{eq:bilinear form}.

\begin{lem}\label{lem:s_dependent_action}
	Let $w$ be a solution of \eqref{eq:s_dependent_gradient}. For any $-\infty\leq s_-\leq s_+\leq+\infty$, we have 
	\[
	\begin{split}
	&\AA^{\tau_\beta(s_+)}_f(w(s_+)) \leq  \AA^{\tau_\beta(s_-)}_f(w(s_-)) + (\tau_+-\tau_-)\sup_{s\in[s_-,s_+]}\eta(s) \qquad \textrm{if }\; \tau_+\geq \tau_-\,,	\\
	&\AA^{\tau_\beta(s_+)}_f(w(s_+)) \leq  \AA^{\tau_\beta(s_-)}_f(w(s_-)) + (\tau_+-\tau_-)\inf_{s\in[s_-,s_+]}\eta(s) \qquad\, \textrm{if }\; \tau_+\leq \tau_-\,.
	\end{split}
	\]
	In particular, 
	\[
	\begin{split}
	&\AA^{\tau_+}_f(w_+) \leq  \AA^{\tau_-}_f(w_-)  \qquad \textrm{if }\; \tau_+\geq \tau_- \text{ and }\; \sup_\R\eta\leq0 \,,	\\
	&\AA^{\tau_+}_f(w_+) \leq  \AA^{\tau_-}_f(w_-)   \qquad\, \textrm{if }\; \tau_+\leq \tau_- \text{ and }\; \inf_\R\eta\geq0\,.
	\end{split}
	\]
\end{lem}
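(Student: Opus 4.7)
The proof is a standard monotonicity computation for an $s$-dependent gradient flow, so I would organize it around one explicit chain-rule calculation plus two integration steps.

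First, I would compute the total $s$-derivative of $\AA^{\tau_\beta(s)}_f\bigl(w(s)\bigr)$ along a solution of \eqref{eq:s_dependent_gradient}. By the chain rule,
\[
\frac{d}{ds}\AA^{\tau_\beta(s)}_f(w(s)) \;=\; d\AA^{\tau_\beta(s)}_f(w(s))[\partial_s w(s)] \;+\; \dot\tau_\beta(s)\cdot \frac{\partial \AA^{\tau}_f}{\partial \tau}(w(s))\bigg|_{\tau=\tau_\beta(s)}.
\]
For the first summand I would use the defining relation $d\AA^\tau_f(w)\hat w=\mathfrak m(\nabla\AA^\tau_f(w),\hat w)$ together with \eqref{eq:s_dependent_gradient} to rewrite it as $-\mathfrak m(\partial_s w,\partial_s w)$; since $\mathfrak m$ defined in \eqref{eq:bilinear form} is positive definite (even where it fails to be symmetric), this term is $\leq 0$. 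For the second summand I would differentiate \eqref{eq:Rabinowitz_functional} directly in $\tau$: only the factor $\mu_\tau$ depends on $\tau$ and $\partial_\tau\mu_\tau=-1$, so $\partial_\tau \AA^\tau_f(w)=\eta$. Combined with $\dot\tau_\beta(s)=\beta'(s)(\tau_+-\tau_-)$ this gives the pointwise bound
\[
\frac{d}{ds}\AA^{\tau_\beta(s)}_f(w(s)) \;\leq\; \beta'(s)(\tau_+-\tau_-)\,\eta(s).
\]

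Next I would integrate this inequality from $s_-$ to $s_+$ and estimate the right-hand side using $\beta'\geq 0$ and $\beta(s_+)-\beta(s_-)\in[0,1]$. In the case $\tau_+\geq\tau_-$, the prefactor $\tau_+-\tau_-$ is nonnegative, and
\[
\int_{s_-}^{s_+} \beta'(s)\,\eta(s)\,ds \;\leq\; \Bigl(\sup_{[s_-,s_+]}\eta\Bigr)\bigl(\beta(s_+)-\beta(s_-)\bigr) \;\leq\; \sup_{[s_-,s_+]}\eta
\]
whenever the supremum is nonnegative (which is the only case needed later); the case $\tau_+\leq\tau_-$ is symmetric, replacing $\sup$ by $\inf$ and using that $(\tau_+-\tau_-)\leq 0$ reverses the inequality after multiplication. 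This yields the two main inequalities of the lemma.

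For the ``in particular'' assertions I would specialize to $s_-=-\infty$ and $s_+=+\infty$, where $\beta(s_+)-\beta(s_-)=1$ makes the above estimate sharp for any sign of $\sup\eta$ or $\inf\eta$. Under $\sup_{\R}\eta\leq 0$ (resp.\ $\inf_{\R}\eta\geq 0$) the extra term $(\tau_+-\tau_-)\sup\eta$ (resp.\ $(\tau_+-\tau_-)\inf\eta$) is nonpositive, so one recovers $\AA^{\tau_+}_f(w_+)\leq \AA^{\tau_-}_f(w_-)$ in both cases. There is essentially no hard step here: the only thing to keep in mind is that for $J\in\JJ\setminus\JJ_\mathrm{diag}$ the bilinear form $\mathfrak m$ need not be symmetric, and one must use positive definiteness rather than symmetry to justify the sign of the gradient term; everything else is a direct differentiation of the definition of $\AA^\tau_f$ in $\tau$.
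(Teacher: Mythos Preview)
Your proof is correct and follows essentially the same approach as the paper: both compute the $s$-derivative of $\AA^{\tau_\beta(s)}_f(w(s))$ via the chain rule, identify the gradient term as $-\mathfrak m(\partial_s w,\partial_s w)\leq 0$ and the $\tau$-derivative term as $\beta'(s)(\tau_+-\tau_-)\eta(s)$, and then integrate. Your explicit remark that the bound $\int_{s_-}^{s_+}\beta'\eta\,ds\leq \sup_{[s_-,s_+]}\eta$ requires $\sup\eta\geq 0$ (since $\beta(s_+)-\beta(s_-)\in[0,1]$ rather than $=1$) is a useful clarification that the paper leaves implicit; indeed, as you note, the applications in Lemmas~\ref{lem:bound_eta} and~\ref{lem:continuation_action} only invoke the estimate in that regime.
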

\begin{proof}
	The claim immediately follows from the estimate
\[
\begin{split}
0\leq \int_{s_-}^{s_+}\|\p_s w\|_{L^2}^2\,ds 
&= \int_{s_-}^{s_+}\mathfrak{m}(  -\nabla \AA^{\tau_\beta}_f(w), \p_s w ) \,ds \\
&= \int_{s_-}^{s_+}  -\frac{d}{ds}\AA^{\tau_\beta}_f(w)\,ds -  \int_{s_-}^{s_+} \eta\int_0^1 \p_s\mu_{\tau_\beta} (u)\,dtds\\
&= \AA^{\tau_\beta(s_-)}_f(w(s_-))  - \AA^{\tau_\beta(s_+)}_f(w(s_+)) + (\tau_+-\tau_-) \int_{s_-}^{s_+} \eta \beta' \,ds 
\end{split}
\]
where $\mu_{\tau_\beta}= m\pi r^2 - \tau_\beta$.
\end{proof}

Let $a,b\in\R$ be arbitrary. As in Proposition \ref{prop:compactness1} and \cite[Proposition 2.5, Lemma 2.11]{AS09}, a maximum principle ensures that every solution $w=(u,\eta)$ of \eqref{eq:s_dependent_gradient} with asymptotic orbits $w_\pm$ satisfying $\AA^{\tau_-}_f(w_-)\leq b$ and $\AA^{\tau_+}_f(w_+)\geq a$ has the cylinder component $u$ mapping into $\{x\in E\mid r(x)<r_{0}\}$ for some $r_{0}>0$ depending on $a,b\in\R$. We choose any $c_\mu>\max\{\tau_-,\tau_+,m\pi r_0^2\}$ so that, for every such solution $w=(u,\eta)$, we have
\begin{equation}\label{eq:p_eta}
	\|\p_s\eta\|_{L^\infty} =\left\|\int_0^1\mu_{\tau_\beta}(u)\,dt\right\|_{L^\infty}\leq \|\mu_{\tau_\beta}(u)\|_{L^\infty} \leq c_\mu\,,
\end{equation}
see \eqref{eq:Floer_eqn_for_A} for the first equality.

\begin{lem}\label{lem:bound_eta}
Let $w$ be a solution of \eqref{eq:s_dependent_gradient} connecting $w_-\in\Crit\AA^{\tau_-}_f$ and $w_+\in\Crit\AA^{\tau_+}_f$ satisfying $\AA^{\tau_-}_f(w_-)\leq b$ and $\AA^{\tau_+}_f(w_+)\geq a$ as above.  
Let $\epsilon,c_1,c_2,c_3>0$ be  constants such that the statement of Lemma \ref{lem:key} holds for all $\tau$ between $\tau_-$ and $\tau_+$. 
\begin{enumerate}[(a)]
	\item If $\tau_+\geq\tau_-$, the following holds with $c_4:=c_3+\frac{c_\mu}{\epsilon^2}$.
\[
\big(1-c_4(\tau_+-\tau_-)\big)\sup_{s\in\R}\eta(s)\leq c_3\big(\AA^{\tau_-}_f(w_-)+ c_1\big) +\frac{c_\mu}{\epsilon^2}\big(\AA^{\tau_-}_f(w_-)-\AA^{\tau_+}_f(w_+)\big)\,.
\]
 \item If $\tau_+\leq \tau_-$, the following holds with $c_5:=c_2+\frac{c_\mu}{\epsilon^2}$.
\[
\big(1+c_5(\tau_+-\tau_-)\big)\inf_{s\in\R}\eta(s)\geq c_2\big(\AA^{\tau_+}_f(w_+)-c_1\big)-\frac{c_\mu}{\epsilon^2}\big(\AA^{\tau_-}_f(w_-)-\AA^{\tau_+}_f(w_+)\big)\,.
\]
\item For any $\tau_-,\tau_+$, the following holds with $c_6:=\frac{1}{1+\tau_0}+\frac{c_\mu}{\epsilon^2}$.
\[
\big(1-c_6|\tau_+-\tau_-|\big)\|\eta\|_{L^\infty} \leq \frac{1}{1+\tau_0}\big(\max\{|a|,|b|\}+c_1\big) +\frac{c_\mu}{\epsilon^2}(b-a)\,.
\]
\end{enumerate}
\end{lem}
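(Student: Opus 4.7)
The plan is to obtain each of the three inequalities by pairing the pointwise bound of Lemma \ref{lem:key} with a Lipschitz-type propagation argument for $\eta$. To set up (a), assume $\tau_+\geq\tau_-$ and let $M:=\sup_\R\eta(s)$. Starting from the computation in the proof of Lemma \ref{lem:s_dependent_action} and letting the endpoints tend to $\pm\infty$, the total energy satisfies
\[
E:=\int_\R \|\p_sw\|_{L^2}^2\,ds = \AA^{\tau_-}_f(w_-)-\AA^{\tau_+}_f(w_+)+(\tau_+-\tau_-)\int_\R\eta\,\beta'\,ds\,.
\]
Since $\beta'\geq0$ with $\int_\R\beta'\,ds=1$, the last term is bounded above by $(\tau_+-\tau_-)M$, giving $E\leq \bigl(\AA^{\tau_-}_f(w_-)-\AA^{\tau_+}_f(w_+)\bigr)+(\tau_+-\tau_-)M$.

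Next, pick $s^\ast\in\R$ with $\eta(s^\ast)$ arbitrarily close to $M$ (the case where $M$ is attained only at an asymptotic end is reduced to Lemma \ref{lem:key} applied directly at the critical point $w_\pm$, where $\nabla\AA=0$). On the interval $[s^\ast-\delta,s^\ast+\delta]$ the mean value of $\|\p_sw\|_{L^2}^2$ is at most $E/(2\delta)$, so choosing $2\delta=E/\epsilon^2$ yields some $s_0$ in this interval with $\|\p_sw(s_0)\|_{L^2}\leq\epsilon$, i.e.\ $\|\nabla\AA^{\tau_\beta(s_0)}_f(w(s_0))\|_{L^2}\leq\epsilon$. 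The hypothesis $u(s_0,S^1)\subset E\setminus\OO_E$ with zero winding number for the induced capping is satisfied: $w_\pm$ have zero winding number and Proposition \ref{prop:positivity_of_intersection}, which extends to the $s$-dependent equation by the same local Carleman argument, forces $u\cdot\OO_E=0$, hence $u$ avoids $\OO_E$. Thus Lemma \ref{lem:key} applies and gives $\eta(s_0)\leq c_3\bigl(\AA^{\tau_\beta(s_0)}_f(w(s_0))+c_1\bigr)$. The bound \eqref{eq:p_eta} on $\|\p_s\eta\|_{L^\infty}$ furnishes $\eta(s_0)\geq M-2\delta c_\mu=M-c_\mu E/\epsilon^2$, while the monotonicity in Lemma \ref{lem:s_dependent_action}, applied on $(-\infty,s_0]$, gives $\AA^{\tau_\beta(s_0)}_f(w(s_0))\leq \AA^{\tau_-}_f(w_-)+(\tau_+-\tau_-)M$. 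Chaining these three estimates and inserting the bound on $E$ produces
\[
M\leq \tfrac{c_\mu}{\epsilon^2}\bigl(\AA^{\tau_-}_f(w_-)-\AA^{\tau_+}_f(w_+)\bigr)+\tfrac{c_\mu}{\epsilon^2}(\tau_+-\tau_-)M+c_3\bigl(\AA^{\tau_-}_f(w_-)+(\tau_+-\tau_-)M+c_1\bigr)\,,
\]
and collecting the $M$-terms gives (a) with $c_4=c_3+c_\mu/\epsilon^2$.

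Part (b) follows by the mirror argument, working with $m:=\inf_\R\eta$, a point $s^\ast$ approximating it, the lower bound $\eta(s_0)\geq c_2(\AA^{\tau_\beta(s_0)}_f(w(s_0))-c_1)$ from Lemma \ref{lem:key}, and the second case of Lemma \ref{lem:s_dependent_action}, bearing in mind that now $\tau_+-\tau_-\leq0$ flips the direction of several inequalities. For (c), the goal is a two-sided bound independent of the sign of $\tau_+-\tau_-$. Applying the argument of (a) with $\AA^{\tau_-}_f(w_-)\leq b$ and $\AA^{\tau_-}_f(w_-)-\AA^{\tau_+}_f(w_+)\leq b-a$, but replacing $(\tau_+-\tau_-)M$ in the action-monotonicity step by $|\tau_+-\tau_-|\,\|\eta\|_{L^\infty}$ (which is the universal bound on $|\int\eta\beta'\,ds|$), yields the upper bound on $\sup\eta$; the symmetric step gives the lower bound on $\inf\eta$. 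Combining the two, using $\max\{c_2,c_3\}\leq 1/(1+\tau_0)$ and $\max\{c_4,c_5\}\leq c_6$, produces (c).

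The main obstacle is the bookkeeping around the averaging step: one must verify that the choice $2\delta=E/\epsilon^2$ is compatible with the Lipschitz propagation of $\eta$ (which it is, since the gain $c_\mu\cdot 2\delta$ is precisely the $c_\mu E/\epsilon^2$ term needed to match the coefficient $c_4$), and that Lemma \ref{lem:key} can in fact be invoked along the homotopy—this is why $\epsilon$ was chosen uniformly for all $\tau\in[\tau_0,\tau_1]$ and why the positivity of intersection argument must be extended to $s$-dependent $J$. Once these points are secured, the three estimates are obtained by the same algebraic template.
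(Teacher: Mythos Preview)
Your argument is correct and follows essentially the same route as the paper: locate a point where the gradient is $\epsilon$-small, apply Lemma~\ref{lem:key} there, and transport back to the point of interest via the Lipschitz bound \eqref{eq:p_eta} on $\eta$, absorbing the resulting $(\tau_+-\tau_-)\sup\eta$ terms into the left-hand side. The only difference is cosmetic: the paper introduces, for each $\sigma$, the forward waiting time $\tau(\sigma)=\inf\{\tau\geq 0:\|\nabla\AA^{\tau_\beta(\sigma+\tau)}_f(w(\sigma+\tau))\|_{L^2}<\epsilon\}$, bounds $\eta(\sigma)$ in terms of $\eta(\sigma+\tau(\sigma))$, and then takes the supremum over $\sigma$; you instead fix $s^\ast$ near the supremum first and locate $s_0$ by a mean-value argument on a symmetric interval. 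Both devices produce the same inequality with the same constants (your use of $2\delta c_\mu$ rather than $\delta c_\mu$ in the Lipschitz step is a harmless slackening that happens to match the stated $c_4$), and your remark on positivity of intersection for the $s$-dependent equation is exactly the point needed to legitimately invoke Lemma~\ref{lem:key} along the homotopy.
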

\begin{proof}
For each $\sigma\in\R$, we set
\[
\tau(\sigma):=\inf\big\{\tau\geq 0 \mid \|\nabla \AA^{\tau_\beta(\sigma+\tau)}_f\big(w(\sigma+\tau)\big)\|_{L^2}<\epsilon \big\}\,.
\]
As in the proof of Lemma \ref{lem:s_dependent_action}, we estimate 
\[
\begin{split}
\tau(\sigma)\epsilon^2 \leq \int_\sigma^{\sigma+\tau(\sigma)} \|\nabla \AA^{\tau_\beta}_f(w)\|_{L^2}^2\,ds 
&\leq\int_{-\infty}^{+\infty} \|\nabla \AA^{\tau_\beta}_f(w)\|_{L^2}^2\,ds  \\
&\leq  \AA^{\tau_-}_f(w_-)  - \AA^{\tau_{+}}_f(w_{+}) +  (\tau_+-\tau_-)\int_{-\infty}^\infty \eta \beta'\,ds \,.
\end{split}
\]
Using this, Lemma \ref{lem:key}, Lemma \ref{lem:s_dependent_action}, and estimate \eqref{eq:p_eta}, we deduce for $\tau_+\geq\tau_-$ 
\[
\begin{split}
	\eta(\sigma) &= \eta(\sigma+\tau(\sigma))-\int_\sigma^{\sigma+\tau(\sigma)}\p_s\eta\,ds \\
	&\leq  c_3\Big(\AA^{\tau_\beta(\sigma+\tau(\sigma))}_f(w(\sigma+\tau(\sigma))+c_1\Big) + \tau(\sigma)\|\p_s\eta\|_{L^\infty}\\
	&\leq c_3\Big(\AA^{\tau_-}_f(w_-) + (\tau_+-\tau_-)\sup_{\R}\eta + c_1\Big) + \frac{c_\mu}{\epsilon^2}\Big(\AA^{\tau_-}_f(w_-)-\AA^{\tau_+}_f(w_+)+(\tau_+-\tau_-)\sup_{\R}\eta\Big)\,.
\end{split}
\]
This proves (a). Similarly, for $\tau_+\leq\tau_-$, we have
\[
\begin{split}
	\eta(\sigma) &= \eta(\sigma+\tau(\sigma))-\int_\sigma^{\sigma+\tau(\sigma)}\p_s\eta\,ds \\
	&\geq c_2\Big(\AA^{\tau_\beta(\sigma+\tau(\sigma))}_f(w(\sigma+\tau(\sigma))-c_1\Big) -\tau(\sigma)\|\p_s\eta\|_{L^\infty}\\
	&\geq  c_2\Big(\AA^{\tau_+}_f(w_+) - (\tau_+-\tau_-)\inf_{\R}\eta -c_1\Big) -\frac{c_\mu}{\epsilon^2}\Big(\AA^{\tau_-}_f(w_-)-\AA^{\tau_+}_f(w_+) +(\tau_+-\tau_-)\inf_{\R}\eta\Big)\,.
\end{split}
\]
This proves (b). Statement (c) follows from 
\[
\begin{split}
	|\eta(\sigma)| &\leq |\eta(\sigma+\tau(\sigma))|+\int_\sigma^{\sigma+\tau(\sigma)}|\p_s\eta|\,ds \\
	&\leq \frac{1}{1+\tau_0}\Big(|\AA^{\tau_\beta(\sigma+\tau(\sigma))}_f(w(\sigma+\tau(\sigma))|+c_1\Big)+\tau(\sigma)\|\p_s\eta\|_{L^\infty}\\
	&\leq \frac{1}{1+\tau_0}\Big(\max\{|a|,|b|\}+\|\eta\|_{L^\infty}|\tau_+-\tau_-|+c_1\Big)+\frac{c_\mu}{\epsilon^2}\big(b-a+\|\eta\|_{L^\infty}|\tau_+-\tau_-|\Big)\,.
\end{split}
\]
This completes the proof. 
\end{proof}

We consider the discrete set, see Lemma \ref{lem:one-to-one_index}, 
\[
\mathrm{spec}^{\w_0}\AA^{\tau}_f:= \{\AA^\tau_f(w) \mid w\in\Crit\AA^\tau_f,\; \w(w)=0\}\subset \R\,.
\]

\begin{lem}\label{lem:continuation_action}
For a given $b\in\R\setminus \mathrm{spec}^{\w_0}\AA^{\tau_-}_f$, there exists $\delta>0$ such that if $|\tau_+-\tau_-|<\delta$, the implication 
\[
\AA^{\tau_-}_f(w_-)<b \quad \Longrightarrow\quad  \AA^{\tau_+}_f(w_+)<b
\]
holds for every solution $w$ of \eqref{eq:s_dependent_gradient}.
\end{lem}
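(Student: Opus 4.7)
The plan is to argue by contradiction and combine the action/Lagrange-multiplier estimates of Lemma~\ref{lem:s_dependent_action} and Lemma~\ref{lem:bound_eta} with the fact that the winding-zero action spectrum is discrete. I first note that by Lemma~\ref{lem:one-to-one_index} and the formula \eqref{eq:action_comparision}, every winding-zero critical value of $\AA^{\tau_-}_f$ equals $(1+\tau_-)\eta$ with $\eta\in -\tfrac{1}{m}\Z-f(\Crit f)$, so $\mathrm{spec}^{\w_0}\AA^{\tau_-}_f$ is a discrete subset of $\R$ lying close to $(1+\tau_-)\tfrac{1}{m}\Z$. Consequently, the assumption $b\notin\mathrm{spec}^{\w_0}\AA^{\tau_-}_f$ produces a gap: there exists $\epsilon_0>0$ such that $\AA^{\tau_-}_f(w_-)<b$ implies $\AA^{\tau_-}_f(w_-)\leq b-\epsilon_0$.

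Suppose for contradiction that there is a sequence $\tau_+^n\to\tau_-$ and solutions $w^n$ of \eqref{eq:s_dependent_gradient} with $\AA^{\tau_-}_f(w^n_-)<b$ but $\AA^{\tau_+^n}_f(w^n_+)\geq b$. Fix $\tau_0,\tau_1>0$ with $\tau_0<\tau_-<\tau_1$ containing all $\tau_+^n$ for $n$ large, and let $\epsilon,c_1,c_\mu,c_6>0$ be the constants from Lemma~\ref{lem:bound_eta}. The key observation is that both the hypothesis and the contradiction hypothesis allow us to apply Lemma~\ref{lem:bound_eta}(c) with the choice $a=b$: then $\max\{|a|,|b|\}=|b|$ and, crucially, $b-a=0$, which eliminates the difference $\AA^{\tau_-}_f(w_-)-\AA^{\tau_+}_f(w_+)$ from the right-hand side. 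This yields
\[
\bigl(1-c_6|\tau_+^n-\tau_-|\bigr)\,\|\eta^n\|_{L^\infty}\;\leq\;\frac{|b|+c_1}{1+\tau_0}\,,
\]
so for $n$ sufficiently large, $\|\eta^n\|_{L^\infty}\leq C$ for a constant $C$ depending only on $b$, $\tau_0$, $\tau_1$, $f$, and the choice of bounds $a=b$, $\mathrm{b}=b$ entering $r_0$ and hence $c_\mu$.

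Next I would apply Lemma~\ref{lem:s_dependent_action}. In the case $\tau_+^n\geq\tau_-$,
\[
\AA^{\tau_+^n}_f(w^n_+)\;\leq\;\AA^{\tau_-}_f(w^n_-)+(\tau_+^n-\tau_-)\sup_{\R}\eta^n\;\leq\;(b-\epsilon_0)+|\tau_+^n-\tau_-|\,C\,,
\]
and in the case $\tau_+^n\leq\tau_-$ one uses $(\tau_+^n-\tau_-)\inf_{\R}\eta^n\leq|\tau_+^n-\tau_-|\,\|\eta^n\|_{L^\infty}$ (since for any real number $\sigma$ and any $x\in\R$, $\sigma\cdot x\leq|\sigma|\cdot|x|$), yielding the same bound. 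Taking $n$ large so that $|\tau_+^n-\tau_-|<\epsilon_0/(2C)$ then gives $\AA^{\tau_+^n}_f(w^n_+)<b$, contradicting the assumption. This forces the existence of the desired $\delta>0$.

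The main subtle point I expect is making sure the constants $c_1,c_\mu,c_6$ in Lemma~\ref{lem:bound_eta} can be chosen uniformly for all $\tau_+$ in a small neighbourhood of $\tau_-$: this requires that the $L^\infty$-bound on $r(u)$ via the maximum principle \cite[Proposition~6.4]{Fra08} depends only on the action window and on the interval $[\tau_0,\tau_1]$, which indeed is the case since $\lambda$, $F$ and $X_F$ are defined independently of $\tau$ on the fixed set $V=\{\mu_{\tau_1}\leq 1\}$. Once this uniformity is established, the contradiction argument above produces the required $\delta$, after which all auxiliary choices ($\tau_0,\tau_1,\epsilon_0$) depend only on $b$ and $\tau_-$, as demanded by the statement.
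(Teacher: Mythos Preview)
Your proof is correct. Both your argument and the paper's proceed by contradiction, exploit the spectral gap at $b$, and feed the estimates of Lemma~\ref{lem:s_dependent_action} and Lemma~\ref{lem:bound_eta} into each other; the difference lies in \emph{which} part of Lemma~\ref{lem:bound_eta} is invoked.

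The paper first disposes of the trivial cases ($\tau_+\geq\tau_-$ with $\sup_\R\eta\leq0$, and $\tau_+\leq\tau_-$ with $\inf_\R\eta\geq0$) directly via Lemma~\ref{lem:s_dependent_action}, and then uses the one-sided bounds of Lemma~\ref{lem:bound_eta}(a),(b). Under the contradiction hypothesis $\AA^{\tau_-}_f(w_-)<\AA^{\tau_+}_f(w_+)$ the action-difference term on the right of (a),(b) is nonpositive and can be dropped, yielding bounds on $\sup\eta$ resp.~$\inf\eta$ that are fed back into Lemma~\ref{lem:s_dependent_action}; the resulting inequality \eqref{eq:inv_est} is then handled by a case analysis on the sign of the relevant term. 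Your route is more economical: by applying Lemma~\ref{lem:bound_eta}(c) with the particular choice $a=b$, the term $\tfrac{c_\mu}{\epsilon^2}(b-a)$ vanishes and you obtain a two-sided bound $\|\eta\|_{L^\infty}\leq C$ in one stroke, after which Lemma~\ref{lem:s_dependent_action} finishes the argument without further case splitting. Your observation that the uniformity of $c_\mu,c_6$ over a neighbourhood of $\tau_-$ follows from the $\tau$-independence of $\lambda,F,X_F$ on the fixed set $V$ is the correct justification.
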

\begin{proof}
Due to Lemma \ref{lem:s_dependent_action}, it suffices to analyze the case $\tau_+\geq\tau_-$ with $\sup_\R\eta>0$ and $\tau_+\leq\tau_-$ with $\inf_\R\eta<0$. We assume $\sup_\R\eta>0$ if $\tau_+\geq\tau_-$, and $\inf_\R\eta<0$ if $\tau_+\leq\tau_-$.
We choose $\epsilon>0$ so small that 
\begin{equation}\label{eq:spec_epsilon}
\mathrm{spec}^{\w_0}\AA^{\tau_-}_f\cap (b-\epsilon,b+\epsilon)=\emptyset\,.	
\end{equation}
Assume for a contradiction $\AA^{\tau_-}_f(w_-)<b\leq \AA^{\tau_+}_f(w_+)$. Then $\AA^{\tau_-}_f(w_-)<\AA^{\tau_+}_f(w_+)$ together with (a) and (b) in Lemma \ref{lem:bound_eta} implies
\[
\begin{split}
&\big(1-c_4(\tau_+-\tau_-)\big)\sup_{s\in\R}\eta(s)\leq c_3(\AA^{\tau_-}_f(w_-)+ c_1) \qquad \textrm{if }\; \tau_+\geq \tau_-\,, \\	
&\big(1+c_5(\tau_+-\tau_-)\big)\inf_{s\in\R}\eta(s) \,\geq c_2(\AA^{\tau_+}_f(w_+)-c_1) \qquad \textrm{if }\; \tau_+\leq \tau_- \,.
\end{split}
\]
Choosing $\delta<\min\{\frac{1}{2c_4},\frac{1}{2c_5}\}$ leads to 
\begin{equation}\label{eq:c_1,c_2}
\begin{split}
&\sup_{s\in\R}\eta(s)\leq 2c_3(\AA^{\tau_-}_f(w_-)+ c_1) \qquad \textrm{if }\; \tau_+\geq \tau_-\,, \\
&\inf_{s\in\R}\eta(s)\geq 2c_2(\AA^{\tau_+}_f(w_+)-c_1) \qquad \,\textrm{if }\; \tau_+\leq \tau_- \,.
\end{split}
\end{equation}
Combining Lemma \ref{lem:s_dependent_action} with \eqref{eq:c_1,c_2}, we obtain
\begin{equation}\label{eq:inv_est}
	\begin{split}
&\AA^{\tau_+}_f(w_+) \leq   \AA^{\tau_-}_f(w_-) + 2c_3(\tau_+-\tau_-)  (\AA^{\tau_-}_f(w_-)+c_1) \qquad \textrm{if }\; \tau_+\geq \tau_-\,, \\[.5ex]
&\AA^{\tau_+}_f(w_+) \leq   \AA^{\tau_-}_f(w_-) + 2c_2(\tau_+-\tau_-) (\AA^{\tau_+}_f(w_+)-c_1) \qquad \,\textrm{if }\; \tau_+\leq \tau_- \,.
	\end{split}
\end{equation}
We require $\delta$ to additionally satisfy $\delta<\min\{\frac{\epsilon}{2c_3|b+c_1|},\frac{\epsilon}{2c_2|c_1-b|}\}$.   
We first consider the case $\tau_+\geq \tau_-$ and use the first line of \eqref{eq:inv_est}. If $\AA^{\tau_-}_f(w_-)\leq -c_1$, this leads to the contradiction $\AA^{\tau_+}_f(w_+)\leq  \AA^{\tau_-}_f(w_-)<b$. We now assume $-c_1<\AA_f^{\tau_-}(w_-)<b$. Then by $\delta<\frac{\epsilon}{2c_3(b+c_1)}$ we arrive at the contradiction $\AA^{\tau_+}_f(w_+) \leq \AA^{\tau_-}_f(w_-)  + \epsilon <b$, where the latter inequality is by \eqref{eq:spec_epsilon}. 
Suppose $\tau_+\leq \tau_-$. In this case, we use the second line of \eqref{eq:inv_est}. For $\AA^{\tau_+}_f(w_+)\geq c_1$, we get the  contradiction $\AA^{\tau_+}_f(w_+)\leq \AA^{\tau_-}_f(w_-)<b$. If $b\leq \AA^{\tau_+}_f(w_+)<c_1$, then our choice $\delta<\frac{\epsilon}{2c_2(c_1-b)}$ leads to the contradiction $\AA^{\tau_+}_f(w_+) \leq   \AA^{\tau_-}_f(w_-) +\epsilon<b$ by \eqref{eq:spec_epsilon} again. This proves $\AA^{\tau_+}_f(w_+)<b$ in both cases. 
\end{proof}

In the following corollary and proposition, as usual, we assume (A1), (A2), or (A3) depending on which class of $J$ we are working with. We also assume $\tau_-,\tau_+>0$ to be small when working with $J\in\JJ_\mathrm{diag}$. 
\begin{cor}\label{cor:invariance_close}
Let $a,b\in \R\setminus\mathrm{spec}^{\w_0}\AA^{\tau_-}_f$. There exists $\delta>0$ such that if $|\tau_+-\tau_-|<\delta$, it holds that
	\[
	\RFH_*^{\w_0,(a,b)}(\AA^{\tau_-}_f) \cong \RFH_*^{\w_0,(a,b)}(\AA^{\tau_+}_f)\qquad\forall *\in\Z\,.
	\]
\end{cor}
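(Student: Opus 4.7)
The plan is to build a continuation chain map using the $s$-dependent Rabinowitz--Floer equation \eqref{eq:s_dependent_gradient} for the linear homotopy $\tau_\beta(s) = (1-\beta(s))\tau_- + \beta(s)\tau_+$ and show it descends to an isomorphism between the action-filtered complexes, provided $|\tau_+ - \tau_-|$ is chosen small relative to both $a$ and $b$. First I would apply Lemma \ref{lem:continuation_action} at both action levels $a$ and $b$ (both lie outside $\mathrm{spec}^{\w_0}\AA^{\tau_-}_f$ by hypothesis), obtaining a common $\delta_0 > 0$ so that every solution $w$ of \eqref{eq:s_dependent_gradient} with $\AA^{\tau_-}_f(w_-) < b$ (resp.\ $<a$) satisfies $\AA^{\tau_+}_f(w_+) < b$ (resp.\ $<a$). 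By shrinking $\delta$ further if necessary we may also use Lemma \ref{lem:bound_eta}.(c) to obtain a uniform $L^\infty$-bound on $\eta$ for solutions whose asymptotes lie in the relevant action window; the remaining compactness input (absence of sphere bubbling and fiber bubbling) is identical to the proof of Proposition \ref{prop:compactness1}/\ref{prop:compactness2} in the $s$-dependent setting, and the winding-zero condition is preserved exactly as in the proof of Proposition \ref{prop:boundary_operator} by Proposition \ref{prop:positivity_of_intersection}.

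Next I would define the continuation homomorphism
\[
\Phi_{\tau_-\tau_+}: \RFC^{\w_0,(-\infty,b)}_{*}(\AA^{\tau_-}_f) \longrightarrow \RFC^{\w_0,(-\infty,b)}_{*}(\AA^{\tau_+}_f)
\]
in the standard Morse--Bott way, by counting rigid configurations consisting of solutions of \eqref{eq:s_dependent_gradient} with cascades and $-\nabla h$-segments at the asymptotic ends. The action control at level $b$ makes $\Phi_{\tau_-\tau_+}$ well-defined, and the analogous control at level $a$ guarantees that it restricts to the subcomplex at action $\leq a$; hence it descends to a chain map
\[
\Phi^{(a,b)}_{\tau_-\tau_+}: \RFC^{\w_0,(a,b)}_{*}(\AA^{\tau_-}_f) \longrightarrow \RFC^{\w_0,(a,b)}_{*}(\AA^{\tau_+}_f).
\]
Running the reversed homotopy gives a candidate inverse $\Phi^{(a,b)}_{\tau_+\tau_-}$. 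To see that the two compositions are chain-homotopic to the identity, I would use the standard homotopy-of-homotopies argument: deform the concatenation of the two homotopies, through $s$-dependent paths $\tau_\beta^{(r)}$ with $r \in [0,1]$, to the constant homotopy $\tau \equiv \tau_-$ (respectively $\tau \equiv \tau_+$). Applying Lemma \ref{lem:continuation_action} uniformly along this one-parameter family of homotopies gives filtered chain homotopies, yielding the isomorphism claimed on homology.

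The main technical point, and the only spot requiring care, is the uniform action-filtration control along the family of homotopies needed for the chain-homotopy step. For each individual homotopy the argument of Lemma \ref{lem:continuation_action} applies, but one must check that the constants $c_1, \dots, c_5$ and the smallness thresholds on $\delta$ can be chosen uniformly in the deformation parameter $r \in [0,1]$. This is straightforward because all estimates in Lemma \ref{lem:key} and Lemma \ref{lem:bound_eta} depend only on $\tau_-$, $\tau_+$ and the action window $(a,b)$, while the values $\tau_\beta^{(r)}(s)$ always lie in the closed interval between $\tau_-$ and $\tau_+$; a single $\delta$, obtained by intersecting finitely many open conditions, works throughout the proof.
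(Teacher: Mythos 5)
Your proposal is correct and follows essentially the same route as the paper's proof: Lemma \ref{lem:bound_eta}.(c) for the uniform bound on $\eta$ and hence compactness, Proposition \ref{prop:positivity_of_intersection} to keep everything at winding number zero, Lemma \ref{lem:continuation_action} applied at both levels $a$ and $b$ to make the continuation map respect the action window, and a homotopy-of-homotopies argument for invertibility. Your extra remarks on defining the map first on $(-\infty,b)$ and on the uniformity of the constants along the deformation of homotopies are harmless refinements of what the paper treats as standard.
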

\begin{proof}
Lemma \ref{lem:bound_eta}.(c) gives a uniform $L^\infty$-bound on $\eta$ for solutions of  \eqref{eq:s_dependent_gradient} for $\delta<1/c_6$. This guarantees necessary compactness results  as in Proposition \ref{prop:compactness1} or Proposition \ref{prop:compactness2} and allows us to construct a continuation homomorphism 
\[
\RFH_*^{\w_0,(a,b)}(\AA^{\tau_-}_f) \longrightarrow \RFH_*^{\w_0,(a',b')}(\AA^{\tau_+}_f)
\]
given by the count of solutions of \eqref{eq:s_dependent_gradient} not intersecting the zero section $\OO_E$. That this count defines a chain homomorphism is again due to Proposition \ref{prop:positivity_of_intersection} which continues to hold for solutions of \eqref{eq:s_dependent_gradient}.

In addition if we choose $\delta>0$ so small that Lemma \ref{lem:continuation_action} holds for both $a$ and $b$, we may take $a'=a$ and $b'=b$. Switching the roles of $\tau_-$ and $\tau_+$, we also obtain a continuation homomorphism going in the opposite direction. A standard homotopy of homotopies argument shows that these two continuation homomorphisms  at the homology level are inverse to each other. 
\end{proof}

\begin{prop}\label{prop:inv_radius}
	For given $a_-,b_-\in{\R}\setminus\mathrm{spec}^{\w_0}\AA^{\tau_-}_f$, we set  
	\[
	a_+:=\frac{(1+\tau_+)a_-}{1+\tau_-}\,,\qquad b_+:=\frac{(1+\tau_+)b_-}{1+\tau_-}\,.
	\]
	Then there is an isomorphism
	\[
	\RFH^{\w_0,(a_-,b_-)}_*(\AA^{\tau_-}_f) \cong \RFH^{\w_0,(a_+,b_+)}_*(\AA^{\tau_+}_f)\qquad\forall *\in\Z\,.
	\]
	In particular, $\RFH^{\w_0}_*(E,\Sigma_{\tau_-})\cong \RFH^{\w_0}_*(E,\Sigma_{\tau_+})$ for all $*\in\Z$. 
	\end{prop}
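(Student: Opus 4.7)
The plan is to reduce the assertion to Corollary \ref{cor:invariance_close} by first handling a small radius change and then iterating along a finite chain of intermediate radii. The key algebraic observation is Lemma \ref{lem:one-to-one_index}: for every $w\in\Crit\AA^\tau_f$ with $\w(w)=0$ one has $\AA^\tau_f(w)=(1+\tau)\,\a_f(\Pi(w))$, hence
\[
\mathrm{spec}^{\w_0}\AA^\tau_f=(1+\tau)\cdot\mathrm{spec}\,\a_f\,.
\]
Since $\mathrm{spec}\,\a_f=\{-\om([\bar q])-f(q)\}$ is a discrete subset of $\R$ (finite under (A1), a union of finitely many cosets of $\nu\Z$ under (A2)/(A3)), the hypothesis $a_-,b_-\notin\mathrm{spec}^{\w_0}\AA^{\tau_-}_f$ is equivalent to $a_-/(1+\tau_-),\,b_-/(1+\tau_-)\notin\mathrm{spec}\,\a_f$, and the rescaled endpoints $a(\tau):=\tfrac{1+\tau}{1+\tau_-}a_-$, $b(\tau):=\tfrac{1+\tau}{1+\tau_-}b_-$ then automatically avoid $\mathrm{spec}^{\w_0}\AA^\tau_f$ for every $\tau>0$; in particular $a_+=a(\tau_+)$ and $b_+=b(\tau_+)$.

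First I will produce the isomorphism $\RFH^{\w_0,(a_-,b_-)}_*(\AA^{\tau_-}_f)\cong\RFH^{\w_0,(a_+,b_+)}_*(\AA^{\tau_+}_f)$ when $|\tau_+-\tau_-|$ is sufficiently small. Corollary \ref{cor:invariance_close} applied with the \emph{unscaled} window $(a_-,b_-)$ gives $\RFH^{\w_0,(a_-,b_-)}_*(\AA^{\tau_-}_f)\cong\RFH^{\w_0,(a_-,b_-)}_*(\AA^{\tau_+}_f)$. To identify the right-hand side with $\RFH^{\w_0,(a_+,b_+)}_*(\AA^{\tau_+}_f)$, I note that the zero-winding generators of $\RFC^{\w_0,(a_-,b_-)}_*(\AA^{\tau_+}_f)$ correspond under $\Pi$ to $\mathfrak q\in\Crit\a_f$ with $\a_f(\mathfrak q)\in(a_-/(1+\tau_+),\,b_-/(1+\tau_+))$, while those of $\RFC^{\w_0,(a_+,b_+)}_*(\AA^{\tau_+}_f)$ correspond to $\mathfrak q$ with $\a_f(\mathfrak q)\in(a_-/(1+\tau_-),\,b_-/(1+\tau_-))$. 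By discreteness of $\mathrm{spec}\,\a_f$ and the fact that its complement contains both $a_-/(1+\tau_-)$ and $b_-/(1+\tau_-)$, these two open intervals bracket the same set of critical values of $\a_f$ once $|\tau_+-\tau_-|$ is small enough, so the two submodules of $\RFC^{\w_0}_*(\AA^{\tau_+}_f)$ coincide as chain subcomplexes, the boundary being the restriction of the ambient one in both cases. Composition of the two identifications yields the desired local isomorphism.

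The general case is obtained by concatenating finitely many such steps along a monotone partition $\tau_-=\sigma_0<\sigma_1<\dots<\sigma_N=\tau_+$, applying the preceding argument at each step with $(a_-,b_-)$ replaced by $(a(\sigma_i),b(\sigma_i))$. The rescaling law is transitive, $a(\sigma_{i+1})=\tfrac{1+\sigma_{i+1}}{1+\sigma_i}a(\sigma_i)$, so the composition is well formed and delivers the claimed isomorphism $\RFH^{\w_0,(a_-,b_-)}_*(\AA^{\tau_-}_f)\cong\RFH^{\w_0,(a_+,b_+)}_*(\AA^{\tau_+}_f)$ as soon as one mesh size works at every step. This uniformity is the only real technical point: the smallness threshold in Corollary \ref{cor:invariance_close} at step $i$ is controlled by the gap of $a(\sigma_i),b(\sigma_i)$ from $\mathrm{spec}^{\w_0}\AA^{\sigma_i}_f$, which equals $(1+\sigma_i)\cdot\mathrm{dist}\bigl(\{a_-/(1+\tau_-),b_-/(1+\tau_-)\},\mathrm{spec}\,\a_f\bigr)$ and is thus bounded below by a positive constant on the compact interval between $\tau_-$ and $\tau_+$; the remaining constants appearing in Lemma \ref{lem:key} and Lemma \ref{lem:bound_eta} are likewise uniformly bounded there. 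The ``in particular'' statement is the special case $(a_-,b_-)=(-\infty,+\infty)$, for which the chain-level identification in the preceding paragraph is automatic.
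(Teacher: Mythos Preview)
Your proof is correct and follows essentially the same approach as the paper: apply Corollary \ref{cor:invariance_close} for a small radius change, then adjust the action window using the rescaling $\mathrm{spec}^{\w_0}\AA^\tau_f=(1+\tau)\cdot\mathrm{spec}\,\a_f$, and finally iterate along a finite subdivision of $[\tau_-,\tau_+]$. The paper deduces the last assertion from Remark \ref{rem:finitely_many}.(i) (finitely many generators in each degree) rather than by formally setting $(a_-,b_-)=(-\infty,+\infty)$, but this is a cosmetic difference.
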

	
\begin{proof}
We first observe that if $|\tau_+-\tau_-|$ is sufficiently small, we have 
\[
\RFH^{\w_0,(a_-,b_-)}_*(\AA^{\tau_-}_f) \cong \RFH^{\w_0,(a_-,b_-)}_*(\AA^{\tau_+}_f)\cong \RFH^{\w_0,(a_+,b_+)}_*(\AA^{\tau_+}_f)\,.
\]
The first isomorphism is due to Corollary \ref{cor:invariance_close}, and the second one follows from the fact that the intervals $(a_-,a_+)$ and $(b_-,b_+)$ (or $(a_+,a_-)$ and $(b_+,b_-)$) do not intersect $\mathrm{spec}^{\w_0}\AA^{\tau_+}_f$ since by Lemma \ref{lem:one-to-one_index} 
\[
\mathrm{spec}^{\w_0}\AA^{\tau_+}_f=\big\{\tfrac{(1+\tau_+)c}{1+\tau_-}\mid c\in \mathrm{spec}^{\w_0}\AA^{\tau_-}_f\big\}\,.
\] 
For general $\tau_-$ and $\tau_+$, we may consider $\tau_i:=\tau_-+\frac{i}{\nu}(\tau_+-\tau_-)$ for some $\nu\in\N$ and $i\in\{0,\dots,\nu\}$. If $\nu$ is sufficiently large, the above argument constructs a chain of isomorphisms 
\[
\RFH^{\w_0,(a_i,b_i)}_*(\AA_f^{\tau_i}) \cong \RFH^{\w_0,(a_{i+1},b_{i+1})}_*(\AA_f^{\tau_{i+1}})\,,
\] 
with $a_i:=\frac{(1+\tau_i)a_-}{1+\tau_-}$ and $b_i:=\frac{(1+\tau_i)b_-}{1+\tau_-}$. We obtain the claimed isomorphism by composing these isomorphisms. The last assertion follows from Remark \ref{rem:finitely_many}.(i).
\end{proof}

\section{Floer Gysin sequence revisited}\label{sec:gysin_revisit}
\subsection{Gysin sequence in Rabinowitz Floer homology}
In this section, let $J\in\JJ_\mathrm{reg}^\BB\cup\JJ_\mathrm{diag}$, and we accordingly assume (A1) or (A2) from the introduction. 
As before, $\tau>0$ can be arbitrary if $J \in\JJ_\mathrm{reg}^\BB$ while we take small $\tau>0$ when working with $J\in\JJ_\mathrm{diag}$. We abbreviate
\[
\mathfrak C^{\w_0}(\AA^\tau_f):=\big\{w \mid w \in\Crit h\subset \Crit\AA^\tau_f\,,\; \w(w)=0\,\big\}\,.
\]
We recall that $h$ is a perfect Morse function which simplifies the construction in this section significantly. Let $\widehat{\mathfrak{C}}^{\w_0}(\AA^\tau_f)$ and $\widecheck{\mathfrak{C}}^{\w_0}(\AA^\tau_f)$ be the subset of $\mathfrak C^{\w_0}(\AA^\tau_f)$ consisting of maximum and minimum points of $h$ respectively. Then,
\[
\mathfrak C^{\w_0}(\AA^\tau_f)=\widehat{\mathfrak{C}}^{\w_0}(\AA^\tau_f) \sqcup \widecheck{\mathfrak{C}}^{\w_0}(\AA^\tau_f)\,.
\]
Let $-\infty\leq a<b\leq +\infty$. The module $\RFC_*^{\w_0,(a,b)}(\AA^\tau_f)$ defined in \eqref{eq:RFC} splits into
\[
\mathrm{RFC}^{\w_0,(a,b)}_*(\AA^\tau_f)=\widehat{\mathrm{RFC}}{}^{\w_0,(a,b)}_*(\AA^\tau_f)\oplus \widecheck{\mathrm{RFC}}^{\w_0,(a,b)}_*(\AA^\tau_f)
\]
where $\widehat{\mathrm{RFC}}{}^{\w_0,(a,b)}_*(\AA^\tau_f)$ and $\widecheck{\mathrm{RFC}}^{\w_0,(a,b)}_*(\AA^\tau_f)$ are the submodules generated by elements in $\widehat{\mathfrak{C}}^{\w_0}(\AA^\tau_f)$ and $\widecheck{\mathfrak{C}}^{\w_0}(\AA^\tau_f)$ respectively. In order to make the notation less cumbersome, we sometimes omit $\AA^\tau_f$ from the notation if the choices of $f$ and $\tau$ are irrelevant to the context.

\begin{lem}
 Let $w_\pm\in\Crit h$ with $\w(w_-)=\w(w_+)$ and $\mu_\RFH^h(w_-)-\mu_\RFH^h(w_+)=1$. Suppose that $\MM(w_-,w_+,\AA^\tau_f,J)$ is not empty. Then one of the following holds.
	\begin{enumerate}[(a)]
		\item $\mu_\FH(\Pi(w_-))-\mu_\FH(\Pi(w_+))=1$, and both $w_-$ and $w_+$ are either maximum points or minimum points of $h$.
		\item $\mu_\FH(\Pi(w_-))-\mu_\FH(\Pi(w_+))=2$, and $w_-$ is a minimum point and $w_+$ is a maximum point of $h$.
	\end{enumerate}
\end{lem}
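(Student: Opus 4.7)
The proof will proceed by index bookkeeping, with the only non-trivial step being the exclusion of one of four possible subcases. Recall from \eqref{eq:index_h} that $\mu_\RFH^h(\hat w) = \mu_\RFH(w) + 1$ and $\mu_\RFH^h(\check w) = \mu_\RFH(w)$, while \eqref{eq:indices_and_winding} gives
\[
\mu_\RFH(w_-) - \mu_\RFH(w_+) = \mu_\FH(\Pi(w_-)) - \mu_\FH(\Pi(w_+))
\]
under the standing hypothesis $\w(w_-) = \w(w_+)$. I will go through the four subcases determined by whether each of $w_-, w_+$ is a maximum or a minimum of $h$. When both are maxima, or both are minima, the hypothesis $\mu_\RFH^h(w_-) - \mu_\RFH^h(w_+) = 1$ converts to $\mu_\RFH(w_-) - \mu_\RFH(w_+) = 1$ and hence $\mu_\FH(\Pi(w_-)) - \mu_\FH(\Pi(w_+)) = 1$, which is case (a). When $w_-$ is a minimum and $w_+$ a maximum, the same conversion yields $\mu_\FH$-difference $2$, which is case (b). The remaining subcase, $w_-$ a maximum and $w_+$ a minimum, would force $\mu_\FH$-difference $0$, and the bulk of the proof will show that it cannot occur.

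For this exclusion I will assume, for contradiction, the existence of a flow line with cascades $\mathbf w = (w^1, \dots, w^n)$ in this remaining subcase. Since $\w(w_-) = \w(w_+)$, Proposition \ref{prop:positivity_of_intersection} immediately gives $\w(w^i_\pm) = \w(w_\pm)$ for every intermediate critical point, so Lemma \ref{lem:one-to-one} lets me identify the $S$-families of critical points of $\AA^\tau_f$ at this common winding number with elements of $\Crit \a_f$. In particular, the non-triviality condition $S_{w^i_-} \neq S_{w^i_+}$ built into the definition of a cascade translates to $\Pi(w^i_-) \neq \Pi(w^i_+)$. Next, I invoke the relevant transversality results, namely Proposition \ref{prop:transversality_J_B} for $J \in \JJ^\BB_\mathrm{reg}$, or Propositions \ref{prop:transversality} and \ref{prop:transversality2} for $J \in \JJ_\mathrm{diag}$ with $\tau$ sufficiently small, to arrange that each $\widehat\MM(S_{w^i_-}, S_{w^i_+}, \AA^\tau_f, J)$ is a smooth manifold of dimension $\mu_\RFH(w^i_-) - \mu_\RFH(w^i_+) + 1$. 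The free $\R$-action on any non-empty such space forces its dimension to be at least $1$, i.e., $\mu_\RFH(w^i_-) - \mu_\RFH(w^i_+) \geq 0$ for each $i$, and summing these inequalities together with $\mu_\RFH(w_-) - \mu_\RFH(w_+) = 0$ forces equality for every $i$.

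I will then project: since $\wp$ is $(J, j)$-holomorphic for every $J$ under consideration, $\Pi(w^i)$ is an honest Floer cylinder in $M$ connecting $\Pi(w^i_-) \neq \Pi(w^i_+)$ and satisfying $\mu_\FH(\Pi(w^i_-)) - \mu_\FH(\Pi(w^i_+)) = 0$. This yields the desired contradiction: for $j \in \mathfrak{j}_\mathrm{reg}$ the moduli space $\widehat\NN$ is transversely cut out of virtual dimension $0$ and carries a free $\R$-action, hence is empty; for $j \in \mathfrak{j}_\mathrm{HS}$, Proposition \ref{prop:j_HS}(d) forces $\Pi(w^i)$ to be $t$-independent, i.e., a negative gradient trajectory of $-f$, but Morse-Smale transversality (Proposition \ref{prop:j_HS}(c)) rules out non-constant gradient trajectories between distinct critical points of equal index. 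The main technical point to verify will be the simultaneous application of positivity of intersection to every cascade of $\mathbf w$, since only this permits the identification of $\Pi(w^i)$ with a Floer cylinder at a fixed winding number and hence the matching of Fredholm indices through \eqref{eq:indices_and_winding}.
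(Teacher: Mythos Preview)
Your argument is correct in spirit and follows the same idea as the paper: reduce to the subcase $w_-\in\widehat{\mathfrak C}$, $w_+\in\widecheck{\mathfrak C}$ (equivalently $\mu_\FH(\Pi(w_-))-\mu_\FH(\Pi(w_+))=0$) and rule it out by projecting the cascades to $M$. The paper does exactly this, only much more tersely: it observes that each cascade projects to a \emph{nontrivial} Floer cylinder in $M$ (this was already recorded in the paragraph after \eqref{eq:proj_cylinder}), hence has strictly positive $\mu_\FH$-drop, so the total drop is at least $n\geq1$, forcing $\MM$ to be empty and in particular $S_{w_-}=S_{w_+}$ whenever the index difference is zero.

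Your detour through transversality of the Rabinowitz moduli spaces $\widehat\MM(S_{w^i_-},S_{w^i_+},\AA^\tau_f,J)$ is unnecessary and creates a small gap when $J\in\JJ_\mathrm{diag}$ with $j\in\j_\mathrm{HS}$: Proposition~\ref{prop:transversality2} only gives surjectivity of $D_w$ for $w$ with \emph{simple} projection, and at this point you do not yet know that $\Pi(w^i)$ is simple (Lemma~\ref{lem:simple_w} needs an index bound you are trying to establish). The clean fix is to skip the $\widehat\MM$-step entirely and project immediately, exactly as the paper does: each $\Pi(w^i)\in\widehat\NN(\Pi(w^i_-),\Pi(w^i_+),\a_f,j)$ connects \emph{distinct} critical points (by Lemma~\ref{lem:one-to-one} and Proposition~\ref{prop:positivity_of_intersection}), so for $j\in\j_\mathrm{reg}$ transversality of $\widehat\NN$ plus the free $\R$-action gives $\mu_\FH$-drop $\geq1$, while for $j\in\j_\mathrm{HS}$ your own final paragraph (Proposition~\ref{prop:j_HS}(c),(d)) gives the same conclusion directly. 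Telescoping yields $\mu_\FH(\Pi(w_-))-\mu_\FH(\Pi(w_+))\geq n\geq1$, contradicting the assumed value $0$.
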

\begin{proof}
	 Lemma \ref{lem:one-to-one_index} implies that $\mu_\FH(\Pi(w_-))-\mu_\FH(\Pi(w_+))\in\{0,1,2\}$. Suppose that this difference is $0$. Then $\Pi(w_-)=\Pi(w_+)$ and thus  $S_{w_-}=S_{w_+}$ where $w_-$ is the maximum point and $w_+$ is the minimum point of $h$ in $S_{w_-}$. In this case, $\MM(w_-,w_+,\AA^\tau_f,J)$ is empty, see Remark \ref{rem:zero_cascade}. The rest follows immediately from \eqref{eq:index_h} and Lemma \ref{lem:one-to-one_index}.
\end{proof}

According to the preceding  lemma, we can decompose the boundary operator $\p^{\w_0}_J$ into
\begin{equation}\label{eq:p^w_0}
\p^{\w_0}_J=\hat\p^{\w_0}_J + \check\p^{\w_0}_J + \p^{\w_0,c_1^E}_J	
\end{equation}
where $\hat\p^{\w_0}_J$ resp.~$\check\p^{\w_0}_J$ counts gradient flow lines from $\widehat{\mathfrak{C}}^{\w_0}$ resp.~$\widecheck{\mathfrak{C}}^{\w_0}$ to itself  while $\p^{\w_0,c_1^E}_J$ counts gradient flow lines from $\widecheck{\mathfrak{C}}^{\w_0}$ to $\widehat{\mathfrak{C}}^{\w_0}$, i.e.
\[
\begin{split}
\hat\p^{\w_0}_J&:\widehat{\mathrm{RFC}}{}^{\w_0,(a,b)}_*(\AA^\tau_f)\longrightarrow \widehat{\mathrm{RFC}}{}^{\w_0,(a,b)}_{*-1}(\AA^\tau_f)\,, \\[0.5ex]
\check\p^{\w_0}_J&:\widecheck{\mathrm{RFC}}^{\w_0,(a,b)}_*(\AA^\tau_f)\longrightarrow \widecheck{\mathrm{RFC}}^{\w_0,(a,b)}_{*-1}(\AA^\tau_f)\,, \\[0.5ex]
\p^{\w_0,c_1^E}_J&:\widecheck{\mathrm{RFC}}^{\w_0,(a,b)}_*(\AA^\tau_f)\longrightarrow \widehat{\mathrm{RFC}}{}^{\w_0,(a,b)}_{*-1}(\AA^\tau_f)\,.
\end{split}
\] 
As usual $-\infty\leq a<b\leq +\infty$. 
The equality $\p^{\w_0}_J\circ\p^{\w_0}_J=0$ is equivalent to  
\[
\hat\p^{\w_0}_J\circ\hat\p^{\w_0}_J=0\,,\quad \check\p^{\w_0}_J\circ \check\p^{\w_0}_J=0\,,\quad\text{and}\quad \p^{\w_0,c_1^E}_J\circ\check\p^{\w_0}_J+\hat\p^{\w_0}_J\circ \p^{\w_0,c_1^E}_J=0\,.
\]
Therefore 
$(\widehat{\mathrm{RFC}}{}^{\w_0,(a,b)}(\AA^\tau_f),\hat\p^{\w_0}_J)$ 
is a subcomplex of $(\mathrm{RFC}^{\w_0,(a,b)}(\AA^\tau_f),\p^{\w_0}_J)$ and the induced quotient complex is isomorphic to $(\widecheck{\mathrm{RFC}}^{\w_0,(a,b)}(\AA^\tau_f), \check\p^{\w_0}_J)$. We denote their homologies by
\[
\begin{split}
\widehat{\mathrm{RFH}}{}^{\w_0,(a,b)}_*(\AA^\tau_f,J)&:=\mathrm{H}_*\big(\widehat{\RFC}{}^{\w_0,(a,b)}(\AA^\tau_f), \hat\p^{\w_0}_J\big)\,,\\[1ex]
\widecheck{\mathrm{RFH}}^{\w_0,(a,b)}_*(\AA^\tau_f,J)&:=\mathrm{H}_*\big(\widecheck{\RFC}^{\w_0,(a,b)}(\AA^\tau_f), \check\p^{\w_0}_J\big)\,.
\end{split}
\]
These are invariant under the change of $J$ by Lemma \ref{lem:inv_radius_compatibility} or Proposition \ref{prop:one-to-one} below, so we often omit $J$ from the notation. We write 
\[
\begin{split} 
\widehat\RFH{}_*^{\w_0}(E,\Sigma)=\widehat{\mathrm{RFH}}{}^{\w_0}_*(\AA^\tau_f)=\widehat{\mathrm{RFH}}{}^{\w_0,(-\infty,+\infty)}_*(\AA^\tau_f) \,, \\[1ex]
\widecheck\RFH_*^{\w_0}(E,\Sigma)=\widecheck{\mathrm{RFH}}^{\w_0}_*(\AA^\tau_f)=\widecheck{\mathrm{RFH}}^{\w_0,(-\infty,+\infty)}_*(\AA^\tau_f) \,.
\end{split}
\]
As the notation indicates the above two homologies are invariant under the change of $\tau$, see again Lemma \ref{lem:inv_radius_compatibility} or Proposition \ref{prop:one-to-one}.

\begin{prop}\label{prop:les}
There is a long exact sequence 	
\[
\cdots\to \RFH_*^{\w_0,(a,b)}(\AA^\tau_f) \to\widecheck{\RFH}_*^{\w_0,(a,b)}(\AA^\tau_f) \stackrel{\delta}{\to}\widehat{\RFH}{}_{*-1}^{\w_0,(a,b)}(\AA^\tau_f) \to \RFH_{*-1}^{\w_0,(a,b)}(\AA^\tau_f) \to\cdots
\]
where $\delta$ is induced by $\p^{\w_0,c_1^E}$. For $(a,b)=(-\infty,+\infty)$, the above long exact sequence reads
\[
\cdots\to \RFH_*^{\w_0}(E,\Sigma) \to\widecheck{\RFH}_*^{\w_0}(E,\Sigma) \stackrel{\delta}{\to}\widehat{\RFH}{}_{*-1}^{\w_0}(E,\Sigma) \to \RFH_{*-1}^{\w_0}(E,\Sigma) \to\cdots\,.
\]
\end{prop}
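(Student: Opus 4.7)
The plan is to obtain the long exact sequence directly from the standard machinery applied to a short exact sequence of chain complexes. As already observed just before the statement, on the level of $\Z$-modules the chain complex decomposes as
\[
\RFC^{\w_0,(a,b)}_*(\AA^\tau_f)=\widehat{\RFC}{}^{\w_0,(a,b)}_*(\AA^\tau_f)\oplus \widecheck{\RFC}^{\w_0,(a,b)}_*(\AA^\tau_f)\,,
\]
and the decomposition $\p^{\w_0}=\hat\p^{\w_0}+\check\p^{\w_0}+\p^{\w_0,c_1^E}$ of \eqref{eq:p^w_0} combined with the identity $\p^{\w_0}\circ\p^{\w_0}=0$ gives the three relations
\[
\hat\p^{\w_0}\circ\hat\p^{\w_0}=0\,,\qquad \check\p^{\w_0}\circ\check\p^{\w_0}=0\,,\qquad \p^{\w_0,c_1^E}\circ\check\p^{\w_0}+\hat\p^{\w_0}\circ\p^{\w_0,c_1^E}=0\,.
\]
Thus $(\widehat{\RFC}{}^{\w_0,(a,b)},\hat\p^{\w_0})$ is a subcomplex, the quotient is canonically isomorphic to $(\widecheck{\RFC}^{\w_0,(a,b)},\check\p^{\w_0})$, and $\p^{\w_0,c_1^E}$ is (up to sign) a chain map between these two complexes.

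The first main step is to write down the tautological short exact sequence of chain complexes
\[
0\longrightarrow \big(\widehat{\RFC}{}^{\w_0,(a,b)}_*,\hat\p^{\w_0}\big)\stackrel{\iota}{\longrightarrow} \big(\RFC^{\w_0,(a,b)}_*,\p^{\w_0}\big)\stackrel{\pi}{\longrightarrow} \big(\widecheck{\RFC}^{\w_0,(a,b)}_*,\check\p^{\w_0}\big)\longrightarrow 0\,,
\]
where $\iota$ is inclusion and $\pi$ is projection. Both $\iota$ and $\pi$ are degree-preserving chain maps in view of the relations above, and exactness on the level of $\Z$-modules is clear from the direct sum decomposition. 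Applying the standard zig-zag construction then yields the long exact sequence in the statement, with connecting homomorphism $\delta:\widecheck{\RFH}^{\w_0,(a,b)}_*\to\widehat{\RFH}{}^{\w_0,(a,b)}_{*-1}$ given at the chain level by the following recipe: lift $\check w\in\widecheck{\RFC}^{\w_0,(a,b)}_*$ to itself in $\RFC^{\w_0,(a,b)}_*$ via the splitting, apply $\p^{\w_0}$ to obtain $\hat\p^{\w_0}\check w+\check\p^{\w_0}\check w+\p^{\w_0,c_1^E}\check w$, and project onto the $\widehat{\RFC}$-summand. Since the projection kills the first two summands, this is precisely $\p^{\w_0,c_1^E}\check w$, identifying $\delta$ with the map induced by $\p^{\w_0,c_1^E}$.

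The only genuine work is bookkeeping to confirm the degree shift of $\delta$. This is forced by the fact that $\p^{\w_0,c_1^E}$ sends $\widecheck{\RFC}^{\w_0,(a,b)}_*$ to $\widehat{\RFC}{}^{\w_0,(a,b)}_{*-1}$, which in turn follows from $\p^{\w_0}$ dropping degree by one. I do not anticipate any analytic obstacle here, since all compactness, transversality, and coherent-orientation inputs needed to make $\p^{\w_0}$ and its decomposition well-defined have already been secured in Section~\ref{sec:J} and Section~\ref{sec:RFH}.

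Finally, the statement for $(a,b)=(-\infty,+\infty)$ follows by passing the long exact sequence through the bidirect limit in \eqref{eq:wind_lim}. Concretely, the action filtration homomorphisms \eqref{eq:direct_system_RFH} are induced by morphisms of the short exact sequences above (inclusion of generators on both $\widehat{\RFC}$ and $\widecheck{\RFC}$), hence are compatible with the zig-zag, and by Remark~\ref{rem:finitely_many}.(i) the homologies stabilize in each fixed degree for sufficiently large action windows. Since taking direct limits is exact and the inverse limits reduce to finite stabilizations, the long exact sequence persists in the limit, giving the displayed exact sequence for the full $\RFH^{\w_0}_*(E,\Sigma)$, $\widehat{\RFH}{}^{\w_0}_*(E,\Sigma)$, and $\widecheck{\RFH}^{\w_0}_*(E,\Sigma)$.
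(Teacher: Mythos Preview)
Your proposal is correct and follows essentially the same approach as the paper: the paper notes that the complex is the mapping cone of $\p^{\w_0,c_1^E}$, and equivalently offers exactly the short exact sequence of chain complexes you wrote down, with the long exact sequence following by the zig-zag lemma. Your last paragraph on limits is unnecessary, since $\RFC^{\w_0,(-\infty,+\infty)}_*$ is defined directly (and is finitely generated in each degree by Remark~\ref{rem:finitely_many}.(i)), so the full-window statement is just the specialization $a=-\infty$, $b=+\infty$ of the first one; but this extra discussion does no harm.
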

\begin{proof}
The assertion follows since $({\mathrm{RFC}}{}^{\w_0,(a,b)}(\AA^\tau_f),\p^{\w_0})$ is isomorphic to the cone complex of $\partial^{\w_0,c_1^E}$. Alternatively,   
the short exact sequence of chain complexes
\[
	0\to \big(\widehat{\mathrm{RFC}}{}^{\w_0,(a,b)}(\AA^\tau_f),\hat\p^{\w_0}\big) \to \big(\mathrm{RFC}^{\w_0,(a,b)}_*(\AA^\tau_f),\p^{\w_0}\big)\to (\widecheck{\mathrm{RFC}}^{\w_0,(a,b)}(\AA^\tau_f), \check\p^{\w_0}\big)\to0
\]
induced by natural inclusion and projection also leads to the desired exact sequence.  
\end{proof}

\begin{lem}\label{lem:inv_radius_compatibility}
The isomorphism in Proposition \ref{prop:inv_radius} is compatible with the exact sequence in Proposition \ref{prop:les}, i.e.~the following diagram commutes
		\[
	\begin{tikzcd}[row sep=1.5em,column sep=1.3em]
\cdots \arrow{r} & \widehat{\mathrm{RFH}}{}^{\w_0,(a_-,b_-)}({\AA}^{\tau_-}_f) \arrow{r} \arrow{d}{\cong}  & {\mathrm{RFH}}^{\w_0,(a_-,b_-)}({\AA}^{\tau_-}_f)  \arrow{r} \arrow{d}{\cong} & \widecheck{\mathrm{RFH}}^{\w_0,(a_-,b_-)}_*({\AA}^{\tau_-}_f) \arrow{r}{\delta} \arrow{d}{\cong} & \cdots\\
\cdots \arrow{r} & \widehat{\mathrm{RFH}}{}^{\w_0,(a_+,b_+)}({\AA}^{\tau_+}_{f}) \arrow{r} & {\mathrm{RFH}}^{\w_0,(a_+,b_+)}({\AA}^{\tau_+}_{f})   \arrow{r} &  \widecheck{\mathrm{RFH}}^{\w_0,(a_+,b_+)}_*({\AA}^{\tau_+}_{f}) \arrow{r}{\delta} & \cdots
\end{tikzcd}
\]
where all vertical maps are isomorphisms, and $a_\pm,b_\pm$ are given as in Proposition \ref{prop:inv_radius}. 

Corresponding statements for the isomorphisms in \eqref{eq:continuation_J} and \eqref{eq:isom_f} for the change of $J$, $f$, or $h$ are also true.
\end{lem}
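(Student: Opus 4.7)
The plan is to show that every continuation chain map used to establish the invariance isomorphisms respects the splitting $\RFC^{\w_0,(a,b)}=\widehat{\RFC}{}^{\w_0,(a,b)}\oplus \widecheck{\RFC}^{\w_0,(a,b)}$ in an upper-triangular fashion, so that it induces a morphism of the short exact sequences of complexes
\[
0\to \widehat{\RFC}{}^{\w_0,(a,b)}\to \RFC^{\w_0,(a,b)}\to \widecheck{\RFC}^{\w_0,(a,b)}\to 0\,,
\]
and then to invoke naturality of the associated long exact sequence. I will treat the change-of-radius case in detail; the cases of varying $J$, $f$, or $h$ are formally identical.

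First, I would describe the continuation map $\Phi:\RFC^{\w_0,(a_-,b_-)}(\AA^{\tau_-}_f)\to \RFC^{\w_0,(a_+,b_+)}(\AA^{\tau_+}_f)$ as counting rigid Morse--Bott cascade configurations in which exactly one cascade satisfies the $s$-dependent equation \eqref{eq:s_dependent_gradient}. With respect to the splitting above, $\Phi$ decomposes as a block matrix
\[
\Phi=\begin{pmatrix} \hat\Phi & \Phi^{\mathrm{cross}} \\ \Phi^\flat & \check\Phi\end{pmatrix}\,,
\]
and the crucial claim is $\Phi^\flat=0$, i.e.\ there are no contributions from maxima $\hat w_-$ to minima $\check w_+$. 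Given the grading conventions $\mu^h_\RFH(\hat w)=\mu_\RFH(w)+1$ and $\mu^h_\RFH(\check w)=\mu_\RFH(w)$, a degree-preserving contribution to $\Phi^\flat$ would force $\mu_\RFH(w_-)-\mu_\RFH(w_+)=-1$. Since both evaluations $\ev_\pm$ are unconstrained in this case (as $W^u(\hat w_-,h)$ and $W^s(\check w_+,h)$ are both full-dimensional in their respective $S^1$-families), the configuration reduces to a single $s$-dependent Rabinowitz--Floer cylinder of virtual dimension equal to $\mu_\RFH(w_-)-\mu_\RFH(w_+)=-1<0$, which is empty after generic perturbation by the parametrized versions of Proposition \ref{prop:transversality_cylinder}, Proposition \ref{prop:transversality_J_B}, Proposition \ref{prop:transversality}, and Proposition \ref{prop:transversality2}. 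Consequently $\Phi^\flat=0$.

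With the upper-triangular structure established, $\hat\Phi$ is a chain map on $\widehat{\RFC}{}^{\w_0}$, $\check\Phi$ is well-defined on the quotient $\widecheck{\RFC}^{\w_0}$, and $\Phi$ restricts and descends to a morphism of short exact sequences of chain complexes. Passing to homology and invoking the naturality of the connecting homomorphism produces the commutative ladder of long exact sequences in the statement. To upgrade the vertical arrows to isomorphisms it suffices to observe that the reverse homotopy (reversing the roles of $\tau_-$ and $\tau_+$, or of $J_0$ and $J_1$, etc.) yields an inverse up to chain homotopy by the standard homotopy-of-homotopies argument used already in the proof of Corollary \ref{cor:invariance_close} and Proposition \ref{prop:inv_radius}; applying the five lemma to the two ladders gives that $\hat\Phi$ and $\check\Phi$ are also isomorphisms on homology.

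The main technical obstacle is maintaining the uniform action/Lagrange-multiplier estimates of Lemma \ref{lem:bound_eta} and Lemma \ref{lem:continuation_action}, together with the positivity-of-intersection argument from Proposition \ref{prop:positivity_of_intersection}, throughout the parametrized moduli problem that defines $\Phi$; this ensures that cascades stay away from $\OO_E$ and land in the prescribed action windows $(a_\pm,b_\pm)$, so that the block decomposition above is genuinely valid on the chain complexes with the indicated action filtrations. Once these uniform estimates and the transversality for $s$-dependent cylinders are in place, the argument is purely algebraic: upper-triangularity of $\Phi$ plus the five lemma.
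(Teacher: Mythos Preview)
Your overall architecture is the same as the paper's: show the continuation chain map preserves $\widehat{\RFC}{}^{\w_0}$ (upper-triangularity), pass to a morphism of short exact sequences, and conclude by naturality of the long exact sequence together with a five-lemma/homotopy-inverse argument. That part is fine.

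The gap is in your justification of $\Phi^\flat=0$. Your dimension count is off by one. In the Morse--Bott setup the moduli space of $s$-dependent Rabinowitz--Floer cylinders asymptotic to the $S^1$-families $S_{w_\pm}$ has the same Fredholm index as in the $s$-independent case, namely
\[
\mathrm{virdim}\,\widehat\MM^s(S_{w_-},S_{w_+})=\mu_\RFH(w_-)-\mu_\RFH(w_+)+1,
\]
the ``$+1$'' coming from the $V_-\oplus V_+$ summands in \eqref{eq:tangent_space}. With $\mu_\RFH(w_-)-\mu_\RFH(w_+)=-1$ this gives virtual dimension $0$, not $-1$; and since, as you correctly note, the evaluation constraints at $\hat w_-$ and $\check w_+$ are codimension $0$, the cascade configurations you are trying to rule out are genuinely $0$-dimensional. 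Generic transversality alone does not make them empty.

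The paper closes this gap by a different mechanism that uses the hypothesis $J\in\JJ^\BB_\mathrm{reg}\cup\JJ_\mathrm{diag}$ in force throughout Section~\ref{sec:gysin_revisit}: any continuation cylinder $w=(u,\eta)$ projects via $\wp$ to a Floer cylinder $\Pi(w)=\wp\circ u$ in $M$ from $\Pi(w_-)$ to $\Pi(w_+)$. By Lemma~\ref{lem:one-to-one_index} (zero winding number) one gets $\mu_\FH(\Pi(w_-))-\mu_\FH(\Pi(w_+))=-1$, and for $j\in\j_\mathrm{reg}\cup\j_\mathrm{HS}$ the space $\widehat\NN(\Pi(w_-),\Pi(w_+),\a_f,j)$ is then empty. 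This projection argument is what actually forces $\Phi^\flat=0$; your purely intrinsic dimension count on $E$ does not.
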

\begin{proof}
	We first recall that the isomorphism $\Psi$ in Proposition \ref{prop:inv_radius} is given by the composition of finitely many isomorphisms, each of which is induced by a chain level homomorphism given by the count of solutions $w$ of \eqref{eq:s_dependent_gradient} with asymptotic limits $w_\pm$ having the same $\mu^h_\RFH$-index. In particular, $\Psi$ comes from a chain map $\psi$. We claim that $\psi$ maps $\widehat{\mathfrak{C}}^{\w_0}$ to itself. Suppose that $w_-\in\widehat{\mathfrak{C}}^{\w_0}$ and $w_+\in\widecheck{\mathfrak{C}}^{\w_0}$. Then Lemma \ref{lem:one-to-one_index} yields $\mu_\FH(\Pi(w_-))+1=\mu_\FH(\Pi(w_+))$. This contradicts the existence of a solution $\Pi(w)$ of \eqref{eq:Floer_eq_M} from $\Pi(w_-)$ to $\Pi(w_+)$. This shows the claim. Hence, $\psi$ induces a chain homomorphism between two short exact sequences  
	\[
	\begin{tikzcd}[row sep=1.5em,column sep=1.3em]
0\arrow{r} & \widehat{\mathrm{RFC}}{}^{\w_0,(a_-,b_-)}({\AA}^{\tau_-}_f)  \arrow{r} \arrow{d} & \mathrm{RFC}^{\w_0,(a_-,b_-)}_*({\AA}^{\tau_-}_f) \arrow{r} \arrow{d}{\psi} & \widecheck{\mathrm{RFC}}^{\w_0,(a_-,b_-)}({\AA}^{\tau_-}_f) \arrow{r} \arrow{d} & 0 \\
0\arrow{r} & \widehat{\mathrm{RFC}}{}^{\w_0,(a_+,b_+)}({\AA}^{\tau_+}_f)  \arrow{r} & \mathrm{RFC}^{\w_0,(a_+,b_+)}_*({\AA}^{\tau_+}_f) \arrow{r} & \widecheck{\mathrm{RFC}}^{\w_0,(a_+,b_+)}({\AA}^{\tau_+}_f) \arrow{r} & 0
\end{tikzcd}
\]
where the last vertical map is $\psi$ followed by the projection to $\widecheck{\mathrm{RFC}}^{\w_0}$. Alternatively, one may think of the last vertical arrow as the quotient map induced by $\psi$. 
The map $\psi$ is a quasi-isomorphism, in fact there is a map $\varphi$ in the opposite direction so that the composition $\varphi\circ\psi$ is chain homotopic to the identity, see Proposition \ref{prop:inv_radius}, and similarly for $\psi\circ\varphi$. Since $\p|_{\widehat{\mathrm{RFC}}{}^{\w_0}}$ and $\varphi\circ\psi|_{\widehat{\mathrm{RFC}}{}^{\w_0}}$ automatically map into ${\widehat{\mathrm{RFC}}{}^{\w_0}}$, it follows that $\varphi\circ\psi|_{\widehat{\mathrm{RFC}}{}^{\w_0}}$ is chain homotopic to the identity also in $({\widehat{\mathrm{RFC}}{}^{\w_0}},\hat\p)$.
Therefore also the first vertical map is a quasi-isomorphism, and thus so is the last vertical map. This induces the commutative diagram in the statement. 

The claim for the isomorphisms in \eqref{eq:continuation_J} and \eqref{eq:isom_f} can be shown analogously.
\end{proof}

The above results are parallel to those in Section \ref{sec:quantum_gysin}. Indeed, the following propositions show that this is consistent.

\begin{prop}\label{prop:one-to-one}
There exist isomorphisms
\[
\widehat{\RFH}{}_*^{\w_0,(a,b)}(\AA^\tau_f,J)\cong \FH^{(\frac{a}{1+\tau},\frac{b}{1+\tau})}_{*-1}(f,j)\,,\quad \widecheck{\RFH}_*^{\w_0,(a,b)}(\AA^\tau_f,J)\cong \FH^{(\frac{a}{1+\tau},\frac{b}{1+\tau})}_{*}(f,j)\,.
\]
In particular, $\widehat{\RFH}{}_*^{\w_0}(E,\Sigma)\cong \FH_{*-1}(M)\cong \widecheck{\RFH}_{*-1}^{\w_0}(E,\Sigma)$.
\end{prop}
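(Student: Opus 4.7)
The strategy is to use invariance to reduce to a convenient choice of data, where the chain-level identification between the complexes follows directly from Lemma \ref{lem:one-to-one_index} and the boundary-operator comparison from Lemma \ref{lem:equal_count}.(a). Indeed, both sides of the claimed isomorphism are invariant under the allowed changes of $(J,h,f)$ via continuation homomorphisms compatible with the $\hat{}/\check{}$-splitting (Lemma \ref{lem:inv_radius_compatibility}), and under changes of $\tau$ they transform by the compatible rescaling $(a,b)\mapsto(a/(1+\tau),b/(1+\tau))$ of action windows (Proposition \ref{prop:inv_radius} together with Lemma \ref{lem:inv_radius_compatibility}; the analogous invariance on the Floer-homology side is immediate from \eqref{eq:continuation}). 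It therefore suffices to prove the isomorphism for a single admissible choice. I would take
\[
J=\begin{pmatrix} i & 0\\ 0 & j\end{pmatrix}\in\JJ_\mathrm{diag}\,,\qquad j\in\j_\mathrm{HS}(f)\,,
\]
with $\tau>0$ so small that Theorem \ref{thm:bijection} applies to all pairs of critical points relevant to index difference $\leq 2$ (which involves only finitely many action levels, cf.\ Remark \ref{rem:finitely_many}.(ii)).

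On generators, Lemma \ref{lem:one-to-one_index} produces bijections
\[
\hat\psi:\widehat\RFC_*^{\w_0,(a,b)}(\AA^\tau_f) \longrightarrow \FC_{*-1}^{(a/(1+\tau),b/(1+\tau))}(f)\,,\qquad \check\psi:\widecheck\RFC_*^{\w_0,(a,b)}(\AA^\tau_f) \longrightarrow \FC_*^{(a/(1+\tau),b/(1+\tau))}(f)
\]
defined by $\hat w,\check w\mapsto \Pi(w)$, using the identities $\mu_\FH(\Pi(w))=\mu_\RFH(w)=\mu^h_\RFH(\check w)=\mu^h_\RFH(\hat w)-1$ and $\AA^\tau_f(w)=(1+\tau)\a_f(\Pi(w))$. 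To compare the boundary operators, Lemma \ref{lem:equal_count}.(a) applied to $w_\pm$ with $\w(w_\pm)=0$ and $\mu^h_\RFH$-index difference $1$ gives
\[
\#\MM(\check w_-,\check w_+,\AA^\tau_f,J)=\#\NN(q_-,q_+,f,g)\,,\qquad \#\MM(\hat w_-,\hat w_+,\AA^\tau_f,J)=-\#\NN(q_-,q_+,f,g)\,,
\]
where $q_\pm=\Pi(w_\pm)$. Because $j\in\j_\mathrm{HS}(f)$, the Floer complex of $(f,j)$ reduces to the Morse complex of $(-f,g)$ on each $\Gamma_M$-component (see the discussion around \eqref{eq:morse_isom}), so its boundary is $\partial(q_-)=\sum_{q_+}\#\NN(q_-,q_+,f,g)\,q_+$. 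Hence $\check\psi$ intertwines $\check\p^{\w_0}$ with $\partial$ and is a chain isomorphism, whereas $\hat\psi$ intertwines $\hat\p^{\w_0}$ with $-\partial$. Since $(\FC(f),\partial)$ and $(\FC(f),-\partial)$ share the same kernels and images (or, equivalently, $(-1)^*\hat\psi$ is a genuine chain isomorphism), $\hat\psi$ descends to an isomorphism on homology as well. Passing to the direct/inverse limits of action windows, as in \eqref{eq:wind_lim}, yields the unfiltered statement, and composing the two isomorphisms produces the index shift $\widehat\RFH^{\w_0}_*(E,\Sigma)\cong\FH_{*-1}(M)\cong\widecheck\RFH^{\w_0}_{*-1}(E,\Sigma)$.

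The main subtlety is ensuring that only one-cascade flow lines enter the boundary operators we are comparing. This is built into our setup: with $j\in\j_\mathrm{HS}(f)$ and $\mu^h_\RFH$-index difference $1$, every projected cylinder $\Pi(w)$ is $t$-independent (Proposition \ref{prop:j_HS}.(d)), so Proposition \ref{prop:bijection} (rather than the slightly weaker Theorem \ref{thm:bijection}) provides the one-to-one correspondence modulo the fiber $S^1$-action which is the content of Lemma \ref{lem:equal_count}.(a). Positivity of intersections (Proposition \ref{prop:positivity_of_intersection}) further guarantees that every cascade between zero-winding critical points stays inside $E\setminus\OO_E$, so all intermediate critical points also have winding number $0$ and the $\w_0$-subcomplex is preserved by $\p^{\w_0}$. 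I expect that verifying these compatibilities (in particular the sign bookkeeping in the identification between the Morse boundary on $M$ and the two pieces $\check\p^{\w_0}$, $\hat\p^{\w_0}$) will be the principal bookkeeping step; all geometric input is already in place.
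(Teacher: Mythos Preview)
Your proof is correct and follows essentially the same route as the paper: reduce via invariance (Lemma \ref{lem:inv_radius_compatibility}) to small $\tau$ and $J\in\JJ_\mathrm{diag}$ with $j\in\j_\mathrm{HS}(f)$, then identify the chain complexes using Lemma \ref{lem:one-to-one_index} on generators and Lemma \ref{lem:equal_count}.(a) (together with Proposition \ref{prop:j_HS}.(d)) on boundary operators. Your discussion of the sign bookkeeping and of why only one-cascade, $t$-independent solutions contribute is a bit more explicit than the paper's, but the argument is the same.
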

\begin{proof}
	We only show the isomorphism for $\widehat{\RFH}{}_*^{\w_0}$. The case for $\widecheck{\RFH}_*^{\w_0}$ can be shown analogously. Due to the invariance properties, \eqref{eq:isom} and Lemma \ref{lem:inv_radius_compatibility}, it suffices to obtain the isomorphism for small $\tau>0$ and $J=\begin{pmatrix}
 i & 0\\
 0 & j
\end{pmatrix}$ with $j\in\mathfrak{j}_\mathrm{HS}(f)$. 
By Lemma \ref{lem:one-to-one_index}, we have a bijection between $\widehat{\mathfrak C}^{\w_0}(\AA^\tau_{f})$ and $\Crit\a_{f}$ that relates actions and indices as in the statement of the proposition. Moreover, Proposition \ref{prop:j_HS}.(d)  and Lemma \ref{lem:equal_count}.(a) yield 
\[
\#\widehat\NN(\Pi(w_-),\Pi(w_+),\mathfrak{a}_{f},j)=-\#\widehat\MM(w_-,w_+,\AA^\tau_{f},J)
\] 
for all $w_\pm\in\widehat{\mathfrak C}^{\w_0}(\AA^\tau_f)$ with $\mu_\RFH(w_-)-\mu_\RFH(w_+)=1$. This proves that the following two chain complexes are isomorphic:
\begin{equation}\label{eq:Pi_isom}
\big(\widehat{\RFC}{}_*^{\w_0,(a,b)}(\AA^\tau_{f}),\hat\p^{\w_0}_J\big) \cong \big(\FC^{(\frac{a}{1+\tau},\frac{b}{1+\tau})}_{*-1}(f),-\p_j\big)\,.	
\end{equation}
This induced the desired isomorphism.
\end{proof}

The following proposition together with Proposition \ref{prop:quantum_gysin_simple}.(c) proves statements (b) and (c) in Theorem \ref{thm:main}, see also Remark \ref{rem:RFH_w_0=H_Sigma} below.

\begin{prop}\label{prop:cap_product}
$ $
\begin{enumerate}[(a)]
	\item There exists an isomorphism 
\[
\RFH_*^{\w_0,(a,b)}(\AA^\tau_{f},J)\cong \FH_{*}^{(\frac{a}{1+\tau},\frac{b}{1+\tau})}(\tilde f,j)\,.
\]
In particular, $\RFH_*^{\w_0}(E,\Sigma)\cong \FH_{*}(\Sigma)$.
	\item The exact sequence in Proposition \ref{prop:les} is isomorphic to the one in Proposition \ref{prop:quantum_gysin}, that is, we have the commutative diagram
	\[
	\begin{tikzcd}[row sep=1.5em,column sep=.5em]
\cdots \arrow{r} &  \RFH_*^{\w_0,(a,b)}(\AA^\tau_f) \arrow{r} \arrow{d}{\cong} & \widecheck{\RFH}_*^{\w_0,(a,b)}(\AA^\tau_f) \arrow{r} \arrow{d}{\cong} & \widehat{\RFH}{}_{*-1}^{\w_0,(a,b)}(\AA^\tau_f) \arrow{r} \arrow{d}{\cong} & \RFH_{*-1}^{\w_0,(a,b)}(\AA^\tau_f) \arrow{r}\arrow{d}{\cong} & \cdots \\
\cdots \arrow{r} & \FH_{*}^{(\frac{a}{1+\tau},\frac{b}{1+\tau})}(\tilde f) \arrow{r} & \FH_{*}^{(\frac{a}{1+\tau},\frac{b}{1+\tau})}(f) \arrow{r} & \FH_{*-2}^{(\frac{a}{1+\tau},\frac{b}{1+\tau})}(f)  \arrow{r} & \FH_{*-1}^{(\frac{a}{1+\tau},\frac{b}{1+\tau})}(\tilde f) \arrow{r} & \cdots
\end{tikzcd}
\]
 where all vertical maps are isomorphisms.
\end{enumerate}
\end{prop}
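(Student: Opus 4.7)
The plan is to produce an explicit chain-level isomorphism between the Rabinowitz Floer complex $(\RFC^{\w_0,(a,b)}(\AA^\tau_f), \p^{\w_0}_J)$ and the Floer Gysin complex $(\FC^{(\frac{a}{1+\tau},\frac{b}{1+\tau})}(\tilde f, j), \p^\wp_j)$, from which both (a) and (b) will follow. By the invariance results (change of $J$, change of $f$ and $h$, and Proposition \ref{prop:inv_radius} together with Lemma \ref{lem:inv_radius_compatibility}), it is enough to work with a single convenient choice, which I take to be $\tau>0$ sufficiently small in the sense of Theorem \ref{thm:bijection} and $J=\begin{pmatrix} i & 0 \\ 0 & j \end{pmatrix}\in\JJ_\mathrm{diag}$ with $j\in\mathfrak{j}_\mathrm{HS}(f)$. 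I will also choose the perfect Morse functions on $\Crit\tilde f$ (from \eqref{eq:ftn_h}) and on $\Crit\AA^\tau_f$ (from \eqref{eq:perfect}) compatibly, using the tubular neighborhood $U$ of $N$ and the trivialization $\Sigma|_{M\setminus U}\cong (M\setminus U)\times S^1$ as in the proof of Proposition \ref{prop:quantum_gysin_simple}(c), so that the specific cascade-counting identifications below hold on the nose.

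Using Lemma \ref{lem:one-to-one_index}, I define $\Phi$ at the module level as the grading-preserving bijection that sends $\hat w \in \widehat{\mathfrak{C}}^{\w_0}$ to the generator $\hat\q$ of $\FC(f)[-1]\subset\FC(\tilde f)$ corresponding to $\q=\Pi(w)$, and $\check w\in\widecheck{\mathfrak{C}}^{\w_0}$ to the generator $\check\q$ of $\FC(f)$. The compatibility of $\Phi$ with the three blocks of the boundary operators will be checked separately. For the diagonal blocks, Proposition \ref{prop:one-to-one} (and the chain-level identity \eqref{eq:Pi_isom}) already gives $\Phi\circ\hat\p^{\w_0}_J=\hat\p\circ\Phi$ and $\Phi\circ\check\p^{\w_0}_J=\check\p\circ\Phi$, since the relevant gradient flow lines have $t$-independent projections by Proposition \ref{prop:j_HS}(d) and the counts agree by Lemma \ref{lem:equal_count}(a). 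For the off-diagonal block $\p^{\w_0,c_1^E}$, I will split the count $\#\MM(\check w_-,\hat w_+,\AA^\tau_f,J)$ according to the number of cascades and whether the single-cascade solutions have $t$-independent projection:
\begin{equation*}
\#\MM(\check w_-,\hat w_+,\AA^\tau_f,J) = \#\MM^1_{\om=0}(\check w_-,\hat w_+,\AA^\tau_f,J) + \sum_q \#\{[\mathbf{w}]:\Pi(\mathbf{w})=q_\theta\} + \#\MM^2(\check w_-,\hat w_+,\AA^\tau_f,J),
\end{equation*}
where the middle sum ranges over one representative per connected component of $\widehat\NN_s(\q_-,\q_+,\a_f,j)$ of simple Floer cylinders $q$. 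The first term equals $\#\NN^1(\check q_-,\hat q_+,\tilde f,\tilde g)=\#\widehat\NN(q_-,q_+,f,g,N)=\psi_0^{c_1^E}$-count by Lemma \ref{lem:equal_count}(b) and the analysis in the proof of Proposition \ref{prop:quantum_gysin_simple}(c); the middle sum equals $-\sum_q \epsilon(q)c_1^E([q])$ by Lemma \ref{lem:chern_number=min_to_max}; and the last term vanishes on both sides by the choice of $h$ made above (and agrees with $\#\NN^2=0$ by Lemma \ref{lem:equal_count}(b)). Summing these three contributions and comparing with the explicit formula in Proposition \ref{prop:quantum_gysin_simple}(b) gives $\Phi\circ\p^{\w_0,c_1^E}_J=\psi^{c_1^E}\circ\Phi$, which completes the verification that $\Phi$ is a chain isomorphism and establishes (a).

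For (b), the key observation is that under $\Phi$ the short exact sequence of subcomplex and quotient that yields the long exact sequence of Proposition \ref{prop:les} is mapped isomorphically to the mapping-cone short exact sequence $0\to\FC(f)[-1]\to\FC(\tilde f)\to\FC(f)\to 0$ that yields the Floer Gysin sequence of Proposition \ref{prop:quantum_gysin}(a). Naturality of the connecting homomorphism then gives the claimed commutative diagram of long exact sequences. The main technical obstacle is the careful sign comparison in the middle block: the coherent orientations on the RFH moduli spaces (Section \ref{sec:orientation}, in particular the orientation of $\ker D_w^{\mathrm v}$ by $R(u)$ and the fibered sum rule of Remark \ref{rem:ori_rule}) must be matched against the orientation conventions used in defining $\psi^{c_1^E}$ in Section \ref{sec:Floer_cap}; Lemma \ref{lem:sign1} and Lemma \ref{lem:chern_number=min_to_max} are designed to accomplish exactly this, and the bookkeeping of signs in the three-way decomposition above is where the proof will spend most of its effort.
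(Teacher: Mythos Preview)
Your approach is essentially the same as the paper's: reduce via invariance to small $\tau$ and $J\in\JJ_\mathrm{diag}$ with $j\in\j_\mathrm{HS}$, set up the module isomorphism $\Pi_*$ using Lemma~\ref{lem:one-to-one_index}, and verify block-by-block that $\p^{\w_0}_J$ matches $\p^\wp_j$, with the diagonal blocks handled by Lemma~\ref{lem:equal_count}(a) and the off-diagonal block split into $t$-dependent and $t$-independent pieces via Lemma~\ref{lem:chern_number=min_to_max} and Lemma~\ref{lem:equal_count}(b). Part (b) then follows from the chain-level identification of the two short exact sequences, exactly as you say.

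There is one technical oversimplification in your treatment of the $t$-independent piece. You claim that by choosing $h$ as in the proof of Proposition~\ref{prop:quantum_gysin_simple}(c), the two-cascade term $\#\MM^2(\check w_-,\hat w_+,\AA^\tau_f,J)$ and $\#\NN^2(\check q_-,\hat q_+,\tilde f,\tilde g)$ both vanish. But the vanishing of $\NN^2$ in that proof relies not only on the choice of $h$ but also on the special connection $1$-form (curvature supported on $U$) used to define $\tilde g$. On the Rabinowitz side the parallel transport entering Corollary~\ref{cor:rfh_lift}, which governs where $\ev_+$ lands, is determined by the fixed $\alpha$ on $E$, not by a freely chosen one; so you cannot force $\MM^2=0$ by choosing $h$ alone. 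The paper does not attempt this: it uses Lemma~\ref{lem:equal_count}(b) to get $\#\MM^1_{\om=0}=\#\NN^1$ and $\#\MM^2=\#\NN^2$ for the $\tilde g$ coming from $E$'s $\alpha$, and then invokes a homotopy between this $\tilde g$ and the special $\tilde g'$ of Proposition~\ref{prop:quantum_gysin_simple}(c) to conclude $\#\NN^1+\#\NN^2=\#\widehat\NN(q_-,q_+,f,g,N)$. Your three-term decomposition is correct, but the bookkeeping should be ``first $+$ third $=\psi_0^{c_1^E}$-count'' rather than ``first $=\psi_0^{c_1^E}$-count, third $=0$''.
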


\begin{proof}
Due to Proposition \ref{prop:quantum_gysin_simple}.(a) and Lemma \ref{lem:inv_radius_compatibility}, it again suffices to show both (a) and (b) for small $\tau>0$ and $J=\begin{pmatrix}
 i & 0\\
 0 & j
\end{pmatrix}$ with $j\in\mathfrak{j}_\mathrm{HS}(f)$. We show that the following two chain complexes are isomorphic through the map $\Pi$:
\begin{equation}\label{eq:claim}
	\Pi_*:\big({\RFC}_*^{\w_0,(a,b)}(\AA^{\tau}_{f}),\p^{\w_0}_J\big) \stackrel{\cong}{\longrightarrow} \big( \FC^{(\frac{a}{1+\tau},\frac{b}{1+\tau})}_{*}(\tilde f), \p^\wp_j\big)\,.
\end{equation}
Lemma \ref{lem:one-to-one_index} shows that $\Pi_*$ induces an isomorphism of chain modules. Next we compare the boundary operator $\p^\wp=\hat\p+\check\p+\p^{c_1^E}$ for $\FC$ and the boundary operator $\p^{\w_0}=\hat\p^{\w_0}+\check{\p}^{\w_0}+\p^{\w_0,c_1^E}$ for $\RFC^{\w_0}$. We claim that up to conjugation by $\Pi_*$ it holds that
\begin{equation}\label{eq:differentials_same}
\hat\p = \hat\p^{\w_0}\,,\qquad \check\p = \check\p^{\w_0}\,,\qquad \p^{c_1^E} = \p^{\w_0,c_1^E}\,.		
\end{equation}
For the first two identities, see the proof of Proposition \ref{prop:one-to-one}. For the last one, we show that there is a sign-preserving bijection 
\[
\MM(\check w_-,\hat w_+,\AA^\tau_{f},J) \longrightarrow \NN(\q_-,\q_+,\mathfrak{a}_{f},j,N)
\]
for $\check w_-\in\widecheck{\mathfrak{C}}^{\w_0}(\AA^\tau_f)$ and $\hat w_+\in\widehat{\mathfrak{C}}^{\w_0}(\AA^\tau_f)$ with $\mu^h_\RFH(\check w_-)-\mu^h_\RFH(\hat w_+)=1$. Here $\q_\pm=[q_\pm,\bar q_\pm]=\Pi(\check w_\pm)$. In view of \eqref{eq:no_3} we have
\[
\begin{split}
\MM(\check w_-,\hat w_+,\AA^\tau_{f},J)&=\MM^1_{\om\neq 0}(\check w_-,\hat w_+,\AA^\tau_{f},J)\sqcup \MM^1_{\om= 0}(\check w_-,\hat w_+,\AA^\tau_{f},J)\\[.5ex]
&\quad \sqcup  \MM^2(\check w_-,\hat w_+,\AA^\tau_{f},J)\,,
\end{split}
\]
where the subscript $\om\neq 0$ indicates that the space consists of $[w]$ with $\om([\Pi(w)])\neq 0$, i.e.~$\Pi(w)$ is $t$-dependent by \eqref{eq:om-energy}, while the subscript $\om=0$ means the opposite as in \eqref{eq:om=0_moduli}. Using the same notation, we write
\[
\NN(\q_-,\q_+,\a_{f},j,N)= \NN_{\om\neq0}(\q_-,\q_+,\a_{f},j,N)\sqcup \NN_{\om=0}(\q_-,\q_+,\a_{f},j,N)\,.
\]
It follows from Proposition \ref{prop:quantum_gysin_simple}.(b) and Lemma \ref{lem:chern_number=min_to_max} that 
\[
\#\MM^1_{\om\neq 0}(\check w_-,\hat w_+,\AA^\tau_f,J)=\#\NN_{\om\neq0}(\q_-,\q_+,\a_f,j,N)\,.
\]
On the other hand if $[\bar q_-]=[\bar q_+]$, then 
\[
\begin{split}
\#\NN_{\om=0}(\q_-,\q_+,\a_f,j,N)&= \#\NN(q_-,q_+,f,g,N)\\
&=\#\NN^1(\check q_-,\hat q_+,\tilde f,\tilde g)+\#\NN^2(\check q_-,\hat q_+,\tilde f,\tilde g)\\
&=\#\MM^1_{\om= 0}(\check w_-,\hat w_+,\AA^\tau_f,J) +\#\MM^2(\check w_-,\hat w_+,\AA^\tau_f,J)
\end{split}
\]
where the first equality holds by \eqref{eq:om-energy} and the last line is due to Lemma \ref{lem:equal_count}.(b). The second equality follows from the proof of Proposition \ref{prop:quantum_gysin_simple}.(c). In the proof, we lift $g$ to $\tilde g'$ on $\Sigma$ using a special connection 1-form and have the equality for $\tilde g'$ with $\NN^2(\check q_-,\hat q_+,\tilde f,\tilde g')=\emptyset$. For $\tilde g$ used here, however, the space $\NN^2(\check q_-,\hat q_+,\tilde f,\tilde g)$ is not necessarily empty, but the second equality can be seen using a homotopy between $\tilde g$ and $\tilde g'$.
Moreover $\NN_{\om=0}(\q_-,\q_+,\a_f,j,N)=\emptyset$ if $[\bar q_-]\neq[\bar q_+]$ again by \eqref{eq:om-energy}.
This establishes \eqref{eq:differentials_same} and thus \eqref{eq:claim}. Hence (a) is proved. 

Statement (b) follows immediately from \eqref{eq:differentials_same} and the construction of the respective exact sequences. 
\end{proof}

\begin{rem}\label{rem:Lambda_module_str}
	The proof of Proposition \ref{prop:cap_product}.(a) implies that the chain complex $\RFC^{\w_0}(\AA^\tau_f)$ is isomorphic to the mapping cone of $\psi^{c_1^E}$, see \eqref{eq:cone} and \eqref{eq:mapping_cone}:
\[
\big(\RFC^{\w_0}(\AA^\tau_f),\p^{\w_0}\big)\cong \mathrm{Cone\,}(\psi^{c_1^E})\,.
\] 
As mentioned after \eqref{eq:floer_homology_h}, $\mathrm{Cone\,}(\psi^{c_1^E})$ is a chain complex of $\Lambda$-modules, where the Novikov ring $\Lambda$ is defined in \eqref{eq:nov}, see also Remark \ref{rem:H}.(ii). Thus, through the above isomorphism, we also have a $\Lambda$-module structure on $(\RFC^{\w_0}(\AA^\tau_f),\p^{\w_0})$. In view of Lemma \ref{lem:one-to-one}, this module structure can be interpreted geometrically as follows. For $w=([u,\bar u],\eta)\in\Crit\AA^\tau_f$, we write $u=u_q^\ell$, meaning that it is the $\ell$-fold cover of a simple Reeb orbit $u_q$ on $\Sigma_\tau$ over $q\in\Crit f$. Then an element $A\in\Gamma_M$ acts on $\Crit\AA^\tau_f$ by
\[
A\cdot w=A\cdot\big([u,\bar u],\eta\big):=\big([u_q^{\ell-c_1^E(A)},\bar u\#s],\eta'\big)\,,
\]
where $s:S^2\to E$ with $[\wp\circ s]=A$ and $\eta'\in\R$ is determined by \eqref{eq:eta=cov}. This action  preserves the winding number $\w$ and shifts the $\mu_\RFH$-index by $-2c_1^{TM}(A)$. Moreover the proof of Proposition \ref{prop:cap_product}.(a) yields that this $\Gamma_M$-action commutes with the boundary operator $\p^{\w_0}$, and therefore this naturally extends to a $\Lambda$-module action on $(\RFC^{\w_0}(\AA^\tau_f),\p^{\w_0})$. We refer to \cite{Ueb19} for related results.
\end{rem}

\begin{rem}\label{rem:RFH_w_0=H_Sigma}
In the case that $\om$ vanishes on $\pi_2(M)$, Proposition \ref{prop:quantum_gysin_simple}.(c) and Proposition \ref{prop:cap_product}.(a) imply $\RFH^{\w_0}(E,\Sigma) \cong \H(\Sigma;\Lambda)$. Let us explain this isomorphism directly. 
 To simplify the exposition we use $\AA^\tau_{f=0}$ where we take $f=0$ in the definition of the action functional. Then every connected component of $\Crit\AA^\tau_{f=0}$ is diffeomorphic to $\Sigma$, and we choose a Morse function $h$ on $\Crit\AA^\tau_{f=0}$. We note that before the role of $f$ was  solely to reduce the Morse-Bott setting to $S^1$, as opposed to $\Sigma$. 
The assumption $\om|_{\pi_2(M)}=0$ implies that the map $\pi_1(S^1)\to \pi_1(\Sigma)$ in \eqref{eq:homotopy_les} is injective. Therefore the loop component $u$ of $w=([u,\bar u],\eta)\in\Crit\AA^\tau_{f=0}$ with $\w(w)=0$ is  constant, mapping to a point in $\Sigma_\tau$ by Proposition \ref{prop:winding}.(f), and the boundary operator counts only gradient flow lines of $h$ for energy reasons. Therefore $(\RFC^{\w_0}(\AA^\tau_{f=0}),\p^{\w_0})$ is simply the Morse complex of $h$, and the claimed isomorphism follows. 
\end{rem}

\begin{rem}\label{rem:w_k_RFH}
It is possible to define the Rabinowitz Floer chain complex with winding number $k\in\Z$ and its homology, denoted by $\RFC^{\w_k}$ and $\RFH^{\w_k}$ respectively, exactly in the same way to construct $\RFH^{\w_0}$. Then arguments used in Proposition \ref{prop:cap_product} together with Lemma \ref{lem:one-to-one}, \eqref{eq:action_comparision}, and \eqref{eq:indices_and_winding} prove
\[
\RFH_{*-2k}^{\w_k,((1+\tau)a-\frac{\tau k}{m},(1+\tau)b-\frac{\tau k}{m})}(\AA^\tau_f)\cong\FH_{*}^{(a,b)}(\tilde f)  \,.
\]
Similarly $\widehat{\RFC}{}^{\w_k}$, $\widecheck{\RFC}{}^{\w_k}$, $\widehat{\RFH}{}^{\w_k}$, and $\widecheck{\RFH}^{\w_k}$ are defined as well and 
\[
	\widehat{\RFH}{}^{\w_k,((1+\tau)a-\frac{\tau k}{m},(1+\tau)b-\frac{\tau k}{m})}_{*-2k+1}(\AA^\tau_f)\cong\FH_{*}^{(a,b)}(f) \cong \widecheck{\RFH}^{\w_k,((1+\tau)a-\frac{\tau k}{m},(1+\tau)b-\frac{\tau k}{m})}_{*-2k}(\AA^\tau_f)
\]
as in Proposition \ref{prop:one-to-one}. 
Furthermore the exact sequence in Proposition \ref{prop:cap_product} should hold for these homologies with winding number $k$.
We also note that the composition of the isomorphisms 
\[
\RFC_{*}^{\w_0,(a,b)}(\AA^\tau_f)\to \FC_{*}^{(\frac{a}{1+\tau},\frac{b}{1+\tau})}(\tilde f)\to \RFC_{*-2k}^{\w_k,(a-\frac{\tau k}{m},b-\frac{\tau k}{m})}(\AA^\tau_f)
\] 
coincides with the map simply adding $k$ iterations to generators. 
\end{rem}

Next we prove Corollary \ref{eq:RFH_w_0_O}.
Let $\om_\mathrm{FS}$ be the Fubini-Study form on $\CP^n$. 
Then $c_1^{T\CP^n}=(n+1)[\om_\mathrm{FS}]$ and thus condition (A3) is met for all $n\in\N$. 

\begin{cor}\label{cor:rhf_CPn}
Let $E=\OO_{\CP^n}(-m)$ be the complex line bundle over $M=\CP^n$ with $c_1^{\OO_{\CP^n}(-m)}=-m[\om_\mathrm{FS}]$. Then $\Sigma$ is diffeomorphic to the lens space $L(m,1)=S^{2n+1}/\Z_m$ and 
\[
\RFH^{\w_0}_*(\OO_{\CP^n}(-m),L(m,1)) \cong \left\{
\begin{aligned} 
\Z_m  & \quad *\in2\Z+1\,, \\[1ex]
0\quad & \quad *\in 2\Z\,.
\end{aligned}
\right.
\]
\end{cor}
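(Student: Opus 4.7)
The proof proposal is essentially a two-line combination of results established earlier in the excerpt. First I would identify the circle bundle: since $c_1^{\OO_{\CP^n}(-m)}=-m[\om_{\mathrm{FS}}]$ and the Hopf bundle $S^{2n+1}\to\CP^n$ has Euler class $\pm[\om_{\mathrm{FS}}]$, the associated principal $S^1$-bundle $\Sigma\to\CP^n$ is obtained by quotienting $S^{2n+1}$ by the standard $\Z_m$-action, i.e.~$\Sigma\cong S^{2n+1}/\Z_m=L(m,1)$ (this identification was already noted after Corollary \ref{eq:RFH_w_0_O} in the introduction and follows directly from the discussion in Section \ref{sec:cyclic}, where $\Sigma^m\cong \Sigma^1/\Z_m$ and $\Sigma^1$ is the Hopf bundle $S^{2n+1}\to\CP^n$).

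Next, I would observe that the standing assumption (A3) from the introduction is satisfied for $M=\CP^n$: indeed $c_1^{T\CP^n}=(n+1)[\om_{\mathrm{FS}}]$, so $\lambda=n+1$, $\nu=1$, and $(\lambda-1)\nu=n\geq 1$. Hence Theorem \ref{thm:main} and in particular the isomorphism of Proposition \ref{prop:cap_product}.(a) apply, giving
\[
\RFH^{\w_0}_*\big(\OO_{\CP^n}(-m),L(m,1)\big)\;\cong\;\FH_*(\Sigma)\,.
\]
The computation of the right-hand side is precisely the content of Proposition \ref{prop:CPn}, which yields $\FH_*(\Sigma)\cong\Z_m$ for $*\in 2\Z+1$ and $\FH_*(\Sigma)=0$ for $*\in 2\Z$.

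Combining these two inputs gives the claimed formula, and there is no genuine obstacle since all of the analytic work (transversality, compactness, the mapping cone identification) has already been assembled in the preceding sections. The only minor point to double-check is that the $\Z$-grading conventions match on both sides of the isomorphism in Proposition \ref{prop:cap_product}.(a), which is immediate from the identity $\mu_\RFH(w)=\mu_\FH(\Pi(w))$ for zero-winding generators (Lemma \ref{lem:one-to-one_index}) together with the degree shift built into the definition of $\mu_\FH^h$ on $\Crit h\subset\Crit\tilde f$; this matches the parity pattern produced by Proposition \ref{prop:CPn}.
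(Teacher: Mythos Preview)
Your proposal is correct and matches the paper's own proof, which simply states that the result is a direct consequence of Proposition \ref{prop:CPn} and Proposition \ref{prop:cap_product}. The extra details you supply (the lens space identification via Section \ref{sec:cyclic} and the verification of (A3)) are accurate and already implicit in the surrounding text.
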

\begin{proof}
	This is a direct consequence of Proposition \ref{prop:CPn} and Proposition \ref{prop:cap_product}.
\end{proof}

\subsection{Transfer homomorphism revisited}\label{sec:transfer_revisited}
Let $E^m$ and $\Sigma^m$ denote the degree $m$ bundles as in Section \ref{sec:cyclic}. 
\begin{cor}\label{cor:transfer2}
There exist transfer and projection homomorphisms 
\[
T:\RFH^{\w_0}_*(E^m,\Sigma^m)\to \RFH^{\w_0}_*(E^1,\Sigma^1)\,,\quad P: \RFH^{\w_0}_*(E^1,\Sigma^1)\to  \RFH^{\w_0}_*(E^m,\Sigma^m)
\] 
such that both compositions $P\circ T$ and $T\circ P$ agree with the scalar multiplication by $m$. 

	This leads to the following consequences. We fix $\kappa\in\Z$ and assume  $\RFH_{\kappa}^{\w_0}(E^1,\Sigma^1)=0$. Then, for any $m\in\N$, $\RFH_{\kappa}^{\w_0}(E^m,\Sigma^m)$ only contains torsion classes of order $m$. Moreover, $\RFH_{\kappa}^{\w_0}(E^m,\Sigma^m)$ is torsion for some $m\in\N$ if and only if it is torsion for every $m\in\N$. In particular, if we use coefficients in a field instead of integers $\Z$, then $\RFH_{\kappa}^{\w_0}(E^m,\Sigma^m)=0$ for some $m\in\N$ if and only if $\RFH_{\kappa}^{\w_0}(E^m,\Sigma^m)=0$ for all  $m\in\N$.
\end{cor}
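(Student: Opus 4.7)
The plan is to transport the chain-level construction of Section \ref{sec:transfer} to Rabinowitz Floer homology via the mapping cone identification in Remark \ref{rem:Lambda_module_str}. That remark, combined with \eqref{eq:cone} and \eqref{eq:mapping_cone} applied to the setting of Proposition \ref{prop:cap_product}.(a), produces a chain-level isomorphism
\[
\bigl(\RFC^{\w_0}(\AA^\tau_f;E^m),\p^{\w_0}\bigr)\;\cong\;\FC(f)[-1]\oplus\FC(f)
\]
whose differential is precisely the mapping cone of $\psi^{c_1^{E^m}}=m\,\psi^{c_1^{E^1}}$. Under this identification I would copy the definitions from Section \ref{sec:transfer} verbatim, setting
\[
T(\q_1,\q_2):=(\q_1,m\q_2),\qquad P(\q_1,\q_2):=(m\q_1,\q_2).
\]
The verification that these are chain maps reduces to the equality $\psi^{c_1^{E^m}}=m\,\psi^{c_1^{E^1}}$, which holds on the nose from \eqref{eq:floer_cap_chain}, and the formulas immediately yield $P\circ T=T\circ P=m\cdot\mathrm{id}$. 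Passing to homology produces the desired $T$ and $P$ with the required identities.

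For the consequence about torsion, I would mimic the proof of Proposition \ref{prop:transfer}. Assume $\RFH^{\w_0}_\kappa(E^1,\Sigma^1)=0$; then $T$ vanishes in degree $\kappa$, so $m\cdot\mathrm{id}=P\circ T=0$ on $\RFH^{\w_0}_\kappa(E^m,\Sigma^m)$, forcing every class there to have order dividing $m$. For the equivalence, suppose $\RFH^{\w_0}_\kappa(E^{m_0},\Sigma^{m_0})$ consists of torsion for some $m_0\in\N$. Given $Z\in\RFH^{\w_0}_\kappa(E^1,\Sigma^1)$, the element $P(Z)\in\RFH^{\w_0}_\kappa(E^{m_0},\Sigma^{m_0})$ satisfies $kP(Z)=0$ for some $k\in\N$, hence $k m_0 Z=k\,(T\circ P)(Z)=0$, so $\RFH^{\w_0}_\kappa(E^1,\Sigma^1)$ is torsion. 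Applying the same argument in the other direction, using the pair $T,P$ associated with arbitrary $m\in\N$, any $Z'\in\RFH^{\w_0}_\kappa(E^m,\Sigma^m)$ has $T(Z')$ torsion in $\RFH^{\w_0}_\kappa(E^1,\Sigma^1)$, so some $k'T(Z')=0$, and then $k'mZ'=k'(P\circ T)(Z')=0$. Hence torsion at one $m$ implies torsion at every $m\in\N$. Over a field, torsion is equivalent to zero, giving the final assertion.

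The hard part is essentially bookkeeping rather than new analysis: one must ensure that the chain-level maps $T$ and $P$ are compatible with all the choices entering the definition of $\RFH^{\w_0}$ (perfect Morse function $h$, almost complex structure $J$, radius $\tau$, and the corresponding continuation isomorphisms from Section \ref{sec:invariance}). This is straightforward because $T$ and $P$ are $\Lambda$-linear and determined purely by the mapping cone decomposition, which is preserved on the nose by the continuation and radius-invariance isomorphisms as spelled out in Lemma \ref{lem:inv_radius_compatibility}.
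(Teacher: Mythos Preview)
Your proposal is correct and follows essentially the same route as the paper: the paper's proof simply cites \eqref{eq:TP=PT=m}, Proposition~\ref{prop:transfer}, and the isomorphism $\RFH^{\w_0}_*(E,\Sigma)\cong\FH_*(\Sigma)$ from Proposition~\ref{prop:cap_product}.(a), which amounts to exactly the mapping-cone identification and chain-level $T,P$ you describe. Your compatibility remarks and spelled-out torsion argument are faithful elaborations of what the paper condenses into those three citations.
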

\begin{proof}
	This is a direct consequence of \eqref{eq:TP=PT=m}, Proposition \ref{prop:transfer}, and Proposition \ref{prop:cap_product}.(a).
\end{proof}
This establishes Theorem \ref{thm:main}.(d) and Corollary \ref{cor:transfer}. In the rest of this section, we provide a geometric interpretation of Corollary \ref{cor:transfer2}.

To emphasize that we work on the bundles $E^m$ and $\Sigma^m$ we denote by
\[
\AA^{\tau,m}_f:\widetilde{\mathcal{L}}(E^m)\x\R\longrightarrow\R
\]
 the Rabinowitz action functional for $E^m$ given in \eqref{eq:Rabinowitz_functional}, i.e.~$\RFH(\AA^{\tau,m}_f)=\RFH(E^m,\Sigma^m)$. Moreover we set  $\tilde f^m:=f\circ\wp:\Sigma^m\to\R$, i.e.~the lift of $f:M\to\R$ by $\wp:\Sigma^m\to M$.   We also denote by $\tilde g^m$ the lift of a Riemannian metric $g$ on $M$ to $\Sigma^m$. As before, we write $h^m:\Crit \tilde f^m\to\R$ and $h^m:\Crit\AA^{\tau,m}_f\to\R$ for the perfect Morse functions given in \eqref{eq:ftn_h} and \eqref{eq:perfect}. We pull back both functions by the $m$-fold covering map $\wp^m:\Sigma^1\to\Sigma^m$ to obtain 
\[
\tilde h^m:=h^m\circ\wp^m:\Crit \tilde f^1\to\R\,,\qquad\tilde h^m:=h^m\circ\wp^m: \Crit{\AA}^{\tau,1}_f\to\R
\]
where, for the latter, we actually mean $h^m$ composed with the map $\Crit{\AA}^{\tau,1}_f\to \Crit{\AA}^{\tau,m}_f$ induced by $\wp^m:\Sigma^1\to\Sigma^m$, see Lemma \ref{lem:cyclic1} for similar situation.
 Both $\tilde h^m$ are Morse functions with exactly $m$ maximum points and $m$ minimum points on each connected component.
We denote by 
\[
\{\tilde{\bar q}^1,\dots,\tilde{\bar q}^m\}\subset \Crit \tilde h^m\subset \Crit \tilde f^1\,,\qquad \{\tilde w^1,\dots,\tilde w^m\}\subset\Crit\tilde h^m \subset \Crit{\AA}^{\tau,1}_f
\]
the preimages of $\bar q\in\Crit h^m\subset \Crit\tilde f^m$ and of   $w\in\Crit h^m\subset \Crit{\AA}^{\tau,m}_f$ respectively. 
 We also take $J\in\JJ_\mathrm{diag}$ associated with $j\in\j_\mathrm{HS}$ on $E^m$ and note that the pullback $\widetilde J=(\wp^m)^*J$ by $\wp^m:E^1\to E^m$ is also diagonal associated with $j$.
Although $\tilde h^m$ is not a perfect Morse function for $m\geq2$, we still can define the chain complex $(\RFC^{\w_0,(a,b)}(\AA^{\tau,1}_f,\tilde h^m),\p^{\w_0}_{\widetilde J})$ exactly in the same manner as in Section \ref{sec:RFH} except that the boundary operator $\p^{\w_0}_{\widetilde J}$ now also counts zero cascades, i.e.~negative gradient flow lines of $\tilde h^m$, as opposed to the above situation where we only considered perfect Morse functions. The resulting homology is isomorphic to the one defined with the perfect Morse function $h^1$ via a continuation homomorphism:
\[
\H_*\big(\RFC^{\w_0,(a,b)}(\AA^{\tau,1}_f,\tilde h^m),\p^{\w_0}_{\widetilde J}\big)\cong\RFH^{\w_0,(a,b)}_*(\AA^{\tau,1}_f,h^1)\,.
\]
Let $-\infty\leq a<b\leq+\infty$, and let $\tau>0$ be sufficiently small as required at the beginning of Section \ref{sec:RFH}. There is a free $\Z_m$-action on $\RFC_*^{\w_0,(a,b)}(\AA^{\tau,1}_f,\tilde h^m)$ induced by the free $\Z_m$-action on $\Crit\tilde h^m$. We denote the quotient module by
\[
\RFC_*^{\w_0,(a,b),\Z_m}({\AA}^{\tau,1}_f,\tilde h^m):={\RFC_*^{\w_0,(a,b)}(\AA^{\tau,1}_f,\tilde h^m)}/{\Z_m}\,,\qquad *\in\Z\,.
\]
There is a module isomorphism 
\[
\RFC_*^{\w_0,(a,b),\Z_m}({\AA}^{\tau,1}_f,\tilde h^m)\cong \RFC_*^{\w_0,(a,b)}(\AA^{\tau,m}_f, h^m)
\] 
given by the action-preserving map $[\tilde w^1]=\cdots=[\tilde w^m]\mapsto w$, see again Lemma \ref{lem:cyclic1} for similar situation. 
We then define the boundary operator
\[
\p^{\w_0,\Z_m}_{\widetilde J}:\RFC_*^{\w_0,(a,b),\Z_m}(\AA^{\tau,1}_f,\tilde h^m)\to \RFC_{*-1}^{\w_0,(a,b),\Z_m}(\AA^{\tau,1}_f,\tilde h^m)\,,\qquad 
\p^{\w_0,\Z_m}_{\widetilde J} [\tilde w_-]:= [\p^{\w_0}_{\widetilde J}\tilde w_-]\,,
\]
which is indeed well-defined since $\p^{\w_0}_{\widetilde J}$ is $\Z_m$-equivariant. We claim that the above module isomorphism is in fact an isomorphism of chain complexes:
\begin{equation}\label{eq:equiv_quot}
\big(\RFC_*^{\w_0,(a,b),\Z_m}(\AA^{\tau,1}_f,\tilde h^m),\p^{\w_0,\Z_m}_{\widetilde J}\big) \cong  \big(\RFC_*^{\w_0,(a,b)}(\AA^{\tau,m}_f, h^m),\p^{\w_0}_{J}\big)\,.
\end{equation}
To show the claim, we establish a bijection between the moduli spaces involved in the definition of $\p^{\w_0,\Z_m}_{\widetilde J}$ and $\p^{\w_0}_{J}$. One is tempted to argue by projecting and lifting Rabinowitz Floer cylinders directly via the covering map $E^1\setminus\OO_{E^1}\to E^m\setminus\OO_{E^m}$, but this does not work because $\AA^{\tau,1}_f$ is not exactly the pullback of $\AA^{\tau,m}_f$, see Remark \ref{eq:Floer_eqn_for_A} below. Instead we obtain the desired bijection by relating the moduli spaces for $\p^{\w_0,\Z_m}_{\widetilde J}$ and for $\p^{\w_0}_{J}$ with the moduli spaces of Floer cylinders in $M$ as in the proof of Proposition \ref{prop:cap_product}.(a). 
Adapting the proof of Lemma \ref{lem:equal_count}.(a), we deduce 
\[
\sum_{\ell=1}^m\#\MM(\tilde{\hat w}_-^k,\tilde{\hat w}_+^\ell,\AA^{\tau,1}_f,\widetilde J)=-\#\NN(q_-,q_+,f,g) =\#\MM({\hat w}_-,{\hat w}_+,\AA^{\tau,m}_f,J)
\]
 for every $k\in\Z_m$ and  $\hat w_\pm\in\Crit h^m\subset\Crit\AA^{\tau,m}_f$ with $\mu_\RFH^h(\hat w_-)-\mu_\RFH^h(\hat w_+)=1$ and $\w(\hat w_-)=\w(\hat w_+)$. A corresponding identity  for $\check w_\pm$ holds as well. Now we consider $\check w_-,\hat w_+\in\Crit h^m\subset\Crit\AA^{\tau,m}_f$ with	$\mu_\RFH^h(\check w_-)-\mu_\RFH^h(\hat w_+)=1$ and $\w(\check w_-)=\w(\hat w_+)$.
Again by Lemma \ref{lem:equal_count}.(a),
\[
\#\MM_{\om=0}^1({\check w}_-,{\hat w}_+,\AA^{\tau,m}_f,J)=\#\NN^1(\check q_-,\hat q_+,\tilde f^m, \tilde g^m)\,,
\]
where $\check q_-,\hat q_+\in\Crit h^m\subset\Crit\tilde f^m$ are the points corresponding to $\check w_-,\hat w_+$, 
and a minor modification of its proof also yields
\[
\#\MM_{\om=0}^1(\tilde{\check w}_-^k,\tilde{\hat w}_+^\ell,\AA^{\tau,1}_f,\widetilde J)=\#\NN^1(\tilde{\check q}_-^k,\tilde{\hat q}_+^\ell,\tilde f^1, \tilde g^1)\,.
\]
In addition, for Morse trajectories, projecting and lifting via $\wp^m:\Sigma^1\to\Sigma^m$ implies that for every $k\in\Z_m$
\[
\begin{split}
\sum_{\ell= 1}^m\#\MM_{\om=0}^1(\tilde{\check w}_-^k,\tilde{\hat w}_+^\ell,\AA^{\tau,1}_f,\widetilde J)=\sum_{\ell= 1}^m\#\NN^1(\tilde{\check q}_-^k,\tilde{\hat q}_+^\ell,\tilde f^1, \tilde g^1)
&=\#\NN^1(\check q_-,\hat q_+,\tilde f^m, \tilde g^m)\\
&=\#\MM_{\om=0}^1({\check w}_-,{\hat w}_+,\AA^{\tau,m}_f,J)\,.	
\end{split}
\]
Arguing analogously, we also obtain that for any $k\in\Z_m$
\[
\sum_{\ell= 1}^m\#\MM^2(\tilde{\check w}_-^k,\tilde{\hat w}_+^\ell,\AA^{\tau,1}_f,
\widetilde J)=\#\MM^2({\check w}_-,{\hat w}_+,\AA^{\tau,m}_f,J)\,.
\]
Finally, by Lemma \ref{lem:chern_number=min_to_max}, we have that for every $q\in\widehat\NN_s(\Pi(w_-),\Pi(w_+),\a_f,j)$
\[
\#\left\{[\mathbf{w}]=[w]\in \MM^1(\check{w}_-,\hat{w}_+,\AA^{\tau,m}_f,J) \mid \Pi(w)=q_\theta \textrm{ for some }\theta\in S^1\right\}=-\epsilon(q) c_1^{E^m}([q])\,.
\]
It is easy to see that the assertion of Lemma \ref{lem:chern_number=min_to_max} also applies to $(\AA^{\tau,1}_f,\tilde h^m)$ even though $\tilde h^m$ is not perfect. That is, for any $k,\ell\in\Z_m$,
\[
\#\left\{[\mathbf{w}]=[w]\in \MM^1(\tilde{\check{w}}^k_-,\tilde{\hat{w}}^\ell_+,\AA^{\tau,1}_f,\widetilde J) \mid \Pi(w)=q_\theta \textrm{ for some }\theta\in S^1\right\}=-\epsilon(q)  c_1^{E^1}([q])\,.
\]
Since $mc_1^{E^1}=c_1^{E^m}$, the two equalities imply
\[
\sum_{\ell= 1}^m\#\MM^1_{\om\neq 0}(\tilde{\check w}_-^k,\tilde{\hat w}_+^\ell,\AA^{\tau,1}_f,\widetilde J)=\#\MM^1_{\om\neq 0}(\check w_-,\hat w_+,\AA^{\tau,m}_f,J)\,.
\]
Altogether this verifies \eqref{eq:equiv_quot}. Therefore the $\Z_m$-equivariant Rabinowitz Floer homology with zero winding number for $E^1$ is canonically isomorphic to the Rabinowitz Floer homology with zero winding number for $E^m$:
\[
\H_*\big(\RFC_*^{\w_0,(a,b),\Z_m}(\AA^{\tau,1}_f,\tilde h^m),\p^{\w_0,\Z_m}_{\widetilde J}\big) \cong \RFH_*^{\w_0,(a,b)}(\AA^{\tau,m}_f, h^m, J)\,.
\]

Next we define the transfer and projection homomorphisms by 
\[
\begin{split}
&T:\RFC_*^{\w_0,(a,b)}(\AA^{\tau,m}_f, h^m) \to \RFC_*^{\w_0,(a,b)}(\AA^{\tau,1}_f, \tilde h^m)\,,\qquad Tw:=\sum_{i\in\Z_m} \tilde w^i\\[0.5ex]
&P:\RFC_*^{\w_0,(a,b)}(\AA^{\tau,1}_f, \tilde h^m) \to \RFC_*^{\w_0,(a,b)}(\AA^{\tau,m}_f, h^m) \,,\qquad P \tilde w^i:= w \quad \forall i\in\Z_m
\end{split}
\]
respectively. In fact, these correspond to the transfer and projection homomorphisms in Section \ref{sec:transfer} via the isomorphism in Proposition \ref{prop:cap_product}.(a).
The above homomorphisms can be interpreted as maps between $\RFC_*^{\w_0,(a,b)}(\AA^{\tau,1}_f, \tilde h^m)$ and $ \RFC_*^{\w_0,(a,b),\Z_m}(\AA^{\tau,1}_f, \tilde h^m)$ via \eqref{eq:equiv_quot}, and we readily see that $T$ and $P$ are chain maps. Moreover the composition $P\circ T$ is the scalar multiplication by $m$, i.e.
\[
P\circ T:\RFC_*^{\w_0,(a,b)}(\AA^{\tau,m}_f)\to \RFC_*^{\w_0,(a,b)}(\AA^{\tau,m}_f)\,,\qquad w\mapsto mw\,.
\]
The other composition $T\circ P:\RFC_*^{\w_0,(a,b)}(\AA^{\tau,1}_f, \tilde h^m)\to \RFC_*^{\w_0,(a,b)}(\AA^{\tau,1}_f, \tilde h^m)$ maps generators $\tilde{w}^i$ to $\sum_{j=1}^m \tilde{w}^j$, and thus the induced map at the homology level also equals the scalar multiplication by $m$. This provides another proof of Corollary \ref{cor:transfer2}.

\begin{rem}\label{rem:covering}
 Lifting and projecting critical points and gradient flow lines by the covering map $E^1\setminus\OO_{E^1}\to E^m\setminus\OO_{E^m}$ as in Lemma \ref{lem:cyclic1} and Lemma \ref{lem:cyclic2} immediately produce transfer and projection homomorphisms between $\RFH_*^{\w_0,(a,b)}(\AA^{\tau,m}_f)$ and $\RFH_*^{\w_0,(a,b)}(\widetilde{\AA}^\tau_f)$, the latter of which uses the pullback symplectic form and the pullback functions on $E^1$ defined in \eqref{eq:tilde_om} and \eqref{eq:tilde_mu}. It is then natural to expect 
\begin{equation}\label{eq:tilde_isom}
\RFH_*^{\w_0,(a,b)}(\widetilde\AA^\tau_f)\cong\FH_*^{(\frac{a}{1+\tau},\frac{b}{1+\tau})}(\tilde f^1)\cong \RFC_*^{\w_0,(a,b)}(\AA^{\tau,1}_f)\,,
\end{equation}
for $\tilde f^1:\Sigma^1\to\R$. The latter isomorphism from Proposition \ref{prop:cap_product}.(a) is established by a correspondence between gradient flow lines of $\a_f$ and those of $\AA^{\tau,1}_f$. Indeed, an analogous correspondence  for the first two homologies in \eqref{eq:tilde_isom} should exist because using the symplectic form and the functions in \eqref{eq:tilde_om} and  \eqref{eq:tilde_mu} instead does not cause essential changes to equation \eqref{eq:Kazdan-Warner}.
 In this way, we would obtain a bijection between gradient flow lines of $\widetilde{\AA}^\tau_f$ and those of $\AA^{\tau,1}_f$, which might also be proved by adapting arguments in \cite{Fra22}. 
\end{rem}

\section{Contact Hamiltonians and orderability}\label{sec:contact}

It was discovered in \cite{AbM18} that the orderability problem for a contact manifold can be studied by means of the Rabinowitz Floer homology of Liouville fillings. Despite the nonexactenss of line bundles $(E,\Omega)$, we will see that this line of attack is still valid as $\RFH^{\w_0}(E,\Sigma)$ is essentially the Rabinowitz Floer homology of $E\setminus\OO_E$ on which $\Omega$ is exact.

To adapt the setting in \cite{AbM18} to our situation, we first recall our convention
\[
\Sigma=\Sigma_m=\{x\in E\mid m\pi r^2(x)=m\}\,,\qquad \Omega= d\lambda \;\;\textrm{on }\; E\setminus\OO_E\,,
\]
where $\lambda=\left(\tfrac{1}{m}+\pi r^2\right)\alpha$. 
The symplectization of $(\Sigma, \Theta:=(\frac{1}{m}+1)\alpha)$ is defined by 
\[
\Big((0,+\infty)\x\Sigma,d(\r\Theta)\Big)
\]
where $\mathfrak{r}$ denotes the coordinate on $(0,+\infty)$. Then the map 
	\begin{equation}\label{eq:symplectization}
	\Xi:E\setminus\OO_E\to \left(\frac{1}{m+1},+\infty\right)\x \Sigma,\qquad x\mapsto \left(\frac{\frac{1}{m}+\pi r^2(x)}{\frac{1}{m}+1},\frac{x}{\sqrt{\pi} r(x)}\right)\,,
	\end{equation}
satisfies $\Xi^*(\r\Theta)=\lambda$. In the following, we identify these two spaces.

Let $\Cont_0(\Sigma,\xi)$ be the identity component of the group of contactomorphisms on $(\Sigma,\xi)$ where $\xi=\ker\Theta=\ker\alpha$. Its universal cover $\widetilde{\Cont_0}(\Sigma,\xi)$ consists of equivalence classes $[\{\varphi_t\}]$ where $\{\varphi_t\}_{t\in[0,1]}$ is a path of contactomorphisms starting at $\varphi_0=\mathrm{id}_\Sigma$ and two such paths are equivalent if they are homotopic with fixed end points.
A smooth path $\{\varphi_t\}_{t\in[0,1]}$ in $\Cont(\Sigma,\xi)$ with $\varphi_0=\mathrm{id}_\Sigma$ defines a smooth function 
\[
k_t:\Sigma\to\R\,,\qquad k_t(\varphi_t(x)):=\Theta_{\varphi_t(x)}\left(\frac{d}{dt}\varphi_t(x)\right)\,.
\]
Following \cite[Proposition 2.3]{AF12}, we set
\begin{equation}\label{eq:hamiltonian_contact}
K_t:(0,+\infty)\x \Sigma\to\R\,,\qquad K_t(\r,x):= \r k_t(x)\,.
\end{equation}
Its Hamiltonian flow with respect to $d(\r\Theta)$ is 
\[
\phi_K^t(\r,x)=\left(\frac{\r}{\rho_t(x)},\varphi_t(x)\right):(0,+\infty)\x \Sigma\to (0,+\infty)\x \Sigma 
\]
where $\rho_t:\Sigma\to\R$ is defined by 
\begin{equation}\label{eq:rho}
(\varphi_t^*\Theta)_x=\rho_t(x)\Theta_x\,.
\end{equation}

\subsection{Perturbed Rabinowitz action functional}
For a given smooth path $\{\varphi_t\}_{t\in[0,1]}$ in $\Cont_0(\Sigma,\xi)$ with $\varphi_0=\mathrm{id}_\Sigma$, we set
\[
C(\{\varphi_t\}):=6\max_{t\in[0,1]}\left|\int_0^t \max_{x\in\Sigma} \frac{\dot\rho_{a}(x)}{\rho_{a}(x)^2}\,da\right|
\]
where $\rho_t$ is defined as in \eqref{eq:rho}. We fix 
\begin{equation}\label{eq:size_tau}
C>C(\{\varphi_t\})\quad \text{and}\quad  \tau>\frac{e^{2C}}{m+1}\,.	
\end{equation}
We choose a smooth cutoff function 
\[
\beta:\left(\frac{1}{m+1},+\infty\right)\to[0,1],\qquad \beta(\r)=
\left\{\begin{aligned} 
&1\qquad \r\in[e^{-C}\tau,e^{C}\tau]\,, \\[.5ex]
&0\qquad \r\notin[\tfrac{e^C}{m+1},e^{2C}\tau]\,.
\end{aligned}
\right.
\]
We also choose two auxiliary smooth functions with disjoint supports: 
\[
\nu:S^1\to\R,\qquad\supp\nu\subset(0,\tfrac{1}{2}),\qquad \int_0^1\nu(t)\,dt=1\,,
\]
and
\[
\chi:[0,1]\to[0,1]\,,\qquad \chi(\tfrac{1}{2})=0\,,\qquad\chi(1)=1\,,\qquad 0\leq \dot\chi(t)\leq 3\;\;\forall t\in[0,1]\,.
\]
We consider $\beta$ as a function on $(\frac{1}{m+1},+\infty)\x\Sigma$ via projection to the first component and denote the product of $\beta$ and $K_t|_{(\frac{1}{m+1},+\infty)\x\Sigma}$ for simplicity by $\beta K_t$.
Pulling-back $\beta K_t$ to $E\setminus\OO_E$ via the map $\Xi$ in \eqref{eq:symplectization} and extending  it over $\OO_E$ by zero, we think of $\beta K_t$ as a smooth function on $E$, i.e.~$\beta K_t:E\to\R$. We consider the perturbed Rabinowitz action functional
\[
\begin{split}
&\AA^\tau_K:\widetilde{\mathcal{L}}(E)\x\R\to\R,\\
&\AA^\tau_K([u,\bar u],\eta)=-\int_{D^2}\bar u^*\Omega-\eta\int_0^1\nu(t)\mu_\tau(u)dt-\int_0^1\dot\chi(t)\beta K_{\chi(t)}(u(t))dt\,.
\end{split}
\]
One can readily see that $([u,\bar u],\eta)$ is a critical point of $\AA^\tau_K$ if and only if
\[
\left\{\begin{aligned}
\dot u-\eta\nu X_{\mu_\tau}(u)-\dot\chi X_{\beta K_{\chi}}(u)=0\\[.5ex]
 \int_0^1\nu\mu_\tau(u)dt=0\,.\qquad \qquad\quad \;\;
\end{aligned}\;
\right.
\]
Due to the disjoint supports of $\nu$ and $\chi$, the above equations can be recast as 
\begin{equation}\label{eq:perturbed_crit}
\left\{
\begin{aligned}
&\mu_\tau(u(t))=0  & t\in[0,\tfrac{1}{2}]\,,\\[1ex]
&\dot u(t)=\eta\nu(t) X_{\mu_\tau}(u(t))\quad & t\in[0,\tfrac{1}{2}]\,,\\[1ex]
&\dot u(t)=\dot\chi(t)X_{\beta K_{\chi(t)}}(u(t))\qquad & t\in[\tfrac{1}{2},1]\,.
\end{aligned}
\right.
\end{equation}
Moreover, arguing exactly as in \cite[Proposition 2.5]{AM13}, we see that  $u$ is uniformly bounded and away from $\OO_E$, more precisely:
\begin{equation}\label{eq:bound_r}
r(u(t))\in \left[e^{-C/2}\tau,e^{C/2}\tau\right] \qquad \forall t\in S^1\,.	
\end{equation}
Thus the last equation in \eqref{eq:perturbed_crit} simplifies to $\dot u(t)=\dot\chi(t)X_{K_{\chi(t)}}(u(t))$. Since $\phi_{X_{\mu_\tau}}^{\eta\int_0^t\nu(a)da}$ and $\phi_{X_{K}}^{\chi(t)}$ are the Hamiltonian flows of $\eta\nu X_{\mu_\tau}$ and $\dot\chi X_{K_\chi}$ respectively, we have
\[
\left\{
\begin{aligned}
&u(\tfrac{1}{2})=\phi_{X_{\mu_\tau}}^{\eta\int_0^{1/2}\nu(a)da}(u(0))=\phi_{X_{\mu_\tau}}^{\eta}(u(0))=\phi_{R}^{-m\eta}(u(0))\\[1ex]
& u(0)=u(1)=\phi_{X_K}^{\chi(1)}(u(\tfrac{1}{2}))=\phi_{X_K}^1(u(\tfrac{1}{2}))\,.
\end{aligned}
\right.
\]
In particular if $([u,\bar u],\eta)\in\Crit\AA^\tau_K$, then $x=u(\frac{1}{2})\in\Sigma$ is a translated point of $\varphi_1$ with respect to $\alpha$, i.e.~it satisfies 
\begin{equation}\label{eq:trans_pt}
\varphi_1(x)=\phi_{R}^{m\eta}(x) \,,\qquad (\varphi_1^*\alpha)_x=\alpha_x\,.	
\end{equation}
We refer to \cite{San12,San13} for an account on this notion. 
We associate the RFH-index to $w=([u,\bar u],\eta)\in\Crit\AA^\tau_K$ as follows. The linearized flow $\phi^t_{\eta\nu X_{\mu_\tau}+\dot\chi X_{\beta K_\chi}}(u(0))$ for $t\in [0,1]$ in a trivialization given by $\bar u$ as in \eqref{eq:trivialization} defines a path of symplectic matrices $\Psi_w$, and we set 
\[
\mu_\RFH(w):=-\mu_\CZ(\Psi_w)\,.
\]
We also define the winding number of $w=([u,\bar u],\eta)\in\Crit\AA^\tau_K$ by 
\[
\w(w):=\w(u,\bar u)\,,
\]
which is well-defined due to \eqref{eq:bound_r}, see also \eqref{eq:wind_crit}. The action value for $w=([u,\bar u],\eta)\in\Crit\AA^\tau_f$ with $\w(w)=0$ computes to 
\begin{equation}\label{eq:action_K}
\AA^\tau_K(w)=-\int_0^1u^*\lambda - \int_0^1\dot\chi K_{\chi}(u)\,dt =(1+\tau)\eta
\end{equation}
using Proposition \ref{prop:winding}.(f), \eqref{eq:perturbed_crit}, and $\lambda(X_{K_\chi})=-K_\chi$, 
see also \cite[Lemma 3.2]{AF12}.

\subsection{Rabinowitz Floer homology for perturbed functionals}
We assume that $\varphi$ satisfies a genericity condition so that $\AA^\tau_K$ is Morse, see the proof of \cite[Theorem 1.4.(2)]{AM13}. 
Let $J$ be a family of smooth $\Omega$-compatible  almost complex structures on $E$ smoothly parametrized by $\R\x S^1$ such that it is diagonal near $\OO_E$ and outside a bounded subset as in \eqref{eq:diagonal_J}. That is, $J\in\JJ$ and in addition $\Omega$-compatible.
We consider the bilinear map $\mathfrak m$ in \eqref{eq:bilinear form} again, which for this choice of $J$ is a genuine metric. The gradient vector field of $\AA_K^\tau$ with respect to $\mathfrak m$ at $(u,\eta)$ is
\[
\nabla\AA^\tau_K(u,\eta)= \left(J(\eta,t,u)\big(\p_tu-\eta\nu X_{\mu_\tau}(u)-\dot\chi X_{\beta K_\chi}(u)\big),\,-\int_0^1\nu\mu_\tau(u)dt\right)\,.
\]
A smooth map $(u,\eta)\in C^\infty(\R\x S^1,E)\x C^\infty(\R,\R)$ is called a negative gradient flow line of $\AA^\tau_K$ with respect to $J$ if it is a solution of 
\begin{equation}\label{eqn:perturbed_Floer_eqn_for_A}
\left\{\begin{aligned}
&\p_su+J(\eta,t,u)\big(\p_tu-\eta\nu X_{\mu_\tau}(u) - \dot\chi X_{\beta K_\chi}(u)\big)=0\\[.5ex]
&\p_s\eta-\int_0^1\nu\mu_\tau(u)dt=0\,.
\end{aligned}\right.
\end{equation}
For $w_\pm=([u_\pm,\bar u_\pm],\eta_\pm)\in\Crit\AA^\tau_K$, we define the moduli space
\[
\widehat\MM(w_-,w_+,\AA^\tau_K,J)
\]
consisting of smooth maps $w=(u,\eta)\in C^\infty(\R\x S^1,M)\x C^\infty(\R,\R)$ solving \eqref{eqn:perturbed_Floer_eqn_for_A} and 
\[
\lim_{s\to\pm\infty}w(s,\cdot) =u_\pm \,,\qquad [\bar u_-\#u\# \bar u_+^\mathrm{rev}]=0 \;\textrm{ in }\;\Gamma_E\,.	
\]
Since $J$ is diagonal near $\OO_E$ and $K$ vanishes near $\OO_E$, we have the following corollary of Proposition \ref{prop:positivity_of_intersection}.
\begin{cor}\label{cor:positivity_intersection_K}
For any $(u,\eta)\in\widehat\MM(w_-,w_+,\AA^\tau_K,J)$, the intersection number $u\cdot\OO_E=\w(w_-)-\w(w_+)$ is nonnegative. Moreover $u\cdot\OO_E=0$ if and only if the image of $u$ is contained in $E\setminus\OO_E$.
\end{cor}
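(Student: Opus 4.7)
\medskip

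\noindent\textbf{Proof plan.} The proof is a direct adaptation of the proof of Proposition \ref{prop:positivity_of_intersection} to the perturbed setting, enabled by the crucial observation that both the almost complex structure $J$ is diagonal near $\OO_E$ (as $J\in\JJ$) and the Hamiltonian perturbation $\beta K_\chi$ vanishes in a neighborhood of $\OO_E$ (since the cutoff $\beta$ is supported away from $\r=0$, equivalently, away from $r=0$ via the map $\Xi$ in \eqref{eq:symplectization}). First, the identity $u\cdot\OO_E=\w(w_-)-\w(w_+)$ (up to the appropriate sign convention) is purely topological and follows from Proposition \ref{prop:winding}.(d) applied to the cylinder $u$ connecting $u_-$ and $u_+$, using the constraint $[\bar u_-\#u\#\bar u_+^\mathrm{rev}]=0$ in $\Gamma_E$.

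The main analytic content is the nonnegativity of each local intersection number. The plan is to fix a putative intersection point $(s_0,t_0)\in\R\times S^1$ with $u(s_0,t_0)\in\OO_E$ and work in a small neighborhood $U$ of $(s_0,t_0)$ on which the image of $u$ stays in the region where $J$ is diagonal and $\beta K_\chi\equiv 0$. On $U$ the perturbed Rabinowitz-Floer equation \eqref{eqn:perturbed_Floer_eqn_for_A} reduces to the system
\[
\p_su+J(\eta,t,u)\bigl(\p_tu-\eta\nu(t)X_{\mu_\tau}(u)\bigr)=0,
\]
which is, up to replacing the constant $\eta$ by the function $\eta\,\nu(t)$, identical in form to the unperturbed vertical equation \eqref{eqn:Floer_eqn_fiber1}. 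One then parallel-transports $u$ along $q=\wp\circ u$ into a single fiber as in \eqref{eq:u^0} to obtain a map $u^0$ satisfying an equation of the form \eqref{eqn:eqn_for_u^0} (with $mf(q)$ absent and $m\eta$ replaced by $m\eta\nu(t)$). The key step is to observe that this remains a $\overline{\p}$-equation perturbed by a linear zeroth-order term, so the Carleman similarity principle (see \cite[Section 2.3]{MS12}) applies verbatim to conclude that $u^0$ either vanishes identically on $U$ or has isolated zeros with strictly positive local winding number.

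The identity-vanishing alternative is excluded whenever $u^0$ is not globally $\equiv 0$, and by the standard identification the local winding number of $u^0$ at a zero coincides with the local intersection number of $u$ with $\OO_E$ at the corresponding point. Hence every intersection of $u$ with $\OO_E$ contributes strictly positively to $u\cdot\OO_E$, which yields both $u\cdot\OO_E\geq 0$ and the equivalence $u\cdot\OO_E=0\iff u(\R\times S^1)\subset E\setminus\OO_E$. The main (essentially the only) obstacle is the bookkeeping verifying that the time-dependent factor $\nu(t)$ does not spoil the application of the Carleman similarity principle; this is however routine since $\nu$ is smooth and bounded, so the zeroth-order coefficient of the relevant Cauchy-Riemann-type operator remains in $L^\infty_{\mathrm{loc}}$, which is all that is needed.
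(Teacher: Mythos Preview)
Your proof is correct and follows the same approach as the paper. The paper states this as an immediate corollary of Proposition~\ref{prop:positivity_of_intersection} after the sentence ``Since $J$ is diagonal near $\OO_E$ and $K$ vanishes near $\OO_E$, we have the following corollary\dots''; your write-up simply spells out the details of that adaptation, including the reduction of \eqref{eqn:perturbed_Floer_eqn_for_A} near $\OO_E$ to a Cauchy--Riemann type equation with bounded zeroth-order term to which the Carleman similarity principle applies.
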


Next we state a necessary compactness and transversality result for defining the Floer homology of $\AA^\tau_K$. 
\begin{prop}\label{prop:compactness_K}
Let $(M,\om)$ satisfy either (A1) or (A3) in the introduction. Then for a generic $\Omega$-compatible $J\in\JJ$, the following holds. For every $w_\pm\in\Crit\AA^\tau_K$ with 
\[
\w(w_-)=\w(w_+)=0\,,\qquad \mu_\RFH(w_-)-\mu_\RFH(w_+)\leq 2\,,
\]
the moduli space $\widehat\MM(w_-,w_+,\AA^\tau_K,J)$ is cut out transversely and compact in the $C^\infty_{loc}$-topology. 
\end{prop}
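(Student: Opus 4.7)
The plan is as follows. Because $\w(w_-)=\w(w_+)=0$, Corollary \ref{cor:positivity_intersection_K} forces every element $(u,\eta)\in\widehat\MM(w_-,w_+,\AA^\tau_K,J)$ to have its cylinder component $u$ disjoint from the zero section $\OO_E$. Hence via the symplectization identification \eqref{eq:symplectization} we may regard $u$ as taking values in $\bigl(\tfrac{1}{m+1},+\infty\bigr)\x\Sigma$ where $\Omega=d\lambda$ is exact. This exactness puts us in the setting of Rabinowitz Floer theory on Liouville domains studied in \cite{CF09,AF10,AM13,AbM18,CFP17}, and we follow the strategies developed there.

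I would first establish uniform $L^\infty$-bounds on $u$. Since the perturbation $\dot\chi\,\beta K_{\chi(t)}$ is supported in a compact radial window and $J$ is diagonal outside a bounded region of $E$, the function $\log r\circ u$ is subharmonic on the exterior of a compact set; a maximum principle argument as in \cite[Proposition 2.5]{AM13} then confines the image of $u$ to a compact annulus $\{\r_0^{-1}\leq r\leq \r_0\}\subset E\setminus\OO_E$ depending only on the uniform energy bound $E(w)=\AA^\tau_K(w_-)-\AA^\tau_K(w_+)$. The $L^\infty$-bound on $\eta$ is obtained through the ``fundamental lemma'' of Cieliebak--Frauenfelder in the form used in Lemma \ref{lem:key}: the action identity \eqref{eq:action_K} combined with the $L^2$-gradient decay controls $\eta$ at points where $\|\nabla\AA^\tau_K\|_{L^2}$ is small, and the equation \eqref{eqn:perturbed_Floer_eqn_for_A} yields $\|\p_s\eta\|_{L^\infty}\leq \|\nu\mu_\tau(u)\|_{L^\infty}<\infty$ once $u$ is bounded, so one interpolates exactly as in the proof of Lemma \ref{lem:bound_eta}.

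Next I would rule out bubbling of pseudo-holomorphic spheres. Under (A1) this is immediate by the energy identity, since $\om$ vanishes on $\pi_2(M)\cong\pi_2(E)$. Under (A3), the argument proceeds as in the proof of Proposition \ref{prop:compactness1}: a bubbling $J_{(\eta,t)}$-holomorphic sphere $v$ satisfies $c_1^{TE}([v])=(\lambda-1)\om([\wp\circ v])\notin[-\tfrac{1}{2}\dim M+1,\,0]$, and since $u$ avoids $\OO_E$ the sphere $v$ must lie in the region where $J$ can be freely perturbed. In the negatively monotone case $\lambda\nu\leq-\tfrac{1}{2}\dim M$ the relevant sphere moduli space has negative virtual dimension after quotienting by $\mathrm{PSL}(2;\C)$, so no such sphere exists generically. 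In the case $(\lambda-1)\nu\geq 1$ each sphere contributes at least $2$ to the Fredholm index; combined with the index constraint $\mu_\RFH(w_-)-\mu_\RFH(w_+)\leq 2$ and Proposition \ref{prop:transversality_avoiding} this forces a contradiction, just as in the second half of the proof of Proposition \ref{prop:compactness2}.

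Finally, transversality is obtained by a standard genericity argument on $\R\x S^1$-dependent $J\in\JJ$: the cylinder $u$ has injective points in the region where $J$ is unconstrained beyond $\Omega$-compatibility and the diagonal condition at $\OO_E$ and infinity, so the usual Floer--Hofer--Salamon argument (cf.~Proposition \ref{prop:transversality_cylinder} and \cite[Theorem 4.11]{AbM18}) applies verbatim to yield surjectivity of the linearization $D_w$ for generic $J$, and the Fredholm index equals $\mu_\RFH(w_-)-\mu_\RFH(w_+)+1$ as in Section \ref{sec:fredholm}. The main obstacle I anticipate is the bubbling analysis under (A3), where the time-dependent contact perturbation $\beta K_{\chi(t)}$ breaks the clean structural arguments available in the unperturbed setting; however, since the bubbles project to $M$ via $\wp$ and the target transversality results in Proposition \ref{prop:transversality_avoiding} are insensitive to this perturbation, the same dimension counts go through.
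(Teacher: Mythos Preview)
Your overall strategy matches the paper's: its proof is only the sentence that compactness follows ``as in Proposition~\ref{prop:compactness1} using Corollary~\ref{cor:positivity_intersection_K}'', together with the remark that transversality is standard for $\Omega$-compatible $J\in\JJ$. Your treatment of the $L^\infty$-bounds and of transversality is fine.

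There is, however, a genuine gap in your bubbling argument under (A3). You write $c_1^{TE}([v])=(\lambda-1)\om([\wp\circ v])$, but on $E=E^m$ one has $c_1^{TE}=\wp^*c_1^{TM}+\wp^*c_1^E=(\lambda-m)\wp^*\om$; your formula is correct only for $m=1$. For $m\geq 2$ the contribution $2c_1^{TE}([v])$ of a bubble to the Fredholm index need not be positive, and the index count you invoke collapses. The paper repairs this exactly as in Proposition~\ref{prop:compactness1}: since $\w(w_\pm)=0$, the cylinder $u$ avoids $\OO_E$ by Corollary~\ref{cor:positivity_intersection_K} and therefore lifts through the $m$-fold cover $\wp^m\colon E^1\setminus\OO_{E^1}\to E\setminus\OO_E$ to a solution of the pulled-back perturbed equation on $E^1$; there $c_1^{TE^1}=(\lambda-1)\wp^*\om$, and condition (A3) yields the required dichotomy for bubbles so that the index and avoidance arguments (Propositions~\ref{prop:transversality_sphere}, \ref{prop:transversality_avoiding}) go through. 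Your appeal to Proposition~\ref{prop:compactness2} is misplaced for the same reason the paper singles out: that argument hinges on the cylinder projecting to a Floer solution on $M$, and the contact perturbation $\beta K_{\chi(t)}$ destroys this projection property, so the method of Section~\ref{sec:compact_J_diag} is unavailable here.
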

The compactness part follows as in Proposition \ref{prop:compactness1} using Corollary \ref{cor:positivity_intersection_K}.
We point out that projecting solutions of the perturbed Rabinowitz Floer equation  \eqref{eqn:perturbed_Floer_eqn_for_A} to the base $M$ does not give Floer solutions. Therefore our method of proving compactness under the assumption (A2) in Section \ref{sec:compact_J_diag} does not work in the current setting. Similarly proving transversality with almost complex structures in $\JJ_\mathrm{diag}$ or $\JJ^\BB$, see Section \ref{sec:J_in_JJ}, does not work. Instead we choose to work with $\Omega$-compatible $J\in\JJ$ for which standard transversality arguments are applicable.

  Unless $w_-= w_+$, there is a free $\R$-action on $\widehat\MM(w_-,w_+,\AA^\tau_K,J)$ by translating solutions in the $s$-direction, and we denote the quotient space by
\[
\MM(w_-,w_+,\AA^\tau_K,J):=\widehat\MM(w_-,w_+,\AA^\tau_K,J)/\R\,.
\]
If $\mu_\RFH(w_-)-\mu_\RFH(w_+)=1$, then $\MM(w_-,w_+,\AA^\tau_K,J)$ is a finite set and we denote by $\#\MM(w_-,w_+,\AA^\tau_K,J)\in\Z$ its cardinality counted with sign with respect to  coherent orientation.

Now we are in a position to define the Floer complex for the perturbed action functional. For this, we recall the setting. 
We assume that $(M,\om)$ satisfies either (A1) or (A3) in the introduction. For a given generic path $\{\varphi_t\}_{t\in[0,1]}$ in $\Cont_0(\Sigma,\xi)$ with $\varphi_0=\mathrm{id}_\Sigma$, let $K_t$ be the associated Hamiltonian defined in \eqref{eq:hamiltonian_contact}. Let $\tau>0$ be a constant associated with $\{\varphi_t\}$ satisfying \eqref{eq:size_tau}. Finally we choose a generic $\Omega$-compatible $J\in\JJ$ to Proposition \ref{prop:compactness_K} hold. Then for $-\infty<a<b<+\infty$, we define the $\Z$-module 
\[
\RFC^{\w_0,(a,b)}_*(\AA^\tau_K)\,,\qquad *\in\Z
\] 
generated by elements $w\in\Crit\AA^\tau_K$ with $\mu_\RFH(w)=*$, $\w(w)=0$, and  $\AA^\tau_K(w)\in(a,b)$. We note that $\AA^\tau_K$ has at most finitely many critical points $w$ with $\mu_\RFH(w)=*$, $\w(w)=0$ and $\AA^\tau_K(w)\in(a,b)$ since it is Morse and there is a unique class of capping disks for each solution of \eqref{eq:perturbed_crit} with zero winding number modulo the action of ${\pi_2(E\setminus\OO_E)}/{\ker \wp^*c_1^{TM}}$ as observed in Section \ref{sec:zero_winding}. We define homomorphisms
\[
\begin{split}
& \p^{\w_0}_J=\p^{\w_0}:\RFC^{\w_0,(a,b)}_*(\AA^\tau_K)\longrightarrow  \RFC^{\w_0,(a,b)}_{*-1}(\AA^\tau_K)\\[1ex]
& \p^{\w_0}w_-:=\sum_{w_+}\#\MM(w_-,w_+,\AA^\tau_K,J)\,w_+\,.
\end{split}
\]
where the sum is taken over all $w_+\in\Crit\AA^\tau_f$ with $\mu_\RFH(w_+)=*-1$, $\w(w_+)=0$, and $\AA^\tau_K(w_+)\in(a,b)$. 
The same reasons as in Proposition \ref{prop:boundary_operator}, Corollary \ref{cor:positivity_intersection_K} and Proposition \ref{prop:compactness_K} imply that $\p^{\w_0}\circ \p^{\w_0}=0$ and we denote 
\[
\RFH^{\w_0,(a,b)}_*(\AA^\tau_K):= \H_*\big(\RFC^{\w_0,(a,b)}(\AA^\tau_K),\p^{\w_0}\big)\,.
\]
As in the unperturbed case \eqref{eq:direct_system_RFH}, we have a bidirect system induced by action filtration homomorphisms. We define
\[
\RFH^{\w_0}_*(E,\Sigma_\tau,\{\varphi_t\}):=\varinjlim_{b\uparrow+\infty}\varprojlim_{a\downarrow-\infty}\RFH^{\w_0,(a,b)}_*(\AA^\tau_K)\,.
\]

\subsection{Application to the orderability problem}

We continue to assume that $(M,\om)$ satisfies either (A1) or (A3) from the introduction.   
A continuation argument as in Section \ref{sec:invariance} proves that
\begin{equation}\label{eq:isom_K}
\RFH^{\w_0}(E,\Sigma_\tau,\{\varphi_t\})\cong \RFH^{\w_0}(E,\Sigma)=\RFH^{\w_0}(E,\Sigma_\tau)\,.	
\end{equation}
For this, it is crucial to choose a homotopy between $\AA^\tau_f$ and $\AA^\tau_K$ so that flow lines for the homotopy also have the positivity of intersection property with $\OO_E$ similar to the situation in Section \ref{sec:invariance}. As the precise value of $\tau$ is not important due to \eqref{eq:isom_K}, we simply write $\RFH^{\w_0}(E,\Sigma,\{\varphi\})$ in Theorem \ref{thm:main}.(e). 
 The following proposition  immediately follows from \eqref{eq:trans_pt} and \eqref{eq:isom_K}.

\begin{prop}\label{prop:translated_pt}
	If $\RFH^{\w_0}(E,\Sigma)$ is nonzero, then every $\varphi\in\Cont_0(\Sigma,\xi)$ has a translated point with respect to $\alpha$. 
\end{prop}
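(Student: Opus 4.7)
The plan is to deduce existence of a translated point directly from the nonvanishing of the perturbed Rabinowitz Floer homology via the identification \eqref{eq:trans_pt} between critical points of $\AA^\tau_K$ and translated points. Given $\varphi \in \Cont_0(\Sigma,\xi)$, I would first choose a smooth path $\{\varphi_t\}_{t\in[0,1]}$ from $\mathrm{id}_\Sigma$ to $\varphi$, generic enough so that $\AA^\tau_K$ is Morse (which can be arranged by an arbitrarily small perturbation of $\{\varphi_t\}$), and pick constants $C > C(\{\varphi_t\})$ and $\tau > e^{2C}/(m+1)$ as in \eqref{eq:size_tau}. Then $\RFH^{\w_0}(E,\Sigma_\tau,\{\varphi_t\})$ is defined, and by \eqref{eq:isom_K} it is isomorphic to $\RFH^{\w_0}(E,\Sigma)$, which is nonzero by hypothesis. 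In particular, the chain module $\RFC^{\w_0}(\AA^\tau_K)$ must be nontrivial, hence there exists $w = ([u,\bar u],\eta) \in \Crit \AA^\tau_K$ with $\w(w) = 0$. By \eqref{eq:trans_pt}, the point $x := u(\tfrac{1}{2}) \in \Sigma$ satisfies $\varphi(x) = \phi_R^{m\eta}(x)$ and $(\varphi^*\alpha)_x = \alpha_x$, i.e.\ it is a translated point of $\varphi = \varphi_1$ with respect to $\alpha$.

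To remove the genericity assumption on $\{\varphi_t\}$, I would approximate an arbitrary $\varphi$ by a $C^\infty$-convergent sequence $\varphi^{(n)} \to \varphi$ with each $\varphi^{(n)}$ admitting a generic connecting path. This produces a sequence of translated points $x_n \in \Sigma$ with associated Lagrange multipliers $\eta_n$. By compactness of $\Sigma$, a subsequence of $x_n$ converges to some $x \in \Sigma$; a uniform bound on $\eta_n$ follows from \eqref{eq:action_K}, which equates $(1+\tau)\eta_n$ with the critical value of the associated functional, combined with the observation that a fixed nonzero class in $\RFH^{\w_0}$ can be represented in a bounded action-window uniformly in $n$ (note that the generators in each degree are finite in number by Remark~\ref{rem:finitely_many}). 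Passing to the limit in the defining equations \eqref{eq:trans_pt} then yields a translated point of $\varphi$, since those conditions are closed under $C^\infty$-convergence of $(\varphi^{(n)},\eta_n,x_n)$.

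The main obstacle in a fully rigorous write-up is this limiting step: one needs the uniform bound on $\eta_n$ to hold, which in turn requires uniform control on the action-window in which a nonzero homology class lives as the path $\{\varphi_t\}$ varies. The cylinder components $u_n$ are automatically confined to an annular region in $E\setminus\OO_E$ by \eqref{eq:bound_r}, so no bubbling or escape to the zero section can occur. Once the action bound is in place, the entire argument reduces, as the author notes, to a direct combination of \eqref{eq:trans_pt} and \eqref{eq:isom_K}.
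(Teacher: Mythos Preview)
Your argument for generic $\varphi$ is exactly the paper's: nonzero $\RFH^{\w_0}$ forces $\Crit\AA^\tau_K\neq\emptyset$ via \eqref{eq:isom_K}, and a critical point gives a translated point by \eqref{eq:trans_pt}. This is all the paper says, and it is correct.

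Where you diverge is in handling the non-generic case by approximation. This works, but the limiting step you flag as ``the main obstacle'' is unnecessary. The cleaner route is the contrapositive: if $\varphi$ has \emph{no} translated point, then by \eqref{eq:trans_pt} the functional $\AA^\tau_K$ has no critical points at all, and a functional with empty critical set is vacuously Morse. Hence $\RFC^{\w_0}(\AA^\tau_K)=0$, so $\RFH^{\w_0}(E,\Sigma_\tau,\{\varphi_t\})=0$, and then \eqref{eq:isom_K} forces $\RFH^{\w_0}(E,\Sigma)=0$. This avoids the approximation entirely and with it the need for uniform action bounds or continuity of spectral invariants. Your approximation argument can be made rigorous (it is essentially the continuity of spectral numbers under $C^\infty$-perturbation of the contact Hamiltonian, as in the Albers--Merry framework the paper cites), but it is a detour.
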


\begin{rem}
It is plausible that one can extend the proposition for \emph{every} contact form supporting $\xi$ by considering $\RFH^{\w_0}$ for a hypersurface in $E$ which arises as the graph of a function on $\Sigma$. 	
\end{rem}

All critical points of $\AA^\tau_K$ lie $E\setminus\OO_E$ by \eqref{eq:bound_r}. Moreover we only consider critical points with zero winding number and 
 all negative gradient flow lines connecting them are entirely contained in $E\setminus\OO_E$ by Corollary \ref{cor:positivity_intersection_K}. Thus we can rephrase the construction of $\RFH^{\w_0,(a,b)}(\AA^\tau_K)$ on the part of the symplectization $(\frac{1}{1+m},+\infty)\x \Sigma\cong E\setminus\OO_E$, see Section \ref{sec:zero_winding}.
This ensures that results in \cite{AbM18} carry over to our situation, which is essentially an exact setting as discussed above. Key inputs are \eqref{eq:action_K} and \eqref{eq:isom_K}.  
\begin{thm}\label{thm:orderablility}
If $\RFH^{\w_0}(E,\Sigma)$ is nonzero, then the group $\widetilde{\Cont_0}(\Sigma,\xi)$ is orderable in the sense of \cite{EP00}.
\end{thm}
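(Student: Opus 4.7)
The proof is a direct adaptation of the Albers--Merry strategy \cite{AbM18}, which was developed for Liouville fillings but transports to the present setting because, by \eqref{eq:bound_r} and Corollary \ref{cor:positivity_intersection_K}, all critical points and negative gradient trajectories entering the definition of $\RFH^{\w_0}(E,\Sigma_\tau,\{\varphi_t\})$ are confined to $E\setminus\OO_E$, where $\Omega$ is exact. The argument proceeds by contradiction. Non-orderability of $\widetilde{\Cont_0}(\Sigma,\xi)$ is equivalent to the existence of a positive contractible loop $\{\phi_t\}_{t\in S^1}$ in $\Cont_0(\Sigma,\xi)$ based at the identity; here positive means that the generating contact Hamiltonian $k_t$ satisfies $\min_{t,x}k_t(x)=:\delta>0$.

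For each $N\in\N$ I would form the $N$-fold iterate $\{\phi_t^{(N)}\}_{t\in[0,1]}$ and its perturbed Rabinowitz functional $\AA^\tau_{K^{(N)}}$. Positivity together with the characterization \eqref{eq:trans_pt} of zero-winding critical points as translated points of $\varphi_1^{(N)}=\mathrm{id}$ forces the Lagrange multiplier $\eta$ at every such critical point to satisfy $|\eta|\geq c_0 N$ for a constant $c_0>0$ depending only on $\{\phi_t\}$: the $N$-fold iteration accumulates positive contact displacement at rate at least $\delta$ per round, which the Reeb flow must unwind to yield a translated point. Combined with the action identity \eqref{eq:action_K} this forces
\[
|\AA^\tau_{K^{(N)}}(w)|\;\geq\;(1+\tau)c_0 N\;\xrightarrow{N\to\infty}\;\infty
\]
for every zero-winding critical point. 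On the other hand, contractibility of the loop and the invariance isomorphism \eqref{eq:isom_K} yield $\RFH^{\w_0}(E,\Sigma_\tau,\{\phi_t^{(N)}\})\cong\RFH^{\w_0}(E,\Sigma)\neq 0$ by hypothesis, so picking a nonzero $\alpha\in\RFH^{\w_0}(E,\Sigma)$ gives a nonzero counterpart $\alpha_N$ for every $N$.

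I would then introduce the spectral invariant
\[
c(\alpha,\{\phi_t^{(N)}\}):=\inf\bigl\{b\in\R\mid \alpha_N\in\mathrm{image}\bigl(\RFH^{\w_0,(-\infty,b)}(\AA^\tau_{K^{(N)}})\to\RFH^{\w_0}(\AA^\tau_{K^{(N)}})\bigr)\bigr\}\,,
\]
which is finite (because $\RFH^{\w_0,(-\infty,b)}$ stabilizes to $\RFH^{\w_0}$ for $b$ large in each degree) and, by standard spectrality arguments based on the nowhere-density of the action spectrum for generic $\{\phi_t\}$, takes values in that spectrum. The divergent lower bound above then gives $|c(\alpha,\{\phi_t^{(N)}\})|\to\infty$. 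The contradiction is obtained from the remaining step, which I view as the main obstacle: namely, to establish homotopy invariance of $c(\alpha,\cdot)$ along homotopies of paths with fixed endpoints both equal to the identity. Following \cite[\S 3]{AbM18}, this rests on (i) continuity of $s\mapsto c(\alpha,\{\phi_t^s\})$, obtained via chain-level continuation morphisms whose action shift is controlled by the energy of the homotopy and whose requisite compactness is supplied by the same positivity-of-intersection argument that proves Corollary \ref{cor:positivity_intersection_K}, and (ii) the observation that a continuous selection from a nowhere-dense family of real numbers on a connected parameter interval is constant. Applied to a contraction of $\{\phi_t^{(N)}\}$ to the constant path at the identity, this yields $c(\alpha,\{\phi_t^{(N)}\})=c(\alpha,\{\mathrm{id}\})$, a quantity independent of $N$, contradicting the divergence and completing the proof.
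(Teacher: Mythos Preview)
Your overall architecture---spectral invariants attached to a nonzero class, divergence under iteration of a positive contractible loop, contradiction via homotopy invariance---is exactly the route the paper takes by importing \cite{AbM18}. The gap is in your growth step. You assert that every zero-winding critical point of $\AA^\tau_{K^{(N)}}$ has $|\eta|\geq c_0 N$, invoking \eqref{eq:trans_pt} and positivity of $k_t$. But \eqref{eq:trans_pt} is merely an endpoint condition: since $\varphi_1^{(N)}=\mathrm{id}$, every $x\in\Sigma$ is a translated point for every $\eta\in\frac{1}{m}\Z$, which gives no lower bound on $|\eta|$. Your ``Reeb must unwind the accumulated contact displacement'' heuristic does not rescue the claim either: under (A3) the capping disk can absorb any integer multiple of $m\nu$ of fiberwise winding, so zero-winding critical points with small $|\eta|$ are not excluded, and the spectrum of $\AA^\tau_{K^{(N)}}$ on the zero-winding sector need have no growing gap around the origin.

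In \cite{AbM18} the divergence of $c(\alpha,\{\phi_t^{(N)}\})$ comes instead from a Lipschitz/monotonicity property of the spectral numbers with respect to the contact Hamiltonian, established via the action shift of continuation homomorphisms: $k_t\geq\delta$ allows one to compare $\{\phi_t^{(N)}\}$ with a Reeb reparametrisation of total length proportional to $N$, whose spectral value shifts linearly in $N$ by the action identity \eqref{eq:action_K}. That identity, the isomorphism \eqref{eq:isom_K}, and the stabilization you correctly observe (this is the paper's appeal to Remark~\ref{rem:finitely_many}.(i) to guarantee that a nonzero class is spectrally finite) are precisely what the paper singles out as the needed inputs. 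The homotopy-invariance step you flag as the main obstacle is, by contrast, routine once continuity and spectrality are available; the substantive analytic work is the monotonicity estimate you have bypassed.
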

Strictly speaking, it is shown in \cite{AM18} that the presence of a spectrally finite class implies orderability. A nonzero class $Z\in \RFH^{\w_0}(E,\Sigma)$ is said to be spectrally finite if 
\[
\inf \Big\{b\in\R \,\Big|\, Z\in \mathrm{im}\big( \iota^b: \varprojlim_{a\downarrow-\infty}\RFH^{\w_0,(a,b)}(E,\Sigma)\to \RFH^{\w_0}(E,\Sigma)\big)\Big\} \neq -\infty
\]
where $\iota^b$ is the action filtration homomorphism. 
However under the hypothesis (A1), (A2), or (A3) in the introduction, $\RFH^{\w_0}(E,\Sigma)$ is nonzero if and only if it has a spectrally finite class due to Remark \ref{rem:finitely_many}.(i). 

\begin{cor}
Suppose that the Floer cap product with $c_1^E$ on $\FH(M)$ is not an isomorphism.  
Then $\widetilde{\Cont_0}(\Sigma,\xi)$ is orderable. 
\end{cor}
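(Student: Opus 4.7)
The plan is to invoke the Floer Gysin long exact sequence in tandem with Theorem \ref{thm:orderablility}; the corollary is essentially the composition of these two results. Under the standing assumption (A1) or (A3) required to run the contact-Hamiltonian construction of Section \ref{sec:contact}, Theorem \ref{thm:main}.(b), whose proof was completed in Proposition \ref{prop:cap_product}.(b), furnishes the exact sequence
\[
\cdots\longrightarrow \RFH^{\w_0}_{*}(E,\Sigma)\longrightarrow \FH_{*}(M)\stackrel{\Psi^{c_1^E}}{\longrightarrow}\FH_{*-2}(M)\longrightarrow \RFH^{\w_0}_{*-1}(E,\Sigma)\longrightarrow\cdots\,.
\]

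First I would take the contrapositive and assume $\RFH^{\w_0}_{*}(E,\Sigma)=0$ for every $*\in\Z$. Then exactness at each term $\FH_{*}(M)$ collapses the sequence to short pieces $0\to \FH_{*}(M)\stackrel{\Psi^{c_1^E}}\to\FH_{*-2}(M)\to 0$, so $\Psi^{c_1^E}$ is an isomorphism in every degree. This is precisely the observation already recorded in Proposition \ref{prop:nonzero_class}, now read at the level of $\RFH^{\w_0}$ via the identification $\RFH^{\w_0}_{*}(E,\Sigma)\cong \FH_{*}(\Sigma)$ from Proposition \ref{prop:cap_product}.(a). Equivalently, the hypothesis that $\Psi^{c_1^E}$ fails to be an isomorphism produces a nonzero class in $\RFH^{\w_0}(E,\Sigma)$.

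To conclude, I would feed this nonvanishing into Theorem \ref{thm:orderablility}, which says that $\RFH^{\w_0}(E,\Sigma)\neq 0$ implies orderability of $\widetilde{\Cont_0}(\Sigma,\xi)$. One small bookkeeping point worth spelling out is that the spectral finiteness condition alluded to after Theorem \ref{thm:orderablility} is automatic here: by Remark \ref{rem:finitely_many}.(i) each degree of $\RFH^{\w_0}(E,\Sigma)$ is built out of finitely many generators and stabilizes in the action-window limit \eqref{eq:wind_lim}, so any nonzero class is spectrally finite in the sense of \cite{AM18}. No further step is required: orderability of $\widetilde{\Cont_0}(\Sigma,\xi)$ is then exactly the content of Theorem \ref{thm:orderablility}, and I expect there to be no serious obstacle since the two ingredients are already in place in the preceding sections.
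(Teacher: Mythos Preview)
Your proposal is correct and follows essentially the same route as the paper: the paper's proof simply cites Proposition~\ref{prop:nonzero_class}, Proposition~\ref{prop:cap_product}, and Theorem~\ref{thm:orderablility}, which is exactly the chain of reasoning you unpack (Gysin sequence forces nonvanishing of $\RFH^{\w_0}(E,\Sigma)$, then orderability). Your additional remark on spectral finiteness is already addressed in the paragraph following Theorem~\ref{thm:orderablility}, so no extra argument is needed there.
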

\begin{proof}
	This follows from Proposition \ref{prop:nonzero_class}, Proposition \ref{prop:cap_product}, and  Theorem \ref{thm:orderablility}.
\end{proof}

This completes the proof of Theorem \ref{thm:main}.(e). Corollary \ref{cor:orderability} now follows from results in Section \ref{sec:nonvanishing} and Proposition \ref{prop:cap_product}. 
Finally, we present another proof for the computation for $\OO_{\CP^{n}}(-1)$ in Corollary \ref{cor:rhf_CPn}.
\begin{cor}
Let $E=\OO_{\CP^{n}}(-1)$ be as in Corollary \ref{cor:rhf_CPn}. Then 
\[
\RFH_*^{\w_0}(\OO_{\CP^{n}}(-1),S^{2n+1}) =0\qquad \forall *\in\Z\,.
\]
\end{cor}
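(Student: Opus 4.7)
\medskip

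The plan is to apply Theorem \ref{thm:orderablility} contrapositively, using the classical non-orderability of the standard contact sphere. Concretely, if $\RFH^{\w_0}_*(\OO_{\CP^n}(-1),S^{2n+1})$ were nonzero for some $*\in\Z$, then by Theorem \ref{thm:orderablility} the group $\widetilde{\Cont_0}(S^{2n+1},\xi_{\mathrm{std}})$ would be orderable. This contradicts the theorem of Eliashberg--Polterovich \cite{EP00}, which establishes that $\widetilde{\Cont_0}(S^{2n+1},\xi_{\mathrm{std}})$ is \emph{not} orderable for every $n\geq 1$.

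To make this rigorous, I would first verify that the standing hypotheses of Theorem \ref{thm:orderablility} are satisfied. Here $M=\CP^n$ with $\om=\om_{\mathrm{FS}}$, so $c_1^{T\CP^n}=(n+1)[\om_{\mathrm{FS}}]$, giving $\lambda=n+1$ and $\nu=1$; since $(\lambda-1)\nu=n\geq 1$, condition (A3) holds. Moreover, for $m=1$ the associated principal $S^1$-bundle $\wp:\Sigma^1\to\CP^n$ is the Hopf fibration, so $\Sigma^1\cong S^{2n+1}$ and the connection 1-form $\alpha$ built in Section \ref{sec:line} is (up to diffeomorphism) the standard contact form on the sphere, with $\xi=\ker\alpha=\xi_{\mathrm{std}}$.

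The key input is then the result of \cite{EP00} (Section 1.6 therein): the standard contact sphere is non-orderable, witnessed explicitly by a positive contractible loop of contactomorphisms of $(S^{2n+1},\xi_{\mathrm{std}})$. Combining this with Theorem \ref{thm:orderablility} applied to $E=\OO_{\CP^n}(-1)$ and $\Sigma=S^{2n+1}$ forces
\[
\RFH^{\w_0}_*(\OO_{\CP^n}(-1),S^{2n+1})=0\qquad \forall\,*\in\Z\,,
\]
as claimed.

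The main obstacle is essentially absent: the non-orderability of $(S^{2n+1},\xi_{\mathrm{std}})$ is a well-established classical result, and all compatibility checks (namely that (A3) holds and that the Hopf circle bundle over $\CP^n$ is the standard contact sphere) are direct. The only mildly subtle point, which I would state explicitly rather than reprove, is the identification of the contact form $\alpha$ coming from the Hermitian connection on $\OO_{\CP^n}(-1)$ with the standard contact form on $S^{2n+1}$ up to a strict contactomorphism, so that the orderability of $\widetilde{\Cont_0}(\Sigma,\ker\alpha)$ coincides with that of $\widetilde{\Cont_0}(S^{2n+1},\xi_{\mathrm{std}})$.
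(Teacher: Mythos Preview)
Your proof is correct and follows essentially the same approach as the paper: both apply Theorem~\ref{thm:orderablility} contrapositively using the classical non-orderability of the standard contact sphere. The only minor discrepancy is the citation---the paper attributes the non-orderability of $(S^{2n+1},\xi_{\mathrm{std}})$ to \cite[Theorem 1.10]{EKP06} rather than \cite{EP00}.
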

\begin{proof}
	We note that in this case $(\Sigma,\xi)$ is contactomorphic to $S^{2n+1}$ with the standard contact structure which is known to be not orderable due to \cite[Theorem 1.10]{EKP06}. Therefore the assertion follows from Theorem \ref{thm:orderablility}.
\end{proof}

\begin{rem}
The full Rabinowitz Floer homology of $(E,\Sigma_\tau)$ studied in \cite{AK17}, that we revisit in Section \ref{sec:full} below, may not be appropriate to attack the orderability problem. Leaving aside applicability of the full Rabinowitz Floer homology to the orderability problem in the nonexact setting, we need a nonvanishing result for arbitrary $\tau>0$ in order to accommodate the condition in \eqref{eq:size_tau}, but the full Rabinowitz Floer homology, in general, does vary between vanishing and nonvanishing under the change of $\tau$. 
\end{rem}

\section{Full Rabinowitz Floer homology}\label{sec:full}
In this section, we define the Rabinowitz Floer homology of $\AA^\tau_f$ which takes all critical points, i.e.~with any winding numbers, into account. We sometimes call this the full Rabinowitz Floer homology to distinguish it from the homology with zero winding number we have studied so far. We work with $J\in\JJ^\BB_\mathrm{reg}$ and accordingly assume either (A1) or (A2) from the introduction. If the horizontal part $j$ of $J$ is from $\j_{\mathrm{HS}}$, we can slightly relax assumption (A2), see the beginning of Section \ref{sec:RFH}. 

\subsection{Two constructions of full Rabinowitz Floer homology}
For $-\infty < a<b < +\infty$, we define the $\Z$-module 
\[
\RFC_*^{(a,b)}(\AA^\tau_f)\,,\qquad *\in\Z 
\]
generated by critical points $w\in\Crit h\subset\Crit\AA^\tau_f$ with $\mu_\RFH^h(w)=*$ and $\AA^\tau_f(w)\in(a,b)$. We also define
\begin{equation}\label{eq:compl}
\begin{split}
\RFC_*^{(-\infty,b)}(\AA^\tau_f)&:=\varprojlim_{a\downarrow-\infty}\RFC_*^{(a,b)}(\AA^\tau_f) \,,\\
\RFC_*^{(-\infty,+\infty)}(\AA^\tau_f)&:=\varinjlim_{b\uparrow+\infty}\varprojlim_{a\downarrow-\infty}\RFC_*^{(a,b)}(\AA^\tau_f)\,,
\end{split}
\end{equation}
using canonical inclusions and projections, cf.~\eqref{eq:FC}. 
We recall from 	\eqref{eq:action_value_full} and \eqref{eq:mu_RFH_cov}  that for $w=([u,\bar u_\mathrm{fib}\#s],\eta)\in\Crit\AA^\tau_f$, 
\begin{equation}\label{eq:action_index}
\begin{split}
\AA^\tau_f(w) &= -\om([s])- \frac{\tau}{m}\cov(u)-(1+\tau)f(q)\,,\\
\mu_\RFH(w) &= -2\cov(u)-2(\lambda-m)\om([s])+\mu_{-f}(q)-\frac{1}{2}\dim M\,,
\end{split}	
\end{equation}
where $q=\wp\circ u$ as usual. 
Here $\lambda\in\R$ is the constant such that $c_1^E=\lambda\omega$ on $\pi_2(M)$ in (A2), and in the case of (A1) the term $2(\lambda-m)\omega([s])$ vanishes. In particular, the second identity in \eqref{eq:action_index} implies that for each $*\in\Z$ there are at most finitely many $w\in\Crit\AA^\tau_f$ with $\mu_\RFH(w)=*$ if (A1) is assumed. To study the case of (A2), let $c_1^E=\lambda\omega$ on $\pi_2(M)$. 
Abbreviating $e(w):=\mu_{-f}(q)-\frac{1}{2}\dim M$, we have
\begin{equation}\label{eq:full_action_winding}
	\AA^\tau_f(w)= \left(\frac{\tau}{m}(\lambda-m)-1\right)\om([s]) + \frac{\tau}{2m}\big(\mu_\RFH(w)-e(w)\big) -(1+\tau)f(q)\,.
\end{equation}
We recall our convention  $\frac{\tau}{m}=\pi r^2(u)$ and note that $e(w)$ and $(1+\tau)f(q)$ are uniformly bounded. If $\tau(\lambda-m)\neq m$, then for each $*\in\Z$ there are at most finitely many $w\in\Crit\AA^\tau_f$ with $\mu_\RFH(w)=*$ and $\AA^\tau_f(w)\in(a,b)$ by \eqref{eq:full_action_winding}, and thus $\RFC_*^{(a,b)}(\AA^\tau_f)$ has finite rank. On the other hand, \eqref{eq:full_action_winding} also shows that, if $\tau(\lambda-m)=m$, then for a given $*\in\Z$ there exist $a,b\in\R$ such that every $w\in\Crit\AA^\tau_f$ with $\mu_\RFH(w)=*$ has action $\AA^\tau_f(w)\in(a,b)$. Therefore $\RFC_*^{(a,b)}(\AA^\tau_f)$ can have infinite rank in degree $*\in\Z$ provided $\Gamma_E$ has infinitely many elements.
Using $\w(w)=\cov(u)-m\om([s])$, see Proposition \ref{prop:winding}.(a), we recast \eqref{eq:full_action_winding} as 
\begin{equation}\label{eq:full_action_winding2}
	\AA^\tau_f(w)= \left(\frac{\tau}{m}(\lambda-m)-1\right)\frac{2\w(w)+\mu_\RFH(w)-e}{-2\lambda}  + \frac{\tau}{2m}\left(\mu_\RFH(w)-e\right) -(1+\tau)f(q)\,.
\end{equation}
Now we define the homomorphism 
\[
\p_J=\p:\RFC_*^{(a,b)}(\AA^\tau_f)\longrightarrow \RFC_{*-1}^{(a,b)}(\AA^\tau_f)
\]
by the linear extension of 
\begin{equation}\label{eq:p_full}
\p w_-:=\sum_{w_+}\#\MM(w_-,w_+,\AA^\tau_f,J)\,w_+\,.	
\end{equation}
Here the sum is taken over all $w_+\in\Crit h$ with $\mu_\RFH^h(w_+)=*-1$ and $\AA^\tau_f(w_+)\in(a,b)$. To make sense of the above definition in the case of (A2) with $\tau(\lambda-m)=m$, we need to ensure that the sum in \eqref{eq:p_full} is finite. Suppose that this is not the case. Then we have a sequence $\mathbf{w}_\nu\in\MM(w_-,w_+^{\nu},\AA^\tau_f,J)$, $\nu\in\N$, where $w_+^{\nu}$ are mutually distinct critical points of $h$ satisfying the above conditions. Since $\mu_\RFH^h(w_+^\nu)=*-1$, in particular is independent of $\nu$, the covering number of the loop component of $w_+^\nu$ does not converge, see the index computation in \eqref{eq:action_index}. This contradicts the fact that the sequence $\mathbf{w}_\nu$ has energy bounded by $b-a$ and thus converges to a broken flow line with cascades. We will see this finiteness in an alternative way in Remark \ref{prop:vanishing}.

We have established necessary compactness and transversality results for $\p\circ\p=0$ in Proposition \ref{prop:transversality_J_B} and Proposition \ref{prop:compactness2}. Note that we do not assume any condition on winding numbers in these propositions. Hence we can define
\[
\RFH^{(a,b)}_*(\AA^\tau_f,J):= \H_*\big(\RFC^{(a,b)}(\AA^\tau_f),\p_J\big)\,.
\]
A standard continuation argument shows that it is invariant under the change of $J\in\JJ_\mathrm{reg}^\BB$, so we omit $J$ from the notation. We also write $\RFH_*(E,\Sigma_\tau)=\RFH_*^{(-\infty,+\infty)}(\AA^\tau_f)$. This proves Theorem \ref{thm:full_rfh}.(a). 

\begin{rem}\label{rem:Novikov}
The module $\RFC_*^{(-\infty,+\infty)}(\AA^\tau_f)$ in \eqref{eq:compl} is the one used in \cite{AK17}. Elements can be interpreted as formal linear combinations 
\[
\sum_{w\in\Crit h} a_w w\,,\qquad a_w\in\Z\,,\;\; \mu_\RFH^h(w)=*
\]
subject to the Novikov condition 
\begin{equation}\label{eq:novikov}
\forall \kappa\in\R\;:\; \#\{w \mid a_w\neq 0\,,\;\AA^\tau_f(w)\geq\kappa\}<\infty\,.	
\end{equation}
\end{rem}

\begin{rem}\label{rem:M-L}
Suppose that condition (A2) is fulfilled. The following discussion is trivial in the case of (A1). 
We can define the  Rabinowitz Floer homology for the action-windows $(-\infty,b)$ or $(-\infty,+\infty)$ as follows:
\[
\varprojlim_{a\downarrow-\infty}\RFH_*^{(a,b)}(\AA^\tau_f)\,,\qquad  \varinjlim_{b\uparrow+\infty}\varprojlim_{a\downarrow-\infty}\RFH_*^{(a,b)}(\AA^\tau_f)\,.
\]
As observed above, if $\tau(\lambda-m)=m$, there exist $a,b\in\R$ depending on $*\in\Z$ such that every $w\in\Crit\AA^\tau_f$ with $\mu_\RFH(w)=*$ satisfies $\AA^\tau_f(w)\in(a,b)$. Hence, 
\begin{equation}\label{eq:rfh=rfh_nov}
\RFH_*(E,\Sigma_\tau) = \varinjlim_{b\uparrow+\infty}\varprojlim_{a\downarrow-\infty}\RFH_*^{(a,b)}(\AA^\tau_f)\quad\textrm{for }\; \tau=\frac{m}{\lambda-m}\,,
\end{equation}
and furthermore switching the two limits does not make any difference. 
However it is not clear whether the two homologies in \eqref{eq:rfh=rfh_nov} are isomorphic for general $\tau>0$. To be precise, there is a surjective homomorphism 
\begin{equation}\label{eq:full_surj}
\RFH_*(E,\Sigma_\tau) \longrightarrow  \varinjlim_{b\uparrow+\infty}\varprojlim_{a\downarrow-\infty}\RFH_*^{(a,b)}(\AA^\tau_f)
\end{equation}
induced naturally by the universal properties of inverse and direct limits, and the exactness of direct limits. This is even an isomorphism if the inverse limit satisfies the Mittag-Leffler condition as discussed in Remark \ref{rem:ML} and Remark \ref{rem:ML2}. We emphasize again that this is indeed the case if we work with coefficients in a field, see \cite{CF11}.
\end{rem}

We remark that in the case of (A2) with $1\leq m\leq \lambda-1$, the  module $\RFC_*^{(-\infty,+\infty)}(\AA^\tau_f)$ changes dramatically at $\tau=\frac{m}{\lambda-m}$. Indeed the sign of the coefficient of $\om([s])$ in  \eqref{eq:full_action_winding} is determined by the sign of $\tau-\frac{m}{\lambda-m}$, and thus by \eqref{eq:ind_M}, the Novikov condition in \eqref{eq:novikov} is equivalent to 
\begin{equation}\label{eq:nov_ind}
	\begin{split}
	&\forall \kappa\in\R\;:\; \#\{w \mid a_w\neq 0\,,\;\mu_\FH(\Pi(w))\geq\kappa\}<\infty \quad  \textrm{if }\,\tau<\frac{m}{\lambda-m}\,,\\[.5ex]
	&\forall \kappa\in\R\;:\; \#\{w \mid a_w\neq 0\,,\;\mu_\FH(\Pi(w))\leq\kappa\}<\infty \quad  \textrm{if }\,\tau>\frac{m}{\lambda-m}\,.
	\end{split}
\end{equation}
In other words, if $\tau<\tfrac{m}{\lambda-m}$, then $\RFC_*^{(-\infty,+\infty)}(\AA^\tau_f)$ contains infinite sums only in the direction that the $\mu_\FH$-index in the base $M$ decreases. Exactly the opposite is true when $\tau>\tfrac{m}{\lambda-m}$. We mention again that $\RFC_*^{(-\infty,+\infty)}(\AA^\tau_f)$ for $\tau=\frac{m}{\lambda-m}$ consists only of finite sums. It is worth pointing out that if $m\geq\lambda$, then the first line in \eqref{eq:nov_ind} holds for all $\tau>0$ since the coefficient of $\om([s])$ in \eqref{eq:full_action_winding} is always negative.

\begin{prop}\label{prop:vanishing}
In the case of (A1) or (A2) with $\lambda \leq m$, let $\tau>0$ be arbitrary. In the remaining case, i.e.~(A2) with $1\leq m\leq\lambda-1$, let $0<\tau<\frac{m}{\lambda-m}$. Then we have
\[
\RFH_*(E,\Sigma_\tau)=0\qquad \forall *\in\Z\,.
\]
\end{prop}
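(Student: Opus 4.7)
The plan is to deduce the vanishing from the short exact sequence of Theorem~\ref{thm:full_rfh}.(b) by showing that the connecting map
\[
\delta = \mathrm{id} + \Psi^{c_1^E} : \mathop{\widetilde{\bigoplus}}_{k \in \Z}\FH_{*+2k}(M) \longrightarrow \mathop{\widetilde{\bigoplus}}_{k \in \Z}\FH_{*+2k}(M)
\]
is an isomorphism; once this is done, exactness forces $\RFH_{*-1}(E,\Sigma_\tau) = 0$ for every $*$. Throughout, I use the fact that $\Psi^{c_1^E}$ has degree $-2$, so it sends the $k$-th summand $\FH_{*+2k}(M)$ into the $(k-1)$-st summand $\FH_{*+2(k-1)}(M)$; in particular it strictly lowers the index $k$.

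Under the hypotheses of the proposition, in every case we have $\tau(\lambda-m)<m$ (in case (A1), the term $\om([s])$ vanishes on $\pi_2(M)$, and the completion $\widetilde\bigoplus$ in Theorem~\ref{thm:full_rfh}.(b) reduces to the ordinary $\bigoplus$; moreover $\FH_*(M) \cong \H_{*+\frac{\dim M}{2}}(M;\Z)$ is concentrated in a bounded range of degrees, so the direct sum is finite in each total degree $*$). In case~(A1), since $\Psi^{c_1^E}$ strictly lowers the $k$-grading of a finite-dimensional module, it is nilpotent, so $\mathrm{id}+\Psi^{c_1^E}$ is inverted by the finite geometric series $\sum_{j\geq 0} (-\Psi^{c_1^E})^{j}$.

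In case (A2) with $\tau(\lambda-m)<m$, the completion consists of formal sums $Z = \sum_{k \leq k_0} Z_k$ bounded above in $k$, and $\Psi^{c_1^E}$ preserves this completion (the index $k$ is shifted down by one). For surjectivity, given $Y = \sum_{k \leq k_0} Y_k$, the recursion $Z_k = Y_k - \Psi^{c_1^E}(Z_{k+1})$ starting from $Z_{k>k_0}=0$ yields
\[
Z_k = \sum_{j=0}^{k_0-k}(-\Psi^{c_1^E})^{j}\,Y_{k+j}\qquad (k \leq k_0),
\]
a \emph{finite} sum for each $k$; the resulting $Z$ is bounded above by $k_0$ and satisfies $\delta(Z)=Y$. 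For injectivity, if $\delta(Z)=0$ with $Z$ bounded above by $k_0$, then $Z_k = -\Psi^{c_1^E}(Z_{k+1})$ combined with $Z_{k_0+1}=0$ forces $Z_{k_0}=0$ and, inductively, $Z_k=0$ for all $k\leq k_0$. Hence $\delta$ is an isomorphism, and the proposition follows.

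The substantive difficulty lies not in the algebraic step above — which is essentially a one-sided Novikov-completed geometric series inversion — but in the construction of the short exact sequence of Theorem~\ref{thm:full_rfh}.(b), identifying $\RFC_*(\AA^\tau_f)$ with a suitably completed mapping cone of a cap-product type operator on Floer chains of $M$, and handling the role of nonzero winding numbers in the non-exact setting $(E,\Omega)$. Should one prefer to bypass (b), the direct route of \cite{AK17} is available: construct an iteration chain map $\mathcal I$ on $\RFC_*(\AA^\tau_f)$ which under the hypothesis $\tau(\lambda-m)<m$ strictly decreases action on generators, together with a chain homotopy realizing $\mathcal{I}-\mathrm{id}$ as a boundary; the telescoping null-homotopy $-\sum_{k\geq 0} H\circ \mathcal I^{k}$ then converges in the Novikov completion \eqref{eq:novikov} and contracts the identity of $\RFC_*(\AA^\tau_f)$.
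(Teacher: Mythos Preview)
Your proposal is correct and follows essentially the same route as the paper: the paper also deduces the vanishing from the short exact sequence of Theorem~\ref{thm:full_rfh}.(b) by proving that $\delta=\mathrm{id}+\Psi^{c_1^E}$ is an isomorphism, with injectivity established in Lemma~\ref{lem:id+Psi_injective} via the same downward induction on $k$, and surjectivity in Proposition~\ref{prop:id+Psi_surjective}.(a) via the identical recursion $\xi_{k-1}=Z_{k-1}-\Psi^{c_1^E}(\xi_k)$. Your separate treatment of case~(A1) using nilpotency of $\Psi^{c_1^E}$ on the degree-bounded $\bigoplus_k\FH_{*+2k}(M)$ is a minor presentational variant (the paper absorbs this into the $\tau(\lambda-m)=m$ case via Proposition~\ref{prop:id+Psi_surjective}.(b)), and your closing remark on the direct \cite{AK17}-style argument matches the paper's own comment preceding Remark~\ref{rem:full_RFH_invariance}.
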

	This proposition was proved in \cite[Theorem 1.2]{AK17}. Strictly speaking, the assumption on $(M,\om)$ in this paper is different from that in \cite{AK17}, and $\Z_2$-coefficient is used in \cite{AK17}, but the same proof remains applicable, see also Remark \ref{rem:p_0}. Nevertheless we present a proof as part of the proof of Proposition \ref{prop:id+Psi_surjective}.
	
	\begin{rem}\label{rem:full_RFH_invariance}
	The assumption $0<\tau<\frac{m}{\lambda-m}$ when $1\leq m\leq\lambda-1$ is not made in the statement of \cite[Theorem 1.2]{AK17} although this assumption together with the first case of \eqref{eq:nov_ind} was crucially used in the proof  to guarantee that boundaries of cycles that we explicitly construct indeed satisfy the Novikov condition. We had the wrong expectation that $\RFH_*(E,\Sigma_\tau)$ is invariant under the change of $\tau$. As pointed out by Sara Venkatesh \cite{Ven18}, this invariance property is not always true. We will come back to this below. 
	\end{rem}

\subsection{Two filtrations}\label{sec:two_filtrations}
For $w_\pm\in\Crit\AA^\tau_f$, equation \eqref{eq:indices_and_winding} yields
	\begin{equation}\label{eq:index_wind_intersection}
	\mu_\RFH(w_-) - \mu_\RFH(w_+) = 2(\w(w_+)-\w(w_-))+\mu_\FH(\Pi(w_-)) -\mu_\FH(\Pi(w_+))\,.
	\end{equation}
Now we introduce two filtrations of $\p$ suggested by the relation \eqref{eq:index_wind_intersection}.

\subsubsection*{Filtration by winding number}

For each $i\in\Z$, we define the homomorphism 
\[
\p^{\w_i}:\RFC^{(a,b)}_*(\AA^\tau_f)\longrightarrow\RFC^{(a,b)}_{*-1}(\AA^\tau_f)
\]
by the linear extension of
\[
\p^{\w_i} w_-:=\sum_{w_+}\#\MM(w_-,w_+,\AA^\tau_f,J)\,w_+\,,
\]
where the sum ranges over all $w_+\in\Crit h$ with $\mu_\RFH^h(w_+)=*-1$, $\AA^\tau_f(w_+)\in(a,b)$, and $\w(w_+)-\w(w_-)=i$. The restriction of $\p^{\w_0}$ to $\RFC^{\w_0,(a,b)}_*(\AA^\tau_f)$ recovers the previous definition of $\p^{\w_0}$ in \eqref{eq:boundary}. We also write $\p^{\w_0}$ in the current  situation. Suppose that $\MM(w_-,w_+,\AA^\tau_f,J)$ is nonempty for $w_\pm\in\Crit h$ with $\mu_\RFH^h(w_-)-\mu_\RFH^h(w_+)=1$. Since cascades in $\MM(w_-,w_+,\AA^\tau_f,J)$ project to Floer cylinders in $M$, see \eqref{eq:proj_cylinder}, we have $\mu_\FH(\Pi(w_-))\geq\mu_\FH(\Pi(w_+))$ due to our choice of $j\in\mathfrak{j}_\mathrm{reg}(f)\cup\mathfrak{j}_\mathrm{HS}(f)$. Moreover, by 
Proposition \ref{prop:positivity_of_intersection}, $\w(w_+)\geq \w(w_-)$. Hence, \eqref{eq:index_wind_intersection} together with \eqref{eq:index_h} yields that $\w(w_+)$ equals either $\w(w_-)$ or $\w(w_-)+1$, and we have in fact
\[
\p=\p^{\w_0} +\p^{\w_1}\,.
\]

\begin{rem}\label{rem:boundary_operator}
We note that the identity $\p\circ\p=0$ subsumes $\p^{\w_0}\circ\p^{\w_0}=0$. Indeed,
\[
0=\p\circ\p= \p^{\w_0}\circ\p^{\w_0}+(\p^{\w_1}\circ\p^{\w_0}+\p^{\w_0}\circ\p^{\w_1})+\p^{\w_1}\circ \p^{\w_1}
\]
implies that each of the three terms on the right-hand side vanishes individually since they map into different subspaces.
\end{rem}

\subsubsection*{Filtration by index on the base}
For each $i\in\Z$, we define the homomorphism
\[
\p_i:\RFC^{(a,b)}_*(\AA^\tau_f)\longrightarrow\RFC^{(a,b)}_{*-1}(\AA^\tau_f)
\]
by the linear extension of
\[
\p_{i} w_-:=\sum_{w_+}\#\MM(w_-,w_+,\AA^\tau_f,J)\,w_+\,.
\]
where the sum is taken over all $w_+\in\Crit h$ with $\mu_\RFH^h(w_+)=*-1$, $\AA^\tau_f(w_+)\in(a,b)$, and $\mu_\FH(\Pi(w_-))-\mu_\FH(\Pi(w_+))=i$.
By the discussion on \eqref{eq:index_wind_intersection} above, $\p_i$ for $i<0$ and $i>2$ are trivial, and hence 
\begin{equation}\label{eq:filtr}
\p=\p_0 + \p_1 +\p_2\,.	
\end{equation}
In fact, \eqref{eq:index_wind_intersection} relates the two filtrations as follows. We have  
\[
\p^{\w_0}=\p_1+\p_2\,,\qquad  \p^{\w_1}=\p_0\,.	
\]
For the latter equality, we recall from Remark \ref{rem:zero_cascade} that  flow lines with zero cascade do not contribute to $\p_0$, see also Remark \ref{rem:p_0} below. 
Similar to Section \ref{sec:gysin_revisit}, we abbreviate $\mathfrak C:=\Crit h\subset\Crit\AA^\tau_f$ and denote by $\widehat{\mathfrak{C}}$ and $\widecheck{\mathfrak{C}}$ the subsets composed of the maximum and minimum points of $h$ respectively:
\[
\mathfrak C =\widehat{\mathfrak{C}}  \sqcup \widecheck{\mathfrak{C}} \,.
\]
As in \eqref{eq:p^w_0} we decompose 
\[
\p^{\w_0}=\hat\p^{\w_0}+\check\p^{\w_0}+\p^{\w_0,c_1^E}
\]
where $\hat\p^{\w_0}$ counts solutions from $\widehat{\mathfrak{C}}$ to itself, $\check\p^{\w_0}$ counts solutions from $\widecheck{\mathfrak{C}}$ to itself, and $\p^{\w_0,c_1^E}$ counts solutions from $\widecheck{\mathfrak{C}}$ to $\widehat{\mathfrak{C}}$. Then it holds that 
\begin{equation}\label{eq:filtr2}
\p_1 = \hat\p^{\w_0}+\check\p^{\w_0} \,,\qquad \p_2=\p^{\w_0,c_1^E}\,.
\end{equation}
We sum up the domains and targets of $\p_i$:
\[
	\begin{split}
	\p_0:\widecheck{\mathfrak{C}}\to \widehat{\mathfrak{C}}\,,\quad \p_0:\widehat{\mathfrak{C}}\to 0\,,\quad 
	\p_1:\widehat{\mathfrak{C}}\to \widehat{\mathfrak{C}}\,,\quad \p_1:\widecheck{\mathfrak{C}}\to \widecheck{\mathfrak{C}}\,,\quad 
	\p_2:\widecheck{\mathfrak{C}}\to \widehat{\mathfrak{C}}\,,\quad \p_2:\widehat{\mathfrak{C}}\to 0\,.
\end{split}
\]

\begin{rem}\label{rem:p_0}
	The filtration by index played a key role in \cite{AK17} to show Proposition \ref{prop:vanishing}. 
As in Remark \ref{rem:boundary_operator}, $\p\circ \p=0$ implies $\p_0\circ\p_0=0$. Since $\p_0$ counts solutions $(u,\eta)$ with $u$ entirely contained in a single fiber of $E\to M$, see \cite[Proposition 3.3]{AK17}, the chain complex with $\p_0$ is  just copies of the Rabinowitz Floer complex for $(\C,S^1)$. We hence have
\[
\H_*\big(\RFC^{(-\infty,+\infty)}(\AA^\tau_f),\p_0\big)=0\,\qquad \forall *\in\Z\,,
\]
see \cite[Corollary 3.4]{AK17}. We even know that for $\check w_-=([u_-,\bar u_-],\eta_-)\in\widecheck{\mathfrak{C}}$
\[
\partial_0(\check w_-)=\hat w_+ 
\] 
where $\hat w_+=([u_+,\bar u_+],\eta_+)\in\widehat{\mathfrak{C}}$ is uniquely determined by $\Pi(\check w_-)=\Pi(\hat w_+)$ and $\cov(u_+)=\cov(u_-)+1$.

One can see that the sum in \eqref{eq:p_full} is finite even when  $\tau(\lambda-m)=m$ from the fact that $\p^{\w_0}w_-$ consists of a finite sum by Remark \ref{rem:finitely_many}.(i)  and $\p^{\w_1}w_-=\p_0w_-$ is zero or has a single term as mentioned above.
\end{rem}

\begin{rem}\label{rem:full_RFH_diagonal_J}
So far $J\in\JJ^\BB_\mathrm{reg}$ was taken. 
We can also define $\RFH_*(\AA^\tau_f)$ with $J\in\JJ_\mathrm{diag}$ but with the tools from this article only for small $\tau>0$. Transversality results for $\p^{\w_0}$ are established in Section \ref{sec:trans_diag}. Moreover as mentioned Remark \ref{rem:p_0}, $\p^{\w_1}=\p_0$ corresponds to the boundary operator of the Rabinowitz Floer complex for $(\C,S^1)$ where automatic transversality holds, see \cite[Appendix A]{AF16}. The latter fact is also used in the proof of Proposition \ref{prop:transversality_J_B}. Next we want to see $\p\circ\p=(\p^{\w_0}+\p^{\w_1})\circ (\p^{\w_0}+\p^{\w_1})=0$. Since $\p^{\w_0}\circ \p^{\w_0}=0$ and $\p^{\w_1}\circ\p^{\w_1}=0$, it remains to verify
\[
\p^{\w_0}\circ \p^{\w_1}+\p^{\w_1}\circ\p^{\w_0}=\hat\p^{\w_0}\circ \p^{\w_1}+\p^{\w_1}\circ\check\p^{\w_0}=0\,.
\]
This indeed holds since two chain complexes $\widehat\RFC(\AA^\tau_f)$ and $\widecheck\RFC(\AA^\tau_f)$ can be identified through $\p^{\w_1}=\p_0$ by Remark \ref{rem:p_0}, and up to this identification $\hat\p^{\w_0}$ and $-\check\p^{\w_0}$ coincide by Lemma \ref{lem:equal_count}. 
Due to the aforementioned transversality results for $J\in\JJ_\mathrm{diag}$, a  continuation argument shows that $\RFH_*(\AA^\tau_f)$ defined with $J\in\JJ_\mathrm{diag}$ is isomorphic to the one defined with $J\in\JJ_\mathrm{reg}^\BB$.

We also note that a continuation homomorphism $\varphi$ at the chain level with respect to any admissible change of data $f$, $h$, $\tau$, and $J$ respect winding numbers. Indeed, a continuation homomorphism counts solutions connecting critical points $w_-$ and $w_+$ with $\mu_\RFH^h(w_-)=\mu_\RFH^h(w_+)$. Then \eqref{eq:index_wind_intersection} yields that $\w(w_+)=\w(w_-)$. Therefore 
\[
\varphi\circ\p^{\w_0}=\p^{\w_0}\circ\varphi\,,\qquad \varphi\circ\p^{\w_1}=\p^{\w_1}\circ\varphi\,.
\]
\end{rem}

\subsection{Gysin sequence for full Rabinowitz Floer homology}
In this section, we assume that condition (A2) is fulfilled. However, the situation with (A1) can be dealt with exactly in the same way as in the case of (A2) with $\tau(\lambda-m)=m$.
Let $-\infty\leq a<b\leq+\infty$. As in Section \ref{sec:gysin_revisit}, we decompose 
\[
{\RFC}^{(a,b)}(\AA^\tau_f)= \widehat{\RFC}{}^{(a,b)}(\AA^\tau_f)\oplus \widecheck{\RFC}^{(a,b)}(\AA^\tau_f)
\]
where $\widehat{\RFC}{}^{(a,b)}(\AA^\tau_f)$ and $\widecheck{\RFC}^{(a,b)}(\AA^\tau_f)$ are the submodules generated by elements in $\widehat{\mathfrak{C}}$ and $\widecheck{\mathfrak{C}}$ respectively. Then $(\widehat{\RFC}{}^{(a,b)}(\AA^\tau_f),\hat \p^{\w_0})$
is a subcomplex since $\p$ and $\hat\p^{\w_0}$ coincide on $\widehat{\RFC}{}^{(a,b)}(\AA^\tau_f)$. The quotient complex is isomorphic to $(\widecheck{\RFC}^{(a,b)}(\AA^\tau_f),\check\p^{\w_0})$. We denote the homologies of these two complexes by $\widehat\RFH{}_{*}^{(a,b)}(\AA^\tau_f)$ and $\widecheck\RFH_{*}^{(a,b)}(\AA^\tau_f)$ respectively. As before, we omit $(a,b)$ if we consider the whole action-window $(-\infty,+\infty)$. We also write $\widehat\RFH{}_{*}(E,\Sigma_\tau)=\widehat\RFH{}_{*}(\AA^\tau_f)$, and likewise for $\widecheck\RFH{}_{*}(\AA^\tau_f)$. 
 In view of Remark \ref{rem:w_k_RFH}, for $a\neq -\infty$, we have
\[
\begin{split}
\widehat\RFH{}_{*}^{(a,b)}(\AA^\tau_f)&=\bigoplus_{k\in\Z}\widehat\RFH{}_*^{\w_k,(a,b)}(\AA^\tau_f)\cong \bigoplus_{k\in\Z}\FH_{*+2k-1}^{\big(\frac{1}{1+\tau}(a+\frac{\tau k}{m}),\frac{1}{1+\tau}(b+\frac{\tau k}{m})\big)}(f)\,,\\[1ex]
\widecheck\RFH_{*}^{(a,b)}(\AA^\tau_f)&=\bigoplus_{k\in\Z}\widecheck\RFH_*^{\w_k,(a,b)}(\AA^\tau_f)\cong \bigoplus_{k\in\Z}\FH_{*+2k}^{\big(\frac{1}{1+\tau}(a+\frac{\tau k}{m}),\frac{1}{1+\tau}(b+\frac{\tau k}{m})\big)}(f)\,.
\end{split}
\]
We now compute these homologies for the action-window $(a,b)=(-\infty,+\infty)$. The case $(-\infty,b)$ can be done analogously. If $\tau(\lambda-m)=m$, then by the discussion following \eqref{eq:full_action_winding}
\begin{equation}\label{eq:check_rfh}
\begin{split}
\widecheck\RFH_{*}(E,\Sigma_\tau)\cong \widehat\RFH{}_{*+1}(E,\Sigma_\tau)&=\bigoplus_{k\in\Z}\widehat\RFH{}_{*+1}^{\w_k}(E,\Sigma_\tau)\cong \bigoplus_{k\in\Z}\FH_{*+2k}(M)\,,
\end{split}
\end{equation}
where the last isomorphism respects $k\in\Z$.
Next we assume $\tau(\lambda-m)>m$. In this case, for a fixed $\mu_\RFH$-index, $\AA^\tau_f$ is negatively proportional to $\w$ by \eqref{eq:full_action_winding2}. Therefore 
\begin{equation}\label{eq:check_rfh2}
\begin{split}
\widecheck\RFH{}_{*}(E,\Sigma_\tau)&\cong \widehat\RFH{}_{*+1}(E,\Sigma_\tau) \\
& \cong \left\{\sum_{k\geq k_0} Z_k \mid Z_k\in \widehat\RFH{}_{*+1}^{\w_k}(E,\Sigma_\tau)\cong \FH{}_{*+2k}(M),\; k_0\in\Z \right\}\,.
\end{split}	
\end{equation}
Similarly, if $\tau(\lambda-m)<m$ and $\mu_\RFH$-index is fixed, then $\AA^\tau_f$ is positively proportional to $\w$, and thus
\[
\begin{split}
\widecheck\RFH{}_{*}(E,\Sigma_\tau)&\cong\widehat\RFH{}_{*+1}(E,\Sigma_\tau)\\
& \cong \left\{\sum_{k\leq k_0} Z_k \mid Z_k\in \widehat\RFH{}_{*+1}^{\w_k}(E,\Sigma_\tau)\cong \FH{}_{*+2k}(M),\; k_0\in\Z \right\}\,.
\end{split}
\]
This proves that
\[
\widecheck\RFH{}_{*}(E,\Sigma_\tau)\cong\widehat\RFH{}_{*+1}(E,\Sigma_\tau)\cong \mathop{\widetilde{\bigoplus}}_{k\in\Z}\FH_{*+2k}(M)
\]
where 
\[
\mathop{\widetilde{\bigoplus}}_{k\in\Z}\FH_{*+2k}(M) := \left\{
\begin{aligned} 
&\bigg\{\sum_{k\leq k_0} Z_k \mid Z_k\in \FH{}_{*+2k}(M),\; k_0\in\Z \bigg\}\qquad & \tau(\lambda-m)<m\,,\\[.5ex]
&\bigg\{\sum_{|k|\leq k_0} Z_k \mid Z_k\in \FH{}_{*+2k}(M),\; k_0\in\Z \bigg\}	  & \tau(\lambda-m)=m\,,  \\[.5ex]
& \bigg\{\sum_{k\geq k_0} Z_k \mid Z_k\in \FH{}_{*+2k}(M),\; k_0\in\Z \bigg\}  &  \tau(\lambda-m)>m\,,
\end{aligned}
\right.
\]  
was defined in Theorem \ref{thm:full_rfh}.(b). We note that these are 2-periodic in degree. 
The short exact sequence of chain complexes
\[
0\to \big(\widehat{\RFC}{}(\AA^\tau_f),\hat\p^{\w_0}\big)\stackrel{}{\to} \big(\RFC(\AA^\tau_f),\p\big)\to \big(\widecheck{\RFC}(\AA^\tau_f),\check\p^{\w_0}\big)\to 0\,,
\]
induces the long exact sequence 
\begin{equation}\label{eq:les_full2}
\cdots\to\RFH_*(E,\Sigma_\tau) \to \widecheck\RFH_{*}(E,\Sigma_\tau) \stackrel{\delta}{\to} \widehat\RFH{}_{*-1}(E,\Sigma_\tau)\stackrel{}{\to} \RFH_{*-1}(E,\Sigma_\tau) \to  \cdots
\end{equation}
where the connecting map $\delta$ is induced by $\p_0+\p_2$ due to \eqref{eq:filtr} and \eqref{eq:filtr2}. 
Thus vanishing of $\RFH_*(E,\Sigma_\tau)$ in all degrees $*\in\Z$ is equivalent to $\delta$ being an isomorphism. This happens, for instance, under the hypothesis of Proposition \ref{prop:vanishing}. 
	We write $\delta=\delta_0+\delta_2$ where $\delta_0$ and $\delta_2$ are the maps at the homology level induced by $\p_0$ and $\p_2$ respectively. Then $\delta_0$ gives an isomorphism $\widecheck{\RFH}^{\w_k}_*(E,\Sigma_\tau)\to \widehat{\RFH}{}^{\w_{k+1}}_{*-1}(E,\Sigma_\tau)$ for every $k\in\Z$ since $\p_0$ even identifies the respective chain complexes, see Remark \ref{rem:p_0} and also Remark \ref{rem:full_RFH_diagonal_J}. This corresponds to the identity map on $\FH_{*+2k}(M)$. Furthermore, $\delta_2$ corresponds to the Floer cap product $\Psi^{c_1^E}$ as observed in the proof of Proposition \ref{prop:cap_product}.(b). In other words, we have the following commutative diagrams for each $k\in\Z$: 
\begin{equation}\label{eq:delta_0,2}
	\begin{tikzcd}[row sep=1.5em,column sep=1.5em]
\widecheck{\RFH}^{\w_k}_*(E,\Sigma_\tau) \arrow{r}{\delta_0} \arrow{d}{\cong} &  \widehat{\RFH}{}^{\w_{k+1}}_{*-1}(E,\Sigma_\tau)   \arrow{d}{\cong}   \\
\FH_{*+2k}(M) \arrow{r}{\mathrm{id}} & \FH_{*+2k}(M)
\end{tikzcd}
\qquad
\begin{tikzcd}[row sep=1.5em,column sep=1.5em]
\widecheck{\RFH}^{\w_k}_*(E,\Sigma_\tau) \arrow{r}{\delta_2} \arrow{d}{\cong} &  \widehat{\RFH}{}^{\w_{k}}_{*-1}(E,\Sigma_\tau)   \arrow{d}{\cong}   \\
\FH_{*+2k}(M) \arrow{r}{\mathrm{\Psi^{c_1^E}}} & \FH_{*+2k-2}(M)\,.
\end{tikzcd}
\end{equation}
Hence the long exact sequence in \eqref{eq:les_full2} is isomorphic to 
\begin{equation}\label{eq:les_full3}
	\to \RFH_{*}(E,\Sigma_\tau) \to   \mathop{\widetilde{\bigoplus}}_{k\in\Z} \FH_{*+2k}(M) \stackrel{\delta}{\to} \mathop{\widetilde{\bigoplus}}_{k\in\Z}\FH_{*+2k}(M)\to \RFH_{*-1}(E,\Sigma_\tau) \to  \cdots
\end{equation}
with $\delta=\mathrm{id}+\Psi^{c_1^E}$. In order to investigate the map $\delta$, we need the following basic fact in linear algebra, which was also used for the computation of the symplectic homology of $E$ in \cite{Rit14}. For any endomorphism $L$ on a finite dimensional vector space $V$, the image $\mathrm{im\,} L^n$ stabilizes for $n\geq \dim V$, i.e.~$\mathrm{im\,} L^{\dim V}=\mathrm{im\,} L^n$ for every $n\geq \dim V$, and the restriction of $L$ to $\mathrm{im\,} L^{\dim V}$ is an automorphism. To apply this fact, we recall that $\bigoplus_{*\in\Z}\FH_*(M)$ is $\Lambda$-module, and $\Psi^{c_1^E}$ on $\bigoplus_{*\in\Z}\FH_*(M)$ is $\Lambda$-linear. Thus if we use coefficients in a field $\mathbb F$, the image of $(\Psi^{c_1^E})^n$ on the finite dimensional $\Lambda$-vector space $\bigoplus_{*\in\Z}\FH_*(M)$ stabilizes for  
\begin{equation}\label{eq:stabilize}
n\geq b_{(M;\F)}:= \dim_{\mathbb F} \bigoplus_{*\in\Z}\H_*(M;\mathbb F)= \dim_\Lambda \bigoplus_{*\in\Z} \FH(M)\,. 	
\end{equation}

\begin{lem}\label{lem:id+Psi_injective}
	The homomorphism 
	\[
	\delta=\mathrm{id}+\Psi^{c_1^E}:\mathop{\widetilde{\bigoplus}}_{k\in\Z} \FH_{*+2k}(M)  \longrightarrow \mathop{\widetilde{\bigoplus}}_{k\in\Z}\FH_{*+2k}(M)
	\]
	is injective when coefficients are taken from $\Z$ or a field $\F$. 
\end{lem}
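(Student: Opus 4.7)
My plan is to unfold $\delta$ explicitly in coordinates and analyze its kernel. Using the identification $\delta = \mathrm{id} + \Psi^{c_1^E}$ from Theorem \ref{thm:full_rfh}.(b), where $\Psi^{c_1^E}: \FH_{*+2k}(M) \to \FH_{*+2(k-1)}(M)$, an element $Z = (Z_k)$ with $Z_k \in \FH_{*+2k}(M)$ satisfies $\delta(Z) = 0$ exactly when
\[
Z_j + \Psi^{c_1^E}(Z_{j+1}) = 0 \qquad \forall\, j \in \Z.
\]

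In case (A1), or in case (A2) with $\tau(\lambda-m) \leq m$, the module consists of sequences bounded above, so $Z_k = 0$ for $k > k_0$. A downward induction starting from $Z_{k_0} = -\Psi^{c_1^E}(Z_{k_0+1}) = 0$ then yields $Z = 0$. This argument is independent of the coefficient ring. The substantive case is (A2) with $\tau(\lambda-m) > m$, where sequences are bounded below, $Z_k = 0$ for $k < k_0$. Iterating the recursion produces $Z_j = (-\Psi^{c_1^E})^n(Z_{j+n})$ for every $n \geq 0$, so $Z_j$ lies in the stable image
\[
S_{*+2j} := \bigcap_{n \geq 0} \mathrm{im}\bigl((\Psi^{c_1^E})^n : \FH_{*+2j+2n}(M) \to \FH_{*+2j}(M)\bigr),
\]
and the equation at $j = k_0 - 1$ additionally gives $\Psi^{c_1^E}(Z_{k_0}) = 0$.

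Over a field $\F$, each $\FH_{*+2j}(M;\F)$ is finite-dimensional by the decomposition \eqref{eq:isom2}, so the descending chain defining $S_{*+2j}$ stabilizes. A classical finite-dimensional argument (a nested sequence of non-empty affine subspaces in a finite-dimensional vector space has non-empty intersection) will show that $\Psi^{c_1^E}$ maps $S_{*+2(j+1)}$ surjectively onto $S_{*+2j}$. Since $\Lambda = \F[t,t^{-1}]$ is Noetherian and $\bigoplus_k \FH_k(M;\F)$ is finitely generated over $\Lambda$, the submodule $S = \bigoplus_j S_{*+2j}$ is also finitely generated; Vasconcelos's theorem, that a surjective endomorphism of a finitely-generated module over a Noetherian ring is injective, then forces $\Psi^{c_1^E}|_S$ to be bijective in every degree. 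Consequently $Z_{k_0} \in S_{*+2k_0} \cap \ker \Psi^{c_1^E} = 0$, and iterating (each $Z_j$ satisfies the same setup with the previous vanishings in place of $Z_{k_0-1} = 0$) gives $Z = 0$.

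The main obstacle will be the $\Z$-coefficient case: decreasing chains of non-empty cosets in a finitely-generated $\Z$-module can fail to intersect, so the surjectivity step breaks. My plan here is a two-step reduction. First, tensoring with $\Q$ reduces $\delta$ to the corresponding $\Q$-coefficient map, which is injective by the field case; hence $Z_j \otimes \Q = 0$, so each $Z_j$ lies in the finite torsion subgroup $T_{*+2j} \subseteq \FH_{*+2j}(M;\Z)$. Second, rerun the entire argument inside the torsion part $T = \bigoplus_j T_{*+2j}$: in each degree $T_{*+2j}$ is a \emph{finite} abelian group, so decreasing chains of non-empty cosets there do stabilize with non-empty intersection, $\Psi^{c_1^E}|_{S \cap T}$ is surjective, and a surjective endomorphism of a finite abelian group is bijective. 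The conclusion $Z = 0$ then follows as before.
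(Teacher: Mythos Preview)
Your argument is correct and follows the same three-step strategy as the paper: the bounded-above case by downward induction, the field case via a stable-image/Fitting-type argument, and the $\Z$ case by first reducing to torsion (tensoring with a field of characteristic zero) and then exploiting finiteness. The paper tensors with $\R$ rather than $\Q$ and uses the elementary fact that $(\Psi^{c_1^E})^n$ stabilizes on a finite-dimensional $\Lambda$-vector space, whereas you package this as Vasconcelos's theorem; these are equivalent.

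The one place that differs in execution is the final torsion step over $\Z$. The paper argues by pigeonhole: since each $Z_k$ lies in $\mathcal T\otimes\Z[t,t^{-1}]$ and $\mathcal T$ is finite, two of the $Z_{k_0+i}$ must coincide after a $t$-shift, and then $\Lambda$-linearity of $\Psi^{c_1^E}$ together with $\Psi^{c_1^E}(Z_{k_0})=0$ forces everything to vanish. You instead rerun the stable-image argument inside the torsion part. That is fine, but your closing sentence ``a surjective endomorphism of a finite abelian group is bijective'' is not literally applicable: $S\cap T=\bigoplus_j S^T_{*+2j}$ is not a finite group, and $\Psi^{c_1^E}$ shifts degree. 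You should either invoke Vasconcelos again (over the Noetherian ring $\Lambda=\Z[t,t^{-1}]$, where $S^T$ is a finitely generated submodule of $\mathcal T\otimes\Lambda$), or observe that surjectivity makes the finite cardinalities $|S^T_{*+2j}|$ weakly increasing in $j$ while the $\Lambda$-action makes them periodic with period $c_M$, so they are constant and each surjection is a bijection. With that fix your proof is complete; the trade-off is that the paper's pigeonhole is more hands-on, while your stable-image route is cleaner and reuses the field-case machinery.
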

\begin{proof}
	We first treat the case that $\tau(\lambda-m)\leq m$ and coefficients are taken from  $\Z$ or $\F$. Suppose that there is an element 	
	\[
	\sum_{k\leq k_0} Z_k  \in \ker\,(\mathrm{id}+\Psi^{c_1^E})
	\]
	for some $k_0\in\Z$ with $Z_{k}\in\FH_{*+2k}(M)$. In case of $\tau(\lambda-m)= m$, the sum actually ranges from $-k_0$ to $k_0$. Since $\Psi^{c_1^E}$ decreases degree by $-2$, 
	\[
		0 =(\mathrm{id}+\Psi^{c_1^E})\sum_{k\leq k_0} Z_k 
		= Z_{k_0}+\sum_{i=1}^{+\infty} \big(Z_{k_0-i}+\Psi^{c_1^E}(Z_{k_0-i+1})\big) 
	\]
	implies for degree reasons that each summand on the right-hand side vanishes. In particular, $Z_{k_0}=0$ which then implies $Z_{k_0-1}=0$ and inductively $Z_{k_0 -i}=0$ for all $i\geq 0$. Thus $\mathrm{id}+\Psi^{c_1^E}$ is injective.
		
	Next we consider the case $\tau(\lambda-m)> m$ with  coefficients from a field $\F$. Assume for a contradiction that, for some $k_0\in\Z$, there is a nonzero element 	
	\[
	\sum_{k\geq k_0} Z_k  \in \ker\,(\mathrm{id}+\Psi^{c_1^E})
	\]
	 with $Z_{k}\in\FH_{*+2k}(M)$ and $Z_{k_0}\neq0$. Then we have 
	\[
	0 =(\mathrm{id}+\Psi^{c_1^E})\sum_{k\geq k_0} Z_k 
	=  \Psi^{c_1^E}(Z_{k_0})+	\sum_{i=1}^{+\infty} \big(Z_{k_0+i-1}+\Psi^{c_1^E}(Z_{k_0+i})\big) \,.
	\]
	 As before, all summands on the right-hand side vanish individually for degree reasons. Thus it holds that
	\begin{equation}\label{eq:ker_relation}
		\Psi^{c_1^E}(Z_{k_0})=0\,,\qquad Z_{k_{0}}=-\Psi^{c_1^E}(Z_{k_0+1})=\cdots=(-1)^i(\Psi^{c_1^E})^i(Z_{k_0+i})=\cdots \,.	
	\end{equation}
	Let $b= b_{(M,\F)}$ be as in \eqref{eq:stabilize} such that $\Psi^{c_1^E}$ is an automorphism on $V:=\mathrm{im}(\Psi^{c_1^E})^b\subset\bigoplus_{*\in\Z}\FH_*(M)$. From this we derive a  contradiction as follows. Observe that on the one hand, by \eqref{eq:ker_relation},
	\[
	0\neq Z_{k_0}=(-1)^b(\Psi^{c_1^E})^b(Z_{k_0+n})\in V\,.
	\] 
	On the other hand, $\Psi^{c_1^E}(Z_{k_0})=0$ which contradicts that $\Psi^{c_1^E}$ is an automorphism on $V$.

	Finally let us study the case $\tau(\lambda-m)> m$ with $\Z$-coefficients.   We decompose 
	\[
	\bigoplus_{*\in\Z}\H_*(M;\mathbb Z)\cong\mathcal{F}\oplus\mathcal{T}\,,\qquad \bigoplus_{*\in\Z}\FH_*(M)\cong (\mathcal{F}\otimes\Lambda)\oplus (\mathcal{T} \otimes\Lambda)\,,
	\] 
	where $\mathcal{F}$ and $\mathcal{T}$ are the free and torsion part of $\bigoplus_{*\in\Z}\H_*(M;\mathbb Z)$ respectively. We set
	\[
	n:=\max \big\{\mathrm{rank}_{\Z}\mathcal F, \#\mathcal{T}\}<+\infty\,,
	\]
	where $\#\mathcal{T}$ denotes the cardinality of $\mathcal{T}$. Assume as before that there is a nonzero element 	
	\begin{equation}\label{eq:ker_id+Psi}
	\sum_{k\geq k_0} Z_k  \in \ker\,(\mathrm{id}+\Psi^{c_1^E})		
	\end{equation}
	for some $k_0\in\Z$ with $Z_{k}\in\FH_{*+2k}(M)$ and $Z_{k_0}\neq0$. Again conclusion  \eqref{eq:ker_relation} holds. We also have that $Z_k\neq0$ for all $k\geq k_0$. Indeed, if $Z_{k}=0$ for some $k$, we have $Z_{k_0}+Z_{k_0+1}+\cdots + Z_{k}\in \ker\,(\mathrm{id}+\Psi^{c_1^E})$. Then arguing as in the case of $\tau(\lambda-m)= m$ above, we arrive at the contradiction $Z_{k_0}=0$.   
	
	As observed in Remark \ref{rem:H}.(ii), $\Lambda$ is isomorphic to $\Z[t,t^{-1}]$ with $\deg t=-2c_M$.	
	We claim that $Z_{k_0+m}\in\mathcal{T}\otimes\Z[t,t^{-1}]$ for every $m\geq 0$. Assume for a contradiction that $Z_{k_0+m}\notin\mathcal{T}\otimes\Z[t,t^{-1}]$ for some $m\geq0$. Then $Z_{k_0+m}\otimes 1$ is in $(\mathcal{F}\otimes\Z[t,t^{-1}])\otimes\R \cong \mathcal{F}\otimes \R[t,t^{-1}]$, which is a vector space over $\R[t,t^{-1}]$ of dimension at most $n$. Tensoring with the identity map, we extend $\Psi^{c_1^E}$ to $\mathcal{F}\otimes \R[t,t^{-1}]$.  By \eqref{eq:ker_relation}, 
	\[
	Z_{k_0+m}\otimes 1=(-1)^n(\Psi^{c_1^E})^{n}(Z_{k_0+m+n}\otimes 1)\in V:=\mathrm{im}(\Psi^{c_1^E})^n\,.
	\] 
		Since $Z_{k_0+m}$ is nonzero and not a torsion element,  $Z_{k_0+m}\otimes 1$ is nonzero. 
		Since $\Psi^{c_1^E}$ is an isomorphism on $V$, we have $(\Psi^{c_1^E})^i(Z_{k_0+m}\otimes 1)\neq0$ for every $i\geq 0$. This contradicts $(\Psi^{c_1^E})^{m+1}(Z_{k_0+m}\otimes 1)=(-1)^m\Psi^{c_1^E}(Z_{k_0}\otimes 1)=0$, see \eqref{eq:ker_relation}. This proves the claim. 
	Recalling that $\deg Z_k=*+2k$ and $\deg t=-2c_M$, we write 
	\[
	Z_k=\sum_{\ell\in\Z} Z_k^\ell\otimes t^\ell\,,\qquad Z_k^\ell\in\mathcal{T}\,,\;\;\deg Z_k^\ell= *+2k+2\ell c_M\,.
	\]
	Since $\#\mathcal{T}\leq n$, there exist $0\leq i<j\leq 2^n$ such that $Z_{k_0+j}=Z_{k_0+i} \cdot t^{(i-j)/2c_M}$, where $i-j$ is automatically divisible by $2c_M$. Due to the $\Lambda$-linearity of $\Psi^{c_1^E}$, we obtain
	\[
	(-1)^{j-i}Z_{k_0+i}=(\Psi^{c_1^E})^{j-i}(Z_{k_0+j})=(\Psi^{c_1^E})^{j-i}(Z_{k_0+i} \cdot t^{(i-j)/2c_M})=(\Psi^{c_1^E})^{j-i}(Z_{k_0+i}) \cdot t^{(i-j)/2c_M}
	\]
	and therefore for every $\ell\in\N$ 
	\[
	(-1)^{\ell(j-i)}Z_{k_0+i}=(\Psi^{c_1^E})^{\ell(j-i)}(Z_{k_0+i}) \cdot t^{\ell(i-j)/2c_M}\,.
	\]
	However, due to \eqref{eq:ker_relation} the term on the right-hand side vanishes if $\ell(j-i)>i$ which contradicts the observation that $Z_k\neq0$ for all $k\geq k_0$. This completes the proof.
\end{proof}

Due to Lemma \ref{lem:id+Psi_injective}, the long exact sequence \eqref{eq:les_full3} splits into 
\[
	0 \to   \mathop{\widetilde{\bigoplus}}_{k\in\Z} \FH_{*+2k}(M) \stackrel{\delta}{\to} \mathop{\widetilde{\bigoplus}}_{k\in\Z}\FH_{*+2k}(M)\to \RFH_{*-1}(E,\Sigma_\tau) \to  0\,.
\]
We obtain the following corollary. 
\begin{cor}\label{cor:rfh}
We have isomorphisms
\[
\begin{split}
\RFH_{*-1}(E,\Sigma_\tau) &\cong \bigg(\mathop{\widetilde{\bigoplus}}_{k\in\Z}\FH_{*+2k}(M)\bigg) \,\Big/\, \mathrm{im}\bigg(\mathop{\widetilde{\bigoplus}}_{k\in\Z}\FH_{*+2k}(M) \stackrel{\delta}\to\mathop{\widetilde{\bigoplus}}_{k\in\Z}\FH_{*+2k}(M) \bigg) \\[1ex]
	& \cong\left.\left\{\sum Z_k \mid Z_k\in \FH{}_{*+2k}(M)  \right\} \right/ \big\langle -Z_k=\Psi^{c_1^E}(Z_k)\big\rangle
\end{split}
\]
where the sum $\sum$ in the second line means $\sum_{k\leq k_0}$, $\sum_{|k|\leq k_0}$, and $\sum_{k\geq k_0}$ for some $k_0\in\Z$ in the case of $\tau(\lambda-m)<m$, $\tau(\lambda-m)=m$, and $\tau(\lambda-m)>m$ respectively. Here the relation includes infinite sums, e.g.~$-\sum_{k\geq k_0}Z_k=\sum_{k\geq k_0}\Psi^{c_1^E}(Z_k)$.

If we take coefficients in a field $\mathbb F$, the case of $\tau(\lambda-m)=m$ yields
\[
\begin{split}
\RFH_{*-1}(E,\Sigma_\tau)
 \cong \bigoplus_{k\in\Z}\FH_{*+2k}(M) /\,\mathrm{im\,}(\mathrm{id}+\Psi^{c_1^E})
 \cong \FH_{*}(M)/\ker (\Psi^{c_1^E})^n
\end{split}
\]
for any $n\geq b_{(M;\F)}$ since the image of $(\Psi^{c_1^E})^n$ stabilizes.
\end{cor}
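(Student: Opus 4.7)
\medskip

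My plan is to read off the corollary directly from the short exact sequence displayed immediately before it. By Lemma \ref{lem:id+Psi_injective} the map $\delta=\mathrm{id}+\Psi^{c_1^E}$ is injective on $\mathop{\widetilde{\bigoplus}}_{k\in\Z}\FH_{*+2k}(M)$, so the long exact sequence \eqref{eq:les_full3} collapses to
\[
0 \longrightarrow \mathop{\widetilde{\bigoplus}}_{k\in\Z} \FH_{*+2k}(M) \stackrel{\delta}{\longrightarrow} \mathop{\widetilde{\bigoplus}}_{k\in\Z}\FH_{*+2k}(M) \longrightarrow \RFH_{*-1}(E,\Sigma_\tau) \longrightarrow 0,
\]
which gives the first displayed isomorphism. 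The second displayed isomorphism is a restatement: writing an element of the domain as a formal sum $\sum Z_k$ with $Z_k\in \FH_{*+2k}(M)$, the image of $\delta$ is exactly the submodule generated by elements of the form $Z_k+\Psi^{c_1^E}(Z_k)$, where $Z_k$ occupies the $k$-th slot and $\Psi^{c_1^E}(Z_k)\in\FH_{*+2(k-1)}(M)$ the $(k-1)$-st slot.

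For the field coefficient case with $\tau(\lambda-m)=m$, the tilded direct sum is an honest direct sum $\bigoplus_k\FH_{*+2k}(M)$, so I only need
\[
\bigoplus_{k\in\Z}\FH_{*+2k}(M)\big/\,\mathrm{im}(\mathrm{id}+\Psi^{c_1^E}) \,\cong\, \FH_*(M)/\ker(\Psi^{c_1^E})^n.
\]
First I would invoke Fitting's lemma. Since $\bigoplus_{*}\FH_*(M)$ is a finite dimensional $\Lambda$-vector space and $\Psi=\Psi^{c_1^E}$ is $\Lambda$-linear, for $n\geq b_{(M;\F)}$ we have a $\Psi$-invariant splitting $\FH_*(M)=V_*\oplus W_*$ with $V_*=\mathrm{im}(\Psi^n)$ and $W_*=\ker(\Psi^n)$, where $\Psi$ acts as an automorphism on $V$ and nilpotently on $W$. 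This splits the map $\mathrm{id}+\Psi$ on $\bigoplus_k\FH_{*+2k}(M)$ into its $V$- and $W$-parts. On the $W$-part the map $\mathrm{id}+\Psi$ is an isomorphism, with explicit inverse the (degreewise finite) Neumann series $\sum_{j\geq 0}(-\Psi)^j$, so it contributes nothing to the cokernel.

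It then remains to identify the cokernel of $\mathrm{id}+\Psi$ on $\bigoplus_k V_{*+2k}$ with $V_*$. For this I would construct the homomorphism
\[
\rho:\bigoplus_{k\in\Z}V_{*+2k}\longrightarrow V_*,\qquad \rho\big((Z_k)_k\big):=\sum_{k\in\Z}(-1)^k\Psi^k(Z_k),
\]
which is well-defined since $\Psi$ is invertible on $V$ and the sum is finite. A direct telescoping computation shows $\rho\circ(\mathrm{id}+\Psi)=0$, and $\rho$ is surjective via $Y\mapsto (Y,0,0,\ldots)$. For injectivity of the induced map on the quotient, given $(Z_k)$ with finite support $\{k_0,\ldots,k_1\}$ in $\ker\rho$, I would set $Y_k:=\sum_{j\geq 0}(-1)^j\Psi^j(Z_{k+j})$, verify $Z_k=Y_k+\Psi(Y_{k+1})$ by telescoping, and check $Y_k=0$ for $k<k_0$ using the relation $\sum_\ell(-1)^\ell\Psi^\ell(Z_\ell)=0$ provided by $\rho((Z_k))=0$, so that $(Z_k)\in\mathrm{im}(\mathrm{id}+\Psi)$. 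Finally, $V_*=\mathrm{im}(\Psi^n)\cong \FH_*(M)/\ker(\Psi^n)$ by the first isomorphism theorem. The only subtle step is the injectivity argument for $\rho$ modulo $\mathrm{im}(\mathrm{id}+\Psi)$; once the right ansatz for $Y_k$ is written down the rest is telescoping, but crucially $Y_k$ has finite support only because $\Psi|_V$ is invertible, which is where Fitting's lemma is used.
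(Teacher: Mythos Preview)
Your argument is correct and matches the paper's approach. The paper derives the first isomorphism exactly as you do, by observing that Lemma~\ref{lem:id+Psi_injective} collapses the long exact sequence \eqref{eq:les_full3} to the short exact sequence; for the field-coefficient identification the paper offers only the phrase ``since the image of $(\Psi^{c_1^E})^n$ stabilizes,'' so your Fitting decomposition and explicit $\rho$-construction supply the details the paper leaves to the reader.
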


The previous discussion leads to the following vanishing result.  

\begin{prop}\label{prop:id+Psi_surjective}
	 Suppose that one the following holds.
	\begin{enumerate}[(a)]
	\item $\tau(\lambda-m)<m$.
	\item $\tau(\lambda-m)\geq m$ and $(\Psi^{c_1^E})^n=0$ for some $n\in\N$.
	\item $\tau(\lambda-m)> m$ and $\Psi^{c_1^E}$ is an isomorphism.
	\item $\tau(\lambda-m)>m$ and coefficients are taken from a field $\F$. 
	\end{enumerate}
	Then the homomorphism
	\[
	\delta=\mathrm{id}+\Psi^{c_1^E}:\mathop{\widetilde{\bigoplus}}_{k\in\Z} \FH_{*+2k}(M)  \longrightarrow \mathop{\widetilde{\bigoplus}}_{k\in\Z}\FH_{*+2k}(M)
	\]
	is an isomorphism, and hence $\RFH_*(E,\Sigma_\tau)$ vanishes for all $*\in\Z$.
\end{prop}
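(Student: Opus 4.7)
The strategy is to derive the vanishing of $\RFH_*(E,\Sigma_\tau)$ from the exact sequence \eqref{eq:les_full3} by producing, in each of the four cases, a two-sided inverse of $\delta=\mathrm{id}+\Psi^{c_1^E}$. Injectivity is already supplied by Lemma \ref{lem:id+Psi_injective}, so the task is purely to establish surjectivity; the delicate point throughout is to ensure that any proposed inverse preserves the support condition defining $\mathop{\widetilde{\bigoplus}}_{k\in\Z}\FH_{*+2k}(M)$ in the relevant range of $\tau$.

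In case (a) elements are supported on $\{k\le k_0\}$, and I would invert $\delta$ by a top-down recursion: given $W=\sum_{k\le k_0}W_k$, set $Z_{k_0}:=W_{k_0}$ and recursively $Z_k:=W_k-\Psi^{c_1^E}(Z_{k+1})$ for $k\le k_0-1$; the resulting $Z$ has the same support bound, and $\delta(Z)=W$ by construction. In case (b), where $(\Psi^{c_1^E})^n=0$, the formal Neumann series truncates to the finite polynomial $\sum_{i=0}^{n-1}(-1)^i(\Psi^{c_1^E})^i$, which shifts any support bound by at most $n-1$ and therefore defines an endomorphism of $\mathop{\widetilde{\bigoplus}}_{k\in\Z}\FH_{*+2k}(M)$; a direct telescoping computation shows it is a two-sided inverse of $\delta$.

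The heart of the argument is case (c), where the support is bounded below and one must invert $\delta$ \emph{towards increasing} $k$. Here I would take the formal inverse
\[
(\mathrm{id}+\Psi^{c_1^E})^{-1}=\sum_{i\ge 1}(-1)^{i-1}\bigl(\Psi^{c_1^E}\bigr)^{-i},
\]
which makes sense because $(\Psi^{c_1^E})^{-i}$ raises homological degree by $2i$: applied to $W=\sum_{k\ge k_0}W_k$, the component in $\FH_{*+2\ell}(M)$ of the output equals $\sum_{1\le i\le \ell-k_0}(-1)^{i-1}(\Psi^{c_1^E})^{-i}(W_{\ell-i})$, a finite sum since $W_{\ell-i}=0$ whenever $\ell-i<k_0$. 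The resulting $Z$ is supported on $\{k\ge k_0+1\}$ and inverts $\delta$ by a telescoping identity. The expected main obstacle is precisely this well-definedness question, which is resolved by matching the upward degree shift of $(\Psi^{c_1^E})^{-1}$ with the direction of the support bound imposed by $\tau(\lambda-m)>m$.

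Case (d) reduces to (b) and (c) via a Fitting-type splitting. Since $\bigoplus_{*\in\Z}\FH_*(M;\F)$ is a finitely generated $\Lambda$-module on which the chain $\mathrm{im}(\Psi^{c_1^E})^n$ stabilizes for $n\ge b_{(M,\F)}$ and $\Psi^{c_1^E}$ restricts to an automorphism of the stable image --- a fact already used in Lemma \ref{lem:id+Psi_injective} via \eqref{eq:stabilize} --- the standard Fitting argument (writing $v=(v-w)+w$ with $w$ an element of the stable image satisfying $(\Psi^{c_1^E})^n(w)=(\Psi^{c_1^E})^n(v)$, which exists because $(\Psi^{c_1^E})^n$ is an automorphism there) produces a $\Lambda$-equivariant and grading-respecting decomposition
\[
\bigoplus_{*\in\Z}\FH_*(M;\F)=\ker(\Psi^{c_1^E})^n\oplus\mathrm{im}(\Psi^{c_1^E})^n.
\]
This induces a corresponding decomposition of $\mathop{\widetilde{\bigoplus}}_{k\ge k_0}\FH_{*+2k}(M;\F)$ that is preserved by $\delta$, and I would apply case (b) on the nilpotent summand and case (c) on the automorphism summand, assembling the two inverses to complete the proof.
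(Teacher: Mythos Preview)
Your proof is correct and follows essentially the same route as the paper's: both invoke Lemma~\ref{lem:id+Psi_injective} for injectivity and then exhibit explicit preimages, using the top-down recursion in~(a), the truncated Neumann series in~(b), and the series in negative powers of $\Psi^{c_1^E}$ in~(c). For~(d) the paper does not pass through the Fitting splitting but instead writes down the preimage directly as $\beta_k-\theta_k$ with $\beta_k=\sum_{i=0}^{b-1}(-1)^i(\Psi^{c_1^E})^i(Z_k)$ and $\theta_k=\sum_{\ell\ge1}(-1)^{\ell+1}(\Psi^{c_1^E}|_V)^{-\ell}\big((-1)^{b-1}(\Psi^{c_1^E})^b(Z_k)\big)$; this is the same mechanism as your decomposition-then-apply-(b)-and-(c), just packaged without asserting a direct sum.
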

\begin{proof}
	 Let $\zeta:=\sum Z_k\in\mathop{\widetilde{\bigoplus}}_{k\in\Z}\FH_{*+2k}(M)$ be an arbitrary element where $\sum$ means the same as in Corollary \ref{cor:rfh}. Due to Lemma \ref{lem:id+Psi_injective}, it suffices to show that $\zeta\in\mathrm{im}(\mathrm{id}+\Psi^{c_1^E})$.
	In case (a), we take 
	\[
	\xi:=\sum_{k\leq k_0}\xi_k \in \mathop{\widetilde{\bigoplus}}_{k\in\Z}\FH_{*+2k}(M)\,,\qquad \xi_{k_0}:=Z_{k_0}\,,\;\;\xi_{k-1}:=\big(Z_{k-1}-\Psi^{c_1^E}(\xi_{k})\big)\,.
	\]
	This satisfies $(\mathrm{id}+\Psi^{c_1^E})\xi= \zeta$.
	For (b) with condition $\tau(\lambda-m)>m$, we take
	\[
	\xi:=\sum_{k\geq k_0} \xi_k \in \mathop{\widetilde{\bigoplus}}_{k\in\Z}\FH_{*+2k}(M)
	\,,\qquad \xi_k:=\sum_{i=0}^{n-1}(-1)^i(\Psi^{c_1^E})^i(Z_k)
	\]
	so that $(\mathrm{id}+\Psi^{c_1^E})\xi_k= Z_k$ and thus $(\mathrm{id}+\Psi^{c_1^E})\xi= \zeta$. We take the same $\xi$ with $\sum_{k\geq k_0}$ replaced by $\sum_{|k|\leq k_0}$ 
	 if $\tau(\lambda-m)=m$. In case (c),
	\[
	\xi:=\sum_{\ell=1}^{+\infty}(-1)^{\ell+1}(\Psi^{c_1^E})^{-\ell}(\zeta) \in \mathop{\widetilde{\bigoplus}}_{k\in\Z}\FH_{*+2k}(M)
	\]
	satisfies $(\mathrm{id}+\Psi^{c_1^E})\xi= \zeta$. Finally we treat case (d).
	We take $b=b_{(M;\F)}$ as in \eqref{eq:stabilize} so that $\Psi^{c_1^E}$ is an automorphism on $V:=\mathrm{im}(\Psi^{c_1^E})^b$. We set 
	\[
	\beta_k:=\sum_{i=0}^{b-1}(-1)^i(\Psi^{c_1^E})^i(Z_k)\,,\quad\sigma_k:=(-1)^{b-1}(\Psi^{c_1^E})^{b}(Z_k)\,,\quad \theta_k:=\sum_{\ell=1}^{+\infty}(-1)^{\ell+1}(\Psi^{c_1^E}|_V)^{-\ell}(\sigma_k)\,,
	\]
	where $(\Psi^{c_1^E}|_V)^{-\ell}$ denotes the $\ell$-th power map of the inverse of $\Psi^{c_1^E}|_V:V\to V$.
	Then $(\mathrm{id}+\Psi^{c_1^E})\beta_k=Z_k+\sigma_k$ and $(\mathrm{id}+\Psi^{c_1^E})\theta_k=\sigma_k$. Therefore, 
	\[
	\xi:=\sum_{k\geq k_0}(\beta_k-\theta_k) \in \mathop{\widetilde{\bigoplus}}_{k\in\Z}\FH_{*+2k}(M)\,,\qquad  (\mathrm{id}+\Psi^{c_1^E})\xi=\zeta\,.
	\]
	This finishes the proof of the proposition. 
\end{proof}

At this point we completely proved Theorem \ref{thm:full_rfh} from the introduction. 

\begin{rem}\label{rem:consecutive index}
Suppose that $M$ admits a Morse function $f:M\to\R$ without critical points having  consecutive Morse indices. By multiplying a small constant, we may assume that $f$ is $C^2$-small. The chain complex $\FC(f)$ has trivial boundary operator for index reasons and hence $\FC(f)=\FH(f)$. We fix $J\in\JJ^{\BB}_\textrm{reg}$ with $j\in\j_\mathrm{HS}(f)$ and describe the boundary operator $\p=\p_0+\p_1+\p_2$ as follows. First, $\p_0$ is explained in Remark \ref{rem:p_0}. Second, $\p_1=0$ since $\p_1$ projects to the boundary operator on $\FC(f)$. Finally, as observed in Remark \ref{rem:full_RFH_diagonal_J}, a continuation homomorphism $\varphi$ at the chain level preserves winding numbers and commutes with $\p_0$ and also with $\p_2$. Moreover, by our assumption on $f$ and \eqref{eq:index_wind_intersection}, the continuation homomorphism $\varphi$ preserves the $\mu_\FH$-index in the base $M$ and thus maps $\widecheck{\RFC}^{\w_k}=\widecheck{\RFH}^{\w_k}$ to itself, and likewise for $\widehat{\RFC}{}^{\w_k}=\widehat{\RFH}{}^{\w_k}$. In particular, a continuation homomorphism for the change of $J$ is the identity map. Similarly, for the change of $\tau$, a continuation homomorphism is  essentially the identity map in the sense that it only scales the radius of the loop component of critical points of $\AA^\tau_f$. Since $\p_2$ respects winding numbers, it restricts to $\p_2:\widecheck{\RFC}^{\w_k}_*(\AA^\tau_f)\to \widehat{\RFC}{}^{\w_{k}}_*(\AA^\tau_f)$. Up to continuation homomorphisms for $\widecheck{\RFC}^{\w_k}$ and $\widehat{\RFC}{}^{\w_k}$, we can identify $\p_2$ with the one defined for small $\tau>0$ and $J\in\JJ_\mathrm{diag}$. Therefore $\p_2$ corresponds to the Floer cap product $\psi^{c_1^E}:\FC_*(f)\to\FC_{*-2}(f)$ as studied in Section \ref{sec:gysin_revisit}. We expect that a similar argument works for a perfect Morse function.
\end{rem}

\section{Examples:~$\OO_{\CP^n}(-m)$}\label{sec:example}

To describe the next examples we need the following sets. For $m\in\N$ we define 
\begin{equation}\label{eq:Q_m}
	\begin{split}
		\Q_m&:=\left\{\sum_{k\geq k_0} a_{k}m^{k} \,\Big|\, a_k\in\Z\,,\; 0\leq a_k\leq m-1\,, \;k_0\in\Z \right\}\,,\\[0.5ex]
		\widetilde{\Q}_m&:=\left\{\sum_{|k|\leq k_0} a_{k}m^{k} \,\Big|\, a_k\in\Z\,,\; 0\leq a_k\leq m-1\,, \;k_0\in\Z \right\}
	\end{split}
\end{equation}
consisting of formal (Laurent) polynomials. We endow these sets with natural $\Z$-module structures given by 
\[
\sum_{k\geq k_0} a_km^k +\sum_{k\geq k_0} b_k m^k= \sum_{k\geq k_0} c_k m^k
\]
where the coefficients $0\leq c_k \leq m-1$ are inductively determined by 
\[
\begin{split}
c_{k_0}&=a_{k_0}+b_{k_0} \;\;(\textrm{mod $m$}) \\
c_{k_0+1}&=a_{k_0+1}+b_{k_0+1}+\Big\lfloor\frac{a_{k_0}+b_{k_0}}{m}\Big\rfloor  \;\;(\textrm{mod $m$})	 \\
c_{k_0+2}&=a_{k_0+2}+b_{k_0+2}+\Big\lfloor\frac{a_{k_0+1}+b_{k_0+1}+\lfloor\frac{a_{k_0}+b_{k_0}}{m}\rfloor}{m}\Big\rfloor  \;\;(\textrm{mod $m$})	 \\
& \;\vdots
\end{split}
\]
We note that, when $m=p$ is a prime number,  $\Q_p$ equals the field of $p$-adic numbers as a $\Z$-module.

\subsection{Results}
We recall that $c_1^{T{\CP}^n}=(n+1)[\om_\mathrm{FS}]$ and the complex line bundle $\OO_{\CP^n}(-m)\to\CP^n$ has  first Chern class $c_1^{\OO_{\CP^n}(-m)}=-m[\om_\mathrm{FS}]$. Now we prove Corollary \ref{cor:full_rfh_o_intro} from the introduction.

\begin{cor}\label{cor:full_RFH_O}
Let $n,m\in\N$ be arbitrary. For $*\in2\Z$, we have
\[
\RFH_*(\OO_{\CP^n}(-m),\Sigma_\tau)=0 \qquad \forall \tau>0\,.
\]
For $*\in2\Z+1$,  we have
\[
\RFH_*(\OO_{\CP^n}(-m),\Sigma_\tau)\cong \left\{
\begin{aligned} 
&\;0 \quad  && \quad \tau(n+1-m)<1\,, \\[.5ex]
&\;\Z &&\quad \tau(n+1-m)=1\,,\; m=1\,,\\[.5ex]
&\;\widetilde{\Q}_m \quad  && \quad \tau(n+1-m)=1\,,\;m\neq1\,, \\[.5ex]
&\;\Q_m \quad  && \quad \tau(n+1-m)>1\,,
\end{aligned}
\right.
\]
where the isomorphism is as $\Z$-modules. Note that $\Q_{m=1}=0$. 

If we take coefficients in a field $\mathbb F$, then we again have $\RFH_*(\OO_{\CP^n}(-m),\Sigma_\tau)=0$ for all $\tau>0$ and $*\in2\Z$. For every  $*\in2\Z+1$,
\[
\RFH_*(\OO_{\CP^n}(-m),\Sigma_\tau)\cong \left\{
\begin{aligned} 
&\;0 \quad  && \quad \tau(n+1-m)\neq1\,, \\[.5ex]
&\;\F &&\quad \tau(n+1-m)=1\,.
\end{aligned}
\right.
\]
\end{cor}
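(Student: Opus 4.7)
The strategy is to apply Theorem \ref{thm:full_rfh}(b) together with Corollary \ref{cor:rfh} to $(M,\om)=(\CP^n,\om_\mathrm{FS})$, which satisfies (A2) with $\lambda=n+1$ and $\nu=1$. The proof of Proposition \ref{prop:CPn} records that $\FH_*(\CP^n)\cong\Z$ is concentrated in a single parity of $*$ and that the Floer cap product $\Psi^{c_1^E}$ acts as scalar multiplication by $\pm m$ on each rank-one summand. Since one side of the short exact sequence in Theorem \ref{thm:full_rfh}(b) vanishes in the opposite parity, this immediately gives $\RFH_{2\Z}(\OO_{\CP^n}(-m),\Sigma_\tau)=0$ and reduces the remaining computation to the cokernel of $\delta=\mathrm{id}+\Psi^{c_1^E}$ on $\mathop{\widetilde{\bigoplus}}_{k\in\Z}\Z\langle e_k\rangle$, where $e_k$ generates $\FH_{*+2k}(\CP^n)$ and $\delta(e_k)=e_k\pm m\,e_{k-1}$.

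Next I would compute this cokernel $C_\tau$ in each of the three regimes of $\tau(\lambda-m)$ relative to $m$. In the subcritical regime, where $\mathop{\widetilde{\bigoplus}}$ consists of sums $\sum_{k\leq k_0}$, any single generator $e_{k_0}$ equals $\delta\bigl(\sum_{j\geq 0}(\mp m)^j e_{k_0-j}\bigr)$ because admissible sums are unbounded below; inductively every element lies in $\mathrm{im}\,\delta$, giving $C_\tau=0$ (and thereby reproving Theorem \ref{thm:full_rfh}(c) in this example). In the supercritical regime, where sums run over $k\geq k_0$, the relation $e_k\equiv\mp m\,e_{k-1}$ modulo $\mathrm{im}\,\delta$ implements the standard $m$-adic carrying procedure: every element reduces uniquely to a digit form $\sum_{k\geq k_0}a_ke_k$ with $a_k\in\{0,\dots,m-1\}$, and the assignment $e_k\mapsto m^k$ descends to the identification $C_\tau\cong\Q_m$. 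The critical regime runs in parallel with finite sums $\sum_{|k|\leq k_0}$, yielding $\widetilde{\Q}_m$. When $m=1$ the relation $e_k\equiv\mp e_{k-1}$ collapses all generators to a single $\Z$-summand, so $C_\tau\cong\Z$ at the threshold and $C_\tau=0$ in the supercritical regime, which matches $\Q_1=0$.

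The field-coefficient version follows directly from Theorem \ref{thm:full_rfh}(c)--(e): when $\mathrm{char}\,\F\nmid m$, the map $\Psi^{c_1^E}=\pm m$ is an $\F$-linear isomorphism, so parts (c) and (d) force $\RFH_*=0$ off the threshold, and part (e) at the threshold gives $\RFH_{*-1}\cong\FH_{*-1}(\CP^n)/\ker(\Psi^{c_1^E})^n\cong\F$; when $\mathrm{char}\,\F\mid m$, $\delta$ reduces to $\mathrm{id}$ and everything vanishes trivially.

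The main obstacle is the cokernel identification in the second paragraph: showing that the carrying procedure is not merely surjective but an actual isomorphism $C_\tau\cong\Q_m$ (respectively $\widetilde{\Q}_m$). The cleanest way is to construct the candidate inverse $e_k\mapsto m^k$ into $\Q_m$ (respectively $\widetilde{\Q}_m$), check that $\delta$ lands in its kernel by the telescoping relation $m^k-m\cdot m^{k-1}=0$, and verify injectivity on digit representatives. Keeping careful track of signs and of which lower-bound conventions are admissible in $\mathop{\widetilde{\bigoplus}}$ in each of the three regimes is where most of the care is needed.
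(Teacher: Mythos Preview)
Your approach is essentially identical to the paper's: both invoke Corollary~\ref{cor:rfh} (equivalently Theorem~\ref{thm:full_rfh}(b)) together with the computation $\FH_*(\CP^n)\cong\Z$ and $\Psi^{c_1^E}=\pm m$ from Proposition~\ref{prop:CPn}, and then identify the cokernel of $\delta=\mathrm{id}+\Psi^{c_1^E}$ via the $m$-adic carrying procedure. The paper only writes out the supercritical case $m\geq 2$ explicitly and defers the remaining cases to Corollary~\ref{cor:rfh} and Proposition~\ref{prop:id+Psi_surjective}; you spell out all three regimes, which is the same content.

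One remark on your field-coefficient paragraph: you correctly observe that when $\operatorname{char}\F\mid m$ the map $\Psi^{c_1^E}$ vanishes and $\delta=\mathrm{id}$, forcing $\RFH_*=0$ in every degree including at the threshold. This does not match the corollary as stated, which asserts $\RFH_*\cong\F$ at $\tau(n+1-m)=1$ without restriction on $\operatorname{char}\F$. Your analysis is the more careful one here; the stated result should be read as implicitly assuming $\operatorname{char}\F\nmid m$ (or equivalently, the threshold answer is $\FH_*(\CP^n)/\ker(\Psi^{c_1^E})^n$, which is $\F$ or $0$ according to that divisibility).
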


\begin{proof}
	The assertions follow from Corollary \ref{cor:rfh} and Proposition \ref{prop:id+Psi_surjective}. We only give a proof of the claim that for $m\geq 2$
	\[
	\RFH_*(\OO_{\CP^n}(-m),\Sigma_\tau)\cong\Q_m \qquad  *\in 2\Z+1\,,\; \tau(n+1-m)>1\,.
	\]
	By Corollary \ref{cor:rfh} together with the well-known computation of $\FH_*(\CP^n)$, see the proof of Proposition \ref{prop:CPn}, $\RFH_*(\OO_{\CP^n}(-m),\Sigma_\tau)$ is isomorphic to 
	\[
	\Xi_m:=\Big\{(\dots,a_{-1},a_0,a_{1},\dots)  \in \prod_{\Z}\Z \mid  \exists k_0\in\Z \textrm{ s.t. } a_k=0\;\forall k<k_0 \Big\}  \Big/ \sim
	\]
	where the equivalence relation is generated by 
\[
(\dots,0,\underbrace{m}_{i\textrm{-th}},0,0,\dots) \sim (\dots,0,0,\underbrace{1}_{i+1\textrm{-th}},0,\dots)\qquad \forall i\in\Z\,.
\]
Then every element in $\Xi_m$ has a representative $(\dots,0,b_{k_0},b_{k_0+1},\dots)$ with $0\leq b_i\leq m-1$ for all $i\in\Z$, and the map 
\[
\Xi_m\to \Q_m\,,\qquad \big[(\dots,0,b_{k_0},b_{k_0+1},\dots)\big]\mapsto \sum_{k\geq k_0} b_{k} m^{k}
\]
is a $\Z$-module isomorphism.
\end{proof}

\subsection{Chain complex for $\OO_{\CP^2}(-m)$}
In this section, we provide a complete description and illustrations of the chain complex $\RFC(\OO_{\CP^2}(-m),\Sigma_\tau)$ and revisit the homology computation made in Corollary \ref{cor:full_RFH_O}.
Let $\sigma$ be a generator of $\pi_2(\CP^2)\cong\Z$ such that $\om_\mathrm{FS}(\sigma)=1$. 
We choose a Morse function $f:\CP^2\to\R$ having exactly three critical points $q_i$, $i=0,1,2$ with $\mu_{-f}(q_i)=2i$.  Let $\gamma_i$ denote the simple Reeb orbit on $\Sigma_\tau$ over $q_i$. Any element  $w=([u,\bar u],\eta)\in\Crit\AA^\tau_f$ with $\wp(u)=q_i$ is equivalent to $(\gamma_i^\ell,k\sigma)$ where $\ell=\cov(u)$, $[\bar u]=[\bar u_\mathrm{fib}\#s]$ with $[\wp\circ s]=k\sigma$, and $k=\om_\mathrm{FS}([\wp\circ\bar u])\in\Z$. Therefore, we write elements in $\Crit h$ by
\[
\hat w=(\hat\gamma_i^\ell,k\sigma)\,,\qquad \check w=(\check\gamma_i^\ell,k\sigma)\,,
\]
where $i\in\{0,1,2\}$ and $\ell,k\in\Z$ are uniquely determined by $w$ as above.
As computed in Example \ref{ex:CP}, we have
\begin{equation}\label{eq:example_index}
\begin{split}	&\mu_\RFH^h(\hat\gamma_i^\ell,k\sigma)-1=\mu_\RFH^h(\check\gamma_i^\ell,k\sigma)=-2\ell-2(3-m)k+2i-2\,,\\[0.5ex]
	&\w(\hat\gamma_i^\ell,k\sigma)=\w(\check\gamma_i^\ell,k\sigma)=\ell-mk\,.
\end{split}
\end{equation}
Let $\xi\in\RFC_{2j}(\AA^\tau_f)$ for $j\in\Z$. We can express it as 
\[
\xi=\sum_{i=0}^2\sum_{k\in\Z} a_{i,k}\big(\check\gamma_i^{-j-(3-m)k+i-1},k\sigma\big)\,,\qquad a_{i,k}\in\Z
\]
subject to 
\begin{equation}\label{eq:CP_Nov}
\left\{
\begin{aligned} 
\exists k_0\in\Z \textrm{ such that } a_{i,k}=0 \,\;\forall i\in\{0,1,2\}\,\;\forall k<k_0  \;\;& \quad \textrm{if } \tau(3-m)<m\,, \\[1ex]
\exists k_0\in\Z \textrm{ such that } a_{i,k}=0 \,\;\forall i\in\{0,1,2\}\,\;\forall k>k_0   \;\;& \quad \textrm{if }\tau(3-m)>m\,, \\[1ex]
\exists k_0\in\Z \textrm{ such that } a_{i,k}=0 \,\;\forall i\in\{0,1,2\}\,\;\forall |k|>k_0  & \quad \textrm{if } \tau(3-m)=m\,,
\end{aligned}
\right.
\end{equation}
according to the Novikov condition, see Remark \ref{rem:Novikov} and also \eqref{eq:full_action_winding} and \eqref{eq:nov_ind}. We note that if $m\geq 3$,  only the first one occurs for all $\tau>0$. A corresponding statement  for $\xi\in\RFC_{2j+1}(\AA^\tau_f)$ is true with $\check\gamma_i$ replaced by $\hat\gamma_i$.

Moreover, due to the discussion in Remark \ref{rem:consecutive index}, we have for all $i$, $\ell$, and $k$
\[
\p_0(\check\gamma_i^\ell,k\sigma)=(\hat\gamma_{i}^{\ell+1},k\sigma)\,,\qquad \p_0(\hat\gamma_i^\ell,k\sigma)=0\,,\qquad \p_1=0\,,\qquad \p_2(\hat\gamma_i^\ell,k\sigma)=0\,,
\]
\[
\p_2(\check\gamma_2^\ell,k\sigma)=(\hat\gamma_1^\ell,k\sigma)\,,\quad \p_2(\check\gamma_1^\ell,k\sigma)=(\hat\gamma_0^\ell,k\sigma)\,,\quad  \p_2(\check\gamma_0^\ell,k\sigma)=(\hat\gamma_2^{\ell+m} ,(k+1)\sigma)\,.
\]
In particular, every maximum point of $h$ is a cycle. Moreover every cycle is generated only by maximum points of $h$. Indeed if a chain contains a minimum point, then to be a cycle it needs infinitely many terms in both directions of increasing and decreasing $k$, which is not allowed by \eqref{eq:CP_Nov}.

\subsubsection{The case $\OO_{\CP^2}(-1)\to\CP^2$}

\begin{figure}
\centering
\includegraphics[scale=0.6]{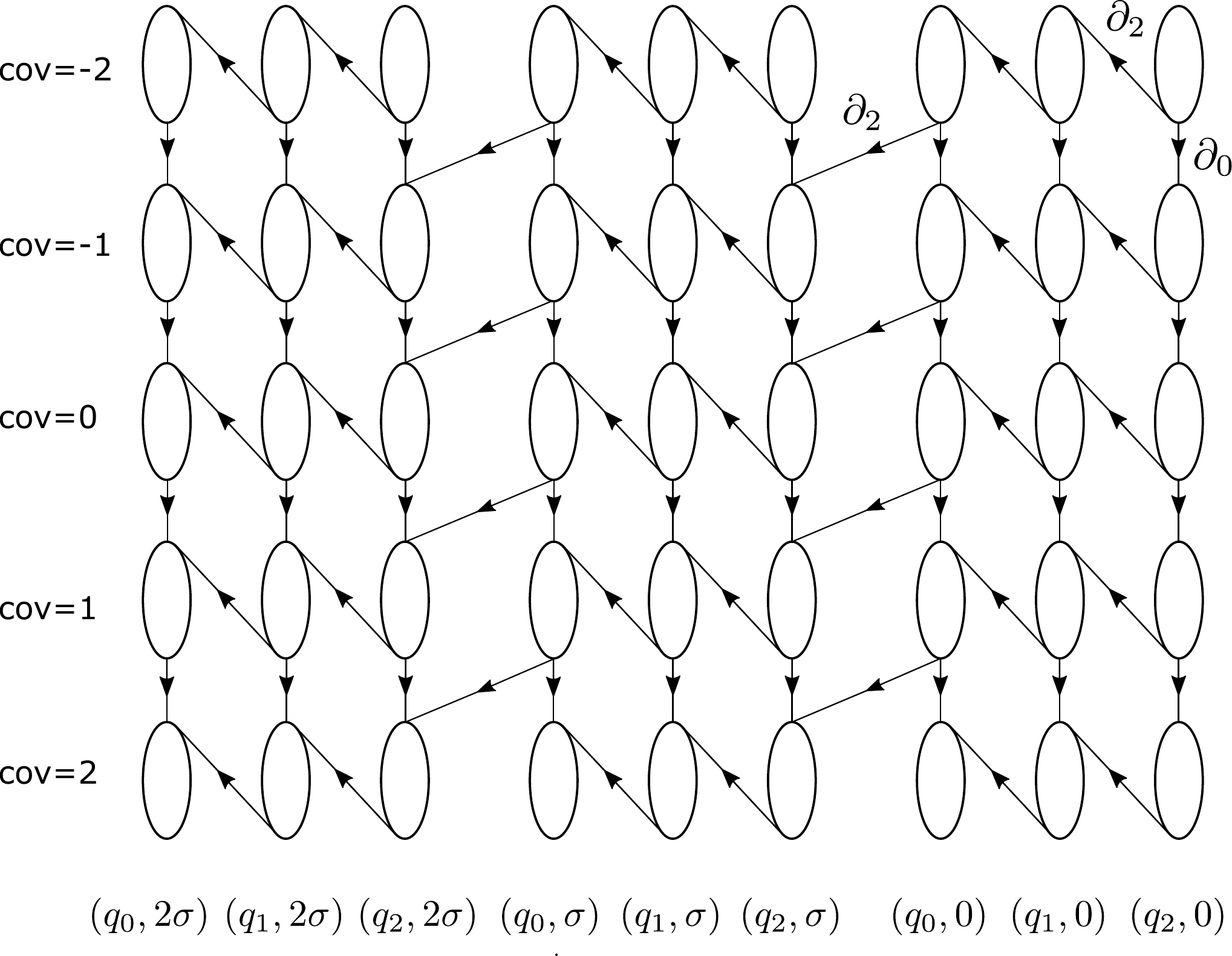}
\caption{$\OO_{\CP^2}(-1)\to\CP^2$}\label{fig:O(-1)}
\end{figure}

\noindent\underline{Case $\tau<\frac{1}{2}$}: Every cycle is a boundary and thus $\RFH_*(\OO_{\CP^2}(-1),\Sigma_{\tau})=0$ for all $*\in\Z$:
\[
\begin{split}
(\hat\gamma_0^\ell,k\sigma)=\p\Big(&(\check\gamma^{\ell-1}_0,k\sigma)-(\check\gamma_2^{\ell-1},(k+1)\sigma)+(\check\gamma_1^{\ell-2},(k+1)\sigma)- (\check\gamma_0^{\ell-3},(k+1)\sigma)\\
&+(\check\gamma_2^{\ell-3},(k+2)\sigma) - (\check\gamma_1^{\ell-4},(k+2)\sigma) + \cdots \Big)
\end{split}
\]
and similarly for $(\hat\gamma_1^\ell,k\sigma)$ and $(\hat\gamma_2^\ell,k\sigma)$.

\medskip

\noindent\underline{Case $\tau>\frac{1}{2}$}: Every cycle is boundary and thus $\RFH_*(\OO_{\CP^2}(-1),\Sigma_{\tau})=0$ for all $*\in\Z$:
\[
\begin{split}
(\hat\gamma_0^\ell,k\sigma)&=\p\Big((\check\gamma^{\ell}_1,k\sigma)-(\check\gamma_2^{\ell+1},k\sigma)+(\check\gamma_0^{\ell+1},(k-1)\sigma)- (\check\gamma_1^{\ell+2},(k-1)\sigma)+\cdots \Big)\\[1ex]
\end{split}
\]
and similarly for $(\hat\gamma_1^\ell,k\sigma)$ and $(\hat\gamma_2^\ell,k\sigma)$.

\medskip

\noindent\underline{Case $\tau=\frac{1}{2}$}: The cycles $(\hat\gamma_i^\ell,k\sigma)$ are not boundaries and homologous each other for suitably related $i$, $\ell$, and $k$ as follows:
\[
\big[(\hat\gamma_0^\ell,k\sigma)\big]=\big[(\hat\gamma_1^{\ell+1},k\sigma)\big]=\big[(\hat\gamma_2^{\ell+2},k\sigma)\big]=\big[(\hat\gamma_0^{\ell+2},(k-1)\sigma)\big]=\cdots \,.
\]
Together with the index computation in \eqref{eq:example_index}, this yields 
\[
\RFH_*(\OO_{\CP^2}(-1),\Sigma_{\tau}) \cong \left\{
\begin{aligned} 
\Z \quad  & \quad *\in2\Z+1\,, \\[1ex]
0 \quad & \quad *\in 2\Z\,.
\end{aligned}
\right.
\]

\subsubsection{The case $\OO_{\CP^2}(-2)\to\CP^2$}
\begin{figure}
\centering
\includegraphics[scale=0.6]{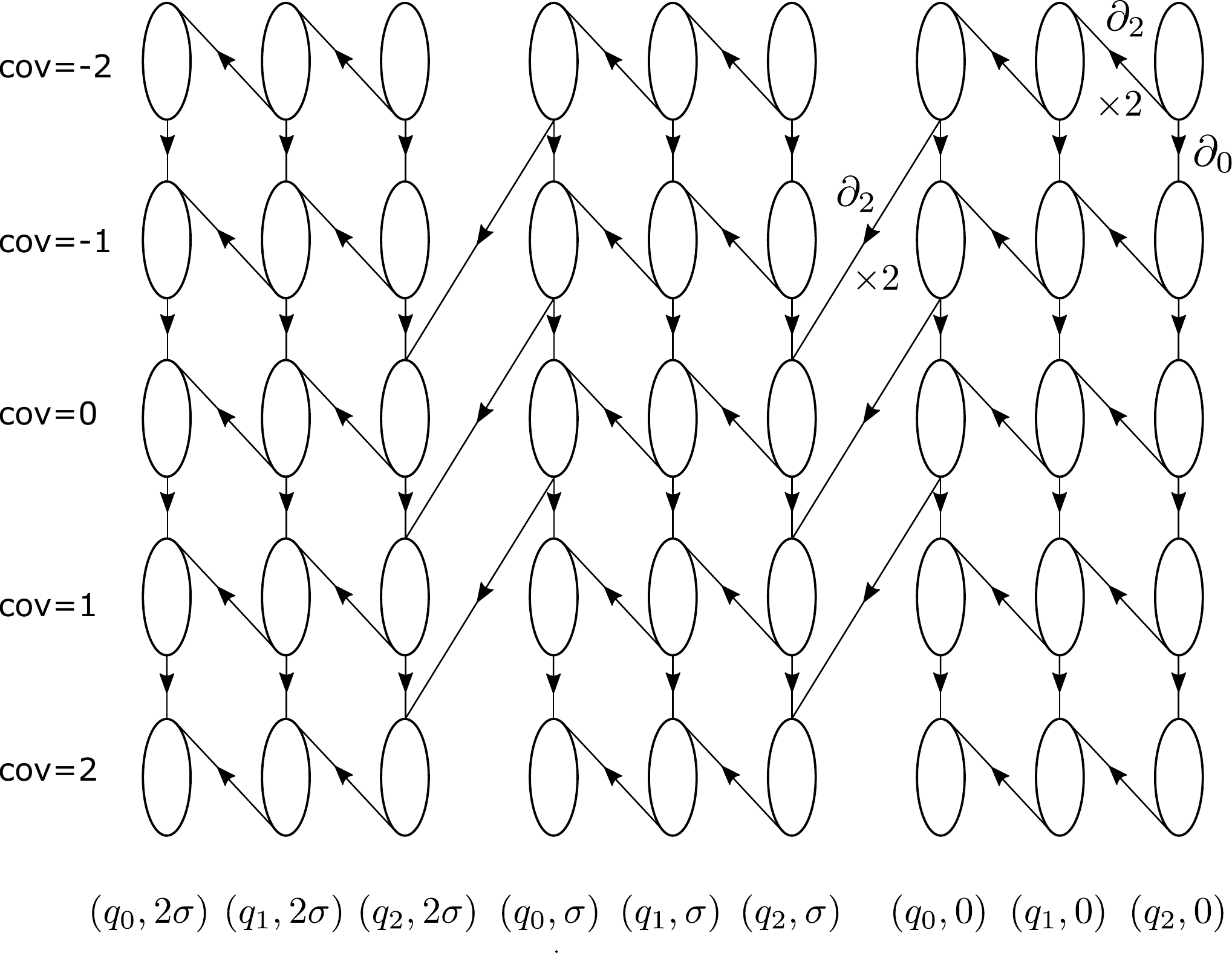}
\caption{$\OO_{\CP^2}(-2)\to\CP^2$}
\end{figure}

\noindent\underline{Case $\tau<2$}: Every cycle is a boundary and $\RFH_*(\OO_{\CP^2}(-2),\Sigma_{\tau})=0$ for all $*\in\Z$:
\[
\begin{split}
(\hat\gamma_0^\ell,k\sigma)=\p\Big(&\check\gamma^{\ell-1}_0,k\sigma)-2(\check\gamma_2^{\ell},(k+1)\sigma)+2^2(\check\gamma_1^{\ell-1},(k+1)\sigma)- 2^3(\check\gamma_0^{\ell-2},(k+1)\sigma)\\
&+2^4(\check\gamma_2^{\ell-1},(k+2)\sigma) - 2^5(\check\gamma_1^{\ell-2},(k+2)\sigma) +\cdots \Big)
\end{split}
\]
and similarly for $(\hat\gamma_1^\ell,k\sigma)$ and $(\hat\gamma_2^\ell,k\sigma)$.

\medskip

\noindent\underline{Case $\tau>2$}: The cycles $(\hat\gamma_i^\ell,k\sigma)$ are not boundaries and homologous as follows:
\[
\big[(\hat\gamma_0^{\ell+1},(k-1)\sigma)\big]=2\big[(\hat\gamma_2^{\ell+2},k\sigma)\big]=2^2\big[(\hat\gamma_1^{\ell+1},k\sigma)\big]=2^3\big[(\hat\gamma_0^\ell,k\sigma)\big]=\cdots \,.
\]
We note that, in this case, a cycle is allowed to have infinitely many terms in the direction of increasing $k$.
This leads to the computation
\[
\RFH_*(\OO_{\CP^2}(-2),\Sigma_{\tau}) \cong \left\{
\begin{aligned} 
\Q_2 \quad  & \quad *\in2\Z+1\,, \\[1ex]
0\; \quad & \quad *\in 2\Z\,.
\end{aligned}
\right.
\]

\medskip

\noindent\underline{Case $\tau=2$}: We have exactly the same relations between cycles as in the case of $\tau>2$ but this time cycles can have only finitely many terms. Therefore
\[
\RFH_*(\OO_{\CP^2}(-2),\Sigma_{\tau}) \cong \left\{
\begin{aligned} 
\widetilde{\Q}_2 \quad  & \quad *\in2\Z+1\,, \\[1ex]
0\; \quad & \quad *\in 2\Z\,.
\end{aligned}
\right.
\]

\subsubsection{The case $\OO_{\CP^2}(-m)\to\CP^2$ for $m\geq3$}

\begin{figure}
\centering
\includegraphics[scale=0.6]{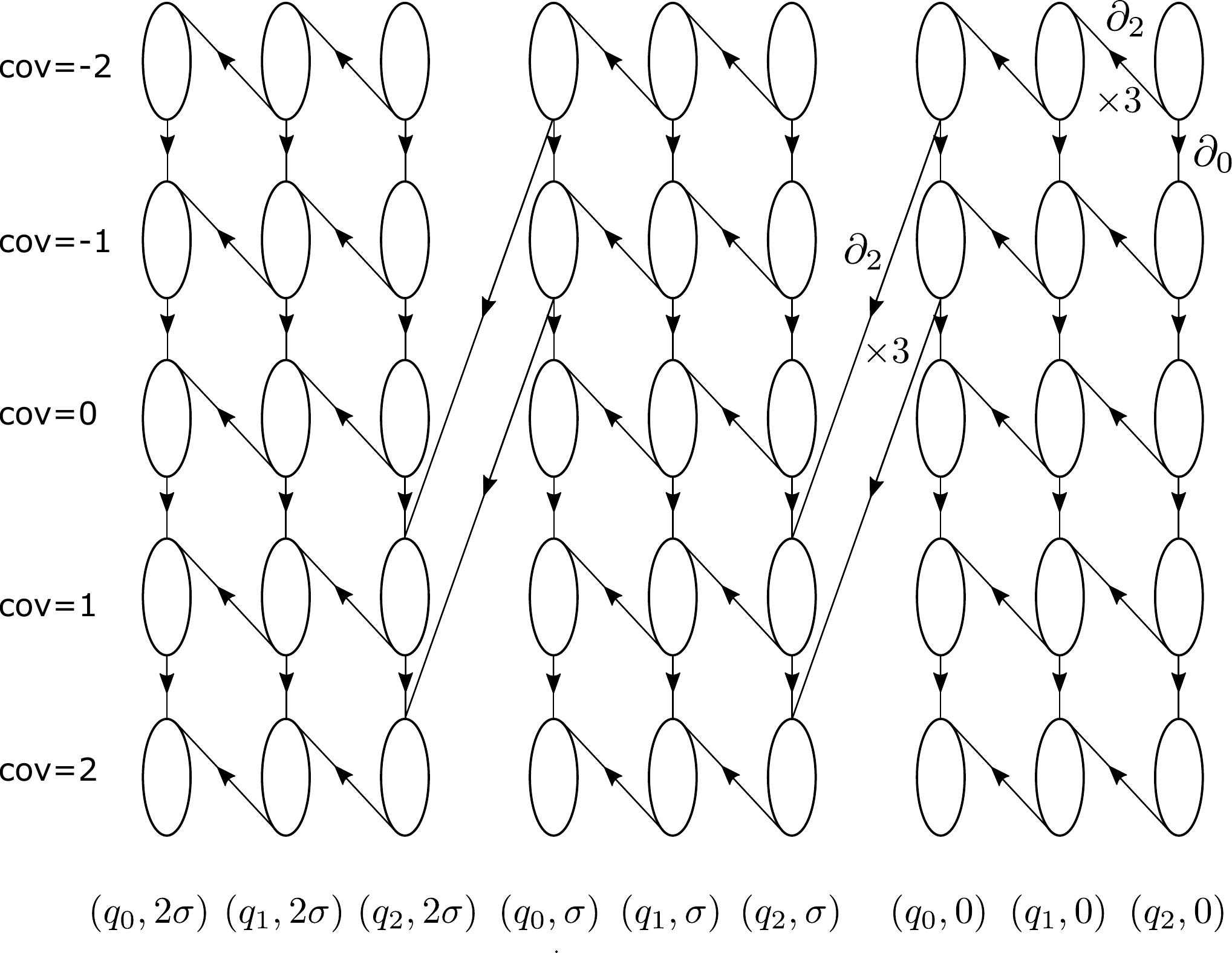}
\caption{$\OO_{\CP^2}(-3)\to\CP^2$}
\end{figure}
In this case, the first line in \eqref{eq:CP_Nov} holds for all $\tau>0$. Therefore we have 
\[
\begin{split}
(\hat\gamma_0^\ell,k\sigma)=\p\Big(&(\check\gamma^{\ell-1}_0,k\sigma)-m(\check\gamma_2^{\ell+m-2},(k+1)\sigma)+m^2(\check\gamma_1^{\ell+m-3},(k+1)\sigma) \\ 
& - m^3(\check\gamma_0^{\ell+m-4},(k+1)\sigma)
	 +m^4(\check\gamma_2^{\ell+2m-5},(k+2)\sigma)-\cdots \Big)\,,
\end{split}
\]
and similarly $(\hat\gamma_1^\ell,k\sigma)$ and $(\hat\gamma_2^\ell,k\sigma)$ are also boundaries. Hence, for every $m\geq3$, $\tau>0$, and $*\in\Z$, we have
\[
\RFH_*(\OO_{\CP^2}(-m),\Sigma_{\tau})=0\,.
\]

\newpage

\bibliographystyle{amsalpha}
\bibliography{RFH_wind.bib}

\def\cprime{$'$} \def\cprime{$'$}
\providecommand{\bysame}{\leavevmode\hbox to3em{\hrulefill}\thinspace}
\providecommand{\MR}{\relax\ifhmode\unskip\space\fi MR }
\providecommand{\MRhref}[2]{%
  \href{http://www.ams.org/mathscinet-getitem?mr=#1}{#2}
}
\providecommand{\href}[2]{#2}
\begin{thebibliography}{GKPS21}

\bibitem[Abo15]{Abo15}
Mohammed Abouzaid, \emph{Symplectic cohomology and {V}iterbo's theorem}, Free
  loop spaces in geometry and topology, IRMA Lect. Math. Theor. Phys., vol.~24,
  Eur. Math. Soc., Z\"{u}rich, 2015, pp.~271--485.

\bibitem[AF10]{AF10}
Peter Albers and Urs Frauenfelder, \emph{Leaf-wise intersections and
  {R}abinowitz {F}loer homology}, J. Topol. Anal. \textbf{2} (2010), no.~1,
  77--98.

\bibitem[AF12]{AF12}
\bysame, \emph{A variational approach to {G}ivental's nonlinear {M}aslov
  index}, Geom. Funct. Anal. \textbf{22} (2012), no.~5, 1033--1050.

\bibitem[AF16]{AF16}
\bysame, \emph{Bubbles and onis}, Journal of Fixed Point Theory and
  Applications \textbf{19} (2016), no.~1, 85--112.

\bibitem[AK17]{AK17}
Peter Albers and Jungsoo Kang, \emph{Vanishing of {R}abinowitz {F}loer homology
  on negative line bundles}, Math. Z. \textbf{285} (2017), no.~1-2, 493--517.

\bibitem[AM13]{AM13}
Peter Albers and Will~J. Merry, \emph{Translated points and {R}abinowitz
  {F}loer homology}, J. Fixed Point Theory Appl. \textbf{13} (2013), no.~1,
  201--214.

\bibitem[AM18a]{AbM18}
Alberto Abbondandolo and Will~J. Merry, \emph{Floer homology on the time-energy
  extended phase space}, J. Symplectic Geom. \textbf{16} (2018), no.~2,
  279--355.

\bibitem[AM18b]{AM18}
Peter Albers and Will~J. Merry, \emph{Orderability, contact non-squeezing, and
  {R}abinowitz {F}loer homology}, J. Symplectic Geom. \textbf{16} (2018),
  no.~6, 1481--1547.

\bibitem[AS09]{AS09}
Alberto Abbondandolo and Matthias Schwarz, \emph{Estimates and computations in
  {R}abinowitz-{F}loer homology}, J. Topol. Anal. \textbf{1} (2009), no.~4,
  307--405.

\bibitem[B{\"a}h21]{Ba21}
Yannis B{\"a}hni, \emph{First steps in twisted {R}abinowitz-{F}loer homology},
  arXiv:2105.13934, 2021.

\bibitem[BH13]{BH13}
Augustin Banyaga and David~E. Hurtubise, \emph{Cascades and perturbed
  {M}orse-{B}ott functions}, Algebr. Geom. Topol. \textbf{13} (2013), no.~1,
  237--275.

\bibitem[Bhu01]{Bhu01}
Mohan Bhupal, \emph{A partial order on the group of contactomorphisms of {$\Bbb
  R^{2n+1}$} via generating functions}, Turkish J. Math. \textbf{25} (2001),
  no.~1, 125--135.

\bibitem[BK13]{BK13}
Paul Biran and Michael Khanevsky, \emph{A {F}loer-{G}ysin exact sequence for
  {L}agrangian submanifolds}, Comment. Math. Helv. \textbf{88} (2013), no.~4,
  899--952.

\bibitem[BKK22]{BKK22}
Joonghyun Bae, Jungsoo Kang, and Sungho Kim, to appear, 2022.

\bibitem[BM04]{BM04}
Fr\'{e}d\'{e}ric Bourgeois and Klaus Mohnke, \emph{Coherent orientations in
  symplectic field theory}, Math. Z. \textbf{248} (2004), no.~1, 123--146.

\bibitem[BO09a]{BO09a}
Fr\'{e}d\'{e}ric Bourgeois and Alexandru Oancea, \emph{An exact sequence for
  contact- and symplectic homology}, Invent. Math. \textbf{175} (2009), no.~3,
  611--680.

\bibitem[BO09b]{BO09}
\bysame, \emph{Symplectic homology, autonomous {H}amiltonians, and
  {M}orse-{B}ott moduli spaces}, Duke Math. J. \textbf{146} (2009), no.~1,
  71--174.

\bibitem[BO13]{BO13a}
\bysame, \emph{The {G}ysin exact sequence for {$S^1$}-equivariant symplectic
  homology}, J. Topol. Anal. \textbf{5} (2013), no.~4, 361--407.

\bibitem[BZ15]{BZ15}
Matthew~Strom Borman and Frol Zapolsky, \emph{Quasimorphisms on
  contactomorphism groups and contact rigidity}, Geom. Topol. \textbf{19}
  (2015), no.~1, 365--411.

\bibitem[CF09]{CF09}
Kai Cieliebak and Urs Frauenfelder, \emph{A {F}loer homology for exact contact
  embeddings}, Pacific J. Math. \textbf{239} (2009), no.~2, 251--316.

\bibitem[CF11]{CF11}
\bysame, \emph{Morse homology on noncompact manifolds}, J. Korean Math. Soc.
  \textbf{48} (2011), no.~4, 749--774.

\bibitem[CFO10]{CFO10}
Kai Cieliebak, Urs Frauenfelder, and Alexandru Oancea, \emph{Rabinowitz {F}loer
  homology and symplectic homology}, Ann. Sci. \'{E}c. Norm. Sup\'{e}r. (4)
  \textbf{43} (2010), no.~6, 957--1015.

\bibitem[CFP17]{CFP17}
Vincent Colin, Emmanuel Ferrand, and Petya Pushkar, \emph{Positive isotopies of
  {L}egendrian submanifolds and applications}, Int. Math. Res. Not. IMRN
  (2017), no.~20, 6231--6254.

\bibitem[CN10a]{CN10a}
Vladimir Chernov and Stefan Nemirovski, \emph{Legendrian links, causality, and
  the {L}ow conjecture}, Geom. Funct. Anal. \textbf{19} (2010), no.~5,
  1320--1333.

\bibitem[CN10b]{CN10}
\bysame, \emph{Non-negative {L}egendrian isotopy in {$ST^*M$}}, Geom. Topol.
  \textbf{14} (2010), no.~1, 611--626.

\bibitem[CO18]{CO18}
Kai Cieliebak and Alexandru Oancea, \emph{Symplectic homology and the
  {E}ilenberg-{S}teenrod axioms}, Algebr. Geom. Topol. \textbf{18} (2018),
  no.~4, 1953--2130, Appendix written jointly with Peter Albers.

\bibitem[DL19a]{DL19}
Lu\'{\i}s Diogo and Samuel~T. Lisi, \emph{Morse-{B}ott split symplectic
  homology}, J. Fixed Point Theory Appl. \textbf{21} (2019), no.~3, Paper No.
  77.

\bibitem[DL19b]{DL19b}
\bysame, \emph{Symplectic homology of complements of smooth divisors}, J.
  Topol. \textbf{12} (2019), no.~3, 967--1030.

\bibitem[EKP06]{EKP06}
Yakov Eliashberg, Sang~Seon Kim, and Leonid Polterovich, \emph{Geometry of
  contact transformations and domains: orderability versus squeezing}, Geom.
  Topol. \textbf{10} (2006), 1635--1747.

\bibitem[EP00]{EP00}
Y.~Eliashberg and L.~Polterovich, \emph{Partially ordered groups and geometry
  of contact transformations}, Geom. Funct. Anal. \textbf{10} (2000), no.~6,
  1448--1476.

\bibitem[FH93]{FH93}
A.~Floer and H.~Hofer, \emph{Coherent orientations for periodic orbit problems
  in symplectic geometry}, Math. Z. \textbf{212} (1993), no.~1, 13--38.

\bibitem[FHS95]{FHS95}
Andreas Floer, Helmut Hofer, and Dietmar Salamon, \emph{Transversality in
  elliptic {M}orse theory for the symplectic action}, Duke Math. J. \textbf{80}
  (1995), no.~1, 251--292.

\bibitem[Fra04]{Fra04}
Urs Frauenfelder, \emph{The {A}rnold-{G}ivental conjecture and moment {F}loer
  homology}, Int. Math. Res. Not. (2004), no.~42, 2179--2269.

\bibitem[Fra08]{Fra08}
\bysame, \emph{Rabinowitz action functional on very negative line bundles},
  Habilitationsschrift (2008).

\bibitem[Fra22]{Fra22}
\bysame, \emph{The gradient flow equation of {R}abinowitz action functional in
  a symplectization}, arXiv:2202.00281, 2022.

\bibitem[GG19]{GG19}
Viktor~L. Ginzburg and Ba{\c s}ak~Z. G{\"u}rel, \emph{Lusternik--{S}chnirelmann
  theory and closed reeb orbits}, Mathematische Zeitschrift \textbf{295}
  (2019), no.~1-2, 515--582.

\bibitem[Giv90]{Giv90}
A.~B. Givental, \emph{Nonlinear generalization of the {M}aslov index}, Theory
  of singularities and its applications, Adv. Soviet Math., vol.~1, Amer. Math.
  Soc., Providence, RI, 1990, pp.~71--103.

\bibitem[GKPS21]{GKPS21}
Gustavo Granja, Yael Karshon, Milena Pabiniak, and Sheila Sandon,
  \emph{Givental's non-linear {M}aslov index on lens spaces}, Int. Math. Res.
  Not. IMRN (2021), no.~23, 18225--18299.

\bibitem[GS18]{GS18}
Viktor~L. Ginzburg and Jeongmin Shon, \emph{On the filtered symplectic homology
  of prequantization bundles}, Internat. J. Math. \textbf{29} (2018), no.~11,
  1850071, 35.

\bibitem[HS95]{HS95}
H.~Hofer and D.~A. Salamon, \emph{Floer homology and {N}ovikov rings}, The
  {F}loer memorial volume, Progr. Math., vol. 133, Birkh\"{a}user, Basel, 1995,
  pp.~483--524.

\bibitem[KS02]{KS02}
Mikhail Khovanov and Paul Seidel, \emph{{Quivers, {F}loer cohomology, and braid
  group actions}}, {J. Amer. Math. Soc.} \textbf{{15}} ({2002}), no.~{1},
  {203--271}.

\bibitem[Mil08]{Mil08}
Isidora Milin, \emph{Orderability of contactomorphism groups of lens spaces},
  ProQuest LLC, Ann Arbor, MI, 2008, Thesis (Ph.D.)--Stanford University.

\bibitem[MP11]{MP11}
Will~J. Merry and Gabriel~P. Paternain, \emph{Index computations in
  {R}abinowitz {F}loer homology}, J. Fixed Point Theory Appl. \textbf{10}
  (2011), no.~1, 87--111.

\bibitem[MS12]{MS12}
Dusa McDuff and Dietmar Salamon, \emph{{$J$}-holomorphic curves and symplectic
  topology}, second ed., American Mathematical Society Colloquium Publications,
  vol.~52, American Mathematical Society, Providence, RI, 2012.

\bibitem[Oan08]{Oan08}
Alexandru Oancea, \emph{Fibered symplectic cohomology and the {L}eray-{S}erre
  spectral sequence}, J. Symplectic Geom. \textbf{6} (2008), no.~3, 267--351.

\bibitem[Ono95]{Ono95}
Kaoru Ono, \emph{On the {A}rnol\cprime d conjecture for weakly monotone
  symplectic manifolds}, Invent. Math. \textbf{119} (1995), no.~3, 519--537.

\bibitem[Per08]{Per08}
Timothy Perutz, \emph{A symplectic {G}ysin sequence}, arXiv:0807.1863, 2008.

\bibitem[PSS96]{PSS96}
S.~Piunikhin, D.~Salamon, and M.~Schwarz, \emph{Symplectic {F}loer-{D}onaldson
  theory and quantum cohomology}, Contact and symplectic geometry ({C}ambridge,
  1994), Publ. Newton Inst., vol.~8, Cambridge Univ. Press, Cambridge, 1996,
  pp.~171--200.

\bibitem[Rit14]{Rit14}
A.~F. Ritter, \emph{Floer theory for negative line bundles via
  {G}romov-{W}itten invariants}, Adv. Math. \textbf{262} (2014), 1035--1106.

\bibitem[Rit16]{Rit16}
Alexander~F. Ritter, \emph{Circle actions, quantum cohomology, and the {F}ukaya
  category of {F}ano toric varieties}, Geom. Topol. \textbf{20} (2016), no.~4,
  1941--2052.

\bibitem[RS93]{RS93}
Joel Robbin and Dietmar Salamon, \emph{The {M}aslov index for paths}, Topology
  \textbf{32} (1993), no.~4, 827--844.

\bibitem[San11]{San11}
Sheila Sandon, \emph{Equivariant homology for generating functions and
  orderability of lens spaces}, J. Symplectic Geom. \textbf{9} (2011), no.~2,
  123--146.

\bibitem[San12]{San12}
\bysame, \emph{On iterated translated points for contactomorphisms of {$\Bbb
  R^{2n+1}$} and {$\Bbb R^{2n}\times S^1$}}, Internat. J. Math. \textbf{23}
  (2012), no.~2, 1250042, 14.

\bibitem[San13]{San13}
\bysame, \emph{A {M}orse estimate for translated points of contactomorphisms of
  spheres and projective spaces}, Geom. Dedicata \textbf{165} (2013), 95--110.

\bibitem[Sch93]{Sch93}
Matthias Schwarz, \emph{Morse homology}, Progress in Mathematics, vol. 111,
  Birkh\"{a}user Verlag, Basel, 1993.

\bibitem[Sch16]{Sch16}
Felix Schm{\"a}schke, \emph{Floer homology of lagrangians in clean
  intersection}, arXiv:1606.05327, 2016.

\bibitem[Ueb19]{Ueb19}
Peter Uebele, \emph{Periodic {R}eeb flows and products in symplectic homology},
  J. Symplectic Geom. \textbf{17} (2019), no.~4, 1201--1250.

\bibitem[Ven18]{Ven18}
Sara Venkatesh, \emph{Rabinowitz {F}loer homology and mirror symmetry}, J.
  Topol. \textbf{11} (2018), no.~1, 144--179.

\bibitem[Ven21]{Ven21}
\bysame, \emph{The quantitative nature of reduced {F}loer theory}, Adv. Math.
  \textbf{383} (2021), 107682.

\bibitem[Wei94]{Wei94}
Charles~A. Weibel, \emph{An introduction to homological algebra}, Cambridge
  Studies in Advanced Mathematics, vol.~38, Cambridge University Press,
  Cambridge, 1994.

\bibitem[Zap20]{Zap20}
Frol Zapolsky, \emph{Quasi-morphisms on contactomorphism groups and
  {G}rassmannians of 2-planes}, Geom. Dedicata \textbf{207} (2020), 287--309.

\end{thebibliography}
\end{document}